\renewcommand{\paragraph}{%
\@startsection{paragraph}{4}%
{\z@}{1.5ex \@plus 1.5ex \@minus .2ex}{-0.7em}%
{\normalfont\normalsize\bfseries}%
}
\def\thm@space@setup{%
  \thm@preskip=\parskip \thm@postskip=0pt
}
\setlist[itemize]{leftmargin=5mm}
\DeclareSymbolFont{timesoperators}{T1}{ptm}{m}{n}
\DeclareMathAlphabet{\mathbb}{U}{jkpsyb}{m}{n}
\SetMathAlphabet{\mathbb}{bold}{U}{jkpsyb}{bx}{n}
\DeclarePairedDelimiter\abs\lvert\rvert
\definecolor{colorLink}{RGB}{0,100,162}
\definecolor{colorCite}{RGB}{8,124,100}
\def\M{\mathbb{M}}
\def\D{\mathbb{D}}
\def\B{\mathbb{B}}
\def\N{\mathbb{N}}
\def\R{\mathbb{R}}
\def\Z{\mathbb{Z}}
\def\E{\mathbb{E}}
\def\P{\mathbb{P}}
\def\S{\mathbb{S}}
\def\A{\mathbb{A}}
\def\G{\mathbb{G}}
\def\X{\mathbb{X}}
\def\H{\mathbb{H}}
\def\L{\mathbb{L}}
\def\CC{\mathcal{C}}
\def\CF{\mathcal{F}}
\def\CG{\mathcal{G}}
\def\CH{\mathcal{H}}
\def\CM{\mathcal{M}}
\def\CN{\mathcal{N}}
\def\CO{\mathcal{O}}
\def\CP{\mathcal{P}}
\def\CR{\mathcal{R}}
\def\CX{\mathcal{X}}
\def\CY{\mathcal{Y}}
\def\rmT{\mathscr{T}}
\def\rmA{\mathrm{A}}
\def\rmB{\mathrm{B}}
\def\rmC{\mathrm{C}}
\def\rmD{\mathrm{D}}
\def\rmE{\mathrm{E}}
\def\rmF{\mathrm{F}}
\def\rmG{\mathrm{G}}
\def\rmK{\mathrm{K}}
\def\rmL{\mathrm{L}}
\def\rmR{\mathrm{R}}
\def\rmS{\mathrm{S}}
\def\rmT{\mathrm{T}}
\def\rmU{\mathrm{U}}
\def\rmW{\mathrm{W}}
\def\rmX{\mathrm{X}}
\def\rmY{\mathrm{Y}}
\def\rmZ{\mathrm{Z}}
\def\bfF{\mathbf{F}}
\def\bfG{\mathbf{G}}
\def\bfC{\mathbf{C}}
\def\bfZ{\mathbf{Z}}
\def\bfW{\mathbf{W}}
\def\frkI{\mathfrak{I}}
\def\frkJ{\mathfrak{J}}
\def\frkK{\mathfrak{K}}
\def\frkF{\mathfrak{F}}
\def\frkd{\mathfrak{d}}
\def\one{\mathds{1}}
\let\eps\upvarepsilon
\let\iota\upiota
\let\alpha\upalpha
\let\beta\upbeta
\let\delta\updelta
\let\gamma\upgamma
\let\mu\upmu
\let\eta\upeta
\let\nu\upnu
\let\rho\uprho
\let\chi\upchi
\let\xi\upxi
\let\phi\upphi
\let\psi\uppsi
\let\zeta\upzeta
\let\tau\uptau
\let\varphi\upvarphi
\let\lambda\uplambda
\let\theta\uptheta
\let\pi\uppi
\let\Upsilon\Upupsilon
\let\Theta\Uptheta
\let\Psi\Uppsi
\let\Xi\Upxi
\let\f\frac
\let\d\partial
\DeclareMathOperator{\Leb}{Leb}
\DeclareMathOperator{\argmax}{argmax}
\DeclareMathOperator{\Osc}{Osc}
\DeclareMathOperator{\Law}{Law}
\DeclareMathOperator{\gammac}{\gamma_{c}}
\DeclareMathOperator{\dd}{d}
\DeclareMathOperator{\new}{new}
\def\loc{\mathrm{loc}}
\def\scal#1{\langle#1\rangle}
\newcommand{\eqdef}{\stackrel{\mbox{\tiny\rm def}}{=}}
\newcommand{\eqlaw}{\stackrel{\mbox{\tiny\rm law}}{=}}
\def\dash{\leavevmode\unskip\kern0.18em--\penalty\exhyphenpenalty\kern0.18em}
\def\slash{\leavevmode\unskip\kern0.15em/\penalty\exhyphenpenalty\kern0.15em}
\renewcommand{\operator@font}{\mathgroup\symtimesoperators}
\DeclareRobustCommand{\TitleEquation}[2]{\texorpdfstring{\StrLeft{\f@series}{1}[\@firstchar]$\if%
b\@firstchar\boldsymbol{#1}\else#1\fi$}{#2}}
\newcommand{\pushright}[1]{\ifmeasuring@#1\else\omit\hfill$\displaystyle#1$\fi\ignorespaces}
\newcommand{\pushleft}[1]{\ifmeasuring@#1\else\omit$\displaystyle#1$\hfill\fi\ignorespaces}
\renewcommand{\bar}{\overline}
\renewcommand{\hat}{\widehat}
\renewcommand{\tilde}{\widetilde}
\newcommand{\ceps}{{(\eps)}}
\newcommand{\oset}[3][0ex]{%
  \mathrel{\mathop{#3}\limits^{
    \vbox to#1{\kern-2\ex@
    \hbox{$\scriptstyle#2$}\vss}}}}
\newcommand{\frka}{\mathfrak{a}}
\newcommand{\frkg}{\mathfrak{g}}
\newcommand{\frkh}{\mathfrak{h}}
\newcommand{\frkm}{\mathfrak{m}}
\theoremstyle{plain}
\newtheorem{theorem}{Theorem}[section]
\newtheorem{corollary}[theorem]{Corollary}
\newtheorem{lemma}[theorem]{Lemma}
\newtheorem{proposition}[theorem]{Proposition}
\newtheorem{theoremA}{Theorem}[section]
\newtheorem{corollaryA}[theoremA]{Corollary}
\theoremstyle{definition}
\newtheorem{definition}[theorem]{Definition}
\newtheorem*{acknowledgements}{Acknowledgements}
\newtheorem{assumption}[theorem]{Assumption}
\newtheorem{remark}[theorem]{Remark}
\numberwithin{equation}{section}
\colorlet{darkblue}{blue!90!black}
\colorlet{darkgreen}{green!50!black}
\begin{document}

\title{How does the supercritical GMC converge?}

\author{Federico Bertacco$^1$\orcidlink{0000-0002-6363-1294}, Martin Hairer$^{2}$\orcidlink{0000-0002-2141-6561}}

\institute{Imperial College London, Email: \href{mailto:f.bertacco20@imperial.ac.uk}{\color{black}\texttt{f.bertacco20@imperial.ac.uk}} \and EPFL \& Imperial College London, Email: \href{mailto:martin.hairer@epfl.ch}{\color{black}\texttt{martin.hairer@epfl.ch}}}

\maketitle

\begin{abstract}
In the spirit of \cite{BiskupLouidor}, we study the local structure of $\star$-scale invariant fields \dash a class of log-correlated Gaussian fields \dash around their extremal points by characterising the law of the ``shape'' of the field's configuration near such points. As a consequence, we obtain
a refined understanding of the freezing phenomenon in supercritical Gaussian multiplicative chaos.
\end{abstract}

\setcounter{tocdepth}{2}
\tableofcontents

%%%%%%%%%%%%%%%%%%%%%%%%%%%%%%%%%%%%%%%%%%%%%%
%%%%%%%%%%%%%%%%%%%%%%%%%%%%%%%%%%%%%%%%%%%%%%
\section{Introduction}
\label{sec:intro}
The theory of Gaussian Multiplicative Chaos (GMC) involves the study of random measures that can be formally expressed as 
\begin{equation}
\label{e:GMCformal}
\mu_{\gamma}(dx) \mathrel{\text{``$=$''}} e^{\gamma \rmX(x)} dx \;,
\end{equation}
where $\gamma > 0$ is a real positive parameter representing the \emph{inverse temperature} of the model, $\rmX$ is a \emph{log-correlated Gaussian field} on a domain $\rmD \subseteq \R^d$, and $dx$ denotes Lebesgue measure on $\rmD$. Since $\rmX$ only exists as a random Schwartz distribution, regularisation and renormalisation are necessary to show the existence of the measure $\mu_{\gamma}$ as defined above \cite{Kahane, DS11, RV_Review, Shamov, Berestycki_Elementary}.

It is by now well known that the behaviour of the random measure \eqref{e:GMCformal} exhibits a phase transition at 
\begin{equation*}
\gammac \eqdef \sqrt{\smash[b]{2d}} \;.
\end{equation*}
Following standard convention, we call the regime $\gamma < \gammac$ \emph{subcritical}, the borderline case $\gamma = \gammac$ \emph{critical}, and the range $\gamma > \gammac$ \emph{supercritical}. These three different regimes differ in the normalisation needed to obtain a non-trivial limiting measure, as well as in the qualitative features of the limiting measure (see also \cite{Lacoin} for a more detailed phase diagram 
including complex values of $\gamma$). 
Notably, in the supercritical regime, the limiting random measure is \emph{not} measurable with respect to the underlying field $\rmX$ and is 
purely \emph{atomic}. Before delving into more details and stating our main results, we introduce the family of log-correlated fields we will be working with.

\paragraph{The class of $\star$-scale invariant fields.} 
We consider log-correlated Gaussian fields $\rmX$ on $\R^d$ with short-range correlations, which naturally admit an approximation by a martingale $(\rmX_t)_{t \geq 0}$. Here, each $\rmX_t$ is a smooth 
Gaussian field, and for every $x \in \R^d$, the process $(\rmX_t(x))_{t \geq 0}$ is a standard Brownian motion. 
Moreover, these fields satisfy a certain type of scale-invariance called \emph{$\star$-scale invariance} \cite[Section~2.3]{RV_Review}. In a nutshell, it states that for any $s$, $t>0$, the fields $\rmX_t$ and $\rmX_{t+s} - \rmX_t$ are independent, and that the latter is equal in law to the field $\rmX_s$, spatially rescaled by a factor $e^t$.

The key ingredient in constructing a $\star$-scale invariant field is the so-called \emph{seed covariance function} $\frkK:\R^d \to \R$, which we assume satisfies the following properties:
\begin{enumerate}[start=1,label={{{(K\arabic*})}}]
\item \label{hp_K1} $\frkK$ is positive definite, radial, and $\frkK(0) = 1$.
\item \label{hp_K2} $\frkK \in \CC^{\infty}(\R^d)$ and it is supported in $B(0,1)$.
\end{enumerate}

\begin{remark}
The unit ball appearing in \ref{hp_K2} can of course be replaced by any compact subset of $\R^d$.
\end{remark}

We write $\bar\frkK:\R^d \to \R$ for the (unique) positive definite function such that the convolution of $\bar \frkK$ with itself equals $\frkK$. 

\begin{remark}\label{rem:strict}
As a consequence of \ref{hp_K1} \dash \ref{hp_K2}, the (inverse) Fourier transform of $\frkK$ given by 
$\smash{\hat \frkK(\omega) \eqdef (2 \pi)^{-d} \int_{\R^d} \frkK(x) e^{i \omega \cdot x} dx}$ is a 
probability measure with a smooth density and admitting moments of all orders. 
In particular, the Hessian $D^2 \frkK(0)$ at the
origin is strictly negative definite in the sense that there exists 
$\delta > 0$ such that, for any $v \in \R^d$, one has 
$\scal{v,D^2 \frkK(0) v} \le -\delta |v|^2$.
\end{remark}

\begin{definition}
\label{def:fields}
Let $\frkK : \R^d \to \R$ be a function satisfying assumptions \ref{hp_K1} \dash \ref{hp_K2}, and let $\bar\frkK:\R^d \to \R$ denote the (unique) positive definite function whose convolution with itself equals $\frkK$.
For $\xi$ a space-time white noise on $\R^d \times (0, \infty)$, we define the \emph{$\star$-scale invariant field with seed covariance $\frkK$} by  
\begin{equation}
\label{eq:field}
\rmX(\cdot) \eqdef \int_{\R^d} \int_0^{\infty} \bar\frkK\bigl(e^{r}(y - \cdot)\bigr) e^{\f{rd}{2}} \xi(dy, dr) \;.
\end{equation}
Furthermore, for $0 \leq s < t$, we let $\rmX_{s, t}$ be the field on $\R^d$ given by
\begin{equation}
\label{eq:fieldST}
\rmX_{s, t}(\cdot) \eqdef \int_{\R^d} \int_{s}^{t} \bar\frkK\bigl(e^{r}(y - \cdot)\bigr) e^{\f{rd}{2}} \xi(dy, dr) \;,	
\end{equation}
with the notational convention that $\rmX_{0, t} = \rmX_t$. 
\end{definition}
For $t \geq 0$, the fields $\rmX$ and $\rmX_t$ have the following covariance structures, for all $x$, $y \in \R^d$,
\begin{equation}
\label{eq:covs}
\E\bigl[\rmX(x) \rmX(y)\bigr] = \int_0^{\infty} \frkK\bigl(e^{r}(x-y)\bigr) dr \;, \qquad \E\bigl[\rmX_t(x) \rmX_t(y)\bigr] = \int_0^{t} \frkK\bigl(e^{r}(x-y)\bigr) dr \;.
\end{equation}
Clearly $\E[\rmX(x)^2] = \infty$, so $\rmX$ can only be realised as a random Schwartz distribution.
The collection of fields $(\rmX_t)_{t \geq 0}$ is called the \emph{martingale approximation} of $\rmX$. Indeed, 
by construction, $\smash{(\rmX_t)_{t \geq 0}}$ is a martingale for the filtration $\smash{(\CF_t)_{t \geq 0}}$ given by 
\begin{equation}
\label{eq:defSigmaFieldT}
\CF_t \eqdef \sigma\bigl(\rmX_{s} \, : \, s \in [0, t)\bigr) \;.
\end{equation}
Moreover, as $t \to \infty$ the field $\rmX_t$ converges almost surely to a limit $\rmX$ in the Sobolev space $\CH_{\loc}^{-\kappa}(\R^d)$ for any $\kappa > 0$. We refer to \cite[Proposition~4.1~(iv)]{Junnila_deco} for a proof of this fact.

\paragraph{The three phases of GMC measures.} 
As previously mentioned, GMC measures exhibit three distinct phases depending on the value of the parameter $\gamma > 0$ in \eqref{e:GMCformal}. Each phase is characterised by the specific form of renormalisation required to obtain a nontrivial limiting measure.

In the \emph{subcritical regime}, i.e., when $\gamma \in (0, \gammac)$, the sequence of random measures
\begin{equation}
\label{e:def_GMC}
\mu_{\gamma, t}(dx) = e^{\gamma \rmX_t(x) - \frac{\gamma^2}{2}t} dx
\end{equation}
converges weakly in probability to a limiting positive random measure $\mu_{\gamma}$ as $t \to \infty$ \cite{Kahane, RV_Review, Shamov, Berestycki_Elementary}, which is almost surely nontrivial. It is well known that $\mu_{\gamma}$ is almost surely non-atomic, but singular with respect to Lebesgue measure. Many further properties of these measures concerning, among others, moments and multifractal behaviour are known \cite{RV_Review, BerMulti}. An important feature of the measure $\mu_{\gamma}$ is that it is carried by the set of $\gamma$-thick points. Intuitively, a thick point is a point where the field takes an unusually large value: one where it is of the order of its variance instead of the order of its standard deviation. 

When $\gamma \geq \gammac$ a phase transition occurs and if one considers the sequence of measures defined in \eqref{e:def_GMC}, then for any compact subset $\rmA \subset \R^d$, it holds that $\smash{\mu_{\gamma, t}(\rmA) \to 0}$ in probability as $t \to \infty$. Therefore, in order to define a nontrivial limiting measure at the critical threshold $\gammac$, one needs to give the sequence of approximating measures an extra ``push'' in the right direction.
More precisely, in the \emph{critical regime}, i.e., when $\gamma = \gammac$, the sequence of random measures
\begin{equation}
\label{e:def_crtical_GMC_Der}
\mu_{\gammac, t}(dx) = \bigl(-\rmX_{t}(x) + \gammac t \bigr)e^{\gammac \rmX_t(x) - \frac{\gammac^2}{2} t} dx
\end{equation}
converges weakly in probability as $t \to \infty$ to a limiting positive random measure $\smash{\mu_{\gammac}}$ \cite{Critical_der, Criticial_KPZ, Powell_Critical}, which is non-atomic and has full support. The normalisation used in \eqref{e:def_crtical_GMC_Der} is known as the ``derivative normalisation'' since it can be obtained by evaluating at $\gamma = \gammac$ the derivative with respect to $\gamma$ of the expression on the right-hand side of \eqref{e:def_GMC}. 

\begin{remark}
There is also an equivalent deterministic normalisation, called ``Seneta--Heyde normalisation'', which produces the same limiting measure $\smash{\mu_{\gammac}}$ up to a deterministic multiplicative constant,
namely
\begin{equation}
\label{e:def_crtical_GMC_SH}
\mu_{\gammac, t}^{\mathrm{SH}}(dx) = \sqrt{t} e^{\gammac \rmX_t(x) - \frac{\gammac^2}{2}t} dx
\end{equation}
converges weakly in probability to $\smash{\sqrt{\smash[b]{2/\pi}} \, \mu_{\gammac}}$ as $t \to \infty$ \cite{Junnila_deco,Powell_Critical}. In the present article, unless otherwise stated, we always refer to the critical GMC as the one obtained using the derivative normalisation \eqref{e:def_crtical_GMC_Der}.
\end{remark}

 In the low temperature or \emph{supercritical regime}, i.e.\ $\gamma > \gammac$, the GMC exhibits atomic 
behaviour under suitable renormalisation, with the locations and masses of the atoms dictated by the extremal 
statistics or near maximum values of the fields $\rmX_t$. In the continuum, the only available mathematical 
result we are aware of is \cite{Glassy}, where Madaule, Rhodes, and Vargas show that for $\gamma > \gammac$, the sequence of random measures
\begin{equation}
\label{e:norm_super}
\mu_{\gamma, t}(dx) = t^{\f{3\gamma}{2 \sqrt{\smash[b]{2d}}}} e^{t(\gamma/\sqrt{\smash[b]{2}} - \sqrt{\smash[b]{d}})^2} e^{\gamma \rmX_t(x) - \frac{\gamma^2}{2}t} dx\;, 
\end{equation}
converges weakly in law as $t \to \infty$ to a nontrivial purely atomic limiting measure $\mu_\gamma$ whose law was previously conjectured in \cite{Critical_der} and can be characterised explicitly in terms of the law of the critical GMC $\smash{\mu_{\gammac}}$. In order to describe this limiting measure, it is convenient to introduce the following notation which will be used throughout the paper.

\begin{definition}
\label{def:PPP}
For $\gamma > \gammac$ and a non-negative, locally finite Borel measure $\nu$ on $\R^d$, we let $\eta_{\gamma}[\nu]$ be the Poisson point measure on $\R^d \times (0, \infty)$ with intensity measure given by $\smash{\nu(dx) \otimes z^{-(1+\gammac/\gamma)} dz}$. We also define the integrated atomic random measure with parameter $\gamma$ and spatial intensity $\nu$ as the random purely atomic measure $\CP_{\gamma}[\nu]$ on $\R^d$ given by
\begin{equation*}
\CP_{\gamma}[\nu](dx) \eqdef \int_{0}^{\infty} z \, \eta_{\gamma}[\nu](dx, dz) \;. 
\end{equation*}
\end{definition}

With this notation in place, we can state the main result of \cite{Glassy} more precisely.
\begin{theorem}[{\cite[Theorem~2.2]{Glassy}}]
\label{th:GlassyEx}
For any $\smash{\gamma > \gammac}$ there exists a constant $c_{\gamma} > 0$ such that the sequence of random measures $\smash{(\mu_{\gamma, t})_{t > 0}}$ defined in \eqref{e:norm_super} converges in law, with respect to the vague topology, to $\smash{c_{\gamma}\CP_{\gamma}[\mu_{\gammac}]}$ as $t \to \infty$, where $\mu_{\gammac}$ is the critical GMC.	
\end{theorem}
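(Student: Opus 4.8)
The plan is to reduce the statement to the convergence of Laplace functionals and then to the extremal-process structure of $\star$-scale invariant fields. Since $\CP_\gamma[\mu_{\gammac}]$ is purely atomic, it suffices to prove that, for every non-negative $f\in\CC_c(\R^d)$, one has $\E[e^{-\scal{\mu_{\gamma,t},f}}]\to\E[e^{-\scal{c_\gamma\CP_\gamma[\mu_{\gammac}],f}}]$ as $t\to\infty$, together with tightness of $(\mu_{\gamma,t})_{t>0}$ in the vague topology, so that the subsequential limits are genuine locally finite measures. For the right-hand side I would condition on $\mu_{\gammac}$ and use the Poisson structure of $\eta_\gamma[\mu_{\gammac}]$: its conditional Laplace transform equals $\exp(-\int_{\R^d}\int_0^\infty(1-e^{-zf(x)})\,z^{-(1+\gammac/\gamma)}\,dz\,\mu_{\gammac}(dx))$, and the substitution $u=zf(x)$ together with the identity $\int_0^\infty(1-e^{-u})u^{-(1+\alpha)}\,du=\alpha^{-1}\Gamma(1-\alpha)$, valid for $\alpha\in(0,1)$, reduces the problem to showing that
\begin{equation*}
\E\bigl[e^{-\scal{\mu_{\gamma,t},f}}\bigr]\;\longrightarrow\;\E\bigl[\exp(-\bar{c}_\gamma\,\scal{\mu_{\gammac},\,f^{\gammac/\gamma}})\bigr]
\end{equation*}
as $t\to\infty$, where $\bar{c}_\gamma\eqdef(\gamma/\gammac)\,\Gamma(1-\gammac/\gamma)\,c_\gamma^{\gammac/\gamma}$.

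Next I would localise at a fixed scale. Fix a large $s>0$ and partition a neighbourhood of $\supp f$ into cells $(Q_i)$ of side $\sim e^{-s}$; writing $\rmX_t=\rmX_s+\rmX_{s,t}$ and conditioning on $\CF_s$, the $\star$-scale invariance shows that the restriction of $\mu_{\gamma,t}$ to $Q_i$ factorises \dash up to the oscillation of the smooth field $\rmX_s$ over $Q_i$ (negligible for $s$ large) and up to boundary interactions between neighbouring cells (negligible because $\bar\frkK$ is compactly supported, so $\rmX_{s,t}$ has correlation range $\sim e^{-s}$) \dash as $e^{\gamma\rmX_s(x_i)-\frac{\gamma^2}{2}s}$ times an independent copy, one per cell, of a normalised supercritical GMC of ``depth'' $t-s$ over a unit box. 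The crux is the asymptotics of each single-cell measure as $t-s\to\infty$: its atoms are produced by the near-maximal points of the associated log-correlated field, and I would invoke the convergence of the extremal process of that field (equivalently, of the associated branching random walk) \dash the point measure $\sum_x\delta_{(x,\,\rmX_{s,t}(x)-m_{t-s})}$, with $m_u\eqdef\sqrt{2d}\,u-\frac{3}{2\sqrt{2d}}\log u$, converges to a decorated Poisson point process whose intensity is, in the height variable, proportional to $e^{-\sqrt{2d}\,h}\,dh$ and, in the spatial variable, proportional to the limit of the derivative martingale restricted to the cell. This yields that the normalised single-cell measure converges to $c_\gamma$ times a Poissonian atomic measure with spatial intensity the critical GMC of the cell, the constant $c_\gamma$ collecting the contribution of the ``decoration'' \dash the shape of the field near its maximum \dash which has a finite moment of order $\gammac/\gamma<1$.

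Assembling the cells, $\scal{\mu_{\gamma,t},f}$ becomes, to leading order, a sum over the points $(x_k,h_k)$ of a Poisson process with intensity proportional to $\mu_{\gammac}(dx)\otimes e^{-\sqrt{2d}\,h}\,dh$ of contributions $c_\gamma\,f(x_k)\,e^{\gamma h_k}\times(\text{decoration})$; since $\gamma>\gammac=\sqrt{2d}$ the sum $\sum_k e^{\gamma h_k}$ converges almost surely, and under the map $z=e^{\gamma h}$ the measure $e^{-\sqrt{2d}\,h}\,dh$ pushes forward to a constant multiple of $z^{-(1+\gammac/\gamma)}\,dz$, which identifies the limit with $c_\gamma\CP_\gamma[\mu_{\gammac}]$ and pins down $c_\gamma$. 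It then remains to interchange $t\to\infty$ with $s\to\infty$ and to establish tightness. Tightness in the vague topology follows from a uniform bound $\sup_t\P(\mu_{\gamma,t}(Q)>R)\to0$ as $R\to\infty$, itself a consequence of the uniform exponential right tail of the recentered maximum $\max_Q\rmX_t-m_t$; this is also where one verifies that the normalisation in \eqref{e:norm_super} is the correct one, being calibrated precisely so that $e^{\gamma m_t-\frac{\gamma^2}{2}t}$ multiplied by the Lebesgue volume $e^{-td}$ of a single near-maximal ``pixel'' is of order one. The error made in the localisation is controlled by noting that the small atoms ($z\le\eps$) carry a total mass with uniformly bounded expectation, since $\int_0^\eps z\cdot z^{-(1+\gammac/\gamma)}\,dz<\infty$ in the supercritical regime, so they may be discarded before, and restored after, sending $s\to\infty$.

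I expect the extremal-process input of the second paragraph to be the main obstacle: proving that the normalised supercritical GMC over a fixed box converges, as its depth tends to infinity, to the conjectured Poissonian limit, with the spatial intensity correctly identified as the derivative-martingale limit (the critical GMC) and with the correct decoration law producing the constant $c_\gamma$. This is essentially equivalent to the full-strength extremal-process convergence for $\star$-scale invariant fields together with the identification of the derivative martingale as the intensity governing the near-maxima; by comparison, the martingale decomposition and the moment estimates are relatively routine.
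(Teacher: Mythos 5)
Your sketch follows the same overall architecture as the original \cite{Glassy} argument and as this paper's own re-derivation of the result: the paper does not prove Theorem~\ref{th:GlassyEx} directly (it cites it) but recovers it as the special case $n=1$, $\rmW_{1,\cdot}=0$ of Theorem~\ref{th:stableConv} via Proposition~\ref{pr:laplaceJoint}, whose proof in Sections~\ref{sec:proofMainTh1}--\ref{sec:Joint} is precisely the two-scale split $\rmX_t = \rmX_s + \rmX_{s,t}$, cell decomposition with buffers, per-cell extremal asymptotics, and Poisson reassembly that you describe, and your $\Gamma$-identity for the Laplace functional of $\CP_\gamma[\nu]$ is Remark~\ref{rem:comp_Laplace} with $\beta(d,\gamma)$ as in \eqref{eq:defBetaDG}.

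Two points of precision. First, your cells should have side $\rmR e^{-s}$ with $\rmR$ large, separated by buffer strips of width $e^{-s}$ (Figure~\ref{fig:deco}); if the cells themselves have side $\sim e^{-s}$ — which is the correlation range of $\rmX_{s,t}$ — then adjacent cells are not independent and the product structure you exploit in the conditional Laplace functional does not hold, while the buffer's total contribution is only controlled because it is an $O(1/\rmR)$ fraction of the volume. Second, you treat the per-cell extremal-process convergence as a citable black box; the paper does not invoke a pre-packaged decorated-PPP limit, but instead proves the needed per-cell statement (Proposition~\ref{pr:joint}) by combining a renewal-type theorem of Madaule with a precise description of the cluster process near a mesoscopic maximum — the ``shape field'' $\Psi$ of Theorems~\ref{th:cluster}--\ref{th:clusterProb} — which is what pins down the constant $c_\gamma$ as $a_\star^{\gamma/\sqrt{2d}}\,\E[\bfZ_\gamma^{\sqrt{2d}/\gamma}]^{\gamma/\sqrt{2d}}$. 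You correctly flag this as the crux, and it is indeed where essentially all of the work in the paper lies.
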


%%%%%%%%%%%%%%%%%%%%%%%%%%%%%%%%%%%%%%%%%%%%%%
\subsection{Overview of the main results}
\label{sub:ov_main_results}
Building on Theorem~\ref{th:GlassyEx}, the main goal of the present article is to gain a deeper understanding of the convergence behaviour of supercritical GMC measures as $t \to \infty$. A natural approach is as follows: instead of simply taking the limit as $t \to\infty$ of $\mu_{\gamma, t}$ defined in \eqref{e:norm_super}, we consider the measure-valued stochastic processes $(\mu_{\gamma, t + s})_{s \geq 0}$ as $t \to \infty$. As we will see below, this procedure yields a limiting stochastic process $(\nu_{\gamma, s})_{s \geq 0}$, and our goal is to investigate its nature\footnote{If the convergence in Theorem~\ref{th:GlassyEx} was in probability, then the process $\nu_{\gamma, s}$ would necessarily be constant in $s$.}. 

\begin{remark}
A helpful way to interpret the role of the process $(\nu_{\gamma, s})_{s \geq 0}$ is through an analogy with the CLT.
Consider a collection $(\rmX_n)_{n \in \N}$ of i.i.d.\ centred random variables with unit variance, and let $\smash{\rmS_n \eqdef \sum_{k = 1}^{n} \rmX_k}$. The CLT tells us that $n^{-1/2} \rmS_n$ converges in law to a standard normal random variable as $n \to \infty$.
 On the linear time scale, the normalised sums quickly settle into a ``stable'' distribution. However, if we switch to a logarithmic time scale, we can capture how the marginals at different times evolve together. More precisely, for every $t \geq 0$, define $\smash{\rmY_t \eqdef \lfloor e^t \rfloor^{-1/2} \rmS_{\lfloor e^t \rfloor}}$. Then, the process $\smash{(\rmY_{t+s})_{s \geq 0}}$ converges in the finite-dimensional sense to a stationary Ornstein--Uhlenbeck process $\smash{(\rmU_s)_{s \geq 0}}$ as $t \to \infty$. Intuitively,  this follows since, for fixed $s \geq 0$, it holds that 
\begin{equation*}
\rmY_{t+s} = \frac{\rmS_{\lfloor e^t \rfloor}}{\sqrt{\lfloor e^{t+s} \rfloor}} + \frac{\rmS_{\lfloor e^{t+s} \rfloor} - \rmS_{\lfloor e^t \rfloor}}{\sqrt{\lfloor e^{t+s} \rfloor}}	\approx e^{-s/2} \rmY_t + (1-e^{-s})^{1/2} \rmY'_t \;,
\end{equation*}
where $\rmY'_t$ is an independent copy of $\rmY_t$. In particular, by taking the limit as $t \to \infty$ in the display above, we obtain that for all $s \geq 0$,
\begin{equation*}
\rmU_s = e^{-s/2} \rmU_0 + (1-e^{-s})^{1/2} \rmZ \;,
\end{equation*}
where $\rmU_0$ and $\rmZ$ are independent standard normal random variables. This is precisely the defining property of a stationary Ornstein--Uhlenbeck process.
\end{remark}

Returning to our setting, we show in Corollary~\ref{co:processMeasures} that there exists an $\R$-valued process $(\bfW_{\gamma, s})_{s \geq 0}$ with $\bfW_{\gamma, 0}=0$, which we refer to as the \emph{weight process}, such that, for any $s \geq 0$, the measure $\nu_{\gamma, s}$ can be expressed as follows
\begin{equation*}
	\nu_{\gamma, s} = \sum_{j \in \N} e^{\bfW_{\gamma, s, j}} w_j \delta_{x_j} \;,
\end{equation*}
where $(x_j, w_j)_{j \in \N}$ enumerates (in an arbitrary manner) the atoms of the Poisson point measure $\eta_{\gamma}[\mu_{\gammac}]$ as introduced in Definition~\ref{def:PPP}, and the collection $(\bfW_{\gamma, s, j})_{j \in \N}$ consists of i.i.d.\ copies of $\bfW_{\gamma, s}$. We emphasise that the spatial locations of the point masses are fixed once and for all and do not change as $s$ varies. The only aspect that evolves in the process $\nu_{\gamma, s}$ are the weights of the point masses, whose dynamics are governed by the \emph{weight process} as described above. Moreover, the mass of each atom evolves independently 
of all the others.

To prove this result, we adopt a general framework where we consider the \emph{joint} limit as $t \to \infty$ of the family of measures $(\mu_{\gamma, t, i})_{i \in [n]}$ defined as follows
\begin{equation*}
\mu_{\gamma, t, i}(dx) \eqdef t^{\f{3\gamma}{2 \sqrt{\smash[b]{2d}}}} e^{t(\gamma/\sqrt{\smash[b]{2}} - \sqrt{\smash[b]{d}})^2} e^{\gamma (\rmX_t(x) + \rmW_{i, t}(x)) - \frac{\gamma^2}{2}t} dx = e^{\gamma \rmW_{i, t}(x)} \mu_{\gamma, t}(dx)\;, 
\end{equation*}
where, the collection of processes $(\rmW_{i, \cdot})_{i \in [n]}$\footnote{Here, and in what follows, we write $[n] = \{1, \ldots, n\}$ and $[n]_0 = \{0, 1, \ldots, n\}$.} satisfies some suitable assumptions (see Assumption~\ref{as:FieldsW} for details). Specifically, in Theorem~\ref{th:stableConv}, we analyse the joint convergence of these types of measures.

\begin{remark}
It is worth noting that for $s \geq 0$, the convergence of the collection of measures $(\mu_{\gamma, t+s})_{t \geq 0}$ naturally fits within this general framework. This follows directly from the decomposition $\rmX_{t+s} = \rmX_t + \rmX_{t, t+s}$, together with the fact that the field $\rmX_{t, t+s}$ satisfies our assumptions.
\end{remark}

\begin{remark}
\label{rm:companionPaper}
One motivation for working within this general framework is that it facilitates our companion paper \cite{BHUniq}, where we establish the uniqueness of the supercritical GMC measure. Specifically, we show that if $\rmX_{\ceps}$ denotes the convolution approximation of a $\star$-scale invariant field at level $\eps$, then, roughly speaking, $\rmX_{(e^{-t})} \approx \rmX_{t} + \rmW(e^t \cdot)$ for a smooth Gaussian field $\rmW$ that is independent of $\rmX$ (see \cite[Proposition~B]{BHUniq} for details).
\end{remark}

The proof of convergence of the process $(\mu_{\gamma, t+s})_{s \geq 0}$ as $t \to \infty$, in addition to relying on technical results from \cite{Madaule_Max}, requires a fine understanding of the structure of $\star$-scale invariant fields around their extremal points. 
Specifically, for $t \gg 0$, let $\rmX_t$ be the martingale approximation of a $\star$-scale invariant field $\rmX$, and condition it on two events: first, that its value at the origin is comparable to the maximum of $\rmX_t$ within an order-one region; second, that the origin is a ``mesoscopic maximum'' within this region, i.e., a local maximum that is global within a ball of radius much smaller than $1$, but much larger 
than $e^{-t}$.

Then, after shifting the coordinate system at this maximum (so that the value at the origin becomes $0$) and after a suitable rescaling, we want to study the ``shape of the field'' in that region. This is the content of Theorem~\ref{th:cluster}, which describes the law of the field governing the shape of the field $\rmX_t$ around a mesoscopic maximum.

We emphasise that in the case of dimension $d = 2$, a similar investigation has been conducted for the discrete Gaussian free field (DGFF) in \cite{BiskupLouidor}. However, their results rely heavily on the fact that the DGFF is defined on the discrete grid $\Z^2$. 
There, one can condition a field on the origin being the maximum in a bounded region and then take the limit as the size of the region tends to infinity in order to obtain
the ``cluster process'' which describes the shape of the field around a mesoscopic
maximum. In the continuum however, conditioning the origin to be a maximum even within a bounded region becomes problematic, as such a conditioning is already degenerate. 

To address this issue, we introduce a ``softer'' form of conditioning: we fix an arbitrary 
threshold $\lambda > 0$ and, instead of conditioning on the origin being a mesoscopic 
maximum, we condition on the value at the origin being at least as large as the nearest 
mesoscopic maximum minus $\lambda$. This conditioning is non-degenerate within a bounded region as long as $\lambda > 0$. Theorem~\ref{th:cluster} proves that we can then take the limit as the region size increases to infinity, thus yielding a limiting field $\smash{\tilde\Upsilon_{\! \lambda}}$.
Finally, we show the existence of a unique random field $\Psi$ on $\R^d$ which is $0$ at the origin, takes only negative values, is independent of the arbitrary threshold $\lambda$, and such that $\smash{\tilde \Upsilon_{\! \lambda}}$ can be expressed as a randomly shifted version of $\Psi$ (under a suitably tilted measure, see \eqref{e:tilt3} for details). In this sense, we can consider $\Psi$ as the canonical field describing the shape of the field $\rmX$ from the perspective of a mesoscopic maximum.

\begin{acknowledgements}
{\small This work grew out of discussions between MH and Christophe Garban at the 2023 SwissMAP Workshop in Mathematical Physics. 
We are also grateful to Michael Aizenman and R\'emi Rhodes for interesting discussions on this topic.
Both authors were supported by the Royal Society through MH's Research Professorship RP\textbackslash R1\textbackslash 191065.}
\end{acknowledgements}

%%%%%%%%%%%%%%%%%%%%%%%%%%%%%%%%%%%%%%%%%%%%%%
%%%%%%%%%%%%%%%%%%%%%%%%%%%%%%%%%%%%%%%%%%%%%%
\section{Main results}
\label{sec:main_results}

We now provide a description of our main results. First, in Section~\ref{sub:main_local}, we discuss the shape of a $\star$-scale invariant field around a mesoscopic maximum. Following that, in Section~\ref{sub:main_conv}, we state the result concerning the convergence of the supercritical GMC. In Section~\ref{sub:measureValuedPr}, we present the result regarding the convergence of the measure-valued processes $(\mu_{\gamma, t + s})_{s \geq 0}$ as $t \to \infty$. 

%%%%%%%%%%%%%%%%%%%%%%%%%%%%%%%%%%%%%%%%%%%%%%
\subsection{Local structure of extremal points}
\label{sub:main_local}
%\federico{In what follows, I've introduced two scales, $0 \ll b \ll t$, so that hopefully the discussion is clearer.}
We aim to investigate the local structure around points within the domain where the field attains unusually large values, comparable to its maximum. The strong correlation with nearby points suggests that each peak in the field comes with a cluster of high values. These clusters of high-value points are generally well-separated from each other. By selecting one of these clusters and identifying as reference point the maximum of the field inside the cluster, our goal is to describe the ``shape'' of the field in the vicinity of this reference point.

\begin{remark}
We will see that the behaviour of $\rmX_t$ near a mesoscopic maximum is dominated by its suprema
over spherical shells. These in turn behave like a Brownian motion, with time given by the 
logarithm of the radius of the shell. It is useful to keep this analogy in mind when parsing the 
results in this section.
\end{remark}

In order to make this heuristic precise, we need to introduce some notation. 
\begin{definition}
\label{def:harmFcts}
For $t \in (0, \infty)$, we introduce the \emph{recentering constant} $\frkm_t$ by letting
\begin{equation}
\label{eq:recentering}
\frkm_t \eqdef \sqrt{\smash[b]{2d}} t - \frac{3}{2 \sqrt{\smash[b]{2d}}} \log t \;.
\end{equation}
Furthermore, we define functions $\frkh_t :\R^d \to \R$ and $\frka_{t}: \R^d \to \R$ by
\begin{equation}
\label{eq:frkgb}
\frkh_t(x) \eqdef \frac{1}{t} \int_0^t \frkK(e^{-s} x) ds \;, \qquad \frka_{t}(x) \eqdef \int_0^{t} \bigl(1-\frkK(e^{-s}x)\bigr) ds \;,
\end{equation}
with the definition of $\frka_t$ extended also to the case $t = \infty$. 
\end{definition}

In \cite{Madaule_Max}, Madaule proved that there exists a constant $c > 0$ such that the following convergence in law holds as $t \to \infty$,
\begin{equation*}
\sup_{x \in B(0, 1)} \rmX_t(x) - \frkm_t - c \Rightarrow \rmG + \log \mu_{\gammac}\bigl(B(0, 1)\bigr) \;,
\end{equation*}
where $\rmG$ is an independent random variable with standard Gumbel distribution, and $\mu_{\gammac}$ is the critical GMC.
Hence, if we aim to describe the shape of the field around a point where the value of the field is comparable to its maximum, it seems natural to condition the field $\rmX_t$ on achieving the value $\frkm_t + z$ at the origin, for some fixed $z \in \R$, while simultaneously requiring that the origin is a mesoscopic maximum. We first zoom in around the origin by introducing the rescaled field
\begin{equation*}
\bar\rmX_t(\cdot) \eqdef \rmX_t(e^{-t} \cdot) \;. 	
\end{equation*}
We now introduce several Gaussian fields that will play an important role in our analysis, and whose significance will become clear in the following paragraphs.
\begin{definition}
\label{def:Fields}
For $t \in \R^{+} \cup \{\infty\}$, we let $\Phi_t$ denote the centred Gaussian field on $\R^d$ with the same law as the field $\bar\rmX_t - \bar\rmX_t(0)$. In particular, for all $x$, $y \in \R^d$,
\begin{equation*}
	\E\bigl[\Phi_t(x) \Phi_t(y)\bigr] = \frka_{t}(x) + \frka_{t}(y) - \frka_{t}(x-y) \;.
\end{equation*}
Moreover, for $t \in \R^{+}  \cup \{\infty\}$, we let $\Upsilon_{\! t}$ be the Gaussian field on $\R^d$ given by
\begin{equation*}
\Upsilon_{\! t}(\cdot)  \eqdef \Phi_{t}(\cdot) - \sqrt{\smash[b]{2d}} \frka_t(\cdot) \;.
\end{equation*}
\end{definition}

\begin{remark}
As one can easily check, the covariance of the field $\Phi_{\infty}$ resembles very much the covariance structure of the DGFF on $\Z^2$ pinned to zero at zero (see \cite[Equation~(2.7)]{BiskupLouidor}). Indeed, in this setting, the covariance takes the same form as for the field $\Phi_{\infty}$ with $\frka_{\infty} : \Z^2 \to \R$ given by the potential kernel of the simple symmetric random walk started from zero (see \cite[Equation~(2.8)]{BiskupLouidor}). 
\end{remark}

Now, going back to our previous discussion, a straightforward calculation shows that the field $\bar \rmX_t$ conditioned to take the value $\frkm_t + z$ at the origin has the same law as the shifted field
\begin{equation*}
\bar \rmX_t(\cdot) - \frkh_t(\cdot) \bigl(\bar \rmX_t(0) - (\frkm_t + z)\bigr)\;.
\end{equation*}
In particular, for $0 \ll b \ll t$, and for every $x$, $y \in B(0, e^b)$, we note that, for $t \gg 0$ large enough, it holds that
\begin{equation*}
\E\bigl[\bigl(\bar \rmX_t(x) - \frkh_t(x) \bar \rmX_t(0)\bigr) \bigl(\bar \rmX_t(y) - \frkh_t(y) \bar \rmX_t(0)\bigr)\bigr] \approx \frka_t(x) + \frka_t(y) - \frka_t(x-y) \;,
\end{equation*}
and also
\begin{equation*}
(\frkm_t + z) - (\frkm_t + z) \frkh_t(x) \approx \sqrt{\smash[b]{2d}} \frka_t(x) \;.
\end{equation*}
Therefore, the preceding computations imply that, as $t \to \infty$, the following convergence in law holds when restricted to the ball $B(0, e^b)$:
\begin{equation*}
 \bigl(\bar \rmX_t(\cdot) - \frkh_t(\cdot) \bigl(\bar \rmX_t(0) - (\frkm_t + z)\bigr)\bigr) - (\frkm_t + z) \Rightarrow  \Phi_{\infty}(\cdot) - \sqrt{\smash[b]{2d}}\frka_{\infty}(\cdot) = \Upsilon_{\! \infty}(\cdot) \;.
\end{equation*}

These considerations suggest that the shape of the field $\rmX$ near a mesoscopic maximum is given by the limit in law, as $b \to \infty$, of the field $\Upsilon_{\! \infty}$ conditioned to be non-positive on the ball $B(0, e^b)$.

Before proving that this limit indeed exists, we introduce some additional notation. For $k \geq 0$, we set
\begin{equation}
\label{eq:defCRk}
\CR_k \eqdef \bigl\{\bfF : \CC(\R^d) \to \R \, : \, \bfF(\phi) = \bfF(\psi) \;\text{whenever}\; \phi|_{B(0, k)} = \psi|_{B(0, k)}\bigr\} \;.
\end{equation}
In other words, $\CR_k$ is the set of (measurable) mappings from $\CC(\R^d)$ to $\R$ that depend on the values of the input function only inside $B(0, k)$. Furthermore, with a slight abuse of the usual notation, we define
\begin{equation}
\label{eq:defCCbloc}
\CC^b_{\loc}(\CC(\R^d)) \eqdef \bigcup_{k \geq 0} \CR_k \cap \CC^b(\CC(\R^d)) \;,
\end{equation}
where $\CC^b$ denotes continuous bounded functions.
We can now state our first main result, where we write $\M_{0, b}(f)$ as a shorthand for $\smash{\sup_{\abs{x} \leq e^b} f(x)}$.
\begin{theoremA}
\label{th:cluster}
For each $\lambda > 0$, there exists a continuous random field $\tilde{\Upsilon}_{\!\lambda}$ on $\R^d$ such that, for any function $\bfF \in \CC^b_{\loc}(\CC(\R^d))$, one has
\begin{equation}
\label{eq:weakLimit}
\E\bigl[\bfF\bigl(\tilde{\Upsilon}_{\!\lambda} \bigr)\bigr] 
=
\lim_{b \to \infty} \E\big[\bfF\bigl(\Upsilon_{\! b}\bigr) \, \big| \, \M_{0, b}(\Upsilon_{\! b}) \leq \lambda \bigr] 
=
\lim_{b \to \infty} \E\big[\bfF\bigl(\Upsilon_{\! \infty}\bigr) \, \big| \, \M_{0, b}(\Upsilon_{\! \infty}) \leq \lambda \bigr] 
 \;.
\end{equation}
\end{theoremA}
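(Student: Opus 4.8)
The plan is to build $\tilde\Upsilon_{\!\lambda}$ directly as a weak limit, exploiting the Gaussian structure of $\Upsilon_{\!b}$ and $\Upsilon_{\!\infty}$ together with a Girsanov/change-of-measure reformulation of the conditioning. The first step is to rewrite the conditional expectations on the right of \eqref{eq:weakLimit} in a tractable form. Recall $\Upsilon_{\!b} = \Phi_b - \sqrt{2d}\,\frka_b$, where $\Phi_b$ is centred Gaussian with covariance $\frka_b(x)+\frka_b(y)-\frka_b(x-y)$; in particular $\Phi_b$ has stationary increments and $\Phi_b(0)=0$ a.s., so that $\Upsilon_{\!b}(0)=0$ and the event $\{\M_{0,b}(\Upsilon_{\!b})\le\lambda\}$ is the event that the recentred field stays below $\lambda$ on the ball of radius $e^b$. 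Since $\frka_\infty$ behaves like $\sqrt{2d}$ times the ``Brownian'' radial profile $\log|x|$ (this is exactly Remark after Definition~\ref{def:Fields}, and the strict negativity of $D^2\frkK(0)$ in Remark~\ref{rem:strict} guarantees the quadratic behaviour near the origin), the field $\Upsilon_{\!\infty}$ has a negative radial drift $-2d\,\frka_\infty$ that eventually dominates, so $\P(\M_{0,b}(\Upsilon_{\!\infty})\le\lambda)>0$ for every $b$ and this probability converges to a strictly positive limit as $b\to\infty$. The monotonicity $\{\M_{0,b'}\le\lambda\}\subseteq\{\M_{0,b}\le\lambda\}$ for $b'>b$ makes the family of conditional laws a decreasing family of events, which is the structural fact driving convergence.

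The second step is the comparison between the $\Upsilon_{\!b}$ column and the $\Upsilon_{\!\infty}$ column. For fixed local test functional $\bfF\in\CR_k$, the law of $\Upsilon_{\!b}|_{B(0,k)}$ differs from that of $\Upsilon_{\!\infty}|_{B(0,k)}$ only through $\frka_b$ versus $\frka_\infty$ on scales up to $k$, and $\frka_b(x)=\frka_\infty(x)$ identically once $b$ is large enough that $e^{-b}x\in B(0,1)^c$ fails to matter — more precisely $\frka_b(x)=\int_0^b(1-\frkK(e^{-s}x))\,ds$ and $\frkK$ is supported in $B(0,1)$, so for $|x|\le k$ and $b>\log k$ one has $\frkK(e^{-s}x)=0$ for $s>\log k$, hence $\frka_b(x)$ and $\frka_\infty(x)$ agree. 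Thus for $b$ large the two processes have literally the same restriction to $B(0,k)$; the only difference is the conditioning event, which lives on the annulus between radius $e^k$ and $e^b$. I would therefore show that conditioning $\Upsilon_{\!b}$ versus $\Upsilon_{\!\infty}$ on $\{\M_{0,b}\le\lambda\}$ produces asymptotically the same local law, by a Markov-type decomposition: split $\Upsilon_{\!\bullet}$ into the harmonic-average part determined by the sphere of radius $e^b$ plus an independent fluctuation, observe that $\Upsilon_{\!b}$ and $\Upsilon_{\!\infty}$ only differ in the contribution of scales beyond $b$, and bound the effect of that tail on the conditioned field near the origin. This reduces the two-column statement to a single convergence statement.

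The third and main step is to prove existence of the limit $\lim_{b\to\infty}\E[\bfF(\Upsilon_{\!\infty})\mid\M_{0,b}(\Upsilon_{\!\infty})\le\lambda]$. Write $Q_b$ for the conditional law on $\CC(B(0,k))$. Because the events are nested, $Q_b$ is an $h$-transform–type reweighting of $Q_{b'}$ for $b'<b$; the key is a uniform estimate showing the Radon–Nikodym densities stabilise. Concretely, I would condition on the field restricted to a large but fixed ball $B(0,R)$ (for $R\gg k$ fixed, $b\to\infty$): by the radial-Brownian-motion heuristic, the conditional probability that $\Upsilon_{\!\infty}$ stays $\le\lambda$ on $B(0,e^b)\setminus B(0,R)$, given its boundary values on $\partial B(0,R)$, is governed by a one-dimensional barrier/ballot problem for a Brownian motion with drift $-\sqrt{2d}$ started near $-\sqrt{2d}\log R + (\text{fluctuation})$; this probability converges as $b\to\infty$ to a limiting functional $g_R$ of the boundary data (finiteness and positivity of $g_R$ follow from the drift being strictly toward $-\infty$), and one obtains $\E[\bfF(\Upsilon_{\!\infty})\mid\M_{0,b}\le\lambda] \to \E[\bfF(\Upsilon_{\!\infty})\, g_R(\Upsilon_{\!\infty}|_{\partial B(0,R)})\,\mathbbm{1}\{\M_{0,R}\le\lambda\}]/\E[g_R(\cdots)\mathbbm{1}\{\cdots\}]$, with the right-hand side independent of $R$ by consistency. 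This limiting tilted law is the law of $\tilde\Upsilon_{\!\lambda}$; continuity of the field is inherited from continuity of $\Upsilon_{\!\infty}$ (Kolmogorov, using the $\frka_\infty$ modulus), and Kolmogorov extension glues the laws on $B(0,k)$ over $k$ into a field on all of $\R^d$. The main obstacle is precisely this barrier estimate: one needs a \emph{uniform-in-$b$} lower bound on $\P(\M_{0,b}(\Upsilon_{\!\infty})\le\lambda\mid\text{boundary data})$ and convergence of the reweighting, which is where the precise Gaussian structure (stationary increments of $\Phi_\infty$, the exact quadratic-at-the-origin / logarithmic-at-infinity behaviour of $\frka_\infty$) and technical input from \cite{Madaule_Max} on ballot-type problems for log-correlated fields enter; the non-degeneracy of $D^2\frkK(0)$ from Remark~\ref{rem:strict} is what prevents the field from being too flat near the origin and keeps all these conditional probabilities strictly positive.
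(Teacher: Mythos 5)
Your Step~1 contains a conceptual error that the rest of the argument inherits. You assert that $\P\bigl(\M_{0,b}(\Upsilon_{\!\infty})\leq\lambda\bigr)$ is bounded below and converges to a strictly positive limit as $b\to\infty$, on the grounds that the radial drift $-\sqrt{2d}\,\frka_\infty(x)\approx -\sqrt{2d}\log|x|$ eventually dominates. That is the correct heuristic for the value of $\Upsilon_{\!\infty}$ along a single ray, but the event $\{\M_{0,b}(\Upsilon_{\!\infty})\leq\lambda\}$ involves the supremum over the whole ball $\B_b$, and the supremum of a log-correlated field over an annulus of radius $e^j$ is of order $\frkm_j\approx\sqrt{2d}\,j$, which \emph{cancels} the radial drift. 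After this cancellation (made precise in Lemma~\ref{lm:approxBrownianBridge}) the condition becomes a ballot-type constraint for a \emph{driftless} Brownian motion to stay above a polylogarithmically receding curve, and its probability decays like $b^{-1/2}$ — this is precisely the content of Theorem~\ref{th:clusterProb}, and the paper flags explicitly that the conditioning is singular. Because you treat the conditioning as non-degenerate, your Step~3 goes astray: the "limiting functional $g_R$ of the boundary data" you propose would be identically zero, and your ratio formula would be $0/0$.

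What the paper actually needs, and what your proposal is missing, is a quantitatively matched pair of asymptotics: an expansion of $\E\bigl[\bfF(\Upsilon_{\!\infty})\,\mathbbm{1}\{\M_{0,b}(\Upsilon_{\!\infty})\leq\lambda\}\bigr]$ of the form $\H_{l,\lambda}(\bfF)\,\alpha\,b^{-1/2}+o(b^{-1/2})$ (Proposition~\ref{pr:mainAsy}), together with the same expansion for $\bfF=1$, so that the conditional expectations converge to $\H_{l,\lambda}(\bfF)/\H_{l,\lambda}(1)$. Getting the constant $\H_{l,\lambda}$ out requires the decomposition of $\Phi_b$ into a driving one-dimensional Brownian motion plus an independent field (the representation \eqref{eq:Phi}), a control variable $\rmK_b$ to localise where the polylog corrections are small, and the entropic-repulsion and ballot estimates of Appendix~\ref{ap:BBEstimates} to show the dominant contribution comes from paths with $B_l\in[l^{1/6},l^{5/6}]$. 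Your idea of conditioning on the field on $\partial B(0,R)$ and then tilting by a survival-probability functional is close in spirit to the $h$-transform that ultimately appears, and your Step~2 comparison of $\Upsilon_{\!b}$ and $\Upsilon_{\!\infty}$ on $\B_k$ is the right first move (though $\frka_b$ and $\frka_\infty$ do not literally coincide on $\B_k$ for $b>k$; they differ by $\int_b^\infty(1-\frkK(e^{-s}x))\,ds$, which is small but nonzero). But without recognising the $b^{-1/2}$ decay, and without the matched-asymptotics machinery to extract a well-defined limit from a degenerate conditioning, the argument cannot be completed.
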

We emphasise that the existence of the weak limits in \eqref{eq:weakLimit} is part of the statement.
We also observe that the conditioning on the right-hand side of \eqref{eq:weakLimit} is singular as $b \to \infty$. More precisely, by letting
\begin{equation*}
\alpha \eqdef \sqrt{\smash[b]{2/\pi}} \;,
\end{equation*}
we have the following result.
\begin{theoremA}
\label{th:clusterProb}
For each $\lambda > 0$, there exists a constant $c_{\star, \lambda} > 0 $, such that
\begin{equation*}
\lim_{b\to \infty} \sqrt{b} \, \P\bigl(\M_{0, b}(\Upsilon_{\! b})  \leq \lambda \bigr)
= \lim_{b\to \infty} \sqrt{b} \, \P\bigl(\M_{0, b}(\Upsilon_{\! \infty})  \leq \lambda \bigr)  
= \alpha \, c_{\star, \lambda} \;. 
\end{equation*}
\end{theoremA}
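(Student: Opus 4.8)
The plan is to peel off the \emph{radial backbone} of $\Upsilon_{\! \infty}$, reducing the event $\{\M_{0,b}(\Upsilon_{\! \infty})\le\lambda\}$ to a one-dimensional barrier (``ballot'') estimate for a Brownian motion, and then to recognise $\alpha=\sqrt{2/\pi}$ as the universal Brownian first-passage constant and $c_{\star,\lambda}$ as the associated ``effective distance to the barrier''. Using the integral representation in Definition~\ref{def:fields} one checks that $\Phi_\infty$ admits a decomposition $\Phi_\infty(x)=-W_{\log|x|}+\Theta(x)+E(x)$ in which $(W_t)_{t\ge 0}$ is a standard Brownian motion depending on $x$ only through $|x|$ (the backbone; that the radial part behaves like a Brownian motion with time playing the role of the logarithm of distance is precisely the heuristic recalled in Section~\ref{sub:main_local}), $\Theta$ carries the transverse fluctuations at spatial scale $|x|$, being shell by shell a log-correlated field of depth $\tau$ in dimension $d$ on $\{e^{\tau}\le|x|<e^{\tau+1}\}$ with $\Theta(0)=0$, and $E$ is a remainder of bounded variance; moreover $W$, $\Theta$, $E$ are mutually ``almost independent'', their cross-covariances being $O(1)$ and concentrated on matching scales. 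Hence $\Upsilon_{\! \infty}(x)=-W_{\log|x|}+\bigl(\Theta(x)-\sqrt{2d}\,\frka_\infty(x)\bigr)+E(x)$, and since $\frka_\infty(x)=\log|x|+O(1)$, the recentering \eqref{eq:recentering} shows that on the shell at scale $e^{\tau}$ the bracketed tilted field has maximum $-\tfrac{3}{2\sqrt{2d}}\log\tau+O(1)$, the $O(1)$ being a Madaule-type fluctuation. Writing $\Xi_\tau$ for the supremum of $\Theta(x)-\sqrt{2d}\,\frka_\infty(x)+E(x)$ over that shell, this gives $\M_{0,b}(\Upsilon_{\! \infty})\approx\sup_{\tau\in[0,b]}\bigl(-W_\tau+\Xi_\tau\bigr)$. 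The crucial point is that the deterministic drifts have cancelled: the backbone is genuinely diffusive, so this running supremum is of order $\sqrt b$, which is what produces the $b^{-1/2}$ decay.

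The second step is a Bramson-type barrier truncation. As in the standard treatment of maxima of log-correlated fields, I would show that, at negligible cost, one may impose on the event $\{\M_{0,b}(\Upsilon_{\! \infty})\le\lambda\}$ the extra requirement that $\tau\mapsto-W_\tau+\Xi_\tau$ stay below a slowly bent barrier of the form $\lambda+\tfrac{3}{2\sqrt{2d}}\log\bigl(\tau\wedge(b-\tau)\bigr)+O(1)$ at all intermediate scales, together with the absence of any large upward excursion of $\Theta$ on any shell. After this truncation the event becomes measurable, up to errors vanishing as the cut-off scale grows, with respect to the ``state at scale $c$'', for any fixed $1\ll c\ll b$; and \dash this is the heart of the matter \dash only a single real functional $H_c$ of the field restricted to $B(0,e^c)$ survives in the $c\to\infty$ limit, namely the backbone position $W_c$ together with a ``height'' of the transverse cluster on $B(0,e^c)$ that plays the role of $\log\mu_{\gammac}\bigl(B(0,1)\bigr)$ in Madaule's theorem. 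The conditional law of the field on the annulus $\{e^{c}\le|x|\le e^{b}\}$ given $\CF_c$ then depends on $\CF_c$ only through $H_c$, up to errors that vanish as $c\to\infty$ uniformly in $b$.

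Granting this reduction, $\sqrt b\,\P\bigl(\M_{0,b}(\Upsilon_{\! \infty})\le\lambda\bigr)$ becomes $\sqrt b$ times the probability that a Brownian motion started at $H_c$ stays below the bent barrier up to time $b$, averaged over the law of $H_c$ and weighted by a convergent ``transverse survival'' factor. By the classical first-passage asymptotics for Brownian motion,
\[
\sqrt b\;\P\bigl(\,B_t\le\lambda+h\ \text{ for all }t\le b \,\bigm|\, B_0=h'\,\bigr)\ \xrightarrow[\,b\to\infty\,]{}\ \sqrt{2/\pi}\,\bigl(\lambda+h-h'\bigr)\,,
\]
so the limit exists and equals $\alpha\,c_{\star,\lambda}$, with $c_{\star,\lambda}$ the expectation of $\lambda$ minus the limiting shifted height, weighted by the survival factor. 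The appearance of $\alpha=\sqrt{2/\pi}$ is no accident: it is exactly the Seneta--Heyde constant of \eqref{e:def_crtical_GMC_SH}, both quantities arising from the Gaussian local limit theorem. Positivity and finiteness of $c_{\star,\lambda}$ follow from the integrability estimates of \cite{Madaule_Max}, and $c_{\star,\lambda}>0$ is in any case forced by Theorem~\ref{th:cluster}, whose limiting conditional law is a bona fide probability measure. Existence of the limit may alternatively be obtained by a renewal argument: decomposing $\{\M_{0,b}(\Upsilon_{\! \infty})\le\lambda\}$ according to the last scale at which the backbone attains a new running maximum exhibits $\sqrt b\,\P(\cdots)$ as a Cauchy sequence in $b$.

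The heuristic decomposition above is of course not exact \dash $W$ is only asymptotically Brownian and not exactly independent of $\Theta$ \dash so the two matching inequalities must be obtained by Gaussian comparison (Slepian's and Kahane's inequalities) together with a \emph{truncated} moment method: restricting to the barrier event, a first-moment estimate over near-maximal points yields the upper bound on $\sqrt b\,\P(\cdots)$, while a modified second moment in the spirit of Bramson--Ding--Zeitouni \dash again restricted to the barrier event, so that the covariance of two candidate points is controlled by the scale at which their genealogies split \dash yields the matching lower bound. I expect the principal obstacle to be the reduction described in the second paragraph: establishing that the infinite-dimensional scale-$c$ data collapses asymptotically to the single real functional $H_c$ with a well-defined limiting law, and that the transverse survival weight converges, uniformly in $b$. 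This is exactly the fine information about the near-maximal structure of $\star$-scale fields underlying Theorem~\ref{th:cluster}, so in practice Theorem~\ref{th:clusterProb} would be proved hand in hand with Theorem~\ref{th:cluster}, the constant $c_{\star,\lambda}$ being read off as the normalisation turning the limiting conditional law there into a probability measure.
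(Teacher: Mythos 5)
Your high-level strategy agrees with the paper's: peel off a radial Brownian backbone, reduce the event $\{\M_{0,b}(\Upsilon_{\!\infty})\le\lambda\}$ to a ballot problem for that backbone against a slowly bending barrier, and read off $\alpha=\sqrt{2/\pi}$ from the Brownian first-passage asymptotic, with $c_{\star,\lambda}$ emerging as the expected ``effective start height'' weighted by a transverse survival factor. This is exactly the $\H_{l,\lambda}$ scheme in Section~\ref{subsec:asyFormulaCluster} (see \eqref{eq:XikF}, Proposition~\ref{pr:mainAsy}, Lemma~\ref{lm:techLemma2Asy}, and \eqref{eq:defcstar}), and you are also right that Theorem~\ref{th:clusterProb} and Theorem~\ref{th:cluster} are proved simultaneously from the same asymptotic formula.

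Where you go astray is in the first and last paragraphs: you treat the backbone as ``only asymptotically Brownian and not exactly independent of the transverse fluctuations,'' and so propose to close the gap with Slepian/Kahane comparison inequalities and a Bramson--Ding--Zeitouni-style truncated moment method. That machinery is unnecessary here and, for a sharp limit rather than matching bounds, would need substantial extra work. The key point you miss is that for $\star$-scale invariant fields with compactly supported seed $\frkK$, the decomposition \eqref{eq:Phi},
$\Phi_b(\cdot) = -\int_0^b(1-\frkK(e^{-s}\cdot))\,dB_s + \rmZ_b(\cdot)$,
is \emph{exact}, with $B$ a genuine Brownian motion \emph{independent} of $\rmZ_b$; and by \ref{hp_K2} the stochastic integral restricted to the annulus $\A_j$ is $-B_j$ plus a tail that is explicitly small (Lemma~\ref{lm:approxBrownianBridge} and Definition~\ref{def:ControlVar}). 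So the reduction to a Brownian barrier estimate is accomplished by a direct change of variables and control variables bounding the discrepancy, not by Gaussian comparison or moment counting; the $\sqrt{2/\pi}$ constant then comes from the exact ballot estimate of Lemma~\ref{lm:techLemma2Asy}, and existence of the limit $c_{\star,\lambda}=\lim_k \H_{k,\lambda}(1)$ follows the consistency argument of \cite[Theorem~2.4]{BiskupLouidor} rather than a renewal decomposition. If you adopt the exact representation \eqref{eq:Phi}, your first three paragraphs become essentially the paper's proof, and your final paragraph can be discarded.
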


\begin{remark}\label{rem:decomposition}
In Section~\ref{sec:cluster}, we introduce a characterisation of the field $\Phi_b$ in terms of a stochastic integral driven by a one-dimensional Brownian motion $(B_t)_{t \geq 0}$ (as a cartoon, think of this as being a smoothened out version of the field $x \mapsto -B_{\log |x|}$), plus an independent centred Gaussian field, see \eqref{eq:Phi}. In particular, we will prove a stronger version of Theorems~\ref{th:cluster}~and~\ref{th:clusterProb}, where we also allow conditioning on the value of $B$ at time $b$. We refer to Propositions~\ref{pr:AsyConv}~and~\ref{pr:AsyProb} for the precise statements.
\end{remark}

\begin{remark}
Although we don't have an explicit representation for the constant $c_{\star, \lambda}$, we will see in \eqref{eq:defcstar} below that it is given by
\begin{equation}
\label{eq:defCStar}
c_{\star, \lambda} = \lim_{k\to \infty} \E \bigl[B_k \one_{\{B_k \in [k^{1/6}, k^{5/6}]\}} \one_{\{\M_{0, k}(\Upsilon_{\! k})  \leq \lambda\}}\bigr]\;,
\end{equation}
with $B$ and $\Upsilon_{\! k}$ related as in Remark~\ref{rem:decomposition}.
The exponents $1/6$ and $5/6$ appearing here are of course unimportant and could probably be replaced by any
values in $(0,1/2)$ and $(1/2,\infty)$ respectively. We refer to Lemma~\ref{lm:boundXione} for a proof of the fact that $c_{\star, \lambda} \in (0, \infty)$.
\end{remark}

We recall that the introduction of a threshold $\lambda > 0$ in Theorems~\ref{th:cluster} and~\ref{th:clusterProb} is necessary due to the continuous setting in which we are working. In such a context, conditioning on the event that a field, which is zero at the origin, remains negative is ill-posed. However, it is desirable to define a ``canonical'' field that captures the local structure of $\rmX$ around an extremal point, without being arbitrarily dependent on $\lambda$.

To achieve this, we introduce below a field $\smash{\Psi_{\lambda}}$, which is essentially just $\smash{\tilde{\Upsilon}_{\! \lambda}}$ shifted to move its maximum to the origin, but under a slightly tilted law. At first glance, it may seem contradictory to define a field that we claim is independent of $\lambda$ while still denoting it as $\Psi_{\lambda}$. This notation arises because, from its definition, it is not immediately evident that $\Psi_{\lambda}$ is indeed independent of the threshold $\lambda$. However, this independence (albeit in a slightly weaker sense) will be established a posteriori (see Proposition~\ref{pr:PsiIndepLambda}).

Before defining the field $\Psi_{\lambda}$, for all $x \in \R^d$, we introduce the shift operator $\tau_x: \CC(\R^d) \to \CC(\R^d)$ by
\begin{equation}
\label{eq:shiftOperator}
\tau_x f(\cdot) = f(\cdot + x) - f(x)\;, \qquad \forall \, f \in \CC(\R^d) \;.
\end{equation}
Furthermore, we let $\Lambda \subseteq \R^{+}$ denote the uncountable set introduced in Lemma~\ref{lm:Countable} below. Roughly speaking, $\Lambda$ consists of the ``good thresholds'' $\lambda$ for which the law of the field $\smash{\tilde{\Upsilon}_{\! \lambda}}$ exhibits some desirable properties.  
More precisely, we need to exclude the ``bad'' values of $\lambda$ such that 
$\smash{\P(\sup_{x \in \R^d} \tilde \Upsilon_{\!\lambda}(x) = \lambda) > 0}$ or $\smash{\P(\abs{\{y \in \R^d \, : \, \tilde \Upsilon_{\! \lambda}(y) = \sup_{x \in \R^d}\tilde{\Upsilon}_{\! \lambda}(x) -\lambda\}} > 0) > 0}$ where $\abs{\cdot}$ denotes Lebesgue measure.
We emphasise that we don't expect any such bad values to exist, i.e., we expect that $\Lambda = \R^{+}$. However, since the field $\smash{\tilde{\Upsilon}_{\! \lambda}}$ is itself defined by a singular conditioning, proving this fact would require additional effort. Since our main result does not require ruling out the existence of bad values, we will not investigate this fact further. 
In any case, we have to  exclude at most countably many points, so $\Lambda$ is dense.

\begin{definition}
\label{def:recoverPhi}
For $\lambda \in \Lambda$, we let $\Psi_{\lambda}$ be the field on $\R^d$ uniquely characterised by the fact that, for all $\bfF \in \CC^b(\CC(\R^d))$,
\begin{equation}
\label{e:recoverPhi}
\E\bigl[\bfF(\Psi_{\lambda})\bigr] \propto \E\left[\frac{\bfF\bigl(\tau_{x_{\star}} \! \tilde \Upsilon_{\! \lambda}\bigr) e^{\sqrt{\smash[b]{2d}}\tilde\Upsilon_{\! \lambda}(x_{\star})}}{\int_{\R^d} e^{\sqrt{\smash[b]{2d}}\tilde\Upsilon_{\! \lambda}(x)} \one_{\{\tilde \Upsilon_{\! \lambda}(x) \geq \tilde \Upsilon_{\! \lambda}(x_{\star}) -\lambda\}} dx}\right] \;,
\end{equation} 
where $\smash{x_{\star} = \argmax\{\tilde{\Upsilon}_{\! \lambda}(x) \, : \, x \in \R^d\}}$ and the proportionality constant is chosen in such a way that $\E[1] = 1$. 
\end{definition}

\begin{remark}
The fact that the proportionality constant in the previous definition lies in $(0, \infty)$ is proved in Lemma~\ref{lm:NorContPsiOK}.
\end{remark}

We now state the following key ``resampling property'' of the field $\smash{\tilde{\Upsilon}_{\! \lambda}}$, whose proof is given in Section~\ref{sub:resamplingProp}.
\begin{proposition}
\label{pr:resempPropUps}
For each $\lambda \in \Lambda$ and for all $\bfF \in \CC^b_{\loc}(\CC(\R^d))$, it holds that
\begin{equation}
\label{e:wantedUpsilon}
\E\bigr[\bfF(\tilde \Upsilon_{\! \lambda})\bigr] = \E\Biggl[\frac{\int_{\R^d} \bfF(\tau_x\tilde\Upsilon_{\! \lambda}) e^{\sqrt{\smash[b]{2d}}\tilde\Upsilon_{\! \lambda}(x)}\one_{\{\tilde\Upsilon_{\! \lambda}(x)\geq \tilde \Upsilon_{\! \lambda}(x_{\star}) - \lambda\}}dx}{\int_{\R^d} e^{\sqrt{\smash[b]{2d}}\tilde\Upsilon_{\! \lambda}(x)}\one_{\{\tilde\Upsilon_{\! \lambda}(x)\ge \tilde \Upsilon_{\! \lambda}(x_{\star}) - \lambda\}}dx}\Biggr] \;,
\end{equation}
where we recall that $\smash{x_{\star} = \argmax\{\tilde{\Upsilon}_{\! \lambda}(x) \, : \, x \in \R^d\}}$.
\end{proposition}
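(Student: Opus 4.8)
The plan is to obtain the resampling identity \eqref{e:wantedUpsilon} by passing to the limit $b \to \infty$ in an exact identity satisfied by the pre-limiting fields $\Upsilon_{\! b}$ (or $\Upsilon_{\! \infty}$, conditioned on $\M_{0,b} \leq \lambda$), and then invoking Theorem~\ref{th:cluster} to identify the limit with $\tilde\Upsilon_{\!\lambda}$. The starting point is the observation that, for the Gaussian field $\Upsilon_{\! b} = \Phi_b - \sqrt{2d}\,\frka_b$, a Cameron--Martin shift implements a translation of the base point. Concretely, using that $\frka_b(x) + \frka_b(y) - \frka_b(x-y)$ is the covariance of $\Phi_b$, one checks that for a fixed shift $x$ the law of $\tau_x \Phi_b$ (shift operator as in \eqref{eq:shiftOperator}) is absolutely continuous with respect to that of $\Phi_b$, with a Radon--Nikodym density of the form $\exp(\sqrt{2d}\,\Phi_b(x) - \text{(deterministic)})$; the deterministic normalisation is exactly what is needed so that the drift term $-\sqrt{2d}\,\frka_b$ of $\Upsilon_{\!b}$ transforms correctly and the tilt becomes $e^{\sqrt{2d}\,\Upsilon_{\!b}(x)}$ up to a constant. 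This is the continuum analogue of the Gibbs/Markov resampling used in \cite{BiskupLouidor}; I would set it up as a preliminary lemma (and it is essentially what underlies Definition~\ref{def:recoverPhi}).

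With this shift identity in hand, the strategy is: average over $x$ ranging over the ball $B(0,e^b)$ against the weight $e^{\sqrt{2d}\,\Upsilon_{\!b}(x)}\one_{\{\Upsilon_{\!b}(x) \geq \M_{0,b}(\Upsilon_{\!b}) - \lambda\}}$, normalise by the same quantity without $\bfF$, and observe that under the event $\{\M_{0,b}(\Upsilon_{\!b}) \leq \lambda\}$ the indicator $\one_{\{\Upsilon_{\!b}(x) \geq \M_{0,b}(\Upsilon_{\!b}) - \lambda\}}$ selects precisely the points that, after shifting to $x$, keep the field within distance $\lambda$ of its maximum — i.e. it reproduces exactly the conditioning event defining $\tilde\Upsilon_{\!\lambda}$. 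Thus one gets a finite-$b$ identity
\begin{equation*}
\E\bigl[\bfF(\Upsilon_{\!b}) \,\big|\, \M_{0,b}(\Upsilon_{\!b}) \leq \lambda\bigr]
= \E\Biggl[\frac{\int_{B(0,e^b)} \bfF(\tau_x \Upsilon_{\!b})\, e^{\sqrt{2d}\,\Upsilon_{\!b}(x)} \one_{\{\Upsilon_{\!b}(x) \geq \M_{0,b}(\Upsilon_{\!b}) - \lambda\}}\, dx}{\int_{B(0,e^b)} e^{\sqrt{2d}\,\Upsilon_{\!b}(x)} \one_{\{\Upsilon_{\!b}(x) \geq \M_{0,b}(\Upsilon_{\!b}) - \lambda\}}\, dx} \,\Bigg|\, \M_{0,b}(\Upsilon_{\!b}) \leq \lambda\Biggr]\;,
\end{equation*}
modulo care about where the argmax sits; one localises using that $x_\star$ is tight and, on the complement, the exponential weight makes the contribution negligible. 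The left side converges to $\E[\bfF(\tilde\Upsilon_{\!\lambda})]$ by Theorem~\ref{th:cluster} since $\bfF \in \CC^b_{\loc}$. For the right side, since $\bfF$ is local, $\tau_x \Upsilon_{\!b}$ only needs to be controlled on a fixed ball around $x$; one decomposes the $x$-integral into a tight ``core'' region near $x_\star$ (where the weighted law converges, again by the stronger local statements behind Theorem~\ref{th:cluster}, cf. Remark~\ref{rem:decomposition}) and a tail, which is controlled by moment/entropic-repulsion estimates for $\Upsilon_{\!\infty}$ and the exponential suppression coming from $e^{\sqrt{2d}\,\Upsilon_{\!b}(x)}$ together with the fact that high values away from the peak are exponentially costly.

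The main obstacle I expect is the passage to the limit in the ratio on the right-hand side: the numerator and denominator each involve an integral over an unbounded (and $b$-dependent) region of a quantity that is itself a ratio-type functional, so one cannot simply invoke weak convergence of $\Upsilon_{\!b}$. One needs (i) uniform integrability of the random denominator $\int e^{\sqrt{2d}\,\Upsilon_{\!b}(x)}\one_{\{\cdots\}}dx$ and a lower bound keeping it away from $0$ — this is where the hypothesis $\lambda \in \Lambda$ enters, ruling out pathological maximisers and guaranteeing the denominator is a.s. positive and well-behaved in the limit (cf. Lemma~\ref{lm:NorContPsiOK}); and (ii) tightness of $x_\star$ and a truncation argument showing the far-field contribution to the numerator vanishes uniformly in $b$. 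These are exactly the kinds of estimates already developed for the existence of $\tilde\Upsilon_{\!\lambda}$ and for Theorem~\ref{th:clusterProb} (the $\sqrt b$-probability asymptotics and the representation \eqref{eq:defCStar}), so the proof will lean heavily on the machinery of Section~\ref{sec:cluster}; assembling it into the clean identity \eqref{e:wantedUpsilon} is the real content. A final bookkeeping point is that the normalising constant implicitly cancels between numerator and denominator, so no separate evaluation of constants is needed once positivity is secured.
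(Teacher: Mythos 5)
Your overall strategy matches the paper's: multiply by a trivial ratio, apply Cameron--Martin to turn the base-point into an integration variable, then pass $b\to\infty$ using Theorem~\ref{th:cluster} and the estimates of Section~\ref{sec:cluster}. However, there is a concrete gap in the finite-$b$ identity you write down, and it is precisely the point where the choice of weight matters.

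The trivial ratio identity holds for any positive weight $H(\Upsilon_b,x)$, but for Cameron--Martin to reproduce the desired structure one must choose the weight so that the conditioning event and the boundedness both come out exactly after the shift. The paper uses
\[
H(\Upsilon_b,x)= e^{\sqrt{\smash[b]{2d}}\left(\Upsilon_b(x)-\M_{x,b}(\Upsilon_b)\right)}\one_{\{\Upsilon_b(x)\geq \M_{x,b}(\Upsilon_b)-\lambda\}}\;,
\]
with the supremum $\M_{x,b}$ taken over the ball \emph{centered at $x$}, not at the origin. Under the shift $\Upsilon_b\mapsto\tau_{-x}\Upsilon_b$, one computes $H(\tau_{-x}\Upsilon_b,x)=e^{-\sqrt{\smash[b]{2d}}\M_{0,b}(\Upsilon_b)}\one_{\{\M_{0,b}(\Upsilon_b)\leq\lambda\}}$, so the factor $\one_{\{\M_{0,b}\leq\lambda\}}$ is recovered from the weight (not from the original conditioning indicator, which instead transforms to $\one_{\{\M_{-x,b}(\Upsilon_b)-\Upsilon_b(-x)\leq\lambda\}}$), and the exponent is bounded by $1$. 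Your weight $e^{\sqrt{2d}\Upsilon_b(x)}\one_{\{\Upsilon_b(x)\geq\M_{0,b}(\Upsilon_b)-\lambda\}}$ transforms to $e^{-\sqrt{2d}\Upsilon_b(-x)}\one_{\{\M_{-x,b}(\Upsilon_b)\leq\lambda\}}$, which is unbounded in $\Upsilon_b(-x)$ and produces the conditioning event for the supremum over the ball centered at $-x$, not at the origin. So the identity you display is not exact: it is already an approximation, and the error requires its own argument before any limit is taken. You acknowledge the issue (``modulo care about where the argmax sits''), but the resolution is not a tightness estimate bolted on at the end; it is the choice of the weight itself.

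A secondary but genuine issue is the integration domain. You integrate over $B(0,e^b)$, so the domain grows with $b$; the paper integrates over a fixed $\B_k$, sends $b\to\infty$ first (where the integrand is uniformly bounded via the covering argument), and only then lets $k\to\infty$ by monotone and dominated convergence. With a $b$-dependent domain, the uniform boundedness of the ratio and the exchange of limit and integral are no longer free, and the continuity/discontinuity-set bookkeeping (which is where $\lambda\in\Lambda$ is actually used) becomes harder to run. Your remarks on uniform integrability and the role of $\Lambda$ are in the right direction, but the proposal as written still needs the correct weight and the nested $k,b$ structure to go through.
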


\begin{remark}
In fact, Proposition~\ref{pr:resempPropUps} is quite general and applies to a large class of Gaussian fields
of the form $\Phi - \eta \frka$ where $\Phi$ is centred with $\E[(\Phi(x) - \Phi(y))^2] = \frka(x-y)$. 
In particular, one can replace the 
field $\tilde \Upsilon_{\! \lambda}$ in \eqref{e:wantedUpsilon} by a drifted, conditioned Brownian motion.
Specifically, for $\lambda > 0$ and $\eta \geq 0$, let $\smash{(X^{\lambda, \eta}_t)_{t \in \R}}$ be a two-sided Brownian motion with drift $t \mapsto -\eta |t|$ for $t \in \R$, conditioned to remain below $\lambda$ at all times. 
When $\eta > 0$ this is a condition that happens with positive probability, while the case $\eta = 0$ can be 
covered by a limiting procedure, yielding a two-sided three-dimensional Bessel process. 
Furthermore, consider the set
\begin{equation*}
\rmA_{\lambda, \eta} \eqdef \Biggl\{t \in \R \, : \,  X^{\lambda, \eta}_t \geq \sup_{s \in \R} X^{\lambda, \eta}_s - \lambda\Biggr\} \;,
\end{equation*}
and let $\rho_{\lambda, \eta}$  be the (random) probability measure on $\rmA_{\lambda, \eta}$ defined as follows 
\begin{equation*}
\rho_{\lambda, \eta}(dt) \eqdef \frac{e^{\eta X^{\lambda, \eta}_t}}{\int_{\rmA_{\lambda, \eta}} e^{\eta X^{\lambda, \eta}_s}  ds}	\one_{\{t \in \rmA_{\lambda, \eta}\}}dt \;.
\end{equation*}
Then, if $t_{\star}$ is a point sampled from the probability measure $\rho_{\lambda, \eta}$, we have the identity in law
\begin{equation*}
\bigl(X^{\lambda, \eta}_{t + t_{\star}} - X^{\lambda, \eta}_{t_{\star}}\bigr)_{t \in \R} \eqlaw \bigl(X^{\lambda, \eta}_t\bigr)_{t \in \R} \;,
\end{equation*}
which we were unable to find in the existing literature.
Note that, in particular, when $\eta = 0$, the probability measure $\rho_{\lambda, 0}$ is the uniform measure on $\rmA_{\lambda, 0}$.
\end{remark}

Now, returning to our setting, we observe that, thanks to Proposition~\ref{pr:resempPropUps}, an alternative way to characterise $\Psi_{\lambda}$ is by inverting \eqref{e:recoverPhi}. More precisely, we have the following result, whose proof is given in Section~\ref{sub:resamplingProp}.
\begin{proposition}
\label{pr:inversionPsi}
For any $\lambda \in \Lambda$ and for all $\bfF \in \CC^b_{\loc}(\CC(\R^d))$, it holds that
\begin{equation}
\label{e:tilt3}
\E\bigl[\bfF(\tilde \Upsilon_{\!\lambda})\bigr] \propto \E\Biggl[\int_{\R^d} \bfF(\tau_x \Psi_{\lambda})e^{\sqrt{\smash[b]{2d}}\Psi_{\lambda}(x)}\one_{\{\Psi_{\lambda}(x)\geq - \lambda\}} dx\Biggr]\;,
\end{equation}
where the proportionality constant is chosen in such a way that $\E[1] = 1$.
\end{proposition}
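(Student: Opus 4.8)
The plan is to view \eqref{e:recoverPhi} and \eqref{e:tilt3} as mutually inverse transformations and to use the resampling property of Proposition~\ref{pr:resempPropUps} as the bridge between them. Concretely, fix $\lambda \in \Lambda$ and $\bfF \in \CC^b_{\loc}(\CC(\R^d))$, say $\bfF \in \CR_k$. Applying the resampling identity \eqref{e:wantedUpsilon} from Proposition~\ref{pr:resempPropUps} to the test functional $\bfF$, we rewrite $\E[\bfF(\tilde\Upsilon_{\!\lambda})]$ as the expectation of an average of $\bfF(\tau_x \tilde\Upsilon_{\!\lambda})$ against the (random) probability measure with density proportional to $e^{\sqrt{2d}\tilde\Upsilon_{\!\lambda}(x)}\one_{\{\tilde\Upsilon_{\!\lambda}(x)\ge \tilde\Upsilon_{\!\lambda}(x_\star)-\lambda\}}$. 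The key observation is that this last expression has exactly the structure of the right-hand side of Definition~\ref{def:recoverPhi} \eqref{e:recoverPhi}, but with $\bfF$ replaced by the shifted-and-reweighted functional
\begin{equation*}
\bfG(\phi) \eqdef \int_{\R^d} \bfF(\tau_x \phi) e^{\sqrt{\smash[b]{2d}}\phi(x)} \one_{\{\phi(x) \geq -\lambda\}}\,dx \;.
\end{equation*}
Indeed, substituting $\phi = \tau_{x_\star}\tilde\Upsilon_{\!\lambda}$ and using that $\tau_x \tau_{x_\star} = \tau_{x+x_\star}$ together with $(\tau_{x_\star}\tilde\Upsilon_{\!\lambda})(x) = \tilde\Upsilon_{\!\lambda}(x+x_\star) - \tilde\Upsilon_{\!\lambda}(x_\star)$, a change of variables $x \mapsto x - x_\star$ turns $\bfG(\tau_{x_\star}\tilde\Upsilon_{\!\lambda})$ into precisely the numerator appearing inside the expectation on the right of \eqref{e:wantedUpsilon}, while the normalising integral $\int e^{\sqrt{2d}\tilde\Upsilon_{\!\lambda}(x)}\one_{\{\cdots\}}dx$ matches the denominator. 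Hence \eqref{e:recoverPhi} applied to $\bfG$ reads $\E[\bfG(\Psi_\lambda)] \propto \E[\bfF(\tilde\Upsilon_{\!\lambda})]$, which, after unfolding the definition of $\bfG$, is exactly \eqref{e:tilt3}.

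There are two technical points to handle carefully. First, one must check that $\bfG$ is an admissible test functional so that \eqref{e:recoverPhi} may legitimately be applied to it: since $\bfF \in \CR_k$ depends only on values in $B(0,k)$, the integrand $\bfF(\tau_x\phi)$ depends on $\phi$ only through its values in $B(0,k) + x$, and for the indicator and exponential weight we need $\phi(x)$; so $\bfG$ is a (measurable) functional, but it is \emph{not} bounded and its domain of integration is all of $\R^d$. One therefore needs integrability of $x \mapsto e^{\sqrt{2d}\tilde\Upsilon_{\!\lambda}(x)}\one_{\{\tilde\Upsilon_{\!\lambda}(x)\ge -\lambda\}}$, i.e.\ of the analogous quantity for $\tilde\Upsilon_{\!\lambda}$ rather than for $\tau_{x_\star}\tilde\Upsilon_{\!\lambda}$; this is precisely the finiteness (and positivity) of the denominator in \eqref{e:recoverPhi}, which is Lemma~\ref{lm:NorContPsiOK}, combined with the fact that $\sup \tilde\Upsilon_{\!\lambda}$ is attained at a well-defined $x_\star$ for $\lambda \in \Lambda$ (the defining property of the set $\Lambda$ in Lemma~\ref{lm:Countable}). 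A standard truncation argument — first prove the identity for $\bfF$ replaced by $\bfF\,\one_{\{x \in B(0,R)\}}$-type cutoffs, then pass $R \to \infty$ by monotone/dominated convergence using this integrability — makes the application of \eqref{e:recoverPhi} rigorous.

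Second, one must pin down the proportionality constant, or rather confirm that it is the same finite positive constant appearing implicitly in \eqref{e:recoverPhi}; taking $\bfF \equiv 1$ in the derived identity shows that the constant in \eqref{e:tilt3} equals $\E\big[\int_{\R^d} e^{\sqrt{2d}\Psi_\lambda(x)}\one_{\{\Psi_\lambda(x)\ge-\lambda\}}dx\big]^{-1}$, which is finite and positive by the same Lemma~\ref{lm:NorContPsiOK} (this is the normalisation that makes $\E[1]=1$ in \eqref{e:recoverPhi}), so the statement is consistent. I expect the main obstacle to be purely the bookkeeping around the unbounded functional $\bfG$: verifying that the change of variables and the interchange of the $x$-integral with the expectation are justified, i.e.\ the truncation/convergence argument in the previous paragraph, rather than any conceptual difficulty — the algebraic heart of the proof is just the observation that \eqref{e:recoverPhi} and \eqref{e:tilt3} are inverse to one another modulo the resampling identity \eqref{e:wantedUpsilon}.
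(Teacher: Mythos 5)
Your proposal is correct and follows essentially the same route as the paper: the paper's proof simply applies Definition~\ref{def:recoverPhi}~\eqref{e:recoverPhi} to the right-hand side of \eqref{e:tilt3} (i.e.\ to the functional you call $\bfG$), observes that the resulting expression is exactly the right-hand side of \eqref{e:wantedUpsilon}, and cites Proposition~\ref{pr:resempPropUps}; you run the same chain of equalities in the opposite direction. The one place you go beyond what the paper writes is the caveat that $\bfG$ is not bounded and not locally dependent, so that applying \eqref{e:recoverPhi} (stated for $\bfF\in\CC^b(\CC(\R^d))$) to $\bfG$ formally requires a truncation-and-limit argument justified by the integrability from Lemma~\ref{lm:NorContPsiOK}; the paper's proof is silent on this, so flagging it is a genuine improvement rather than a deviation.
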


\begin{remark}
We observe that the fact that the proportionality constant in the previous proposition lies in $(0, \infty)$ follows directly from Definition~\ref{def:recoverPhi} of the field $\Psi_{\lambda}$, together with the fact that the proportionality constant appearing in that definition lies in $(0, \infty)$.
\end{remark}

As claimed above, we now state a result that confirms that the field $\Psi_{\lambda}$, introduced in Definition~\ref{def:recoverPhi}, is indeed canonical. Specifically, we have the following result, whose proof is given in Section~\ref{sub:indthreshold}.
\begin{proposition}
\label{pr:PsiIndepLambda}
For any $\lambda_1$, $\lambda_2 \in \Lambda$, one has $\Psi_{\lambda_1} \eqlaw \Psi_{\lambda_2}$.
\end{proposition}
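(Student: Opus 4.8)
The plan is to show that both $\Psi_{\lambda_1}$ and $\Psi_{\lambda_2}$ have a law that can be recovered from a single, $\lambda$-independent object, so that the two must coincide. The natural candidate for that object is the resampled field itself. First I would observe that, combining the resampling property of Proposition~\ref{pr:resempPropUps} with the inversion formula of Proposition~\ref{pr:inversionPsi}, one obtains a closed identity relating $\tilde\Upsilon_{\!\lambda}$ and $\Psi_{\lambda}$ in both directions. The key structural fact to extract is that $\Psi_{\lambda}$, viewed through test functions $\bfF \in \CC^b_{\loc}$, is determined by the law of $\tilde\Upsilon_{\!\lambda}$ seen \emph{from its argmax} — equivalently, the Palm-type measure obtained by biasing by $e^{\sqrt{2d}\,\tilde\Upsilon_{\!\lambda}(x)}\one_{\{\tilde\Upsilon_{\!\lambda}(x)\ge\tilde\Upsilon_{\!\lambda}(x_{\star})-\lambda\}}$ and shifting by $\tau_x$. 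So the real content is: the ``shape from a near-maximal point'' is the same regardless of which cutoff $\lambda$ was used to define the conditioned field.

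The cleanest route is to go back to the approximating fields. Recall from Theorem~\ref{th:cluster} that $\tilde\Upsilon_{\!\lambda}$ is the $b\to\infty$ limit of $\Upsilon_{\!\infty}$ conditioned on $\M_{0,b}(\Upsilon_{\!\infty})\le\lambda$. I would therefore express $\E[\bfF(\Psi_{\lambda})]$, via \eqref{e:recoverPhi}, as a limit of quantities involving $\Upsilon_{\!b}$ (or $\Upsilon_{\!\infty}$) conditioned on $\{\M_{0,b}\le\lambda\}$, biased by $e^{\sqrt{2d}\,\Upsilon(x_\star)}/\int e^{\sqrt{2d}\,\Upsilon(x)}\one_{\{\Upsilon(x)\ge\Upsilon(x_\star)-\lambda\}}dx$ and shifted to the argmax $x_\star$. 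At the level of the Gaussian field $\Upsilon_{\!\infty} = \Phi_\infty - \sqrt{2d}\,\frka_\infty$, shifting to $x_\star$ and recentering the value there to $0$ should, by the exact Cameron--Martin/Gaussian-conditioning computations already used to derive the form of $\Upsilon_b$, produce a field whose law does not reference $\lambda$ except through the region $\{\Upsilon(x)\ge\Upsilon(x_\star)-\lambda\}$ over which one integrates. The upshot is a formula for $\E[\bfF(\Psi_\lambda)]$ in which the $\lambda$-dependence is entirely cosmetic: changing $\lambda$ only reweights by a normalising constant, which is killed by the requirement $\E[1]=1$. Carrying this out carefully for $\lambda_1$ and $\lambda_2$ and matching term by term gives the claim.

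Concretely, the steps in order: (i) use Proposition~\ref{pr:resempPropUps} to rewrite $\E[\bfF(\tilde\Upsilon_{\!\lambda})]$ for $\bfF\in\CC^b_{\loc}$ as an average of $\bfF(\tau_x\tilde\Upsilon_{\!\lambda})$ against the random measure $\propto e^{\sqrt{2d}\,\tilde\Upsilon_{\!\lambda}(x)}\one_{\{\tilde\Upsilon_{\!\lambda}(x)\ge\tilde\Upsilon_{\!\lambda}(x_\star)-\lambda\}}dx$; (ii) recognise, by comparing with Definition~\ref{def:recoverPhi}, that this random measure is (after normalisation) exactly the one defining $\Psi_\lambda$, so that $\Psi_\lambda$ is obtained from $\tilde\Upsilon_{\!\lambda}$ by ``sizing-biasing and recentering at a mass-typical near-maximal point''; (iii) run the same operation on $\tilde\Upsilon_{\!\lambda_2}$, and use the fact (from the proof of Theorem~\ref{th:cluster}, cf.\ Remark~\ref{rem:decomposition} and Propositions~\ref{pr:AsyConv}--\ref{pr:AsyProb}) that both $\tilde\Upsilon_{\!\lambda_1}$ and $\tilde\Upsilon_{\!\lambda_2}$ arise as limits of the \emph{same} family $\Upsilon_{\!\infty}$ conditioned only on the height of its mesoscopic maximum — the two differ only in the window width $\lambda_i$; (iv) on the approximating level, perform the Gaussian shift-to-argmax computation to check that the window width drops out of the law of the recentered field, leaving a $\lambda$-independent limit; (v) pass to the limit $b\to\infty$, using the boundedness and locality of $\bfF$ together with the tightness/convergence already established, to conclude $\E[\bfF(\Psi_{\lambda_1})]=\E[\bfF(\Psi_{\lambda_2})]$ for all $\bfF\in\CC^b_{\loc}$, hence for all $\bfF\in\CC^b$ by a monotone-class argument.

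The main obstacle I expect is step (iv)–(v): controlling the argmax $x_\star$ under the singular conditioning. The field $\tilde\Upsilon_{\!\lambda}$ is already the result of a degenerate limit, and one is now further biasing by $e^{\sqrt{2d}\,\tilde\Upsilon_{\!\lambda}(x_\star)}$ and integrating over a random set that pins down to where the maximum is; one needs $x_\star$ to be a.s.\ unique and the biasing weight to be integrable — precisely the properties guaranteed by restricting $\lambda\in\Lambda$ (Lemma~\ref{lm:Countable}) and by the estimate that the normalising constant in Definition~\ref{def:recoverPhi} lies in $(0,\infty)$ (Lemma~\ref{lm:NorContPsiOK}). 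Making the interchange of limit ($b\to\infty$) and the shift-by-$x_\star$ operation rigorous — i.e.\ showing the argmax of the conditioned approximations converges to that of the limit, and that the size-biasing weights converge with it — is the delicate technical point; everything else is exact Gaussian algebra plus the resampling identity.
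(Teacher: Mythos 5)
Your proposal correctly identifies the two technical ingredients that control the degenerate aspects of the problem (uniqueness of $x_\star$ via $\lambda\in\Lambda$, and finiteness of the normalisation via Lemma~\ref{lm:NorContPsiOK}), and it correctly recognises that the resampling property Proposition~\ref{pr:resempPropUps} and the inversion Proposition~\ref{pr:inversionPsi} must be used. However, there is a genuine gap at the heart of the argument: you never explain how the laws of $\tilde\Upsilon_{\!\lambda_1}$ and $\tilde\Upsilon_{\!\lambda_2}$ are to be compared. In steps (iii)--(iv) you appeal to the fact that both arise as limits of the same family $\Upsilon_{\!\infty}$ and then claim that ``exact Gaussian algebra'' at the approximating level shows the window width drops out of the law of the argmax-recentered field. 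This cannot work as stated: shifting to the argmax $x_\star$ is a shift by a random location that depends on the entire realisation of the field and on the conditioning event, so it is not a Cameron--Martin shift and there is no direct Gaussian computation available. Indeed, the whole point of Proposition~\ref{pr:resempPropUps} (whose proof occupies several pages) is to make exactly such a ``shift to a near-maximal point'' operation rigorous, and it only yields information about the law of a single $\tilde\Upsilon_{\!\lambda}$ relative to itself, not about how the recentered fields compare across different values of $\lambda$.

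The missing ingredient is the third conclusion of Lemma~\ref{lm:Countable}: for $\lambda_2 < \lambda_1$ both in $\Lambda$, the field $\tilde\Upsilon_{\!\lambda_2}$ has the law of $\tilde\Upsilon_{\!\lambda_1}$ conditioned on $\{\M(\tilde\Upsilon_{\!\lambda_1}) \le \lambda_2\}$. You only quote Lemma~\ref{lm:Countable} for the argmax-uniqueness and negligible-level-set properties, not for this nesting. With the nesting in hand, the paper's proof stays entirely at the level of the limiting field and never needs to go back to $\Upsilon_b$ at all (except indirectly through truncation arguments to make the resampling formula applicable to an unbounded, non-local test functional): one writes $\E[\bfF(\Psi_{\lambda_2})]$ via Definition~\ref{def:recoverPhi} in terms of $\tilde\Upsilon_{\!\lambda_1}$ restricted to the event $\{\M(\tilde\Upsilon_{\!\lambda_1}) \le \lambda_2\}$, truncates so the integrand is in $\CC^b_{\loc}$, applies the resampling property of $\tilde\Upsilon_{\!\lambda_1}$ (with width $\lambda_1$, not $\lambda_2$), and then removes the truncations to land on $\E[\bfF(\Psi_{\lambda_1})]$. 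Without the nesting relation, the comparison between the two laws has no starting point, and the claim in step~(iv) that the window width ``drops out'' is precisely the thing that remains to be proved.
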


Since the law of $\Psi_{\lambda}$ does not depend on $\lambda$, except possibly for a countable
number of ``bad'' values, we just write $\Psi$ from now on.

\begin{remark}
Since we have just established that \eqref{e:recoverPhi} is (essentially) independent of $\lambda$, it is tempting to try to 
take the limit $\lambda \to 0$. If we believe that $\Psi$ is smooth and has a unique non-degenerate global maximum 
at the origin then, for small $\lambda$, one expects to have $|x_\star| = \CO(\sqrt{\lambda})$ and 
\begin{equ}
\Leb \{x\,:\,\tilde \Upsilon_{\! \lambda}(x) \geq \tilde \Upsilon_{\! \lambda}(x_{\star}) -\lambda\}
\approx \frac{(\pi \lambda)^{d/2}}{\Gamma(\frac{d}{2}+1)} \lvert\det D^2 \tilde\Upsilon_{\! \lambda}(0)\rvert^{-1/2}\;.
\end{equ}
This strongly suggests that one could alternatively obtain $\Psi$ by
\begin{equ}[e:limitBias]
\E \bfF(\Psi) \propto \lim_{\lambda \to 0} \E \Bigl(\bfF(\tilde \Upsilon_{\! \lambda}) \sqrt{\lvert\det D^2 \tilde\Upsilon_{\! \lambda}(0)\rvert}\Bigr)\;.
\end{equ}
We do however not have sufficient control on $\tilde \Upsilon_{\! \lambda}$ for small $\lambda$ to 
be able to justify this identity rigorously.
The factor $\lvert\det D^2 \tilde\Upsilon_{\! \lambda}(0)\rvert^{1/2}$ appearing in \eqref{e:limitBias} can be 
interpreted as compensating for the fact that if we condition on the origin being an ``almost maximum'', then
we introduce a bias towards fields that are nearly flat there.
\end{remark}

%%%%%%%%%%%%%%%%%%%%%%%%%%%%%%%%%%%%%%%%%%%%%%
\subsection{Stable convergence of supercritical GMC}
\label{sub:main_conv}
We now state our main result regarding the convergence of GMC measures.  Before proceeding, we fix for the remainder of this section the set $\Lambda \subseteq \R^{+}$ introduced in Lemma~\ref{lm:Countable} below. We also let $\Psi$ denote the field introduced in Definition~\ref{def:recoverPhi}, which, as noted in Proposition~\ref{pr:PsiIndepLambda}, has a law that does not depend on $\lambda \in \Lambda$.

We begin by introducing the assumptions considered in our next main theorem. We recall that we write $[n]$ as a shortcut for the set $\{1,\ldots,n\}$.
\begin{assumption}
\label{as:FieldsW}
For $n \in \N$, consider a collection of fields $(\rmW_{i, t})_{i \in [n], t \geq 0}$ on $\R^d$ such that: 
\begin{enumerate}[start=1,label={{{(W\arabic*})}}]
	\item \label{hp:W1} For any $t \geq 0$, the collection of fields $(\rmW_{i, t})_{i \in [n]}$ is independent of the $\sigma$-field $\CF_t$ defined in \eqref{eq:defSigmaFieldT}.
	\item \label{hp:W2} There exist stationary fields $(\rmW_i)_{i \in [n]}$ on $\R^d$ such that, for any fixed $t \geq 0$,
\begin{equation*}
\bigl(\rmW_{i, t}(\cdot)\bigr)_{i \in [n]} \eqlaw \bigl(\rmW_{i}(e^t \cdot)\bigr)_{i \in [n]} \;.
\end{equation*} 
\item \label{hp:W3} For $t > 0$ and for all $x$, $y \in \R^d$ such that $\abs{x - y} > e^{-t}$, it holds that 
\begin{equation*}
\bigl(\rmW_{i, t}(x)\bigr)_{i \in [n]} \perp \bigl(\rmW_{i, t}(y)\bigr)_{i \in [n]} \;. 
\end{equation*}
\item \label{hp:W4} For all $\gamma > \sqrt{\smash[b]{2d}}$, it holds that $\sup_{x \in \R^d} \sum_{i = 1}^n \E[e^{\gamma \rmW_i(x)}] < \infty$.
\end{enumerate}
\end{assumption}

\begin{remark}
 We emphasise that the collection of fields $(\rmW_{i, t})_{i \in [n], t \geq 0}$ satisfying Assumption~\ref{as:FieldsW} is quite arbitrary. 
As briefly mentioned in Remark~\ref{rm:companionPaper}, the reason for working at this level of generality is that it allows us to study both the convergence, as $t \to \infty$, of the process $(\mu_{\gamma, t+s})_{s \geq 0}$, and the uniqueness of the supercritical GMC measure established in the companion paper~\cite{BHUniq}.
\end{remark}

For $\lambda \in \Lambda$, recalling \eqref{eq:defCStar}, we define the constant
\begin{equation}
\label{eq:defAStar}
a_{\star} \eqdef \frac{\alpha \, c_{\star, \lambda}}{\gamma \, \E\bigl[\int_{\R^d} e^{\sqrt{\smash[b]{2d}}\Psi(x)}\one_{\{\Psi(x)\geq - \lambda\}}dx\bigr]} \in (0, \infty) \;.
\end{equation}
The subscript $\lambda$ is not included in the notation $a_{\star}$ since it turns out that the 
right-hand side of \eqref{eq:defAStar} does not actually depend on $\lambda \in \Lambda$, as 
shown in Lemma~\ref{lm:AStarIndepLambda} below.

Before stating our next main theorem, we recall the definition of stable convergence,
which interpolates to some extent between convergence in law and convergence in probability. 
We refer to the monographs \cite{JacodLimit, StableBook} and references therein for more details 
on stable convergence in a more general setting.

Let $\CX$ be a locally compact Polish space and let $\CM^{+}(\CX)$ be the space of non-negative, locally finite measures on $\CX$ endowed with the topology of vague convergence. We equip the space of probability distributions on $\CM^{+}(\CX)$ with the topology 
of weak convergence.
\begin{definition}
\label{def:stabelConv}
Let $(\nu_{t})_{t \geq 0}$ be a collection of $\CM^{+}(\CX)$-valued random variables defined on
a common probability space $(\Omega,\P)$, and let $\nu$ be a $\CM^{+}(\CX)$-valued random variables defined
on a possibly larger probability space. Consider a $\sigma$-algebra $\Sigma$ over $\Omega$.
We say that $\nu_{t}$ converges $\Sigma$-stably to $\nu \in \CM^{+}(\CX)$ as $t \to \infty$, if $(Z, \nu_{t}) \Rightarrow (Z, \nu)$\footnote{Here and below, we write $\Rightarrow$ to denote convergence in law.} for all $\Sigma$-measurable random variables $Z$. 
\end{definition}

\begin{remark}
If $\Sigma$ is the trivial $\sigma$-algebra, then this coincides with convergence in law. Conversely, if $\Sigma$ is the full $\sigma$-algebra of the probability space $\Omega$ and the limiting random variable is defined on $(\Omega, \P)$, then this corresponds to convergence in probability.
\end{remark}

We are now ready to state our next main theorem.
\begin{theoremA} 
\label{th:stableConv}
Let $\gamma > \sqrt{\smash[b]{2d}}$ and consider the sequence of measures $(\mu_{\gamma, t})_{t \geq 0}$ defined in \eqref{e:norm_super}. 
For $n \in \N$, consider a collection of fields $(\rmW_{i, t})_{i \in [n], t \geq 0}$ 
satisfying  \ref{hp:W1} \dash \ref{hp:W4} and independent of the field $\Psi$ introduced in Definition~\ref{def:recoverPhi}.
For each $t \geq 0$ and each $i \in [n]$, define the measure $\mu_{\gamma, t, i}$ by
\begin{equation*}
\mu_{\gamma, t, i}(dx) \eqdef e^{\gamma \rmW_{i, t}(x)} \mu_{\gamma, t}(dx) \;.
\end{equation*}
Consider the $\R^n$-valued random variable $\bfZ_{\gamma}$ such that, for each $i \in [n]$, the $i$-th component $\bfZ_{\gamma, i}$ is given by
\begin{equation}
\label{e:defbZ}
\bfZ_{\gamma, i} \eqdef \int_{\R^d} \exp\bigl(\gamma\bigl(\Psi(y) + \rmW_i(y)\bigr)\bigr) dy \;.
\end{equation}
Let $(x_j, w_j)_{j \in \N} \subseteq \R^d \times [0, \infty]$ be an arbitrary enumeration of the atoms of the Poisson point measure $\eta_{\gamma}[\mu_{\gammac}]$ as introduced in Definition~\ref{def:PPP}. Consider the collection of measures $(\mu_{\gamma, i})_{i \in [n]}$ constructed as follows
\begin{equation*}
\mu_{\gamma, i}(dx) \eqdef a_{\star}^{\frac{\gamma}{\sqrt{\smash[b]{2d}}}} \sum_{j \in \N} \bfZ_{\gamma, i, j} w_j \delta_{x_j} \;, \qquad \forall \, i \in [n]\;,
\end{equation*}
where $\smash{((\bfZ_{\gamma, i, j})_{i \in [n]})_{j \in \N}}$ is a collection of i.i.d.\ copies of $\smash{(\bfZ_{\gamma, i})_{i \in [n]}}$, independent of $\eta_{\gamma}[\mu_{\gammac}]$. Then, the sequence of measures $(\mu_{\gamma, t, i})_{i\in [n]}$ converges $\sigma(\rmX)$-stably to the collection of measures $(\mu_{\gamma, i})_{i\in [n]}$ as $t \to \infty$.
\end{theoremA}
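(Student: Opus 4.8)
The plan is to combine the point-process description of the extremes of $(\rmX_t)$ underlying Theorem~\ref{th:GlassyEx} with the local shape description of Theorem~\ref{th:cluster}, organised around a two-scale decomposition of the field. Fix a large intermediate scale $t_0$ and write $\rmX_t = \rmX_{t_0} + \rmX_{t_0, t}$; since $\rmX_{t_0, t}$ is independent of $\CF_{t_0}$ and, by $\star$-scale invariance, is a spatially rescaled copy of $\rmX_{t - t_0}$, this separates in $\mu_{\gamma, t, i}$ the deterministic prefactor of \eqref{e:norm_super}, an $\CF_{t_0}$-measurable ``height'' factor $e^{\gamma \rmX_{t_0}(x)}$, and a microscopic factor $e^{\gamma(\rmX_{t_0, t}(x) + \rmW_{i, t}(x)) - \frac{\gamma^2}{2} t}$. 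Using the maximum and right-tail estimates of \cite{Madaule_Max} (the same inputs behind Theorem~\ref{th:clusterProb} and Proposition~\ref{pr:AsyProb}), I would first show that, up to an error vanishing as $t \to \infty$ and then $R, L \to \infty$, the measure $\mu_{\gamma, t, i}$ is carried by a union of balls $B(x_k, e^{-t + R})$ indexed by a finite set of well-separated ``$L$-high points'' $x_k$ of $\rmX_{t_0, t}$ at scale $e^{-t}$ (points within $L$ of the recentred maximum of $\rmX_{t_0,t}$ over the relevant region, and mesoscopically maximal there), and that $\mu_{\gamma, t, i}\bigl(B(x_k, e^{-t + R})\bigr)$ factorises into the $\CF_{t_0}$-measurable height contribution $m_k$ at $x_k$ times a ``cluster integral'' depending only on $\rmX_{t_0, t}$ near $x_k$; by \ref{hp:W3} together with the short-range correlations of the $\star$-scale invariant field, these cluster integrals are asymptotically mutually independent over $k$, asymptotically independent of $\CF_{t_0}$, and carry the $\rmW_{i, t}$ through a multiplicative factor $e^{\gamma \rmW_{i, t}}$.

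The second step identifies the cluster integral. For a fixed high point $x_k$, the rescaled, recentred local field $y \mapsto \rmX_{t_0, t}(x_k + e^{-t} y) - \rmX_{t_0, t}(x_k)$, conditioned on $x_k$ being an $L$-high point, is close in law to the field $\tilde\Upsilon_{\!\lambda}$ of Theorem~\ref{th:cluster}, in the refined ($B$-conditioned) form of Proposition~\ref{pr:AsyConv}. The $\tilde\Upsilon_{\!\lambda}$-picture over-counts, since one cluster contains several $L$-high points; the resampling property of Proposition~\ref{pr:resempPropUps} and the inversion formula of Proposition~\ref{pr:inversionPsi} let one pass to the canonical $\Psi$-picture, one copy of $\Psi$ per cluster, with a reweighting by $\int e^{\sqrt{2d}\Psi(x)} \one_{\{\Psi(x) \ge -\lambda\}}\, dx$. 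Combined with \ref{hp:W2}--\ref{hp:W3} for $\rmW_{i, t}$, and a tail estimate on $\int e^{\gamma(\Psi(y) + \rmW_i(y))}\, dy$ following from \ref{hp:W4} together with the integrability of $\Psi$, this yields that the cluster integral attached to $x_k$ converges in law, jointly over $i \in [n]$, to $\bfZ_{\gamma, i} = a_\star^{\gamma/\sqrt{2d}} \int e^{\gamma(\Psi(y) + \rmW_i(y))}\, dy$ as in \eqref{e:defbZ}, where the constant $a_\star$ of \eqref{eq:defAStar} packages the $-\tfrac{3}{2\sqrt{2d}}\log b$ correction in $\frkm_b$, the Seneta--Heyde constant $\alpha = \sqrt{2/\pi}$, the probability constant $c_{\star, \lambda}$ of Theorem~\ref{th:clusterProb}, and the tilting normalisation; Lemma~\ref{lm:AStarIndepLambda} (together with Proposition~\ref{pr:PsiIndepLambda}) guarantees this is consistent across $\lambda \in \Lambda$.

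For the third step, Steps 1--2 reduce the problem to the convergence of the marked point measure $\sum_k \delta_{(x_k,\, m_k,\, \vartheta^{(1)}_k,\, \dots,\, \vartheta^{(n)}_k)}$, where $\vartheta^{(i)}_k$ is the cluster integral for the $i$-th measure. By the arguments behind Theorem~\ref{th:GlassyEx} (see also \cite{Madaule_Max}), the ground process $\sum_k m_k\, \delta_{x_k}$ converges, jointly with $\rmX$, to $\int_0^\infty z\, \eta_\gamma[\mu_{\gammac}](dx, dz) = \CP_\gamma[\mu_{\gammac}]$, with $\mu_{\gammac}$ the critical GMC arising as the ($\sigma(\rmX)$-measurable) limit of its derivative-martingale approximations built from $\rmX_{t_0}$, and with the Gumbel right tail of the recentred maximum producing the intensity $z^{-(1 + \gammac/\gamma)}\, dz$. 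By Step 2 the marks $(\vartheta^{(i)}_k)_{i}$ are asymptotically i.i.d.\ copies of $(\bfZ_{\gamma, i})_i$, independent of the ground process and of $\rmX$; hence, letting $t \to \infty$ and then $R, L, t_0 \to \infty$ and recombining $\mu_{\gamma, t, i} \approx \sum_k m_k\, \vartheta^{(i)}_k\, \delta_{x_k}$, one obtains that $(\mu_{\gamma, t, i})_{i \in [n]}$ converges, jointly with $\rmX$, to $\bigl(\sum_j \bfZ_{\gamma, i, j} w_j \delta_{x_j}\bigr)_{i \in [n]}$ with $(x_j, w_j)_j$ the atoms of $\eta_\gamma[\mu_{\gammac}]$ and $(\bfZ_{\gamma, i, j})_j$ independent copies of $\bfZ_{\gamma, i}$ independent of everything else --- which is precisely the asserted $\sigma(\rmX)$-stable convergence to $(\mu_{\gamma, i})_{i \in [n]}$.

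The main obstacle is the second step: controlling, uniformly in the high point and in $t$, the convergence of the conditioned local field to $\tilde\Upsilon_{\!\lambda}$ strongly enough to transfer it to the exponential integrals --- this requires uniform integrability of $\int e^{\gamma(\Upsilon_{\!b}(y) + \rmW_i(y))}\, dy$ and careful handling of the nested limits $t \to \infty$, $R \to \infty$, $L \to \infty$, $t_0 \to \infty$ --- together with the bookkeeping that converts the over-counted $\tilde\Upsilon_{\!\lambda}$-clusters into the single canonical $\Psi$ and matches every normalisation constant through \eqref{eq:defAStar}. Bounding the contribution of atypically large or atypically close clusters, establishing the asymptotic decorrelation of clusters sitting in distinct mesoscopic boxes, and transferring from vague convergence of the truncated measures to the topology in which stable convergence is formulated, are the remaining points needing care.
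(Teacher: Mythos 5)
Your proposal is conceptually aligned with the paper's proof and relies on the same core ingredients: a two-scale decomposition of $\rmX$, localisation of $\mu_{\gamma,t,i}$ near mesoscopic maxima, the shape-field description from Theorem~\ref{th:cluster} in its refined form (Propositions~\ref{pr:AsyConv}, \ref{pr:AsyProb}), passage from $\tilde\Upsilon_{\!\lambda}$ to the canonical $\Psi$ via the resampling/inversion identities (Propositions~\ref{pr:resempPropUps}, \ref{pr:inversionPsi}), and the emergence of the critical GMC at the intermediate scale. Where you differ organisationally is in the packaging. You frame everything as the convergence of a marked extremal point process $\sum_k \delta_{(x_k,\, m_k,\, \vartheta^{(1)}_k,\dots,\vartheta^{(n)}_k)}$, in the spirit of \cite{BiskupLouidor}. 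The paper instead works entirely at the level of Laplace functionals: it reduces Theorem~\ref{th:stableConv} to the joint Fourier--Laplace computation of Proposition~\ref{pr:laplaceJoint} (via Lemmas~\ref{lm:stableRV}, \ref{lm:jointConv}, and formula \eqref{eq:LaplaceCompInt}), proves that proposition by conditioning on $\CF_{s_n}$ (Lemma~\ref{lem:Laplace}), and proves the conditional statement by a cube decomposition into nearly independent blocks (Lemma~\ref{lem:magic}, Figure~\ref{fig:deco}) feeding into Proposition~\ref{pr:joint}, whose proof rests on the renewal theorem of \cite{Madaule_Max}. Both routes exploit the identical short-range decorrelation via \ref{hp:W3} and \ref{hp_K2}; yours would have to build the analogue of the cube decomposition to get asymptotic independence of cluster integrals, and the analogue of the Madaule renewal input to control the contribution of a single high point --- these are where the actual work lives, and your sketch names them as obstacles but does not resolve them.

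One point deserves a sharper flag: the $\sigma(\rmX)$-stability is where your third step is thinnest. You write that ``by the arguments behind Theorem~\ref{th:GlassyEx}'' the ground process $\sum_k m_k \delta_{x_k}$ converges \emph{jointly with $\rmX$} to $\CP_\gamma[\mu_{\gammac}]$. Theorem~\ref{th:GlassyEx} only gives convergence in law of the measure; upgrading to stable convergence (joint with $\rmX$) is precisely what requires the conditional-Laplace device of Lemma~\ref{lem:Laplace} together with the martingale convergence used in the final proof of Proposition~\ref{pr:laplaceJoint}. Your parenthetical that $\mu_{\gammac}$ arises as ``the ($\sigma(\rmX)$-measurable) limit of its derivative-martingale approximations built from $\rmX_{t_0}$'' gestures at the right fact, but a full argument needs to exhibit a characteristic that is both $\CF_{t_0}$-measurable at the approximation stage and asymptotically $\sigma(\rmX)$-determined, and then show that the microscopic marks decouple from this entire filtration rather than merely from the single field $\rmX_{t_0}$. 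In the paper this is accomplished in a few lines once the conditional Laplace functional is available; in your set-up it would need to be spelled out separately.
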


\begin{remark}
Let $\bfZ_{\gamma}$ be the random variable defined by 
\begin{equation*}
\bfZ_{\gamma} \eqdef \int_{\R^d} \exp\bigl(\gamma\bigl(\Psi(y)\bigr)\bigr) dy \;.
\end{equation*}
 Then, as follows from the proof of Proposition~\ref{pr:laplaceJoint} below, we have $\smash{\E[\bfZ_{\gamma}^{\sqrt{\smash[b]{2d}}/\gamma}] < \infty}$. In particular, this implies that $\bfZ_{\gamma}$ is almost surely finite.
\end{remark}

\begin{remark}
Theorem~\ref{th:stableConv} is significantly more general than \cite[Theorem~2.2]{Glassy}.
 For instance, taking $n = 1$ and $\rmW_{1, \cdot} = 0$, we not only 
recover Theorem~\ref{th:GlassyEx} but also obtain a relatively explicit representation for the multiplicative constant appearing in front of the limiting measure. Indeed, we deduce that the constant $c$ appearing in \cite[page~646]{Glassy} (and for which
only an existence statement is provided there) is given by
\begin{equation*}
c = a_\star^{\frac{\gamma}{\sqrt{\smash[b]{2d}}}} \E\bigl[\bfZ_{\gamma}^{\sqrt{\smash[b]{2d}}/\gamma}\bigr]^{\frac{\gamma}{\sqrt{\smash[b]{2d}}}} \;.
\end{equation*}
\end{remark}

For $\gamma > \sqrt{\smash[b]{2d}}$, we define the constant $\beta(d, \gamma)$ by letting
\begin{equation}
\label{eq:defBetaDG}
\beta(d, \gamma) \eqdef \frac{\Gamma(1-\sqrt{\smash[b]{2d}}/\gamma)}{\sqrt{\smash[b]{2d}}/\gamma} \;.
\end{equation}

Theorem~\ref{th:stableConv} is a direct consequence of the following result, the proof of 
which is given in Section~\ref{sub:JointStable}, where we compute the joint Laplace transform 
of the collection of measures $(\mu_{\gamma, t, i})_{i \in [n]}$. In what follows, for a measure 
$\nu$ on $\R^d$ and a function $f:\R^d \to \R$, we write $\nu(f)$ to denote the integral of 
$f$ against $\nu$.

\begin{proposition}
\label{pr:laplaceJoint}
Consider the same setting described in Theorem~\ref{th:stableConv}. Consider the mapping $\rmT_{\gamma}: (\CC^{+}_{c}(\R^d))^n \to \CC^{+}_{c}(\R^d)$ defined by
\begin{equation}
\label{eq:defSTGamma}
\rmT_{\gamma}\bigl[f_1, \ldots, f_n\bigr](\cdot) \eqdef \E\Biggl[\Biggl(\sum_{i = 1}^{n} f_i(\cdot) \int_{\R^d} \exp\bigl(\gamma\bigl(\Psi(y) + \rmW_i(y)\bigr)\bigr) dy\Biggr)^{\!\!\! \f{\sqrt{\smash[b]{2d}}}{\gamma}}\Biggr] \;.
\end{equation}
Then, for all $(\varphi, (f_i)_{i \in [n]}) \in \CC^{\infty}_c(\R^d) \times (\CC_c^{+}(\R^d))^n$, the following limit holds
\begin{equation*}
\lim_{t \to \infty} \E\Biggl[\exp\bigl(i \langle \rmX, \varphi \rangle \bigr) \prod_{i = 1}^{n} \exp\bigl(-\mu_{\gamma, t, i}(f_i)\bigr)\Biggr] = \E\Bigl[\exp\bigl(i \langle \rmX, \varphi \rangle \bigr) \exp\bigl(- \tilde a_{\star} \mu_{\gammac}\bigl(\rmT_{\gamma}[f_1, \ldots, f_n]\bigr)\bigr)\Bigr] \;,
\end{equation*}		
where $\tilde a_{\star} = \beta(d, \gamma) a_{\star} > 0$ with $a_{\star}$ as defined in \eqref{eq:defAStar}.
\end{proposition}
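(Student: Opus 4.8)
The plan is to compute the joint Laplace transform by a peeling/decomposition argument that localises the mass of $\mu_{\gamma, t}$ near the mesoscopic maxima of $\rmX_t$, replaces the local shape of the field around each such maximum by the canonical field $\Psi$ (via Theorem~\ref{th:cluster} and Theorem~\ref{th:clusterProb}), and then identifies the resulting limit with a Poissonian functional of the critical GMC $\mu_{\gammac}$.

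First I would decompose the field at an intermediate scale: write $\rmX_t = \rmX_r + \rmX_{r,t}$ for $1 \ll r \ll t$, and partition $\R^d$ into a grid of boxes of side $e^{-r}$. On each such box $Q$, the measure $\mu_{\gamma, t, i}$ restricted to $Q$ is, up to the renormalisation constants, governed by the maximum of $\rmX_{r,t}$ over $Q$ (shifted by $\rmX_r$, which is essentially constant on $Q$). Using the $\star$-scale invariance, $\rmX_{r,t}$ restricted to a box of side $e^{-r}$ is, after rescaling, a copy of $\rmX_{t-r}$ on a box of side one; so Madaule's result (the display before Definition~\ref{def:harmFcts}) tells us that $\sup_Q \rmX_{r,t} - \frkm_{t-r}$ converges to $\rmG + \log \mu_{\gammac}$ of the appropriate box. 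The key point is that, conditionally on being near its maximum, the profile of $\rmX_{r,t}$ around its argmax is, after recentering and rescaling, precisely the field $\Upsilon_{\!b}$ conditioned on $\M_{0,b}(\Upsilon_{\!b}) \le \lambda$, which by Theorem~\ref{th:cluster} converges to $\tilde\Upsilon_{\!\lambda}$, and hence (after the tilt of Definition~\ref{def:recoverPhi} and Proposition~\ref{pr:inversionPsi}) to $\Psi$. This is where the constants $\alpha\, c_{\star,\lambda}$ of Theorem~\ref{th:clusterProb} and the normalisation $\E[\int e^{\sqrt{2d}\Psi}\one_{\{\Psi \ge -\lambda\}}]$ of \eqref{eq:defAStar} enter, combining into $a_\star$; the extra $\beta(d,\gamma)$ comes from integrating out the Gumbel variable $\rmG$ (an exponential-type integral $\int_0^\infty z^{-1-\sqrt{2d}/\gamma}\,(\cdots)\,dz$ producing $\Gamma(1-\sqrt{2d}/\gamma)/(\sqrt{2d}/\gamma)$, cf.\ \eqref{eq:defBetaDG}).

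Next I would assemble the boxes. Conditionally on $\rmX_r$ (equivalently, on the position and heights of the mesoscopic maxima, which are asymptotically driven by $\mu_{\gammac}$), the contributions of distinct boxes are asymptotically independent — this uses \ref{hp:W3} and the finite-range structure of the seed covariance \ref{hp_K2}, so that $\rmW_{i,t}$ at well-separated points decorrelates. Each box contributes, in the limit, a term of the form $\exp(-\mu_{\gamma,i}(f_i\one_Q))$ where inside box $Q$ the mass is a single atom of size (schematically) $e^{\gamma \rmX_r(x_Q)} \cdot (\text{local factor})$, and summing the logarithms over boxes and passing to the limit turns the product over boxes into $\exp(-\tilde a_\star \mu_{\gammac}(\rmT_\gamma[f_1,\dots,f_n]))$. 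The function $\rmT_\gamma[f_1,\dots,f_n](x)$ is exactly the $\sqrt{2d}/\gamma$-moment of $\sum_i f_i(x)\int e^{\gamma(\rmW_i(y)+\Psi(y))}dy$ because the Poisson point measure $\eta_\gamma[\mu_{\gammac}]$ has intensity $z^{-1-\sqrt{2d}/\gamma}dz$ in the mass variable, and the Lévy–Khintchine computation $\E[\exp(-\sum_j g(x_j) w_j)] = \exp(-\text{const}\cdot\mu_{\gammac}(g^{\sqrt{2d}/\gamma}))$ for such a PPP produces precisely this moment; the i.i.d.\ copies $(\bfZ_{\gamma,i,j})_j$ and the factor $a_\star^{\gamma/\sqrt{2d}}$ in \eqref{e:defbZ} account for the per-atom local mass. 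The factor $\exp(i\langle \rmX,\varphi\rangle)$ is carried along unchanged because it is $\CF_r$-measurable in the limit $r\to\infty$ (it is a functional of the limiting distribution $\rmX$), which is what delivers the $\sigma(\rmX)$-stable, rather than merely distributional, convergence.

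The main obstacle will be making the two limits $t\to\infty$ and then $r\to\infty$ (the box scale) interchange rigorously: one must show that the contribution of boxes whose running maximum is far below $\frkm_{t-r}$ is negligible in the Laplace transform (a first-moment / truncation estimate, using \ref{hp:W4} to control $\E[e^{\gamma\rmW_i}]$ and the known right tail of the maximum of a log-correlated field from \cite{Madaule_Max}), that the ``bad'' boxes where two near-maxima are close together contribute nothing, and — most delicately — that the conditioned-profile approximation from Theorem~\ref{th:cluster}, which is stated for the idealised fields $\Upsilon_{\!b}$ and $\Upsilon_{\!\infty}$, can genuinely be substituted for the true local profile of $\rmX_{r,t}$ with errors that are uniform enough to survive summation over the $\sim e^{rd}$ boxes. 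Handling this uniformity is presumably where the bulk of the technical work in Section~\ref{sub:JointStable} lies, and where the auxiliary estimates of \cite{Madaule_Max} referenced in the text are invoked. Independence of $a_\star$ and of the whole right-hand side from $\lambda\in\Lambda$ (Lemma~\ref{lm:AStarIndepLambda}) is used to conclude, since the left-hand side manifestly does not involve $\lambda$.
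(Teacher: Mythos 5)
Your proposal is correct and follows essentially the same route as the paper: a two-scale decomposition $\rmX_t = \rmX_s + \rmX_{s,t}$, a grid of near-independent boxes of side $e^{-s}$ (Lemma~\ref{lem:magic}/Proposition~\ref{pr:joint}), replacement of the local shape near a mesoscopic maximum by $\Psi$ via Theorems~\ref{th:cluster}, \ref{th:clusterProb} and the tilt of Proposition~\ref{pr:inversionPsi}, the L\'evy--Khintchine computation \eqref{eq:LaplaceCompInt} producing $\beta(d,\gamma)$ and the $\sqrt{2d}/\gamma$-moment defining $\rmT_\gamma$, and then conditioning on $\CF_{s_n}$ (Lemma~\ref{lem:Laplace}) together with martingale and dominated convergence to carry the $\exp(i\langle\rmX,\varphi\rangle)$ factor along and obtain $\sigma(\rmX)$-stability. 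The one place you are slightly informal is the last step: the paper does not treat $\exp(i\langle\rmX,\varphi\rangle)$ as ``$\CF_r$-measurable in the limit'' but rather approximates it by $\exp(i\langle\rmX_u,\varphi\rangle)$ for $u < s_n$, which \emph{is} $\CF_{s_n}$-measurable, and then sends $u\to\infty$ by dominated convergence at the very end.
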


%%%%%%%%%%%%%%%%%%%%%%%%%%%%%%%%%%%%%%%%%%%%%%
\subsection{A measure-valued process}
\label{sub:measureValuedPr}
As we have already briefly mentioned, an interesting situation where Theorem~\ref{th:stableConv} can be applied is when the admissible fields are given by the increments of the martingale approximation of the $\star$-scale invariant field $\rmX$. More precisely, we note that for all $t$, $s \geq 0$ it holds that
\begin{equation*}
	\rmX_{t+s}(x) = \rmX_t(x) + \rmW_{s, t}(x) \;, \qquad \forall \, x \in \R^d  \;,
\end{equation*}
where $\rmW_{s, t}(\cdot) = \rmW_{s}(e^{t} \cdot)$ with $\rmW_s(\cdot)$ a centred Gaussian field independent of $\CF_t$ and such that 
\begin{equation*}
	\E\bigl[\rmW_s(x)\rmW_s(y)\bigr] = \int_0^{s}\frkK\bigl(e^{u}(x-y)\bigr) du \;, \qquad \forall \, x, y \in \R^d \;.
\end{equation*} 
For $\gamma > \sqrt{\smash[b]{2d}}$, the measure $\mu_{\gamma, t+s}$ can then be rewritten as 
\begin{equation}
\label{eq:decoMeasureSum}
\mu_{\gamma, t+s}(dx) = e^{\gamma (\rmW_{s, t}(x) - \sqrt{\smash[b]{2d}} s) + ds} \bigl((t+s)/t\bigr)^{\f{3\gamma}{2 \sqrt{\smash[b]{2d}}}}  \mu_{\gamma, t}(dx) \;.
\end{equation}

Furthermore, for any finite collection of non-negative numbers $(s_1, \ldots, s_n) \subseteq \R^{+}_0$, the family of fields $\smash{(\rmW_{s_i, t})_{i \in [n], t \geq 0}}$ satisfies \ref{hp:W1} \dash \ref{hp:W4}.
Therefore, by Theorem~\ref{th:stableConv} and Kolmogorov's extension theorem, there exists a process $(\nu_{\gamma, s})_{s \geq 0}$ taking values in the space of non-negative, locally finite measures on $\R^d$ such that the collection of measures $(\nu_{\gamma, s_1}, \ldots, \nu_{\gamma, s_n})$ is the $\sigma(\rmX)$-stable limit of $(\mu_{\gamma, t + s_1}, \ldots, \mu_{\gamma, t + s_n})$ as $t \to \infty$.

Let $\Psi$ denote the field appearing in \eqref{e:defbZ}. For all $s \geq 0$, we define the field $\Psi_s$ on $\R^d$ by letting 
\begin{equation}
\label{e:Psis}
\Psi_s(\cdot) = \Psi(e^{-s}\cdot) + \bigl(\rmW_s(e^{-s} \cdot) - \sqrt{\smash[b]{2d}} s\bigr)\;.
\end{equation}
As a consequence of \eqref{eq:decoMeasureSum}, we then have the following corollary of Theorem~\ref{th:stableConv}.

\begin{corollaryA}
\label{co:processMeasures}
For $\gamma > \sqrt{\smash[b]{2d}}$, consider the $\R$-valued process $(\bfW_{\gamma, s})_{s \geq 0}$ given by
\begin{equation}
\label{eq:weightProcess}
\bfW_{\gamma, s} \eqdef  \log\Biggl(\int_{\R^d} \exp\bigl(\gamma \Psi_s(x)\bigr) dx\Biggr) \;.
\end{equation}
Let $(x_j, w_j)_{j \in \N} \subseteq \R^d \times [0, \infty]$ be an arbitrary enumeration of the atoms of the Poisson point measure $\eta_{\gamma}[\mu_{\gammac}]$ as introduced in Definition~\ref{def:PPP}. Consider the collection of measures $(\nu_{\gamma, s})_{s \geq 0}$ defined as follows
\begin{equation}
\label{e:reweigh}
\nu_{\gamma, s} \eqdef a_{\star}^{\frac{\gamma}{\sqrt{\smash[b]{2d}}}} \sum_{j \in \N} e^{\bfW_{\gamma, s, j}} w_j \delta_{x_j} \;, \qquad \forall \, s \geq 0 \;,
\end{equation}
where $\smash{((\bfW_{\gamma, s, j})_{s \geq 0})_{j \in \N}}$ is a collection of i.i.d.\ copies of $\smash{(\bfW_{\gamma, s})_{s \geq 0}}$, independent of $\eta_{\gamma}[\mu_{\gammac}]$. Then, the collection of measures $(\mu_{\gamma, t + s})_{s \geq 0}$ converges $\sigma(\rmX)$-stably in the finite dimensional sense to the collection of measures $(\nu_{\gamma, s})_{s \geq 0}$ as $t \to \infty$.
\end{corollaryA}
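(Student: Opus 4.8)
The statement is a consequence of Theorem~\ref{th:stableConv}: I would feed into it the increments of the martingale approximation of $\rmX$, deduce the $\sigma(\rmX)$-stable convergence of the rescaled measures, and then read off the explicit description of the limit by matching \eqref{e:defbZ} with \eqref{eq:weightProcess} through a change of variables.

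\textbf{Verifying the hypotheses.} Fix $n \in \N$ and reals $0 \le s_1 < \cdots < s_n$. Set $\rmW_{i,t}(\cdot) \eqdef \rmX_{t,t+s_i}(\cdot)$ and $\rmW_i(\cdot) \eqdef \rmX_{s_i}(\cdot)$ (equivalently, the field $\rmW_{s_i}$ appearing in \eqref{e:Psis}), all realised on the same space-time white noise $\xi$ with the coupling coming from the martingale structure. I would then check \ref{hp:W1}\dash\ref{hp:W4}. Property~\ref{hp:W1} holds because $\rmX_{t,t+s_i}$ is a function of $\xi$ restricted to $\R^d \times [t,\infty)$, which is independent of the noise on $\R^d \times [0,t)$ underlying $\CF_t$. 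Property~\ref{hp:W2} follows from $\star$-scale invariance: a direct computation (cf.\ \eqref{eq:covs}) gives $\E[\rmX_{t,t+s_i}(x)\rmX_{t,t+s_j}(y)] = \int_0^{s_i \wedge s_j} \frkK(e^{u}e^{t}(x-y))\,du = \E[\rmX_{s_i}(e^t x)\rmX_{s_j}(e^t y)]$, so $(\rmX_{t,t+s_i}(\cdot))_i \eqlaw (\rmX_{s_i}(e^t \cdot))_i$ by joint Gaussianity, and each $\rmX_{s_i}$ is stationary as its covariance depends only on $x-y$. Property~\ref{hp:W3} holds because, when $|x-y| > e^{-s}$ with $s < t$, one has $\frkK(e^{r}(x-y)) = 0$ for all $r \ge t$ (as $\frkK$ is supported in the unit ball and $e^{r}|x-y| > 1$), so every covariance between $(\rmW_{i,t}(x))_i$ and $(\rmW_{j,t}(y))_j$ vanishes, forcing independence of the two Gaussian vectors. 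Finally \ref{hp:W4} holds since $\rmW_i(x)$ is centred Gaussian of variance $\int_0^{s_i}\frkK(0)\,du = s_i$, so $\E[e^{\gamma \rmW_i(x)}] = e^{\gamma^2 s_i/2} < \infty$ uniformly in $x$.

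\textbf{From Theorem~\ref{th:stableConv} to the rescaled measures.} Theorem~\ref{th:stableConv} now yields that $(\mu_{\gamma,t,i})_{i\in[n]}$, with $\mu_{\gamma,t,i}(dx) = e^{\gamma \rmW_{i,t}(x)}\mu_{\gamma,t}(dx)$, converges $\sigma(\rmX)$-stably to $(\mu_{\gamma,i})_{i\in[n]}$. By \eqref{eq:decoMeasureSum} there is the deterministic identity $\mu_{\gamma,t+s_i}(dx) = \kappa_{t,i}\,\mu_{\gamma,t,i}(dx)$ with $\kappa_{t,i} \eqdef e^{(d-\gamma\sqrt{\smash[b]{2d}})s_i}\bigl((t+s_i)/t\bigr)^{3\gamma/(2\sqrt{\smash[b]{2d}})} \to \kappa_i \eqdef e^{(d-\gamma\sqrt{\smash[b]{2d}})s_i}$ as $t \to \infty$. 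Rescaling each coordinate by a deterministic scalar that converges preserves $\sigma(\rmX)$-stable convergence, so $(\mu_{\gamma,t+s_i})_{i\in[n]}$ converges $\sigma(\rmX)$-stably to $(\kappa_i\mu_{\gamma,i})_{i\in[n]}$; concretely one tests against $(\varphi,(f_i)_i) \in \CC^{\infty}_c(\R^d) \times (\CC^{+}_{c}(\R^d))^n$ as in Proposition~\ref{pr:laplaceJoint}, writes $\mu_{\gamma,t+s_i}(f_i) = \mu_{\gamma,t,i}(\kappa_{t,i}f_i)$, and uses that $z \mapsto e^{-z}$ is $1$-Lipschitz on $[0,\infty)$ together with tightness of $\mu_{\gamma,t,i}$ on a fixed compact neighbourhood of $\bigcup_i \supp f_i$ (which follows from convergence in law) to replace $\kappa_{t,i}$ by $\kappa_i$ without affecting the limit.

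\textbf{Identifying the limit and main obstacle.} It remains to identify $(\kappa_i\mu_{\gamma,i})_{i\in[n]}$ with $(\nu_{\gamma,s_i})_{i\in[n]}$. Substituting $y = e^{-s_i}x$ in \eqref{eq:weightProcess} and using \eqref{e:Psis} gives $\int_{\R^d}\exp(\gamma\Psi_{s_i}(x))\,dx = e^{(d-\gamma\sqrt{\smash[b]{2d}})s_i}\int_{\R^d}\exp(\gamma(\Psi(y)+\rmW_{s_i}(y)))\,dy$, hence, recalling \eqref{e:defbZ}, $a_{\star}^{\gamma/\sqrt{\smash[b]{2d}}}e^{\bfW_{\gamma,s_i}} = \kappa_i\bfZ_{\gamma,i}$, where $\rmW_{s_i}$ is exactly the stationary field fed into Theorem~\ref{th:stableConv}. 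This is a coordinatewise deterministic relation between the $\R^n$-valued vectors $(a_{\star}^{\gamma/\sqrt{\smash[b]{2d}}}e^{\bfW_{\gamma,s_i}})_i$ and $(\kappa_i\bfZ_{\gamma,i})_i$; applying it to the i.i.d.\ copies indexed by the atoms $(x_j,w_j)$ of $\eta_{\gamma}[\mu_{\gammac}]$ and forming the weighted atomic sums shows $\kappa_i\mu_{\gamma,i} = \nu_{\gamma,s_i}$ for all $i$ simultaneously, with the same independence from $\eta_{\gamma}[\mu_{\gammac}]$ on both sides. Since $n$ and $s_1,\dots,s_n$ were arbitrary, this is the asserted finite-dimensional $\sigma(\rmX)$-stable convergence. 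I expect the main (though still routine) obstacle to be this last bookkeeping step: verifying that the joint law, across both the time indices $i$ and the atom labels $j$, delivered by Theorem~\ref{th:stableConv} coincides with the one described through the single process $(\bfW_{\gamma,s})_{s\ge0}$. This reduces to the observation that the Gaussian family $(\rmW_{s_i})_i$ implicit in \eqref{e:Psis} is exactly the stationary limit of the increments $(\rmX_{t,t+s_i})_i$ used above, so the two couplings agree.
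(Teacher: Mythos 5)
Your proof is correct and follows exactly the approach the paper uses implicitly: the paper notes that the corollary follows from Theorem~\ref{th:stableConv} applied to the increment fields $\rmW_{i,t} = \rmX_{t,t+s_i}$ (which satisfy \ref{hp:W1}\dash\ref{hp:W4}, as the paper remarks) together with the deterministic identity \eqref{eq:decoMeasureSum}. Your proposal simply spells out the implicit steps in full, in particular the change-of-variables computation identifying $a_\star^{\gamma/\sqrt{\smash[b]{2d}}}e^{\bfW_{\gamma,s_i}}$ with $\kappa_i\bfZ_{\gamma,i}$ and the stability of $\sigma(\rmX)$-stable convergence under multiplication by deterministic scalars $\kappa_{t,i}\to\kappa_i$.
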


We conclude this section with a couple of remarks and conjectures.

\begin{remark}[Stationarity modulo tilt]
For $s \geq 0$, write $\smash{x_{s} = \argmax\{\Psi_s(x) \, : \, x \in \R^d\}}$, $\rmZ_s = \Psi_s(x_s)$, and $\tilde \Psi_s = \Psi_s(x_s + \cdot) - \Psi_s(x_s)$. Fix a finite collection of non-negative numbers $(s_0, \ldots, s_n) \subseteq \R^{+}_0$ such that  $s_0 \leq \ldots\leq s_n$, and consider the joint limit of $(\mu_{\gamma, t + s_i})_{i \in [n]}$ as $t \to \infty$. For all $(f_i)_{i \in [n]} \in (\CC_c^{+}(\R^d))^n$, their joint Laplace transform is given by the expression in Proposition~\ref{pr:laplaceJoint} with
\begin{align*}
\rmT_{\gamma}\bigl[f_1, \ldots, f_n\bigr](\cdot) &= \E\Biggl[\Biggl(\sum_{i = 1}^{n} f_i(\cdot) \int_{\R^d} \exp\bigl(\gamma \Psi_{s_i}(y)\bigr) dy\Biggr)^{\!\!\! \f{\sqrt{\smash[b]{2d}}}{\gamma}}\Biggr] \\
&= \E\Biggl[e^{\sqrt{\smash[b]{2d}} \rmZ_{s_0}}\Biggl(\sum_{i = 1}^{n} f_i(\cdot)e^{\gamma(\rmZ_{s_i}-\rmZ_{s_0})} \int_{\R^d} \exp\bigl(\gamma \tilde \Psi_{s_i}(y)\bigr) dy\Biggr)^{\!\!\! \f{\sqrt{\smash[b]{2d}}}{\gamma}}\Biggr]\;.
\end{align*}
This strongly suggests that the process $(\tilde \Psi_s)_{s \geq 0}$ is ``stationary modulo tilt''
in the sense that, for any non-decreasing sequence of non-negative times $(s_0, \ldots, s_n) \subseteq \R^{+}_0$, any $t \geq 0$, and any function $\bfF \in \CC^b((\CC(\R^d))^n)$, 
one has
\begin{equation}
\label{e:conjecture}
\E\bigl[e^{\sqrt{\smash[b]{2d}} \rmZ_{s_0}} \bfF(\tilde \Psi_{s_1},\ldots, \tilde \Psi_{s_n})\bigr]
=
\E\bigl[e^{\sqrt{\smash[b]{2d}} \rmZ_{s_0+t}} \bfF(\tilde \Psi_{s_1+t},\ldots, \tilde \Psi_{s_n+t})\bigr]\;.
\end{equation}
Now, consider the process $(\rho_t)_{t \geq 0}$ given by
\begin{equation}
\label{eq:defProcessRho}
	\rho_t \eqdef \sum_{j \in \N} \delta_{x_j} \otimes \delta_{w_j + \rmZ_{j, t}} \otimes \delta_{\tilde\Psi_{j, t}} \;, \qquad \forall \, t \geq 0 \;,
\end{equation}
where $\smash{(\rmZ_j, \tilde{\Psi}_j)_{j \in \N}}$ is a collection of i.i.d.\ copies of $\smash{(\rmZ, \tilde{\Psi})}$, and $(x_j, w_j)_{j \in \N}$ is an arbitrary enumeration of the Poisson point process with intensity $a_{\star} \gamma \mu_{\gamma_c}(dx) \otimes \exp(-\sqrt{\smash[b]{2d}} w) dw$, where $a_{\star}$ is the constant introduced in \eqref{eq:defAStar}.
Then, similarly to \cite[Proposition~3.1]{Stationary}, one can show that \eqref{e:conjecture} implies that the process $(\rho_t)_{t \geq 0}$ is stationary. 
\end{remark}

\begin{remark}[Full process convergence]
For $0 \ll b \ll t$, recalling \eqref{eq:recentering}, we define the measure $\rho_{t, b}$ on $[0, 1]^d \times \R \times \CC(\R^d)$ as follows 
\begin{equation*}
\rho_{t, b} \eqdef \sum_{x \in \R^d \, : \, \rmX_t(x) = \sup_{\abs{y-x} \leq e^{b-t}} \rmX_t(y)} \delta_x \otimes \delta_{\rmX_t(x) - \frkm_t} \otimes \delta_{\rmX_t(x + e^{-t} \cdot) - \rmX_t(x)} \;,
\end{equation*}
which roughly speaking keeps track of the locations of the maxima, their heights, and the shape of the field around them. Then, combining the results of the present article with the techniques of \cite{BiskupLouidor}, it should not be too hard to show, analogously to \cite[Theorem~2.1]{BiskupLouidor}, the convergence in law
\begin{equation*}
\lim_{b \to \infty}	\lim_{t \to \infty} \rho_{t, b} = \mathrm{PPP} \bigl(a_{\star} \gamma \mu_{\gammac}(dx) \otimes e^{-\sqrt{\smash[b]{2d}} w} dw \otimes \nu(d \phi)\bigr) \;,
\end{equation*}
where $\nu$ denotes the law of the field $\Psi$ on $\CC(\R^d)$, and $a_{\star}$ is the constant introduced in \eqref{eq:defAStar}. 
Furthermore, the discussion in the preceding paragraph strongly suggests that 
\begin{equation*}
\lim_{b \to \infty}	\lim_{t \to \infty} \rho_{t+\cdot, b} = \rho_\cdot\;,
\end{equation*}
where $\rho$ is the process introduced in \eqref{eq:defProcessRho}.
\end{remark}

%%%%%%%%%%%%%%%%%%%%%%%%%%%%%%%%%%%%%%%%%%%%%%
\subsection{Outline}
 The remainder of the paper is organised as follows. 
 In Section~\ref{sec:Setup}, we gather some background material that will be used throughout the paper. 
 Section~\ref{sec:cluster} focuses on the local structure of extremal points, and contains the proofs of Theorems~\ref{th:cluster} and~\ref{th:clusterProb}.
In Section~\ref{sec:resampling}, we establish key properties of the shape field, including a suitable resampling property, and prove Propositions~\ref{pr:inversionPsi}~and~\ref{pr:PsiIndepLambda}.
Section~\ref{sec:proofMainTh1} is dedicated to the proof of Theorem~\ref{th:stableConv}, which is based on Proposition~\ref{pr:laplaceJoint}. The proof of this proposition, in turn, relies on a key technical result, Proposition~\ref{pr:joint}, whose proof is given in Section~\ref{sec:Joint}.
We conclude the paper with three appendices. In Appendix~\ref{ap:BBEstimates}, we establish several estimates concerning the probability of a Brownian bridge remaining above a slowly growing positive or negative curve. Appendix~\ref{ap:proofbReg} contains the proof of a technical lemma used in Section~\ref{sec:Joint}. Finally, in Appendix~\ref{sec:GaussianTool}, we collect some standard results on Gaussian fields.

%%%%%%%%%%%%%%%%%%%%%%%%%%%%%%%%%%%%%%%%%%%%%%
%%%%%%%%%%%%%%%%%%%%%%%%%%%%%%%%%%%%%%%%%%%%%%
\section{Background and preliminaries}
\label{sec:Setup}
In this section, we collect some preliminary results needed for the proof of our main theorems. In particular, in  Section~\ref{sub:basic}, we collect some basic and recurrent notation that will be used throughout the paper. In Section~\ref{sub:topPrel}, we recall some standard results related to convergence in distribution of random measures, and in particular we briefly introduce the concept of stable convergence. Finally, in Section~\ref{sub:propStarScale}, we record some properties of $\star$-scale invariant fields and their martingale approximations.

%%%%%%%%%%%%%%%%%%%%%%%%%%%%%%%%%%%%%%%%%%%%%%
\subsection{Basic and recurrent notation}
\label{sub:basic}
\paragraph{Numbers.} We write $\N = \{1,2, \ldots\}$ and $\N_0 = \{0, 1, 2, \ldots\}$. We let $\R^{+} = (0, \infty)$ and $\R^{+}_0 = [0, \infty)$. Without specific mention, the logarithm will be taken with respect to the natural base $e$. For $a \in \R$, we use $\lfloor a \rfloor$ to represent the largest integer not greater than $a$. Given $n \in \N$, we write $[n] = \{1, \ldots, n\}$ and $[n]_0 = \{0, 1, \ldots, n\}$.

\paragraph{Subsets of Euclidean space.} We consider the space $\R^d$ where $d \geq 1$ is a fixed dimension. We let $(e_1, \ldots, e_d)$ be the orthonormal basis of $\R^d$. For $x \in \R^d$, we write $x = (x_1, \ldots, x_d)$ for its coordinates. For $r \in \R$ and $x\in\R^d$, we write $B(x, r)$ for the ball centred at $x$ and with radius $r$. Furthermore, we write 
\begin{equation}
\label{eq:expBalls}
\B_r(x) \eqdef B(x, e^r)	 \;, 
\end{equation}
and we simply write $\B_r$ for $\B_r(0)$. For every Lebesgue measurable set $\rmD \subseteq \R^d$, we denote its Lebesgue measure by $\abs{\rmD}$. 

\paragraph{Functions and measure spaces.} We write $\CC(\R^d)$ (resp.\ $\CC_c(\R^d)$, $\CC^b(\R^d)$) for the space of continuous (resp.\ continuous with compact support, continuous and bounded) functions from $\R^d$ to $\R$. We write $\CC^{+}_c(\R^d)$ for the space of positive continuous functions from $\R^d$ to $\R$ with compact support. Given a measure $\nu$ and a function $f$, we write $\nu(f)$ to denote the integral of $f$ against $\nu$.

\paragraph{Maxima and related sets.}For a subset $\rmD \subseteq \R^d$, a function $f:\rmD\to\R$, and $\lambda > 0$ we let
\begin{equation}
\label{eq:maximal}
\M_{\rmD}(f) \eqdef \sup_{x \in \rmD} f(x)\;, \qquad \D^{\lambda}_{\rmD}(f) \eqdef \bigl\{x \in \rmD \, : \, f(x) \geq \M_{\rmD}(f) - \lambda\bigr\} \;.
\end{equation}
For $\rmD = \R^d$, we simply write $\M(f)$ and $\D^{\lambda}(f)$. Additionally, if $\rmD = [0, \rmR]^d$ for some $\rmR \geq 0$, then we write $\M_{\rmR}(f)$ (resp.\ $\D^{\lambda}_{\rmR}(f)$) in place of $\M_{[0,\rmR]^d}(f)$ (resp.\ $\D^{\lambda}_{[0, \rmR]^d}(f)$). 
Furthermore, when it will be convenient to do so, we will use the following shorthands,
 \begin{equation}
 \label{eq:maximalExpBalls}
 \M_{x, r}(f) = \M_{\B_r(x)}(f)\;, \qquad \D^{\lambda}_{x, r}(f) = \D^{\lambda}_{\B_r(x)}(f) \;,
 \end{equation}
 and also, for $r_2 > r_1$, we set 
 \begin{equation}
 \label{eq:maximalExpAnnulus}
 \M_{x, r_2, r_1}(f) = \M_{\B_{r_2}(x) \setminus \B_{r_1}(x)}(f)\;, \qquad \D^{\lambda}_{x, r_2, r_1}(f) = \D^{\lambda}_{\B_{r_2}(x) \setminus \B_{r_1}(x)}(f) \;.
\end{equation} 

%%%%%%%%%%%%%%%%%%%%%%%%%%%%%%%%%%%%%%%%%%%%%%
\subsection{Topological preliminaries}
\label{sub:topPrel}
In this subsection, we collect some results on the convergence of random measures and introduce the concept of stable convergence, which plays an important role in our main result regarding the convergence of supercritical GMC measures.

\paragraph{Laplace functionals.}
It is well known that if $\eta$ is a random point measure on $\R^d$, then its law is uniquely characterised by its Laplace functional on the set $\CC_c^{+}(\R^d)$:
\begin{equation*}
\CC_c^{+}(\R^d) \ni \varphi \mapsto \E\bigl[\exp\bigl(-\eta(\varphi)\bigr)\bigr] \;.
\end{equation*}
We also recall that, if $\eta_{\nu}$ is a Poisson point measure with intensity measure $\nu$, then
\begin{equation*}
\E\exp\bigl[\bigl(-\eta_{\nu}(\varphi)\bigr)\bigr]  = \exp\left(-\int_{\R^d} \bigl(1-e^{-\varphi(x)}\bigr) \nu(dx)\right) \;.
\end{equation*}

\begin{remark}
\label{rem:comp_Laplace}
For $\gamma > \sqrt{\smash[b]{2d}}$ and a Radon measure $\nu$ on $\R^d$, let $\CP_{\gamma}[\nu]$ be the integrated atomic random measure with parameter $\gamma$ and spatial intensity $\nu$ as specified in Defnition~\ref{def:PPP}.
Then, in this case, for every $\varphi \in \CC_c^{+}(\R^d)$, it holds that
\begin{align}
\E\bigl[\exp\bigl(-\CP_{\gamma}[\nu](\varphi)\bigr)\bigr]
%& = \E\Biggl[\exp\Biggl(- \int_{\R^d \times \R^{+}}  \varphi(x) z \, \eta_{\gamma}[\nu](dx, dz) \Biggr)\Biggr] \\
%& = \exp\Biggl(- \int_{\R^d \times \R^{+}} \f{1-e^{-\varphi(x) z}}{z^{1+\sqrt{\smash[b]{2d}}/\gamma}} \nu(dx) dz \Biggr) \\
& = \exp\Biggl(- \beta(d, \gamma) \int_{\R^d} \varphi(x)^{\f{\sqrt{\smash[b]{2d}}}\gamma}  \nu(dx) \Biggr) \;, \label{eq:LaplaceCompInt} 
\end{align}
where we recall the definition \eqref{eq:defBetaDG} of the constant $\beta(d, \gamma)$.
\end{remark}

\paragraph{Convergence in distributions of random measures.}
For a locally compact Polish space $\CX$, we let $\CM^{+}(\CX)$ be the space of non-negative, locally finite measures on $\CX$ endowed with the topology of vague convergence.
We equip the space of probability distributions on $\CM^{+}(\CX)$ with the topology 
of weak convergence.
For a collection $(\nu_{t})_{t \geq 0}$ of $\CM^{+}(\CX)$-valued random variables, we write $\nu_t \Rightarrow \nu$ to indicate \emph{vague convergence in distribution}
as $t \to \infty$. 

We record here a useful criterion to establish the vague convergence in distributions of $\CM^{+}(\CX)$-valued random variables.
\begin{lemma}
\label{lem:conv_Laplace}
Let $(\nu_{t})_{t \geq 0}$ be a collection of $\CM^{+}(\CX)$-valued random variables and let $\nu \in \CM^{+}(\CX)$. If for all $f \in \CC_c^{+}(\CX)$, the following limit holds 
\begin{equation}
\label{eq:hpJointLap}
	\lim_{t \to \infty} \E\bigl[\exp\bigl(- \nu_{t}(f)\bigr)\bigr] = \E\bigl[\exp\bigl(- \nu(f)\bigr)\bigr] \;,
\end{equation}
then it holds that $\smash{\nu_{t} \Rightarrow \nu}$ as $t \to \infty$.
\end{lemma}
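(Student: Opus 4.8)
\textbf{Proof plan for Lemma~\ref{lem:conv_Laplace}.}
The plan is to reduce the statement to a standard characterisation of vague convergence in distribution via Laplace functionals, of the type found in Kallenberg's theory of random measures. First I would recall that the space $\CM^{+}(\CX)$ endowed with the vague topology is a Polish space (here $\CX$ is locally compact Polish, hence $\sigma$-compact), so that weak convergence of probability measures on $\CM^{+}(\CX)$ is metrisable and it suffices to check convergence of expectations of a convergence-determining class of bounded continuous test functionals on $\CM^{+}(\CX)$.

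The key step is to identify such a convergence-determining class with the family $\{\nu \mapsto \exp(-\nu(f)) : f \in \CC_c^{+}(\CX)\}$. I would argue this in two sub-steps. First, the maps $\nu \mapsto \nu(f)$ for $f \in \CC_c^{+}(\CX)$ are, by definition, continuous for the vague topology, so each $\nu \mapsto \exp(-\nu(f))$ is a bounded continuous functional; hence \eqref{eq:hpJointLap} is a genuine instance of testing against bounded continuous functionals. Second, and this is the substantive point, finite products $\prod_{j} \exp(-\nu(f_j)) = \exp(-\nu(\sum_j f_j))$ of such functionals are again of the same form (since $\CC_c^{+}(\CX)$ is closed under finite sums), so the family is an algebra; moreover it separates points of $\CM^{+}(\CX)$, because two locally finite measures with $\nu_1(f) = \nu_2(f)$ for all $f \in \CC_c^{+}(\CX)$ must coincide. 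By a Stone--Weierstrass-type argument (or by invoking directly the known fact that the Laplace functional determines the law of a random measure, e.g. via the one-dimensional convergence $\nu_t(f) \Rightarrow \nu(f)$ for each fixed $f$ together with a tightness/relative-compactness argument in $\CM^{+}(\CX)$), one upgrades \eqref{eq:hpJointLap} to joint convergence $(\nu_t(f_1),\ldots,\nu_t(f_k)) \Rightarrow (\nu(f_1),\ldots,\nu(f_k))$ for every finite family, which is exactly the statement of vague convergence in distribution.

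Concretely, the cleanest route I would take is: (i) observe that $\eqref{eq:hpJointLap}$ applied to $f = \sum_{j=1}^k \theta_j f_j$ with $\theta_j \ge 0$ and $f_j \in \CC_c^{+}(\CX)$ gives convergence of the multivariate Laplace transforms $\E[\exp(-\sum_j \theta_j \nu_t(f_j))] \to \E[\exp(-\sum_j \theta_j \nu(f_j))]$; (ii) since Laplace transforms of $[0,\infty)^k$-valued random vectors are continuous and determine the law, this yields $(\nu_t(f_1),\ldots,\nu_t(f_k)) \Rightarrow (\nu(f_1),\ldots,\nu(f_k))$ in $[0,\infty)^k$; (iii) conclude by the standard equivalence (Kallenberg) that convergence of all such finite-dimensional distributions of integrals against $\CC_c^{+}(\CX)$ test functions is equivalent to $\nu_t \Rightarrow \nu$ in $\CM^{+}(\CX)$. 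The main obstacle, such as it is, lies in step (iii): one must either quote the relevant result from the literature on random measures verbatim, or else supply the tightness argument showing that the laws of $(\nu_t)$ form a relatively compact family in the weak topology on $\CM^{+}(\CX)$ — this tightness follows from \eqref{eq:hpJointLap} because, for each fixed $f$, the real sequence $\E[\exp(-\nu_t(f))]$ converges to a limit that is the Laplace transform of a proper (a.s.\ finite) random variable, which rules out mass escaping to infinity in $\nu_t(f)$ and hence, as $f$ ranges over $\CC_c^{+}(\CX)$, pins down tightness in the vague topology. Given the elementary nature of the claim, I would simply cite the appropriate theorem (e.g. Kallenberg, \emph{Random Measures, Theory and Applications}, Theorem~4.11) rather than reprove it.
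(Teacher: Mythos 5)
Your proposal is correct and ultimately takes the same route as the paper: after sketching a longer argument via multivariate Laplace transforms, finite-dimensional distributions, and tightness, you conclude by citing Kallenberg, Theorem~4.11 (the continuity theorem for Laplace functionals of random measures), which is precisely what the paper's one-line proof does.
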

\begin{proof}
	This is an immediate consequence of the continuity theorem for Laplace transforms, see e.g.\ \cite[Theorem~4.11]{Kallenberg}.
\end{proof}

\begin{remark}
In what follows, we will be interested in the case $\CX = [n]\times \R^d$ which coincides (including the topology) with the space $(\CM^{+}(\R^d))^n$. 
\end{remark}

Given a locally compact Polish space $\CX$ and a Hilbert space $\CY$ equipped with a dense subspace $\CY_0$, we consider probability distributions on $\smash{\CY \times \CM^{+}(\CX)}$. Similarly to before, for a sequence $\smash{(\rmY, \nu_{t})_{t \geq 0}}$ of $\smash{\CY \times \CM^{+}(\CX)}$-valued random variables, we write $(\rmY, \nu_{t}) \Rightarrow (\rmY, \nu)$ to indicate \emph{vague convergence in distribution}. We now state the following result, which provides sufficient conditions for convergence in distribution on the space $\smash{\CY \times \CM^{+}(\CX)}$.

\begin{lemma}
\label{lm:jointConv}
Let $\smash{(\rmY, \nu_{t})_{t \geq 0}}$ be a sequence of $\smash{\CY \times \CM^{+}(\CX)}$-valued random variables, and let $\nu \in \CM^{+}(\CX)$. If for all $(\varphi, f) \in \CY_0 \times \CC_c^{+}(\CX)$ the following limit holds
\begin{equation}
\label{eq:criterionJoint}
	\lim_{t \to \infty} \E\Bigl[\exp\bigl(i \langle \rmY, \varphi \rangle\bigr) \exp\bigl( -\nu_{t}(f)\bigr) \Bigr] =  \E\Bigl[\exp\bigl(i \langle \rmY, \varphi \rangle\bigr) \exp\bigl( - \nu(f)\bigr) \Bigr] \;, 
\end{equation}
then it holds that $(\rmY, \nu_{t}) \Rightarrow (\rmY, \nu)$ as $t \to \infty$.
\end{lemma}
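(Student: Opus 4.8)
The plan is to deduce the joint convergence on $\CY \times \CM^+(\CX)$ from the one-dimensional characterisation of convergence in distribution, exploiting that the family of test functionals $(\rmY,\nu) \mapsto \exp(i\scal{\rmY,\varphi})\exp(-\nu(f))$, with $(\varphi,f)$ ranging over $\CY_0 \times \CC_c^+(\CX)$, is rich enough to separate laws on the product space. First I would recall that since $\CX$ is locally compact Polish, $\CM^+(\CX)$ with the vague topology is Polish, and hence so is $\CY \times \CM^+(\CX)$; thus weak convergence of probability measures on this space is metrisable and can be tested against bounded continuous functions. The key structural fact is that the law of a $\CM^+(\CX)$-valued random variable $\nu$ is uniquely determined by its Laplace functional $f \mapsto \E[\exp(-\nu(f))]$ on $\CC_c^+(\CX)$ (as already used for Lemma \ref{lem:conv_Laplace}), and that on the Hilbert-space factor the law of $\rmY$ is determined by its characteristic functional $\varphi \mapsto \E[\exp(i\scal{\rmY,\varphi})]$ on the dense subspace $\CY_0$.

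The main steps I would carry out are: (i) First establish \emph{tightness} of the family $(\rmY,\nu_t)_{t\geq 0}$ on $\CY \times \CM^+(\CX)$. Tightness in the product reduces to tightness of each marginal. For the $\CM^+(\CX)$ component, apply \eqref{eq:criterionJoint} with $\varphi = 0$, which reduces to the hypothesis of Lemma \ref{lem:conv_Laplace}: the convergence of Laplace functionals $\E[\exp(-\nu_t(f))] \to \E[\exp(-\nu(f))]$ for all $f \in \CC_c^+(\CX)$ forces $\nu_t \Rightarrow \nu$, in particular $(\nu_t)$ is tight. For the $\CY$ component, note that in the present applications $\rmY$ does not depend on $t$ (the same random variable appears on both sides of \eqref{eq:criterionJoint}), so its marginal law is trivially tight; more generally one would need a separate hypothesis, but here the statement is phrased with a fixed $\rmY$. (ii) Given tightness, extract from any subsequence a further subsequence along which $(\rmY,\nu_{t_k}) \Rightarrow (\rmY', \nu')$ for some limit law on $\CY \times \CM^+(\CX)$. (iii) Pass to the limit in \eqref{eq:criterionJoint} along this subsequence: the functional $(\y,\nu) \mapsto \exp(i\scal{\y,\varphi})\exp(-\nu(f))$ is bounded and continuous on $\CY \times \CM^+(\CX)$ for fixed $(\varphi,f) \in \CY_0 \times \CC_c^+(\CX)$ — continuity in the $\nu$ variable because $f \in \CC_c^+$ so $\nu \mapsto \nu(f)$ is vaguely continuous, continuity in the $\y$ variable because $\varphi \in \CY$ so $\y \mapsto \scal{\y,\varphi}$ is continuous on $\CY$ — hence $\E[\exp(i\scal{\rmY',\varphi})\exp(-\nu'(f))] = \lim_k \E[\exp(i\scal{\rmY,\varphi})\exp(-\nu_{t_k}(f))] = \E[\exp(i\scal{\rmY,\varphi})\exp(-\nu(f))]$, the last equality by the hypothesis. (iv) Conclude that the joint law of $(\rmY',\nu')$ coincides with that of $(\rmY,\nu)$: the equality of $\E[\exp(i\scal{\cdot,\varphi})\exp(-\cdot(f))]$ over all $(\varphi,f) \in \CY_0 \times \CC_c^+(\CX)$ determines the joint law, since finite products and linear combinations of such functionals form a point-separating family closed under multiplication, so a monotone-class / Stone–Weierstrass argument identifies the joint law uniquely. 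Since every subsequential limit equals the law of $(\rmY,\nu)$, the whole sequence converges, i.e.\ $(\rmY,\nu_t) \Rightarrow (\rmY,\nu)$.

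The step I expect to be the main (though modest) obstacle is step (iv): verifying that the collection of functionals $\exp(i\scal{\rmY,\varphi})\exp(-\nu(f))$ with $(\varphi,f) \in \CY_0 \times \CC_c^+(\CX)$ genuinely determines the joint law on the product space, rather than merely the two marginals. The cleanest route is to note that $\exp(i\scal{\y,\varphi})\exp(-\nu(f)) = \exp(i\scal{\y,\varphi}) \cdot \exp(-\nu(f))$ and to take \emph{linear combinations over several pairs} $(\varphi_1,f_1),\dots,(\varphi_m,f_m)$: products of the individual exponentials $\exp(i\sum_j \scal{\y,\varphi_j})$ are again of the required form (since $\CY_0$ is a linear subspace and sums of functions in $\CC_c^+$ stay in $\CC_c^+$), and the resulting algebra of functions is closed under multiplication, contains the constants, separates points of $\CY \times \CM^+(\CX)$ — the $\CF$-part separates points of $\CY$ because $\CY_0$ is dense, the Laplace part separates measures — and one invokes the standard fact that two Borel probability measures on a Polish space agreeing on such a multiplicative, point-separating class of bounded continuous functions must coincide. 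This is a routine application of the monotone class theorem but worth spelling out, since the lemma is precisely the tool used to bootstrap from the Laplace-transform computation in Proposition \ref{pr:laplaceJoint} to the stable convergence asserted in Theorem \ref{th:stableConv}.
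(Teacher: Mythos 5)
Your argument is correct, and its overall skeleton matches the paper's: prove tightness by setting $\varphi=0$ and invoking Lemma~\ref{lem:conv_Laplace}, then identify the limit by showing that the functionals $\exp(i\scal{\cdot,\varphi})\exp(-\cdot(f))$ pin down the joint law. Where you diverge is in \emph{how} the limit is identified. You stay at the level of the infinite-dimensional product space and invoke a Stone--Weierstrass/monotone-class argument for the multiplicative, point-separating, conjugation-closed algebra of functionals — this is valid, and your observation that the class is closed under products (because $\CY_0$ is a linear subspace and $\CC_c^+$ is closed under addition) is exactly right; but the ``standard fact'' you appeal to, that two Borel probability measures on a non-compact Polish space agreeing on such an algebra must coincide, is itself not entirely elementary (its proof runs through inner regularity plus Stone--Weierstrass on compacta, or a Souslin-type argument to show $\sigma(\CA)=\CB$). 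The paper instead exploits the linearity of $\y\mapsto\scal{\y,\varphi}$ and $\nu\mapsto\nu(f)$ to reduce the finite-dimensional distributions (via a Cram\'er--Wold-style aggregation of $\varphi_1,\dots,\varphi_k$ into $\varphi=\sum a_i\varphi_i$ and $f_1,\dots,f_m$ into $f=\sum b_jf_j$) to the single bivariate random variable $(\scal{\rmY,\varphi},\nu_t(f))$, whose law is determined by its joint Fourier--Laplace transform — a classical one-or-two-dimensional uniqueness statement, precisely because the second coordinate is almost surely non-negative. That reduction keeps the identification step elementary and avoids the general Polish-space Stone--Weierstrass machinery you would need to make your step~(iv) fully rigorous. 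Both approaches reach the same conclusion; yours trades a linearity observation for a heavier abstract lemma.
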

\begin{proof}
Taking $\varphi = 0$ in \eqref{eq:criterionJoint} and using Lemma~\ref{lem:conv_Laplace}, we deduce that $\nu_{t} \Rightarrow \nu$. Consequently, the joint distribution $(\rmY, \nu_{t})$ is tight in $\smash{\CY \times \CM^{+}(\CX)}$. Thus, the joint convergence in distribution follows if we can show the convergence of the finite-dimensional distributions. As both $\rmY$ and $\nu_{t}$ are linear forms, the convergence of the finite-dimensional distributions can be inferred from that of the one-dimensional distributions. Therefore, it suffices to verify that, for all $\smash{(\varphi, f) \in \CY_0 \times \CC_c^{+}(\CX)}$, the following convergence holds
\begin{equation}
\label{eq:jointFinDim}
\bigl(\langle \rmY, \varphi \rangle, \nu_{t}(f)\bigr) \Rightarrow \bigl(\langle \rmY, \varphi \rangle, \nu(f)\bigr)  \;.
\end{equation}
Since the random variable $\smash{\nu(f)}$ is almost surely non-negative, it can be readily observed that the joint convergence in distribution \eqref{eq:jointFinDim} holds if the corresponding joint Fourier--Laplace transform converges, i.e.\ if \eqref{eq:criterionJoint} holds. 
\end{proof}

\begin{remark}
In what follows, we will be interested in the case $\CX = [n]\times \R^d$ and $\CY = \CH_{\loc}^{-\kappa}(\R^d)$, for some $\kappa > 0$, in which case we can take $\CY_0 = \CC^{\infty}_{c}(\R^d)$. 
\end{remark}

We conclude this section with a simple fact about stable convergence of random measures.
\begin{lemma}
\label{lm:stableRV}
Consider the same setting described in Definition~\ref{def:stabelConv}. Furthermore, let $\CY$ be a Polish space and $\rmY$ a $\CY$-valued random variable. Then $\nu_{t}$ converges $\sigma(\rmY)$-stably to $\nu$ if and only if $\smash{(\rmY, \nu_{t}) \Rightarrow (\rmY, \nu)}$ as $t \to \infty$.
\end{lemma}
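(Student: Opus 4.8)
\textbf{Proof plan for Lemma~\ref{lm:stableRV}.}

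The statement asserts that $\sigma(\rmY)$-stable convergence of $\nu_t$ to $\nu$ is \emph{equivalent} to the joint convergence in distribution $(\rmY, \nu_t) \Rightarrow (\rmY, \nu)$. One direction is immediate from the definition: by Definition~\ref{def:stabelConv}, $\sigma(\rmY)$-stable convergence means that $(Z, \nu_t) \Rightarrow (Z, \nu)$ for \emph{all} $\sigma(\rmY)$-measurable random variables $Z$. Taking $Z = \rmY$ itself (which is trivially $\sigma(\rmY)$-measurable) yields $(\rmY, \nu_t) \Rightarrow (\rmY, \nu)$ at once. So the content of the lemma is the reverse implication: assuming only the single joint convergence $(\rmY, \nu_t) \Rightarrow (\rmY, \nu)$, deduce $(Z, \nu_t) \Rightarrow (Z, \nu)$ for every $\sigma(\rmY)$-measurable $Z$.

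For the reverse direction, the plan is as follows. First, by the Doob--Dynkin lemma, any $\sigma(\rmY)$-measurable random variable $Z$ (taking values in some Polish space) can be written as $Z = g(\rmY)$ for a measurable function $g$. It therefore suffices to show $(g(\rmY), \nu_t) \Rightarrow (g(\rmY), \nu)$ for every measurable $g$, given $(\rmY, \nu_t) \Rightarrow (\rmY, \nu)$. If $g$ were continuous this would be immediate from the continuous mapping theorem applied to the map $(y, m) \mapsto (g(y), m)$ on $\CY \times \CM^{+}(\CX)$. The main work is to upgrade from continuous $g$ to arbitrary measurable $g$. One clean way is to observe that it is enough to test against bounded continuous functionals of the form $(z, m) \mapsto h(z)\,\Xi(m)$ with $h$ bounded continuous on the target space and $\Xi$ bounded continuous on $\CM^{+}(\CX)$, since products of such functionals are convergence-determining on the product space; equivalently, one reduces to showing $\E[h(g(\rmY))\Xi(\nu_t)] \to \E[h(g(\rmY))\Xi(\nu)]$. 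Writing $\phi = h \circ g$, this is a bounded measurable (not necessarily continuous) function of $\rmY$, and we must show $\E[\phi(\rmY)\Xi(\nu_t)] \to \E[\phi(\rmY)\Xi(\nu)]$.

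To handle a bounded \emph{measurable} $\phi$, the key point is that the law of $\rmY$ is a fixed (tight, Radon) probability measure on the Polish space $\CY$, so by Lusin's theorem, for every $\eps > 0$ there is a bounded \emph{continuous} $\phi_\eps$ on $\CY$ with $\|\phi_\eps\|_\infty \le \|\phi\|_\infty$ and $\P(\phi(\rmY) \ne \phi_\eps(\rmY)) < \eps$. Then for each $t$,
\begin{equation*}
\bigl|\E[\phi(\rmY)\Xi(\nu_t)] - \E[\phi_\eps(\rmY)\Xi(\nu_t)]\bigr| \le 2\|\Xi\|_\infty \|\phi\|_\infty \, \P(\phi(\rmY)\ne\phi_\eps(\rmY)) < 2\|\Xi\|_\infty\|\phi\|_\infty\,\eps\;,
\end{equation*}
and similarly with $\nu_t$ replaced by $\nu$ (note $\rmY$ has the same law in the limit, so the same bound applies). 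Since $\phi_\eps \otimes \Xi$ is bounded and continuous on $\CY \times \CM^{+}(\CX)$, the hypothesis $(\rmY, \nu_t)\Rightarrow(\rmY,\nu)$ gives $\E[\phi_\eps(\rmY)\Xi(\nu_t)] \to \E[\phi_\eps(\rmY)\Xi(\nu)]$. Combining, $\limsup_t |\E[\phi(\rmY)\Xi(\nu_t)] - \E[\phi(\rmY)\Xi(\nu)]| \le 4\|\Xi\|_\infty\|\phi\|_\infty\,\eps$, and letting $\eps \to 0$ finishes the argument. Here $\Xi$ ranges over a convergence-determining family on $\CM^{+}(\CX)$ (e.g.\ $\Xi(m) = \exp(-m(f))$ for $f \in \CC_c^{+}(\CX)$, as in Lemma~\ref{lem:conv_Laplace}), which makes the reduction to product functionals rigorous. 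The main obstacle — really the only non-formal step — is precisely this approximation of a bounded measurable function of $\rmY$ by a continuous one, which hinges on the regularity (Radon property) of the law of $\rmY$ on the Polish space $\CY$; everything else is bookkeeping with the continuous mapping theorem and the portmanteau/convergence-determining characterisation of vague convergence in distribution.
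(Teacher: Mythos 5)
Your plan is correct, and the two directions are handled appropriately. The paper itself does not give a proof: it simply cites \cite[Exercise~3.11]{StableBook}, so there is no "paper's argument" to compare against line by line; what you have written is essentially the standard textbook argument spelled out. The forward implication ($Z=\rmY$) is trivial, as you note, and the real content is the reverse, which you reduce via Doob--Dynkin to $Z = g(\rmY)$ and then approximate $\phi = h\circ g$ by a continuous $\phi_\eps$ using Lusin (which applies since any Borel probability measure on a Polish space is Radon). The one place you could usefully add a sentence is the claim that products $h(z)\Xi(m)$ with $h, \Xi$ bounded continuous are convergence-determining on the Polish product $\CY'\times\CM^+(\CX)$: this follows because the first marginal of $(g(\rmY),\nu_t)$ is fixed and the second marginal converges by hypothesis (take $h\equiv 1$), so the joint laws are tight; tightness plus the fact that tensor products of bounded continuous functions separate measures on a product of Polish spaces (a $\pi$-system/Stone--Weierstrass argument) identifies the unique subsequential limit. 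As it stands you simply assert this; it is true, but it is the only non-formal step besides Lusin and deserves the same level of care you gave to the approximation argument.
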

\begin{proof}
See for instance \cite[Exercise~3.11]{StableBook}.
\end{proof}

%%%%%%%%%%%%%%%%%%%%%%%%%%%%%%%%%%%%%%%%%%%%%%
\subsection{Some properties of \texorpdfstring{$\star$}{*}-scale invariant fields}
\label{sub:propStarScale}
Let $\rmX$ be a $\star$-scale invariant field as defined in \eqref{eq:field} and with seed covariance function $\frkK$ satisfying assumptions \ref{hp_K1} \dash \ref{hp_K2}.
Recalling the covariance structure \eqref{eq:covs} of $\rmX$, we point out that there exists a smooth function $g:\R^d\times\R^d \to \R$ such that, for all $x$, $y \in \R^d$,
\begin{equation*}
\E\bigl[\rmX(x) \rmX(y)\bigr] = -\log |x-y| + g(x, y) \;,
\end{equation*}
or in other words, $\rmX$ is a log-correlated Gaussian field. Thanks to \cite{Junnila_deco}, it is known that a partial converse is true, i.e., given a log-correlated Gaussian field $\rmX$ with covariance of the form $-\log\abs{x-y} + g(x, y)$ for some function $g: \R^d \times \R^d \to \R$ satisfying certain (weak) regularity assumptions, then $\rmX$ can be decomposed as $\rmX = \rmX^{\star} + \rmL$, where $\rmX^{\star}$ is a $\star$-scale invariant field, $\rmL$ is a centred Gaussian field with H\"older regularity, and $\rmX^{\star}$ and $\rmL$ are jointly Gaussian.

We recall that $(\rmX_t)_{t \geq 0}$ denotes the martingale approximation of $\rmX$ as defined in \eqref{eq:fieldST}. For every $t \geq 0$, we recall the definition \eqref{eq:defSigmaFieldT} of the $\sigma$-field $\CF_t$.
The following properties are straightforward to check:
\begin{enumerate}
	\item For $0 \leq s < t$, the random field $\rmX_{s, t}$ is independent from the $\sigma$-field $\CF_{s}$. 
	\item For any fixed $x \in \R^d$, the process $(\rmX_t(x))_{t \geq 0}$ has the law of a standard Brownian motion.
	\item  For $0 \leq s < t$, the following scaling relation holds 
\begin{equation}
\label{e:scaling_rel}
\rmX_{s, t}(\cdot) \eqlaw \rmX_{t-s}(e^s \cdot)\;.
\end{equation}
\end{enumerate}

We now introduce a field that will play an import role in what follows. We recall that $\bar \frkK$ is the (unique) positive definite function such that the convolution of $\bar \frkK$ with itself equals $\frkK$.
\begin{definition}
\label{def:fieldsZ}
For $\xi'$ a space-time white noise on $\R^d \times \R^{+}$, we define the field $\rmZ_{\infty}$ on $\R^d$ by letting,
\begin{equation}
\label{eq:fieldZ}
\rmZ_{\infty}(\cdot) \eqdef \int_{0}^{\infty} \int_{\R^d} \bigl(\bar \frkK(e^{-r}(\cdot - y)) - \frkK(e^{-r}  \cdot)\bar\frkK(e^{-r} y)\bigr) e^{-\f{d r}{2}} \xi'(dy,dr) \;.
\end{equation}
Furthermore, for $0 \leq s < t$, we let $\rmZ_{s, t}$ be the field on $\R^d$ given by
\begin{equation}
\label{eq:fieldZST}
\rmZ_{s, t}(\cdot) \eqdef \int_{s}^{t} \int_{\R^d} \bigl(\bar \frkK(e^{-r}(\cdot - y)) - \frkK(e^{-r} \cdot)\bar\frkK(e^{-r} y)\bigr) e^{-\f{d r}{2}} \xi'(dy,dr) \;, 
\end{equation}
with the notational convention that $\rmZ_{0, t} = \rmZ_t$.  
\end{definition}

We observe that, for any $x$, $y \in \R^d$ and $s$, $t\geq 0$, it holds that 
\begin{align}
 \E\bigl[\rmZ_{\infty}(x) \rmZ_{\infty}(y)\bigr] &= \int_0^{\infty} \bigl(\frkK(e^{-r}(x - y)) - \frkK(e^{-r}x) \frkK(e^{-r} y)\bigr) dr \;, \label{e:covZinfty}\\
 \E\bigl[\rmZ_s(x) \rmZ_t(y)\bigr] &= \int_0^{s \wedge t} \bigl(\frkK(e^{-r}(x - y)) - \frkK(e^{-r}x) \frkK(e^{-r}y)\bigr) dr \;.	\label{e:covZb}
\end{align}
It is straightforward to check that the field $\rmZ_t(\cdot)$ converges weakly in law with respect to the local uniform topology in $\CC(\R^d)$ to $\rmZ_{\infty}(\cdot)$ as $t \to \infty$ (see e.g.\ \cite[Proposition~2.4]{Madaule_Max}). 

We also record here a decomposition result for $\star$-scale invariant fields originally stated in \cite{Critical_der} and which can be proved by standard computation of covariances.
\begin{lemma}[{\cite[Lemma~16]{Critical_der}}]
\label{lm:decoPointsStar}
For $z \in \R^d$, the field $(\rmX_t(x))_{t \geq 0, x \in \R^d}$ admits the following decomposition
\begin{equation*}
	\rmX_t(x) = \int_0^t \frkK\bigl(e^r(x-z)\bigr) d\rmX_r(z) + \rmZ^z_t(x) \;, \qquad \forall \, t \geq  0\;, \forall \, x \in \R^d \;,
\end{equation*}
where $\smash{(\rmZ_t^z(x))_{t \geq 0, x \in \R^d}}$ is a centred Gaussian field independent of $(\rmX_t(z))_{t \geq 0}$ and with the following covariance structure, 
\begin{equation*}
\E\bigl[\rmZ^z_s(x) \rmZ^z_t(y)\bigr] = \int_0^{s \wedge t} \bigl(\frkK(e^{r}(x - y)) - \frkK(e^{r}(x - z)) \frkK(e^{r}(y - z))\bigr) dr \;, \quad \forall \, s,t \geq 0, \; \forall \, x, y \in \R^d \;.
\end{equation*}
\end{lemma}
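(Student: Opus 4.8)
\emph{Proof proposal.} The plan is to verify the decomposition directly from the white-noise representation~\eqref{eq:fieldST}, using that every process in sight is a linear functional of the same white noise $\xi$ and hence jointly Gaussian, so that both the independence claim and the covariance identity reduce to computations of covariances. First, since $\E[\rmX_t(z)^2] = \int_0^t \frkK(0)\,dr = t$ by~\ref{hp_K1}, the process $(\rmX_r(z))_{r \geq 0}$ is a standard Brownian motion, whose It\^o differential is, informally, $d\rmX_r(z) = \int_{\R^d}\bar\frkK(e^r(y-z))e^{dr/2}\,\xi(dy,dr)$; accordingly, for the deterministic integrand $r \mapsto \frkK(e^r(x-z))$ one has the standard white-noise representation
$$\int_0^t \frkK\bigl(e^r(x-z)\bigr)\,d\rmX_r(z) = \int_{\R^d}\int_0^t \frkK\bigl(e^r(x-z)\bigr)\bar\frkK\bigl(e^r(y-z)\bigr)e^{dr/2}\,\xi(dy,dr)\,.$$
Subtracting this from~\eqref{eq:fieldST} and taking the resulting formula as the definition of the field $\rmZ^z_t(x)$, namely
$$\rmZ^z_t(x) \eqdef \int_{\R^d}\int_0^t \Bigl(\bar\frkK\bigl(e^r(y-x)\bigr) - \frkK\bigl(e^r(x-z)\bigr)\bar\frkK\bigl(e^r(y-z)\bigr)\Bigr)e^{dr/2}\,\xi(dy,dr)\,,$$
we see that $(\rmZ^z_t(x))_{t \geq 0, x \in \R^d}$ and $(\rmX_t(z))_{t \geq 0}$ are jointly Gaussian.

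The single elementary computation doing all the work is the following: substituting $u = e^r y$ and using that $\bar\frkK$, being real and positive definite, is even and satisfies $\bar\frkK * \bar\frkK = \frkK$, one gets
$$\int_{\R^d}\bar\frkK\bigl(e^r(y-a)\bigr)\bar\frkK\bigl(e^r(y-b)\bigr)e^{dr}\,dy = \bigl(\bar\frkK * \bar\frkK\bigr)\bigl(e^r(a-b)\bigr) = \frkK\bigl(e^r(a-b)\bigr)$$
for all $a$, $b \in \R^d$, which equals $1$ when $a = b$ by~\ref{hp_K1}. Combined with the white-noise isometry, this shows that the integrand defining $\Cov(\rmZ^z_t(x),\rmX_s(z))$ equals $\frkK(e^r(x-z)) - \frkK(e^r(x-z)) = 0$ for every $r \leq s \wedge t$, hence $\Cov(\rmZ^z_t(x),\rmX_s(z)) = 0$; since the pair is Gaussian, $(\rmZ^z_t(x))_{t,x}$ is independent of $(\rmX_t(z))_t$. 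Applying the same identity to the four spatial integrals produced by expanding the product of the two integrands, we likewise obtain
$$\E\bigl[\rmZ^z_s(x)\rmZ^z_t(y)\bigr] = \int_0^{s \wedge t}\Bigl(\frkK\bigl(e^r(x-y)\bigr) - 2\,\frkK\bigl(e^r(x-z)\bigr)\frkK\bigl(e^r(y-z)\bigr) + \frkK\bigl(e^r(x-z)\bigr)\frkK\bigl(e^r(y-z)\bigr)\Bigr)dr\,,$$
which simplifies to the asserted covariance $\int_0^{s\wedge t}\bigl(\frkK(e^r(x-y)) - \frkK(e^r(x-z))\frkK(e^r(y-z))\bigr)dr$.

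There is essentially no obstacle here beyond bookkeeping: the only points requiring a little care are the justification of the white-noise representation of the It\^o integral $\int_0^t \frkK(e^r(x-z))\,d\rmX_r(z)$ and the Fubini-type interchange of the spatial integration with the stochastic integral, both of which are routine given the smoothness and compact support of $\frkK$ and $\bar\frkK$ from~\ref{hp_K1}\dash\ref{hp_K2}. Alternatively, one can bypass the stochastic calculus entirely: take the displayed formula for $\rmZ^z$ as a definition, verify the two covariance identities above, and then observe that $\rmX_t(x) - \rmZ^z_t(x)$ is a centred Gaussian process with exactly the covariance structure of $\int_0^{\cdot}\frkK(e^r(x-z))\,d\rmX_r(z)$, which identifies it in law with that stochastic integral and completes the proof.
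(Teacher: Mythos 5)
Your proof is correct and carries out precisely the ``standard computation of covariances'' that the paper indicates as the method (the paper cites the result from the Rhodes--Vargas reference without reproving it). The two ingredients you isolate — the white-noise representation of the Wiener integral $\int_0^t \frkK(e^r(x-z))\,d\rmX_r(z)$ and the convolution identity $\int_{\R^d}\bar\frkK(e^r(y-a))\bar\frkK(e^r(y-b))e^{dr}\,dy = \frkK(e^r(a-b))$ (using that $\bar\frkK*\bar\frkK=\frkK$ and $\bar\frkK$ is even) — are exactly what the verification needs, and your covariance algebra is correct.
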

We emphasise that for all $t \geq 0$, it holds that $\smash{\rmZ^0_t(e^{-t}\cdot) \eqlaw \rmZ_t(\cdot)}$, where $\rmZ_t$ is the field introduced in Definition~\ref{def:fieldsZ}. 

%%%%%%%%%%%%%%%%%%%%%%%%%%%%%%%%%%%%%%%%%%%%%%
%%%%%%%%%%%%%%%%%%%%%%%%%%%%%%%%%%%%%%%%%%%%%%
\section{Local structure of extremal points}
\label{sec:cluster}
The main goal of this section is to prove Theorems~\ref{th:cluster} and~\ref{th:clusterProb}. We emphasise that the arguments for the proofs of these two theorems are inspired by and follow similar lines to the proofs of \cite[Theorems~2.3~and~2.4]{BiskupLouidor}. 

This section is structured as follows. In Section~\ref{subsec:setupCluster}, we introduce the precise setup and state a slightly stronger version of Theorems~\ref{th:cluster} and~\ref{th:clusterProb}. In Section~\ref{subsec:redBMCluster}, we explain how to convert the statement about the supremum of the field being less than $\lambda$ into a condition on a Brownian motion/bridge to stay above a polylogarithmic curve. In Section~\ref{subsec:techLemmasCluster}, we collect some technical lemmas that are needed for the proof of the main theorems, while their proofs are contained in Section~\ref{subsec:asyFormulaCluster}. 

%%%%%%%%%%%%%%%%%%%%%%%%%%%%%%%%%%%%%%%%%%%%%%
\subsection{Setup and statement of results}
\label{subsec:setupCluster}
Recalling Definition~\ref{def:fieldsZ}, for $b \in \N \cup \{\infty\}$, we introduce the field $\Phi_b$ on $\R^d$ given by 
\begin{equation}
\label{eq:Phi}
\Phi_{b}(\cdot) \eqdef - \int_0^b \bigl(1 - \frkK(e^{-s} \cdot)\bigr) dB_{s} + \rmZ_b(\cdot) \;,
\end{equation}
where $(B_s)_{s \geq 0}$ is a standard Brownian motion independent of the space-time white noise $\xi'$ used in the definition of the field $\rmZ_{b}$ introduced in Definition~\ref{def:fieldsZ}. We observe that the field $\Phi_b$ defined in \eqref{eq:Phi} is equal in law to the field introduced in Definition~\ref{def:Fields} (which justifies using the same notation), and we also recall that $\Upsilon_{\! b}$ is given by
\begin{equation}
\label{eq:defUpsilonBBeg}
\Upsilon_{\! b} = \Phi_{b} - \sqrt{\smash[b]{2d}} \frka_b \;.	
\end{equation}
We will carry our analysis in a slightly more general setting than what we specified in the introduction,
namely we allow the field $\Upsilon_{\! b}$ to be perturbed by a suitable ``well-behaved'' independent field. This will be needed for the proof of Theorem~\ref{th:stableConv}, and in particular for the proof of Proposition~\ref{pr:joint}.

For $b \in \N$, we consider an independent random field $\frkg_b$ on $\R^d$, which we fix for the reminder of this section and assume to satisfy the following properties:
\begin{enumerate}[start=1,label={{{(G\arabic*})}}]
\item \label{as:GG1} One has $\frkg_b(0) = 0$ almost surely. 
\item \label{as:GG2} For all $j \in [b]_0$, there exists a constant $c >0$ such that for all $\eta \geq 0$,
\begin{equation*}
\P\bigl(\M_{0, j}(\frkg_b) \geq \eta\bigr) \lesssim e^{-c e^{2(b-j)} \eta^2} \;.
\end{equation*} 
\item \label{as:GG3} There exist constants $c > 0$ such that for all $\eta \geq 0$,
\begin{equation*}
\P\Biggl(\sup_{i, k \in [d]} \M_{0,b}(\partial^2_{i, k} \frkg_b) \geq \eta\Biggr) \lesssim  e^{-c \eta^2} \;.
\end{equation*}
\end{enumerate}
For $b \in \N$, we then define the field $\Upsilon_{\! b, \frkg}$ on $\R^d$ by 
\begin{equation}
\label{e:defUpsbg}
\Upsilon_{\! b, \frkg}(\cdot)  = \Upsilon_{\! b}(\cdot) + \frkg_b(\cdot) \;.
\end{equation} 
In what follows, given $b \in \N$ and $x$, $y \in \R$, we use the convention that under $\P_{x, y, b}$, the law of $(B_s)_{s \in [0, b]}$ is that of a Brownian bridge from $x$ to $y$ in time $b$. Furthermore, $\E_{x, y, b}$ denotes the expectation with respect to $\P_{x, y, b}$.

The main goal of this section is to prove the following two propositions, which are analogous to Theorems~\ref{th:cluster}~and~\ref{th:clusterProb}, but with the Brownian motion replaced by the Brownian bridge.
\begin{proposition}
\label{pr:AsyConv}
For each $\lambda > 0$, there exists a continuous random field $\tilde \Upsilon_{\! \lambda}$ on $\R^d$ such that, for any function $\bfF \in \CC^b_{\loc}(\CC(\R^d))$, and for all $u \in [b^{1/4}, b^{3/4}]$, one has 
\begin{equation}
\label{eq:limitCondExp}
\E\bigl[\bfF\bigl(\tilde \Upsilon_{\! \lambda} \bigr)\bigr] 
= \lim_{b \to \infty} \E_{0, u, b}\bigl[\bfF\bigl(\Upsilon_{\! b, \frkg}\bigr) \, \big| \M_{0, b}(\Upsilon_{\! b, \frkg}) \leq \lambda \bigr] 
= \lim_{b \to \infty} \E\big[\bfF\bigl(\Upsilon_{\! \infty}\bigr) \, \big| \, \M_{0, b}(\Upsilon_{\! \infty}) \leq \lambda \bigr] \;.
\end{equation}
\end{proposition}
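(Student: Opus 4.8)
The plan is to establish Proposition~\ref{pr:AsyConv} by a two-stage argument: first prove the existence of the left-hand side as a limit of the middle expression (the Brownian-bridge conditioning, with the perturbation $\frkg_b$ present), and then separately identify the middle limit with the right-hand side (the Brownian-motion conditioning without perturbation, which is the content of Theorem~\ref{th:cluster}). For the first stage, the key is to control the conditional law $\E_{0,u,b}[\,\cdot\mid \M_{0,b}(\Upsilon_{\!b,\frkg})\le\lambda]$ by decomposing the spatial domain into the inner ball $\B_k$ (on which $\bfF$ acts, using $\bfF\in\CR_k$) and the outer annulus $\B_b\setminus\B_k$. Recalling the cartoon from Remark~\ref{rem:decomposition} that $\Upsilon_{\!b}$ near the origin is governed by the process $-B_{\log|x|}$ plus lower-order fluctuations and the drift $-\sqrt{2d}\,\frka_b$, the event $\{\M_{0,b}(\Upsilon_{\!b,\frkg})\le\lambda\}$ should, as in \cite{BiskupLouidor}, be converted (this is the business of Section~\ref{subsec:redBMCluster}) into the event that the driving bridge $(B_s)_{s\in[0,b]}$ stays above a polylogarithmic curve. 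One then uses the estimates on Brownian bridges staying above slowly growing curves (Appendix~\ref{ap:BBEstimates}) to show that the conditional probability of the outer part factorises asymptotically and contributes a multiplicative constant independent of $\bfF$, leaving a limiting law for the restriction of $\Upsilon_{\!b,\frkg}$ to $\B_k$.

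Concretely, I would proceed as follows. First, fix $k$ and $\bfF\in\CR_k\cap\CC^b$; write the conditional expectation as a ratio of two expectations (numerator with $\bfF$, denominator with $\bfF\equiv1$). Second, condition on $\CF_k$-type data, i.e.\ on $(B_s)_{s\le k}$ and $\rmZ_k$, so that $\Upsilon_{\!b,\frkg}|_{\B_k}$ is determined; the remaining randomness — the increments of $B$ on $[k,b]$, the field $\rmZ_{k,b}$, and $\frkg_b$ — enters only through the constraint $\M_{k,b}(\Upsilon_{\!b,\frkg})\le\lambda$ on the annulus together with the starting value $B_k$. Third, using the independence structure \eqref{e:scaling_rel} and \eqref{e:covZb}, the $\star$-scale invariance reduces the annular constraint to a rescaled copy of the same type of event, and the Brownian-bridge asymptotics give
\begin{equation*}
\P_{0,u,b}\bigl(\M_{0,b}(\Upsilon_{\!b,\frkg})\le\lambda \,\big|\,\CF_k\bigr)
\sim
\frac{C(\lambda)}{\sqrt{b}}\,\Theta_k\bigl((B_s)_{s\le k},\rmZ_k\bigr)
\end{equation*}
for a functional $\Theta_k$ that is the same in numerator and denominator and is uniform over $u\in[b^{1/4},b^{3/4}]$ — this uniformity is exactly what makes the $u$-dependence drop out and what lets the same argument later cover the Brownian-motion case by integrating $u$ against the Gaussian endpoint density. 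Fourth, the prefactors $C(\lambda)/\sqrt{b}$ cancel in the ratio, and dominated convergence (using \ref{as:GG1}--\ref{as:GG3} to control $\frkg_b$ and uniform integrability of $\Theta_k$, cf.\ the role of the indicator $\one_{\{B_k\in[k^{1/6},k^{5/6}]\}}$ in \eqref{eq:defCStar}) yields a limit of the form $\E[\bfF(\Upsilon_\infty)\Theta_k]/\E[\Theta_k]$, which defines a consistent family of measures in $k$ and hence (by Kolmogorov extension) a single random field $\tilde\Upsilon_{\!\lambda}$ on $\R^d$. Fifth, to identify the middle and right-hand limits I would run the same computation with $\frkg_b=0$ and with $B$ an honest Brownian motion, integrate the bridge result over the endpoint $B_b$ against the $N(0,b)$ density, localise to $B_b\in[b^{1/4},b^{3/4}]$ (the complementary contribution is negligible by the polylogarithmic-curve estimates), and observe that the $u$-uniform asymptotics collapse the integral to the same limit; continuity of $\tilde\Upsilon_{\!\lambda}$ follows from uniform Hölder bounds on $\Upsilon_{\!b}$ on compacts, which in turn come from the smoothness of $\frkK$ via \ref{hp_K2} and Remark~\ref{rem:strict}.

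The main obstacle I anticipate is the passage through the conditioning on the annulus $\B_b\setminus\B_k$: showing that the probability of $\M_{k,b}(\Upsilon_{\!b,\frkg})\le\lambda$, conditionally on the inner data, has the claimed $b^{-1/2}\Theta_k$ asymptotic with an \emph{error uniform in both $k$ and $u\in[b^{1/4},b^{3/4}]$} and uniform enough to survive the ratio and the subsequent dominated-convergence step. This is where the Brownian-bridge-above-a-curve estimates of Appendix~\ref{ap:BBEstimates} do the heavy lifting, and where one must carefully absorb the perturbation $\frkg_b$ (whose fluctuations on $\B_j$ decay like $e^{-(b-j)}$ by \ref{as:GG2}, exactly the right rate to be dominated by the linear drift) and the lower-order field $\rmZ_{k,b}$. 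A secondary delicate point is proving that $\Theta_k$ is genuinely nondegenerate and integrable — i.e.\ that the limiting law is a probability measure and not identically zero — which is where one needs $c_{\star,\lambda}\in(0,\infty)$ from Lemma~\ref{lm:boundXione}; I would handle this in the same stroke by tracking the two-sided bounds on $\E[B_k\one_{\{B_k\in[k^{1/6},k^{5/6}]\}}\one_{\{\M_{0,k}(\Upsilon_{\!k})\le\lambda\}}]$ as in \eqref{eq:defCStar}.
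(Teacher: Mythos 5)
Your plan reproduces the paper's own architecture: reduce the field constraint to a Brownian-bridge-above-a-curve event (Section~\ref{subsec:redBMCluster}), decompose into the inner ball $\B_k$ and the outer annulus, express the conditional expectation as a ratio whose scaling prefactor cancels, and pass to a limiting field by consistency in $k$ plus Kolmogorov extension. The paper's Proposition~\ref{pr:mainAsy} is the precise form of your second and third steps, with your $\Theta_k$ playing the role of $B_l\one_{\{B_l\in[l^{1/6},l^{5/6}]\}}\one_{\{\M_{0,l}(\Upsilon_{\!l})\le\lambda\}}$ in $\H_{l,\lambda}$, and the ratio limits \eqref{eq:IdentLimitShape0} and \eqref{eq:IdentLimitShape1} are exactly your fourth and fifth steps.

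There is, however, one claim in your third step that is false as written and would block a literal execution of the plan. You assert
\begin{equation*}
\P_{0,u,b}\bigl(\M_{0,b}(\Upsilon_{\!b,\frkg})\le\lambda \,\big|\,\CF_k\bigr)\ \sim\ \frac{C(\lambda)}{\sqrt b}\,\Theta_k
\end{equation*}
with the prefactor ``uniform over $u\in[b^{1/4},b^{3/4}]$.'' For the Brownian bridge this is the wrong scaling: by Lemma~\ref{lm:upperBoundKkUpsilon} one has $\P_{0,u,b}(\M_{0,b}(\Upsilon_{\!b,\frkg})\le\lambda)\asymp u/b$, and Lemma~\ref{lm:techLemma2Asy} identifies the sharp constant as $2$, so the correct asymptotic is $\sim\frac{2u}{b}\,\Theta_k$. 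Across $u\in[b^{1/4},b^{3/4}]$ the factor $u/b$ varies by a factor of $\sqrt b$, so there is no single $u$-independent prefactor $C(\lambda)/\sqrt b$, and trying to prove the displayed statement would fail. Your overall argument survives because the genuinely $u$-dependent factor $2u/b$ is common to numerator and denominator and cancels exactly in the ratio (that is the real content of your ``prefactors cancel'' remark), and the $\alpha/\sqrt b$ scaling emerges only in the unconditional Brownian-motion case after integrating the endpoint against its $N(0,b)$ density; meanwhile what \emph{is} uniform in $u$ is the limiting law of the inner data $((B_s)_{s\le k},\rmZ_k)$, which is the content of Step~4 of the paper's proof of Proposition~\ref{pr:mainAsy}. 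A secondary imprecision worth tightening: conditioning on $(B_s)_{s\le k}$ and $\rmZ_k$ does \emph{not} exactly determine $\Upsilon_{\!b,\frkg}|_{\B_k}$ — there are residual contributions from $\int_k^b\bigl(1-\frkK(e^{-s}\cdot)\bigr)\,dB_s$, from $\rmZ_{k,b}$, and from $\frkg_b$, all of order $e^{-(l-k)/2}(\log l)^2$ on the good event $\{\rmK_b\le l\}$ by \eqref{eq:diffFields} — and the paper inserts a dedicated preliminary replacement of $\Upsilon_{\!b,\frkg}$ by $\Upsilon_{\!l}$ in the argument of $\bfF$ (Step~1 of Proposition~\ref{pr:mainAsy}) precisely to absorb this error before the factorisation.
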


\begin{proposition}
\label{pr:AsyProb}
For $\lambda > 0$, let $c_{\star, \lambda}>0$ be the constant defined as follows
\begin{equation*}
c_{\star, \lambda} = \lim_{k\to \infty} \E \bigl[B_k \one_{\{B_k \in [k^{1/6}, k^{5/6}]\}} \one_{\{\M_{0, k}(\Upsilon_{\! k})  \leq \lambda\}}\bigr] \;,
\end{equation*}
then, for all $u \in [b^{1/4}, b^{3/4}]$, it holds that 
\begin{equation*}
\lim_{b \to \infty} \frac{b}{u} \P_{0, u, b}\bigl(\M_{0, b}(\Upsilon_{\! b, \frkg}) \leq \lambda\bigr) = 2 c_{\star, \lambda} \;.
\end{equation*}
\end{proposition}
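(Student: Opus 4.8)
The plan is to prove Proposition~\ref{pr:AsyProb} together with Proposition~\ref{pr:AsyConv}, since the normalising factor implicit in \eqref{eq:limitCondExp} is precisely the probability quantified in Proposition~\ref{pr:AsyProb} and both rest on the same analysis. The starting point is the decomposition \eqref{eq:Phi}, which writes $\Upsilon_{\! b, \frkg}$ as the sum of a \emph{radial backbone} $x\mapsto-\int_0^b(1-\frkK(e^{-s}x))\,dB_s$ --- a functional of $B$ depending on $x$ only through $|x|$ (as $\frkK$ is radial), comparable for $1\ll|x|\ll e^b$ to $-B_{\log|x|}$ up to an $O(1)$ term localised on scales $s\approx\log|x|$ (integration by parts) --- a \emph{transversal}, genuinely $d$-dimensional, log-correlated field $\rmZ_b$ independent of $B$, the deterministic radial drift $-\sqrt{\smash[b]{2d}}\,\frka_b$, and the small perturbation $\frkg_b$. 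The governing heuristic, familiar from branching Brownian motion with the radial coordinate $r$ in the role of time, is that on the annulus $\B_r\setminus\B_{r-1}$ the transversal field and the drift contribute a correction of order $-\f{3}{2\sqrt{\smash[b]{2d}}}\log r$ to the backbone level $-B_r$ (the $\sqrt{\smash[b]{2d}}\,r$ coming from the maximum of the transversal field cancelling the drift $-\sqrt{\smash[b]{2d}}\,\frka_b$).

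\emph{Step 1: reduction to a barrier event (Section~\ref{subsec:redBMCluster}).} I would first show that, up to an error negligible against $\P_{0,u,b}(\M_{0,b}(\Upsilon_{\! b, \frkg})\le\lambda)$, the event $\{\M_{0,b}(\Upsilon_{\! b, \frkg})\le\lambda\}$ coincides with a \emph{barrier event} for $B$: that $B_r$ stays above a slowly-varying (polylogarithmic) curve $\psi_{b,\lambda}$, which is essentially $-\lambda$ on $O(1)$ scales and departs from it only through $\log$-corrections dictated by the precise recentring of maxima of log-correlated fields. The ``only if'' direction is a first-moment bound carried out conditionally on $B$: a union bound over dyadic scales shows that, once $B_r\ge\psi_{b,\lambda}(r)$, the conditional probability that $\rmZ_b+\frkg_b$ produces on some annulus an excursion exceeding the budget left by the backbone is summably small; assumptions \ref{as:GG1}--\ref{as:GG3} are exactly what is needed to absorb $\frkg_b$. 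The ``if'' direction --- that the field is genuinely below $\lambda$ once the backbone obeys the barrier --- requires a truncated second-moment argument for $\rmZ_b$ conditionally on $B$, in the spirit of Bramson--Ding--Zeitouni and of \cite{Madaule_Max}. This, together with the Brownian-bridge barrier estimates of Appendix~\ref{ap:BBEstimates}, is where the technical core lies and is, I expect, the main obstacle.

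\emph{Step 2: the probability asymptotics (Proposition~\ref{pr:AsyProb}).} It then remains to evaluate $\P_{0,u,b}(B_r\ge\psi_{b,\lambda}(r)\text{ for all }r\in[0,b])$ for a Brownian bridge from $0$ to $u$ with $u\in[b^{1/4},b^{3/4}]$. Since the bridge ends at height $u$, far above the $O(\log b)$ curve, the barrier is felt only near the left endpoint, where the transversal corrections aggregate --- after conditioning --- into the renewal-type constant $c_{\star,\lambda}$ of \eqref{eq:defCStar}. Decomposing the barrier event at the first scale $k$ past which $B$ is comfortably above the curve, controlling the initial segment through the defining limit of $c_{\star,\lambda}$ and the remaining bridge segment through Appendix~\ref{ap:BBEstimates}, one obtains $\P_{0,u,b}(\cdots)\sim\f{2u}{b}\,c_{\star,\lambda}$, the factor $2$ being a feature of the reflection identity for the bridge: writing $\P_{0,u,b}=\P_0(\,\cdot\mid B_b=u)$, staying above the curve contributes an extra factor $2u/b$ relative to the unconstrained Gaussian density at $u$. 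The finiteness and positivity of $c_{\star,\lambda}$, and the irrelevance of the exponents $1/6$, $5/6$, follow from the same estimates (this is Lemma~\ref{lm:boundXione}); and integrating the bridge asymptotics against the $N(0,b)$ law of $B_b$ recovers $\sqrt b\,\P(\M_{0,b}(\Upsilon_{\!\infty})\le\lambda)\to\alpha\,c_{\star,\lambda}$ with $\alpha=\sqrt{2/\pi}$, i.e.\ Theorem~\ref{th:clusterProb}.

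\emph{Step 3: convergence of the conditional law (Proposition~\ref{pr:AsyConv}).} Finally, for $\bfF\in\CC^b_{\loc}(\CC(\R^d))$ with $\bfF\in\CR_k$, the quantity $\bfF(\Upsilon_{\! b, \frkg})$ depends only on the restriction of $\Upsilon_{\! b, \frkg}$ to $B(0,k)$, hence only on $B$ at small times, on $\rmZ_b$ near the origin, and on $\frkg_b$ near the origin. Conditionally on the barrier event the constraint away from the origin decouples from this local data and, after dividing by $\P_{0,u,b}(\M_{0,b}(\Upsilon_{\! b, \frkg})\le\lambda)$, contributes only the constant analysed in Step~2, while the local data converges in law --- $\rmZ_b\to\rmZ_{\infty}$ by \cite[Proposition~2.4]{Madaule_Max}, the backbone to its limiting process, and $\frkg_b\to0$ by \ref{as:GG1}--\ref{as:GG3}; uniform modulus-of-continuity bounds for the conditioned fields (Gaussian estimates, using \ref{as:GG3}) give tightness in $\CC(\R^d)$, so $\E_{0,u,b}[\,\bfF(\Upsilon_{\! b, \frkg})\mid\M_{0,b}(\Upsilon_{\! b, \frkg})\le\lambda]$ converges, yielding the limiting random field $\tilde\Upsilon_{\!\lambda}$ and the first equality in \eqref{eq:limitCondExp}. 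Running the same argument with $B$ a genuine Brownian motion, and noting that $\P(B_b\in[b^{1/4},b^{3/4}]\mid\M_{0,b}(\Upsilon_{\!\infty})\le\lambda)\to1$ (the barrier constrains $B$ only on $O(\log b)$ scales, so $B_b$ is still of order $\sqrt b$), shows the limit is unchanged --- in particular independent of $u$ in the stated range and of the admissible perturbation $\frkg_b$ --- which gives the remaining equality in \eqref{eq:limitCondExp}.
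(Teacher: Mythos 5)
Your proposal follows essentially the same architecture as the paper: reduce $\{\M_{0,b}(\Upsilon_{\! b,\frkg})\le\lambda\}$ to a barrier event for the backbone Brownian motion (this is Section~\ref{subsec:redBMCluster}, culminating in Lemma~\ref{lm:approxBrownianBridge} and the inclusions \eqref{eq:Inc1Fin}--\eqref{eq:Inc2Fin}), then establish the $\f{2u}{b}c_{\star,\lambda}$ asymptotics via the ballot-problem factor for the bridge and a renewal constant near the origin (this is Proposition~\ref{pr:mainAsy} with $\bfF=1$, which uses Lemma~\ref{lm:techLemma2Asy} for the factor $2u/b$ and the replacement scheme of Steps~1--4 for the constant). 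Your Step~3 is Proposition~\ref{pr:AsyConv} proved by the same Proposition~\ref{pr:mainAsy} with general $\bfF$.

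Two points of divergence worth flagging. First, where you propose a truncated second-moment argument conditionally on $B$ for the ``if'' direction, the paper instead introduces a deterministic control variable $\rmK_b$ (Definition~\ref{def:ControlVar}) that, on the good event $\{\rmK_b<b\}$, gives a two-sided sandwich $|\sup_{\A_j}\Upsilon_{\! b,\frkg}+B_j|\le\rmR_{\rmK_b}(j)$ from which \emph{both} inclusions follow at once; no second moments are needed, only tail bounds on $\rmK_b$ (Lemma~\ref{lm:Kk}). This is a genuine simplification over the BDZ-style route you sketch. Second, you treat the existence of $\lim_{k\to\infty}\H_{k,\lambda}(1)$ as given, attributing it to Lemma~\ref{lm:boundXione}; but that lemma only gives $c_1<\H_{l,\lambda}(1)<c_2$, i.e.\ boundedness and positivity, not convergence. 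In the paper the convergence of the limit defining $c_{\star,\lambda}$ in \eqref{eq:defcstar} is \emph{extracted} from the asymptotic formula itself: since Proposition~\ref{pr:mainAsy} shows $\f{b}{u}\P_{0,u,b}(\M_{0,b}(\Upsilon_{\! b,\frkg})\le\lambda)=2\H_{l,\lambda}(1)+\eps_b(l)$ with $\lim_l\limsup_b|\eps_b(l)|=0$, a sandwich argument (as in \cite[Theorem~2.4]{BiskupLouidor}) forces $\H_{l,\lambda}(1)$ to be Cauchy and identifies the two limits. Your sketch would need this additional step to avoid circularity in the definition of $c_{\star,\lambda}$.
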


%%%%%%%%%%%%%%%%%%%%%%%%%%%%%%%%%%%%%%%%%%%%%%
\subsection{Reduction to a Brownian motion}
\label{subsec:redBMCluster}
The main goal of this subsection is to show that the condition that the supremum of $\Upsilon_{\!b, \frkg}$ over the ball $\B_b$ is bounded above by $\lambda$ can essentially be rewritten as a condition on $(B_s)_{s \in [0, b]}$ appearing in \eqref{eq:Phi}. The idea is to use a suitable decomposition across annuli of the field $\Phi_b$, which will be introduced below. 
The key feature of this decomposition is that, for any $j \in [b-1]$, the supremum of the field in the annulus $\B_{j+1} \setminus \B_j$ is given by the position of the driving Brownian motion at time $j$ (modulo a sign change), plus a remainder term whose tails we have good control over (see Figure~\ref{fig:deco_BM}). Hence, the condition that the supremum of $\Upsilon_{\!b, \frkg}$ over the ball $\B_b$ is bounded above by $\lambda$ can be recast in terms of the requirement that this driving Brownian motion stays above some polylogarithmic curve.

\subsubsection{Decomposition across annuli}
\label{subsub:decAnnuli}
We begin by defining the following sets 
\begin{equation*}
\A_0 \eqdef \B_1 \qquad \text{ and } \qquad \A_j \eqdef \B_{j+1} \setminus \B_{j}, \quad \forall \, j \in \N \;.
\end{equation*}
With this notation in place, for all $b \in \N \cup \{\infty\}$ and $x\in \B_b$, the field $\Phi_b$ in \eqref{eq:Phi} can be conveniently rewritten as follows
\begin{equation}
\label{eq:decoRestrictedShape}
\Phi_b(x) = - \int_0^b \bigl(1 - \frkK(e^{-s} x)\bigr) dB_{s} + \sum_{j = 0}^{b-1} \rmZ_{j}(x) \one_{\{x \in \A_j\}} +  \sum_{j = 0}^{b-1}\rmZ_{j,b}(x) \one_{\{x \in \A_j\}}  \;,
\end{equation}
where the fields $\rmZ_j$ and $\rmZ_{j,b}$ are as in Definition~\ref{def:fieldsZ}.
For every $b \in \N \cup \{\infty\}$ and $j \in [b-1]_0$, we aim to control the tails of the suprema of the fields $\rmZ_{j}$ and $\rmZ_{j, b}$ over $\A_j$. This will be the content of the next two lemmas.

\subsubsection{Controlling the tails}

Recall the definition \eqref{eq:recentering} of the recentering constants $\frkm_j$. 
 
 \begin{lemma}
\label{lm:controlX'}
There exists a constant $c > 0$ such that for all $j \in \N$, it holds that 
\begin{equation*}
\P\Biggl(\Biggl|\sup_{x \in \A_j} \rmZ_{j}(x) - \frkm_{j}\Biggr| \geq \eta \Biggr) \lesssim e^{-c \eta} \;, \qquad \forall \; \eta \geq 0\;.
\end{equation*} 
\end{lemma}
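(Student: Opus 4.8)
The plan is to reduce the statement to an extremal bound for a $\star$-scale invariant field over a unit-order ball, for which the required machinery is already available from Madaule's work quoted in the paper. First I would recall that, by the scaling relation following Lemma~\ref{lm:decoPointsStar} (namely $\rmZ^0_j(e^{-j}\cdot)\eqlaw\rmZ_j(\cdot)$) together with the covariance \eqref{e:covZb}, the field $\rmZ_j$ restricted to the annulus $\A_j=\B_{j+1}\setminus\B_j$ is, after undoing the $e^{-j}$ rescaling, comparable to the $\star$-scale invariant field $\rmX_j^0$ (the field pinned to zero at the origin) on a ball of radius of order one, sitting at distance of order one from the origin. On that ball the pinning at the origin only perturbs the covariance by a smooth, bounded amount (since $\frkK$ and its derivatives are bounded and the points stay a fixed distance from $0$), so up to a Gaussian comparison (Slepian / Sudakov--Fernique, or the Gaussian tools collected in Appendix~\ref{sec:GaussianTool}) the supremum over $\A_j$ of $\rmZ_j$ has the same upper and lower tail behaviour, around the centering $\frkm_j$, as $\sup_{x\in B(0,1)}\rmX_j(x)-\frkm_j$.

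The second step is to invoke the tightness of $\sup_{x\in B(0,1)}\rmX_j(x)-\frkm_j$. This is exactly the content of Madaule's convergence result recalled in Section~\ref{sub:main_local}: $\sup_{x\in B(0,1)}\rmX_b(x)-\frkm_b-c\Rightarrow \rmG+\log\mu_{\gammac}(B(0,1))$ in law as $b\to\infty$, so in particular the family $\{\sup_{x\in B(0,1)}\rmX_j(x)-\frkm_j\}_{j\ge 1}$ is tight. To upgrade tightness to the stated exponential tail bound, I would use the standard right-tail estimate for the maximum of log-correlated fields, $\P(\sup_{x\in B(0,1)}\rmX_j(x)\ge \frkm_j+\eta)\lesssim (1+\eta)e^{-\sqrt{2d}\,\eta}$ (a first-moment / modified second-moment bound, uniform in $j$; this is classical and also underlies \cite{Madaule_Max}), together with the matching left-tail bound $\P(\sup_{x\in B(0,1)}\rmX_j(x)\le \frkm_j-\eta)\lesssim e^{-c\eta}$, which follows from a barrier / first-moment lower bound argument. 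Both bounds are uniform in $j\in\N$, which is the crucial point; the polynomial prefactor $(1+\eta)$ in the right tail is absorbed into the exponential at the cost of shrinking the constant $c$, giving the clean form $e^{-c\eta}$ stated in the lemma.

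The main obstacle is making the comparison in the first step genuinely uniform in $j$. The subtlety is that $\A_j$ is an annulus of Euclidean radius $e^{j}$ rather than a fixed ball, so one must be careful that after rescaling the relevant "conditioning at the origin" perturbation of the covariance, the quantity $\frkK(e^{-s}x)\bar\frkK(e^{-s}y)$ appearing in \eqref{e:covZb} contributes only a $j$-independent smooth correction on the rescaled annulus $\{x:1\le|x|\le e\}$; since $|x|,|y|$ are bounded above by $e$ there and $\frkK$ is smooth and compactly supported with $\frkK(0)=1$, this correction and all its derivatives are bounded uniformly in $j$, so the Gaussian comparison constant can be chosen independent of $j$. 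A second minor point is that the annulus $\A_j$, after rescaling, is not literally $B(0,1)$ but a fixed annulus; covering it by finitely many unit balls (a $j$-independent number) and union-bounding handles this. Once these uniformities are in place, combining the rescaling, the Gaussian comparison, and the uniform-in-$j$ two-sided tail bounds for $\sup_{B(0,1)}\rmX_j-\frkm_j$ yields the claim.
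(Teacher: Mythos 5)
Your strategy — rescale, relate $\rmZ_j$ on $\A_j$ to the ordinary martingale approximation $\rmX_j$, then invoke uniform-in-$j$ exponential tail bounds for the recentred maximum of log-correlated fields — is the same at the level of scaffolding as the paper's, but you miss the observation that makes the paper's proof essentially immediate. From \eqref{e:covZb}, for $x,y\in\B_1\setminus\B_0$ one has
\begin{equation*}
\E\bigl[\rmZ_j(e^j x)\,\rmZ_j(e^j y)\bigr]=\int_0^j\Bigl(\frkK\bigl(e^{s}(x-y)\bigr)-\frkK(e^{s}x)\frkK(e^{s}y)\Bigr)\,ds\;,
\end{equation*}
and since $|e^{s}x|=e^{s}|x|\geq 1$ for $s\geq 0$, assumption \ref{hp_K2} (support of $\frkK$ in $B(0,1)$) forces $\frkK(e^{s}x)=0$ identically. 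The ``pinning'' correction does not merely stay bounded uniformly in $j$ — it vanishes exactly on the rescaled annulus. Consequently $\rmZ_j(e^j\cdot)$ restricted to $\B_1\setminus\B_0$ is \emph{equal in law} to $\rmX_j$ restricted to that annulus, so there is nothing to compare: the claim reduces directly to the classical two-sided exponential tail for $\sup_{[0,1]^d}\rmX_j-\frkm_j$ (transferred to the annulus by sandwiching it between boxes of fixed size), which is the route the paper takes.

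The Gaussian-comparison step you insert in place of this is where your sketch is weakest: Slepian and Sudakov--Fernique control expectations of suprema or one-sided distributional dominance, not both tails of $|\sup\rmZ_j-\frkm_j|$ directly, so you would have to argue separately (say, via the decomposition $\rmX_j\eqlaw\rmZ_j+W$ with $W$ independent of bounded variance, together with Borell--TIS) that a bounded covariance perturbation preserves exponential upper \emph{and} lower tails. That can be pushed through, but it is extra work whose entire content evaporates once you notice the perturbation is zero on $\A_j$. Noticing this is precisely why the lemma imposes $j\geq 1$ (so the annulus stays outside $\B_0$), and why $\frkK$ is assumed compactly supported in the first place.
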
 
\begin{proof}
For $j \in \N$ and $\eta \geq 0$, the probability in the lemma statement is equivalent to 
\begin{equation}
\label{eq:probEquivTrivial}
\P\Biggl(\Biggl|\sup_{x \in \B_1 \setminus \B_0} \rmZ_{j}(e^j x) - \frkm_{j}\Biggr| \geq \eta \Biggr)\;.
\end{equation}
On the other hand, since by \ref{hp_K2} the seed covariance function $\frkK$ is supported in $B(0, 1)$, we have that for all $x$, $y \in \B_1 \setminus \B_0$, it holds that 
\begin{equation*}
	\E\bigl[\rmZ_{j}(e^j x) \rmZ_{j}(e^j y)\bigr] = \int_0^j \frkK\bigl(e^{s}(x-y)\bigr) ds \;.
\end{equation*} 
In other words, the field $\rmZ_{j}(e^j \cdot)$ restricted to the annulus $\B_1 \setminus \B_0$ has the same law as the martingale approximation at level $j$ of a $\star$-scale invariant field with seed covariance kernel $\frkK$. 
Hence, if the annulus $\B_1 \setminus \B_0$ in \eqref{eq:probEquivTrivial} is replaced by the $d$-dimensional unit box $[0,1]^d$, then this tightness result follows from \cite[Theorem~1.1]{Acosta}. To deduce the tightness of supremum of $(\rmZ_{ j}(e^{j} \cdot))_{j \in \N}$ over the annulus $\B_1 \setminus \B_0$ from the one over the box $[0,1]^d$, one can simply note that $\B_1 \setminus \B_0$ contains and is contained in a box of order one.
\end{proof}

Regarding the field $\rmZ_{j, b}$, we have the following bound on its supremum over annuli of radii smaller than $j$.
\begin{lemma}
\label{lm:controlZ}
There exists a constant $c > 0$ such that for $b \in \N \cup \{\infty\}$, $j \in [b-1]_0$, and $l \in [j]_0$, it holds that 
\begin{equation*}
\P\Biggl(\sup_{x \in \A_l} \abs{\rmZ_{j, b}(x)} \geq \eta \Biggr) \lesssim e^{-c e^{2(j-l)} \eta^2} \;, \qquad \forall \; \eta \geq 0\;.
\end{equation*} 
\end{lemma}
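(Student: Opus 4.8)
The statement to prove is Lemma~\ref{lm:controlZ}, a Gaussian tail bound on the supremum of the field $\rmZ_{j,b}$ over the annulus $\A_l$ for $l \le j$. Since $\rmZ_{j,b}$ is a smooth centred Gaussian field, the natural route is the Borell--TIS inequality together with a Dudley-type entropy bound: I need (i) a uniform bound on the pointwise variance $\sup_{x \in \A_l}\E[\rmZ_{j,b}(x)^2]$, and (ii) control on the modulus of continuity of $\rmZ_{j,b}$ on $\A_l$, both carrying the crucial decay factor $e^{-2(j-l)}$ (equivalently, the variance and fluctuations should be of size $e^{-2(j-l)}$ up to constants).

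\textbf{Step 1: the variance bound.} From \eqref{e:covZb} with $s=t=j$ (and $x=y$),
\[
\E\bigl[\rmZ_{j,b}(x)^2\bigr] \le \E\bigl[\rmZ_{\infty}(x)^2\bigr] - \E\bigl[\rmZ_{j}(x)^2\bigr]\quad\text{for } j \le b,
\]
wait---more carefully, $\rmZ_{j,b} = \rmZ_b - \rmZ_j$ is independent of $\rmZ_j$ by the white-noise structure, so $\E[\rmZ_{j,b}(x)^2] = \int_j^b\bigl(\frkK(e^{-r}(x-y))-\frkK(e^{-r}x)\frkK(e^{-r}y)\bigr)dr\big|_{y=x} = \int_j^b \bigl(1 - \frkK(e^{-r}x)^2\bigr)\,dr$. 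For $x \in \A_l$, i.e.\ $e^l \le |x| < e^{l+1}$, and $r \ge j \ge l$, the argument $e^{-r}x$ has norm $\le e^{l+1-r} \le e^{1-(j-l)}$, which is small when $j-l$ is large. Taylor-expanding $\frkK$ around the origin using \ref{hp_K1} ($\frkK(0)=1$, $\frkK$ smooth, radial so $\nabla\frkK(0)=0$) gives $1 - \frkK(w)^2 = O(|w|^2)$ uniformly for $|w| \le 1$. Hence $1 - \frkK(e^{-r}x)^2 \lesssim e^{-2r}|x|^2 \lesssim e^{2(l+1-r)}$, and integrating over $r \in [j,b]$ (including $b=\infty$) gives $\E[\rmZ_{j,b}(x)^2] \lesssim \int_j^\infty e^{2(l+1-r)}\,dr \lesssim e^{2(l-j)} = e^{-2(j-l)}$. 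This gives the required variance bound with a uniform constant.

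\textbf{Step 2: continuity / entropy.} The same kind of estimate on $\E[(\rmZ_{j,b}(x)-\rmZ_{j,b}(y))^2]$ for $x,y \in \A_l$ (using smoothness of $\frkK$ and $\bar\frkK$, and the explicit integrand in \eqref{e:covZb} or the kernel representation \eqref{eq:fieldZST}) should yield a Lipschitz-type bound: $\E[(\rmZ_{j,b}(x)-\rmZ_{j,b}(y))^2] \lesssim e^{-2(j-l)}|x-y|^2$ (the extra $|x-y|^2$ coming from differentiating $\bar\frkK(e^{-r}(\cdot-y))$, each spatial derivative producing a factor $e^{-r} \le e^{-l}$, plus the integration over $[j,b]$ giving another $e^{-2(j-l)}$-type gain). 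Then $\rmZ_{j,b}$ restricted to $\A_l$, rescaled to the order-one annulus $\B_1\setminus\B_0$ via $x \mapsto e^l x$, is a centred Gaussian field on a fixed compact set with variance and squared-Lipschitz constant both $\lesssim e^{-2(j-l)}$. Dudley's bound (or the simpler fact that a Gaussian field with $\E[(\xi(x)-\xi(y))^2]\le \sigma^2|x-y|^2$ on a bounded set has $\E[\sup \xi] \lesssim \sigma$) gives $\E\bigl[\sup_{x\in\A_l}\rmZ_{j,b}(x)\bigr] \lesssim e^{-(j-l)}$.

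\textbf{Step 3: conclude via Borell--TIS.} With $M \eqdef \E[\sup_{x\in\A_l}\rmZ_{j,b}(x)] \lesssim e^{-(j-l)}$ and $\sigma^2 \eqdef \sup_{x\in\A_l}\E[\rmZ_{j,b}(x)^2] \lesssim e^{-2(j-l)}$, the Borell--TIS inequality gives, for $\eta \ge 2M$,
\[
\P\Bigl(\sup_{x\in\A_l}\rmZ_{j,b}(x) \ge \eta\Bigr) \le \P\Bigl(\sup_{x\in\A_l}\rmZ_{j,b}(x) \ge M + (\eta - M)\Bigr) \le \exp\Bigl(-\frac{(\eta-M)^2}{2\sigma^2}\Bigr) \lesssim e^{-c e^{2(j-l)}\eta^2},
\]
after absorbing the $M$ (which is of the same order as $\eta$ only in a bounded range, handled by adjusting the constant) and for $\eta$ in the remaining bounded range the bound is trivial by choosing $c$ small. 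This is exactly the claimed estimate, uniformly in $b \in \N\cup\{\infty\}$, $j \in [b-1]_0$, $l \in [j]_0$.

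\textbf{Main obstacle.} The routine-but-delicate point is tracking the $e^{-2(j-l)}$ factor cleanly through both the variance and the increment bounds --- in particular making sure the Taylor expansion of $\frkK$ (and of $\bar\frkK$ and its derivatives) near the origin is used uniformly and that the $r$-integral over $[j,b]$ genuinely contributes the geometric decay rather than just boundedness. Everything else (Borell--TIS, Dudley, the rescaling to a fixed annulus) is standard; one should double-check that the constants produced do not secretly depend on $j,l,b$, which is ensured precisely because after rescaling we always work on the same compact set $\B_1\setminus\B_0$.
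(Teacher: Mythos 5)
Your proof follows the same approach as the paper's: rescale $\rmZ_{j,b}$ to the unit annulus $\B_1\setminus\B_0$, bound the pointwise variance and the increments via the Taylor expansion of $\frkK$ near the origin (using $\frkK(0)=1$, $\nabla\frkK(0)=0$ from radiality, and the compact support \ref{hp_K2}), then conclude with Fernique and Borell--TIS. You are in fact a little more careful than the paper's write-up on one point: you explicitly extract $\E\bigl[\sup_{x\in\A_l}\rmZ_{j,b}(x)\bigr]\lesssim e^{-(j-l)}$ from the increment bound, whereas the paper only records $\lesssim 1$; the sharper bound $\E[\sup]\lesssim e^{-(j-l)}\asymp\sigma$ is genuinely needed for Borell--TIS to yield the stated tail estimate with a constant uniform in $j,l,b$ once $\eta$ ranges below $O(1)$.
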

\begin{proof}
The result follows by a standard application of Fernique's majorizing criterion (Lemma~\ref{lm_Fernique}) and Borell-TIS inequality (Lemma~\ref{lm_Borell}). 
We only detail the case $b=\infty$, as the case $b \in \N$ is completely analogous. 
Fix $j \in \N$ and $l \in [j]$ and note that the probability in the statement
equals
\begin{equation*}
\P\Biggl(\sup_{x \in \B_1 \setminus \B_0} \abs{\rmZ_{j, \infty}(e^l x)} \geq \eta \Biggr)\;.	
\end{equation*}
A simple computation based on \ref{hp_K1} \dash \ref{hp_K2} yields that, for all $x$, $y \in \B_1 \setminus \B_0$, it holds that 
\begin{equation}
\label{eq:boundZblSquared}
\E\bigl[\abs{\rmZ_{j, \infty}(e^l x) -  \rmZ_{j, \infty}(e^l y)}^2\bigr] 
% =  2 \int_{0}^{b-l} \bigl(1 - k(e^{-s}(x-y))^2\bigr) ds - \int_{0}^{b-l} \bigl(k(e^{-s}x) - k(e^{-s}y)\bigr)^2 ds \\
\leq 2 \int_{0}^{\infty} \bigl(1 - \frkK(e^{-(s+j-l)}(x-y))\bigr) ds \lesssim \abs{x-y}^2 \;.
\end{equation}
Therefore, an immediate application of Fernique's majorizing criterion (Lemma~\ref{lm_Fernique}) shows that 
\begin{equation*}
\E\Biggl[\sup_{x \in \B_1 \setminus \B_0} \abs{\rmZ_{j, \infty}(e^l x)}\Biggr] \lesssim 1 \;,
\end{equation*}
for some universal implicit constant. 
The conclusion then follows by Borell-TIS inequality (Lemma~\ref{lm_Borell}) and thanks to the fact that 
\begin{equation*}
	\sup_{x \in \B_1 \setminus B_0} \E\bigl[\rmZ_{j, \infty}(e^l x)^2\bigr] = \int_0^{\infty} \bigl(1 - \frkK(e^{-(s + j - l)} x)^2\bigr) ds \lesssim e^{-2(j-l)}\;,
\end{equation*}
where, once again, we used \ref{hp_K1} \dash \ref{hp_K2}, and the implicit constant is independent of the quantities of interest. 
Finally, we remark that the cases $j = 0$ and $j \in \N$ with $l  = 0$ can be treated similarly. 
\end{proof}

\begin{lemma}
\label{lm:mlA}
There exists a constant $c > 0$ such that for all $b \in \N \cup \{\infty\}$ and $j \in [b-1]$, it holds that
\begin{equation*}
\sup_{x \in \A_j} \abs{\sqrt{\smash[b]{2d}} \frka_b(x) - \frkm_{j}} \leq c + \frac{3}{2 \sqrt{\smash[b]{2d}}} \log\bigl(j\bigr) \;.
\end{equation*}
\end{lemma}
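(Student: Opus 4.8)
The plan is to exploit that $\frkK$ is supported in the unit ball and is radial, which makes $\frka_b$ essentially explicit on the annulus $\A_j$, and then to compare directly with the recentering constant $\frkm_j$.

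First I would record the key observation: for $x \in \A_j$ with $j \in [b-1]$ one has $\log|x| \in [j,j+1) \subseteq [0,b)$, so since $\frkK(e^{-s}x) = 0$ whenever $e^{-s}|x| \ge 1$ (by \ref{hp_K2}), the integrand in the definition \eqref{eq:frkgb} of $\frka_b$ equals $1$ on the whole interval $[0,\log|x|]$. Splitting the integral at $s = \log|x|$ and changing variables to $u = s - \log|x|$ then gives
\begin{equation*}
\frka_b(x) = \log|x| + \int_0^{\,b-\log|x|} \psi(u)\,du \;, \qquad \psi(u) \eqdef 1 - \frkK(e^{-u}e_1) \;,
\end{equation*}
where radiality was used to replace $x/|x|$ by $e_1$.

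Next I would establish uniform bounds on $\frka_b$. Since $\frkK$ is positive definite with $\frkK(0)=1$ we have $|\frkK|\le 1$, so $\psi \ge 0$; and Taylor-expanding the smooth radial function $\frkK$ at the origin — where $\nabla\frkK(0)=0$ and $D^2\frkK(0)$ is strictly negative definite (Remark~\ref{rem:strict}) — yields $\psi(u) \lesssim e^{-2u}$ as $u\to\infty$, so that $C_0 \eqdef \int_0^\infty \psi(u)\,du$ is a finite positive constant depending only on $\frkK$. Consequently, for every $b \in \N \cup \{\infty\}$, every $j \in [b-1]$ and every $x \in \A_j$,
\begin{equation*}
\log|x| \;\le\; \frka_b(x) \;\le\; \log|x| + C_0 \;.
\end{equation*}
The point is that all of the $b$-dependence sits in the convergent tail $\int_{b-\log|x|}^\infty \psi$, so these bounds hold uniformly in $b$, including $b=\infty$.

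Finally I would plug in $j \le \log|x| < j+1$ and the definition $\frkm_j = \sqrt{\smash[b]{2d}}\,j - \tfrac{3}{2\sqrt{\smash[b]{2d}}}\log j$ to get
\begin{equation*}
\frac{3}{2\sqrt{\smash[b]{2d}}}\log j \;\le\; \sqrt{\smash[b]{2d}}\,\frka_b(x) - \frkm_j \;\le\; \sqrt{\smash[b]{2d}}\,(1+C_0) + \frac{3}{2\sqrt{\smash[b]{2d}}}\log j \;,
\end{equation*}
and hence, taking $c \eqdef \sqrt{\smash[b]{2d}}\,(1+C_0)$, conclude that $|\sqrt{\smash[b]{2d}}\,\frka_b(x)-\frkm_j| \le c + \tfrac{3}{2\sqrt{\smash[b]{2d}}}\log j$; taking the supremum over $x \in \A_j$ finishes the proof. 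This argument is really just a computation, so I do not expect a genuine obstacle; the only thing to be slightly careful about is the uniformity of $c$ over all $b \in \N\cup\{\infty\}$ and all $j \in [b-1]$, which is ensured precisely by the observation in the first step that the integrand defining $\frka_b$ is identically $1$ on $[0,\log|x|]$, confining the $b$-dependence to the convergent tail bounded by $C_0$.
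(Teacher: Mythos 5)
Your proof is correct and follows essentially the same route as the paper's: both rely on the compact support of $\frkK$ (\ref{hp_K2}) to see that $1-\frkK(e^{-s}x)\equiv 1$ on $[0,j]$ for $x\in\A_j$, reducing the claim to the uniform boundedness of $\int_j^b\bigl(1-\frkK(e^{-s}x)\bigr)ds$, which you justify a bit more explicitly via radiality, the change of variables $u=s-\log|x|$, and the $O(e^{-2u})$ decay of $\psi$ coming from $\nabla\frkK(0)=0$ and the nondegenerate Hessian. The paper states this boundedness tersely and leaves the decay estimate implicit, so your write-up is slightly more detailed but not a different argument.
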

\begin{proof}
Fix $b \in \N$ and $j \in [b-1]$. Then, thanks to \ref{hp_K2}, we have that, uniformly over all $x \in \A_j$, there exists a constant $c > 0$ such that
\begin{equation*}
\bigl|\sqrt{\smash[b]{2d}} \frka_b(x)  - \sqrt{\smash[b]{2d}} j\bigr| =  \Bigl| \int_j^b \bigl(1 - \frkK(e^{-s} x)\bigr) ds\Bigr| \leq c\;.
\end{equation*}
The conclusion follows by recalling the exact expression for $\frkm_{j}$ given in \eqref{eq:recentering}. The case $b = \infty$ and $j \in \N$ can be treated analogously. 
\end{proof}

\subsubsection{Control variables}
In what follows, given $j \in \N_0$ and a function $f:\R^{+}_0 \to \R$, we introduce the following notation,
\begin{equation*}
\Osc_j(f) \eqdef \sup_{s \in [j, j+1]} f(s) -  \inf_{s \in [j, j+1]} f(s) \;.
\end{equation*}
For $j$, $k \in \N_0$, we define 
\begin{equation}
\label{eq:Thetak}
\Theta_k(j) \eqdef \big[\log\bigl(1 + (k \vee j)\bigr)\bigr]^2 \;.
\end{equation}
For $b \in \N \cup \{\infty\}$, we now introduce the control variable $\rmK_b$ which will play an instrumental role in our analysis.

\begin{definition}
\label{def:ControlVar}
For $b \in \N \cup \{\infty\}$, we let $\rmK_{b}$ be the smallest $k \in [b-1]$\footnote{\label{ftn:bInf}With a slight abuse of notation, if $b = \infty$, then $[b-1] = \N$ and $[b -1]_0 = \N_0$.} such that:
\begin{enumerate}[start=1,label={{{(\arabic*})}}]
\item For each $j \in [b-1]_0$\cref{ftn:bInf}, it holds that $\Osc_j(B) \leq \Theta_k(j)$.
\item For each $j \in [b-1]$, it holds that $\abs{\sup_{x \in \A_j} \rmZ_{j}(x) - \frkm_{j}} \leq \Theta_k(j)$.
\item For each $j \in [b-1]_0$ and $l \in [j]_0$, it holds that $\sup_{x \in \A_l}\abs{\rmZ_{j, b}(x)} \leq e^{-(j-l)/2} \Theta_k(j)$.
\item If $b \neq \infty$, for each $j \in [b-1]_0$, it holds that $\sup_{x \in \A_j} \abs{\frkg_{b}(x)} \leq e^{-(b-j)/2}\Theta_k(j)$.
\end{enumerate}
If no such $\rmK_b$ exists, then we set $\rmK_b = b$. 
\end{definition}

\begin{remark}
\label{rm:diffFields}
For each $b \in \N \cup \{\infty\}$, $j \in [b-1]_0$, and $l \in [j]_0$, recalling \eqref{eq:Phi}, we have, for all $x \in \B_{l}$, 
\begin{equation*}
\Upsilon_{\! b, \frkg}(x) - \Upsilon_{\! j, \frkg}(x) = - \int_{j}^{b} \bigl(1 - \frkK(e^{-s} x)\bigr) dB_{s} + \rmZ_{j, b}(x) \;.
\end{equation*}
Now, on the event $\{\rmK_b \leq j\}$, one has
\begin{equation*}
	\sup_{x \in \B_l} \abs{\rmZ_{j, b}(x)} \lesssim e^{-(j-l)/2} \bigl(\log j\bigr)^2 \;,
\end{equation*}
as well as
\begin{equation*}
\sup_{x \in \B_l} \Biggl|\int_{j}^{b} 1 - \frkK(e^{-s} x) dB_{s} \Biggr|	\lesssim e^l\sum_{i=j}^{b-1} e^{-i} \Osc_i(B) \leq  e^{-(j-l)}\sum_{i=0}^{\infty} e^{-i} \Theta_{j}(i+j) \lesssim e^{-(j-l)} \bigl(\log j\bigr)^2 \;.
\end{equation*}

Therefore, combining the previous two bounds, we obtain that for all $l \in [j]_0$, on the event $\{\rmK_b \leq j\}$, it holds that  
\begin{equation}
\label{eq:diffFields}
\sup_{x \in \B_l} \abs{\Upsilon_{\! b, \frkg}(x) - \Upsilon_{\! j, \frkg}(x)}  \lesssim  e^{-(j-l)/2} (\log j)^2 \;.
\end{equation} 
\end{remark}

Thanks to Lemmas~\ref{lm:controlX'}~and~\ref{lm:controlZ}, we have the following result concerning the tail behaviour of the control variables. 
\begin{lemma}
\label{lm:Kk}
There exists a constant $c>0$ and $k_0 \in \N$ such that for $b \in \N$ with $b > k_0$, and $u \in [b^{1/4}, b^{3/4}]$,
\begin{equation}
\label{eq:boundKFinite}
	\P_{0, u, b}\bigl(\rmK_b = k\bigr) \leq e^{-c (\log k)^2} \;, \qquad \forall \, k \in \{k_0, \ldots, b\} \;. 
\end{equation}
Similarly, there exists a constant $\tilde c >0$ and $\tilde k_0 \in \N$ such that
\begin{equation}
\label{eq:boundKInfinite}
\P\bigl(\rmK_{\infty} =  k\bigr) \leq e^{-\tilde c (\log k)^2}\;, \qquad \forall \, k \geq \tilde k_0 \;.
\end{equation}
\end{lemma}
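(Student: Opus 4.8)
The plan is to bound $\P_{0,u,b}(\rmK_b = k)$ and $\P(\rmK_\infty = k)$ by estimating the probability that the defining conditions (1)--(4) in Definition~\ref{def:ControlVar} all hold at level $k-1$ but at least one fails at level $k$. Since $\rmK_b = k$ forces a violation at scale $k$ of one of the four families of conditions with $\Theta_{k-1}$ replaced by $\Theta_k$ — more precisely, $\rmK_b = k$ implies that for some index $j$, the relevant quantity exceeds $\Theta_{k-1}(j)$ but this can only be because the ``$k$-th'' scale index $j \geq k$ is newly included, so the binding constraint is $\Theta_{k}(j) = [\log(1 + (k\vee j))]^2$ — it suffices to union-bound over $j \geq k$ (roughly) the four bad events, each of which has a super-polynomially small probability coming from Lemmas~\ref{lm:controlX'}, \ref{lm:controlZ}, the oscillation bounds for Brownian motion, and assumption \ref{as:GG2}.

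Concretely, I would proceed as follows. First, observe that $\{\rmK_b = k\}$ is contained in the union over $j \in \{k, \ldots, b-1\}$ (using the convention of footnote~\ref{ftn:bInf} when $b = \infty$) of the events: $\{\Osc_j(B) > \Theta_k(j)\}$; $\{|\sup_{x\in\A_j}\rmZ_j(x) - \frkm_j| > \Theta_k(j)\}$; $\{\sup_{x\in\A_l}\rmZ_{j,b}(x) > e^{-(j-l)/2}\Theta_k(j)$ for some $l \in [j]_0\}$; and (when $b \neq \infty$) $\{\sup_{x\in\A_j}|\frkg_b(x)| > e^{-(b-j)/2}\Theta_k(j)$ for some $j\}$. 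Since $\Theta_k(j) \geq [\log(1+k)]^2$ for all $j$, each of these is controlled by the cited tail bounds: the oscillation of a Brownian bridge on a unit interval has Gaussian tails, so $\P_{0,u,b}(\Osc_j(B) > \Theta_k(j)) \lesssim \exp(-c\Theta_k(j)^2)$ uniformly in $j$ and in $u \in [b^{1/4}, b^{3/4}]$ (the bridge on $[j,j+1]$ for $j \leq b-1$ has bounded local drift since $u/b \to 0$); Lemma~\ref{lm:controlX'} gives $\lesssim \exp(-c\Theta_k(j))$; Lemma~\ref{lm:controlZ} summed over $l \in [j]_0$ gives $\sum_{l=0}^{j}\exp(-c e^{2(j-l)} e^{-(j-l)}\Theta_k(j)^2) \lesssim \exp(-c\Theta_k(j)^2)$; and \ref{as:GG2} summed over $j$ gives $\sum_j \exp(-c e^{2(b-j)} e^{-(b-j)}\Theta_k(j)^2) \lesssim \exp(-c\Theta_k(j)^2)$. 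Summing the dominant bound $\exp(-c\Theta_k(j)) = \exp(-c[\log(1+j)]^2)$ over $j \geq k$ yields a geometric-type series dominated by its first term, giving $\P(\rmK_b = k) \lesssim \exp(-c'[\log(1+k)]^2)$ for $k$ large enough, which is the claim after relabelling constants; the existence of $k_0$, $\tilde k_0$ just absorbs the finitely many scales where the implicit constants in the $\lesssim$ are not yet under control.

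The main technical point, and the only place requiring care, is the uniformity in $u \in [b^{1/4}, b^{3/4}]$ of the Brownian bridge oscillation estimate: under $\P_{0,u,b}$ the process $B$ on $[j,j+1]$ is a bridge segment whose endpoints are distributed around the line $s \mapsto su/b$, so one must check that the conditional law of $\Osc_j(B)$ given the endpoint values still has uniformly Gaussian tails with a constant independent of $u$ and $b$ — this follows since $\Osc_j$ of a bridge of any fixed time-length with prescribed endpoints equals (in law) $\Osc_0$ of a standard Brownian bridge shifted by a deterministic affine function, whose oscillation is unaffected, plus the fact that the endpoint-to-endpoint increment $B_{j+1} - B_j$ has variance bounded by $1$ and mean $u/b \leq b^{-1/4} \to 0$. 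Everything else is a routine union bound and summation, so I do not expect genuine obstacles beyond bookkeeping the constants; the argument for $b = \infty$ is identical except that condition (4) is vacuous and one sums over all $j \geq k$ in $\N$.
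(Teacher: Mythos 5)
Your overall strategy is the same as the paper's: union-bound the four families of conditions in Definition~\ref{def:ControlVar}, estimate each using Lemmas~\ref{lm:controlX'},~\ref{lm:controlZ}, assumption~\ref{as:GG2}, and the Gaussian tails of $\Osc_j(B)$, and then sum over $j$; you also correctly isolate the one delicate point, the uniformity over $u\in[b^{1/4},b^{3/4}]$ of the bridge oscillation tail. However, the set inclusion at the heart of the union bound is wrong, in a way that reflects a misreading of how $\Theta_k(j)$ depends on $k$.

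You claim that $\{\rmK_b = k\}$ is contained in the union over $j\in\{k,\ldots,b-1\}$ of the events $\{\Osc_j(B)>\Theta_k(j)\}$, $\{\lvert\sup_{\A_j}\rmZ_j-\frkm_j\rvert>\Theta_k(j)\}$, and so on. This cannot be right: by Definition~\ref{def:ControlVar}, on $\{\rmK_b=k\}$ \emph{every} one of the four conditions holds with threshold $\Theta_k(j)$ for every $j$, so all of the events you list are simultaneously false on $\{\rmK_b=k\}$, and your proposed union-bound set has empty intersection with it. The dependence of $\Theta_k(j)=[\log(1+(k\vee j))]^2$ on $k$ is the opposite of what you describe: passing from $\Theta_{k-1}$ to $\Theta_k$ raises the threshold only at small scales, namely at $j<k$, while $\Theta_{k-1}(j)=\Theta_k(j)$ for all $j\geq k$. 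Consequently $\rmK_b=k$ forces a violation of a $\Theta_{k-1}$-condition at some $j<k$, not at $j\geq k$. The correct inclusion, and the one the paper uses, is that $\{\rmK_b=k\}$ is contained in the union over \emph{all} $j\in[b-1]_0$ of the four bad events with threshold $\Theta_{k-1}(j)$. Since $\Theta_{k-1}(j)\geq[\log k]^2$ for every $j$, each of these probabilities is at most $c_1 e^{-c_2[\log k]^2}$ (or better), and summing over $j$ (at most $k$ terms of this size for $j\leq k-1$, plus a convergent tail $\sum_{j\geq k} e^{-c_2[\log(1+j)]^2}$) gives the stated bound once $k$ is large. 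With the inclusion corrected in this way, the rest of your computation, including the geometric summation over $l$ in the $\rmZ_{j,b}$ estimate and the uniform oscillation tail for the bridge, goes through as you describe.
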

\begin{proof}
We start with \eqref{eq:boundKFinite}. Fix $b \in \N$, $u \in [b^{1/4}, b^{3/4}]$, and let $k \in [b]$. By Definition~\ref{def:ControlVar}, the event $\{\rmK_b = k\}$ is contained in the union of the following events,
\begin{gather*}
\bigcup_{j = 0}^{b-1} \bigl\{\Osc_j(B) > \Theta_{k-1}(j) \bigr\} \;, \qquad \bigcup_{j = 0}^{b-1}  \Biggl\{\sup_{x \in \A_j} \abs{\frkg_b(x)} > e^{-(b-j)/2} \Theta_{k-1}(j) \Biggr\} \;,\\ 
\bigcup_{j = 1}^{b-1}  \Biggl\{\Biggl\lvert \sup_{x \in \A_j} \rmZ_{j}(x) - \frkm_{j}\Biggr\rvert > \Theta_{k-1}(j) \Biggr\}\;, \qquad  \bigcup_{j = 1}^{b-1} \bigcup_{l = 1}^{j} \Biggl\{\sup_{x \in \A_l} \abs{\rmZ_{j, b}(x)} > e^{-(j-l)/2} \Theta_{k-1}(j) \Biggr\} \;.
\end{gather*}
The conclusion then follows since there exists constants $c_1$, $c_2 > 0$ such that
the probabilities of the events appearing in the unions of the above display 
are bounded either by $c_1\exp(-c_2 \Theta_{k-1}(j)^2)$ (for the ones on the first line)
or by $c_1 \exp(- c_2\Theta_{k-1}(j))$ (for the ones on the second line). Summing over $j$ 
then yields the desired bound.

Indeed, for the events in the union on the top-left, this follows since for all $u \in [b^{1/4}, b^{3/4}]$, the oscillation norms $\Osc_j(B)$ have Gaussian tails uniformly over the probability laws $\P_{0, u, b}$, for all $j \in [b-1]_0$. For the event in the union on the top-right this follows from \ref{as:GG2}.
Finally, regarding the events in the union on the bottom-left and bottom-right, this follows by Lemma~\ref{lm:controlX'} and Lemma~\ref{lm:controlZ}, respectively. To conclude, we note that the proof of \eqref{eq:boundKInfinite} proceeds in the same exact way.
\end{proof}

We are now ready to state and prove the following key lemma. For a diagrammatic representation related to this lemma, we refer to Figure~\ref{fig:deco_BM}. 
\begin{lemma}[Approximation by a Brownian motion]
\label{lm:approxBrownianBridge}
There exists a constant $\rmC > 0$ such that for all $b \in \N \cap \{\infty\}$ and all $j \in [b-1]$, on the event $\{\rmK_b < b\}$,
\begin{equation}
\label{eq:ReductionFinite}
\Biggl\lvert\sup_{x \in \A_j} \Upsilon_{\! b, \frkg}(x) +  B_{j}\Biggr\rvert \leq \rmR_{\rmK_b}(j) \;,
\end{equation}
where $\rmR_{k}(j) \eqdef \rmC\bigl(1 + \Theta_{k}(j)\bigr)$. 
\end{lemma}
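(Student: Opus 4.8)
The plan is to start from the annulus decomposition \eqref{eq:decoRestrictedShape} and exploit the support assumption~\ref{hp_K2}: if $x\in\A_j$ and $s\le j$ then $|e^{-s}x|=e^{-s}|x|\ge 1$, so $\frkK(e^{-s}x)=0$, whence $\int_0^b(1-\frkK(e^{-s}x))\,dB_s=B_j+\int_j^b(1-\frkK(e^{-s}x))\,dB_s$. Recalling that $\Upsilon_{\!b,\frkg}=\Phi_b-\sqrt{\smash[b]{2d}}\,\frka_b+\frkg_b$ and that $\rmZ_b=\rmZ_j+\rmZ_{j,b}$, this yields, for every $x\in\A_j$, the identity
\begin{equation*}
\Upsilon_{\!b,\frkg}(x)+B_j=\bigl(\rmZ_j(x)-\sqrt{\smash[b]{2d}}\,\frka_b(x)\bigr)+\rmZ_{j,b}(x)-\int_j^b\bigl(1-\frkK(e^{-s}x)\bigr)dB_s+\frkg_b(x)\;.
\end{equation*}
Writing $\{\rmK_b<b\}=\bigcup_{k<b}\{\rmK_b=k\}$ and noting that on $\{\rmK_b=k\}$ all four conditions of Definition~\ref{def:ControlVar} hold with $\Theta_k$, it then suffices to bound $\sup_{x\in\A_j}$ of the right-hand side above, both above and below, by $\rmC(1+\Theta_k(j))$ on each such event.

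For the upper bound I would estimate the four summands one at a time. Condition~(2) gives $\sup_{\A_j}\rmZ_j\le\frkm_j+\Theta_k(j)$ and Lemma~\ref{lm:mlA} gives $\sqrt{\smash[b]{2d}}\,\frka_b\ge\frkm_j-c-\tfrac{3}{2\sqrt{\smash[b]{2d}}}\log j$ on $\A_j$; combining them and using $\log j\le 1+[\log(1+j)]^2\le 1+\Theta_k(j)$ yields $\rmZ_j-\sqrt{\smash[b]{2d}}\,\frka_b\lesssim 1+\Theta_k(j)$ uniformly on $\A_j$. Condition~(3) with $l=j$ bounds $\rmZ_{j,b}$ by $\Theta_k(j)$ on $\A_j$, and condition~(4) bounds $|\frkg_b|$ by $\Theta_k(j)$ there. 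For the stochastic integral I would argue exactly as in Remark~\ref{rm:diffFields}, using the quadratic vanishing $|1-\frkK(e^{-s}x)|\lesssim e^{2(j-s)}$ for $s\ge j$, $x\in\A_j$ (a consequence of the smoothness and radiality of $\frkK$, cf.\ Remark~\ref{rem:strict}), to get $\sup_{\A_j}|\int_j^b(1-\frkK(e^{-s}x))dB_s|\lesssim\sum_{i\ge j}e^{-(i-j)}\Osc_i(B)$, which by condition~(1) is $\le\sum_{i\ge j}e^{-(i-j)}\Theta_k(i)\lesssim 1+\Theta_k(j)$ since $\Theta_k(i)\le\Theta_k(j)+[\log(1+i)]^2$ and the geometric series converges. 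Summing the four contributions gives the upper estimate.

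For the lower bound I would pick $x_\star\in\A_j$ with $\rmZ_j(x_\star)\ge\sup_{\A_j}\rmZ_j-1\ge\frkm_j-\Theta_k(j)-1$ and evaluate the identity at $x_\star$: using $\sqrt{\smash[b]{2d}}\,\frka_b(x_\star)\le\frkm_j+c+\tfrac{3}{2\sqrt{\smash[b]{2d}}}\log j$ (Lemma~\ref{lm:mlA}), $|\rmZ_{j,b}(x_\star)|\le\Theta_k(j)$ (condition~(3)), $|\frkg_b(x_\star)|\le\Theta_k(j)$ (condition~(4)) and the same bound on the stochastic integral, one gets $\Upsilon_{\!b,\frkg}(x_\star)+B_j\ge-\rmC(1+\Theta_k(j))$, hence $\sup_{\A_j}\Upsilon_{\!b,\frkg}+B_j\ge-\rmC(1+\Theta_k(j))$. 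Taking the union over $k<b$ then completes the argument; the case $b=\infty$ is identical, with the $\frkg_b$-term simply absent.

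The step I expect to be the main obstacle is the lower bound, because it requires two-sided control of the ``local'' fields $\rmZ_j$ and $\rmZ_{j,b}$ evaluated at the near-maximizer $x_\star$ of $\rmZ_j$ — not merely control of their suprema over $\A_j$ — which is exactly why the control variable $\rmK_b$ is designed to dominate $|\rmZ_j-\frkm_j|$, $|\rmZ_{j,b}|$ and $|\frkg_b|$ rather than one-sided maxima. A secondary technical point is the uniform-in-$x\in\A_j$ estimate of $\int_j^b(1-\frkK(e^{-s}x))dB_s$, where the quadratic decay of $1-\frkK$ near the origin supplies the geometric factor $e^{-(i-j)}$ that makes the sum over annular scales converge after invoking the oscillation bound from condition~(1).
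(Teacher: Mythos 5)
Your proof is correct and follows essentially the same route as the paper: both start from the annulus decomposition \eqref{eq:decoRestrictedShape}, use the support condition \ref{hp_K2} to pull $B_j$ out of the stochastic integral, and bound the remaining terms via conditions (1)--(4) of Definition~\ref{def:ControlVar}, Lemma~\ref{lm:mlA}, and a geometric-series estimate on $\int_j^b(1-\frkK(e^{-s}\cdot))\,dB_s$. The only cosmetic difference is that the paper obtains the two-sided bound in one stroke from $\bigl|\sup_{\A_j}(G_2+H)-\sup_{\A_j}G_2\bigr|\le\sup_{\A_j}|H|$ with $G_2=\rmZ_j-\frkm_j$, whereas you argue the upper bound by summing suprema and the lower bound by evaluating at a near-maximizer of $\rmZ_j$; these amount to the same thing.

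One small inaccuracy worth noting: you assert that $\rmK_b$ ``dominates $|\rmZ_{j,b}|$'', but item~(3) of Definition~\ref{def:ControlVar} as written only bounds $\sup_{\A_l}\rmZ_{j,b}$ from above. The two-sided control you need at $x_\star$ (and which the paper's triangle inequality also implicitly uses via the term $\bigl|\sup_{\A_j}\rmZ_{j,b}\bigr|$) does hold in substance, since $\rmZ_{j,b}$ is centred Gaussian and Lemma~\ref{lm:controlZ} applies identically to $-\rmZ_{j,b}$, so the definition of $\rmK_b$ could be stated with $\sup_{\A_l}|\rmZ_{j,b}|$ without changing Lemma~\ref{lm:Kk}; but as literally stated it is one-sided, a detail both you and the paper leave implicit.
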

\begin{proof}
Let $b \in \N$, $j \in [b-1]$, and $x \in \A_j$. Now, recalling \eqref{eq:decoRestrictedShape}, we can write
\begin{equation*}
\Upsilon_{\! b, \frkg}(x) = - B_{j} - \int_{j}^b \bigl(1 - \frkK(e^{-s} x)\bigr) dB_{s} + \rmZ_{j}(x) + \rmZ_{j, b}(x) - \sqrt{\smash[b]{2d}} \frka_b(x) + \frkg_b(x)\;,
\end{equation*}
where we used the fact that thanks to \ref{hp_K2}, the function $\frkK$ is supported in $B(0, 1)$.
In particular, using the triangle inequality, this implies that
\begin{align*}
& \Biggl\lvert\sup_{x \in \A_j}{\Upsilon_{\! b, \frkg}}(x) +  B_{j} \Biggr\rvert \leq \sup_{x \in \A_j} \Biggl\lvert\int_{j}^b \bigl(1 - \frkK(e^{-s} x)\bigr) dB_{s}\Biggr\rvert + \Biggl\lvert\sup_{x \in \A_j} \rmZ_{j}(x) - \frkm_{j}\Biggr\rvert \\
& \hspace{50mm}+ \sup_{x \in \A_j}\abs{\rmZ_{j, b}(x)} + \sup_{x \in \A_j} \abs{\sqrt{\smash[b]{2d}} \frka_b(x) - \frkm_{j}} + \sup_{x \in \A_j} \abs{\frkg_b(x)} \;.
\end{align*}
Now, on the event $\{\rmK_b< b\}$, by Definition~\ref{def:ControlVar} of the control variable $\rmK_b$ and by Lemma~\ref{lm:mlA}, there exists a constant $\rmC > 0$ such that the last four terms on the right-hand side of the above display are bounded from above by a quantity of the form $\rmC\bigl(1 + \Theta_{\rmK_b}(j)\bigr)$.
Therefore, it remains to check that a similar bound also holds for the first term. Again, by Definition~\ref{def:ControlVar} of $\rmK_b$, this follows by a simple computation. Indeed, we have that
\begin{align*}
\sup_{x \in \A_j} \Biggl|\int_{j}^b \bigl(1 - \frkK(e^{-s} x)\bigr) dB_{s}\Biggr|
\lesssim e^{j+1} \sum_{m = j}^{b - 1} e^{-m} \Osc_m(B)
\leq e^{j+1} \sum_{m=j}^{\infty} e^{-m} \Theta_{\rmK_b}(m) \;,
\end{align*}
where the implicit constant is independent of everything else. The quantity on the right-hand side of the above display can be clearly bounded by a quantity of the form $\rmC(1+\Theta_{\rmK_b}(j ))$, for some constant $\rmC >0$. Hence, the estimate in \eqref{eq:ReductionFinite} follows. Finally, we note that if $b = \infty$, then the proof is completely analogous. 
\end{proof}

We record here some useful inclusions that are immediate consequences of Lemma~\ref{lm:approxBrownianBridge} and will be used several times in the remainder of this section.
In particular, recalling the notation introduced in \eqref{eq:maximalExpBalls} and \eqref{eq:maximalExpAnnulus}, and using the same notation as in the previous lemma, for each $\lambda > 0$, thanks to \eqref{eq:ReductionFinite}, one can see that for all $b \in \N$ and $k \in [b-1]$, the following inclusions hold
\begin{align}
\bigl\{\rmK_b < b \bigr\} \cap \bigl\{\M_{0, b}(\Upsilon_{\! b, \frkg}) \leq \lambda \bigr\} & \subseteq \bigcap_{j = 1}^{b-1}\bigl\{B_{j} \geq - \lambda - \rmR_{\rmK_b}(j) \bigr\} \;, \label{eq:Inc1Fin} \\
\bigl\{\rmK_b < b \bigr\} \cap \bigcap_{j = k}^{b-1} \bigl\{B_{j} \geq \lambda + \rmR_{\rmK_b}(j)\bigr\} & \subseteq \bigl\{\M_{0, b, k}(\Upsilon_{\! b, \frkg}) \leq \lambda\bigr\} \;. \label{eq:Inc2Fin}
\end{align}
Similar inclusions hold also for the field $\Upsilon_{\! \infty}$. Indeed, thanks to \eqref{eq:ReductionFinite}, for all $b \in \N$ and $k \in [b-1]$, we have that 
 \begin{align}
\bigl\{\rmK_{\infty} < \infty \bigr\}\cap \bigl\{\M_{0, b}(\Upsilon_{\! \infty}) \leq \lambda\bigr\} & \subseteq \bigcap_{j = 1}^{b-1}\bigl\{B_{j} \geq - \lambda -  \tilde \rmR_{\tilde \rmK_b}(j) \bigr\} \;, \label{eq:Inc1Inf} \\
\bigl\{\rmK_{\infty} < \infty \bigr\} \cap \bigcap_{j = k}^{b-1} \bigl\{B_{j} \geq \lambda + \tilde \rmR_{\tilde \rmK_b}(j)\bigr\} & \subseteq \bigl\{\M_{0, b, k} \Upsilon_{\!\infty}) \leq \lambda\bigr\} \;. \label{eq:Inc2Inf}
\end{align}

\begin{figure}[ht]
\centering
\includegraphics[width=\textwidth, keepaspectratio]{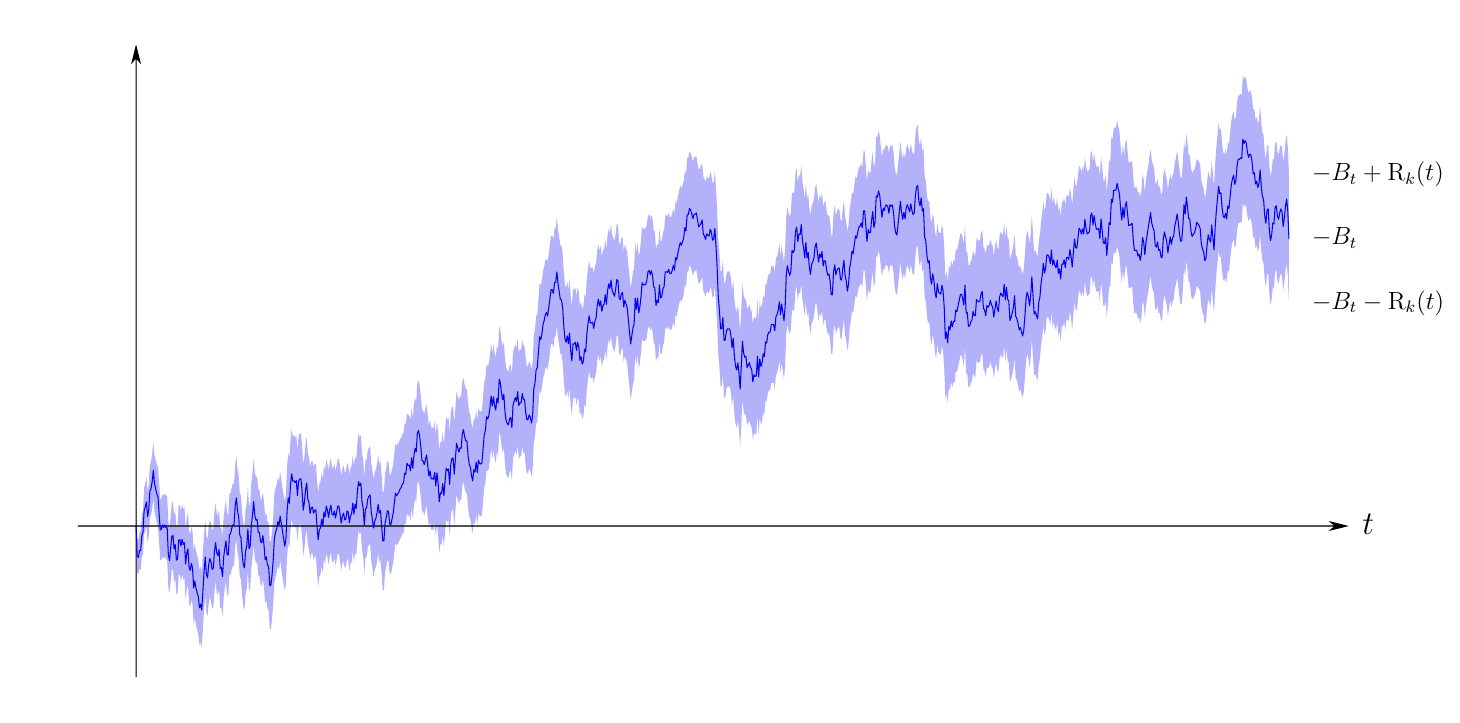}
\caption[short form]{\small{The blue curve represents a standard Brownian motion $-B$ run up to time $b$. The light blue region around the Brownian motion is the area enclosed between the curves $[0, b] \ni t \mapsto -B_t - \rmR_k(t)$ and $[0, b] \ni t \mapsto -B_t + \rmR_k(t)$ for some $0 \ll k \ll b$. Roughly speaking, Lemma~\ref{lm:approxBrownianBridge} states that, with high probability, for any $j \in [b-1]$, the supremum of the field $\Upsilon_{\! b, \frkg}(x)$ in the annulus $\A_j$ lies on the vertical segment at $t = j$ within the light blue region.}}
\label{fig:deco_BM}
\end{figure}

%%%%%%%%%%%%%%%%%%%%%%%%%%%%%%%%%%%%%%%%%%%%%%
\subsection{Some technical lemmas}
\label{subsec:techLemmasCluster}
In this subsection we collect some technical results that are needed for the proofs of Propositions~\ref{pr:AsyConv}~and~\ref{pr:AsyProb}. Before proceeding, we emphasise that all the following lemmas have an analogous counterpart in \cite[Section~4]{BiskupLouidor}.
We begin with the following lemma, which is a slightly augmented version of \cite[Lemma~4.20]{BiskupLouidor}, adapted to our setting. For its statement, given $k \in \N_0$, we use the notation
\begin{equation*}
\CG_k \eqdef \sigma\bigl((B_s)_{s\leq k},\, \rmZ_{\infty}\bigr)	\;.
\end{equation*}

\begin{lemma}
\label{lm:upperBoundKk}
There exists a constant $c > 0$ such that for $\lambda > 0$, for all $b \in \N$ sufficiently large, $u \in [b^{1/4}, b^{3/4}]$, $k \in [b-1]$, and any event $\rmA_k \in \CG_k$,
\begin{equation}
\label{eq:tech1FinAsy}
\P_{0, u, b}\Biggl(\rmK_b = k, \;  \bigcap_{j = k}^{b-1} \bigl\{B_{j} \geq - \lambda - \rmR_{k}(j) \bigr\}, \; \rmA_k \Biggr) \lesssim \f{u}{b} e^{-c (\log k)^2}  \sqrt{\P_{0, u, b}(\rmA_k)}\;.
\end{equation}
Similarly, there exists a constant $\tilde c > 0$ such that for $\lambda > 0$, for all $b \in \N$ sufficiently large, $k \in [b-1]$, and any event $\rmA_k \in \CG_k$,
\begin{equation}
\label{eq:tech1InfAsy}
\P\Biggl(\rmK_{\infty} = k, \; \bigcap_{j=k}^{b-1} \bigl\{B_{j} \geq - \lambda - \tilde \rmR_{k}(j) \bigr\}, \; \rmA_k \Biggr) \lesssim \f{1}{\sqrt{b}} e^{-\tilde c (\log k)^2} \sqrt{\P(\rmA_k)}\;.
\end{equation}
\end{lemma}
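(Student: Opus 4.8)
The plan is to adapt the proof of the analogous statement \cite[Lemma~4.20]{BiskupLouidor}. We discuss only \eqref{eq:tech1FinAsy}; the bound \eqref{eq:tech1InfAsy} is entirely analogous, with the Brownian bridge replaced by a Brownian motion, the only difference being that a Brownian motion started at a typical height $O(\sqrt k)$ pays a cost of order $1/\sqrt b$ to remain above a slowly growing negative curve up to time $b$, whereas a bridge pinned at $u$ at that time pays an extra factor $u/\sqrt b$, giving the normalisation $u/b$. Assume $k\ge2$ (the case $k=1$ is covered directly by the bridge estimate and Cauchy--Schwarz below, since then $\{\rmK_b=1\}$ imposes no constraint). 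Since $\Theta_{k-1}(j)=\Theta_k(j)$ for all $j\ge k$, on $\{\rmK_b=k\}$ one of the four conditions in Definition~\ref{def:ControlVar}, read with $\Theta_{k-1}$ in place of $\Theta_k$, must fail at some scale $j^\star\in[k-1]_0$ (and, for the third condition, some $l^\star\in[j^\star]_0$), at which $\Theta_{k-1}(j^\star)=(\log k)^2$. Thus $\{\rmK_b=k\}$ lies in a union of at most $Ck^2$ ``violation events'', each of the form $\{Q>(\log k)^2\}$ with $Q$ equal, respectively, to $\Osc_{j^\star}(B)$, $\smash{\lvert\sup_{\A_{j^\star}}\rmZ_{j^\star}-\frkm_{j^\star}\rvert}$, $\smash{e^{(j^\star-l^\star)/2}\sup_{\A_{l^\star}}\rmZ_{j^\star,b}}$, or $\smash{e^{(b-j^\star)/2}\sup_{\A_{j^\star}}\lvert\frkg_b\rvert}$; and, by \eqref{eq:Inc1Fin}, the event inside the probability in \eqref{eq:tech1FinAsy} forces $B_k\ge-\lambda-\rmR_k(k)$.

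Next I would condition on $\CG_k$. Under $\P_{0,u,b}$, conditionally on $\CG_k$ the path $(B_s)_{s\in[k,b]}$ is a Brownian bridge from $B_k$ to $u$ over time $b-k$, independent of $\CG_k$ and of the fields $\rmZ_j,\rmZ_{j,b},\frkg_b$. Since $u\ge b^{1/4}$ dominates $\rmR_k(b-1)=O((\log b)^2)$, so the curve $j\mapsto-\lambda-\rmR_k(j)$ is harmless near the right endpoint, the estimates on Brownian bridges above slowly growing negative curves from Appendix~\ref{ap:BBEstimates} give, on $\{B_k\ge-\lambda-\rmR_k(k)\}$,
\[
\P_{0,u,b}\Bigl(\bigcap\nolimits_{j=k}^{b-1}\{B_j\ge-\lambda-\rmR_k(j)\}\,\Big|\,\CG_k\Bigr)\ \lesssim\ \frac{\bigl(B_k+\lambda+\rmR_k(k)\bigr)\,u}{b-k}\,.
\]
Together with the union bound above, the left-hand side of \eqref{eq:tech1FinAsy} is at most a constant times $\frac{u}{b-k}$ times the sum, over the $\le Ck^2$ violation events $V$, of $\E_{0,u,b}\bigl[\one_{\rmA_k}\,\P_{0,u,b}(V\mid\CG_k)\,(B_k+\lambda+\rmR_k(k))_+\bigr]$.

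I would then apply Cauchy--Schwarz in the form $\E[\one_{\rmA_k}Y]\le\P_{0,u,b}(\rmA_k)^{1/2}\E[Y^2]^{1/2}$ to each of these terms, which extracts the factor $\P_{0,u,b}(\rmA_k)^{1/2}$ while keeping $u/(b-k)$ outside the square root. For the violations driven by $\rmZ_{j^\star},\rmZ_{j^\star,b}$ or $\frkg_b$ (conditions 2--4), which are independent of $B$, one uses $\P(V\mid\CG_k)^2\le\P(V\mid\CG_k)$ together with the tower property (as $B_k$ is $\CG_k$-measurable and $V$ is independent of $B_k$) to get $\E_{0,u,b}[\P(V\mid\CG_k)(B_k+\lambda+\rmR_k(k))_+^2]=\P_{0,u,b}(V)\,\E_{0,u,b}[(B_k+\lambda+\rmR_k(k))_+^2]$; here the first factor is $\lesssim e^{-c(\log k)^2}$ by Lemma~\ref{lm:controlX'}, Lemma~\ref{lm:controlZ} and \ref{as:GG2} respectively, and the second is a fixed polynomial in $b$ and $k$, since $B_k$ is Gaussian with mean $ku/b$ and variance $k(b-k)/b$. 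The violation driven by $\Osc_{j^\star}(B)$ (condition 1) is handled the same way after one more Cauchy--Schwarz to decouple it from $B_k$, using that $\Osc_{j^\star}(B)$ has a Gaussian tail under $\P_{0,u,b}$ uniformly in $j^\star$ (as in the proof of Lemma~\ref{lm:Kk}). Summing the $O(k^2)$ contributions and distinguishing cases on the size of $k$ completes the argument: if $k$ is at most a small power of $b$ the polynomial factors are themselves bounded by a power of $k$ and are absorbed into a slightly smaller exponent $c$; if $k$ is comparable to a power of $b$ then $\log k\gtrsim\log b$, so that $e^{-c(\log k)^2}$ dominates all polynomial-in-$b$ losses, including the factor $b/(b-k)\le b$ that appears when $k$ is close to $b$. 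This yields the bound $\lesssim\frac ub e^{-c(\log k)^2}\P_{0,u,b}(\rmA_k)^{1/2}$.

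The main obstacle is twofold: first, obtaining the Brownian bridge ballot estimate above with the correct dependence on the right endpoint $u$ and on the starting height $B_k$ for the slowly growing negative curve $-\lambda-\rmR_k$ — a cruder bound obtained by replacing the curve by a constant is too lossy when $k$ is small, which is the reason for the dedicated estimates in Appendix~\ref{ap:BBEstimates}; second, the bookkeeping needed to make the constant uniform over all $k\in[b-1]$, i.e.\ to ensure that the polynomial-in-$b$ factors produced by the moments of $B_k$ and by the union over $O(k^2)$ scales are always swallowed by the stretched-exponential factor $e^{-c(\log k)^2}$.
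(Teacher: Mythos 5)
Your proposal follows essentially the same route as the paper's proof: bound $\{\rmK_b=k\}$ using the failure of some condition at level $\Theta_{k-1}$ at a scale $j^\star<k$, condition on $\CG_k$, apply the Brownian bridge ballot estimate from Appendix~\ref{ap:BBEstimates}, and extract $\P_{0,u,b}(\rmA_k)^{1/2}$ via Cauchy--Schwarz, absorbing all polynomial-in-$k$ losses into $e^{-c(\log k)^2}$. The one genuine difference is how the starting height $B_k$ of the bridge is handled. The paper works with a single $\CG_k$-measurable event $\rmE_k\supseteq\{\rmK_b=k\}$ on which $B_k$ is deterministically constrained to $[-\zeta(k),\,(k+1)(\log(k+1))^2]$; by monotonicity of the ballot probability in the starting height it then evaluates Proposition~\ref{pr:BoundBBAboveNeg} at a single worst-case value $a_k\approx k(\log k)^2$, obtaining a deterministic bound $\lesssim a_k^2\,u/(b-k)$ (the exponent two, not one, coming from the first term $x/u$ of the error $\tilde\delta$ in \eqref{eq:ErrorTermNeg}). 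You instead keep $B_k$ random and control it through its second (and, for the oscillation violation, fourth) moments, which works because those moments are $O(k^{O(1)})$. Both routes produce polynomial factors that are swallowed by the stretched exponential, so the conclusion is the same.

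Two small imprecisions worth flagging. First, the ballot estimate you quote, $\lesssim(B_k+\lambda+\rmR_k(k))\,u/(b-k)$, is not exactly what Proposition~\ref{pr:BoundBBAboveNeg} delivers: the proposition carries a multiplicative error $(1+\tilde\delta)$ with $\tilde\delta\gtrsim x/u$, so for $B_k$ comparable to or larger than $u$ the correct bound is rather $\lesssim B_k^2/(b-k)$, and near the curve you should invoke monotonicity to replace $B_k$ by $O((\log k)^2)$ before applying the proposition (which requires $x>\zeta(0)$). Neither issue is fatal — your moment argument already absorbs an extra factor of $B_k$, and the small-$B_k$ regime gives a bound of the same polylogarithmic size — but as stated the displayed estimate cannot be read off directly from the appendix. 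Second, the constraint $B_k\geq-\lambda-\rmR_k(k)$ is already part of the intersection $\bigcap_{j=k}^{b-1}\{B_j\geq-\lambda-\rmR_k(j)\}$ at $j=k$; the reference to \eqref{eq:Inc1Fin} is not needed.
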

\begin{proof}
For simplicity, and without loss of generality, we set $\lambda = 1$.
The proof of this lemma follows a similar approach to the proof of \cite[Lemma~4.20]{BiskupLouidor}. 
We start with the proof of \eqref{eq:tech1FinAsy}. Given $b \in  \N$ and $k \in [b-1]$, we let $\rmE_k$ be the event that the conditions in Definition~\ref{def:ControlVar} hold for all $j \in [k]$ (or $j \in [k]_0$) with $\Theta_k(j)$, but at least one of these conditions is not satisfied if $\Theta_k(j)$ is replaced by $\Theta_{k-1}(j)$. We note that $\rmE_k \in \CG_k$ and that $\{\rmK_b = k\} \subset \rmE_k$ by definition of $\rmK_b$. Hence, the probability on the left-hand side of \eqref{eq:tech1FinAsy} can be bounded from above by
\begin{equation*}
	\E_{0, u, b}\Biggl[\one_{\rmE_k \cap \rmA_k} \P_{0, u, b}\Biggl(\bigcap_{j=k}^{b-1} \bigl\{B_j \geq - 1 - \rmR_k(j)\bigr\} \, \Bigg| \, \CG_k\Biggr)\Biggr]\;.
\end{equation*}
Now, we consider the function $\zeta:\R_0^{+} \to \R_0^{+}$ given by
\begin{equation*}
\zeta(s) \eqdef \rmC\bigl[1 + \log(\rmC+s)\bigr]^2 \;,
\end{equation*}
where $\rmC$ is the constant appearing in the definition of $\rmR_k$ which, without any loss of generality, we can assume to be large enough so that $\zeta$ is increasing  and concave on $\R^{+}_{0}$. Then on the event $\{B_k = z\}$ for some $z \in \R$, again by possibly enlarging the constant $\rmC$ in the definition of $\zeta$, thanks to the Markov property of the Brownian bridge, it holds that
\begin{align*}
\P_{0, u, b}\Biggl(\bigcap_{j=k}^{b-1} \bigl\{B_j \geq -1 - \rmR_k(j)\bigr\} \, \Bigg| \, \CG_k\Biggr) \leq \P_{z, u, b-k}\Biggl(\bigcap_{j = k}^{b-1} \bigl\{B_j \geq - \zeta(j) \bigr\}\Biggr) \;.
\end{align*}
Now, thanks to Lemma~\ref{lm:transferBBRW}, we have that 
\begin{equation}
\label{eq:ProbIncrx}
\P_{z, u, b-k}\Biggl(\bigcap_{j = k}^{b-1} \bigl\{B_j \geq - \zeta(j) \bigr\}\Biggr) \lesssim \P_{z, u, b-k}\Biggl(\inf_{s \in [0, b-k]} \bigl(B_s + 2\zeta(k+s)\bigr) \geq 0\Biggr)\;,
\end{equation}
where the implicit constant is independent of everything else.
Since by definition one has $\Osc_j(B) \leq \Theta_k(j)$ on the event $\rmE_k$, for all $j \in [k]_0$, we can assume that 
\begin{equation*} 
z \in \bigl[-\zeta(k), (k+1)\bigl(\log(k+1)\bigr)^2\bigr]\;.
\end{equation*}
We let $a_k \eqdef \zeta(k) \vee ((k+1) (\log(k+1))^2)$. Since the probability on the right-hand side of \eqref{eq:ProbIncrx} is increasing in $z \in \R$, we can estimate this probability for $z \in [a_k, 3 a_k]$. It follows from Proposition~\ref{pr:BoundBBAboveNeg} that   
\begin{equation}
\label{eq:keyLemmaKeyTech}
\sup_{z \in [a_k, 3 a_k]}\P_{z, u, b-k}\Biggl(\inf_{s \in [0, b-k]} \bigl(B_s + 2\zeta(k+s)\bigr) \geq 0\Biggr) \lesssim a^2_k \frac{u}{b-k}\;.
\end{equation}
We note that the presence of $a_k^2$ instead of $a_k$ in the above expression \dash as one might expect from Proposition~\ref{pr:BoundBBAboveNeg} \dash is due to the first summand in the error term \eqref{eq:ErrorTermNeg}. Furthermore, we observe that a direct application of Lemma~\ref{lm:techRhoAK} shows that the remaining summands in the error term \eqref{eq:ErrorTermNeg} can be bounded uniformly over all $k \in [b-1]$ and $z \in [a_k, 3 a_k]$.

Therefore, combining the previous considerations, so far we have proved that on the event $\rmE_k$, it holds that 
\begin{equation*}
\P_{0, u, b}\Biggl(\bigcap_{j=k}^{b-1} \bigl\{B_j \geq - 1 - \rmR_k(j)\bigr\} \, \Bigg| \, \CG_k\Biggr)  \lesssim a^2_k \frac{u}{b-k} \;. 
\end{equation*}
On the other hand, thanks to Cauchy--Schwarz's inequality and arguing as in the proof of Lemma~\ref{lm:Kk}, we have that $\smash{\P_{0, u, b}(\rmE_k \cap \rmA_k) \leq e^{-c_1 (\log k)^2} \sqrt{\P_{0, u, b}(\rmA_k)}}$, for some constant $c_1 > 0$. Hence, by absorbing the factors $a^2_k$ and $b/(b-k)$ inside the exponential, we conclude that there exists a constant $c_2 > 0$ such that 
\begin{equation*}
\P_{0, u, b}\Biggl(\rmK_b = k, \; \bigcap_{j = k}^{b-1} \bigl\{B_{j} \geq -1 - \rmR_{k}(j) \bigr\}, \; \rmA_k \Biggr) \lesssim \frac{u}{b} e^{-c_2 (\log k)^2} \sqrt{\P_{0, u, b}(\rmA_k)}  \;,
\end{equation*}
from which the conclusion follows. Finally, the proof of \eqref{eq:tech1InfAsy} is obtained in a very similar way 
and hence is omitted.
\end{proof}

In what follows, we also require the following version of Lemma~\ref{lm:upperBoundKk}, where the endpoint of the Brownian bridge is constrained to be less than $b^{1/4}$.
\begin{lemma}
\label{lm:upperBoundKkSmallU}
Let $A > 0$ be fixed. For any $\lambda > 0$, there exists a constant $c > 0$ such that, for all $b \in \N$ sufficiently large, $u \in [-A, b^{1/4}]$, $k \in [b-1]$, and any event $\rmA_k \in \CG_k$,
\begin{equation*}
\P_{0, u, b}\Biggl(\rmK_b = k, \;  \bigcap_{j = k}^{b-1} \bigl\{B_{j} \geq - \lambda - \rmR_{k}(j) \bigr\}, \; \rmA_k \Biggr) \lesssim \f{1}{\sqrt{b}} e^{-c (\log k)^2}  \sqrt{\P_{0, u, b}(\rmA_k)}\;.
\end{equation*}
\end{lemma}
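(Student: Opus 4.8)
The plan is to follow the proof of Lemma~\ref{lm:upperBoundKk} essentially line by line; the only genuinely new ingredient is a pathwise monotonicity argument for the Brownian bridge that reduces the relevant probability to the endpoint $u = b^{1/4}$, which is the left boundary of the range already treated there. So I would again take $\lambda = 1$ without loss of generality, introduce the event $\rmE_k\in\CG_k$ from Definition~\ref{def:ControlVar} (with $\Theta_k$ but not $\Theta_{k-1}$, so that $\{\rmK_b = k\}\subseteq\rmE_k$), bound the probability in the statement by $\E_{0,u,b}\bigl[\one_{\rmE_k\cap\rmA_k}\,\P_{0,u,b}\bigl(\bigcap_{j=k}^{b-1}\{B_j\geq -1-\rmR_k(j)\}\mid\CG_k\bigr)\bigr]$, condition on $\CG_k$, and --- using the Markov property of the bridge and the enlargement of the lower curve exactly as in that proof --- reduce, on $\rmE_k\cap\{B_k = z\}$, to bounding $\P_{z,u,b-k}\bigl(\bigcap_{j=k}^{b-1}\{B_j\geq-\zeta(j)\}\bigr)$, where $\zeta(s)=\rmC[1+\log(\rmC+s)]^2$ and $z\in[-\zeta(k),(k+1)(\log(k+1))^2]$ on $\rmE_k$. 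Up to here nothing changes.

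The key new step is then the following. Since $u\le b^{1/4}$, the event $\{B_j\ge-\zeta(j)\ \forall j\}$ is increasing in the sample path, and a Brownian bridge from $z$ to $u'$ in time $T$ pathwise dominates the one from $z$ to $u$ in time $T$ whenever $u'\ge u$ --- couple them via the same standard $0\!\to\!0$ bridge, so that the two paths differ by the deterministic nonnegative function $s\mapsto(u'-u)s/T$ --- and hence I would bound $\P_{z,u,b-k}(\bigcap_j\{B_j\ge-\zeta(j)\})\le\P_{z,b^{1/4},b-k}(\bigcap_j\{B_j\ge-\zeta(j)\})$. The right-hand side, uniformly over $z\in[a_k,3a_k]$ (the restriction to this range being justified on $\rmE_k$ by monotonicity in the starting point, exactly as in Lemma~\ref{lm:upperBoundKk}) with $a_k=\zeta(k)\vee((k+1)(\log(k+1))^2)$, is bounded via Lemma~\ref{lm:transferBBRW} and Proposition~\ref{pr:BoundBBAboveNeg} --- i.e.\ via the chain of estimates \eqref{eq:ProbIncrx}--\eqref{eq:keyLemmaKeyTech} applied with endpoint $b^{1/4}$ (the left end of the range allowed there) --- by $\lesssim a_k^2\,b^{1/4}/(b-k)$. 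This is the one place the hypothesis $u\le b^{1/4}$ enters; the lower bound $u\ge -A$ only serves to fix a definite range.

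To conclude, I would assemble exactly as in Lemma~\ref{lm:upperBoundKk}: Cauchy--Schwarz together with the tail bounds behind Lemma~\ref{lm:Kk} (which hold uniformly over $\P_{0,u,b}$ for $u\in[-A,b^{1/4}]$, only the smallness of $\lvert u\rvert/b$ being used) give $\P_{0,u,b}(\rmE_k\cap\rmA_k)\lesssim e^{-c_1(\log k)^2}\sqrt{\P_{0,u,b}(\rmA_k)}$; combining, the target probability is $\lesssim a_k^2\tfrac{b^{1/4}}{b-k}e^{-c_1(\log k)^2}\sqrt{\P_{0,u,b}(\rmA_k)}=b^{-1/2}\cdot\tfrac{b^{3/4}a_k^2}{b-k}e^{-c_1(\log k)^2}\sqrt{\P_{0,u,b}(\rmA_k)}$, and since $a_k\lesssim k(\log k)^2$ the prefactor $\tfrac{b^{3/4}a_k^2}{b-k}$ is absorbed into $e^{-c_1(\log k)^2}$ at the cost of shrinking the constant --- the same polynomial-versus-$e^{-(\log k)^2}$ absorption used there to pass from $\tfrac{u}{b-k}$ to $\tfrac{u}{b}$, now carrying one extra polynomial factor $b^{3/4}$ --- leaving $\lesssim\tfrac{1}{\sqrt b}e^{-c(\log k)^2}\sqrt{\P_{0,u,b}(\rmA_k)}$ for $b$ large. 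I do not expect a real obstacle here: the only steps needing (minor) care are checking that those auxiliary tail bounds and the Cauchy--Schwarz step remain uniform over the widened endpoint range $u\in[-A,b^{1/4}]$, and that the extra $b^{3/4}$ is still swallowed by $e^{-c_1(\log k)^2}$ uniformly over $k\in[b-1]$ --- which it is, for $k$ bounded because $a_k$ is then bounded, and for $k$ comparable to $b$ because $(\log k)^2\gtrsim(\log b)^2$ dwarfs any power of $\log b$.
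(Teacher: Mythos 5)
Your proof is correct and follows essentially the same route as the paper's: the paper also treats the widened range by exploiting monotonicity of the Brownian bridge in its endpoint (a consequence of the coupling you describe), raising it into the range where Proposition~\ref{pr:BoundBBAboveNeg} applies — the paper shifts $u \mapsto u + b^{\iota}$ for some $\iota \in (0,1/8)$ where you shift directly to $b^{1/4}$, but since $u+b^{\iota} \lesssim b^{1/4}$ this yields the same bound of order $a_k^2\,b^{1/4}/(b-k)$, and the remaining Cauchy--Schwarz and absorption steps are then identical to Lemma~\ref{lm:upperBoundKk}.
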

\begin{proof}
The proof is almost identical to that of Lemma~\ref{lm:Kk}. Therefore, we only highlight the necessary changes.
First, for a fixed $A > 0$, we note that \eqref{eq:boundKFinite} in Lemma~\ref{lm:Kk} also holds uniformly for all $u \in [-A, b^{1/4}]$. Then, the only thing that needs to be changed is the bound \eqref{eq:keyLemmaKeyTech}. Indeed, Proposition~\ref{pr:BoundBBAboveNeg} can only be applied for $u \in [b^{\iota}, b^{3/4}]$ for some $\iota \in (0, 1/8)$. However, we can easily overcome this issue by using monotonicity and replacing the endpoint $u$ in the probability on the left-hand side of \eqref{eq:keyLemmaKeyTech} with $u + b^{\iota}$ for some $\iota \in (0, 1/8)$. This allows us to apply Proposition~\ref{pr:BoundBBAboveNeg}, from which we deduce that the probability on the left-hand side of \eqref{eq:keyLemmaKeyTech} is bounded above by a multiple of $a_k^2 (u + b^{\iota})/(b-k)$. The conclusion then follows by proceeding with the remaining part of the argument in the proof of Lemma~\ref{lm:Kk} and recalling that $u < b^{1/4}$.
\end{proof}

The following lemma is analogous to \cite[Lemma~4.21]{BiskupLouidor} in our setting. 
\begin{lemma}
\label{lm:upperBoundKkUpsilon}
For any $\lambda > 0$, there exist constants $c_1$, $c_{2} > 0$ such that, for $b \in \N$ large enough and $u \in [b^{1/4}, b^{3/4}]$,
\begin{equation}
\label{eq:upperBoundKkFinite}
c_1\f{u}{b} \leq \P_{0, u, b}\bigl(\M_{0, b}(\Upsilon_{\! b, \frkg}) \leq \lambda \bigr) \leq c_{2} \frac{u}{b}\;.
\end{equation}
Similarly, there exist constants $\tilde c_1, \tilde c_{2} > 0$ such that for $\lambda > 0$, for $b \in \N$ large enough, 
\begin{equation}
\label{eq:upperBoundKkInfinite}
\tilde c_1 \f{1}{\sqrt{b}} \leq	\P\bigl(\M_{0, b}(\Upsilon_{\! \infty}) \leq \lambda \bigr)\leq \tilde c_{2} \f{1}{\sqrt{b}} \;.
\end{equation}
\end{lemma}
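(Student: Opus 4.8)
The plan is to establish the matching upper and lower bounds on $\P_{0,u,b}(\M_{0,b}(\Upsilon_{\!b,\frkg})\le\lambda)$ by transferring the question to the behaviour of the driving Brownian bridge, using the decomposition-across-annuli machinery of Section~\ref{subsec:redBMCluster}. First I would decompose according to the value of the control variable $\rmK_b$. On the event $\{\rmK_b=b\}$ (i.e.\ the control conditions fail at all scales) the tail bound \eqref{eq:boundKFinite} of Lemma~\ref{lm:Kk} gives a contribution that is superpolynomially small in $b$, hence negligible compared to $u/b\gtrsim b^{-3/4}$; so it suffices to work on $\{\rmK_b=k\}$ for $k\in[b-1]$ and sum.

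\textbf{Upper bound.} For the upper bound I would use the inclusion \eqref{eq:Inc1Fin}: on $\{\rmK_b<b\}\cap\{\M_{0,b}(\Upsilon_{\!b,\frkg})\le\lambda\}$ one has $B_j\ge -\lambda-\rmR_{\rmK_b}(j)$ for all $j\in[b-1]$. Splitting over $\{\rmK_b=k\}$ and applying Lemma~\ref{lm:upperBoundKk} with the trivial choice $\rmA_k=\Omega$ yields
\begin{equation*}
\P_{0,u,b}\bigl(\rmK_b=k,\ \M_{0,b}(\Upsilon_{\!b,\frkg})\le\lambda\bigr)\lesssim \frac{u}{b}\,e^{-c(\log k)^2}\;,
\end{equation*}
and summing the geometrically-dominated series $\sum_{k}e^{-c(\log k)^2}<\infty$ over $k\in[b-1]$, together with the negligible $\{\rmK_b=b\}$ contribution, gives $\P_{0,u,b}(\M_{0,b}(\Upsilon_{\!b,\frkg})\le\lambda)\le c_2\,u/b$.

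\textbf{Lower bound.} For the lower bound I would produce an explicit sub-event of positive probability of order $u/b$. Fix a large but finite scale $k_0$ (large enough that Lemma~\ref{lm:Kk} and the estimates of Section~\ref{subsec:redBMCluster} are in force). On $\{\rmK_b=k_0\}$, the inclusion \eqref{eq:Inc2Fin} tells us that if additionally $B_j\ge\lambda+\rmR_{k_0}(j)$ for all $j\in[k_0,b-1]$, then $\M_{0,b,k_0}(\Upsilon_{\!b,\frkg})\le\lambda$; it remains to also control the supremum over the inner region $\B_{k_0}$, which, since $\Upsilon_{\!b,\frkg}$ restricted to $\B_{k_0}$ is $\CG_{k_0}$-measurable up to the tail correction \eqref{eq:diffFields}, holds on an event of $\CG_{k_0}$-conditional probability bounded below by an absolute constant. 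Thus the problem reduces to lower-bounding the probability that a Brownian bridge from $0$ to $u$ in time $b$ stays above the polylogarithmic curve $j\mapsto\lambda+\rmR_{k_0}(j)$ on $[k_0,b]$, intersected with an event in $\CG_{k_0}$ of positive probability. This is exactly the type of estimate provided by Appendix~\ref{ap:BBEstimates} (cf.\ Proposition~\ref{pr:BoundBBAboveNeg} and the transfer lemma \ref{lm:transferBBRW}), and it yields a lower bound of order $u/b$ uniformly over $u\in[b^{1/4},b^{3/4}]$. Combining gives $\P_{0,u,b}(\M_{0,b}(\Upsilon_{\!b,\frkg})\le\lambda)\ge c_1\,u/b$.

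\textbf{The infinite case.} The bounds \eqref{eq:upperBoundKkInfinite} for $\Upsilon_{\!\infty}$ follow by the identical argument, now using the inclusions \eqref{eq:Inc1Inf}–\eqref{eq:Inc2Inf}, the estimate \eqref{eq:tech1InfAsy} of Lemma~\ref{lm:upperBoundKk}, and the tail bound \eqref{eq:boundKInfinite}; the only change is that the relevant Brownian-bridge / Brownian-motion estimate now produces the $1/\sqrt{b}$ scaling (the bridge from $0$ to an order-$\sqrt{b}$ endpoint being replaced by a free Brownian motion run to time $b$, whose probability of staying above a polylog curve decays like $1/\sqrt b$). The main obstacle is the lower bound: one must verify that insisting on the inner region $\B_{k_0}$ behaving well does not destroy the order-$u/b$ mass, i.e.\ that the $\CG_{k_0}$-event controlling $\M_{0,k_0}(\Upsilon_{\!b,\frkg})$ and the Brownian-bridge staying-positive event can be imposed simultaneously with only a constant-factor loss — this is where the independence structure of the annular decomposition and the uniform (in $b$) control of the tail correction \eqref{eq:diffFields} are essential.
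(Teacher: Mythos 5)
Your overall strategy matches the paper's: the upper bound via the inclusion \eqref{eq:Inc1Fin} and Lemma~\ref{lm:upperBoundKk} with $\rmA_k=\Omega$, the lower bound by intersecting a ``bridge stays high'' event with a $\CG_k$-measurable inner-region constraint, and entropic repulsion to keep the bridge above the polylogarithmic curve. However, your sketch leaves the main technical knot unresolved and has one misattribution that should be corrected.

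First, you propose to work on $\{\rmK_b=k_0\}$. That event involves oscillation constraints on $B$ at \emph{all} times, including $[0,k_0]$, and is therefore \emph{not} conditionally independent of $\sigma\bigl((B_s)_{s\le k_0}\bigr)$ given $\sigma(B_{k_0})$ --- so your ``impose both events simultaneously with only a constant-factor loss'' step does not go through as stated. The paper gets around this by defining a bespoke event $\rmE_{k}$: all control-variable conditions with $\Theta_k(\cdot)$, but with \emph{no} oscillation requirements on $B$ for times $\le k$. Precisely this omission makes $\rmE_k$ conditionally independent of $\sigma\bigl((B_s)_{s\le k}\bigr)$ given $\sigma(B_k)$. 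Then the inner-region event $\bigl\{\M_{0,k}(\Upsilon_{\!k,\frkg})\le\lambda-c(\log k)^2\bigr\}$ is conditionally independent of $\rmE_k$ and the bridge event given $\sigma(B_k)$, and --- the second missing ingredient --- its conditional probability given $B_k=z$ is \emph{non-decreasing} in $z$. Since on the bridge event $B_k\ge\lambda+\rmR_k(k)$, one may replace this conditional probability by a fixed positive constant $c_1(k)$; monotonicity is what lets you decouple the two constraints without further argument. Your observation that ``this is where the independence structure\dots is essential'' is correct but is not yet a proof; the precise mechanism is the definition of $\rmE_k$ plus stochastic monotonicity in the endpoint.

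Second, for the lower bound on the bridge probability you cite Proposition~\ref{pr:BoundBBAboveNeg}, but that result gives an \emph{upper} bound on staying above a negative curve --- the wrong direction for what you need. The paper instead uses the exact reflection-principle lower bound \eqref{eq:boundsBBstaysPos} (applied to the event $\{\inf_{s\in[1,b]}B_s\ge 0\}$) combined with the entropic-repulsion estimate Lemma~\ref{lm:mainEntrRW}, which shows that demanding $B_j\ge\lambda+\rmR_k(j)$ for $j\ge k$ rather than merely $B_j\ge 0$ only loses a factor $1-O(k^{-1/16})$. Finally, one must also subtract the contribution of $\{\rmK_b>k\}$ (bounded via Lemma~\ref{lm:upperBoundKk} by $e^{-c(\log k)^2}u/b$) before concluding; this is a small bookkeeping step you omit. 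With those corrections, the argument, including the analogous $\Upsilon_{\!\infty}$ case with $u/b$ replaced by $1/\sqrt b$, is the same as the paper's.
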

\begin{proof}
We start with the upper bound in \eqref{eq:upperBoundKkFinite}. Thanks to \eqref{eq:Inc1Fin}, it holds that 
\begin{equation*}
\bigl\{\M_{0, b}(\Upsilon_{\! b, \frkg}) \leq \lambda \bigr\} \subseteq \bigl\{\rmK_b = b\bigr\} \cup \Biggl\{\rmK_b < b, \; \bigcap_{j = 1}^{b-1}\bigl\{B_{j} \geq - \lambda - \rmR_{\rmK_b}(j)\bigr\}\Biggr\} \;.
\end{equation*}
Thanks to Lemmas~\ref{lm:Kk}~and~\ref{lm:upperBoundKk}, the probability of the event on the right-hand side of the above display is less than than a multiple of
\begin{equation*}
\f{u}{b} \sum_{k = 1}^{b-1} e^{-c_1 (\log k)^2} + e^{-c_2 (\log b)^2} \;,
\end{equation*}
for some constants $c_1$, $c_2 > 0$. Therefore, the desired bound follows. We observe that the upper bound in \eqref{eq:upperBoundKkInfinite} can be deduced in a similar way. 

We now focus on the lower bounds. In particular, we limit ourself to study the lower bound in \eqref{eq:upperBoundKkFinite}, since the one in \eqref{eq:upperBoundKkInfinite} can be obtained similarly.
For $k$, $b \in \N$ such that $k < b$, let $\rmE_k$ be the event that all conditions in the definition of the control variable $\rmK_b$ are satisfied with $\Theta_k(\cdot)$, except that we do not impose any requirements on the oscillation bounds for the Brownian motion for time indices up to and including time $k$. In particular, the event $\rmE_k$ is conditionally independent from $\sigma((B_s)_{s \leq k})$ given $\sigma(B_k)$. We have the following lower bound
\begin{equation}
\label{eq:lowerCas1}
\P_{0, u, b}\bigl(\M_{0, b}(\Upsilon_{\! b, \frkg}) \leq \lambda\bigr) \geq \P_{0, u, b}\Biggl(\M_{0, k}(\Upsilon_{\! b, \frkg})  \leq \lambda,\; \rmE_k, \; \bigcap_{j = k}^{b-1}\bigl\{B_{j} \geq \lambda + \rmR_{k}(j) \bigr\}\Biggr) \;.
\end{equation}
where we used the fact that, thanks to Lemma~\ref{lm:approxBrownianBridge}, on the event $\rmE_k$\footnote{To be precise, the conclusion of Lemma~\ref{lm:approxBrownianBridge} holds on the event $\{\rmK_b\leq k\}$. However, as one can easily check, on the annulus $\B_b \setminus \B_k$ only the conditions on the oscillations of the Brownian motion after time $k$ are relevant.}, it holds that 
\begin{equation*}
\bigcap_{j = k}^{b-1}\bigl\{B_{j} \geq \lambda + \rmR_{k}(j) \bigr\} \subseteq \bigl\{\M_{0, b, k}(\Upsilon_{\! b, \frkg}) \leq \lambda\bigr\} \;.
\end{equation*}
We also observe that thanks to \eqref{eq:diffFields}, on the event $\rmE_k$, there exists a constant $c >0$ such that
\begin{equation*}
\bigl\{\M_{0, k}(\Upsilon_{\! k, \frkg})  \leq \lambda - c (\log k)^2\bigr\} \subseteq \bigl\{\M_{0, k}(\Upsilon_{\! b, \frkg}) \leq \lambda\bigr\} \;.
\end{equation*}
Therefore, the right-hand side of \eqref{eq:lowerCas1} can be lower bounded by the following probability 
\begin{equation*}
\P_{0, u, b}\Biggl(\M_{0, k}(\Upsilon_{\! k, \frkg}) \leq \lambda - c (\log k)^2,\;  \rmE_k, \; \bigcap_{j = k}^{b-1}\bigl\{B_{j} \geq \lambda + \rmR_{k}(j) \bigr\}\Biggr) \;.
\end{equation*}
We note that the first event inside the above probability is conditionally independent from the second and third events given $\sigma(B_{k})$. In particular, the conditional probability of the first event given $\{B_{k} = z\}$, for some $z \geq \lambda + \rmR_k(k)$, increases as $z$ increases. Therefore, it can bounded from below by a constant $c_1(k) > 0$. 

Since by definition $\{\rmK_b \leq k\} \subseteq \rmE_k$, to conclude it suffices to estimate the following probability 
\begin{equation}
\label{eq:lowerCas2}
\P_{0, u, b}\Biggl(\rmK_b \leq k,\; \bigcap_{j = k}^{b-1}\bigl\{B_{j} \geq \lambda + \rmR_{k}(j) \bigr\}\Biggr)\;.
\end{equation}
To this end, by using Lemma~\ref{lm:upperBoundKk}, we get that there exist constants $c_2$, $c_3 > 0$ such that 
\begin{equation*}
\P_{0, u, b}\Biggl(\rmK_b > k, \; \bigcap_{j = k}^{b-1} \bigl\{B_j \geq \lambda + \rmR_k(j)\bigr\}\Biggr) \leq  c_2 e^{-c_3(\log k)^2} \frac{u}{b} \;,
\end{equation*}
and so, the probability in \eqref{eq:lowerCas2} can be lower bounded by the following sum
\begin{equation*}
\P_{0, u, b}\Biggl(\bigcap_{j = k}^{b-1} \bigl\{B_j \geq \lambda + \rmR_{k}(j)\bigr\}\Biggr) - c_2 e^{-c_3(\log k)^2} \frac{u}{b}\;.
\end{equation*}
Furthermore, by using the lower bound in \eqref{eq:boundsBBstaysPos} and Lemma~\ref{lm:mainEntrRW}, we get that the above probability is lower bounded by
\begin{align*}
& \P_{0, u, b}\Biggl(\bigcap_{j = 1}^{b-1} \bigl\{B_j \geq -\lambda -\rmR_k(j)\bigr\}, \; \bigcap_{j = k}^{b-1} \bigl\{B_j \geq \lambda + \rmR_k(j)\bigr\}\Biggr)\\
& \geq \P_{0, u, b}\Biggl(\bigcap_{j = 1}^{b-1} \bigl\{B_j \geq 0\bigr\}\Biggr) - \P_{0, u, b}\Biggl(\bigcap_{j = 1}^{b-1} \bigl\{B_j \geq -\lambda -\rmR_k(j)\bigr\}, \; \bigcup_{j = k}^{b-1} \bigl\{B_j \leq \lambda + \rmR_k(j)\bigr\}\Biggr) \\
& \geq \P_{0, u, b}\Biggl(\inf_{s \in [1, b]} B_s \geq 0\Biggr) - \P_{0, u, b}\Biggl(\bigcap_{j = 1}^{b-1} \bigl\{B_j \geq -\lambda -\rmR_k(j)\bigr\}, \; \bigcup_{j = k}^{b-1} \bigl\{B_j < \lambda + \rmR_k(j)\bigr\}\Biggr) \\
& \geq c_4 \frac{u}{b}\bigl(1 - k^{-\f{1}{16}}\bigr) \;,
\end{align*}
for some constant $c_4 > 0$. Hence, putting everything together, we showed that there exist constants $c_1(k)$, $c_2$, $c_3$, $c_4 > 0$ such that 
\begin{equation*}
\P_{0, u, b}\bigl(\M_{0, b}(\Upsilon_{\! b, \frkg}) \leq \lambda\bigr)  \geq c_1(k) \frac{u}{b} \bigl(c_4 (1 - k^{-\f{1}{16}}) - c_2 e^{-c_3(\log k)^2}\bigr) \;,
\end{equation*}
from which the claim follows. 
\end{proof}

We finish this subsection with the following lemma.
\begin{lemma}
\label{lm:entroDelta}
For all $k \in \N$, $\eps > 0$, and $\lambda > 0$, there exists $\delta \in (0, \lambda)$ such that for all $b \geq k$ sufficiently large and $\eta \in \{0, \delta\}$, it holds that
\begin{equation}
\label{eq:entr}
\P_{0, u, b}\bigl(\M_{0, k}(\Upsilon_{\! b, \frkg}) \geq \lambda - \delta + \eta, \; \M_{0, b}(\Upsilon_{\! b, \frkg}) \leq \lambda + \eta\bigr) \leq \eps \frac{u}{b} \;.
\end{equation}
Similarly, for all $k \in \N$, $\eps > 0$, and $\lambda > 0$ there exists $\delta \in (0, \lambda)$ such that for all $b \geq k$ sufficiently large and $\eta \in \{0, \delta\}$, it holds that
\begin{equation}
\label{eq:entrBM}
\P\bigl(\M_{0, k}(\Upsilon_{\! \infty}) \geq \lambda - \delta + \eta,\; \M_{0, b}(\Upsilon_{\! \infty}) \leq \lambda + \eta \bigr) \leq \eps \f{1}{\sqrt{b}} \;.
\end{equation} 
\end{lemma}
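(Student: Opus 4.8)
The estimate \eqref{eq:entr} is an entropic repulsion bound: conditionally on the global maximum of $\Upsilon_{\! b, \frkg}$ over $\B_b$ being at most $\lambda+\eta$ \dash an event of probability of order $u/b$ by Lemma~\ref{lm:upperBoundKkUpsilon} \dash it should be unlikely, by a factor that vanishes as $\delta\to 0$, that the maximum over the fixed ball $\B_k$ reaches within $\delta$ of this ceiling. Writing $\M_{0,b}(\Upsilon_{\! b, \frkg})=\M_{0,k}(\Upsilon_{\! b, \frkg})\vee\M_{0,b,k}(\Upsilon_{\! b, \frkg})$, the event in \eqref{eq:entr} equals
\[
\bigl\{\M_{0,k}(\Upsilon_{\! b, \frkg})\in[\lambda-\delta+\eta,\lambda+\eta]\bigr\}\cap\bigl\{\M_{0,b,k}(\Upsilon_{\! b, \frkg})\leq\lambda+\eta\bigr\}\,;
\]
the plan is to show that the first event contributes an order-$\delta$ factor and the second an order-$u/b$ factor. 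We only discuss \eqref{eq:entr}, taking $u\in[b^{1/4},b^{3/4}]$ for concreteness; the bound \eqref{eq:entrBM}, and the case of smaller $u$, follow along the same lines using \eqref{eq:Inc1Inf}, \eqref{eq:tech1InfAsy} and Lemma~\ref{lm:upperBoundKkSmallU}, with $u/b$ replaced by $1/\sqrt b$ where appropriate.

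\textbf{Step 1: reduction to a Brownian bridge barrier.} Fix $m\in\N$ with $m>k$, to be chosen large at the end. By the inclusion \eqref{eq:Inc1Fin}, Lemmas~\ref{lm:Kk} and~\ref{lm:upperBoundKk}, and the bound $\P_{0,u,b}(\rmK_b=b)\leq e^{-c(\log b)^2}=o(u/b)$ (valid since $u\geq b^{1/4}$), one gets for all large $b$
\[
\P_{0,u,b}\bigl(\rmK_b\geq m,\ \M_{0,b}(\Upsilon_{\! b, \frkg})\leq\lambda+\eta\bigr)\lesssim\frac ub\sum_{\ell\geq m}e^{-c(\log\ell)^2}\leq\frac\eps2\,\frac ub\,,
\]
once $m=m(\eps,k,\lambda)$ is large enough. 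On $\{\rmK_b<m\}$, applying Lemma~\ref{lm:approxBrownianBridge} at the scales $j\in\{k,\dots,b-1\}$ \dash where the errors $\rmR_{\rmK_b}(j)\leq\rmC(1+\Theta_{m-1}(j))$ grow only polylogarithmically in $j$ and are entropically negligible \dash shows that, setting $\rmA:=\bigcap_{j=k}^{b-1}\{B_j\geq-\lambda-\eta-\rmR_{\rmK_b}(j)\}$, one has $\{\rmK_b<m\}\cap\{\M_{0,b,k}(\Upsilon_{\! b, \frkg})\leq\lambda+\eta\}\subseteq\rmA$. Moreover $\{\rmK_b<m\}\subseteq\rmE_m$, where $\rmE_m$ is the event that the conditions of Definition~\ref{def:ControlVar} hold with $\Theta_{m-1}$ in place of $\Theta_k$, so that $\P_{0,u,b}(\rmE_m)\lesssim 1$ and $|B_k|\lesssim 1$ on $\rmE_m$ (with implicit constants depending on $k$ and $m$); hence, by the Markov property of the bridge, Proposition~\ref{pr:BoundBBAboveNeg}, and the estimates of Lemma~\ref{lm:upperBoundKk}, one has $\P_{0,u,b}(\rmA\mid\Sigma)\lesssim u/b$ on $\rmE_m$ for any $\sigma$-algebra $\Sigma$ containing $\sigma(B)$.

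\textbf{Step 2: anti-concentration of the near-origin maximum.} It remains to bound, by $\frac\eps2\frac ub$, the probability of $\rmE_m\cap\{\M_{0,k}(\Upsilon_{\! b, \frkg})\in[\lambda-\delta+\eta,\lambda+\eta]\}\cap\rmA$. One chooses $\Sigma$ to record $B$ in full together with the noises driving $\rmZ_\cdot$ and $\frkg_b$ \emph{away} from the smallest spatial scales, so that $\rmE_m$, $\rmA$ and $B|_{[0,m]}$ are (essentially) $\Sigma$-measurable while $\Upsilon_{\! b, \frkg}|_{\B_k}$ splits as a $\Sigma$-measurable part plus an independent Gaussian field whose covariance is bounded below uniformly in $b$ \dash the fluctuations of $\Upsilon_{\! b, \frkg}$ inside $\B_k$ come from scales of order $k$, well separated from the scales $\gtrsim m$ carrying $\rmA$, and as $b\to\infty$ this near-origin field converges, so uniform non-degeneracy holds for $b$ large. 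A standard anti-concentration estimate for suprema of Gaussian fields (see Appendix~\ref{sec:GaussianTool}) then gives $\P_{0,u,b}(\M_{0,k}(\Upsilon_{\! b, \frkg})\in[\lambda-\delta+\eta,\lambda+\eta]\mid\Sigma)\lesssim\delta$ on $\rmE_m$, with implicit constant depending only on $k,m,\lambda$. Taking conditional expectations and combining with the bound $\P_{0,u,b}(\rmA\mid\Sigma)\lesssim u/b$ from Step 1, the probability in question is $\lesssim\delta\,u/b$, at most $\frac\eps2\frac ub$ once $\delta=\delta(\eps,k,\lambda)\in(0,\lambda)$ is chosen small. Together with Step 1 this proves \eqref{eq:entr}; as noted, \eqref{eq:entrBM} is obtained by the same argument.

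\textbf{Main obstacle.} The delicate point is Step 2. The $O(1)$ error in the Brownian-motion approximation of $\sup_{\A_j}\Upsilon_{\! b, \frkg}$ at bounded scales destroys the dependence on $\delta$, so it cannot be extracted from the driving motion; instead it must be recovered from a genuine anti-concentration bound for the maximum of the Gaussian field $\Upsilon_{\! b, \frkg}|_{\B_k}$, and the technical work is to decouple this field from the large-scale barrier event $\rmA$ (and from the conditions defining $\rmE_m$) while retaining enough independent randomness to bound the density of $\M_{0,k}(\Upsilon_{\! b, \frkg})$ \emph{uniformly} in $b$ and $u$.
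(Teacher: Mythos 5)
Your overall strategy—reduce to a barrier event for the driving Brownian bridge and multiply its $O(u/b)$ probability by an $O(\delta)$ anti‑concentration factor for the near‑origin maximum, obtained after a decoupling conditioning—mirrors the paper's proof. Step 1 is essentially the paper's reduction, except that the assertion ``$\P_{0,u,b}(\rmA\mid\Sigma)\lesssim u/b$ on $\rmE_m$ for any $\Sigma\supseteq\sigma(B)$'' is vacuous ($\rmA$ depends only on $B$, so it is $\Sigma$-measurable and the conditional probability is the indicator); what you actually want and use is the unconditional bound $\P_{0,u,b}(\rmE_m\cap\rmA)\lesssim u/b$, which indeed follows from Lemma~\ref{lm:upperBoundKk}.

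The genuine gaps are all in Step~2, and although you correctly identify it as the delicate step, the sketch does not close it. First, the anti‑concentration bound you invoke is not in Appendix~\ref{sec:GaussianTool}: that appendix records only Borell--TIS, Fernique, and Cameron--Martin, none of which controls the density of a Gaussian supremum. The paper imports an external anti‑concentration theorem (Theorem~3.1 in the reference cited as \texttt{SupDens}), and this is a necessary ingredient. Second, even granting such a result, it does not apply directly to $\M_{0,k}(\Upsilon_{\! b,\frkg})$ because $\Upsilon_{\! b,\frkg}(0)=0$ deterministically, so the field is degenerate at the origin. The paper explicitly works around this by splitting off a small ball $B(0,\eta)$ near the origin (whose contribution to $\{\M_{0,k}\in[\lambda-2\delta,\lambda+2\delta]\}$ is negligible since $\lambda>0$ and $\Upsilon(0)=0$) and applying the density bound over the annulus $\B_k\setminus B(0,\eta)$, where the variance is uniformly bounded below; your proposal makes no mention of this degeneracy. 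Third, your decoupling $\sigma$-algebra $\Sigma$, ``recording $B$ in full together with the coarse scales of $\rmZ_\cdot$ and $\frkg_b$'', does not make $\rmE_m$ even approximately $\Sigma$-measurable (conditions (2)--(3) of Definition~\ref{def:ControlVar} constrain $\rmZ_j$ and $\rmZ_{j,b}$ at scales $j\leq m$, which you deliberately exclude from $\Sigma$), so the factorisation $\P(\rmE_m\cap\rmA\cap\{\cdot\})=\E[\one_{\rmE_m\cap\rmA}\P(\{\cdot\}\mid\Sigma)]$ does not hold as stated; nor is it clear that after conditioning on $\sigma(B)$ the remaining randomness yields a $b$-uniform lower bound on the variance without further work. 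The paper avoids both difficulties by conditioning only on the single value $\sigma(B_l)$ (using the Markov property of the bridge, under which the small-scale event $\{\M_{0,k}(\Upsilon_{\! l,\frkg})\in\cdot\}$ and the barrier $\bigcap_{j\geq l}\{B_j\geq\cdots\}$ become conditionally independent) and by first replacing $\Upsilon_{\! b,\frkg}$ with the $b$-independent field $\Upsilon_{\! l,\frkg}$ via \eqref{eq:diffFields}, which is what makes the anti-concentration constant manifestly uniform in $b$ and $u$.
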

\begin{proof}
The strategy for the proof of this lemma is quite similar to the strategy developed for the proof of Lemma~\ref{lm:upperBoundKk}. 
We will provide the proof only for the case $\eta = 0$, as the case $\eta = \delta$ is completely analogous. We fix $k \in \N$, $\eps > 0$, and $\lambda > 0$. 
For $\delta \in (0, \lambda/2)$, we begin by observing that, thanks to Lemmas~\ref{lm:approxBrownianBridge},~\ref{lm:upperBoundKk},~and~\ref{lm:mainEntrRW}, it holds that 
\begin{equation*}
	\P_{0, u, b}\bigl(\M_{0, b, k}(\Upsilon_{\! b, \frkg}) \geq \lambda - \delta\;, \M_{0, b}(\Upsilon_{\! b, \frkg}) \leq \lambda\bigr) \lesssim k^{-\f{1}{16}} \frac{u}{b}\;,
\end{equation*}
and so, if $k > b/2$ the claim follows by taking $b > 0$ large enough. Hence, from now on, we can focus on the regime $k < b/2$. Thanks to Lemma~\ref{lm:upperBoundKk}, by choosing $l > l_0$ for some $l_0 = l_0(\eps) > k$, we can assume that we are on the event $\{\rmK_b \leq l\}$ with $\rmK_b$ from Definition~\ref{def:ControlVar}. Now, for $\delta \in (0, \lambda/2)$, we observe that thanks to \eqref{eq:Inc1Fin},
\begin{align}
\label{eq:contRepulsion1Event}
& \bigl\{\M_{0, k}(\Upsilon_{\! b, \frkg}) \geq \lambda - \delta, \; \M_{0, b}(\Upsilon_{\! b, \frkg}) \leq \lambda, \; \rmK_b \leq l\bigr\} \nonumber\\
& \hspace{30mm} \subseteq \Biggl\{\M_{0, k}(\Upsilon_{\! b, \frkg}) \in [\lambda - \delta , \lambda], \; \bigcap_{j = l}^{b-1} \bigl\{B_j \geq - \lambda - \rmR_l(j)\bigr\}, \; \rmK_b \leq l\Biggr\} \;.
\end{align}
Thanks to \eqref{eq:diffFields}, on the event $\{\rmK_b \leq l\}$, by possibly taking $l > l_1$ for some $l_1 = l_1(\eps, \delta) > l_0$, we can assume that the event on the right-hand side in \eqref{eq:contRepulsion1Event} is contained in
\begin{equation*}
\bigl\{\M_{0, k}(\Upsilon_{\! l, \frkg}) \in [\lambda - 2\delta, \lambda + 2\delta]\bigr\} \cap \Bigg\{\bigcap_{j = l}^{b-1} \bigl\{B_j \geq - \lambda - \rmR_l(j)\bigr\}\Biggr\} \;.
\end{equation*}
These two events are conditionally independent given $\sigma(B_{l})$. In particular, on the event $\{B_{l} = z\}$ for some $z \geq -\lambda-\rmR_l(l)$, the first conditional probability is equal to
\begin{equation*}
\P_{0, z, l}\bigl(\M_{0, k}(\Upsilon_{\! l, \frkg}) \in [\lambda - 2\delta, \lambda + 2\delta]\bigr)	\;.
\end{equation*}
Now, thanks e.g.\ to \cite[Theorem~3.1]{SupDens}, we observe that by choosing $\delta = \delta(\eps, k) \in (0, \lambda/4)$ sufficiently small, it holds that
\begin{equation}
\label{eq:boundDenSupEntrop}
\sup_{z \geq -1-\rmR_l(l)}\P_{0, z, l}\bigl(\M_{0, k}(\Upsilon_{\! l, \frkg}) \in [\lambda - 2\delta, \lambda + 2\delta]\bigr) \leq \eps \;.
\end{equation}
To be precise, we cannot directly apply \cite[Theorem~3.1]{SupDens} since $\Upsilon_{\! l, \frkg}(0) = 0$. However, this issue can be easily overcome by noting that, by taking $\zeta = \zeta(\eps) > 0$ sufficiently small, it holds that
\begin{equation*}
\sup_{z \geq -1-\rmR_l(l)} \P_{0, z, l}\Biggl(\sup_{x \in B(0, \zeta)} \Upsilon_{\! l, \frkg}(x) \geq \lambda/2 \Biggr) \leq \eps \;,
\end{equation*}
To obtain \eqref{eq:boundDenSupEntrop}, we note that the variance of the field $\Upsilon_{\! l, \frkg}$ over the annulus $\B_k \setminus B(0, \zeta)$ can be uniformly lower bounded by a quantity depending on $\zeta$. Therefore, by \cite[Theorem~3.1]{SupDens}, the density with respect to the Lebesgue measure of the supremum of the field $\Upsilon_{\! l, \frkg}$ over the annulus $\B_k \setminus B(0, \zeta)$ is bounded above by a constant depending on $k$ and $\zeta$. Hence, \eqref{eq:boundDenSupEntrop} follows by taking $\delta = \delta(\eps, k) \in (0, \lambda/4)$ sufficiently small.

The conclusion then follows since by applying Lemma~\ref{lm:upperBoundKk}, one has that
\begin{equation*}
\P_{0, u, b}\Biggl(\bigcap_{j = l}^{b-1} \bigl\{B_j \geq - \lambda - \rmR_l(j)\bigr\}	\biggr) \lesssim \frac{u}{b}\;.
\end{equation*}
The proof of the bound \eqref{eq:entrBM} follows a similar approach and it is therefore omitted.
\end{proof}

%%%%%%%%%%%%%%%%%%%%%%%%%%%%%%%%%%%%%%%%%%%%%%
\subsection{Asymptotic formulas}
\label{subsec:asyFormulaCluster}
For any $\lambda > 0$, for $b$, $l \in \N$ with $b > l$, for $u \in [b^{1/4}, b^{3/4}]$, and for any function $\bfF \in \CC^b_{\loc}\bigl(\CC(\R^d)\bigr)$, we define the following quantities
\begin{align}
& \H_{l, \lambda}(\bfF) \eqdef \E\Bigl[\bfF(\Upsilon_{\! l}) B_l \one_{\{B_l \in [l^{1/6}, l^{5/6}]\}} \one_{\{\M_{0, l} (\Upsilon_{\! l}) \leq \lambda\}}\Bigr]\;, \label{eq:XikF}\\
& \H^{0, u, b}_{l, \lambda}(\bfF) \eqdef \E_{0, u, b}\Bigl[\bfF(\Upsilon_{\! l}) B_l \one_{\{B_l \in [l^{1/6}, l^{5/6}]\}} \one_{\{\M_{0, l}(\Upsilon_{\! l}) \leq \lambda\}}\Bigr]\;.\label{eq:XikF0ub}
\end{align}
\begin{proposition}
\label{pr:mainAsy}
Let $\bfF \in \CC^b_{\loc}(\CC(\R^d))$. For any $\eps > 0$ there exists $l_0 \in \N$ such that for all $l \geq l_0$, $b \geq l$ sufficiently large, $\lambda > 0$, and $u \in [b^{1/4}, b^{3/4}]$, it holds that
\begin{equation}
\label{eq:mainBoundAsy}
\Biggl\vert\E_{0, u, b}\Bigl[\bfF(\Upsilon_{\! b, \frkg}) \one_{\{\M_{0, b}(\Upsilon_{\! b, \frkg}) \leq \lambda\}}\Bigr] - 2 \H_{l, \lambda}(\bfF) \f{u}{b}\Biggr\rvert \leq \eps \f{u}{b} \;.
\end{equation}
Similarly, for any $\eps > 0$ there exists $l_0 \in \N$ such that for all $l \geq l_0$ and $b \geq l$ sufficiently large, it holds that 
\begin{equation}
\label{eq:mainBoundAsyBM}
\Biggl\vert\E\Bigl[\bfF(\Upsilon_{\! \infty}) \one_{\{\M_{0, b}(\Upsilon_{\! \infty}) \leq \lambda\}}\Bigr] -  \H_{l, \lambda}(\bfF) \f{\alpha}{\sqrt b}\Biggr\rvert \leq \eps \f{1}{\sqrt b} \;,
\end{equation}
where we recall that $\alpha = \sqrt{\smash[b]{2/\pi}}$. Furthermore, the same result holds when $\Upsilon_{\! \infty}$ is replaced by $\Upsilon_{\! b}$.
\end{proposition}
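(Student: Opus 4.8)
The plan is to decompose the expectation $\E_{0, u, b}[\bfF(\Upsilon_{\! b, \frkg}) \one_{\{\M_{0, b}(\Upsilon_{\! b, \frkg}) \leq \lambda\}}]$ according to the value of the control variable $\rmK_b$ from Definition~\ref{def:ControlVar}, and to isolate a ``dominant'' contribution coming from intermediate scales $l \ll j \ll b$. First I would split the event $\{\M_{0, b}(\Upsilon_{\! b, \frkg}) \le \lambda\}$ into $\{\M_{0, l}(\Upsilon_{\! b, \frkg}) \le \lambda\}$ and $\{\M_{0, b, l}(\Upsilon_{\! b, \frkg}) \le \lambda\}$; on the event $\{\rmK_b \leq l\}$, by \eqref{eq:diffFields} the field $\Upsilon_{\! b, \frkg}$ restricted to $\B_l$ is within $O((\log l)^2 l^{-1/2})$ of $\Upsilon_{\! l}$ (which is itself a good proxy for $\Upsilon_{\! l, \frkg}$ up to the $\frkg_l$ correction, controlled by \ref{as:GG1}--\ref{as:GG3}), so $\bfF(\Upsilon_{\! b, \frkg}) \approx \bfF(\Upsilon_{\! l})$ by the local uniform continuity and boundedness of $\bfF \in \CC^b_{\loc}$. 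The contribution of $\{\rmK_b > l\}$ is negligible relative to $u/b$ by Lemmas~\ref{lm:Kk},~\ref{lm:upperBoundKk},~and~\ref{lm:upperBoundKkUpsilon}, and by Lemma~\ref{lm:entroDelta} one can (after an $\eps$-room adjustment of $\lambda$) further replace $\{\M_{0, b}(\Upsilon_{\! b, \frkg}) \le \lambda\}$ by $\{\M_{0, l}(\Upsilon_{\! l}) \le \lambda\} \cap \{\M_{0, b, l}(\Upsilon_{\! b, \frkg}) \le \lambda\}$ at the cost of $O(\eps u/b)$.

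Having reduced to $\E_{0, u, b}[\bfF(\Upsilon_{\! l}) \one_{\{\M_{0, l}(\Upsilon_{\! l}) \le \lambda\}} \one_{\{\M_{0, b, l}(\Upsilon_{\! b, \frkg}) \le \lambda\}} \one_{\{\rmK_b \le l\}}]$, the next step is to condition on $\CG_l = \sigma((B_s)_{s \le l}, \rmZ_\infty)$. The factor $\bfF(\Upsilon_{\! l}) \one_{\{\M_{0, l}(\Upsilon_{\! l}) \le \lambda\}}$ is $\CG_l$-measurable, and via the inclusions \eqref{eq:Inc2Fin} the outer-scale event $\{\M_{0, b, l}(\Upsilon_{\! b, \frkg}) \le \lambda\}$ on $\{\rmK_b \le l\}$ is essentially $\bigcap_{j=l}^{b-1}\{B_j \geq \lambda + \rmR_{\rmK_b}(j)\}$, whose conditional probability given $\{B_l = z\}$ is, by the Brownian-bridge-to-random-walk transfer (Lemma~\ref{lm:transferBBRW}) and the entropic repulsion estimates of Appendix~\ref{ap:BBEstimates} (Proposition~\ref{pr:BoundBBAboveNeg} and the ballot-type asymptotics), asymptotically $\frac{2z}{b}\bigl(1 + o(1)\bigr)$ uniformly for $z$ in the relevant window $[l^{1/6}, l^{5/6}]$ — with the endpoint $u \in [b^{1/4}, b^{3/4}]$ contributing the factor proportional to $u$ that matches the stated $2u/b$. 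The restriction $B_l \in [l^{1/6}, l^{5/6}]$ is harmless: larger values of $B_l$ are excluded with the correct order by the tail bounds on $\rmK_b$ (Lemma~\ref{lm:upperBoundKk}) combined with Lemma~\ref{lm:upperBoundKkSmallU} for the small-endpoint regime, while $B_l < l^{1/6}$ contributes negligibly because the conditional probability of the outer event is then of smaller order (a polynomial-in-$l$ loss, beaten by the $e^{-c(\log l)^2}$ tail once $l$ is large). Collecting the leading term yields precisely $\H_{l,\lambda}(\bfF)\,\tfrac{2u}{b}$ up to $O(\eps u/b)$, which is \eqref{eq:mainBoundAsy}; the replacement of $\Upsilon_{\! l}$ by $\Upsilon_{\! l, \frkg}$ inside $\H$ and then back again (and the error from dropping $\{\rmK_b \le l\}$ inside the definition of $\H_{l,\lambda}$) is absorbed into $\eps$ by the $\frkg$-bounds \ref{as:GG2}--\ref{as:GG3} and another application of the tail estimates.

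For \eqref{eq:mainBoundAsyBM} the argument is identical in structure, with $\Upsilon_{\! b, \frkg}$ replaced by $\Upsilon_{\! \infty}$, the Brownian bridge $\P_{0,u,b}$ replaced by the law of the free Brownian motion $(B_s)_{s \ge 0}$, and the inclusions \eqref{eq:Inc1Inf}--\eqref{eq:Inc2Inf} and bound \eqref{eq:tech1InfAsy} used in place of their bridge counterparts. The only change in the asymptotics is that the probability that a Brownian motion started at $z$ stays above a polylogarithmic curve up to time $b$ is $\sim \alpha z / \sqrt{b}$ (with $\alpha = \sqrt{2/\pi}$, the standard half-line survival constant) rather than $\sim 2uz/b$, which is why $\tfrac{2u}{b}$ is replaced by $\tfrac{\alpha}{\sqrt b}$. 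I expect the main obstacle to be the uniformity in the intermediate-scale estimate: one must show that the conditional survival probability of $B$ past scale $l$ is $\tfrac{2z}{b}(1+o(1))$ (resp. $\tfrac{\alpha z}{\sqrt b}(1+o(1))$) \emph{uniformly} over $z$ in the window and over the random curve $\rmR_{\rmK_b}(\cdot)$ on $\{\rmK_b \le l\}$, i.e.\ that the polylogarithmic barrier contributes only a $1+o(1)$ correction — this is exactly what the fine Brownian-bridge estimates of Appendix~\ref{ap:BBEstimates} are designed to deliver, but marrying them to the $\CG_l$-conditioning and the $\rmK_b$-truncation cleanly is the delicate point. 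Everything else is bookkeeping with the already-established tail bounds and the continuity of $\bfF$.
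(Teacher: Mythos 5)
Your overall architecture matches the paper's: decompose along an intermediate scale $l$, replace $\bfF(\Upsilon_{\! b, \frkg})$ by $\bfF(\Upsilon_{\! l})$ on the inner ball, condition on $\CG_l$, and extract the ballot asymptotic $\sim 2 B_l u / b$ for the outer scales (this is the paper's Steps~1--3). There is, however, one genuine step you skip and one heuristic that is off.

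The gap: after the conditioning argument, the leading-order quantity you actually produce is under the \emph{bridge} law, namely $\H^{0,u,b}_{l,\lambda}(\bfF)$ of \eqref{eq:XikF0ub}, not the target $\H_{l,\lambda}(\bfF)$ of \eqref{eq:XikF}, which lives under the unconditional Brownian-motion measure. Your write-up conflates the two. The paper devotes a separate Step~4 to this passage, using the explicit Radon--Nikodym density $\sqrt{b/(b-l)}\,e^{u^2/(2b) - (B_l-u)^2/(2(b-l))}$ of $\P_{0,u,b}$ with respect to $\P$ on $\sigma((B_s)_{s\le l})$ and showing it is $1+o_b(1)$ uniformly over $u \in [b^{1/4}, b^{3/4}]$ on the relevant event. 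Without this step \eqref{eq:mainBoundAsy} is not yet established.

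The off heuristic: you dismiss $B_l < l^{1/6}$ ``because the conditional probability of the outer event is then of smaller order \dots beaten by the $e^{-c(\log l)^2}$ tail.'' That is not the mechanism: once $B_l = z < l^{1/6}$, the conditional outer-scale survival probability is still of order $z u/b$, and integrating this alone gives a contribution comparable to $u/b$, not $o(u/b)$. What actually kills this contribution is entropic repulsion on $[0,l]$ (Lemma~\ref{lm:techLemma1Asy}, resting on Lemma~\ref{lm:entrBasic}): staying above the barrier on $[0,l]$ while landing low at time $l$ carries an extra factor that is $o_l(1)$. The $\rmK_b$ tail plays no role in that estimate. A related over-statement is your claim that on $\{\rmK_b \le l\}$ the field $\Upsilon_{\! b,\frkg}$ is within $O((\log l)^2 l^{-1/2})$ of $\Upsilon_{\! l}$ on all of $\B_l$: the bound \eqref{eq:boundAsymExp} only gives $e^{-(l-k)/2}(\log l)^2$ on the smaller ball $\B_k$, $k < l$, and degenerates on $\B_l$ itself; the paper therefore compares the fields only on $\B_k$ (via continuity of $\bfF$ plus a $\delta$-adjustment) and controls the annuli in $\B_l \setminus \B_k$ through the Brownian barrier conditions, not by direct field comparison.
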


Before proving Proposition~\ref{pr:mainAsy}, we state two auxiliary technical lemmas. 
\begin{lemma}
\label{lm:techLemma1Asy}
For any $\lambda > 0$ and for all $u \in [b^{1/4}, b^{3/4}]$, it holds that 
\begin{equation*}
\lim_{l \to \infty} \limsup_{b \to \infty} \frac{b}{u} \P_{0, u, b}\bigl(B_l \leq l^{1/6}, \; \M_{0, b}(\Upsilon_{\! b, \frkg}) \leq \lambda\bigr) = 0 \;,
\end{equation*}
\end{lemma}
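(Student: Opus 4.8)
The plan is to show that the event $\{B_l \leq l^{1/6}\}$, together with the constraint $\M_{0,b}(\Upsilon_{\!b,\frkg}) \leq \lambda$ (which forces $\Upsilon_{\!b,\frkg}$ to stay below $\lambda$ on all of $\B_b$), is so costly for the driving Brownian bridge that its probability is $o(u/b)$ after taking $b \to \infty$ and then $l \to \infty$. The key mechanism is Lemma~\ref{lm:approxBrownianBridge}: on the event $\{\rmK_b < b\}$ one has $\sup_{x \in \A_j}\Upsilon_{\!b,\frkg}(x) \geq -B_j - \rmR_{\rmK_b}(j)$ for every $j \in [b-1]$, so $\M_{0,b}(\Upsilon_{\!b,\frkg}) \leq \lambda$ forces $B_j \geq -\lambda - \rmR_{\rmK_b}(j)$ for all such $j$. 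Combined with $B_l \leq l^{1/6}$, the bridge is then pinned near the bottom of the allowed tube around time $l$ while still having to come back up (recall the endpoint is $u \in [b^{1/4}, b^{3/4}] \gg l^{1/6}$), which is an entropically unlikely detour.

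First I would split according to the control variable $\rmK_b$ from Definition~\ref{def:ControlVar}. On $\{\rmK_b = b\}$ the probability is negligible by Lemma~\ref{lm:Kk} (summing $e^{-c(\log k)^2}$ gives something far smaller than $u/b$, and $\P_{0,u,b}(\rmK_b = b) \lesssim e^{-c(\log b)^2}$). On $\{\rmK_b = k\}$ with $k \leq l$, I would use the inclusion from Lemma~\ref{lm:approxBrownianBridge} (as in \eqref{eq:Inc1Fin}) to bound the event by $\{\rmK_b = k\} \cap \bigcap_{j=k}^{b-1}\{B_j \geq -\lambda - \rmR_k(j)\} \cap \{B_l \leq l^{1/6}\}$. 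Here the extra input over Lemma~\ref{lm:upperBoundKk} is the constraint $B_l \leq l^{1/6}$, which I incorporate by conditioning on $\CG_k$ and then on $B_l$: write the remaining bridge probability as that of a Brownian bridge from $B_k$ (which is $O((\log k)^2)$ on $\{\rmK_b = k\}$, since $\Osc_j(B) \leq \Theta_k(j)$) to $u$ over $[k, b]$, forced to stay above the polylogarithmic curve $-\lambda - \rmR_k(\cdot)$, \emph{and} to be below $l^{1/6}$ at the intermediate time $l$. One then decomposes at time $l$: the $[k,l]$ piece is a bridge staying above a curve and ending at a value $\leq l^{1/6}$, and the $[l,b]$ piece is a bridge from that value to $u$ staying above the curve. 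The product of the two one-sided-bridge probabilities, via the estimates of Appendix~\ref{ap:BBEstimates} (Proposition~\ref{pr:BoundBBAboveNeg} and Lemma~\ref{lm:mainEntrRW}), carries an extra small factor coming from the low endpoint at time $l$ relative to the typical scale $\sqrt{l}$ — essentially a factor like $l^{1/6}/\sqrt{l} \cdot$ (polylog) or, more robustly, a bound of the form $\P_{0,u,b}(B_l \leq l^{1/6}, \text{stay above curve}) \lesssim \frac{u}{b} \cdot \phi(l)$ with $\phi(l) \to 0$. Summing over $k \leq l$ against the $e^{-c(\log k)^2}$ weights from Lemma~\ref{lm:Kk} keeps the bound of the form $\frac{u}{b}\phi(l)$ (up to a constant), and sending $b \to \infty$ then $l \to \infty$ gives $0$.

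The main obstacle is the intermediate-time estimate: quantifying how unlikely it is for a Brownian bridge conditioned to stay above a slowly growing negative curve on $[0,b]$, with endpoints $0$ and $u \in [b^{1/4}, b^{3/4}]$, to additionally dip as low as $l^{1/6}$ at time $l$. The clean way is to use the Markov property at time $l$ and the two-sided ballot-type asymptotics from Appendix~\ref{ap:BBEstimates}: the probability that the bridge stays above the curve on $[0,l]$ and ends in an interval of width $O(l^{1/6})$ near the curve behaves like (a constant) times $l^{1/6}$ (the curve value being $O((\log l)^2)$, hence negligible compared to $l^{1/6}$) divided by the relevant normalisation $\sim l^{3/2}$ or $\sim l$ depending on which bound one uses; meanwhile the $[l,b]$ piece contributes $\lesssim u/(b-l)$ uniformly over the (small) starting value by monotonicity. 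Multiplying, one gets $\lesssim \frac{u}{b} \cdot l^{1/6}\cdot l^{-1/2}\cdot(\log l)^c = \frac{u}{b} l^{-1/3+o(1)}$, which vanishes as $l \to \infty$. I would also need to handle the $\rmK_b = k$ case for $l < k < b$ separately, but there $B_l$ lies in the annulus $j=l$ region controlled only by the oscillation conditions, and in fact that contribution is already covered by the reasoning of Lemma~\ref{lm:upperBoundKk} combined with $\sum_{k > l} e^{-c(\log k)^2} \to 0$; so it too is $\frac{u}{b} \cdot o_l(1)$. Assembling the three regimes ($\rmK_b = b$; $\rmK_b = k \leq l$; $l < \rmK_b < b$) gives the claimed double limit.
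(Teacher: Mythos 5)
Your plan follows essentially the same route the paper does (the paper simply cites the corresponding DGFF argument of \cite{BiskupLouidor}, together with Lemmas~\ref{lm:upperBoundKk},~\ref{lm:entrBasic},~and~\ref{lm:transferBBRW}): decompose over $\{\rmK_b = k\}$, reduce $\{\M_{0,b}(\Upsilon_{\!b,\frkg})\le\lambda\}$ to a polylogarithmic tube for $B$ via Lemma~\ref{lm:approxBrownianBridge}, use the $e^{-c(\log k)^2}$ suppression from Lemma~\ref{lm:upperBoundKk} to handle $k > l$, and then pay an extra entropic price for $\{B_l \le l^{1/6}\}$ when $k \le l$. The ``intermediate-time estimate'' you single out as the main obstacle and propose to derive by a bespoke Markov decomposition at time $l$ is in fact available off-the-shelf as Lemma~\ref{lm:entrBasic}, which gives $\P_{0,u,b}(\inf_r(B_r + \zeta_{a,k}(r))>0,\; B_s \le x) \lesssim u x^2/(b\sqrt{s})$; taking $s = l$ and $x = l^{1/6}$ (the hypothesis $x \ge \zeta_{a,k}(l)$ holds since $l^{1/6} \gg (\log l)^2$) yields $\lesssim (u/b)\, l^{-1/6}$, which is the required $o_l(1)$ factor. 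One small slip: on $\{\rmK_b = k\}$ the oscillation bounds only give $|B_k| \lesssim k(\log k)^2$, not $O((\log k)^2)$ as you claim when describing the $[k,l]$ piece; this does not affect the conclusion because the $e^{-c(\log k)^2}$ prefactor from Lemma~\ref{lm:upperBoundKk} absorbs any polynomial factor in $k$, exactly as in that lemma's proof.
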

\begin{proof}
The proof follows the exact same argument as in the proof of \cite[Lemma~5.3]{BiskupLouidor} (see also \cite[Lemma~5.5]{BiskupLouidor} for a related statement about Brownian bridges). Specifically, it relies on Lemmas~\ref{lm:upperBoundKk},~\ref{lm:entrBasic},~and~\ref{lm:transferBBRW}. 
%%%%%%%%%%%%%%%%%%%%%%%%%%%%%%%%
%Recalling Definition~\ref{def:ControlVar} of $\rmK_b$, the probability in the statement is trivially bounded from above by 
%\begin{equation*}
%\P_{0, u, b}\bigl(\rmK_b \geq l, \; \M_{0, b}(\Upsilon_{\! b, \frkg}) \leq \lambda\bigr) + \P_{0, u, b}\bigl(\rmK_b < l, \; B_l \leq l^{1/6}, \; \M_{0, b}(\Upsilon_{\! b, \frkg}) \leq \lambda\bigr) \;.
%\end{equation*}
%Regarding the first term, the required bound follows from Lemma~\ref{lm:upperBoundKk}. On the other hand, by \eqref{eq:Inc1Fin}, the second term is bounded from above by 
%\begin{equation*}
%\P_{0, u, b}\Biggr(B_l \leq l^{1/6}, \; \bigcap_{j = 1}^{b-1}\bigl\{B_{j} \geq - 1 - \rmR_{l}(j) \bigr\}\Biggr)\;.
%\end{equation*}
%Therefore, introducing the function $\R^{+}_0 \ni s \mapsto \zeta(s) \eqdef 4\rmC (1 + [\log\bigl(1 + l + s)]^2)$, where here $\rmC > 1$ is the constant introduced in Lemma~\ref{lm:approxBrownianBridge}, and using Lemma~\ref{lm:transferBBRW}, the probability in the above display is bounded from above by
%\begin{equation*}
%\P_{0, u, b}\Biggr(B_l \leq l^{1/6}, \; \inf_{s \in [0, b]}\bigl(B_{s} + \zeta(s)\bigr) \geq 0\Biggr) \prod_{j=1}^{b}\Bigl(1-e^{-\zeta(j)^2}\Bigr)^{-2} \lesssim \frac{u}{b} l^{-\f16} \;.	
%\end{equation*}
%where the bound on the right-hand side follows from Lemma~\ref{lm:entrBasic}.
%%%%%%%%%%%%%%%%%%%%%%%%%%%%%%%%
\end{proof}

\begin{lemma}
\label{lm:techLemma2Asy}
For any $\eps > 0$ there exists $l_0 \in \N$ such that for all $l \geq l_0$, $b \geq l$ sufficiently large, $x \in [l^{1/6}, l^{5/6}]$, and $u \in [b^{1/4}, b^{3/4}]$, it holds that 
\begin{equation*}
\Biggl\lvert \P_{0, u, b}\Biggl(\bigcap_{j = l+1}^{b} \bigl\{B_j \geq 0 \bigr\} \, \Bigg| \,  B_l = x \Biggr) -  \frac{2xu}{b}\Biggr\rvert \leq \eps \frac{xu}{b}\;.
\end{equation*}
\end{lemma}
\begin{proof}
The proof follows the exact same argument as in the proof of \cite[Lemma~5.4]{BiskupLouidor} (see also \cite[Lemma~5.6]{BiskupLouidor} for a related statement about Brownian bridges). In particular, the lower bound follows trivially from Lemma~\ref{lm:basicEstBB}, while the upper bound follows from Lemmas~\ref{pr:BoundBBAboveNeg},~\ref{lm:techRhoAK},~and~\ref{lm:transferBBRW}.
\end{proof}

We now have all the necessary ingredients to prove Proposition~\ref{pr:mainAsy}.
\begin{proof}[Proof of Propositon~\ref{pr:mainAsy}]
We will only prove \eqref{eq:mainBoundAsy}, as the proof of \eqref{eq:mainBoundAsyBM} is completely analogous and, in fact, simpler.

The proof is based on a sequence of replacements that gradually convert one expectation into the other. 
We can and will assume that the function $\bfF \in \CC^b_{\loc}(\CC(\R^d))$ depends only on the value of the field inside the ball $\B_{k_0}$ for some fixed $k_0 > 0$. For $b \in \N$,  $l \in [b-1]_0$, and $k \in [l]_0$, on the event $\{\rmK_b \leq l\}$, arguing as in Remark~\ref{rm:diffFields}, we have that 
\begin{equation}
\label{eq:boundAsymExp}
\sup_{x \in \B_k} \abs{\Upsilon_{\! b, \frkg}(x) - \Upsilon_{\! l}(x)} \lesssim e^{-(l-k)/2} (\log l)^2    \;.
\end{equation} 

\textbf{Step~1:} We start by replacing $\Upsilon_{\!b, \frkg}$ by $\Upsilon_{\! l}$ in the argument of $\bfF$.
For $k \geq k_0$, we recall that on the event $\{\rmK_b \leq k\}$, both $\Upsilon_{\! b, \frkg}$ and $\Upsilon_{\! l}$ are bounded on $\B_{k_0}$ by a quantity depending only on $k$. Therefore, for $k_0 < k < l < b$, using the uniform continuity of $\bfF$ on compacts, the bound in \eqref{eq:boundAsymExp}, and Lemma~\ref{lm:upperBoundKk}, we obtain that for any $\eps > 0$, by taking $b > l$ both large enough, 
\begin{equation}
\label{eq:boundAssy1}
\E_{0, u, b}\Bigl[\abs{\bfF\bigl(\Upsilon_{\! b, \frkg}\bigr) -\bfF\bigl(\Upsilon_{\! l}\bigr)} \one_{\{\M_{0, b}(\Upsilon_{\! b, \frkg}) \leq \lambda\}} \one_{\{\rmK_b \leq k\}}\Bigr] \leq \eps \frac{u}{b} \;.
\end{equation}
Similarly, using the boundedness of $\bfF$ and Lemma~\ref{lm:upperBoundKk}, we also have that for any $\eps > 0$, by taking $k > 0$ large enough, it holds that 
\begin{equation}
\label{eq:boundAssy2}
\E_{0, u, b}\Bigl[\abs{\bfF\bigl(\Upsilon_{\! b, \frkg}\bigr) -\bfF\bigl(\Upsilon_{\! l}\bigr)} \one_{\{\M_{0, b}(\Upsilon_{\! b, \frkg}) \leq \lambda\}} \one_{\{\rmK_b > k\}}\Bigr] \leq \eps \frac{u}{b} \;.	
\end{equation}
Hence, \eqref{eq:boundAssy1} and \eqref{eq:boundAssy2} imply that, for any $\eps >0$, by taking $b > k$ both large enough, it holds that
\begin{equation}
\label{eq:mainBoundAsy0}
\Biggl\lvert\E_{0, u, b}\Bigl[\bfF\bigl(\Upsilon_{\! b, \frkg}\bigr) \one_{\{\M_{0, b}(\Upsilon_{\! b, \frkg}) \leq \lambda\}}\Bigr] - \E_{0, u, b}\Bigl[\bfF\bigl(\Upsilon_{\! l}\bigr) \one_{\{\M_{0, b}(\Upsilon_{\! b, \frkg}) \leq \lambda\}}\Bigr] \Biggr\rvert \leq \eps \frac{u}{b}\;.
\end{equation} 

\textbf{Step 2:} In this step, we show that for any $\eps > 0$, by taking $b > l$ both large enough, it holds that 
\begin{equation}
\label{eq:mainBoundAsy01}
\E_{0, u, b}\Bigl[\bfF(\Upsilon_{\! l}) \one_{\{\M_{0, b}(\Upsilon_{\! b, \frkg}) \leq \lambda\}}\Bigr] \leq  2 \frac{u}{b} \H_{l, \lambda}(\bfF) + \eps \frac{u}{b}  \;.
\end{equation}
To this end, for $\delta \in (0, \lambda)$, and $k_0 < k < l < b$, we define the event $\rmE_{b, l, k, \delta} $ by letting
\begin{equation*}
\rmE_{b, l, k, \delta} \eqdef \bigl\{\rmK_b \leq k\bigr\} \cap \bigl\{\M_{0, k}(\Upsilon_{\! b, \frkg}) < \lambda - \delta\bigr\} \cap \bigl\{B_l \in \bigl[l^{1/6}, l^{5/6}\bigr]\bigr\} \cap \Biggl\{\bigcap_{j = k+1}^{b} \bigl\{B_j \geq \lambda + 2 \rmR_k(j)\bigr\}\Biggr\} \;.
\end{equation*}
For any $\eps >0$, by Lemmas~\ref{lm:upperBoundKk},~\ref{lm:entroDelta},~\ref{lm:techLemma1Asy},~and~\ref{lm:mainEntrRW} for $l > k$ both large enough and $\delta > 0$ small enough,
\begin{equation*}
\limsup_{b \to \infty} \frac{b}{u} \P_{0, u, b}\bigl(\rmE_{b, l ,k, \delta}^c, \; \M_{0, b}(\Upsilon_{\! b, \frkg}) \leq \lambda \bigr) %\lesssim \bigl(e^{-c(\log k)^2} + \eps + l^{-1/6} +  k^{-1/16}\bigr) 
\leq \eps \;.
\end{equation*}
Therefore, so far we proved that, for $b > l$ both large enough, it holds that 
\begin{equation}
\label{eq:ExpAsy}
\E_{0, u, b}\Bigl[\bfF\bigl(\Upsilon_{\! l}\bigr) \one_{\{\M_{0, b}(\Upsilon_{\! b, \frkg})\leq \lambda\}}\Bigr] \leq \eps \frac{u}{b} + \E\Bigl[\bfF\bigl(\Upsilon_{\! l}\bigr) \one_{\{\rmE_{b, l, k, \delta}\}} \Bigr]\;.
\end{equation}

Now, we can choose $l \geq k$ large enough in such a way that the right-hand side of \eqref{eq:boundAsymExp} is less than $\delta$ for $j \in [k]$ and less than $\rmR_k(j)$ for $j \in \{k+1, \ldots, l\}$ (assuming that the constant $\rmC > 0$ in the definition of $\rmR_k$ in Lemma~\ref{lm:approxBrownianBridge} is chosen large enough). In this way, on the event $\{\rmK_b \leq k\}$, we have that 
\begin{equation*}
\bigl\{\M_{0, k}(\Upsilon_{\! b}) < \lambda - \delta\bigr\} \subseteq \bigl\{\M_{0, k}(\Upsilon_{\! l}) < \lambda\bigr\} \;,
\end{equation*}
and also, for $j \in \{k+1, \ldots l\}$,
\begin{equation*}
\bigl\{B_j \geq \lambda + 2 \rmR_k(j)\bigr\}  \subseteq \bigl\{\M_{0, j+1, j}(\Upsilon_{\! b}) \leq \lambda + \rmR_k(j)\bigr\} \subseteq \bigl\{\M_{0, j+1, j}(\Upsilon_{\!l}) \leq \lambda\bigr\} \;.
\end{equation*}
Therefore, putting these facts together, we get that the expectation on the right-hand side of \eqref{eq:ExpAsy} is bounded above by 
\begin{equation*}
\E_{0, u, b}\Bigl[\bfF\bigl(\Upsilon_{\! l}\bigr) \one_{\{B_l \in [l^{1/6}, l^{5/6}]\}} \one_{\{\M_{0, l}(\Upsilon_{\! l}) \leq \lambda\}} \one_{\{\cap_{j = l+1}^{b}\{B_j \geq 0\}\}} \Bigr] \;.
\end{equation*}
Since the field $\Upsilon_{\! l}$ is conditionally independent of $\sigma((B_s)_{s \geq l})$ given $\sigma(B_l)$, it follows from Lemma~\ref{lm:techLemma2Asy}, the boundedness of $\bfF$, the absolute continuity of the law of the Brownian bridge with respect to that of the Brownian motion (see Step~4), and Lemma~\ref{lm:upperBoundKkUpsilon} that
\begin{equation*}
	\lim_{l \to \infty} \limsup_{b \to \infty} \Biggl\lvert\frac{b}{u} \E_{0, u, b}\Bigl[\bfF\bigl(\Upsilon_{\! l}\bigr) \one_{\{B_l \in [l^{1/6}, l^{5/6}]\}} \one_{\{\M_{0, l}(\Upsilon_{\! l}) \leq \lambda\}} \one_{\{\cap_{j = l+1}^{b}\{B_j \geq 0\}\}} \Bigr] - 2\H^{0, u, b}_{l, \lambda}(\bfF)\Biggr\rvert = 0 \;,
\end{equation*}
thus obtaining \eqref{eq:mainBoundAsy01}.

\textbf{Step 3:}  In this step, we show that for any $\eps > 0$, by taking $b > l$ both large enough, it holds that
\begin{equation}
\label{eq:mainBoundAsy1}
\E_{0, u, b}\Bigl[\bfF(\Upsilon_{\!l}) \one_{\{\M_{0, b}(\Upsilon_{\! b, \frkg}) \leq \lambda\}}\Bigr] \geq 2 \frac{u}{b} \H_{l, \lambda}(\bfF) - \eps \frac{u}{b}  \;.
\end{equation}
To get the above inequality, we can proceed similarly to Step~2. For $k_0 < k < l <b$, we define the event $\tilde \rmE_{b, l, k} $ by letting
\begin{equation*}
\tilde \rmE_{b, l, k} \eqdef \bigl\{\rmK_b\leq k\bigr\} \cap \bigl\{\M_{0, b}(\Upsilon_{\! b, \frkg}) \leq \lambda  \bigr\} \cap \bigl\{B_l \in \bigl[l^{1/6}, l^{5/6}\bigr]\bigr\} \;.
\end{equation*}
Now, we note that, for $\delta > 0$, the following trivial lower bound holds true 
\begin{align}
\E_{0, u, b}\Bigl[& \bfF\bigl(\Upsilon_{\! l}\bigr) \one_{\{\M_{0, b}(\Upsilon_{\! b, \frkg}) \leq \lambda\}}\Bigr] \nonumber\\
& \geq \E_{0, u, b}\Bigl[\bfF\bigl(\Upsilon_{\! l}\bigr) \one_{\{\tilde \rmE_{b, l ,k}\}}\one_{\{\M_{0, k}(\Upsilon_{\! b, \frkg}) \leq \lambda + \delta\}} \one_{\{\cap_{j = k +1}^b\{B_j \geq -\lambda-2\rmR_k(j)\}\}}\Bigr] \;. \label{eq:mainBoundAsyAux1}
\end{align}
For any $\eps >0$, by Lemmas~\ref{lm:upperBoundKk},~\ref{lm:entroDelta},~\ref{lm:techLemma1Asy},~and~\ref{lm:mainEntrRW}, for $k < l$ both large enough and $\delta > 0$ small enough, 
\begin{equation}
\label{eq:mainBoundAsyAux2}
\limsup_{b \to \infty} \frac{b}{u} \P_{0, u, b}\Biggl(\tilde \rmE_{b, l ,k}^c,\; \M_{0, k}(\Upsilon_{\! b, \frkg}) \leq \lambda + \delta,\; \bigcap_{j = k +1}^b\bigl\{B_j \geq -\lambda-2\rmR_k(j) \bigr\}\Biggr) \leq \eps \;.
\end{equation}
As before, we can choose $l \geq k$ large enough in such a way that the right-hand side of \eqref{eq:boundAsymExp} is less than $\delta$ for $j \in [k]$ and less than $\rmR_k(j)$ for $j \in \{k+1, \ldots, l\}$ (assuming that the constant $\rmC > 0$ in the definition of $\rmR_k$ in Lemma~\ref{lm:approxBrownianBridge} is chosen large enough). Therefore, on the event $\{\rmK_b\leq k\}$, which is contained in $\tilde \rmE_{b, l, k}$, we have that
\begin{equation*}
\bigl\{\M_{0, k}(\Upsilon_{\! l}) \leq  \lambda \bigr\} \subseteq \bigl\{\M_{0, k}(\Upsilon_{b, \frkg}) \leq \lambda + \delta \bigr\}	
\end{equation*}
and also, for $j \in \{k+1, \ldots l\}$,
\begin{equation*}
\bigl\{\M_{0, j+1, j}(\Upsilon_{\!l}) \leq \lambda\bigr\}  \subseteq \bigl\{\M_{0, j+1, j}(\Upsilon_{\! b, \frkg}) \leq \lambda + \rmR_k(j)\bigr\} \subseteq \bigl\{B_j \geq -\lambda - 2 \rmR_k(j)\bigr\} \;.
\end{equation*}
Therefore, using \eqref{eq:mainBoundAsyAux2}, for $b > l$ both large enough, the right-hand side of \eqref{eq:mainBoundAsyAux1} can be lower bounded by
\begin{align*}
\E_{0, u, b}\Bigl[\bfF\bigl(\Upsilon_{\! l}\bigr) \one_{\{B_l \in [l^{1/6}, l^{5/6}]\}}\one_{\{\M_{0, l}(\Upsilon_{\! l}) \leq \lambda \}} \one_{\{\cap_{j = l+1}^b\{B_j \geq 0\}\}}\Bigr] -\f{u}{b} \eps \;.
\end{align*}
To obtain \eqref{eq:mainBoundAsy1}, we can apply Lemma~\ref{lm:techLemma2Asy} as in Step~2.

\textbf{Step 4:} By combining \eqref{eq:mainBoundAsy0}, \eqref{eq:mainBoundAsy01}, and \eqref{eq:mainBoundAsy1}, we have thus far proven that for any $\eps > 0$, by taking $b > l$ both sufficiently large, 
\begin{equation*}
\Biggl\vert\E_{0, u, b}\Bigl[\bfF(\Upsilon_{\! b, \frkg}) \one_{\{\M_{0, b}(\Upsilon_{\! b, \frkg}) \leq \lambda\}}\Bigr] - 2 \H^{0, u, b}_{l, \lambda}(\bfF) \f{u}{b}\Biggr\rvert \leq \eps \f{u}{b} \;.
\end{equation*}
Therefore, to conclude, it is sufficient to show that we can replace $\H^{0, u, b}_{l, \lambda}(\bfF)$ on the right-hand side of the previous display by $\H_{l, \lambda}(\bfF)$. This is achieved by using the absolute continuity of the law of the Brownian bridge with respect to the law of the Brownian motion. We recall that for any $0 \leq l < b$, it holds that 
\begin{equation*}
	\frac{d \P_{0, u, b}}{d \P} \Big|_{\sigma((B_s)_{s \in [0, l]})} = \sqrt{b/(b-l)} e^{\f{u^2}{2b} - \frac{(B_l - u)^2}{2(b-l)}} \;,
\end{equation*} 
and so, it holds that 
\begin{equation*}
\H_{l, \lambda}^{0, u, b}(\bfF) = \sqrt{b/(b-l)} \E\Biggl[\bfF(\Upsilon_{\! l}) B_l \one_{\{B_l \in [l^{1/6}, l^{5/6}]\}} \one_{\{\M_{0, l}(\Upsilon_{\! l}) \leq 1\}} e^{\f{u^2}{2b} - \f{(B_l-u)^2}{2(b-l)}}\Biggr]\;.
\end{equation*}
Therefore, using the boundedness of $\bfF$, the difference $\abs{\H_{l, \lambda}(\bfF)  - \H_{l, \lambda}^{0, u, b}(\bfF) }$ is bounded from above by a constant times
\begin{equation*}
l^{5/6} \E\Biggl[\Biggl\lvert 1- \sqrt{b/(b-l)} e^{\f{u^2}{2b} - \f{(B_l-u)^2}{2(b-l)}}\Biggr\rvert\Biggr] 
\end{equation*}
which can be made arbitrarily small by taking $b$ large enough, uniformly over $u \in [b^{1/4}, b^{3/4}]$. Hence, the desired result follows. 
\end{proof}

\begin{lemma}
\label{lm:boundXione}
For any $\lambda > 0$, there exist constants $c_1$, $c_2 > 0$ such that, for all $l \in \N$ large enough,
\begin{equation*}
c_1 < \H_{l, \lambda}(1) < c_2 \;.	
\end{equation*}
\end{lemma}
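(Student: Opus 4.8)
The plan is to derive Lemma~\ref{lm:boundXione} as a direct corollary of the two asymptotic statements already established, namely Proposition~\ref{pr:mainAsy} and Lemma~\ref{lm:upperBoundKkUpsilon}. The point is that $\H_{l, \lambda}(1)$ does not depend on $b$, while both of those results describe the behaviour of $\P(\M_{0, b}(\Upsilon_{\! \infty}) \leq \lambda)$ as $b \to \infty$ up to a factor of $\H_{l, \lambda}(1)$; comparing them therefore traps $\H_{l, \lambda}(1)$ between two positive constants, uniformly in $l$.

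Concretely, I would first note that the constant function $\bfF \equiv 1$ belongs to $\CC^b_{\loc}(\CC(\R^d))$ (it lies in $\CR_0$), so Proposition~\ref{pr:mainAsy}, and in particular \eqref{eq:mainBoundAsyBM}, applies with this choice: for every $\eps > 0$ there is $l_0 = l_0(\eps) \in \N$ such that for all $l \geq l_0$ and all $b \geq l$ large enough,
\begin{equation*}
\bigl\lvert \sqrt{b}\,\P\bigl(\M_{0, b}(\Upsilon_{\! \infty}) \leq \lambda\bigr) - \alpha\,\H_{l, \lambda}(1)\bigr\rvert \leq \eps \;.
\end{equation*}
Next I would invoke \eqref{eq:upperBoundKkInfinite}, which provides constants $\tilde c_1, \tilde c_2 > 0$ with $\tilde c_1 \leq \sqrt{b}\,\P(\M_{0, b}(\Upsilon_{\! \infty}) \leq \lambda) \leq \tilde c_2$ for all $b$ large. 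Fixing $l \geq l_0$ and choosing any $b$ large enough for both displays to hold simultaneously, and recalling that $\alpha\,\H_{l, \lambda}(1)$ does not depend on $b$, this forces
\begin{equation*}
\tilde c_1 - \eps \;\leq\; \alpha\,\H_{l, \lambda}(1) \;\leq\; \tilde c_2 + \eps \qquad \text{for every } l \geq l_0(\eps)\;.
\end{equation*}
Finally, since $\alpha = \sqrt{\smash[b]{2/\pi}}$ is a strictly positive absolute constant and $\H_{l, \lambda}(1) \geq 0$ trivially (on the event $\{B_l \in [l^{1/6}, l^{5/6}]\}$ one has $B_l > 0$), I would fix $\eps \eqdef \tilde c_1/2$ and divide by $\alpha$, which yields two strictly positive constants $c_1 \eqdef \tilde c_1/(3\alpha)$ and $c_2 \eqdef 2\tilde c_2/\alpha$ with $c_1 < \H_{l, \lambda}(1) < c_2$ for all $l \geq l_0(\eps)$, i.e.\ for all $l \in \N$ large enough.

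There is no substantial obstacle: the genuine work — the asymptotic identification of the conditioning probability and the matching two-sided bound — is already contained in the cited results. The only small point requiring care is that the lower bound in the conclusion needs the error term $\eps/\sqrt{b}$ in \eqref{eq:mainBoundAsyBM} to be dominated by the genuine lower bound $\tilde c_1/\sqrt{b}$ from \eqref{eq:upperBoundKkInfinite}, which is arranged simply by taking $\eps$ small (hence $l_0$ correspondingly large).
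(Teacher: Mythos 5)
Your proposal is correct and follows essentially the same route as the paper's own proof: take $\bfF = 1$ in Proposition~\ref{pr:mainAsy}, compare against the two-sided bound from Lemma~\ref{lm:upperBoundKkUpsilon}, and use that $\H_{l,\lambda}(1)$ is $b$-independent to trap it between positive constants. The only cosmetic difference is that you use the Brownian-motion versions \eqref{eq:mainBoundAsyBM} and \eqref{eq:upperBoundKkInfinite}, whereas the paper uses the Brownian-bridge versions \eqref{eq:mainBoundAsy} and \eqref{eq:upperBoundKkFinite}; both are available in the cited results and lead to the identical conclusion.
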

\begin{proof}
By taking $\bfF = 1$ in \eqref{eq:mainBoundAsy}, for each $\eps > 0$, there exists $l_0 \in \N$ sufficiently large such that for all $l > l_0$ and for any $b > l$ sufficiently large and $u \in [b^{1/4}, b^{3/4}]$, it holds that 
\begin{equation*}
\Biggl\vert\P_{0, u, b}\bigl(\M_{0, b}(\Upsilon_{\! b, \frkg}) \leq \lambda\bigr) - \H_{l, \lambda}(1) \f{2u}{b}\Biggr\rvert \leq \eps \f{u}{b}	\;.
\end{equation*}
On the other hand, from Lemma~\ref{lm:upperBoundKkUpsilon}, we know that there exist constants $c_1$, $c_{2} > 0$ such that for $b > 0$ large enough and $u \in [b^{1/4}, b^{3/4}]$,
\begin{equation}
c_1\f{u}{b} \leq \P_{0, u, b}\bigl(\M_{0, b}(\Upsilon_{\! b, \frkg}) \leq \lambda \bigr) \leq c_{2} \frac{u}{b}\;,
\end{equation}
and so the conclusion follows readily. 
\end{proof}

We are now ready to prove Propositions~\ref{pr:AsyProb}~and~\ref{pr:AsyConv} as well as Theorems~\ref{th:cluster}~and~\ref{th:clusterProb}.
\begin{proof}[Proof of Proposition~\ref{pr:AsyProb} and of Theorem~\ref{th:clusterProb}]
For the proof of Proposition~\ref{pr:AsyProb}, by taking $\bfF = 1$ in \eqref{eq:mainBoundAsy}, for any $\eps > 0$, by taking $b > l$ both large enough, it holds that 
\begin{equation*}
\P_{0, u, b}\bigl(\M_{0, b}(\Upsilon_{\! b, \frkg}) \leq \lambda\bigr)  = \bigl(2 \H_{l, \lambda}(1) + \eps_b(l) \bigr) \frac{u}{b} \;,
\end{equation*}
where $\lim_{l \to \infty} \limsup_{b \to \infty} \abs{\eps_b(l)} = 0$. By arguing in the same exact way as in the proof of \cite[Theorem~2.4]{BiskupLouidor}, we get that both limits in
\begin{equation*}
\lim_{b \to \infty} \frac{b}{u} \P_{0, u, b}\bigl(\M_{0, b}(\Upsilon_{\! b, \frkg}) \leq \lambda\bigr)  = \lim_{k \to \infty}2 \H_{k, \lambda}(1)
\end{equation*}
exist and are related as stated. Hence, the conclusion follows by letting 
\begin{equation}
\label{eq:defcstar}
c_{\star, \lambda} \eqdef \lim_{k\to\infty}\H_{k, \lambda}(1)	 \;,
\end{equation}
which is positive and finite thanks to Lemma~\ref{lm:boundXione}. Finally, the proof of Theorem~\ref{th:clusterProb} follows in exactly the same way by taking $\bfF = 1$ in \eqref{eq:mainBoundAsyBM}. 
\end{proof}

\begin{proof}[Proof of Proposition~\ref{pr:AsyConv} and of Theorem~\ref{th:cluster}]
Consider a function $\bfF \in \CC^b_{\loc}(\CC(\R^d))$, then by \eqref{eq:mainBoundAsy} from Proposition~\ref{pr:mainAsy}, the following equality holds
\begin{equation*}
\frac{\E_{0, u, b}\bigl[\bfF(\Upsilon_{\! b, \frkg}) \one_{\{\M_{0, b}(\Upsilon_{\! b, \frkg})\leq \lambda\}}\bigr]}{\P_{0, u, b}\bigl(\M_{0, b}(\Upsilon_{\! b, \frkg}) \leq \lambda\bigr)} = \frac{\H_{l, \lambda}(\bfF) + \tilde \eps_b(l)}{\H_{l, \lambda}(1) + \eps_b(l)} \;,
\end{equation*}
where $\lim_{l \to \infty} \limsup_{b \to \infty} \abs{\eps_b(l)} = 0$ and $\lim_{l \to \infty} \limsup_{b \to \infty} \abs{\tilde \eps_b(l)} = 0$. The same argument as in the proof of \cite[Proposition~5.8]{BiskupLouidor} shows that both limits in
\begin{equation}
\label{eq:IdentLimitShape0}
\lim_{b \to \infty} \frac{\E_{0, u, b}\bigl[\bfF(\Upsilon_{\! b, \frkg}) \one_{\{\M_{0, b}(\Upsilon_{\! b, \frkg})\leq \lambda\}}\bigr]}{\P_{0, u, b}\bigl(\M_{0, b}(\Upsilon_{\! b, \frkg}) \leq \lambda\bigr)} = \lim_{l \to \infty} \frac{\H_{l, \lambda}(\bfF)}{\H_{l, \lambda}(1)} 
\end{equation}
exist and are related as stated. Similarly, using \eqref{eq:mainBoundAsyBM}, we note that the same identity as in \eqref{eq:IdentLimitShape0} holds if we replace the field $\Upsilon_{\! b, \frkg}$ on the left-hand side with $\Upsilon_{\! \infty}$ or $\Upsilon_{\! b}$.

Regarding the limit on the right-hand side of \eqref{eq:weakLimit}, we note that, thanks to \eqref{eq:mainBoundAsyBM}, for all $\bfF \in \CC^b_{\loc}(\CC(\R^d))$, it holds that  
\begin{equation}
\label{eq:IdentLimitShape1}
\lim_{b\to \infty}\frac{\E\bigl[\bfF(\Upsilon_{\! \infty}) \one_{\{\M_{0, b}(\Upsilon_{\! \infty}) \leq \lambda\}}\bigr]}{\P\bigl(\M_{0, b}(\Upsilon_{\! \infty})\leq \lambda\bigr)} = \lim_{l \to \infty} \frac{\H_{l, \lambda}(\bfF)}{\H_{l, \lambda}(1)} \;. 
\end{equation}
Since the right-hand side of the above display is equal to the right-hand side of \eqref{eq:IdentLimitShape0}, the two limits coincide.

Finally, it remains to show that there exists a continuous random field $\tilde{\Upsilon}_{\! \lambda}$ on $\R^d$ such that the right-hand side of \eqref{eq:IdentLimitShape0} equals $\E[\bfF(\tilde{\Upsilon}_{\! \lambda})]$ for any $\bfF \in \CC^b(\CC(\B_k))$. For each $k \in \N$ and for any $\bfF \in \CC^b(\CC(\B_k))$, the limit on the left-hand side of \eqref{eq:IdentLimitShape0} exists. In particular, when $\bfF = 1$ this limit is~$1$
by definition. 
By \cite[Theorem~8.7.1]{BogachevI}, this ensures the existence of a probability measure $\tilde \nu_{k, \lambda}$ on $\CC(\B_k)$ such that the right-hand side of \eqref{eq:IdentLimitShape0} coincides with $\E_{\tilde \nu_{k, \lambda}}[\bfF(\Phi)]$, where under $\P_{\tilde \nu_{k, \lambda}}$ the field $\Phi$ is distributed according to $\tilde \nu_{k, \lambda}$. 
Furthermore, the collection of probability measures $({\tilde \nu_{k, \lambda}})_{k \in \N}$ is consistent, and so, by the Kolmogorov extension theorem, there exists a unique probability measure $\tilde \nu_{\lambda}$ on $\CC(\R^d)$ whose restriction to $\CC(\B_k)$ coincides with $\tilde \nu_{k, \lambda}$. 
Defining $\tilde \Upsilon_{\! \lambda}$ as the continuous random field with law $\tilde \nu_{\lambda}$ on $\CC(\R^d)$, it follows that for all $\bfF \in \CC^b_{\loc}(\CC(\R^d))$, the right-hand side of \eqref{eq:IdentLimitShape0} equals $\E[\bfF(\tilde\Upsilon_{\! \lambda})]$. 
\end{proof}

%%%%%%%%%%%%%%%%%%%%%%%%%%%%%%%%%%%%%%%%%%%%%%
\subsection{Corollaries and applications}
\label{subsec:UsefulCons}
In this section, we gather some results that follow as consequences of those presented in earlier sections and that will be particularly useful later, especially in Section~\ref{sec:Joint}. 

We begin with the following lemma.
\begin{lemma}
\label{lm:useCon1}
There exists a constant $c > 0$ such that for $\lambda > 0$, for all $L > 0$ sufficiently large, $b > 0$ sufficiently large, and $u \in [b^{1/4}, b^{3/4}]$,
\begin{equation*}
\P_{0, u, b}\Biggl(\M_{0, b}(\Upsilon_{\!b, \frkg}) \leq \lambda, \; \inf_{s \in [0, b]} B_s < - L,\; \Biggr) \lesssim \frac{u}{b} e^{-c \sqrt{L}}	\;.
\end{equation*}
\end{lemma}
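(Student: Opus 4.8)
The plan is to follow the pattern of Lemmas~\ref{lm:upperBoundKk} and~\ref{lm:upperBoundKkUpsilon}: decompose according to the value of the control variable $\rmK_b$ and transfer the event onto the driving bridge $(B_s)_{s\in[0,b]}$. The contribution of $\{\rmK_b = b\}$ is $\lesssim e^{-c(\log b)^2}$ by Lemma~\ref{lm:Kk}, which is $\lesssim \f{u}{b}e^{-c\sqrt L}$ once $b$ is large (depending on $L$), so it is harmless. On $\{\rmK_b = k\}$ with $k<b$, Lemma~\ref{lm:approxBrownianBridge} gives $\bigl|\sup_{x\in\A_j}\Upsilon_{\! b, \frkg}(x)+B_j\bigr| \leq \rmR_k(j)$ for $j\in[b-1]$, so the event $\{\M_{0,b}(\Upsilon_{\! b, \frkg})\leq\lambda\}$ forces $B_j \geq -\lambda-\rmR_k(j)$ for all such $j$, while $\Osc_j(B)\leq\Theta_k(j)$ turns $\{\inf_{s\in[0,b]}B_s<-L\}$ into $\{B_j < -L+\Theta_k(j)\}$ for some $j$. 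Since $\rmR_k(j)=\rmC(1+\Theta_k(j))$, these two requirements on $B_j$ are jointly possible only when $(\rmC+1)\Theta_k(j) > L-\lambda-\rmC$, i.e.\ only when $k\vee j \geq m_L$, where $m_L \eqdef \lceil\exp(\sqrt{(L-\lambda-\rmC)/(\rmC+1)}\,)\rceil \asymp e^{c\sqrt L}$. This localisation of the ``dip scale'' is the conceptual heart of the estimate.

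It then remains to bound a sum over $k\vee j\geq m_L$, split into $\mathrm{(A)}$ $k\geq m_L$ and $\mathrm{(B)}$ $k<m_L$ (so $j\geq m_L$, hence $j>k$ and $\Theta_k(j)=[\log(1+j)]^2$). Part $\mathrm{(A)}$ is immediate: Lemma~\ref{lm:upperBoundKk} applied with the trivial event gives $\P_{0,u,b}(\rmK_b=k,\ \bigcap_{i\geq k}\{B_i\geq-\lambda-\rmR_k(i)\}) \lesssim \f{u}{b}e^{-c(\log k)^2}$, and summing over $k\geq m_L$ yields $\lesssim \f{u}{b}e^{-c(\log m_L)^2}\lesssim \f{u}{b}e^{-c\sqrt L}$ since $(\log m_L)^2\asymp L$. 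For part $\mathrm{(B)}$ and fixed such $k,j$, I would replace $\{\rmK_b=k\}$ by the $\CG_k$-measurable event $\rmE_k$ (on which $B_k$ lies in a range of size $\lesssim k[\log(1+k)]^2$ and $\P_{0,u,b}(\rmE_k)\lesssim e^{-c(\log k)^2}$), condition on $\CG_k$, use that $(B_s)_{s\in[k,b]}$ is then a free bridge from $B_k$ to $u$, and decompose it at time $j$. Bounding the two resulting barrier‑avoidance probabilities by the estimates of Appendix~\ref{ap:BBEstimates} (in the spirit of Proposition~\ref{pr:BoundBBAboveNeg}; each one carries a factor proportional to the gap $x+\lambda+\rmR_k(j)$ between the value $x=B_j$ and the barrier, $x$ ranging over an interval of length $w_k(j)\eqdef(\rmC+1)\Theta_k(j)+\lambda+\rmC-L\lesssim[\log(1+j)]^2$), together with the Gaussian density bound $p_{k,b}(x)\lesssim(j\wedge(b-j))^{-1/2}$, I expect a per-term bound of the shape
\begin{equation*}
e^{-c(\log k)^2}\,k[\log(1+k)]^2\,\f{u\,w_k(j)^3}{j^{3/2}(b-j)}\,\bigl(1\wedge e^{-c b^{1/2}/(b-j)}\bigr)\,.
\end{equation*}
For $j\in[m_L,b/2]$ one drops the last factor and uses $\sum_{j\geq m_L}w_k(j)^3 j^{-3/2}\lesssim(\log m_L)^3 m_L^{-1/2}\lesssim e^{-c\sqrt L}$; then $\sum_{k<m_L}e^{-c(\log k)^2}k[\log(1+k)]^2<\infty$ closes this range.

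The step I expect to be the main obstacle is the range $j$ close to $b$ in part $\mathrm{(B)}$, where $w_k(j)$ can be as large as $[\log b]^2$ and the remaining $(b-j)^{-3/2}$ sums only to $O(1)$, leaving the useless bound $\f{u}{b}[\log b]^6$. Here one must retain the full Gaussian factor in $p_{k,b}$: since the bridge is pinned near $u\geq b^{1/4}$ while the admissible window for $B_j$ lies below $[\log(1+j)]^2\leq[\log b]^2$, the density at $x$ carries $e^{-(u-x)^2/(2(b-j))}\leq e^{-c b^{1/2}/(b-j)}$, and $\sum_{m\geq 1} m^{-3/2}e^{-cb^{1/2}/m}\lesssim b^{-1/4}$, so this range contributes $\lesssim u[\log b]^6 b^{-5/4}\lesssim\f{u}{b}e^{-c\sqrt L}$ for $b$ large (depending on $L$). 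Matching the implicit constants in the Appendix~\ref{ap:BBEstimates} estimates against the $k$- and $j$-dependent barriers $-\lambda-\rmR_k(\cdot)$ is the only genuinely delicate bookkeeping; the rest follows the template already established by Lemmas~\ref{lm:upperBoundKk}, \ref{lm:upperBoundKkUpsilon} and~\ref{lm:Kk}.
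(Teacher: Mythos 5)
Your $m_L \asymp e^{c\sqrt L}$ plays the same role as the paper's $\bar k$, and the localisation of the dip scale via the oscillation control ($\Osc_j(B)\leq\Theta_k(j)$ turns $\inf_{[0,b]}B_s<-L$ into $B_j<-L+\Theta_k(j)$ for some $j$, forcing $k\vee j\geq m_L$) is a valid and correct observation; to that extent the conceptual heart of your proposal matches the paper's. From there on, however, the routes diverge and yours has a genuine gap. In Part~(B) you propose to sum directly over dip scales $j\geq m_L$, bounding each term by conditioning on $\CG_k$, decomposing the bridge at time $j$, and invoking Appendix~\ref{ap:BBEstimates}. But Propositions~\ref{pr:BoundBBAbovePos} and~\ref{pr:BoundBBAboveNeg} require the bridge endpoint to lie in $[T^{\iota},T^{3/4}]$ with $T$ the bridge lifespan, and for your right-hand bridge over $[j,b]$ one has $T=b-j<b$ while $u$ can be as large as $b^{3/4}>(b-j)^{3/4}$, so the hypothesis simply fails whenever $u$ is near the upper edge of its allowed range. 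Your Gaussian-density workaround addresses only $b-j\lesssim b^{1/2}$; for intermediate $j$ and large $u$ a genuine hole remains in the per-term bound.

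Moreover, Part~(B) is avoidable in its entirety: on $\{\rmK_b=k\}$ with $k<m_L$, the dip at a scale $j\geq m_L$ gives $B_j<-L+\Theta_k(j)<\lambda+\rmR_{m_L}(j)$, while the barriers give $B_i\geq-\lambda-\rmR_k(i)\geq-\lambda-\rmR_{m_L}(i)$ for all $i\in[b-1]$, so after taking the union over $j\geq m_L$ the whole event sits inside the hypotheses of Lemma~\ref{lm:mainEntrRW} with $k$ replaced by $m_L$, which yields $\lesssim\f{u}{b}\,m_L^{-1/16}\lesssim\f{u}{b}e^{-c\sqrt L}$ in a single stroke and eliminates the $j$-by-$j$ bookkeeping. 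The paper itself takes a closely related but structurally different route: after restricting to $\{\rmK_b<\bar k\}$ and applying Lemma~\ref{lm:mainEntrRW} to push $B_j$ above $\lambda+\rmR_{\bar k}(j)$ for $j\geq\bar k$, it detects the dip through the piecewise-linear-interpolant residuals $W^j_s=((j+1)-s)B_j+(s-j)B_{j+1}-B_s$, which are independent of $(B_j)_j$ and have Gaussian tails; that decomposition avoids any appeal to your per-scale bridge estimates, which is why the paper never has to confront the endpoint-range issue that undermines your Part~(B).
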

\begin{proof}
For $L >0$, we let $\bar k = \bar k(L)$ be the smallest $k \in [b-1]$ such that $\lambda + \rmR_{k}(0) \geq L/4$, where $\rmR_{k}$ is defined in the statement of Lemma~\ref{lm:approxBrownianBridge}. 
We note that, by possibly taking $L > 0$ large enough, we have that $\bar k \approx e^{c \sqrt{L}}$. As usual, we can assume that we are on the event $\{\rmK_b < \bar k\}$, otherwise the conclusion follows trivially by Lemma~\ref{lm:upperBoundKk}. In particular, on the event $\{\rmK_b < \bar k\}$, by Lemma~\ref{lm:approxBrownianBridge} (see also \eqref{eq:Inc1Fin}), it suffices to estimate the probability of the following event
\begin{equation}
\label{eq:eventOfI}
\Biggl\{\bigcap_{j = 1}^{b-1} \bigl\{B_{j} \geq - \lambda - \rmR_{\bar k}(j) \bigr\}, \; \inf_{s \in [0, b]} B_s < -L \Biggr\} \;.
\end{equation}
To this end, we start by noticing that thanks to Lemma~\ref{lm:mainEntrRW}, there exists a constant $c >0$ such that,
\begin{equation*}
\P_{0, u, b}\Biggl(\bigcap_{j = 1}^{b-1} \bigl\{B_{j} \geq - \lambda - \rmR_{\bar k}(j)\bigr\}, \; \bigcup_{j = \bar k}^{b-1} \bigl\{B_{j} \leq \lambda + \rmR_{\bar k}(j) \bigr\} \Biggr) \lesssim \f{u}{b} e^{-c \sqrt{L}} \;.
\end{equation*} 
Hence, we can now assume that we are on the complement of the giant union appearing in the probability on the left-hand side of the above display, i.e., we need to estimate the probability of the following event 
\begin{equation}
\label{eq:eventOfII}
\Biggl\{\bigcap_{j = 1}^{b-1} \bigl\{B_{j} \geq - \lambda - \rmR_{\bar k}(j)\bigr\}, \; \bigcap_{j = \bar k}^{b-1} \bigl\{B_{j} > \lambda + \rmR_{\bar k}(j) \bigr\}, \; \inf_{s \in [0, b]} B_s < -L	\Biggr\} \;.
\end{equation}
For each $j \in [b-1]_0$, we introduce the process $(W^j_s)_{s\in [j, j+1]}$ given by
\begin{equation*}
W^{j}_s \eqdef \bigl((j+1)-s\bigr) B_j + \bigl(s-j\bigr)B_{j+1} - B_s \;, \qquad \forall \, s\in [j, j+1]\;.
\end{equation*}
The process $W^{j}$ has the law of standard Brownian bridge indexed by times in the interval $[j, j+1]$. Moreover, the collection $(W^j)_{j \in [b-1]_0}$ is independent of the values $(B_j)_{j \in [b]_0}$. Now, we note that by definition of $\bar k$, for all $j \in [\bar k]_0$, it holds that $\lambda + \rmR_{\bar k}(j) \leq L/2$. In particular, this fact implies that, on the first event, the last two events in \eqref{eq:eventOfII} are contained in the following union of events 
\begin{equation}
\label{eq:eventRemInf}
\Biggl\{\bigcup_{j = 0}^{\bar k}\Bigl\{\sup_{s \in [j, j+1]} \abs{W^j_s}\geq L/2\Bigr\}\Biggr\} \cup \Biggl\{\bigcup_{j = \bar k}^{b-1} \Bigl\{\sup_{s \in [j, j+1]} \abs{W^j_{s}} \geq \lambda + \rmR_{\bar k}(j) + L \Bigr\}\Biggr\} \;. 
\end{equation}
For the first event in \eqref{eq:eventRemInf}, using the independence of the collection $(W^j)_{j \in [b-1]_0}$ from the values $(B_j)_{j \in [b]_0}$, the Gaussian tails of the supremum of a standard Brownian bridge, arguing as in the proof Lemma~\ref{lm:upperBoundKk}, there exists a constant $c > 0$ such that 
\begin{align*}
\sum_{l = 0}^{\bar k}\P_{0, u, b}\Biggl(\bigcap_{j = 1}^{b-1} \bigl\{B_{j} \geq - \lambda - \rmR_{\bar k}(j)\bigr\}, \; \sup_{s \in [l, l+1]} \abs{W^l_s}\geq L/2\Biggr) \lesssim \frac{u}{b} e^{-c \sqrt{L}} \;.
\end{align*} 
Finally, regarding the second event in \eqref{eq:eventRemInf}, using again the independence mentioned above, and the Gaussian tails of the supremum of a standard Brownian bridge, we have that thanks to Lemma~\ref{lm:upperBoundKk}, there exists a constant $c > 0$ such that 
\begin{align*}
\sum_{l = \bar k}^{b-1}\P_{0, u, b}\Biggl(\bigcap_{j = 1}^{b-1} \bigl\{B_{j} \geq - \lambda - \rmR_{\bar k}(j)\bigr\}, \; \sup_{s \in [l, l+1]} \abs{W^l_{s}} \geq \lambda + \rmR_{\bar k}(l)+L\Biggr) \lesssim \frac{u}{b} e^{-c \sqrt{L}} \;.
\end{align*}  
Therefore, by combining all the bounds we have established so far, the claim follows.
\end{proof}

We now state and prove the following results which provides a decay of the field $\Upsilon_{\! b, \frkg}$ on the event that the supremum of such a field is bounded by one.
\begin{lemma}
\label{lm:repulsionShape}
For each $\lambda > 0$, $b > 0$ sufficiently large, $u \in [b^{1/4}, b^{3/4}]$, and $0 < k < b$ sufficiently large, it holds for all $k \leq k' \leq b-1$ that
\begin{align}
\P_{0, u, b}\Biggl(\bigcup_{j = k}^{k'} \Bigl\{\sup_{x \in \A_j} \Upsilon_{\! b, \frkg}(x) > - (\log j)^2\Bigr\}, \; \M_{0, b}(\Upsilon_{\! b, \frkg}) \leq \lambda\Biggr) & \lesssim \frac{u}{b} k^{-\f{1}{16}} \;, \label{eq:boundGrowthBB} \\
\P\Biggl(\bigcup_{j = k}^{k'} \Bigl\{\sup_{x \in \A_j} \Upsilon_{\! b, \frkg}(x) > - (\log j)^2\Bigr\}, \; \M_{0, b}(\Upsilon_{\! b, \frkg}) \leq \lambda\Biggr) & \lesssim \frac{1}{\sqrt{b}} k^{-\f{1}{16}} \;. \label{eq:boundGrowthBM}
\end{align}
Similarly, for all $k \geq 0$ sufficiently large, one has that   
\begin{equation}
\label{eq:boundGrowthLimiting}
\P\Biggl(\bigcup_{j = k}^{\infty} \Bigl\{\sup_{x \in \A_j} \tilde \Upsilon_{\! \lambda}(x) > - (\log j)^2\Bigr\}\Biggr) \lesssim k^{-\f{1}{16}} \;.	
\end{equation}
\end{lemma}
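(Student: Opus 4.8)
The lemma comprises three estimates: \eqref{eq:boundGrowthBB} and \eqref{eq:boundGrowthBM} are proved identically to one another, and \eqref{eq:boundGrowthLimiting} is then obtained from \eqref{eq:boundGrowthBB} by passing to the limit through Proposition~\ref{pr:AsyConv}. I treat the three in turn.

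\textbf{The bound \eqref{eq:boundGrowthBB}.} The plan is to convert the event about $\Upsilon_{\! b, \frkg}$ into an event about the driving Brownian bridge $B$ and then close it with the entropic-repulsion estimate Lemma~\ref{lm:mainEntrRW}. Split according to the control variable $\rmK_b$ of Definition~\ref{def:ControlVar}. The contribution of $\{\rmK_b\ge k\}\cup\{\rmK_b=b\}$, intersected with $\{\M_{0,b}(\Upsilon_{\! b, \frkg})\le\lambda\}$, is bounded by $\sum_{m\ge k}\P_{0,u,b}(\rmK_b=m,\,\M_{0,b}(\Upsilon_{\! b, \frkg})\le\lambda)$, which by Lemmas~\ref{lm:Kk}~and~\ref{lm:upperBoundKk} (and the analogous bound for $\{\rmK_b=b\}$) is $\lesssim\frac{u}{b}\sum_{m\ge k}e^{-c(\log m)^2}\lesssim\frac{u}{b}\,e^{-c'(\log k)^2}$, negligible compared with $\frac{u}{b}k^{-1/16}$. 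On $\{\rmK_b=m\}$ with $m<k$, Lemma~\ref{lm:approxBrownianBridge} gives for every $j$ the inclusion $\{\sup_{x\in\A_j}\Upsilon_{\! b, \frkg}(x)\ge-(\log j)^2\}\subseteq\{B_j\le(\log j)^2+\rmR_m(j)\}$, while \eqref{eq:Inc1Fin} gives $\{\M_{0,b}(\Upsilon_{\! b, \frkg})\le\lambda\}\subseteq\bigcap_{j=1}^{b-1}\{B_j\ge-\lambda-\rmR_m(j)\}$; since $\rmR_m\le\rmR_k$ for $m<k$, both events are contained in $m$-independent ones of the form $\bigcup_{j=k}^{k'}\{B_j\le C\Theta_k(j)\}$ and $\bigcap_{j=1}^{b-1}\{B_j\ge-C'\Theta_k(j)\}$. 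It remains to show that a Brownian bridge from $0$ to $u\in[b^{1/4},b^{3/4}]$ over $[0,b]$ which stays above the polylogarithmic curve $-C'\Theta_k(\cdot)$ but dips to within $C\Theta_k(j)$ of it at some time $j\ge k$ does so with probability $\lesssim\frac{u}{b}k^{-1/16}$; this is precisely Lemma~\ref{lm:mainEntrRW}, combined with Lemma~\ref{lm:transferBBRW} and the lower bound \eqref{eq:boundsBBstaysPos}. The bound \eqref{eq:boundGrowthBM} follows by the identical argument carried out under $\P$ with $B$ a Brownian motion, using \eqref{eq:tech1InfAsy} in place of \eqref{eq:tech1FinAsy}, which yields $\frac{1}{\sqrt b}$ instead of $\frac{u}{b}$.

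\textbf{The bound \eqref{eq:boundGrowthLimiting}.} Fix a valid choice $u=u(b)=\lceil b^{1/2}\rceil\in[b^{1/4},b^{3/4}]$; by Proposition~\ref{pr:AsyConv}, $\E[\bfF(\tilde\Upsilon_{\!\lambda})]=\lim_{b\to\infty}\E_{0,u,b}[\bfF(\Upsilon_{\! b, \frkg})\mid\M_{0,b}(\Upsilon_{\! b, \frkg})\le\lambda]$ for every $\bfF\in\CC^b_{\loc}(\CC(\R^d))$. For finite $k\le k'$ set
\begin{equation*}
E_{k,k'}\eqdef\bigcup_{j=k}^{k'}\bigl\{f\in\CC(\R^d)\,:\,\sup_{x\in\A_j}f(x)\ge-(\log j)^2\bigr\}\,,
\end{equation*}
a closed subset of $\CC(\R^d)$ whose membership depends only on $f|_{\B_{k'+1}}$, since each $\bar\A_j$ is compact and hence $f\mapsto\sup_{x\in\A_j}f(x)$ is continuous for local uniform convergence. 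As $k'\to\infty$ the events $\{\tilde\Upsilon_{\!\lambda}\in E_{k,k'}\}$ increase to the event in \eqref{eq:boundGrowthLimiting}, so by monotone convergence it suffices to bound $\P(\tilde\Upsilon_{\!\lambda}\in E_{k,k'})$ by a constant multiple of $k^{-1/16}$ uniformly in $k'\ge k$ and in $k$ large. To handle the non-continuity of $\one_{E_{k,k'}}$, pick $\delta\in(0,1)$ and set
\begin{equation*}
\bfF_\delta(f)\eqdef\Bigl(\delta^{-1}\bigl(\max_{k\le j\le k'}\bigl(\sup_{x\in\A_j}f(x)+(\log j)^2\bigr)+\delta\bigr)\Bigr)^{+}\wedge 1\,,
\end{equation*}
which lies in $\CC^b_{\loc}(\CC(\R^d))$ and satisfies $\one_{E_{k,k'}}\le\bfF_\delta\le\one_{E^\delta_{k,k'}}$, where $E^\delta_{k,k'}\eqdef\bigcup_{j=k}^{k'}\{f:\sup_{x\in\A_j}f(x)\ge-(\log j)^2-\delta\}$. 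Consequently, $\P(\tilde\Upsilon_{\!\lambda}\in E_{k,k'})\le\E[\bfF_\delta(\tilde\Upsilon_{\!\lambda})]=\lim_{b}\E_{0,u,b}[\bfF_\delta(\Upsilon_{\! b, \frkg})\mid\M_{0,b}(\Upsilon_{\! b, \frkg})\le\lambda]\le\limsup_b\P_{0,u,b}(\Upsilon_{\! b, \frkg}\in E^\delta_{k,k'}\mid\M_{0,b}(\Upsilon_{\! b, \frkg})\le\lambda)$. Now the proof of \eqref{eq:boundGrowthBB} only uses the polylogarithmic growth of the threshold curve, so it applies verbatim with $-(\log j)^2$ replaced by $-(\log j)^2-\delta$, giving $\P_{0,u,b}(\Upsilon_{\! b, \frkg}\in E^\delta_{k,k'},\,\M_{0,b}(\Upsilon_{\! b, \frkg})\le\lambda)\lesssim\frac{u}{b}k^{-1/16}$; dividing by $\P_{0,u,b}(\M_{0,b}(\Upsilon_{\! b, \frkg})\le\lambda)\ge c_1\frac{u}{b}$ from \eqref{eq:upperBoundKkFinite} yields a bound $\lesssim k^{-1/16}$ with a constant depending on none of $b$, $k$, $k'$, $\delta$. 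Taking $\limsup_b$ and then $k'\to\infty$ concludes.

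\textbf{Main obstacle.} The substantive work is entirely in \eqref{eq:boundGrowthBB}: it rests on the Brownian-bridge entropic-repulsion estimate Lemma~\ref{lm:mainEntrRW} and on carrying out the reduction of Lemma~\ref{lm:approxBrownianBridge} uniformly over the random value of $\rmK_b$, in the same spirit as \cite[Section~5]{BiskupLouidor}. By contrast, the passage to the limiting field in \eqref{eq:boundGrowthLimiting} is a soft sandwich argument; the only point worth a remark is that the pre-limit estimate is insensitive to a constant downward shift of the threshold curve (equivalently, one could have stated \eqref{eq:boundGrowthBB} with $-(\log j)^2-1$ throughout at no extra cost), which is exactly what allows the continuous approximant $\bfF_\delta$ to be controlled.
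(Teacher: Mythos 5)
Your proof is correct and follows essentially the same route as the paper: reduce to the event $\{\rmK_b\le k\}$ via Lemma~\ref{lm:Kk}/Lemma~\ref{lm:upperBoundKk}, translate both events to statements about the Brownian bridge via Lemma~\ref{lm:approxBrownianBridge}, and close with the entropic-repulsion bound Lemma~\ref{lm:mainEntrRW}; the Brownian-motion case is the same argument under $\P$. Two small remarks on the limiting bound \eqref{eq:boundGrowthLimiting}: the paper derives it from \eqref{eq:boundGrowthBM} together with Theorem~\ref{th:cluster} (i.e., via $\Upsilon_{\!\infty}$), whereas you go via \eqref{eq:boundGrowthBB} and Proposition~\ref{pr:AsyConv}; both routes are legitimate, and yours has the mild advantage of not needing to invoke the $\frkg=0$ specialisation. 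More importantly, your explicit sandwich with $\bfF_\delta\in\CC^b_{\loc}$ fills in a step the paper leaves implicit: since $\one_{E_{k,k'}}$ is neither continuous nor local in the required sense, one genuinely cannot pass the weak limit through the indicator directly, and your observation that the pre-limit bound tolerates a constant downward shift of the threshold curve is exactly what makes the Portmanteau-style squeeze close. This is a sharper account of the ``take the limit'' step than the paper's one-line remark.
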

\begin{proof}
We only prove the bound for the Brownian bridge since the bound for the Brownian motion can be obtained in the same way. Furthermore, it suffices to prove \eqref{eq:boundGrowthBM} for $k' = b-1$.
As usual, we can assume that we are on the event $\{\rmK_b \leq k\}$, otherwise the conclusion follows trivially from Lemma~\ref{lm:upperBoundKk}. Thanks to Lemma~\ref{lm:approxBrownianBridge}, it holds that 
\begin{equation*}
\bigcup_{j = k}^{b-1}\Biggl\{\sup_{x \in \A_j} \Upsilon_{\! b, \frkg}(x) > - (\log j)^2,\; \rmK_b \leq k\} \subseteq \bigcup_{j = k}^{b-1}\bigl\{B_j < \lambda + 2\rmR_{k}(j)\bigr\} \;,
\end{equation*}
and also
\begin{equation*}
\bigl\{\M_{0, b}(\Upsilon_{\! b, \frkg})  \leq \lambda,\; \rmK_b \leq k\bigr\} \subseteq \bigcap_{j = 1}^{b-1}\bigl\{B_{j} \geq - \lambda - 2\rmR_{k}(j) \bigr\} \;.	
\end{equation*}
The claim follows since by Lemma~\ref{lm:mainEntrRW} it holds that  
\begin{equation*}
\P_{0, u, b}\Biggl(\bigcap_{j = 1}^{b-1} \bigl\{B_{j} \geq - \lambda - 2\rmR_{k}(j)\bigr\}, \; \bigcup_{j = k}^{b-1} \bigl\{B_j  < + 2\rmR_{k}(j)\bigr\}\Biggr) \lesssim \frac{u}{b} k^{-\f{1}{16}}  \;.
\end{equation*}
Finally, we note that the bound \eqref{eq:boundGrowthLimiting} follows directly from Theorems~\ref{th:cluster}~and~\ref{th:clusterProb}, and by Portmanteau's theorem, by taking the limit as $b \to \infty$ in \eqref{eq:boundGrowthBM}.
\end{proof}

\begin{remark}
\label{rm:DecayFieldsAS}
An immediate consequence of \eqref{eq:boundGrowthLimiting} is that there almost surely exists a (random) $k \geq 0$ such that, for all $j \geq k$, the supremum of the field $\tilde \Upsilon_{\! \lambda}$ on the annulus $\A_j$ is less than $-(\log j)^2$.
\end{remark}

%%%%%%%%%%%%%%%%%%%%%%%%%%%%%%%%%%%%%%%%%%%%%%
\subsection{Tail estimates for near-maximal level sets}
\label{sub:tailsD}
In this section, we establish a key result concerning the tail behaviour of the volume of ``near-maximal level sets'' $\abs{\D^{\lambda}_{0, j}(\Upsilon_{\! b, \frkg})}$, for $j \leq b$, where here we recall the notation introduced in \eqref{eq:maximalExpBalls}, as well as the definition of $\Upsilon_{\!b, \frkg}$ in \eqref{e:defUpsbg}. Throughout this section, we assume that the field $\frkg_b$ satisfies \ref{as:GG1} \dash \ref{as:GG3}. 

\begin{lemma}
\label{lm:tailsDgUpsilon}
There exists $\delta = \delta(d) \in (0, 1)$ such that for any $j_0 > 0$ sufficiently large and $b > j_0$ sufficiently large, it holds for any $\lambda > 0$, $j \in \{j_0,\ldots, b\}$, $u \in [b^{1/4}, b^{3/4}]$, and $\eta \geq 0$,
\begin{equation}
\label{eq:boundProbuOrdinary}
\P_{0, u, b}\bigl(\abs{\D^{\lambda}_{0, j}(\Upsilon_{\!b, \frkg})}^{-1} \geq \eta,\; \M_{0, b}(\Upsilon_{\!b, \frkg}) \leq \lambda\bigr) \lesssim \frac{u}{b} \bigl(1 \wedge \eta^{-(1+\delta)}\bigr) \;.
\end{equation}
Moreover, for all $\sigma \in [0, \delta)$, it holds that
\begin{equation}
\label{eq:boundExpuOrdinary}
\E_{0, u, b}\Bigl[\abs{\D^{\lambda}_{0, j}(\Upsilon_{\!b, \frkg})}^{-(1 + \sigma)} \one_{\{\M_{0, b}(\Upsilon_{\! b, \frkg}) \leq \lambda\}}\Bigr] \lesssim \f{u}{b} \;.
\end{equation}
\end{lemma}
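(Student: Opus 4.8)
The plan is to mimic the structure of the analogous DGFF result in \cite[Section~6]{BiskupLouidor}, using the Brownian-motion reduction developed in Section~\ref{subsec:redBMCluster} to transfer the problem to an estimate about the driving Brownian bridge. First, observe that \eqref{eq:boundExpuOrdinary} follows from \eqref{eq:boundProbuOrdinary} by integrating the tail: writing $N = \abs{\D^{\lambda}_{0, j}(\Upsilon_{\!b, \frkg})}^{-1}$ and using $\E_{0, u, b}[N^{1+\sigma}\one_{\{\M_{0, b}(\Upsilon_{\!b, \frkg}) \leq \lambda\}}] = (1+\sigma)\int_0^\infty \eta^{\sigma}\,\P_{0, u, b}(N \geq \eta,\, \M_{0, b}(\Upsilon_{\!b, \frkg}) \leq \lambda)\,d\eta$, the bound \eqref{eq:boundProbuOrdinary} makes the integrand $\lesssim \tfrac{u}{b}(\eta^{\sigma}\wedge \eta^{\sigma-1-\delta})$, which is integrable precisely when $\sigma < \delta$. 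So the whole content is \eqref{eq:boundProbuOrdinary}, and within that, the bound $\lesssim \tfrac{u}{b}$ (without the $\eta$-decay) is just the upper bound in Lemma~\ref{lm:upperBoundKkUpsilon}; the new part is the gain of $\eta^{-(1+\delta)}$ for large $\eta$.

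For the decay, I would fix $\eta$ large and estimate $\P_{0, u, b}(\abs{\D^{\lambda}_{0, j}(\Upsilon_{\!b, \frkg})} \leq \eta^{-1},\, \M_{0, b}(\Upsilon_{\!b, \frkg}) \leq \lambda)$. As usual we may restrict to the event $\{\rmK_b \leq k\}$ for a suitable $k = k(\eta)$, since Lemma~\ref{lm:upperBoundKk} together with Lemma~\ref{lm:Kk} shows $\P_{0, u, b}(\rmK_b > k,\, \ldots) \lesssim \tfrac{u}{b}e^{-c(\log k)^2}$, which is $\lesssim \tfrac{u}{b}\eta^{-(1+\delta)}$ once $k \approx (\log \eta)^{1/2}$ or so (a stretched-exponential–versus–polynomial comparison). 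On $\{\rmK_b \leq k\}$, the key point is that the near-maximal level set being small is essentially a statement about the field near the origin: by \eqref{eq:diffFields} the fields $\Upsilon_{\!b, \frkg}$ and $\Upsilon_{\! k}$ agree on $\B_{k_0}$ up to $O((\log k)^2)$, and by Lemma~\ref{lm:repulsionShape} (and the repulsion estimate \eqref{eq:boundGrowthBB}) the field on annuli $\A_l$ with $l \geq k$ is $\leq -(\log l)^2$, hence such annuli contribute a uniformly positive volume to $\D^{\lambda}_{0, j}(\Upsilon_{\!b, \frkg})$ whenever the maximum is at level $\leq \lambda$. Therefore $\{\abs{\D^{\lambda}_{0, j}} \leq \eta^{-1}\}$ forces the supremum of $\Upsilon_{\!b, \frkg}$ over $\B_{k_0}$ to be atypically close to $\M_{0, b}(\Upsilon_{\!b, \frkg})$ — precisely, within $O(\eta^{-c})$ of it — so that the level set near the origin has volume $\leq \eta^{-1}$. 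This is a small-ball/anti-concentration event for the maximum of the smooth nondegenerate Gaussian field $\Upsilon_{\! k}$ over a fixed ball, conditioned on $B_k$, and I would quantify it via a density bound for that supremum (as in \cite[Theorem~3.1]{SupDens}, used already in the proof of Lemma~\ref{lm:entroDelta}): the probability that the maximum over $\B_{k_0}$ lies in a window of width $\eta^{-c}$ below its own ambient maximum is $\lesssim \eta^{-c'}$. Choosing constants so that $c' \geq 1+\delta$ with $\delta = \delta(d) > 0$ gives the claim; the leftover factor $\tfrac{u}{b}$ comes, as in Lemma~\ref{lm:upperBoundKkUpsilon}, from the Brownian bridge staying above the polylog curve, via the inclusion \eqref{eq:Inc1Fin} and Lemma~\ref{lm:upperBoundKk}.

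The main obstacle is making precise the deterministic geometric step: that a small near-maximal level set forces the field's local maximum near the origin to nearly attain the global maximum. One has to combine three scales carefully — the $O((\log k)^2)$ discrepancy between $\Upsilon_{\!b, \frkg}$ and $\Upsilon_{\! k}$ near the origin, the repulsion $\leq -(\log l)^2$ on outer annuli which guarantees a floor of level-set volume away from the origin (so that if $\D^{\lambda}_{0, j}$ is to be small, \emph{all} its volume must be concentrated near the origin and the field near the origin must be within $\lambda$ of the global max), and the anti-concentration for the local max of $\Upsilon_{\! k}$. Quantitatively: if $\abs{\D^{\lambda}_{0, j}} \leq \eta^{-1}$ then in particular the sublevel structure of $\Upsilon_{\! k}$ on $\B_{k_0}$ within distance $O(\eta^{-c})$ of its maximum has volume $\leq \eta^{-1}$, which by smoothness of $\Upsilon_{\! k}$ (uniformly bounded Hessian on the event $\{\rmK_b \leq k\}$, using \ref{as:GG3}) forces the maximum of $\Upsilon_{\! k}$ over $\B_{k_0}$ to be within $O(\eta^{-c''})$ of $\lambda$. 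The density bound then closes the argument. I would also need the elementary fact that the exponent $\delta = \delta(d)$ can be taken strictly positive — this is where dimension enters, through the volume–vs–height scaling of a smooth function near a nondegenerate maximum (volume of $\{f \geq \max f - \epsilon\} \sim \epsilon^{d/2}$), which is exactly the mechanism producing a power $\delta$ rather than merely $\delta = 0$.
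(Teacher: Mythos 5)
Your scaffolding is sound --- passing from \eqref{eq:boundProbuOrdinary} to \eqref{eq:boundExpuOrdinary} by integrating the tail, restricting to $\{\rmK_b \leq k\}$ with $k$ growing with $\eta$, and extracting the $u/b$ factor via the Brownian-bridge inclusion \eqref{eq:Inc1Fin} are all exactly as in the paper. However, the middle of your argument --- the step that is supposed to convert ``$\abs{\D^{\lambda}_{0,j}(\Upsilon_{\!b,\frkg})}$ is small'' into an event with polynomially small probability --- is not the mechanism the paper uses, and as stated I don't think it closes.

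Two concrete problems. First, the sentence ``such annuli contribute a uniformly positive volume to $\D^{\lambda}_{0,j}$'' has the logic reversed: on outer annuli the repulsion estimate gives $\Upsilon_{\!b,\frkg} \leq -(\log l)^2$, which is far \emph{below} the level-set threshold $\M_{0,j}(\Upsilon_{\!b,\frkg}) - \lambda \geq -\lambda$, so these annuli contribute \emph{nothing} to $\D^{\lambda}_{0,j}$. Second, and more importantly, a near-maximal level set $\{\phi \geq \M(\phi) - \lambda\}$ at a \emph{fixed} width $\lambda$ being small is \emph{not} an anticoncentration event for the value of the supremum: it is a statement about the curvature of the field near its maximum, and the sup-density bound of \cite[Theorem~3.1]{SupDens} (as used in Lemma~\ref{lm:entroDelta}) gives a $\delta$-window decay only if you can identify a $\delta$-window, which you never do --- your ``within $O(\eta^{-c''})$ of $\lambda$'' is asserted, not derived, and nothing forces $c''>1$. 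The paper's route is different and is driven by Remark~\ref{rem:implicationSmallD}: a level set of volume $\leq \eta^{-1}$ forces the Hessian $\rmS$ to exceed $c_\lambda \eta^{2/d}$ somewhere. One then splits into three regimes comparing $\log r_\eta \approx \eta^{1/d}$ with $j$ and $b$. In the main regime one restricts to $\{\rmS_{r_\eta} \leq c_\lambda \eta^{2/d}\}$ and $\{\rmS_1 \leq (\log\eta)^4\}$ (affordable by the Gaussian tail of Lemma~\ref{lm:techPsi}/\ref{lm:techPsiReal}); the first constraint pushes the location of the maximum outside $B(0,r_\eta)$, which in turn means the Brownian bridge must dip back below the polylog curve at some scale $l \geq \log r_\eta$ --- an entropic-repulsion failure costing $\eta^{-1/(16d)}$ via Lemma~\ref{lm:mainEntrRW}; and the second constraint, combined with the smallness of the level set, forces $\abs{\partial_i \Upsilon_{\!b,\frkg}(0)} \lesssim \eta^{-1/d}(\log\eta)^4$, whose probability is $\lesssim \eta^{-1}(\log\eta)^{4d}$ by the nondegeneracy of the gradient (Remark~\ref{rem:strict}, \eqref{e:covGradFieldFinal}); crucially $\partial_i\Upsilon(0)$ is independent of $B$, so these two factors multiply. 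It is the gradient anticoncentration that supplies the $\eta^{-1}$, and the entropic repulsion that supplies the extra $\eta^{-\delta}$ --- neither appears in your proposal. Finally, a smaller point: your choice $k \approx (\log\eta)^{1/2}$ is far too small to make $e^{-c(\log k)^2}$ beat $\eta^{-(1+\delta)}$; you would need $k \gtrsim \exp(C\sqrt{\log\eta})$, while the paper takes $k = \log r_\eta \approx \eta^{1/d}$, chosen to synchronize with the Hessian argument.
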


\begin{remark}
\label{rm:TailEstBM}
As usual, the previous lemma admits a corresponding version for Brownian motions in place of Brownian bridges. Specifically, if we replace the conditional probability law $\P_{0, u, b}$ with the unconditional probability law $\P$, then the conclusions of the previous lemma remain valid, with the only difference that $u/b$ is replaced by $1/\sqrt{b}$.
\end{remark}

Before proceeding with the proof of Lemma~\ref{lm:tailsDgUpsilon}, we state and prove the following auxiliary result concerning the tail behaviour of the second derivative of the field $\rmZ_b$. 
\begin{lemma}
\label{lm:techPsi}
There exists a constant $c > 0$ such that for any $b \in \N$, $r \in [0, e^b]$, $i$, $k \in [d]$, and $\eta \geq 0$, 
\begin{equation}
\label{e:boundSupDer}
\P\Biggl(\sup_{\abs{x} \leq r} \bigl|\partial^2_{i, k} \rmZ_{b}(x)\bigr| > \eta \Biggr)
\lesssim 1 \wedge r^d e^{-c\eta^2}\;.
\end{equation}
\end{lemma}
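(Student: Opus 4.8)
The plan is to regard $\partial^2_{i,k}\rmZ_b$ as a smooth centred Gaussian field given by an explicit white-noise integral, obtained by differentiating under the integral sign in \eqref{eq:fieldZST}, and then to run the standard route: a uniform-in-$(x,b)$ variance bound, Fernique's majorising criterion to control the expected supremum on unit-size cells, the Borell--TIS inequality for Gaussian concentration, and finally a union bound over the $\lesssim 1+r^d$ cells needed to cover $B(0,r)$ together with the trivial bound that a probability is $\le 1$.

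\textbf{Representation and variance.} Differentiating \eqref{eq:fieldZST} under the integral gives
\[
\partial^2_{i,k}\rmZ_b(x) = \int_0^b\!\!\int_{\R^d}\phi_r(x,y)\,\xi'(dy,dr)\,,
\qquad
\phi_r(x,y) = e^{-dr/2}e^{-2r}\bigl[(\partial_i\partial_k\bar\frkK)(e^{-r}(x-y)) - (\partial_i\partial_k\frkK)(e^{-r}x)\,\bar\frkK(e^{-r}y)\bigr]\,,
\]
the factor $e^{-2r}$ coming from the two derivatives hitting the argument $e^{-r}(\cdot)$. Thanks to \ref{hp_K1}--\ref{hp_K2}, $\hat\frkK$ is a Schwartz function (the Fourier transform of a $\CC^\infty_c$ function), hence $\widehat{\bar\frkK} = \sqrt{\hat\frkK}$ decays faster than any polynomial, so $\bar\frkK\in\CC^\infty(\R^d)$ with all its derivatives bounded and square-integrable; in particular $\partial_i\partial_k\bar\frkK,\bar\frkK\in L^2(\R^d)$, $\nabla\partial_i\partial_k\bar\frkK$ is bounded, and $\partial_i\partial_k\frkK$ is bounded with compact support. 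Using the Itô isometry and the substitution $z = e^{-r}y$,
\[
\E\bigl[(\partial^2_{i,k}\rmZ_b(x))^2\bigr]
= \int_0^b e^{-4r}\int_{\R^d}\bigl|(\partial_i\partial_k\bar\frkK)(e^{-r}x - z) - (\partial_i\partial_k\frkK)(e^{-r}x)\,\bar\frkK(z)\bigr|^2\,dz\,dr
\lesssim \int_0^b e^{-4r}\,dr \lesssim 1\,,
\]
uniformly in $x\in\R^d$ and $b$; the case $b = \infty$ is identical.

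\textbf{Increments, Fernique and Borell--TIS.} Applying the mean value theorem to $\partial_i\partial_k\bar\frkK$ and $\partial_i\partial_k\frkK$ (both with bounded gradient) and repeating the change of variables yields, for all $x,x'\in\R^d$,
\[
\E\bigl[\bigl|\partial^2_{i,k}\rmZ_b(x) - \partial^2_{i,k}\rmZ_b(x')\bigr|^2\bigr]
\lesssim |x-x'|^2\int_0^b e^{-6r}\,dr \lesssim |x-x'|^2\,,
\]
again uniformly in $b$. Hence Fernique's majorising criterion (Lemma~\ref{lm_Fernique}) gives $\E\bigl[\sup_{x\in B(x_0,1)}\partial^2_{i,k}\rmZ_b(x)\bigr]\lesssim 1$ uniformly in $x_0\in\R^d$ and $b$, and together with the variance bound the Borell--TIS inequality (Lemma~\ref{lm_Borell}) provides a constant $c>0$ such that $\P\bigl(\sup_{x\in B(x_0,1)}|\partial^2_{i,k}\rmZ_b(x)| > \eta\bigr)\lesssim e^{-c\eta^2}$ for all $\eta\ge 0$, $x_0$, $b$.

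\textbf{Conclusion and main obstacle.} Covering $B(0,r)$ by $\lesssim 1+r^d$ unit balls and taking a union bound gives $\P\bigl(\sup_{|x|\le r}|\partial^2_{i,k}\rmZ_b(x)| > \eta\bigr)\lesssim (1+r^d)e^{-c\eta^2}$, which combined with the trivial bound $\le 1$ yields \eqref{e:boundSupDer}. There is no real obstacle here: the only points needing care are deducing the $\CC^\infty$ regularity of $\bar\frkK$, together with the boundedness and square-integrability of its derivatives, from the hypotheses on $\frkK$ — rather than taking it for granted — and keeping track of the exponentially decaying prefactors in $r$ so that each $r$-integral converges uniformly in $b$, including the limiting case $b=\infty$ treated in exactly the same way.
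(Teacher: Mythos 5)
Your proof is correct and follows essentially the same route as the paper: uniform variance bound, $L^2$-Lipschitz estimate for the increments, Fernique's criterion plus Borell--TIS on unit balls, then a union bound over $\lesssim 1+r^d$ cells. The one real difference is the starting point: you differentiate the white-noise representation and therefore need to establish regularity of $\bar\frkK$, whereas the paper differentiates the covariance formula \eqref{e:covZb} directly and only uses the (directly assumed) smoothness and compact support of $\frkK$, which makes the increment estimate $\lesssim |x-y|^2\wedge 1$ immediate without any Fourier argument for $\bar\frkK$. One step in your write-up is loose: invoking the ``mean value theorem \ldots with bounded gradient'' gives a pointwise Lipschitz bound in $y$, which by itself is not integrable because $\bar\frkK$ is not compactly supported; what you actually need is the $L^2(dz)$ bound. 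This is easily repaired either by combining MVT with Cauchy--Schwarz in the interpolation parameter so that the $z$-integral produces $\|\nabla\partial_i\partial_k\bar\frkK\|_{L^2}^2$, or by the cleaner Plancherel estimate $\|\tau_h g - g\|_{L^2}\lesssim |h|\,\|\nabla g\|_{L^2}$, which holds because $\xi^\alpha\widehat{\bar\frkK}\in L^2$ for every multi-index $\alpha$. With that fix the argument is complete and equivalent to the paper's.
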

\begin{proof}
Recall that, for $x$, $y \in \B_b$,
\begin{equation*}
\E\bigl[\partial^2_{i, k} \rmZ_{b}(x)  \partial^2_{i, k} \rmZ_{b}(y)\bigr] = \int_0^b \bigl(e^{-4s} \frkK_{i,i,k,k}(e^{-s}(x-y)) - e^{-4s}\frkK_{i,k}(e^{-s} x) \frkK_{i,k}(e^{-s} y)\bigr) ds \;, 
\end{equation*}
where, given $n \in \N$ and $(j_1, \ldots j_n) \in [d]^n$, we write $\frkK_{j_1, \ldots, j_n}$ for the $n$-th derivative of $\frkK$ along the directions $e_{j_1}, \ldots, e_{j_n}$. In particular, by the smoothness of $\frkK$ and the fact that it is supported in the unit ball, it holds
\begin{equation*}
\E\bigl[\abs{\partial^2_{i, k} \rmZ_{b}(x) - \partial^2_{i, k} \rmZ_{b}(y)}^2\bigr] \lesssim \abs{x-y}^2 \wedge 1 \;,
\end{equation*}
where the implicit constant is independent of $b$.
It follows from Fernique's majorizing criterion (Lemma~\ref{lm_Fernique}) and Borell-TIS inequality (Lemma~\ref{lm_Borell}) that, for any $\eta \geq 0$ and for some $c > 0$, one has the bound 
\begin{equation*}
\P \Biggl(\sup_{\abs{x - y} \leq 1} \bigl|\partial^2_{i, k} \rmZ_{b}(x)\bigr| > \eta \Biggr) \lesssim e^{-c\eta^2}\;,
\end{equation*}
uniformly over $y$. The requested estimate then follows from the union bound.
\end{proof}

Before proceeding with the proof of Lemma~\ref{lm:tailsDgUpsilon}, we introduce some notation. For $r \geq 0$, we set
\begin{equation}
\label{eq:defSrNearLevel}
\rmS_r \eqdef \sup_{i, k \in [d], \, x \in B(0, r)} \abs{\partial^2_{i, k} \Upsilon_{\! b, \frkg}(x)} \;,
\end{equation} 
where, to simplify the notation, we have omitted the dependence of $\rmS_r$ on the parameter $b$.
Furthermore, recalling the characterisation \eqref{eq:Phi} of the field $\Phi_{b}$, for any $i$, $k \in [d]$ and $x \in \B_b$, we have 
\begin{equation}
\label{eq:secDirecDerUps}
\partial^2_{i,k} \Upsilon_{\! b, \frkg}(x) =  \int_0^b e^{-2 s}  \frkK_{i, k}(e^{-s} x) dB_{s} + \partial^2_{i, k} \rmZ_{b}(x) - \sqrt{\smash[b]{2d}} \int_0^b e^{-2 s}  \frkK_{i, k}(e^{-s} x) ds + \partial^2_{i, k} \frkg_b(x),
\end{equation}
where we have used the same notation introduced in the proof of Lemma~\ref{lm:techPsi}.  

The reason why it is useful to look at the second derivative of the field $\Upsilon_{\! b, \frkg}$ is due to the following observation.
\begin{remark}
\label{rem:implicationSmallD}
For any $\lambda > 0$, $\eta \geq 0$, and $0 < j \leq b$, we observe that the following implication holds
\begin{equation}
\label{eq:implicationSmallD}
\abs{\D^{\lambda}_{0, j}(\Upsilon_{\!b, \frkg})}^{-1} \geq \eta \quad \implies \quad \rmS_{e^j} > c_{\lambda} \eta^{2/d} \;,
\end{equation}
for some suitable constant $c_{\lambda}$ depending on $\lambda$ and $d$. Indeed, if the condition on the right-hand side of the above display is not satisfied then there exists a ball of area at least $\eta^{-1}$ around the maximum of $\Upsilon_{\!b, \frkg}$ inside $\B_j$ where $\Upsilon_{\! b, \frkg}$ is greater than its maximal value minus $\lambda$, which contradict the fact that $\abs{\D^{\lambda}_{0, j}(\Upsilon_{\!b, \frkg})}^{-1} \geq \eta$. 
\end{remark} 

A straightforward consequence of Lemma~\ref{lm:techPsi} is the following result. 
\begin{lemma}
\label{lm:techPsiReal}
Let $A > 0$ be fixed. There exist constants $c_1$, $c_2 >0$ such that for any $\eta \geq 0$, $b \geq j > 0$, and $u > -A$, it holds that 
\begin{equation}
\label{e:betterBound}
\P_{0, u, b} \bigl(\rmS_{e^j} > \eta^{2/d}\bigr) \lesssim 1 \wedge e^{d j + c_1 \frac{u}{b} \eta^{2/d}} e^{-c_2 \eta^{4/d}}\;. 
\end{equation}
\end{lemma}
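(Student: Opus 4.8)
The plan is to bound the tail of $\rmS_{e^j}$ by combining the crude moment bounds on the four summands in the decomposition \eqref{eq:secDirecDerUps} of $\partial^2_{i,k}\Upsilon_{\!b,\frkg}$. First I would note that, since $\rmS_{e^j}$ is a supremum over $i,k\in[d]$ of suprema over $B(0,e^j)$, a union bound over the finitely many pairs $(i,k)$ reduces matters to estimating $\P_{0,u,b}(\sup_{|x|\le e^j}|\partial^2_{i,k}\Upsilon_{\!b,\frkg}(x)| > \tfrac{1}{4}\eta^{2/d})$ for each fixed $(i,k)$, and then to estimating the tail of the supremum of each of the four terms in \eqref{eq:secDirecDerUps} above $\tfrac{1}{16}\eta^{2/d}$ (or similar fixed fractions). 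The deterministic term $\sqrt{2d}\int_0^b e^{-2s}\frkK_{i,k}(e^{-s}x)\,ds$ is bounded by an absolute constant uniformly in $x$ and $b$ (by smoothness and compact support of $\frkK$), so it contributes nothing to the tail once $\eta$ is large. The term $\partial^2_{i,k}\frkg_b(x)$ is handled directly by assumption \ref{as:GG3}, which gives a Gaussian tail $e^{-c\eta^{4/d}}$ for its supremum over $\B_b\supseteq B(0,e^j)$. The term $\partial^2_{i,k}\rmZ_b(x)$ is handled by Lemma~\ref{lm:techPsi}, which gives exactly $1\wedge e^{dj}e^{-c\eta^{4/d}}$ after setting $r=e^j$ and $\eta\mapsto \eta^{2/d}$; note this term does not depend on $B$, hence its law is unaffected by conditioning on the Brownian bridge endpoint.

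The remaining and genuinely new ingredient is the tail of $\sup_{|x|\le e^j}\big|\int_0^b e^{-2s}\frkK_{i,k}(e^{-s}x)\,dB_s\big|$ under $\P_{0,u,b}$, i.e.\ when $(B_s)_{s\in[0,b]}$ is a Brownian bridge from $0$ to $u$ in time $b$. For fixed $x$ this is a Gaussian random variable (conditionally Gaussian given the bridge structure), with mean of order $(u/b)\int_0^b e^{-2s}\frkK_{i,k}(e^{-s}x)\,ds$, which is $O(u/b)$ uniformly in $x$, and variance bounded by $\int_0^\infty e^{-4s}\frkK_{i,k}(e^{-s}x)^2\,ds \lesssim 1$ uniformly in $x$ and $b$. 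Spatial regularity of this Gaussian field in $x$ follows as in the proof of Lemma~\ref{lm:techPsi}: by smoothness of $\frkK$ and compact support, $\E_{0,u,b}[|\partial^2_{i,k}(\cdot)(x)-\partial^2_{i,k}(\cdot)(y)|^2]\lesssim |x-y|^2\wedge 1$ uniformly, so Fernique's majorizing criterion (Lemma~\ref{lm_Fernique}) controls the expected supremum over unit balls. Then Borell--TIS (Lemma~\ref{lm_Borell}), applied after recentering by the mean (which is $O(u/b)$), yields on each unit ball a bound $\lesssim e^{-c(\eta^{2/d}-C u/b)^2}$, and a union bound over the $O(e^{dj})$ unit balls covering $B(0,e^j)$ gives $\lesssim e^{dj}e^{-c(\eta^{2/d}-Cu/b)^2}$. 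Expanding $(\eta^{2/d}-Cu/b)^2 = \eta^{4/d} - 2C\tfrac{u}{b}\eta^{2/d} + (Cu/b)^2$ and absorbing the negligible last term produces the factor $e^{c_1\frac{u}{b}\eta^{2/d}}e^{-c_2\eta^{4/d}}$ appearing in \eqref{e:betterBound}; the $u>-A$ hypothesis is only needed to keep $u/b$ from being too negative so that the $e^{c_1\frac{u}{b}\eta^{2/d}}$ factor (coming with the ``wrong'' sign when $u<0$) does not spoil the bound — actually for $u<0$ the mean shift only helps, so one truncates at $-A$ to make the statement uniform.

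Combining the four estimates via a union bound, the dominant contributions are the $\rmZ_b$ term and the stochastic-integral term, both of the form $1\wedge e^{dj}e^{c_1\frac{u}{b}\eta^{2/d}}e^{-c_2\eta^{4/d}}$ (with the $\frkg_b$ and deterministic terms giving strictly smaller tails, hence absorbed into the constants), which is exactly \eqref{e:betterBound}. The main obstacle is the careful bookkeeping in the Brownian bridge case: one must verify that the conditional mean of the stochastic integral is genuinely $O(u/b)$ uniformly in $x$ and $b$ and that the conditional covariance structure (which differs from the unconditional one by a rank-one correction coming from pinning $B_b=u$) still satisfies the same spatial regularity estimate, so that Fernique/Borell--TIS apply with constants independent of $b$ and of $u\in[b^{1/4},b^{3/4}]$ (or more generally $u>-A$). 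Once that uniformity is in place, the rest is the routine union bound over dyadic or unit-scale covers of $B(0,e^j)$ and the elementary manipulation of the Gaussian exponent. I would also remark, as the lemma's proof presumably does, that the case $b=\infty$ (Brownian motion, with $u/b$ replaced by $0$ and the $e^{c_1 u\eta^{2/d}/b}$ factor simply absent) is strictly easier and follows the same lines, matching Remark~\ref{rm:TailEstBM}.
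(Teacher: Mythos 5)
Your proof follows exactly the same route as the paper: decompose via \eqref{eq:secDirecDerUps}, note the deterministic term is uniformly bounded, invoke \ref{as:GG3} for the $\frkg_b$ term and Lemma~\ref{lm:techPsi} for the $\rmZ_b$ term, and recenter the Brownian bridge stochastic integral by its mean, which is $O(u/b)$ because $\int_0^b e^{-2s}\frkK_{i,k}(e^{-s}x)\,ds$ is uniformly bounded. The paper's version is terser — it simply asserts that the recentered field has uniform Gaussian tails over $x\in\B_b$ and over the laws $\P_{0,u,b}$ — whereas you spell out the Fernique/Borell--TIS step and the union bound over $O(e^{dj})$ unit balls explicitly, but the underlying mechanism and the manipulation of the exponent $(\eta^{2/d}-Cu/b)^2$ are identical.
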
 
\begin{proof}
Consider the expression \eqref{eq:secDirecDerUps} for the second directional derivative of $\Upsilon_{\! b, \frkg}$. We note that the deterministic term is bounded by a constant independent of everything else. Moreover, thanks to \ref{as:GG3}, the term $\partial^2_{i, k} \frkg_b$ has Gaussian tails uniformly over all $i$, $k \in [d]$ and $x \in \B_b$. Furthermore, we also note the field  
\begin{equation*}
\B_b \ni x \mapsto \int_0^b e^{-2 s}  \frkK_{i, k}(e^{-s} x) dB_{s} - \frac{u}{b} \int_0^b e^{-2 s}  \frkK_{i, k}(e^{-s} x) ds
\end{equation*}
has uniform Gaussian tails uniformly over $i$, $k \in [d]$, $x \in \B_b$, and over the probability laws $\P_{0, u, b}$, for all $b > 0$ and $\smash{u > - A}$. In particular, since the integral multiplying $u/b$ in the above display is bounded uniformly over $i$, $k \in [d]$ and $x \in \B_b$, using Lemma~\ref{lm:techPsi}, we get that there exist constants $c_1$, $c_2 > 0$ such that 
\begin{equation*}
\P_{0, u, b} \bigl(\rmS_{e^j} > \eta^{2/d}\bigr) \lesssim 1 \wedge e^{d j + c_1 \frac{u}{b} \eta^{2/d}} e^{-c_2 \eta^{4/d}}\;,
\end{equation*}
thus proving the claim.
\end{proof}

We are now ready for the proof of Lemma~\ref{lm:tailsDgUpsilon}. 
\begin{proof}[Proof of Lemma~\ref{lm:tailsDgUpsilon}]
For $\eta \geq 0$, we define 
\begin{equation*}
r_{\eta} \eqdef \lfloor \exp(\eta^{1/d})\rfloor\;,
\end{equation*}
so that $\log r_\eta \approx \eta^{1/d}$, and we also consider $b \geq j > 0$ and $u \in [b^{1/4}, b^{3/4}]$. We split the proof into three distinct cases. 

\textbf{Case 1:} We begin by treating the following case: 
\begin{equation*}
\log r_\eta \geq \f{b}{2} \;.
\end{equation*}
Let $c_{\lambda} > 0$ be the constant introduced in Remark~\ref{rem:implicationSmallD}. Then, thanks to \eqref{eq:implicationSmallD} and Lemma~\ref{lm:techPsiReal}, there exists a constant $c > 0$ such that
\begin{equation}
\label{eq:InclusionKeyCase1}
\P_{0, u, b}\bigl(\abs{\D^{\lambda}_{0, j}(\Upsilon_{\!b, \frkg})}^{-1} \geq \eta\bigr) \leq \P_{0, u, b} \bigl(\rmS_{e^j} > c_{\lambda} \eta^{2/d}\bigr) \lesssim e^{dj} e^{-c \eta^{4/d}} \;.
\end{equation}
which, in the regime $\smash{\log r_\eta \geq b/2}$, is trivially bounded by a constant times $(u/b) \eta^{-2}$.

\textbf{Case 2:} We now consider the following case: 
\begin{equation*}
\f{j}{2} \leq \log r_\eta < \f{b}{2} \;.
\end{equation*}
Proceeding, as above, thanks to Lemma~\ref{lm:techPsiReal}, it suffices to bound the following probability 
\begin{equation*}
\P_{0, u, b}\bigl(\rmS_{e^j} > c_{\lambda} \eta^{2/d},\; \M_{0, b}(\Upsilon_{\!b, \frkg}) \leq \lambda\bigr)\;. 
\end{equation*} 
Recalling Definition~\ref{def:ControlVar} of the control variable $\rmK_b$, on the event ${\rmK_b \geq \log r_{\eta}}$, for $b \geq 0$ sufficiently large, Lemma~\ref{lm:upperBoundKk} provides the following bound
\begin{equation*}
\P_{0, u, b}\bigl(\M_{0, b}(\Upsilon_{\! b, \frkg}) \leq \lambda, \;\rmK_b\geq \log r_\eta\bigr) \lesssim \f{u}{b} e^{-c (\log \eta)^2} \;,
\end{equation*}
for some constants $c >0$.
Hence, we can further restrict ourselves to the event $\{\rmK_b< \log r_{\eta}\}$. We recall that on this event, for each $l \in [b-1]_0$, it holds that $\Osc_l(B) \leq \Theta_{\log r_\eta}(l)$. In particular, this implies that
\begin{equation*}
\sup_{i,k \in [d], \, x \in \B_b} \Biggl\lvert\int_0^b e^{-2 s} \frkK_{i,k}(e^{-s} x) dB_{s}\Biggr\rvert \lesssim \sum_{l = 0}^{b-1} e^{-2l}  \Osc_l(B) \lesssim (\log \eta)^2\;.
\end{equation*}
Hence, noting that the deterministic term appearing in $\partial^2_{i,k} \Upsilon_{\! b, \frkg}$ is bounded by a constant independent of $b$, it holds that
\begin{equation*}
\bigl\{\rmS_{e^j} > c \eta^{2/d},\; \rmK_b< \log r_\eta\bigr\} 
\subseteq \Biggl\{\sup_{i,k \in [d], \, x \in \B_j} \abs{\partial^2_{i, k} \rmZ_{b}(x) + \partial^2_{i, k} \frkg_{b}(x)} \gtrsim \eta^{2/d}\Biggr\} \eqdef \rmE_0 \;.
\end{equation*}

Now, we observe that, for $b \geq 0$ sufficiently large, thanks again to Lemma~\ref{lm:upperBoundKk}, it holds that 
\begin{equation}
\label{e:PA0FirstB}
\P_{0, u, b}\bigl(\rmK_b < \log r_{\eta},\; \M_{0, b}(\Upsilon_{\!b, \frkg}) \leq \lambda,\, \rmE_0\bigr) \lesssim \frac{u}{b} \sqrt{\P(\rmE_0)} \;.	
\end{equation}
By Lemma~\ref{lm:techPsiReal} and \ref{as:GG3}, there exist constants $c_1$, $c_2 > 0$ such that 
\begin{equation*}
\P(\rmE_0) \lesssim e^{dj - c_1 \eta^{4/d}} \leq e^{- c_2 \eta^{4/d}} \;,
\end{equation*}
where the last inequality follows thanks to the fact that $j/2 \leq \log r_{\eta}$. Combining the above bound with \eqref{e:PA0FirstB} yields the claim in this case.

\textbf{Case 3:} Finally, we consider the following case: 
\begin{equation}
\label{e:constrRlambda}
0 \leq \log r_\eta < \f{j}{2}\;.
\end{equation}
Arguing as above, and using Lemma~\ref{lm:upperBoundKk}, by taking $b \geq 0$ sufficiently large, we can restrict ourselves to the event $\{\rmK_b < \log r_{\eta}\}$.
We observe that, by \eqref{e:constrRlambda} and Lemma~\ref{lm:approxBrownianBridge},
\begin{equation}
\label{e:implicationMax}
\bigl\{\M_{0, b}(\Upsilon_{\! b, \frkg}) \leq \lambda, \; \rmK_b< \log r_\eta\bigr\} \subseteq \Biggl\{\bigcap_{j =1}^{b-1} \bigl\{B_{j} \geq - \lambda - 2\rmR_{\rmK_b}(j)\bigr\}, \; \rmK_b< \log r_\eta\Biggr\}\;.
\end{equation}
Now, recalling \eqref{eq:defSrNearLevel}, we show that for suitable values of $r \in (0,r_\eta]$ and $M \ge (\log \eta)^2$, we can restrict to the event $\{\rmS_r \leq M\}$. Indeed, arguing exactly as in the previous case, we have that 
\begin{equation*}
\bigl\{\rmS_r > M,\; \rmK_b< \log r_\eta\bigr\} 
\subseteq \Biggl\{\sup_{i,k \in [d], \, x \in B(0, r)} \abs{\partial^2_{i, k} \rmZ_{b}(x) + \partial^2_{i, k} \frkg_{b}(x)} \gtrsim M\Biggr\} \eqdef \rmE_1 \;.
\end{equation*}
By Lemma~\ref{lm:techPsiReal} and \ref{as:GG3}, there exists a constant $a > 0$ such that 
\begin{equation}
\label{e:PA0}
\P\bigl(\rmE_1\bigr) \lesssim r^d e^{-a M^2}\;.
\end{equation}
By combining this with \eqref{eq:Inc1Fin} and \eqref{eq:tech1FinAsy} of Lemma~\ref{lm:upperBoundKk}, we find that
\begin{equation*}
\P_{0, u, b}\bigl(\M_{0, b}(\Upsilon_{\! b, \frkg}) \leq \lambda, \; \rmK_b< \log r_\eta,\;  \rmS_r > M\bigr) \lesssim \frac{u}{b} \sqrt{\P(\rmE_1)} \;. \label{eq:case22TailsNear}
\end{equation*}
Hence, recalling \eqref{e:PA0}, this implies that we can impose finitely many conditions of the type
$\{\rmS_r \le M\}$ provided that the parameters $r$ and $M$ are such that $\smash{r^{d} e^{-a M^2} \lesssim \eta^{-\beta}}$ for some $\beta > 2$.
In our particular case, we can use this to impose
\begin{equation}
\label{e:conditionS}
\bigl\{\rmS_{r_\eta} \le c_{\lambda} \eta^{\frac2d}\bigr\} \qquad \text{ and } \qquad \bigl\{\rmS_1 \le (\log \eta)^4\bigr\}\;,
\end{equation}
where $c_{\lambda} > 0$ is the same constant introduced in Remark~\ref{rem:implicationSmallD}. Therefore, it remains to show the bound
in the statement for the event
\begin{equation*}
\rmE_2 \eqdef \Bigl\{\abs{\D^{\lambda}_{0, j}(\Upsilon_{\!b, \frkg})}^{-1} \geq \eta,\; \M_{0, b}(\Upsilon_{\!b, \frkg}) \leq \lambda, \; \rmK_b< \log r_\eta, \;  \rmS_{r_\eta} \le c\eta^{\frac2d}, \; S_1 \le (\log \eta)^4\Bigr\}\;.
\end{equation*}
We note now that the first condition in \eqref{e:conditionS} implies that if the maximum is achieved within the ball of radius $r_\eta$, then there is a ball of area at least $\eta^{-1}$ around that maximum where $\smash{\Upsilon_{\!b, \frkg}}$ is greater than its maximal value minus $\lambda$. This however cannot happen if $\smash{\abs{\D^{\lambda}_{0, j}(\Upsilon_{\!b, \frkg})}^{-1} \geq \eta}$. In particular, since $\Upsilon_{\!b, \frkg}(0) = 0$, we have shown that  on the event $\rmE_2$ there exists $x \in \R^d$ with $|x| \ge r_\eta$ such  that $\smash{\Upsilon_{\!b, \frkg}(x) \geq 0}$. Since we are on the event $\smash{\{\rmK_b< \log r_\eta\}}$, thanks to Lemma~\ref{lm:approxBrownianBridge}, there must be some $\smash{l \geq \log r_\eta}$ such that $\smash{B_l \leq \lambda + 2\rmR_{\log r_\eta}(l)}$.

Similarly, the condition $\rmS_1 \leq (\log \eta)^4$ implies that,
\begin{equation*}
\rmE_2 \subseteq \Biggl\{\sup_{i \in [d]} \abs{\partial_i \Upsilon_{\! b, \frkg}(0)} \lesssim \eta^{-\f1d}(\log\eta)^4\Biggr\}\;.
\end{equation*}
Otherwise, one could find a point $z \in \R^d$ at distance of order $\eta^{-1/d}$ from the origin
such that $\Upsilon_{\! b, \frkg}(z) \gtrsim \eta^{-2/d} (\log\eta)^4$, which implies that the values of $\Upsilon_{\! b, \frkg}$ inside a ball of radius $\eta^{-1/d}$ around $z$ are all positive. But since we are on the event $\{\M_{0, b}(\Upsilon_{\!b, \frkg}) \leq \lambda\}$, these points are contained in $\D^{\lambda}_{0, j}(\Upsilon_{\!b, \frkg})$, leading again to a contradiction.

Now, we note that, thanks to \ref{hp_K1}, it holds that the gradient of $\frkK$ at the origin is zero, and so $\d_i \Upsilon_{\! b, \frkg}(0)$ is independent of $B$. Combining these considerations with \eqref{e:implicationMax}, it follows that
\begin{equation}
\label{e:secondlastBound}
\begin{alignedat}{1}
\P_{0, u, b} (\rmE_2) \leq & \P_{0, u, b}\Biggl(\sup_{i \in [d]} \abs{\partial_i \Upsilon_{\! b, \frkg}(0)} \lesssim \eta^{-\f1d}(\log\eta)^4\Biggr) \\
& \cdot \P_{0, u, b}\Biggl(\bigcap_{l = 1}^{b-1} \bigl\{B_{l} \geq - \lambda - 2\rmR_{\log r_\eta}(l) \bigr\},\; \bigcup_{l = \log r_\eta}^{b-1} \bigl\{B_l \leq \lambda + 2\rmR_{\log r_\eta}(l)\bigr\}\Biggr) \;.
\end{alignedat}
\end{equation}

We can now apply Lemma~\ref{lm:mainEntrRW} to bound the second probability, showing that for $j_0 \geq 0$ sufficiently large, $b \geq j_0$ sufficiently large, and for any $j \in [j_0, b]$, it holds that 
\begin{equation*}
\P_{0, u, b}\bigl(\rmE_2\bigr) \lesssim \f{u}{b} \eta^{-\f{1}{16 d}} \P_{0, u, b}\Biggl(\sup_{i \in [d]} \abs{\partial_i \Upsilon_{\! b, \frkg}(0)} \lesssim \eta^{-\f1d}(\log\eta)^4\Biggr) \;.\label{e:lastBound}
\end{equation*}
In order to bound the remaining factor, we note that $\d_i \Upsilon_{\! b, \frkg}(0) = \d_i \rmZ_{b}(0) + \d_i \frkg_{b}(0)$ and that, by \eqref{e:covZb},
\begin{equation}
\label{e:covGradField}
\E \bigl[\d_i \rmZ_{b}(0)\d_j \rmZ_{b}(0)\bigr]
= -\d^2_{ij} \frkK(0) \int_0^b e^{-2s}\,ds\;.
\end{equation}
Since $(\d_i \rmZ_{b}(0))_{i \in [d]}$ are jointly Gaussian, it follows from Remark~\ref{rem:strict}
and \eqref{e:covGradField} that 
\begin{equation}
\label{e:covGradFieldFinal}
\P_{0, u, b}\Biggl(\sup_{i \in [d]} \abs{\partial_i \Upsilon_{\! b, \frkg}(0)} \leq \eps\Biggr)
\lesssim \eps^d\;,\qquad \forall \, \eps \in (0,1]\;,
\end{equation}
uniformly over $b \geq 0$ and $u \in [b^{1/4}, b^{3/4}]$. Therefore, we conclude that \eqref{e:lastBound} is bounded by some constant times $(u/b) \eta^{-(1 + \delta)}$ for any $\delta \in (0,  1/(16d))$.

To conclude, we note that \eqref{eq:boundExpuOrdinary} follows immediately from \eqref{eq:boundProbuOrdinary}.
\end{proof}

We also need the following version of Lemma~\ref{lm:tailsDgUpsilon}, where the end point of the Brownian bridge is taken to be less than $b^{1/4}$.
\begin{lemma}
\label{lm:tailsDgUpsilonSmallU}
Let $A > 0$ be fixed. There exists $\delta = \delta(d) \in (0, 1)$ such that for any $b \geq 0$ sufficiently large, it holds for any $\lambda > 0$, $u \in [-A, b^{1/4}]$, and $\eta \geq 0$,
\begin{equation}
\label{eq:boundProbSmallu}
\P_{0, u, b}\Bigl(\abs{\D^{\lambda}_{0, b}(\Upsilon_{\!b, \frkg})}^{-1} \geq \eta,\; \M_{0, b}(\Upsilon_{\!b, \frkg}) \leq \lambda\Bigr) \lesssim \f{1}{\sqrt{b}} \eta^{-(1+\delta)} \;.
\end{equation}
Moreover, for all $\sigma \in [0, \delta)$, it holds that 
\begin{equation}
\label{eq:boundExpSmallu}
\E_{0, u, b}\Bigl[\abs{\D^{\lambda}_{0, b}(\Upsilon_{\!b, \frkg})}^{-(1 + \sigma)} \one_{\{\M_{0, b}(\Upsilon_{\! b, \frkg}) \leq \lambda\}}\Bigr]  \lesssim \f{1}{\sqrt{b}} \;.
\end{equation}
\end{lemma}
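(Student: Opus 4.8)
The plan is to run, essentially verbatim, the three-case argument from the proof of Lemma~\ref{lm:tailsDgUpsilon}, with two systematic substitutions: each factor $u/b$ is replaced by $1/\sqrt{b}$, and each appeal to Lemma~\ref{lm:upperBoundKk} is replaced by its small-endpoint counterpart Lemma~\ref{lm:upperBoundKkSmallU}. Since only the ball $\B_b$ now enters (rather than $\B_j$ for an arbitrary $j \le b$), the intermediate case $j/2 \le \log r_\eta < b/2$ of that proof is vacuous, so only two cases survive; throughout, $r_\eta = \lfloor\exp(\eta^{1/d})\rfloor$ as before.

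In the regime $\log r_\eta \ge b/2$, the implication of Remark~\ref{rem:implicationSmallD} together with Lemma~\ref{lm:techPsiReal} (which is already stated for all $u > -A$) gives $\P_{0,u,b}(\abs{\D^{\lambda}_{0,b}(\Upsilon_{\! b, \frkg})}^{-1} \ge \eta) \lesssim e^{db}e^{-c\eta^{4/d}}$; since $\eta^{1/d} \approx \log r_\eta \ge b/2$ the Gaussian factor $e^{-c\eta^{4/d}}$ dominates $e^{db}$ by a wide margin, and the resulting bound is trivially below $\frac{1}{\sqrt b}\eta^{-(1+\delta)}$. Note that the target here is in fact weaker than in Lemma~\ref{lm:tailsDgUpsilon}, since $1/\sqrt b \ge u/b$ whenever $u \le \sqrt b$.

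The substantive regime is $0 \le \log r_\eta < b/2$, where one proceeds exactly as in Case~3 of the proof of Lemma~\ref{lm:tailsDgUpsilon}: via Lemma~\ref{lm:upperBoundKkSmallU} one restricts to $\{\rmK_b < \log r_\eta\}$; via Lemma~\ref{lm:techPsiReal} and Lemma~\ref{lm:upperBoundKkSmallU} one imposes $\rmS_{r_\eta} \le c_\lambda \eta^{2/d}$ and $\rmS_1 \le (\log\eta)^4$; and then the purely field-theoretic part of the argument, which does not involve the endpoint of the bridge at all, forces on the remaining event the existence of $x$ with $\abs{x} \ge r_\eta$ and $\Upsilon_{\! b, \frkg}(x) \ge 0$ (hence, by Lemma~\ref{lm:approxBrownianBridge}, some $l \ge \log r_\eta$ with $B_l \le \lambda + 2\rmR_{\log r_\eta}(l)$), as well as $\sup_{i \in [d]}\abs{\partial_i \Upsilon_{\! b, \frkg}(0)} \lesssim \eta^{-1/d}(\log\eta)^4$. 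Factoring as in \eqref{e:secondlastBound}, and using that $\partial_i \Upsilon_{\! b, \frkg}(0) = \partial_i \rmZ_b(0) + \partial_i \frkg_b(0)$ is independent of $B$ (since $\nabla\frkK(0) = 0$), one reduces to a product of two probabilities: the first is $\lesssim \eps^d$ uniformly in $u \in [-A, b^{1/4}]$ by \eqref{e:covGradFieldFinal}, which holds for any endpoint since the gradient at the origin does not see $B$; the second is an entropic-repulsion probability $\P_{0,u,b}(\bigcap_{l=1}^{b-1}\{B_l \ge -\lambda - 2\rmR_{\log r_\eta}(l)\},\, \bigcup_{l=\log r_\eta}^{b-1}\{B_l \le \lambda + 2\rmR_{\log r_\eta}(l)\})$.

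The only point requiring attention is the small-endpoint analogue of Lemma~\ref{lm:mainEntrRW}, namely that this last probability is $\lesssim \frac{1}{\sqrt b}(\log r_\eta)^{-1/16}$ uniformly over $u \in [-A, b^{1/4}]$. This follows by the same monotonicity argument used in the proof of Lemma~\ref{lm:upperBoundKkSmallU}: replacing the endpoint $u$ by $u + b^\iota$ for some $\iota \in (0, 1/8)$ brings us into the range where the Appendix estimates (Proposition~\ref{pr:BoundBBAboveNeg} and Lemma~\ref{lm:mainEntrRW}) apply, at the cost of a factor $(u + b^\iota)/b \lesssim 1/\sqrt b$. Collecting the pieces gives \eqref{eq:boundProbSmallu} for any $\delta \in (0, 1/(16d))$, and \eqref{eq:boundExpSmallu} then follows by integrating the tail against $\eta^\sigma\,d\eta$ for $\sigma < \delta$, exactly as in Lemma~\ref{lm:tailsDgUpsilon}. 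I expect no genuine obstacle beyond this bookkeeping: the delicate Gaussian-field estimates (Lemmas~\ref{lm:techPsi} and~\ref{lm:techPsiReal}, and the bound \eqref{e:covGradFieldFinal}) are reused without change, and the regime-specific inputs are either already available (Lemma~\ref{lm:upperBoundKkSmallU}) or obtained by the above monotonicity trick.
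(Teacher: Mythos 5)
Your overall plan — run the proof of Lemma~\ref{lm:tailsDgUpsilon} with $u/b$ replaced by $1/\sqrt b$ and Lemma~\ref{lm:upperBoundKk} replaced by Lemma~\ref{lm:upperBoundKkSmallU}, and observe that the intermediate case collapses when $j = b$ — matches the paper's outline exactly, as does the reuse of Lemmas~\ref{lm:techPsi},~\ref{lm:techPsiReal} and \eqref{e:covGradFieldFinal} and the observation that $\nabla\Upsilon_{\!b,\frkg}(0)$ is independent of the bridge endpoint.

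The one step where you diverge from the paper is also the one place where your argument has a gap. You propose to establish a small-endpoint analogue of the entropic repulsion estimate Lemma~\ref{lm:mainEntrRW} by the monotonicity trick used in Lemma~\ref{lm:upperBoundKkSmallU} (replace $u$ by $u + b^{\iota}$). That trick works there because the event is just $\{B_j \ge -\zeta(j)\ \forall j\}$, which is monotone increasing in the endpoint under the coupling $B^{(u)}_s = B^{(0)}_s + su/b$. But the entropic repulsion probability is the probability of an \emph{intersection} of this event with $\bigcup_{j\ge k}\{B_j \le \zeta(j)\}$, and the latter is monotone \emph{decreasing} in the endpoint under the same coupling; the probability of the intersection is therefore not monotone in $u$, and raising the endpoint does not manifestly give an upper bound. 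So as stated you cannot conclude the bound $\lesssim \tfrac{1}{\sqrt b}(\log r_\eta)^{-1/16}$.

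The paper circumvents this cleanly and more economically: for $\rmE_2$ it discards the union event altogether, bounds just $\P_{0,u,b}(\bigcap_{j=1}^{b-1}\{B_j \ge -\lambda - 2\rmR_{\log r_\eta}(j)\})$ using \eqref{eq:boundsBBstaysPos} and $u \le b^{1/4}$ to get $\lesssim b^{-5/8}$, and then uses the regime constraint $\log r_\eta < b/2$ (equivalently $\eta \lesssim e^{d b/2}$) to convert the extra $b^{-1/8}$ into $\eta^{-1/(8d)}$. The point is that since the target already contains a $1/\sqrt b$ rather than $u/b$, one no longer needs the entropic repulsion gain in $k^{-1/16}$ to close the argument; the polynomial decay in $\eta$ comes for free from the case constraint. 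If you wish to salvage your route, you would need to prove a genuine small-endpoint version of Lemma~\ref{lm:mainEntrRW} by reworking the Appendix estimates for $u$ near $0$ or negative, which is more work than the problem requires.
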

\begin{proof}
We do not provide a detailed proof of this lemma, as it closely follows that of Lemma~\ref{lm:tailsDgUpsilon}. Instead, we outline the necessary modifications to that proof.

First, in the regime $\smash{\log r_\eta \geq b/2}$, we can proceed exactly as in the proof of Lemma~\ref{lm:tailsDgUpsilon}. Regarding the regime $\smash{\log r_\eta < b/2}$, it suffices to use Lemma~\ref{lm:upperBoundKkSmallU} in place of Lemma~\ref{lm:upperBoundKk} to restrict to the event $\{\rmK_b < \log r_{\eta}\}$. Similarly, we obtain that the probability on the left-hand side of \eqref{eq:case22TailsNear} is bounded above by a multiple of $b^{-1/2} \sqrt{\P(\rmE_1)}$, when $b \geq 0$ is large enough.

Hence, it remains to bound $\P_{0, u, b}(\rmE_2)$ appearing on the left-hand side of the first line in \eqref{e:secondlastBound}. 
To this end, arguing as in the proof of \cite[Lemma~4.16]{BiskupLouidor} and using \eqref{eq:boundsBBstaysPos}, we observe that the probability on the right-hand side of the second line in \eqref{e:secondlastBound} can be bounded from above by
\begin{equation*}
\P_{0, u, b}\Biggl(\bigcap_{j = 1}^{b-1} \bigl\{B_{j} \geq - \lambda - 2\rmR_{\log r_\eta}(j) \bigr\}\Biggr)
\lesssim \P_{0, u, b}\Biggl(\inf_{s \in [0, b]} B_s \gtrsim -(\log b)^2\Biggr) 
\lesssim b^{-5/8} \;,
\end{equation*}
where, to derive the last inequality, we used the fact that $u \leq b^{1/4}$. Finally, since we are in the regime $\log r_\eta < b/2$, it follows that $b^{-5/8}$ is at most a multiple of $b^{-1/2} \eta^{-1/(8d)}$. Combining this with the bound in \eqref{e:covGradFieldFinal} completes the proof of \eqref{eq:boundProbSmallu}. To conclude, we note that \eqref{eq:boundExpSmallu} follows immediately from \eqref{eq:boundProbSmallu}.
\end{proof}

Finally, we also have the following result. 
\begin{lemma}
\label{lm:tailsDgUpsilonLargeU}
For $\lambda > 0$ and for $b > 0$ sufficiently large, it holds that 
\begin{equation*}
\label{eq:boundExpLargeu}
\E_{0, u, b}\bigl[\abs{\D^{\lambda}_{0, b}(\Upsilon_{\!b, \frkg})}^{-1}\bigr] \lesssim 
\begin{cases}
	b^{d^2}\;, \quad & \text{ if } u \in [b^{3/4}, b^{2d}] \;, \\
	(u/b)^{2d} \;, \quad & \text{ if } u > b^{2d} \;.
\end{cases}
\end{equation*}
\end{lemma}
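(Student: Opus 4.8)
**Proof plan for Lemma 3.36 (the statement with the two-regime bound on $\E_{0,u,b}[|\D^\lambda_{0,b}(\Upsilon_{b,\frkg})|^{-1}]$).**

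The plan is to reduce to the already-proved tail bound of Lemma~\ref{lm:tailsDgUpsilon} (equivalently its small-$u$ version Lemma~\ref{lm:tailsDgUpsilonSmallU}), which is valid for $u \in [b^{1/4}, b^{3/4}]$, by a change-of-measure argument that transfers a Brownian bridge with a large endpoint $u$ to one with a moderate endpoint. First I would recall that by the decomposition $B_s = \frac{s}{b}u + (B_s - \frac{s}{b}u)$, under $\P_{0,u,b}$ one has $(B_s)_{s\in[0,b]} \eqlaw (\tfrac{s}{b}u + \hat B_s)_{s\in[0,b]}$ where $(\hat B_s)$ is a standard Brownian bridge from $0$ to $0$. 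Consequently, recalling the representation \eqref{eq:Phi}--\eqref{eq:defUpsilonBBeg} of $\Upsilon_{b,\frkg}$ through the stochastic integral against $dB$, adding $\tfrac{s}{b}u$ to the driving path shifts $\Upsilon_{b,\frkg}$ by the deterministic function $x \mapsto -\tfrac{u}{b}\int_0^b (1-\frkK(e^{-s}x))\,ds = -\tfrac{u}{b}\frka_b(x)$. On the ball $\B_b$ the function $\frka_b$ is comparable to $|x|$-distance on a log scale: by the definition \eqref{eq:frkgb} and \ref{hp_K2}, $\frka_b(x) \le b$ for $x \in \B_b$ (since $0 \le 1-\frkK \le 2$ and the integrand vanishes once $e^{-s}|x|\ge 1$, i.e.\ $s \le \log|x|$), and more precisely $\frka_b(x)$ grows like $\log|x|$. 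Thus the deterministic shift only lowers the field, so it can only enlarge $\D^\lambda$ of the un-shifted version; but the direction we need (an \emph{upper} bound on $\E[|\D^\lambda|^{-1}]$) requires controlling how much the near-maximal level set can \emph{shrink}.

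The key step is therefore the following comparison. Write $V = |\D^\lambda_{0,b}(\Upsilon_{b,\frkg})|$. I would bound $\E_{0,u,b}[V^{-1}]$ by splitting according to the location of the argmax of $\Upsilon_{b,\frkg}$ inside $\B_b$. Because the deterministic drift $-\tfrac{u}{b}\frka_b$ has gradient bounded by $\tfrac{u}{b}\cdot \tfrac{c}{|x|}$ near a point at distance $e^r$ from the origin (differentiating \eqref{eq:frkgb}), the Hessian of $\Upsilon_{b,\frkg}$ still satisfies the quantitative non-degeneracy estimate of Lemma~\ref{lm:techPsiReal} but now with the extra factor $c\tfrac{u}{b}\eta^{2/d}$ in the exponent — exactly the term already present in \eqref{e:betterBound}. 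Tracking this through the three-case analysis of the proof of Lemma~\ref{lm:tailsDgUpsilon}: in Case~1 ($\log r_\eta \ge b/2$) nothing changes; in Cases~2 and~3, whenever one needs $u \in [b^{1/4},b^{3/4}]$ to invoke Lemma~\ref{lm:upperBoundKk} or the Brownian-bridge staying-positive estimates, one instead uses monotonicity to replace the endpoint $u$ by $b^{3/4}$ (as in the proof of Lemma~\ref{lm:upperBoundKkSmallU}, but in the opposite direction), picking up a multiplicative factor of order $u/b^{3/4}$, i.e.\ $(u/b)\cdot (b^{1/4})$ or worse. Carrying out the integration $\E_{0,u,b}[V^{-1}] = \int_0^\infty \P_{0,u,b}(V^{-1}\ge\eta)\,d\eta$ and using the resulting tail of the form $\lesssim \tfrac{u}{b}\,b^{c}\,(1\wedge\eta^{-(1+\delta)})$, I would obtain the bound $\lesssim b^{d^2}$ in the regime $u \in [b^{3/4}, b^{2d}]$ (absorbing the polynomial-in-$b$ losses into $b^{d^2}$, which is a generous exponent chosen precisely so that no sharp bookkeeping is needed) and $\lesssim (u/b)^{2d}$ once $u > b^{2d}$, where now the drift $-\tfrac{u}{b}\frka_b$ dominates: for such large $u$, the near-maximal set $\D^\lambda$ is controlled directly, since the gradient bound \eqref{e:covGradFieldFinal} at the origin together with $\partial_i \Upsilon_{b,\frkg}(0) = \partial_i \rmZ_b(0) + \partial_i\frkg_b(0)$ (the drift contributes nothing to the gradient at $0$ because $\nabla\frkK(0)=0$ by \ref{hp_K1}) gives $\P_{0,u,b}(\sup_i|\partial_i\Upsilon_{b,\frkg}(0)|\le\eps)\lesssim\eps^d$ uniformly, and the Hessian estimate \eqref{e:betterBound} with its $\tfrac{u}{b}\eta^{2/d}$ term forces $V^{-1}$ to have tails decaying like a power of $b/u$; a crude computation then yields the stated $(u/b)^{2d}$.

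The main obstacle I anticipate is making the change-of-measure/monotonicity argument rigorous while still getting a clean polynomial bound: the deterministic shift $-\tfrac{u}{b}\frka_b$ is not uniformly small over $\B_b$ (it reaches size $\sim \tfrac{u}{b}b$, which for $u \sim b^{2d}$ is of order $b^{2d}$), so one cannot simply say the two fields are uniformly close. The right way around this is to observe that $\D^\lambda$ only sees the field near its own maximum, and the maximum of $\Upsilon_{b,\frkg}$ under a large downward radial drift is overwhelmingly likely to sit near the origin (where the drift and its gradient vanish) — this should follow from Lemma~\ref{lm:approxBrownianBridge}, which says the annular suprema track $-B_j = -\tfrac{j}{b}u - (\text{bridge})$, a sharply decreasing function of $j$ when $u$ is large, so the argmax is in $\A_0$ with probability $\to 1$. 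On that event the local structure near the max is governed by the field restricted to $\B_{k}$ for fixed $k$, where the drift correction $\tfrac{u}{b}(\frka_b(x)-\frka_b(0)) = \tfrac{u}{b}\frka_b(x)$ is $O(\tfrac{u}{b})$ times a fixed smooth bump — and it is precisely this $\tfrac{u}{b}$-sized perturbation that one feeds into the Hessian bound \eqref{e:betterBound}. Estimating the small-probability complementary event (argmax far from origin) contributes a term controlled again by Lemma~\ref{lm:approxBrownianBridge} and the staying-positive estimates, and is where one must be a little careful that it is still dominated by the main term $(u/b)^{2d}$; but since that event is super-polynomially rare relative to the $u/b$-type base rate, this is routine.
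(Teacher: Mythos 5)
Your plan significantly over-complicates this lemma, and one of its auxiliary steps (the ``replace $u$ by $b^{3/4}$ using monotonicity'' idea) actually goes the wrong way.

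The key observation, which your plan circles around but does not exploit cleanly, is that the expectation in Lemma~\ref{lm:tailsDgUpsilonLargeU} is \emph{not} restricted to the event $\{\M_{0,b}(\Upsilon_{\!b, \frkg})\le\lambda\}$, and the target bounds $b^{d^2}$ and $(u/b)^{2d}$ are deliberately generous polynomials rather than bounds with a $u/b$ prefactor. Consequently you need neither the Brownian-bridge entropic-repulsion estimates (Lemma~\ref{lm:upperBoundKk}), nor the three-case analysis of Lemma~\ref{lm:tailsDgUpsilon}, nor any decomposition of the bridge into drift plus zero-bridge. The paper's proof is two lines per case: combine the implication from Remark~\ref{rem:implicationSmallD} (which holds with no event restriction),
\begin{equation*}
\P_{0,u,b}\bigl(\abs{\D^\lambda_{0,b}(\Upsilon_{\!b,\frkg})}^{-1}\geq\eta\bigr)\leq\P_{0,u,b}\bigl(\rmS_{e^b}>c_\lambda\eta^{2/d}\bigr)\;,
\end{equation*}
with the explicit tail of Lemma~\ref{lm:techPsiReal}, $\P_{0,u,b}(\rmS_{e^b}>\eta^{2/d})\lesssim 1\wedge e^{db+c_1(u/b)\eta^{2/d}-c_2\eta^{4/d}}$, and integrate in $\eta$. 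For $u\in[b^{3/4},b^{2d}]$ cut the integral at $\eta=b^{d^2}$: beyond that point $\eta^{2/d}\geq b^{2d}\geq b\cdot(u/b)$, so $c_2\eta^{4/d}$ dominates $c_1(u/b)\eta^{2/d}$ and the Gaussian tail swallows the $e^{db}$ prefactor. For $u>b^{2d}$ cut at $\eta=(u/b)^{2d}$ and argue identically. No change of measure, no localisation of the argmax, no event constraint.

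Two concrete problems with your plan. First, the decomposition $B_s=\tfrac{s}{b}u+\hat B_s$ and the resulting shift $-(u/b)\frka_b$ of the field are correct observations, but they duplicate what Lemma~\ref{lm:techPsiReal} already encodes: the factor $e^{c_1(u/b)\eta^{2/d}}$ in \eqref{e:betterBound} is exactly the effect of that drift on the Hessian distribution, and the lemma was stated for all $u>-A$ with the $u$-dependence made explicit precisely so that no manual reduction to a zero-drift bridge is needed. Second, and more seriously, the ``monotonicity to replace the endpoint $u$ by $b^{3/4}$'' step fails: the probability that the bridge stays above a given curve is \emph{increasing} in the endpoint, so replacing $u$ by the smaller value $b^{3/4}$ yields a \emph{lower} bound on such probabilities, not the upper bound you need. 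In the proof of Lemma~\ref{lm:upperBoundKkSmallU} the endpoint is replaced by the \emph{larger} value $u+b^\iota$ for exactly this reason, and there is no ``opposite direction'' that preserves the upper-bound inequality. This step of your plan is unrecoverable as written, though fortunately the entire detour through Lemma~\ref{lm:tailsDgUpsilon}'s proof is unnecessary.
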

\begin{proof}
We begin by recalling that, for $r > 0$, the quantity $\rmS_r$ is defined in \eqref{eq:defSrNearLevel}. Furthermore, we also recall that, thanks to Remark~\ref{rem:implicationSmallD}, for all $\eta > 0$, it holds that
\begin{equation*}
\P_{0, u, b}(\abs{\D^{\lambda}_{0, b}(\Upsilon_{\!b, \frkg})}^{-1} \geq \eta) \leq \P_{0, u, b} \bigl(\rmS_{e^b} > c_{\lambda} \eta^{2/d}\bigr) \;,
\end{equation*}
for some constant $c_{\lambda} > 0$ depending only on $\lambda$. We divide the proof into two disjoint cases.

\textbf{Case 1:} We begin by considering the case $u \in [b^{3/4}, b^{2d}]$. In this case, using Lemma~\ref{lm:techPsiReal}, there exist constants $c_1$, $c_2$, $c_3 > 0$ such that, for any $b > 0$ sufficiently large,
\begin{align*}
\E_{0, u, b}\bigl[\abs{\D^{\lambda}_{0, b}(\Upsilon_{\!b, \frkg})}^{-1}\bigr] 
& \leq b^{d^2} + \int_{b^{d^2}}^{\infty} \P_{0, u, b} \bigl(\rmS_{e^b} > c_{\lambda} \eta^{2/d}\bigr) d\eta \\
& \lesssim b^{d^2} + e^{d b}\int_{b^{d^2}}^{\infty} e^{c_1\frac{u}{b} \eta^{2/d} - c_2 \eta^{4/d}} d\eta \\
& \lesssim b^{d^2} + e^{d b}\int_{b^{d^2}}^{\infty} e^{-c_3 \eta^{4/d}} d\eta \lesssim b^{d^2} \;.
\end{align*}

\textbf{Case 2:} We now focus on the case $u > b^{2d}$. 
Proceeding similarly to the previous case, we obtain that there exist constants $c_1$, $c_2$, $c_3 > 0$ such that, for any $b > 0$ sufficiently large,
\begin{align*}
\E_{0, u, b}\bigl[\abs{\D^{\lambda}_{0, b}(\Upsilon_{\!b, \frkg})}^{-1}\bigr] 
& \lesssim (u/b)^{2d} + e^{db} \int_{(u/b)^{2d}}^{\infty} e^{c_1 \frac{u}{b} \eta^{2/d} - c_2 \eta^{4/d}} d\eta \\
& \lesssim (u/b)^{2d} + e^{db} \int_{(u/b)^{2d}}^{\infty} e^{-c_3 \eta^{4/d}} d\eta \lesssim (u/b)^{2d} \;,
\end{align*}
which completes the proof.
\end{proof}

%%%%%%%%%%%%%%%%%%%%%%%%%%%%%%%%%%%%%%%%%%%%%%
%%%%%%%%%%%%%%%%%%%%%%%%%%%%%%%%%%%%%%%%%%%%%%
\section{Resampling property and independence from the threshold}
\label{sec:resampling}
The main goal of this section is to present some results concerning the fields $\tilde \Upsilon_{\! \lambda}$ and $\Psi_{\lambda}$ introduced in Theorem~\ref{th:cluster} and Definition~\ref{def:recoverPhi}, respectively. In particular, in Section~\ref{sub:Uncountable}, we prove some key technical results related to the field $\tilde \Upsilon_{\! \lambda}$.
Then, in Section~\ref{sub:resamplingProp}, we establish the ``resampling property'' of the field $\tilde{\Upsilon}_{\! \lambda}$, as stated in Proposition~\ref{pr:resempPropUps}, from which Proposition~\ref{pr:inversionPsi} follows directly.
Finally, in Section~\ref{sub:indthreshold}, we establish the independence of certain quantities of interest from $\lambda$. In particular, we prove Proposition~\ref{pr:PsiIndepLambda}.

%%%%%%%%%%%%%%%%%%%%%%%%%%%%%%%%%%%%%%%%%%%%%%
\subsection{Some technical results}
\label{sub:Uncountable}
The main goal of this section is to establish a result concerning the limiting ``shape field'' $\tilde \Upsilon_{\! \lambda}$ introduced in Theorem~\ref{th:cluster}, which will play a crucial role in the proof of some of our main theorems. Specifically, the main goal of this section is to prove the following lemma.
\begin{lemma}
\label{lm:Countable}
There exists a countable set $\Lambda^c \subseteq \R^{+}$ such that for each $\lambda$, $\theta \in \Lambda = \R^{+} \setminus \Lambda^c$ it holds that 
\begin{equation}
\label{eq:Countable1}
\P\bigl(\M(\tilde \Upsilon_{\! \lambda}) = \theta\bigr) = 0 \;.
\end{equation}
Moreover, for all $\lambda$, $\theta \in \Lambda$, it holds that 
\begin{equation}
\label{eq:Countable2}
\P\bigl(\abs{\{x \in \R^d \, : \, \tilde \Upsilon_{\! \lambda}(x) - \M(\tilde \Upsilon_{\! \lambda}) = -\theta\}} > 0\bigr) = 0 \;.
\end{equation}  
Furthermore, for each $\lambda_1 \neq \lambda_2\in \Lambda$ such that $\lambda_2 < \lambda_1$, the field $\smash{\tilde{\Upsilon}_{\! \lambda_2}}$ has the same law of the field $\smash{\tilde{\Upsilon}_{\! \lambda_1}}$ conditioned on the event $\smash{\M(\tilde{\Upsilon}_{\! \lambda_1}) \leq \lambda_2}$. 
\end{lemma}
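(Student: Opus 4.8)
The plan is to fix two good thresholds $\lambda_2 < \lambda_1$ in $\Lambda$ and to show that for every $\bfF \in \CC^b_{\loc}(\CC(\R^d))$
\[
\E\bigl[\bfF(\tilde \Upsilon_{\! \lambda_2})\bigr] \;=\; \frac{\E\bigl[\bfF(\tilde \Upsilon_{\! \lambda_1})\,\one_{\{\M(\tilde \Upsilon_{\! \lambda_1}) \leq \lambda_2\}}\bigr]}{\P\bigl(\M(\tilde \Upsilon_{\! \lambda_1}) \leq \lambda_2\bigr)}\;.
\]
Since such functionals determine laws on $\CC(\R^d)$ (this is precisely how $\tilde \Upsilon_{\! \lambda}$ was built in the proof of Theorem~\ref{th:cluster}), this will identify the two laws; moreover, by splitting $\bfF = \bfF^+ - \bfF^-$ with $\bfF^{\pm} \in \CC^b_{\loc}(\CC(\R^d))$ nonnegative, we may and do assume $\bfF \geq 0$.

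First I would use the elementary consistency of conditioning at finite scale. Since $\{\M_{0,b}(\Upsilon_\infty) \leq \lambda_2\} \subseteq \{\M_{0,b}(\Upsilon_\infty) \leq \lambda_1\}$, for all $b$ large enough that $\bfF$ depends only on the values of its argument inside $\B_b$, the quantity $\E[\bfF(\Upsilon_\infty) \mid \M_{0,b}(\Upsilon_\infty) \leq \lambda_2]$ equals $\tfrac{\P(\M_{0,b}(\Upsilon_\infty) \leq \lambda_1)}{\P(\M_{0,b}(\Upsilon_\infty) \leq \lambda_2)}$ times $\E[\bfF(\Upsilon_\infty)\,\one_{\{\M_{0,b}(\Upsilon_\infty) \leq \lambda_2\}} \mid \M_{0,b}(\Upsilon_\infty) \leq \lambda_1]$. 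By Theorem~\ref{th:cluster} the left-hand side tends to $\E[\bfF(\tilde \Upsilon_{\! \lambda_2})]$ as $b \to \infty$, and by Theorem~\ref{th:clusterProb} the prefactor tends to $c_{\star, \lambda_1}/c_{\star, \lambda_2} \in (0, \infty)$ (these constants being positive and finite by Lemma~\ref{lm:boundXione}). Hence the whole statement reduces to proving that $\E[\bfF(\Upsilon_\infty)\,\one_{\{\M_{0,b}(\Upsilon_\infty) \leq \lambda_2\}} \mid \M_{0,b}(\Upsilon_\infty) \leq \lambda_1] \to \E[\bfF(\tilde \Upsilon_{\! \lambda_1})\,\one_{\{\M(\tilde \Upsilon_{\! \lambda_1}) \leq \lambda_2\}}]$ as $b \to \infty$: taking $\bfF = 1$ then yields $\P(\M(\tilde \Upsilon_{\! \lambda_1}) \leq \lambda_2) = c_{\star, \lambda_2}/c_{\star, \lambda_1} > 0$, and the displayed identity follows.

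The obstacle — and the step I expect to cost the most effort — is that the event $\{\M_{0,b}(\Upsilon_\infty) \leq \lambda_2\}$ depends on $b$ through the growing ball $\B_b$, so a priori it is not a continuity set for the weak convergence of $\Upsilon_\infty$ conditioned on $\{\M_{0,b}(\Upsilon_\infty) \leq \lambda_1\}$ to $\tilde \Upsilon_{\! \lambda_1}$ (Theorem~\ref{th:cluster}). I would get around this by reducing to a fixed scale $k$. Since $\Upsilon_\infty(0) = 0$ one has $\M_{0,b}(\Upsilon_\infty) = \M_{0,k}(\Upsilon_\infty) \vee \M_{0,b,k}(\Upsilon_\infty)$ with $\M_{0,k}(\Upsilon_\infty) \geq 0$, while the repulsion estimate \eqref{eq:boundGrowthBM} of Lemma~\ref{lm:repulsionShape} — which applies with $\Upsilon_\infty$ in place of $\Upsilon_{\! b, \frkg}$, the difference $\Upsilon_\infty - \Upsilon_b$ restricted to $\B_b$ being an independent field satisfying \ref{as:GG1}--\ref{as:GG3} — together with Theorem~\ref{th:clusterProb} shows that, conditionally on $A_b := \{\M_{0,b}(\Upsilon_\infty) \leq \lambda_1\}$ and off an event of conditional probability $\lesssim k^{-1/16}$ uniformly in $b \geq k$, one has $\M_{0,b,k}(\Upsilon_\infty) < -(\log k)^2 < 0$, hence $\M_{0,b}(\Upsilon_\infty) = \M_{0,k}(\Upsilon_\infty)$ there. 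Consequently, for bounded $\bfF \geq 0$ and uniformly in $b \geq k$, the conditional expectations $\E[\bfF(\Upsilon_\infty)\,\one_{\{\M_{0,b}(\Upsilon_\infty) \leq \lambda_2\}} \mid A_b]$ and $\E[\bfF(\Upsilon_\infty)\,\one_{\{\M_{0,k}(\Upsilon_\infty) \leq \lambda_2\}} \mid A_b]$ differ by at most a constant multiple of $\|\bfF\|_\infty k^{-1/16}$.

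To finish, for each fixed $k$ I would apply the Portmanteau theorem to the weak limit of Theorem~\ref{th:cluster}, using that $\phi \mapsto \M_{0,k}(\phi)$ is continuous on $\CC(\R^d)$: with the closed set $\{\M_{0,k}(\cdot) \leq \lambda_2\}$ for the upper bound and the open set $\{\M_{0,k}(\cdot) < \lambda_2\}$ for the lower bound (the weight $\bfF$ being nonnegative, bounded and continuous), one obtains $\E[\bfF(\tilde \Upsilon_{\! \lambda_1})\,\one_{\{\M_{0,k}(\tilde \Upsilon_{\! \lambda_1}) < \lambda_2\}}] \leq \liminf_{b} \E[\bfF(\Upsilon_\infty)\,\one_{\{\M_{0,k}(\Upsilon_\infty) \leq \lambda_2\}} \mid A_b]$ and $\limsup_{b} \E[\bfF(\Upsilon_\infty)\,\one_{\{\M_{0,k}(\Upsilon_\infty) \leq \lambda_2\}} \mid A_b] \leq \E[\bfF(\tilde \Upsilon_{\! \lambda_1})\,\one_{\{\M_{0,k}(\tilde \Upsilon_{\! \lambda_1}) \leq \lambda_2\}}]$. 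Combining with the uniform $k^{-1/16}$ estimate above and then letting $k \to \infty$ — noting that $\M_{0,k}(\tilde \Upsilon_{\! \lambda_1}) \uparrow \M(\tilde \Upsilon_{\! \lambda_1})$ and is almost surely eventually constant by Remark~\ref{rm:DecayFieldsAS}, so both indicator sequences converge pointwise and dominated convergence applies — both bounds tend to $\E[\bfF(\tilde \Upsilon_{\! \lambda_1})\,\one_{\{\M(\tilde \Upsilon_{\! \lambda_1}) \leq \lambda_2\}}]$; the two limiting bounds coincide precisely because $\P(\M(\tilde \Upsilon_{\! \lambda_1}) = \lambda_2) = 0$, which is the already-established \eqref{eq:Countable1} applied with the good thresholds $\lambda_1, \lambda_2 \in \Lambda$ — the only place where the restriction to $\Lambda$ enters. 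This squeezes $\E[\bfF(\Upsilon_\infty)\,\one_{\{\M_{0,b}(\Upsilon_\infty) \leq \lambda_2\}} \mid A_b]$ to the desired limit and completes the argument; the remaining points needing care are verifying that $\Upsilon_\infty - \Upsilon_b$ on $\B_b$ genuinely meets \ref{as:GG1}--\ref{as:GG3} and that the $k^{-1/16}$-estimate is uniform in $b$, and carrying out the two nested limits $b \to \infty$, then $k \to \infty$ in the right order.
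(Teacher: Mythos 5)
Your argument for the third part of the lemma (the conditioning relation) takes a genuinely different route from the paper. The paper proves Lemma~\ref{lm:upgradeCon}, upgrading the weak convergence of Theorem~\ref{th:cluster} from the local pseudometric $\dd_{\X}$ to the stronger $\bar\dd_{\X}$ built from $\chi_b e^{c\phi}$, so that the boundaries $\bar\partial_{\X}\A_{\lambda}$ become disjoint for distinct $\lambda$; the conditioning relation then drops out of uniqueness of weak limits (Lemma~\ref{lm:CountableRestr}). You instead reduce everything to a fixed scale $k$ via the entropic-repulsion estimate \eqref{eq:boundGrowthBM} and apply the Portmanteau theorem on $\CC(\B_k)$, letting $k\to\infty$ afterwards. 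The ingredients (Theorems~\ref{th:cluster}, \ref{th:clusterProb}, Lemma~\ref{lm:repulsionShape}) are the same, but the mechanism is different: you avoid having to control $e^{c\Upsilon_b}$ near $\partial\B_b$, which is a real saving (Lemma~\ref{lm:upgradeCon} is the computational core of the paper's proof). The remark that $\Upsilon_\infty-\Upsilon_b$ restricted to $\B_b$ is an independent field satisfying \ref{as:GG1}--\ref{as:GG3} is correct and is what makes \eqref{eq:boundGrowthBM} applicable here.

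However, as written your proposal has a genuine gap. The Portmanteau squeeze works only because $\P(\M(\tilde\Upsilon_{\!\lambda_1})=\lambda_2)=0$, which you invoke as "the already-established \eqref{eq:Countable1}". But \eqref{eq:Countable1} is one of the things the lemma asks you to prove, and nothing in your proposal establishes it --- nor \eqref{eq:Countable2}, which you never address, nor the existence of a single countable set $\Lambda^c$ that works simultaneously for all pairs $(\lambda,\theta)$. For a fixed $\lambda_1$ the atoms of $\M(\tilde\Upsilon_{\!\lambda_1})$ form a countable set $N_{\lambda_1}$, but a priori this set depends on $\lambda_1$, so you cannot simply declare $\Lambda = \R^+\setminus N_{\lambda_1}$. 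The argument can be repaired by bootstrapping: first run your Portmanteau step only with $\lambda_1 = n\in\N$ and $\lambda_2\notin N_n$, and set $\Lambda^c_0:=\bigcup_n N_n$, which is countable; this yields $\tilde\Upsilon_{\!\lambda_2}\eqlaw[\tilde\Upsilon_{\!n}\mid \M\le\lambda_2]$ for $\lambda_2\in\R^+\setminus\Lambda^c_0$ and $n>\lambda_2$. By transitivity of conditioning this gives the desired relation for general $\lambda_1,\lambda_2\in\R^+\setminus\Lambda^c_0$, and then \eqref{eq:Countable1} and \eqref{eq:Countable2} follow from the relation together with the countability of the atoms of $\M(\tilde\Upsilon_{\!n})$ and of the law of $\tilde\Upsilon_{\!n}(x)-\M(\tilde\Upsilon_{\!n})$ evaluated at a random $x$, much as in the paper's own proof. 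But this bootstrap is essential and entirely absent from your write-up; without it, the proposal proves only the third assertion, and that one circularly.
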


\begin{remark}
\label{rm:redTechSmallk}
It follows from the proof of Lemma~\ref{lm:Countable} that we may (and will) assume that, for any $\lambda$, $\theta \in \Lambda$, the identities \eqref{eq:Countable1} and \eqref{eq:Countable2} hold also with $\smash{\M(\tilde \Upsilon_{\! \lambda})}$ replaced by $\M_{0, k}(\tilde \Upsilon_{\! \lambda})$ for some fixed $k \geq 0$. 
\end{remark}

In order to prove Lemma~\ref{lm:Countable}, we need to introduce some notation. We begin by defining the following space 
\begin{align*}
%\X & \eqdef \bigl\{(b, \phi) \, : \, b \in \R_{0}^{+} \cup \{\infty\}, \; \phi \in \CC(\R^d) \bigr\} \;, \\
\X \eqdef \bigl\{(b, \phi) \, : \, b \in \R_{0}^{+} \cup \{\infty\}, \; \phi \in \CC(\R^d, \R \cup \{-\infty\}) \bigr\} \;.
\end{align*}
We equip the space $\X$ with the pseudometrics $\dd_{\X}$ and $\bar\dd_{\X}$ defined as follows
\begin{align*}
\dd_{\X}\bigl((b_1, \phi_1), (b_2, \phi_2)\bigr) & \eqdef \abs{e^{-b_1} - e^{-b_2}} + \sum_{k = 1}^{\infty} 2^{-k} \Biggl(1 \wedge \sup_{x \in \B_k \cap \B_{b_1 \wedge b_2}} \abs{\phi_1(x) - \phi_2(x)}\Biggr) \;, \\
\bar\dd_{\X}\bigl((b_1, \phi_1), (b_2, \phi_2)\bigr) & \eqdef \abs{e^{-b_1} - e^{-b_2}} + \sup_{x \in \R^d}  \abs{\chi_{b_1}(x) e^{c\phi_1(x)} -  \chi_{b_2}(x)e^{c \phi_2(x)}} \;,
\end{align*}
where $c = c(d) \eqdef \sqrt{\smash[b]{2/d}}$, and for each $b \in \R_{0}^{+} \cup \{\infty\}$, $\chi_b:\R^d \to \R$ is a smooth function such that $\chi_b(x) = 1$ for all $\abs{x} \leq e^b$ and $\chi_b(x) = 0$ for all $\abs{x} \geq e^{b}+1$. 

For each $\lambda > 0$, we let $\A_{\lambda} \subseteq \X$ be the set defined as follows
\begin{equation*}
\A_{\lambda} \eqdef \bigl\{(b, \phi) \in \X \, : \, \M_{0, b}(\phi) \leq \lambda\bigr\}\;.
\end{equation*}
Furthermore, we define the collection of measures $(\nu_{b, \lambda})_{b > 0}$ and the measure $\nu_{\infty, \lambda}$ as follows
\begin{equation*}
\nu_{b, \lambda} \eqdef c_{\star, \lambda} \sqrt{b} \Law\bigl[\bigl(b, \Upsilon_{\! b}\bigr)\bigr] |\A_{\lambda} \qquad \text{ and } \qquad \nu_{\infty, \lambda} = \Law\bigl[\bigl(\infty, \tilde{\Upsilon}_{\! \lambda}\bigr)\bigr] \;,
\end{equation*}
where $\cdot|\A_{\lambda}$ denotes the restriction to the subspace $\A_{\lambda}$, and $c_{\star, \lambda}$ is the constant introduced in Theorem~\ref{th:clusterProb}.

\begin{remark}
The reason we introduced the ``stronger'' pseudometric $\bar\dd_{\X}$ is that the boundaries of the set $\A_{\lambda}$ are \emph{not} disjoint in $(\X, \dd_{\X})$ for different values of $\lambda$. In contrast, in $(\X, \bar\dd_{\X})$, we have $\bar \partial_{\X} \A_{\lambda_1} \cap \bar\partial_{\X} \A_{\lambda_2} = \emptyset$ for any $\lambda_1, \lambda_2 > 0$ with $\lambda_1 \neq \lambda_2$. This fact will play an important role in what follows.
\end{remark}

\begin{lemma}
\label{lm:upgradeCon}
For each $\lambda > 0$, the collection of measures $(\nu_{b, \lambda})_{b > 0}$ converges weakly in $(\X, \bar\dd_{\X})$ to $\nu_{\infty, \lambda}$ as $b \to \infty$. 
\end{lemma}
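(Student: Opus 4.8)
\textbf{Proof plan for Lemma~\ref{lm:upgradeCon}.}

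The plan is to upgrade the weak convergence in the weaker pseudometric $\dd_{\X}$ (which is essentially the content of Theorems~\ref{th:cluster}~and~\ref{th:clusterProb}, once we know the probability asymptotics for the conditioning event) to weak convergence in the stronger pseudometric $\bar\dd_{\X}$. First I would record that $(\nu_{b,\lambda})_{b>0}$ converges weakly to $\nu_{\infty,\lambda}$ in $(\X,\dd_{\X})$: by Theorem~\ref{th:clusterProb} one has $\sqrt{b}\,\P(\M_{0,b}(\Upsilon_{\!\infty})\leq\lambda)\to\alpha\,c_{\star,\lambda}$, so $\nu_{b,\lambda}$ is a sub-probability measure whose total mass converges; its normalisation is the conditional law appearing in \eqref{eq:weakLimit}, which converges to $\Law[\tilde\Upsilon_{\!\lambda}]$ in the local-uniform topology on $\CC(\R^d)$ by Theorem~\ref{th:cluster}, and the $b$-coordinate converges trivially since $e^{-b}\to 0$. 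Combining, $\nu_{b,\lambda}\Rightarrow\nu_{\infty,\lambda}$ in $(\X,\dd_{\X})$.

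The key point is then that $\bar\dd_{\X}$ generates the same Borel structure on the relevant part of $\X$ but its topology is strong enough to control the ``tail at infinity'' of the field. Concretely, the difference between the two pseudometrics is that $\bar\dd_{\X}$ sees the behaviour of $\chi_b e^{c\phi}$ uniformly over all of $\R^d$, whereas $\dd_{\X}$ only sees $\phi$ on balls of fixed radius. So the upgrade amounts to proving a uniform tightness/tail estimate: for every $\eps>0$ there is $R>0$ such that, uniformly in $b$,
\begin{equation*}
\nu_{b,\lambda}\Bigl(\bigl\{(b,\phi): \tsup_{|x|\ge R} e^{c\phi(x)}>\eps\bigr\}\Bigr)<\eps\;,
\end{equation*}
and likewise for $\nu_{\infty,\lambda}$. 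Since $c=\sqrt{2/d}$ and the field $\Upsilon_{\!b}$ behaves near a mesoscopic maximum like $-\sqrt{2d}\,\frka_b(\cdot)$ plus the log-correlated fluctuation, on the conditioning event $\{\M_{0,b}(\Upsilon_{\!b})\le\lambda\}$ one expects $\Upsilon_{\!b}(x)\lesssim -\tfrac12(\log|x|)^2$ for $|x|$ large, as quantified by Lemma~\ref{lm:repulsionShape} (the bounds \eqref{eq:boundGrowthBB}, \eqref{eq:boundGrowthBM} and \eqref{eq:boundGrowthLimiting}). Thus $e^{c\Upsilon_{\!b}(x)}$ decays faster than any polynomial in $|x|$ on an event whose complementary probability, measured by $\sqrt b\,\P(\cdots)$, is $\lesssim k^{-1/16}$ — exactly the form needed for uniform tightness. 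I would run this argument annulus-by-annulus: on $\A_j$ with $j\ge k$, $\nu_{b,\lambda}(\tsup_{\A_j}\phi\ge -(\log j)^2)\lesssim k^{-1/16}$ by Lemma~\ref{lm:repulsionShape}, and on the complement $\tsup_{\A_j}e^{c\phi}\le e^{-c(\log j)^2}$, which is summable in $j$; choosing $R=e^k$ with $k$ large handles the tail uniformly in $b$ (including $b=\infty$, via \eqref{eq:boundGrowthLimiting} and Remark~\ref{rm:DecayFieldsAS}).

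With the uniform tail control in hand, the upgrade is a soft functional-analytic step: weak convergence in $(\X,\dd_{\X})$ plus uniform tightness of the ``mass at infinity'' of $\chi_b e^{c\phi}$ implies weak convergence in $(\X,\bar\dd_{\X})$, because any $\bar\dd_{\X}$-continuous bounded test function can be approximated, up to an error controlled by the tail estimate, by one depending only on $(e^{-b},\phi|_{\B_R})$, which is $\dd_{\X}$-continuous. More precisely, given $G\in\CC^b(\X,\bar\dd_{\X})$ and $\eps>0$, pick $R$ from the tightness bound and let $G_R(b,\phi)\eqdef G(b,\phi\vee(-\infty)\cdot\one_{\B_R}+(-\infty)\one_{\B_R^c})$ — i.e. replace $\phi$ by $-\infty$ outside $\B_R$; then $|G-G_R|\le\mathrm{osc}(G)$ on the bad set and is small on the good set by uniform continuity of $G$ in $\bar\dd_{\X}$ together with the fact that $\chi_b e^{c\phi}$ and $\chi_R e^{c\phi}$ differ only where $e^{c\phi}\le\eps$. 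Since $G_R$ factors through the $\dd_{\X}$-continuous map $(b,\phi)\mapsto(b,\phi|_{\B_R})$, the already-established $\dd_{\X}$-convergence gives $\nu_{b,\lambda}(G_R)\to\nu_{\infty,\lambda}(G_R)$, and letting $\eps\to 0$ yields $\nu_{b,\lambda}(G)\to\nu_{\infty,\lambda}(G)$.

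The main obstacle is the uniform (in $b$) decay estimate for $e^{c\Upsilon_{\!b}}$ at spatial infinity on the conditioning event — i.e. making the annulus-by-annulus argument genuinely uniform over $b\in\N\cup\{\infty\}$ and over $u\in[b^{1/4},b^{3/4}]$ if one works, as in Section~\ref{sec:cluster}, with Brownian bridges. This is precisely what Lemma~\ref{lm:repulsionShape} delivers (its three displays cover the bridge, the motion, and the limiting field respectively), so the obstacle is really one of bookkeeping: verifying that the $k^{-1/16}$-bounds assemble into a single tightness statement valid simultaneously for all $b$ and in the limit, and that the constant $c=\sqrt{2/d}$ is small enough that $e^{-c(\log j)^2}$ is summable (it is, for any $c>0$). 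Everything else is routine.
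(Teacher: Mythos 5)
Your plan correctly identifies the strategy (upgrade $\dd_{\X}$-convergence to $\bar\dd_{\X}$-convergence via a uniform-in-$b$ tightness bound for the tail of $\chi_b e^{c\phi}$, using Lemma~\ref{lm:repulsionShape} for the annulus-by-annulus decay), and the reduction to $\dd_{\X}$-convergence via Theorems~\ref{th:cluster}~and~\ref{th:clusterProb} is right. But there is a genuine gap: your annulus estimate only controls the tail inside $\B_b$, i.e.\ over annuli $\A_j$ with $k \le j \le b-1$. The pseudometric $\bar\dd_{\X}$ takes the supremum of $|\chi_b e^{c\phi} - \chi_\infty e^{c\psi}|$ over all of $\R^d$, and by construction of the cutoff $\chi_b$ there is a residual shell $\{|x| \in [e^b, e^b+1)\}$ where $\chi_b$ transitions from $1$ to $0$. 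Lemma~\ref{lm:repulsionShape} says nothing about this shell: the conditioning event $\{\M_{0,b}(\Upsilon_{\!b}) \le \lambda\}$ constrains the field only on $\B_b$, so on the shell $\Upsilon_{\!b}$ is essentially unconditioned.

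The shell is not innocuous. For $|x| > e^b$ one has, using that $\frkK$ is supported in $B(0,1)$, $\Upsilon_{\!b}(x) = -B_b + \rmZ_b(x) - \sqrt{2d}\,b$, and $\rmZ_b$ restricted to the shell is (after rescaling by $e^{-b}$) a log-correlated field at depth $b$ on a set of Lebesgue measure $\asymp e^{(d-1)b}$; its supremum grows like $\sqrt{2(d-1)}\,b$. After integrating out $B_b$ (contributing a factor $e^{c^2 b/2}$) and multiplying by the $\sqrt{b}$ normalisation in the definition of $\nu_{b,\lambda}$, one gets an exponential factor with rate $c\bigl(c/2 + \sqrt{2(d-1)} - \sqrt{2d}\bigr)$, and this vanishes in the limit \emph{only because} $c = \sqrt{2/d}$ makes the bracket nonpositive. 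Your remark that ``the constant $c = \sqrt{2/d}$ is small enough that $e^{-c(\log j)^2}$ is summable (it is, for any $c > 0$)'' shows you have missed this: the specific value of $c$ plays no role in the interior-annulus estimate you have in mind, but it is the whole point of the shell estimate, and the inequality $c/2 + \sqrt{2(d-1)} - \sqrt{2d} \le 0$ is genuinely tight as $d \to \infty$. You need to add a separate computation for $\E[\sup_{|x| \in [e^b, e^b+1)} e^{c\Upsilon_{\!b}(x)} \one_{\{\M_{0,b}(\Upsilon_{\!b}) \le \lambda\}}]$ along the lines sketched above (this is what the paper does, via Borell--TIS and Fernique on the shell), before the $\eps \to 0$ argument closes.
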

\begin{proof}
Let $\smash{(\infty, \tilde \Upsilon_{\! \lambda})}$ be sampled from $\smash{\nu_{\infty, \lambda}}$, and with a slight abuse of notation, let $\smash{(b, \tilde \Upsilon_{\! b, \lambda})}$ be sampled from $\smash{\nu_{b, \lambda}}$.
Thanks to Theorem~\ref{th:cluster}, we know that $\nu_{b, \lambda}$ converges weakly to $\nu_{\infty, \lambda}$ in $(\X, \dd_{\X})$ as $b \to \infty$. This fact implies that we can find a coupling between the sequence $\smash{((b, \tilde \Upsilon_{\! b}))}_{b > 0}$ and $\smash{(\infty, \tilde \Upsilon_{\! \lambda})}$ such that 
\begin{equation}
\label{eq:convMetricLocal}
\lim_{b \to \infty} \E\bigl[\dd_{\X}\bigl((b,\tilde\Upsilon_{\! b, \lambda}), (\infty, \tilde{\Upsilon}_{\! \lambda})\bigr)\bigr] = 0 \;.
\end{equation}
Therefore, to obtain the desired result, we need to check that the same limit holds if we replace the pseudometric $\dd_{\X}$ in the above display with the pseudometric $\bar\dd_{\X}$. In particular, it suffices to show that 
\begin{equation*}
\lim_{b \to \infty} \E\Biggl[\sup_{x \in \R^d}  \abs{\chi_{b}(x) e^{c \tilde\Upsilon_{\! b, \lambda}(x)} - e^{c \tilde\Upsilon_{\! \lambda}(x)}}\Biggr] = 0
\end{equation*}
To this end, we let $0 \leq j \leq b$, and we note that  
\begin{equation*}
\sup_{x \in \R^d} \abs{\chi_{b}(x) e^{c \tilde\Upsilon_{\! b, \lambda}(x)} - e^{c \tilde\Upsilon_{\! \lambda}(x)}} \leq \sup_{x \in \B_j} \abs{e^{c \tilde\Upsilon_{\! b, \lambda}(x)} - e^{c \tilde\Upsilon_{\! \lambda}(x)}} + \sup_{x \in \R^d \setminus \B_j} \abs{\chi_b(x) e^{c \tilde\Upsilon_{\! b, \lambda}(x)} - e^{c  \tilde\Upsilon_{\! \lambda}(x)}} \;.
\end{equation*}
The fact that the limit as $b \to \infty$ of the expectation of the first term on the right-hand side of the above display goes to zero follows from \eqref{eq:convMetricLocal}. Hence, it remains to show that the expectation of the second term on the right-hand side of the above display also tends to zero as $b \to \infty$, followed by $j \to \infty$. Thanks to Lemmas~\ref{lm:upperBoundKkUpsilon}~and~\ref{lm:repulsionShape}, it holds that 
\begin{equation*}
\lim_{j \to \infty}\lim_{b \to \infty}\E\Biggl[\sup_{x \in \B_b \setminus \B_j} e^{c \tilde\Upsilon_{\! b, \lambda}(x)}\Biggr] = 0\;, \qquad \lim_{j \to \infty}\E\Biggl[\sup_{x \in \R^d \setminus \B_{j}} e^{c \tilde\Upsilon_{\! \lambda}(x)}\Biggr] = 0 \;.
\end{equation*} 
Therefore, recalling that $\chi_b(x) = 0$ for all $\abs{x} \geq e^{b}+1$, it remains to check that
\begin{equation}
\label{eq:limitWantedShell}
\lim_{b \to \infty} \E\Biggl[\sup_{\abs{x} \in [e^b, e^b+1)} e^{c \tilde{\Upsilon}_{\! b, \lambda}(x)} \Biggr] = 0 \;.
\end{equation}
In particular, thanks to Lemma~\ref{lm:upperBoundKkUpsilon}, for $b \geq 0$ large enough, by recalling the definition \eqref{eq:defUpsilonBBeg} of the field $\Upsilon_{\! b}$ and since, by \ref{hp_K2}, the function $\frkK$ is supported in $B(0, 1)$, the expectation on the left-hand side of the above display is bounded above by a multiple of
\begin{align}
\sqrt{b} \E\Biggl[\sup_{\abs{x} \in [e^b, e^b+1)} e^{c \Upsilon_{\! b}(x)} \one_{\{\M_{0, b}(\Upsilon_{\! b}) \leq \lambda\}}\Biggr] 
& \leq \sqrt{b} \E\Biggl[e^{-c (B_b + \sqrt{\smash[b]{2d}} b)} \sup_{\abs{x} \in [e^b, e^b+1)} e^{c\rmZ_b(x)}\Biggr] \nonumber\\
& = \sqrt{b} e^{c^2 b/2 - \sqrt{\smash[b]{2d}} c b} \E\Biggl[\sup_{\abs{x} \in [e^b, e^b+1)} e^{c \rmZ_b(x)}\Biggr]  \;, \label{eq:limitWantedShellBound}
\end{align}
and thus, we are left with the task of bounding the expectation in the final term of the above display. To achieve this, we start by noting that, by the Borell-TIS inequality (Lemma~\ref{lm_Borell}) and Fernique's majorizing criterion (Lemma~\ref{lm_Fernique}), for any $y \in \R^d$ such that $B(y, 1) \subseteq \R^d \setminus \B_b$, it holds that 
\begin{equation*}
\P\Biggl(\sup_{x \in B(y, 1)} \abs{\rmZ_b(x)} \geq \eta \Biggr) \lesssim e^{-\frac{\eta^2}{2b}}\;, \qquad \forall \, \eta \geq 0\;.	
\end{equation*} 
Consequently, thanks to the union bound, we get that 
\begin{equation}
\label{eq:boundInterExpShell}
\P\Biggl(\sup_{\abs{x} \in [e^b, e^b+1)} \abs{\rmZ_b(x)} \geq \eta \Biggr) \lesssim e^{b(d-1) - \frac{\eta^2}{2b}}\;, \qquad \forall \, \eta \geq 0\;.	
\end{equation} 
Next, observing that for any non-negative random variable $X$ it holds that $\E[e^{cX}] = 1 + c\int_0^{\infty} e^{cx} \P(X > x) dx$, we can write
\begin{equation*}
\E\Biggl[\sup_{\abs{x} \in [e^b, e^b+1)} e^{c \rmZ_b(x)}\Biggr] = 1 + c \int_0^{\infty} e^{c \eta} \P\Biggl(\sup_{\abs{x} \in [e^b, e^b+1)} \rmZ_b(x) \geq \eta\Biggr) d\eta \;.
\end{equation*}
In particular, for $k > 0$, using the fact that $\eta^2 \geq 2 k \eta - k^2$, we have that 
\begin{align*}
\int_0^\infty e^{c \eta} \bigl(1\wedge e^{b(d-1) - \frac{\eta^2}{2b}}\bigr) d\eta
&\leq \int_0^{k} e^{c \eta} d\eta + \int_{k}^\infty e^{c \eta + b (d-1) - \frac{k \eta}{b} + \frac{k^2}{2b}} d\eta \\
&\leq k e^{c k} + \frac{b}{k - cb} e^{c k+ b(d-1) - \frac{k^2}{2b}}\;,	
\end{align*}
provided that $k > bc$. It is natural to choose $k$ such that $k^2/(2b) = b(d-1)$, i.e., $k = b \sqrt{2(d-1)}$ which is indeed greater than $bc$ since $c = \sqrt{\smash[b]{2/d}} < \sqrt{2(d-1)}$. Therefore, recalling \eqref{eq:boundInterExpShell} and combining the previous observations, we conclude that
\begin{equation*}
\E\Biggl[\sup_{\abs{x} \in [e^b, e^b+1)} e^{c \rmZ_b(x)}\Biggr] \lesssim b e^{\sqrt{2(d-1)} c b}\;.
\end{equation*}
Therefore, by plugging the previous bound into \eqref{eq:limitWantedShellBound}, we showed that the expectation in \eqref{eq:limitWantedShell} is bounded above by a multiple of 
\begin{equation*}
\E\Biggl[\sup_{\abs{x} \in [e^b, e^b+1)} e^{c \tilde{\Upsilon}_{\! b, \lambda}(x)} \Biggr] \lesssim b^{\f32} e^{bc (c/2 + \sqrt{2(d-1)} - \sqrt{\smash[b]{2d}})}	\to 0 \quad \text{ as } \quad b \to \infty \;, 
\end{equation*}
where, recalling that $c = \sqrt{\smash[b]{2/d}}$, we have used the fact that $c/2 + \sqrt{2(d-1)} - \sqrt{\smash[b]{2d}} \leq 0$.
\end{proof}

We are now ready to prove the following intermediate result.
\begin{lemma}
\label{lm:CountableRestr}
There exists a countable set $\bar \Lambda^c \subseteq \R^{+}$ such that, letting $\bar \Lambda = \R^{+} \setminus \bar\Lambda^c$, for each $n \in \N$ and $\lambda \in \bar \Lambda \cap (0, n]$, it holds that $\nu_{\infty, \lambda} = \nu_{\infty, n}|\A_{\lambda}$. 
\end{lemma}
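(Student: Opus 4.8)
\textbf{Proof proposal for Lemma~\ref{lm:CountableRestr}.}
The plan is to exploit the ``nesting'' property of the limiting shape fields implicit in Theorem~\ref{th:cluster}, combined with a standard argument that a family of measures can have only countably many ``bad'' levels at which mass sits on the boundary of a sublevel set. First, fix $n \in \N$ and work on the probability space carrying $\tilde\Upsilon_{\!n}$ with law $\nu_{\infty,n}$. For $\lambda \in (0,n]$ define the candidate measure $\nu_{\infty,n}|\A_\lambda$, i.e.\ the restriction of $\nu_{\infty,n}$ to the event $\{\M(\tilde\Upsilon_{\!n}) \le \lambda\}$ (using $\M_{0,b}$ with $b = \infty$, so just $\M$). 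By Theorem~\ref{th:cluster}, for $\lambda_2 < \lambda_1$ both in a suitable full-measure set, the field $\tilde\Upsilon_{\!\lambda_2}$ has the same law as $\tilde\Upsilon_{\!\lambda_1}$ conditioned on $\{\M(\tilde\Upsilon_{\!\lambda_1}) \le \lambda_2\}$ — this is the last assertion of Lemma~\ref{lm:Countable}, but since that lemma is what we are building towards, I will instead derive the needed nesting directly from the double limit \eqref{eq:weakLimit}: for $\lambda' < \lambda$,
\begin{equation*}
\E\bigl[\bfF(\Upsilon_{\!\infty})\one_{\{\M_{0,b}(\Upsilon_{\!\infty})\le\lambda'\}}\bigr]
= \E\bigl[\bfF(\Upsilon_{\!\infty})\one_{\{\M_{0,b}(\Upsilon_{\!\infty})\le\lambda\}}\one_{\{\M_{0,b}(\Upsilon_{\!\infty})\le\lambda'\}}\bigr]\;,
\end{equation*}
and dividing through by $\P(\M_{0,b}(\Upsilon_{\!\infty})\le\lambda)$ and using Proposition~\ref{pr:mainAsy} / the asymptotics of Theorem~\ref{th:clusterProb} one sees that $\nu_{\infty,\lambda'}$ is the restriction of $\nu_{\infty,\lambda}$ to $\A_{\lambda'}$ \emph{provided} $\lambda'$ is a level at which no mass of $\nu_{\infty,\lambda}$ (equivalently of $\nu_{\infty,n}$) sits exactly on $\bar\partial_\X\A_{\lambda'}$.

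The second step isolates the bad levels. For each fixed $n$, the map $\lambda \mapsto \nu_{\infty,n}(\A_\lambda) = \P(\M(\tilde\Upsilon_{\!n}) \le \lambda)$ is non-decreasing and bounded, hence has at most countably many discontinuities; moreover $\nu_{\infty,n}$ assigns mass to the topological boundary $\bar\partial_\X\A_\lambda$ (in the $\bar\dd_\X$ topology) only for $\lambda$ in a countable set. Here the key geometric input — flagged in the remark preceding Lemma~\ref{lm:upgradeCon} — is that in the metric $\bar\dd_\X$ the boundaries $\bar\partial_\X\A_\lambda$ are pairwise disjoint across distinct $\lambda$, because $\bar\dd_\X$ controls $\sup_x |\chi_b(x)e^{c\phi(x)}|$ and the event $\{\M_{0,b}(\phi) = \lambda\}$ forces $\sup_x \chi_b e^{c\phi} = e^{c\lambda}$ (up to the smooth cutoff, which is harmless after also recording that the support radius $e^b$ is part of the data); hence the sets $\{(b,\phi) : \sup_x\chi_b(x)e^{c\phi(x)} = e^{c\lambda}\}$ are disjoint level sets of a fixed continuous functional, so only countably many of them can carry positive $\nu_{\infty,n}$-mass. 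Call this countable exceptional set $\bar\Lambda^c_n$, and set $\bar\Lambda^c = \bigcup_{n\in\N}\bar\Lambda^c_n$, which is still countable, and $\bar\Lambda = \R^+ \setminus \bar\Lambda^c$.

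The third step is the gluing. For $\lambda \in \bar\Lambda \cap (0,n]$: since $\lambda \notin \bar\Lambda^c_n$, the portmanteau theorem applied to the weak convergence $\nu_{b,n} \Rightarrow \nu_{\infty,n}$ in $(\X,\bar\dd_\X)$ from Lemma~\ref{lm:upgradeCon}, together with $\nu_{\infty,n}(\bar\partial_\X\A_\lambda) = 0$, gives $\nu_{b,n}|\A_\lambda \Rightarrow \nu_{\infty,n}|\A_\lambda$ as $b \to \infty$. On the other hand, by definition $\nu_{b,n}|\A_\lambda = c_{\star,n}\sqrt b\,\Law[(b,\Upsilon_{\!b})]|\A_\lambda$, and since $\{\M_{0,b}(\phi)\le\lambda\} \subseteq \{\M_{0,b}(\phi)\le n\}$ we have $\nu_{b,n}|\A_\lambda = (c_{\star,n}/c_{\star,\lambda})\,\nu_{b,\lambda}$; letting $b\to\infty$ and using Lemma~\ref{lm:upgradeCon} again (this time for $\lambda$, noting $\lambda\in\bar\Lambda$ so the limit $\nu_{\infty,\lambda}$ exists as an honest limit), we get $\nu_{\infty,n}|\A_\lambda = (c_{\star,n}/c_{\star,\lambda})\,\nu_{\infty,\lambda}$. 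Testing both sides against $\bfF = 1$ forces the constant $c_{\star,n}/c_{\star,\lambda}$ to equal $\nu_{\infty,n}(\A_\lambda)$... but in fact a cleaner bookkeeping is available: by Theorem~\ref{th:clusterProb}, $c_{\star,\lambda} = \lim \sqrt b\,\P(\M_{0,b}(\Upsilon_{\!\infty})\le\lambda)/\alpha$, so $c_{\star,n}\,\P(\M\le\lambda \mid \M\le n)\cdot(\dots)$ collapses and one reads off directly that $\nu_{\infty,\lambda} = \nu_{\infty,n}|\A_\lambda$ with the correct normalization built in, because $\nu_{\infty,\lambda}$ is by construction the $\alpha$-normalized limit. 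I would write this last identification carefully by comparing total masses: $\nu_{\infty,\lambda}(\X) = 1 = \nu_{\infty,n}(\A_\lambda)/\nu_{\infty,n}(\X)$ is automatic, so the proportionality constant is forced to be $1$, giving $\nu_{\infty,\lambda} = \nu_{\infty,n}|\A_\lambda$ exactly.

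The main obstacle I anticipate is the portmanteau step: one must check that $\nu_{\infty,n}(\bar\partial_\X\A_\lambda) = 0$ for all but countably many $\lambda$, and this genuinely requires the upgraded topology $\bar\dd_\X$ and Lemma~\ref{lm:upgradeCon} — in the weaker $\dd_\X$ the boundaries are \emph{not} disjoint, so the countability argument fails there. The disjointness in $\bar\dd_\X$ must be verified against the precise definition of $\bar\dd_\X$, including the smooth cutoffs $\chi_b$ and the $|e^{-b_1}-e^{-b_2}|$ term, to be sure that $\M_{0,b}(\phi) = \lambda$ really does pin down the value of the continuous functional $\sup_x \chi_b(x) e^{c\phi(x)}$ up to the boundary layer $\{|x|\in[e^b,e^b+1)\}$, where the estimate \eqref{eq:limitWantedShell} from the proof of Lemma~\ref{lm:upgradeCon} shows the contribution vanishes in the limit. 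Everything else is soft: monotone-function countability, restriction of weakly convergent measures to continuity sets, and matching of total masses.
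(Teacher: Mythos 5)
Your proposal follows essentially the same route as the paper's proof: both arguments rely on the upgraded weak convergence in $(\X,\bar\dd_\X)$ supplied by Lemma~\ref{lm:upgradeCon}, on the observation that the boundaries $\bar\partial_\X\A_\lambda$ are pairwise disjoint in $\bar\dd_\X$ so that $\nu_{\infty,n}$ can assign positive mass to at most countably many of them, on the portmanteau theorem at the good levels $\lambda$, and on a countable union over $n$. The paper's argument is a terse version of exactly this: at finite $b$ one has $\nu_{b,\lambda}\propto\nu_{b,n}|\A_\lambda$, Lemma~\ref{lm:upgradeCon} gives $\nu_{b,\lambda}\Rightarrow\nu_{\infty,\lambda}$, portmanteau gives $\nu_{b,n}|\A_\lambda\Rightarrow\nu_{\infty,n}|\A_\lambda$ off the countable bad set, and uniqueness of the weak limit closes the circle. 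Your initial detour through the double limit \eqref{eq:weakLimit} and the monotone-function countability argument are both superfluous (the disjoint-boundary argument alone is what the paper uses), but they do no harm.

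The one place your write-up goes genuinely wrong is the final normalization paragraph. Having correctly arrived at $\nu_{\infty,n}|\A_\lambda\propto\nu_{\infty,\lambda}$ with a constant of proportionality of the form $c_{\star,n}/c_{\star,\lambda}$, you assert that comparing total masses yields $\nu_{\infty,\lambda}(\X)=1=\nu_{\infty,n}(\A_\lambda)/\nu_{\infty,n}(\X)$ ``automatically'' and thus forces the constant to equal $1$. That is false: $\nu_{\infty,n}(\A_\lambda)=\P\bigl(\M(\tilde\Upsilon_{\!n})\le\lambda\bigr)$ is strictly less than $1$ whenever $\lambda<n$. The correct reading is that $\nu_{\infty,\lambda}$ is the \emph{normalized} restriction, $\nu_{\infty,\lambda}=\nu_{\infty,n}|\A_\lambda\big/\nu_{\infty,n}(\A_\lambda)$, which is exactly how the result is used afterwards: the last claim of Lemma~\ref{lm:Countable} is phrased as a conditional law, and Lemma~\ref{lm:repCStarLambda} recovers the compatible identity $c_{\star,\lambda}/c_{\star,n}=\P(\M(\tilde\Upsilon_{\!n})\le\lambda)$. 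The paper's own statement $\nu_{\infty,\lambda}=\nu_{\infty,n}|\A_\lambda$ also suppresses this factor as a mild abuse of notation, but it does not claim the factor is literally $1$; your attempt to verify that it is promotes a notational shorthand into an incorrect assertion.
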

\begin{proof}
Thanks to Lemma~\ref{lm:upgradeCon}, we know that for each $\lambda > 0$, the sequence $(\nu_{b, \lambda})_{b > 0}$ converges weakly to $\nu_{\infty, \lambda}$ in $(\X, \bar\dd_{\X})$. 
Moreover, for all $n \in \N$, $\lambda \in (0, n]$, and $b > 0$, we have that $\nu_{b, \lambda} = \nu_{b, n}|\A_{\lambda}$.
We note that there exists a countable set $\smash{\bar\Lambda^c_n \subseteq (0, n]}$ such that for all $\smash{\lambda \in (0, n] \setminus \bar\Lambda^c_n}$, it holds that $\nu_{b, \lambda}$ converges weakly to $\nu_{\infty, n}|\A_{\lambda}$ in $(\X, \bar\dd_{\X})$. 
Indeed, this follows from the uniqueness of the weak limit and the fact that the sets $(\bar \partial_{\X} \A_{\lambda})_{\lambda \in (0, n]}$ are disjoint, implying that $\nu_{\infty, n}$ can assign positive mass to at most countable many of them. 
Therefore, the conclusion follows by setting $\smash{\bar \Lambda^c = \cup_{n \in \N} \bar \Lambda_n^c}$ and noting that a countable union of countable sets remains countable.
\end{proof}

We are now ready to prove Lemma~\ref{lm:Countable}

\begin{proof}[Proof of Lemma~\ref{lm:Countable}]
We begin by observing that an immediate consequence of Lemma~\ref{lm:CountableRestr} is that for each $\smash{\lambda_1 \neq \lambda_2\in \bar{\Lambda}}$ such that $\lambda_2 < \lambda_1$, the field $\smash{\tilde{\Upsilon}_{\! \lambda_2}}$ has the same law of the field $\smash{\tilde{\Upsilon}_{\! \lambda_1}}$ conditioned on the event $\smash{\M(\tilde{\Upsilon}_{\! \lambda_1}) \leq \lambda_2}$.

For $\theta > 0$, we introduce the set $\S_{\theta, 1} \subseteq \X$ by letting 
\begin{equation*}
\S_{\theta, 1} \eqdef \bigl\{\bigl(\infty, \phi\bigr) \in \X \, : \, \M(\phi) = \theta\bigr\} \;.
\end{equation*}
Since the sets $(\S_{\theta, 1})_{\theta > 0}$ are disjoint, for all $n \in \N$, there exists a countable set $\smash{\bar \Lambda_{1, n}^c \subseteq \bar \Lambda}$ such that for all $\smash{\theta \in \bar\Lambda \setminus \bar\Lambda_{1, n}^c}$, it holds that $\smash{\nu_{\infty, n}(\S_{\theta, 1}) = 0}$. Indeed, if that were not the case, the measure $\nu_{\infty, n}$ would assign positive measure to uncountable many disjoint sets, which is, of course, not possible. Furthermore, thanks to Lemma~\ref{lm:CountableRestr}, we know that for each $\lambda \in \bar\Lambda \cap (0, n]$, it holds that $\nu_{\infty, \lambda} = \nu_{\infty, n}|\A_{\lambda}$. In particular, this implies that for all $\smash{\lambda}$, $\smash{\theta \in \bar\Lambda \setminus \bigcup_{n \in \N}\bar\Lambda_{1, n}^c}$, it holds that $\smash{\nu_{\infty, \lambda}(\S_{\theta, 1}) = 0}$.

Next, for $\theta > 0$, we introduce the set $\S_{\theta, 2} \subseteq \X \subseteq \bar{\X}$ by letting 
\begin{equation*}
\S_{\theta, 2} \eqdef \bigl\{\bigl(\infty, \phi\bigr) \in \X \, : \, \abs{\{x \in \R^d \, : \, \phi(x) - \M(\phi) = -\theta\}} > 0\bigr\}
\end{equation*}
Consider the mapping $\Theta : \X \times \R^d \to \R$ defined as $\Theta((b, \phi), x) = \phi(x) - \M_{0, b}(\phi)$. For $n \in \N$ and a standard multivariate normal random variable $\smash{\CN(0, I_d)}$, let $\smash{\nu^{\star}_{\infty, n}}$ be the pushforward of $\smash{\nu_{\infty, n} \otimes \CN(0, I_d)}$ under $\Theta$, which is a measure on $\R_0^{-}$. We note that there exists a countable set $\bar \Lambda_{2, n}^c \subseteq \bar \Lambda$ such that for all $\theta \in \bar\Lambda \setminus \bar\Lambda_{2, n}^c$, it holds that $\smash{\nu^{\star}_{\infty, n}(\{-\theta\}) = 0}$. This is due to the fact that the measure $\smash{\nu^{\star}_{\infty, n}}$ can only have at most countable many point masses. In turn, this implies that for all $\theta \in \bar\Lambda \setminus \bar\Lambda_{2, n}^c$, it holds that $\smash{\nu_{\infty, n}(\S_{\theta, 2}) = 0}$. Now, we observe once again that thanks to Lemma~\ref{lm:CountableRestr}, for each $\smash{\lambda \in \bar\Lambda \cap (0, n]}$, it holds that $\smash{\nu_{\infty, \lambda} = \nu_{\infty, n}|\A_{\lambda}}$. Therefore, this implies that for all $\lambda$, $\smash{\theta \in \bar\Lambda \setminus \bigcup_{n \in \N} \bar\Lambda_{2, n}^c}$, it holds that $\smash{\nu_{\infty, \lambda}(\S_{\theta, 2}) = 0}$.
Finally, combining the previous discussions, the conclusion follows by setting 
$
\Lambda \eqdef \bar \Lambda \setminus \bigcup_{n \in \N} (\bar\Lambda^c_{1, n} \cup \bar\Lambda^c_{2, n})
$.
\end{proof}

We conclude this section by proving that the proportionality constant appearing in Definition~\ref{def:recoverPhi} lies in the interval $(0, \infty)$. Specifically, it suffices to establish the following result.
\begin{lemma}
\label{lm:NorContPsiOK}
For all $\lambda \in \Lambda$, it holds that $\smash{\E[\abs{\D^{\lambda}(\tilde \Upsilon_{\! \lambda})}^{-1}] \in (0, \infty)}$.
\end{lemma}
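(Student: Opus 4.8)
The goal is to show that $\E[|\D^{\lambda}(\tilde \Upsilon_{\! \lambda})|^{-1}] \in (0, \infty)$, which will both justify that the proportionality constant in Definition~\ref{def:recoverPhi} is finite and nonzero. I would split this into the finiteness bound, which is the substantive part, and the positivity, which is essentially trivial.

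\begin{proof}
We first prove finiteness. Since $\Lambda \subseteq \R^{+}$, we have $\lambda > 0$, so $|\D^{\lambda}(\tilde \Upsilon_{\! \lambda})| > 0$ almost surely, since $\tilde \Upsilon_{\! \lambda}(0) = 0$ and the field is continuous, so some ball of positive radius around the location of the near-maximum lies in $\D^{\lambda}(\tilde \Upsilon_{\! \lambda})$; in particular the random variable is well-defined and positive. It therefore suffices to control its first moment. Fix $j_0 \in \N$ and note that, by monotonicity in the domain, $|\D^{\lambda}(\tilde \Upsilon_{\! \lambda})| \ge |\D^{\lambda}_{0, j_0}(\tilde \Upsilon_{\! \lambda})|$, where $\D^{\lambda}_{0, j_0}$ refers to the near-maximal level set inside $\B_{j_0}$, so it suffices to bound $\E[|\D^{\lambda}_{0, j_0}(\tilde \Upsilon_{\! \lambda})|^{-1}]$. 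Here one has to be slightly careful: $\M(\tilde \Upsilon_{\! \lambda})$ and $\M_{0, j_0}(\tilde \Upsilon_{\! \lambda})$ need not coincide, but thanks to Lemma~\ref{lm:repulsionShape} (see in particular Remark~\ref{rm:DecayFieldsAS}) the global maximum is attained inside some bounded ball with overwhelming probability, and by Lemma~\ref{lm:Countable} together with Remark~\ref{rm:redTechSmallk} we may choose $j_0$ large enough and work with $\M_{0, j_0}(\tilde \Upsilon_{\! \lambda})$ in place of $\M(\tilde \Upsilon_{\! \lambda})$ up to a null event; more precisely, on the complement of the event $\{\M(\tilde \Upsilon_{\! \lambda}) = \M_{0, j_0}(\tilde \Upsilon_{\! \lambda})\}$ we can further split off a contribution that is negligible using \eqref{eq:boundGrowthLimiting}.

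The core estimate is then a passage to the limit in Lemma~\ref{lm:tailsDgUpsilon}. By Theorem~\ref{th:cluster}, $\tilde \Upsilon_{\! \lambda}$ is the weak limit (in the local uniform topology on $\CC(\R^d)$, testing against $\bfF \in \CC^b_{\loc}(\CC(\R^d))$) of $\Upsilon_{\! b}$ conditioned on $\{\M_{0, b}(\Upsilon_{\! b}) \le \lambda\}$. The functional $\phi \mapsto |\D^{\lambda}_{0, j_0}(\phi)|^{-(1+\sigma)}$ is not bounded and not continuous, so one cannot simply pass to the limit; instead, for a truncation level $N$, apply the weak convergence to the bounded continuous functional $\phi \mapsto N \wedge |\D^{\lambda}_{0, j_0}(\phi)|^{-(1+\sigma)}$ (one should check lower semicontinuity/continuity of $\phi \mapsto |\D^{\lambda}_{0, j_0}(\phi)|$ at the relevant points — this is where Lemma~\ref{lm:Countable} is used, since it rules out the pathological configurations where the level set has a boundary of positive measure or the maximum sits exactly at the threshold) to get
\begin{equation*}
\E\bigl[N \wedge |\D^{\lambda}_{0, j_0}(\tilde \Upsilon_{\! \lambda})|^{-(1+\sigma)}\bigr] = \lim_{b \to \infty} \E\bigl[N \wedge |\D^{\lambda}_{0, j_0}(\Upsilon_{\! b})|^{-(1+\sigma)} \, \big| \, \M_{0, b}(\Upsilon_{\! b}) \le \lambda\bigr]\;.
\end{equation*}
By the unconditional-to-conditional reduction, the right-hand side equals $\lim_{b \to \infty} \E[N \wedge |\D^{\lambda}_{0, j_0}(\Upsilon_{\! b})|^{-(1+\sigma)} \one_{\{\M_{0, b}(\Upsilon_{\! b}) \le \lambda\}}] / \P(\M_{0, b}(\Upsilon_{\! b}) \le \lambda)$. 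The numerator is bounded above, uniformly in $N$ and $b$, by a constant times $1/\sqrt{b}$ thanks to \eqref{eq:boundExpuOrdinary} in Lemma~\ref{lm:tailsDgUpsilon} (in the Brownian-motion form recorded in Remark~\ref{rm:TailEstBM}, with $\sigma < \delta$), while the denominator is bounded below by a constant times $1/\sqrt{b}$ by \eqref{eq:upperBoundKkInfinite} in Lemma~\ref{lm:upperBoundKkUpsilon}. Hence the right-hand side is bounded by a finite constant independent of $N$, and letting $N \to \infty$ by monotone convergence gives $\E[|\D^{\lambda}_{0, j_0}(\tilde \Upsilon_{\! \lambda})|^{-(1+\sigma)}] < \infty$; taking $\sigma = 0$ yields the claim.

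Finally, positivity is immediate: $|\D^{\lambda}(\tilde \Upsilon_{\! \lambda})| \le |\D^{\lambda}_{0, k}(\tilde \Upsilon_{\! \lambda})|$ would be the wrong direction, so instead observe directly that $|\D^{\lambda}(\tilde \Upsilon_{\! \lambda})|^{-1} > 0$ almost surely (the level set has finite Lebesgue measure with positive probability — in fact almost surely, by the decay estimate \eqref{eq:boundGrowthLimiting} which forces $\tilde \Upsilon_{\! \lambda}$ to go to $-\infty$ along annuli, so only finitely many annuli can contribute to $\D^{\lambda}$), whence its expectation is strictly positive. The main obstacle in the whole argument is the interchange of limits: justifying that the non-continuous, unbounded functional $|\D^{\lambda}_{0, j_0}(\cdot)|^{-1}$ behaves well under the weak convergence of Theorem~\ref{th:cluster}, which is exactly what the ``good threshold'' conditions isolated in Lemma~\ref{lm:Countable} are designed to handle, and matching the $j_0$-truncation of the domain with the decay estimates of Lemma~\ref{lm:repulsionShape}.
\end{proof}
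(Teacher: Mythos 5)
Your core estimate is the right one and matches the paper: truncate the functional $\phi \mapsto \abs{\D^{\lambda}_{0,j}(\phi)}^{-1}$, feed it through the weak convergence of Theorem~\ref{th:cluster} together with the asymptotic for the conditioning probability, bound the numerator uniformly via \eqref{eq:boundExpuOrdinary} (Remark~\ref{rm:TailEstBM}) and the denominator via \eqref{eq:upperBoundKkInfinite}, and invoke Lemma~\ref{lm:Countable} to excuse the discontinuities of the near-maximal level-set functional. This is essentially what the paper does.

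However, your reduction from $\D^{\lambda}$ to $\D^{\lambda}_{0,j_0}$ has a gap. The inequality $\abs{\D^{\lambda}(\tilde\Upsilon_{\!\lambda})} \geq \abs{\D^{\lambda}_{0,j_0}(\tilde\Upsilon_{\!\lambda})}$ that you invoke at the outset holds only on the event $\{\M(\tilde\Upsilon_{\!\lambda}) = \M_{0,j_0}(\tilde\Upsilon_{\!\lambda})\}$ (on that event $\D^{\lambda}_{0,j_0}$ is exactly $\D^{\lambda}\cap \B_{j_0}$), and this event is \emph{not} a null event for any fixed $j_0$ — its probability is strictly less than one, though it does tend to one as $j_0 \to \infty$ by \eqref{eq:boundGrowthLimiting}. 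Consequently ``split off a contribution that is negligible'' is not actually established: on the bad event you would need to control $\E[\abs{\D^{\lambda}}^{-1}\one_{\{\M \neq \M_{0,j_0}\}}]$, and since $\abs{\D^{\lambda}}^{-1}$ is precisely the quantity whose expectation you are trying to bound, you cannot treat it via the probability of the event alone (a Cauchy--Schwarz-type argument would require a second moment you have not established). The clean way to close the argument — and the one the paper uses — is to not fix $j_0$: obtain $\sup_j \E\bigl[\abs{\D^{\lambda}_{0,j}(\tilde\Upsilon_{\!\lambda})}^{-1}\bigr] < \infty$ from your uniform-in-$j$ estimate, observe that by Remark~\ref{rm:DecayFieldsAS} one has $\abs{\D^{\lambda}_{0,j}(\tilde\Upsilon_{\!\lambda})}^{-1} \to \abs{\D^{\lambda}(\tilde\Upsilon_{\!\lambda})}^{-1}$ almost surely as $j \to \infty$ (the maximum stabilizes and the near-maximal set is eventually contained in a bounded ball), and then conclude by Fatou's lemma (the paper's appeal to monotone convergence amounts to the same thing, using eventual monotonicity once the maximum has stabilized). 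The positivity half of your argument is fine and matches the paper.
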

\begin{proof}
The fact that $\smash{\E[\abs{\D^{\lambda}(\tilde \Upsilon_{\! \lambda})}^{-1}] > 0}$ follows immediately from the continuity of the field $\tilde \Upsilon_{\! \lambda}$ and from its almost sure decay at infinity, as implied by Lemma~\ref{lm:repulsionShape} (see also Remark~\ref{rm:DecayFieldsAS}).
Hence, it remains to verify that $\smash{\E[\abs{\D^{\lambda}(\tilde \Upsilon_{\! \lambda})}^{-1}] < \infty}$. To this end, fix $0 < j \leq b$. Then, using Lemma~\ref{lm:tailsDgUpsilon} (see also Remark~\ref{rm:TailEstBM}) and proceeding exactly as in the proof of Lemma~\ref{lm:truncOK} below, one obtains that 
\begin{equation*}
\lim_{L \to \infty} \limsup_{b \to \infty} \sqrt{b}\Bigl|\E\bigl[\abs{\D^{\lambda}_{0, j}(\Upsilon_{\! b})}^{-1} \one_{\{\M_{0, b}(\Upsilon_{\! b}) \leq \lambda\}} \bigr]- \E\bigl[\bigl(\abs{\D^{\lambda}_{0, j}(\Upsilon_{\! b})}^{-1} \wedge L\bigr) \one_{\{\M_{0, b}(\Upsilon_{\! b}) \leq \lambda\}} \bigr] \Bigr| = 0 \;.
\end{equation*}
Since $\lambda \in \Lambda$, we have that the set of discontinuities of the mapping $\smash{\CC(\R^d) \ni \phi \mapsto \abs{\D^{\lambda}_{0, j}(\phi)}^{-1} \wedge L}$ is assigned measure zero by the law of the field $\smash{\tilde \Upsilon_{\! \lambda}}$. Therefore, we are in a position to apply Theorems~\ref{th:cluster} and~\ref{th:clusterProb} from which we can deduce that 
\begin{equation*}
\lim_{b \to \infty} \sqrt{b} \E\bigl[\abs{\D^{\lambda}_{0, j}(\Upsilon_{\! b})}^{-1} \one_{\{\M_{0, b}(\Upsilon_{\! b}) \leq \lambda\}} \bigr]	= \alpha c_{\star, \lambda} \E\bigl[\abs{\D^{\lambda}_{0, j}(\tilde \Upsilon_{\! \lambda})}^{-1}\bigr] \;.
\end{equation*}
By invoking the monotone convergence theorem and Lemma~\ref{lm:tailsDgUpsilon} (see also Remark~\ref{rm:TailEstBM}), the claim follows by taking the limit as $j \to \infty$ on both sides of the above expression.
\end{proof}

%%%%%%%%%%%%%%%%%%%%%%%%%%%%%%%%%%%%%%%%%%%%%%
\subsection{A resampling property}
\label{sub:resamplingProp}
In what follows, for $x\in\R^d$, we recall the definition \eqref{eq:shiftOperator} of the shift operator $\tau_x$.  
We begin with the proof of the ``resampling property'' stated in Proposition~\ref{pr:resempPropUps} for the field $\smash{\tilde{\Upsilon}_{\! \lambda}}$. We will use the notation introduced in \eqref{eq:maximal}, \eqref{eq:maximalExpBalls}, and \eqref{eq:maximalExpAnnulus}. Furthermore, we recall that $\Lambda \subseteq \R^{+}$ denotes the set introduced in Lemma~\ref{lm:Countable}.

\begin{proof}[Proof of Proposition~\ref{pr:resempPropUps}]
For $b \in \N$, we recall that $\Upsilon_{\! b} = \Phi_b - \sqrt{\smash[b]{2d}} \frka_b$, where the field $\Phi_b$ is such that, for all $x$, $y \in \R^d$,
\begin{equation*}
\E\bigl[\Phi_b(x) \Phi_b(y)\bigr] = \frka_b(x) + \frka_b(y) - \frka_b(x-y) \;,
\end{equation*}
and where the function $\frka_b$ is defined as in \eqref{eq:frkgb}.
In particular, the field $\Phi_{b}$ is shift invariant in the sense that $\tau_x \Phi_{b}$ has the same law as $\Phi_{b}$ for every $x \in \R^d$.
Then, by applying the Cameron--Martin theorem (see Lemma~\ref{lm_Girsanov}), we have that for all $\bfF : \CC(\R^d) \to \R$ and $y \in \R^d$, it holds that
\begin{align}
\E\bigl[\bfF\bigl(\Upsilon_{\! b}\bigr)\bigr] & = \E\bigl[\bfF\bigl(\Phi_{b} - \sqrt{\smash[b]{2d}} \frka_b\bigr)\bigr] \nonumber \\
& = \E\Bigl[\bfF\bigl(\Phi_{b} + \sqrt{\smash[b]{2d}} \E[\Phi_b(\cdot) \Phi_b(y)] - \sqrt{\smash[b]{2d}} \frka_b\bigr) e^{-\sqrt{\smash[b]{2d}}\Phi_b(y) - 2d \frka_b(y)} \Bigr] \nonumber \\
& = \E\Bigl[\bfF\bigl(\tau_{-y}\Phi_{b} + \sqrt{\smash[b]{2d}} \E[\Phi_b(\cdot) \Phi_b(y)] - \sqrt{\smash[b]{2d}} \frka_b\bigr) e^{-\sqrt{\smash[b]{2d}}\tau_{-y}\Phi_b(y) - 2d \frka_b(y)} \Bigr] \nonumber \\
& = \E\Bigl[\bfF\bigl(\tau_{-y}\Upsilon_{\! b} + \sqrt{\smash[b]{2d}} \tau_{-y} \frka_{b} + \sqrt{\smash[b]{2d}} \E[\Phi_b(\cdot) \Phi_b(y)] - \sqrt{\smash[b]{2d}} \frka_b\bigr) e^{\sqrt{\smash[b]{2d}}\Phi_b(-y) - 2d \frka_b(y)} \Bigr] \nonumber \\
& = \E\Bigl[\bfF\bigl(\tau_{-y}\Upsilon_{\! b}\bigr) e^{\sqrt{\smash[b]{2d}}\Upsilon_{\! b}(-y)}\Bigr] \;. \label{eq:GirsUps}
\end{align}

Now, for each $\lambda \in \Lambda$, $\bfF \in \CC^b_{\loc}(\CC(\R^d))$, and $0 \leq k \leq b$, we can write 
\begin{equation*}
\E\Bigl[\bfF\bigl(\Upsilon_{\! b}\bigr)\one_{\{\M_{0, b}(\Upsilon_{\! b}) \leq \lambda\}}\Bigr] 
= \E\Biggl[\frac{\int_{\B_k} \bfF(\Upsilon_{\! b}) \one_{\{\M_{0, b}(\Upsilon_{\! b}) \leq \lambda\}} e^{\sqrt{\smash[b]{2d}}(\Upsilon_{\! b}(x)- \M_{x, b}(\Upsilon_{\! b}))}\one_{\{\Upsilon_{\! b}(x) \geq \M_{x, b}(\Upsilon_{\! b}) - \lambda\}}\,dx}{\int_{\B_k} e^{\sqrt{\smash[b]{2d}}(\Upsilon_{\! b}(y) -\M_{y, b}(\Upsilon_{\! b}))}\one_{\{\Upsilon_{\! b}(y) \geq \M_{y, b}(\Upsilon_{\! b})- \lambda\}}dy}\Biggr]\;,
\end{equation*}
Therefore, using \eqref{eq:GirsUps}, the expectation on the right-hand side of the previous display is equal to
\begin{align}
& \int_{\B_k} \E\Biggl[\frac{\bfF(\tau_{-x} \Upsilon_{\! b}) \one_{\{\M_{-x, b}(\Upsilon_{\! b}) - \Upsilon_{\! b}(-x) \leq \lambda\}} e^{\sqrt{\smash[b]{2d}}(\Upsilon_{\! b}(-x) - \M_{0, b}(\Upsilon_{\! b}))}\one_{\{\M_{0, b}(\Upsilon_{\! b}) \leq \lambda\}}}{\int_{\B_k} e^{\sqrt{\smash[b]{2d}}(\Upsilon_{\! b}(y-x) -\M_{y-x, b}(\Upsilon_{\! b}))}\one_{\{\Upsilon_{\! b}(y-x) \geq \M_{y-x, b}(\Upsilon_{\! b}) - \lambda\}}dy}\Biggr] dx \nonumber \\
& \qquad = \E\Biggl[\int_{\B_k}  \frac{\bfF(\tau_{x} \Upsilon_{\! b}) e^{\sqrt{\smash[b]{2d}}(\Upsilon_{\! b}(x) - \M_{0, b}(\Upsilon_{\! b}))}\one_{\{\Upsilon_{\! b}(x) \geq \M_{x, b}(\Upsilon_{\! b}) - \lambda\}}}{\int_{\B_k(x)} e^{\sqrt{\smash[b]{2d}}(\Upsilon_{\! b}(y) -\M_{y, b}(\Upsilon_{\! b}))}\one_{\{\Upsilon_{\! b}(y) \geq \M_{y, b}(\Upsilon_{\! b}) - \lambda\}}dy} dx \one_{\{\M_{0, b}(\Upsilon_{\! b}) \leq \lambda\}}\Biggr] \;. \label{eq:startResempling}
\end{align}

We observe that the integral over $x$ inside the expectation in \eqref{eq:startResempling} is bounded above by a positive finite constant. Indeed, since the function $\bfF$ is bounded by assumption and the exponential terms are easily bounded, it suffices to verify that
\begin{equation*}
\int_{\B_k}  \frac{\one_{\{\Upsilon_{\! b}(x) \geq \M_{x, b}(\Upsilon_{\! b}) - \lambda\}}}{\int_{\B_k(x)} \one_{\{\Upsilon_{\! b}(y) \geq \M_{y, b}(\Upsilon_{\! b}) - \lambda\}}dy} dx < \infty \;.
\end{equation*}
To simplify the notation, we introduce the set $\rmA_{\lambda} \eqdef \{x \in \R^d \, : \, \Upsilon_{\! b}(x) \geq \M_{x, b}(\Upsilon_{\! b}) - \lambda\}$. We note that there exist a positive number $\sigma_d$ that depends only on $d$ and points $\{z_1, \ldots z_{\sigma_d}\} \subseteq \R^d$, such that $\B_k \subseteq \cup_{i=1}^{\sigma_d} B(z_i, e^k/2)$. Consequently, this implies that the quantity on the left-hand side of the previous expression is bounded above by
\begin{equation*}
\sum_{i = 1}^{\sigma_d} \int_{B(z, e^k/2)} \frac{\one_{\{x \in \rmA_{\lambda}\}}}{\int_{\B_k(x)} \one_{\{y \in \rmA_{\lambda}\}} dy} dx \leq \sum_{i = 1}^{\sigma_d} \int_{B(z, e^k/2)} \frac{\one_{\{x \in \rmA_{\lambda}\}}}{\int_{B(z, e^k/2)} \one_{\{y \in \rmA_{\lambda}\}} dy} dx \leq \sigma_d \;.
\end{equation*}

We now want to take the limit as $b \to \infty$ in the expectation in \eqref{eq:startResempling} by applying Theorems~\ref{th:cluster}~and~\ref{th:clusterProb}. However, the function that assigns to $\Upsilon_{\! b}$ the quantity inside the expectation in \eqref{eq:startResempling} is not a function in $\CC^b_{\loc}(\CC(\R^d))$. In order to overcome this issue, we proceed in three steps.

\textbf{Step~1:} We can choose $0 < k \leq j \leq b$ sufficiently large such that for all $x \in \B_k$ and all $y \in \B_k(x)$ the following inclusions hold 
\begin{equation*}
\B_{j} \subseteq \B_b \subseteq \B_{2b} \;, \qquad \B_{j} \subseteq \B_b(x) \subseteq \B_{2b} \;, \qquad \B_{j} \subseteq \B_b(y) \subseteq \B_{2b} \;.
\end{equation*}
In particular, this implies that on the event $\{\M_{0, 2b}(\Upsilon_{\! b}) = \M_{0, j}(\Upsilon_{\! b})\}$ it holds, for all $x \in \B_k$ and all $y \in \B_k(x)$, that 
\begin{equation*}
\M_{0, b}(\Upsilon_{\! b}) = \M_{x, b}(\Upsilon_{\! b}) = \M_{y, b}(\Upsilon_{\! b}) = \M_{0, j}(\Upsilon_{\! b}) \;.
\end{equation*}
Hence, since as we have already observed the integral over $x$ in \eqref{eq:startResempling} is bounded above, we have that the difference
\begin{align*}
& \Biggl\lvert\E\Biggl[\int_{\B_k}  \frac{\bfF(\tau_{x} \Upsilon_{\! b}) e^{\sqrt{\smash[b]{2d}}(\Upsilon_{\! b}(x) - \M_{0, b}(\Upsilon_{\! b}))}\one_{\{\Upsilon_{\! b}(x) \geq \M_{x, b}(\Upsilon_{\! b}) -  \lambda\}}}{\int_{\B_k(x)} e^{\sqrt{\smash[b]{2d}}(\Upsilon_{\! b}(y) -\M_{y, b}(\Upsilon_{\! b}))}\one_{\{\Upsilon_{\! b}(y) \geq \M_{y, b}(\Upsilon_{\! b}) - \lambda\}}dy} dx \one_{\{\M_{0, b}(\Upsilon_{\! b}) \leq \lambda\}}\Biggr]	 \\
& \hspace{20mm} - \E\Biggl[\int_{\B_k}  \frac{\bfF(\tau_{x} \Upsilon_{\! b}) e^{\sqrt{\smash[b]{2d}}(\Upsilon_{\! b}(x) - \M_{0, j}(\Upsilon_{\! b}))}\one_{\{\Upsilon_{\! b}(x) \geq \M_{0, j}(\Upsilon_{\! b}) - \lambda\}}}{\int_{\B_k(x)} e^{\sqrt{\smash[b]{2d}}(\Upsilon_{\! b}(y) - \M_{0, j}(\Upsilon_{\! b}))}\one_{\{\Upsilon_{\! b}(y) \geq \M_{0, j}(\Upsilon_{\! b}) - \lambda\}}dy} dx \one_{\{\M_{0, b}(\Upsilon_{\! b}) \leq \lambda\}}\Biggr]\Biggr\rvert 
\end{align*}
is bounded by a positive constant times the following probability 
\begin{equation*}
\P\bigl(\M_{0, 2b}(\Upsilon_{\! b}) \neq \M_{0, j}(\Upsilon_{\! b}), \; \M_{0, b}(\Upsilon_{\! b}) \leq \lambda \bigr) \;. 
\end{equation*}
By using Lemmas~\ref{lm:approxBrownianBridge},~\ref{lm:upperBoundKk} and a version of Lemma~\ref{lm:mainEntrRW} for Brownian motions (see e.g.\ \cite[Lemma~4.18]{BiskupLouidor}), we have that 
\begin{equation*}
\lim_{j \to \infty} \limsup_{b \to \infty} \sqrt{b} \, \P\bigl(\M_{0, 2b}(\Upsilon_{\! b}) \neq \M_{0, j}(\Upsilon_{\! b}), \; \M_{0, b}(\Upsilon_{\! b}) \leq \lambda\bigr) = 0 \;. 	
\end{equation*}

\textbf{Step~2:} We note that on the event $\{\M_{0, j, k/2}(\Upsilon_{\! b}) < \M_{0, j}(\Upsilon_{\! b}) - \lambda\}$, it holds that 
\begin{align*}
& \int_{\B_k}  \frac{\bfF(\tau_{x} \Upsilon_{\! b}) e^{\sqrt{\smash[b]{2d}}(\Upsilon_{\! b}(x) - \M_{0, j}(\Upsilon_{\! b}))}\one_{\{\Upsilon_{\! b}(x) \geq \M_{0, j}(\Upsilon_{\! b}) - \lambda\}}}{\int_{\B_k(x)} e^{\sqrt{\smash[b]{2d}}(\Upsilon_{\! b}(y) -\M_{0, j}(\Upsilon_{\! b}))}\one_{\{\Upsilon_{\! b}(y) \geq \M_{0, j}(\Upsilon_{\! b}) - \lambda\}}dy} dx \\
& \hspace{40mm} =\frac{ \int_{\B_{k/2}} \bfF(\tau_{x} \Upsilon_{\! b}) e^{\sqrt{\smash[b]{2d}}(\Upsilon_{\! b}(x) - \M_{0, j}(\Upsilon_{\! b}))}\one_{\{\Upsilon_{\! b}(x) \geq \M_{0, j}(\Upsilon_{\! b}) - \lambda\}} dx}{\int_{\B_{k/2}} e^{\sqrt{\smash[b]{2d}}(\Upsilon_{\! b}(y) - \M_{0, j}(\Upsilon_{\! b}))}\one_{\{\Upsilon_{\! b}(y) \geq \M_{0, j}(\Upsilon_{\! b}) - \lambda\}}dy}\;.
\end{align*}
Therefore using this fact, we have that the difference 
\begin{align*}
& \Biggl\lvert\E\Biggl[\int_{\B_k}  \frac{\bfF(\tau_{x} \Upsilon_{\! b}) e^{\sqrt{\smash[b]{2d}}(\Upsilon_{\! b}(x) - \M_{0, j}(\Upsilon_{\! b}))}\one_{\{\Upsilon_{\! b}(x) \geq \M_{0, j}(\Upsilon_{\! b}) - \lambda\}}}{\int_{\B_k(x)} e^{\sqrt{\smash[b]{2d}}(\Upsilon_{\! b}(y) -\M_{0, j}(\Upsilon_{\! b}))}\one_{\{\Upsilon_{\! b}(y) \geq \M_{0, j}(\Upsilon_{\! b}) - \lambda\}}dy} dx \one_{\{\M_{0, b}(\Upsilon_{\! b}) \leq \lambda\}}\Biggr] \\
& \hspace{20mm} - \E\Biggl[\frac{\int_{\B_{k/2}} \bfF(\tau_{x} \Upsilon_{\! b}) e^{\sqrt{\smash[b]{2d}}(\Upsilon_{\! b}(x) - \M_{0, j}(\Upsilon_{\! b}))}\one_{\{\Upsilon_{\! b}(x) \geq \M_{0, j}(\Upsilon_{\! b}) - \lambda\}}dx }{\int_{\B_{k/2}} e^{\sqrt{\smash[b]{2d}}(\Upsilon_{\! b}(y) - \M_{0, j}(\Upsilon_{\! b}))}\one_{\{\Upsilon_{\! b}(y) \geq \M_{0, j}(\Upsilon_{\! b}) - \lambda\}}dy} \one_{\{\M_{0, b}(\Upsilon_{\! b}) \leq \lambda\}}\Biggr] \Biggr\rvert 
\end{align*}
is bounded by a finite positive constant times the following probability 
\begin{equation*}
\P\bigl(\M_{0, j, k/2}(\Upsilon_{\! b}) \geq \M_{0, j}(\Upsilon_{\! b}) - \lambda, \; \M_{0, b}(\Upsilon_{\! b}) \leq \lambda\bigr) \;.	
\end{equation*}
Once again, by using Lemmas~\ref{lm:approxBrownianBridge},~\ref{lm:upperBoundKk} and \cite[Lemma~4.18]{BiskupLouidor}, we have that 
\begin{equation*}
\lim_{k \to \infty} \limsup_{b \to \infty} \sqrt{b} \, \P\bigl(\M_{0, j, k/2}(\Upsilon_{\! b}) \geq \M_{0, j}(\Upsilon_{\! b}) - \lambda, \; \M_{0, b}(\Upsilon_{\! b}) \leq \lambda\bigr) = 0 \;. 	
\end{equation*}

\textbf{Step~3:} To conclude, we observe that, since $\bfF \in \CC^b_{\loc}(\CC(\R^d))$, the mapping 
\begin{equation*}
\CC(\R^d) \ni \phi \mapsto \frac{\int_{\B_{k/2}} \bfF(\tau_{x} \phi) e^{\sqrt{\smash[b]{2d}}(\phi(x) - \M_{0, j}(\phi))}\one_{\{\phi(x) \geq \M_{0, j}(\phi) - \lambda\}}dx }{\int_{\B_{k/2}} e^{\sqrt{\smash[b]{2d}}(\phi(y) - \M_{0, j}(\phi))}\one_{\{\phi(y) \geq \M_{0, j}(\phi) - \lambda\}}dy} 
\end{equation*}
depends on the values of $\phi$ inside a compact subsets of $\R^d$. Furthermore, since $\lambda \in \Lambda$, the set of discontinuities of the above mapping is assigned measure zero by the law of the field $\tilde \Upsilon_{\! \lambda}$ in $\CC(\R^d)$ (see Remark~\ref{rm:redTechSmallk}). Hence, by combining \eqref{eq:startResempling} and Steps~1~and~2, by taking the limit as $b \to \infty$, and using Theorems~\ref{th:cluster}~and~\ref{th:clusterProb}, we obtain that  
\begin{align*}
\E\bigl[\bfF\bigl(\tilde \Upsilon_{\! \lambda} \bigr)\bigr] = \lim_{k \to \infty} \lim_{j \to \infty} \E\Biggl[\frac{\int_{\B_{k/2}} \bfF(\tau_{x} \tilde \Upsilon_{\! \lambda}) e^{\sqrt{\smash[b]{2d}}\tilde \Upsilon_{\! \lambda}(x)}\one_{\{\tilde \Upsilon_{\! \lambda}(x) \geq \M_{0, j}(\tilde \Upsilon_{\! \lambda}) - \lambda\}}dx }{\int_{\B_{k/2}} e^{\sqrt{\smash[b]{2d}} \tilde \Upsilon_{\! \lambda}(y)}\one_{\{\tilde \Upsilon_{\! \lambda}(y) \geq \M_{0, j}(\tilde \Upsilon_{\! \lambda})-\lambda\}}dy}\Biggr]  \;, 
\end{align*}
and so the claim follows from the dominated convergence theorem and the monotone convergence theorem.
\end{proof}

We are now ready to prove Proposition~\ref{pr:inversionPsi}.
\begin{proof}[Proof of Proposition~\ref{pr:inversionPsi}]
Let $\lambda \in \Lambda$ and $\bfF \in \CC^b_{\loc}(\CC(\R^d))$. By applying the characterisation \eqref{e:recoverPhi} of the field $\Psi_{\lambda}$ to the right-hand side of \eqref{e:tilt3}, we obtain that the claim follows if one can prove that the following equality holds
\begin{align*}
\E\bigl[\bfF(\tilde{\Upsilon}_{\! \lambda})\bigr] =  \E\Biggl[\frac{\int_{\R^d} \bfF(\tau_x \tilde{\Upsilon}_{\!\lambda})e^{\sqrt{\smash[b]{2d}} \tilde \Upsilon_{\!\lambda}(x)} \one_{\{\tilde\Upsilon_{\!\lambda}(x) \geq \M(\tilde \Upsilon_{\!\lambda}) - \lambda\}} dx}{\int_{\R^d} e^{\sqrt{\smash[b]{2d}}\tilde \Upsilon_{\!\lambda}(x)}\one_{\{\tilde \Upsilon_{\!\lambda}(x) \geq \M(\tilde \Upsilon_{\!\lambda}) - \lambda\}} dx}\Biggr] \;.
\end{align*}
We note that this is precisely \eqref{e:wantedUpsilon}, and thus the claim is established by Proposition~\ref{pr:resempPropUps}.
\end{proof}

%%%%%%%%%%%%%%%%%%%%%%%%%%%%%%%%%%%%%%%%%%%%%%
\subsection{Independence from the choice of threshold}
\label{sub:indthreshold}
In this section, we verify the independence of certain quantities of interest from the arbitrary threshold $\lambda$. Specifically, we begin by proving Proposition~\ref{pr:PsiIndepLambda}, which establishes the independence of the law of the field $\Psi_{\lambda}$ from $\lambda \in \Lambda$. Then, in Lemma~\ref{lm:AStarIndepLambda}, we show that the constant $a_{\star}$, defined in \eqref{eq:defAStar}, is also independent from $\lambda \in \Lambda$.

We start by proving Proposition~\ref{pr:PsiIndepLambda}.
\begin{proof}[Proof of Proposition~\ref{pr:PsiIndepLambda}]
Consider $\lambda_1$, $\lambda_2 \in \Lambda$ such that $\lambda_2 < \lambda_1$. Thanks to Lemma~\ref{lm:Countable}, we know that the field $\smash{\tilde{\Upsilon}_{\! \lambda_2}}$ has the same law as the field $\smash{\tilde{\Upsilon}_{\! \lambda_1}}$, conditioned on the event $\smash{\M(\tilde{\Upsilon}_{\! \lambda_1}) \leq \lambda_2}$.
Therefore, for any $\bfF \in \CC^b_{\loc}(\CC(\R^d))$, recalling Definition~\ref{def:recoverPhi}, we can write 
\begin{equation}
\label{eq:PsiIndepLambdaProof1}
\E\bigl[\bfF(\Psi_{\lambda_2})\bigr] \propto \E\left[\frac{\bfF\bigl(\tau_{x_{\star}} \! \tilde \Upsilon_{\! \lambda_1}\bigr) e^{\sqrt{\smash[b]{2d}} \M(\tilde{\Upsilon}_{\! \lambda_1})}}{\int_{\R^d} e^{\sqrt{\smash[b]{2d}}\tilde\Upsilon_{\! \lambda_1}(x)} \one_{\{\tilde \Upsilon_{\! \lambda_1}(x) \geq \M(\tilde{\Upsilon}_{\! \lambda_1}) - \lambda_2\}} dx} \one_{\{\M(\tilde{\Upsilon}_{\! \lambda_1}) \leq \lambda_2\}}\right] \;,
\end{equation}
where $x_{\star} \in \R^d$ denotes the point where the field $\smash{\tilde{\Upsilon}_{\! \lambda_1}}$ achieves its maximum. 
Now, one would like to apply the resampling property \eqref{e:wantedUpsilon} of the field $\smash{\tilde \Upsilon_{\! \lambda_1}}$ to the quantity on the right-hand side. However, we observe that the function that assigns to $\tilde \Upsilon_{\! \lambda_1}$ the quantity inside the expectation in \eqref{eq:PsiIndepLambdaProof1} is not a function in $\CC^b_{\loc}(\CC(\R^d))$.
%\begin{equation*}
%	\Phi \mapsto \frac{\bfF\bigl(\tau_{x_{\star}} \! \Phi\bigr) e^{\sqrt{\smash[b]{2d}}\M(\Phi)}}{\int_{\R^d} e^{\sqrt{\smash[b]{2d}}\Phi(x)} \one_{\{\Phi(x) \geq \M(\Phi) - \lambda_2\}} dx} \one_{\{\M(\Phi) \leq \lambda_2\}}
%\end{equation*}
Indeed, this map is neither bounded nor dependent on the values of the input function inside a compact set. Moreover, note that it is also not continuous. However, since $\smash{\lambda_1}$, $\smash{\lambda_2 \in \Lambda}$, its set of discontinuities has measure zero under the law of $\smash{\tilde{\Upsilon}_{\! \lambda_1}}$, so this does not pose an issue.

For $k > 0$, let $x_{\star, k} \in \B_k$ denote the point where the field $\smash{\tilde{\Upsilon}_{\! \lambda_1}}$ achieves its maximum inside $\B_k$. Then, to overcome the issues mentioned above, we claim that the following quantity
\begin{equation}
\label{eq:PsiIndepLambdaProof111}
\E\left[\frac{\bfF\bigl(\tau_{x_{\star, k}} \! \tilde \Upsilon_{\! \lambda_1}\bigr) e^{\sqrt{\smash[b]{2d}} \M_{0, k}(\tilde{\Upsilon}_{\! \lambda_1})}}{\bigl(\int_{\B_k} e^{\sqrt{\smash[b]{2d}}\tilde\Upsilon_{\! \lambda_1}(x)} \one_{\{\tilde \Upsilon_{\! \lambda_1}(x) \geq \M_{0, k}(\tilde{\Upsilon}_{\! \lambda_1}) - \lambda_2\}} dx\bigr) \vee L^{-1}} \one_{\{\M_{0, k}(\tilde{\Upsilon}_{\! \lambda_1}) \leq \lambda_2\}}\right]
\end{equation}
converges to the right-hand side of \eqref{eq:PsiIndepLambdaProof1} as $k \to \infty$ first, and then $L \to \infty$.
This follows from Theorems~\ref{th:cluster},~\ref{th:clusterProb} and Lemmas~\ref{lm:repulsionShape},~\ref{lm:tailsDgUpsilon}. For brevity, we do not provide the details here, but we note that similar computations are carried out in the proofs of Lemmas~\ref{lm:truncOK} and \ref{lm:reduceSizeOK} below.

Now, we are in a position to apply the resampling property \eqref{e:wantedUpsilon} of the field $\smash{\tilde \Upsilon_{\! \lambda_1}}$ to the quantity on the right-hand side of \eqref{eq:PsiIndepLambdaProof111} for fixed $k \geq 0$ and $L \geq 0$. Proceeding in this manner, performing some algebraic rearrangements, and then removing the cutoff parameters, it is straightforward to see that the quantity on the right-hand side of \eqref{eq:PsiIndepLambdaProof1} coincides with $\smash{\E[\bfF(\Psi_{\lambda_1})]}$, as desired.
\end{proof}

We now argue that the constant $a_{\star}$, defined in \eqref{eq:defAStar}, is independent of $\lambda \in \Lambda$. To establish this, we first need to prove the following result concerning the constant $c_{\star, \lambda}$, defined in \eqref{eq:defCStar}.
\begin{lemma}
\label{lm:repCStarLambda}
For any $\lambda_1$, $\lambda_2 \in \Lambda$ such that $\lambda_2 \leq \lambda_1$, it holds that  
\begin{equation*}
c_{\star, \lambda_2} = c_{\star, \lambda_1} \P\bigl(\M(\tilde{\Upsilon}_{\! \lambda_1}) \leq \lambda_2\bigr) \;.
\end{equation*} 
\end{lemma}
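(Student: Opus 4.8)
The plan is to relate the constant $c_{\star, \lambda}$, as represented in \eqref{eq:defcstar} via the functional $\H_{k, \lambda}(1) = \lim_{k} \E[B_k \one_{\{B_k \in [k^{1/6}, k^{5/6}]\}}\one_{\{\M_{0, k}(\Upsilon_{\! k}) \leq \lambda\}}]$, to the measures $\nu_{\infty, \lambda}$ introduced in Section~\ref{sub:Uncountable}. Recall that $\nu_{b, \lambda} = c_{\star, \lambda}\sqrt{b}\,\Law[(b, \Upsilon_{\! b})]|\A_{\lambda}$ and that, by Proposition~\ref{pr:AsyProb} (or rather its Brownian motion analogue, i.e.\ Theorem~\ref{th:clusterProb}) together with Proposition~\ref{pr:mainAsy}, the total mass of $\nu_{b, \lambda}$ converges to $c_{\star, \lambda}$, since $\sqrt{b}\,\P(\M_{0, b}(\Upsilon_{\! b}) \leq \lambda) \to \alpha c_{\star, \lambda}$ and the extra factor of $\alpha^{-1}$ is absorbed. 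More precisely, the key identity is that for $\bfF \in \CC^b_{\loc}(\CC(\R^d))$, by \eqref{eq:mainBoundAsyBMFinite} the quantity $\sqrt{b}\,\E[\bfF(\Upsilon_{\! b})\one_{\{\M_{0, b}(\Upsilon_{\! b}) \leq \lambda\}}]$ converges to $\alpha\,\H_{\infty, \lambda}(\bfF) \eqdef \alpha\lim_{k}\H_{k,\lambda}(\bfF)$, and by the proof of Theorem~\ref{th:cluster}, $\E[\bfF(\tilde\Upsilon_{\! \lambda})] = \H_{\infty,\lambda}(\bfF)/\H_{\infty,\lambda}(1) = \H_{\infty,\lambda}(\bfF)/c_{\star,\lambda}$. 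Hence the finite measure $c_{\star,\lambda}\,\Law[\tilde\Upsilon_{\!\lambda}]$ (pushed into $\X$ as the second marginal of $\nu_{\infty,\lambda}$) satisfies $\int \bfF\, d(c_{\star,\lambda}\nu_{\infty,\lambda}) = \alpha^{-1}\lim_{b}\sqrt{b}\,\E[\bfF(\Upsilon_{\! b})\one_{\{\M_{0,b}(\Upsilon_{\! b}) \leq \lambda\}}]$ for all such $\bfF$, with total mass $c_{\star,\lambda}$.

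The next step exploits the restriction structure from Lemma~\ref{lm:Countable}: for $\lambda_2 < \lambda_1$ in $\Lambda$, the field $\tilde\Upsilon_{\!\lambda_2}$ has the law of $\tilde\Upsilon_{\!\lambda_1}$ conditioned on $\{\M(\tilde\Upsilon_{\!\lambda_1}) \leq \lambda_2\}$; moreover, by Remark~\ref{rm:redTechSmallk}, the same holds with $\M$ replaced by $\M_{0,k}$ for fixed $k$, and by Lemma~\ref{lm:CountableRestr} we have $\nu_{\infty,\lambda_2} = \nu_{\infty,\lambda_1}|\A_{\lambda_2}$ as measures on $(\X, \bar\dd_{\X})$. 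Taking total masses in the identity $c_{\star,\lambda_2}\nu_{\infty,\lambda_2} = c_{\star,\lambda_2}\bigl(\nu_{\infty,\lambda_1}|\A_{\lambda_2}\bigr)$ is not quite right since the constants differ; instead, one should combine $\E[\bfF(\tilde\Upsilon_{\!\lambda_2})] = c_{\star,\lambda_2}^{-1}\H_{\infty,\lambda_2}(\bfF)$ with the conditioning identity $\E[\bfF(\tilde\Upsilon_{\!\lambda_2})] = \E[\bfF(\tilde\Upsilon_{\!\lambda_1})\one_{\{\M(\tilde\Upsilon_{\!\lambda_1}) \leq \lambda_2\}}]/\P(\M(\tilde\Upsilon_{\!\lambda_1}) \leq \lambda_2)$. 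Taking $\bfF = 1$ gives $1 = c_{\star,\lambda_2}^{-1}\H_{\infty,\lambda_2}(1) = c_{\star,\lambda_2}^{-1}c_{\star,\lambda_2}$, which is vacuous; so instead one must use a genuinely local $\bfF$ and track the normalisations. The cleanest route: since $\H_{\infty,\lambda}(\bfF) = c_{\star,\lambda}\E[\bfF(\tilde\Upsilon_{\!\lambda})]$, the conditioning identity reads
\begin{equation*}
\frac{\H_{\infty,\lambda_2}(\bfF)}{c_{\star,\lambda_2}} = \frac{\E\bigl[\bfF(\tilde\Upsilon_{\!\lambda_1})\one_{\{\M(\tilde\Upsilon_{\!\lambda_1}) \leq \lambda_2\}}\bigr]}{\P\bigl(\M(\tilde\Upsilon_{\!\lambda_1}) \leq \lambda_2\bigr)} \;.
\end{equation*}
Taking $\bfF = \one_{\{\M_{0,k}(\cdot) \leq \lambda_2\}}$ (which lies in the right class up to a null set of discontinuities since $\lambda_2 \in \Lambda$, using Remark~\ref{rm:redTechSmallk}) and letting $k \to \infty$, the left side becomes $\lim_k \H_{\infty,\lambda_2}(\one_{\{\M_{0,k} \leq \lambda_2\}})/c_{\star,\lambda_2}$; but $\tilde\Upsilon_{\!\lambda_2}$ already satisfies $\M(\tilde\Upsilon_{\!\lambda_2}) \leq \lambda_2$ a.s., so this is $\H_{\infty,\lambda_2}(1)/c_{\star,\lambda_2} = 1$, while the right side becomes $\E[\one_{\{\M(\tilde\Upsilon_{\!\lambda_1}) \leq \lambda_2\}}]/\P(\M(\tilde\Upsilon_{\!\lambda_1}) \leq \lambda_2) = 1$ — again vacuous. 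The resolution is to directly compare $c_{\star,\lambda_2} = \H_{\infty,\lambda_2}(1)$ with $\H_{\infty,\lambda_1}$ evaluated on the (nearly) indicator $\one_{\{\M \leq \lambda_2\}}$: from \eqref{eq:mainBoundAsyBMFinite} applied at level $\lambda_1$ with $\bfF_k = \one_{\{\M_{0,k}(\cdot) \leq \lambda_2\}}$ we get $\sqrt{b}\,\E[\bfF_k(\Upsilon_{\! b})\one_{\{\M_{0,b}(\Upsilon_{\! b}) \leq \lambda_1\}}] \to \alpha\H_{\infty,\lambda_1}(\bfF_k)$, while since $\{\M_{0,k} \leq \lambda_2\} \cap \{\M_{0,b} \leq \lambda_1\}$ sandwiches $\{\M_{0,b} \leq \lambda_2\}$ up to events of relative probability $o(1/\sqrt b)$ (controlled by Lemma~\ref{lm:entroDelta} together with Lemmas~\ref{lm:approxBrownianBridge}, \ref{lm:upperBoundKk} and the entropic repulsion bound, exactly as in Steps~1--2 of the proof of Proposition~\ref{pr:resempPropUps}), the same limit equals $\alpha\,c_{\star,\lambda_2}$ after dividing; i.e.\ $\H_{\infty,\lambda_1}(\one_{\{\M \leq \lambda_2\}}) = c_{\star,\lambda_2}$. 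On the other hand $\H_{\infty,\lambda_1}(\one_{\{\M \leq \lambda_2\}}) = c_{\star,\lambda_1}\E[\one_{\{\M(\tilde\Upsilon_{\!\lambda_1}) \leq \lambda_2\}}] = c_{\star,\lambda_1}\P(\M(\tilde\Upsilon_{\!\lambda_1}) \leq \lambda_2)$, which gives the claim.

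Thus the structure of the argument is: (i) record that $c_{\star,\lambda} = \H_{\infty,\lambda}(1)$ and that for bounded local $\bfF$, $\H_{\infty,\lambda}(\bfF) = \lim_b \sqrt b\,\alpha^{-1}\E[\bfF(\Upsilon_{\! b})\one_{\{\M_{0,b}(\Upsilon_{\! b}) \leq \lambda\}}] = c_{\star,\lambda}\E[\bfF(\tilde\Upsilon_{\!\lambda})]$ (from Proposition~\ref{pr:mainAsy} and the proof of Theorem~\ref{th:cluster}); (ii) take $\bfF = \bfF_k \eqdef \one_{\{\M_{0,k}(\cdot) \leq \lambda_2\}}$, legitimate since $\lambda_2 \in \Lambda$ makes its discontinuity set $\tilde\Upsilon_{\!\lambda_1}$-null by Lemma~\ref{lm:Countable}/Remark~\ref{rm:redTechSmallk}; (iii) show $\H_{\infty,\lambda_1}(\bfF_k) \to c_{\star,\lambda_2}$ as $k \to \infty$, by comparing $\{\M_{0,k}(\Upsilon_{\! b}) \leq \lambda_2,\ \M_{0,b}(\Upsilon_{\! b}) \leq \lambda_1\}$ with $\{\M_{0,b}(\Upsilon_{\! b}) \leq \lambda_2\}$ and using Lemma~\ref{lm:entroDelta} to absorb the near-maximal discrepancies in the annulus $\B_b \setminus \B_k$ at relative cost $o(1/\sqrt b)$, mirroring Steps~1--2 of the proof of Proposition~\ref{pr:resempPropUps}; and (iv) identify $\lim_k\H_{\infty,\lambda_1}(\bfF_k)$ on the other hand with $c_{\star,\lambda_1}\P(\M(\tilde\Upsilon_{\!\lambda_1}) \leq \lambda_2)$ via step (i) at level $\lambda_1$, monotone convergence, and the a.s.\ decay of $\tilde\Upsilon_{\!\lambda_1}$ at infinity (Lemma~\ref{lm:repulsionShape}, Remark~\ref{rm:DecayFieldsAS}) which guarantees $\M_{0,k}(\tilde\Upsilon_{\!\lambda_1}) = \M(\tilde\Upsilon_{\!\lambda_1})$ for $k$ large a.s. The main obstacle is step (iii): one must show that replacing the sharp constraint $\M_{0,b}(\Upsilon_{\! b}) \leq \lambda_2$ by the pair of constraints $\M_{0,k}(\Upsilon_{\! b}) \leq \lambda_2$ and $\M_{0,b}(\Upsilon_{\! b}) \leq \lambda_1$ costs only $o(u/b)$ (equivalently $o(1/\sqrt b)$ for the Brownian motion), which requires the entropic-repulsion estimate of Lemma~\ref{lm:entroDelta} — to rule out the field coming within $\lambda_1 - \lambda_2$ of its max in $\B_k$ while staying $\leq \lambda_1$ in $\B_b$ — together with the reduction-to-Brownian-motion machinery of Section~\ref{subsec:redBMCluster}; this is routine given those tools but is the only non-formal point. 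Everything else is bookkeeping with the functionals $\H_{\infty,\lambda}$ and the already-established restriction property of the $\tilde\Upsilon_{\!\lambda}$ family.
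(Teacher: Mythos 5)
Your proposal is correct and takes essentially the same approach as the paper: both express $c_{\star,\lambda_2}$ via the functionals $\H_{\cdot,\lambda}$ (equivalently the probability integral obtained by conditioning on $B_k$), use entropic repulsion (Lemma~\ref{lm:repulsionShape} and the reduction-to-Brownian-motion machinery) to replace the constraint $\M_{0,b}(\cdot)\le\lambda_2$ by a localised version on a ball of fixed radius at cost $o(1/\sqrt b)$, and then invoke Propositions~\ref{pr:AsyConv} and~\ref{pr:AsyProb} with the near-indicator test function $\one_{\{\M_{0,j}(\cdot)\le\lambda_2\}}$, finishing with monotone convergence in $j$. The paper starts from the explicit integral representation of $c_{\star,\lambda_2}$ and works forward whereas you argue from both ends through $\H_{\infty,\lambda_1}(\bfF_j)$, but once the meandering false starts in your write-up are stripped away the final structure (i)--(iv) is the same sequence of steps with the same supporting lemmas.
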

\begin{proof}
Let $\lambda_1$, $\lambda_2 \in \Lambda$ such that $\lambda_2 \leq \lambda_1$. Recalling \eqref{eq:defCStar}, we have that 
\begin{equation*}
c_{\star, \lambda_2} = \lim_{k\to \infty} \E \bigl[B_k \one_{\{B_k \in [k^{1/6}, k^{5/6}]\}} \one_{\{\M_{0, k}(\Upsilon_{\! \infty})  \leq \lambda_2\}}\bigr]\;.
\end{equation*}
Hence, for fixed $k \geq 0$, by conditioning on the value of $B_k$, the right-hand side of the above display admits the following representation, 
\begin{equation}
\label{eq:intProbIndepLambda}
\frac{1}{\sqrt{2 \pi k}} \int_{k^{1/6}}^{k^{5/6}} u e^{-\f{u^2}{2k}} \P_{0, u, k}\bigl(\M_{0, k}(\Upsilon_{\! \infty}) \leq \lambda_2, \; \M_{0, k}(\Upsilon_{\! k}) \leq \lambda_1\bigr) du \;,
\end{equation}
where here we used the fact that $\lambda_2 < \lambda_1$. Now, by employing a standard entropic repulsion argument that have been used several times in Section~\ref{sec:cluster} (see e.g.\ the proof of Lemma~\ref{lm:repulsionShape}), for all $u \in [k^{1/6}, k^{5/6}]$, one has that 
\begin{equation*}
\lim_{j \to \infty} \limsup_{k \to \infty} \frac{k}{u}\P_{0, u, k}\bigl(\M_{0, k, j}(\Upsilon_{\! k}) > \lambda_2, \; \M_{0, k}(\Upsilon_{\! k}) \leq \lambda_1\bigr) = 0 \;.
\end{equation*}
This fact implies that, for $0 \leq j \leq k $, we can just focus on the following probability  
\begin{equation*}
\P_{0, u, k}\bigl(\M_{0, j}(\Upsilon_{\! k}) \leq \lambda_2, \; \M_{0, k}(\Upsilon_{\! k}) \leq \lambda_1\bigr) \;.
\end{equation*}
In particular, since $\lambda_1$, $\lambda_2 \in \Lambda$ (see also Remark~\ref{rm:redTechSmallk}), we can apply Propositions~\ref{pr:AsyConv}~and~\ref{pr:AsyProb} with $\bfF(\Upsilon_{\! k}) = \one_{\{\M_{0, j}(\Upsilon_{\! k}) \leq \lambda_2\}}$ to obtain that 
\begin{align*}
c_{\star, \lambda_2}
& = \lim_{j \to \infty} \lim_{k \to \infty} \P_{0, u, k}\bigl(\M_{0, j}(\Upsilon_{\! k}) \leq \lambda_2, \; \M_{0, k}(\Upsilon_{\! k}) \leq \lambda_1\bigr) \frac{1}{\sqrt{2 \pi k}}\int_{k^{1/6}}^{k^{5/6}} u e^{-\f{u^2}{2k}} du \\
& = c_{\star, \lambda_1} \P\bigl(\M(\tilde{\Upsilon}_{\! \lambda_1}) \leq \lambda_2\bigr)  \;,
\end{align*}
from which the conclusion follows. 
\end{proof}

We now have all the necessary ingredients to verify that the constant $a_{\star}$, defined in \eqref{eq:defAStar}, is independent of $\lambda \in \Lambda$. More precisely, for $\lambda > 0$, we let
\begin{equation*}
a_{\star, \lambda} \eqdef \frac{\alpha \, c_{\star, \lambda}}{\gamma \, \E\bigl[\int_{\R^d} e^{\sqrt{\smash[b]{2d}}\Psi(x)}\one_{\{\Psi(x)\geq - \lambda\}}dx\bigr]} \;,
\end{equation*}
then we have the following result.
\begin{lemma}
\label{lm:AStarIndepLambda}
For any $\lambda_1$, $\lambda_2 \in \Lambda$, it holds that $a_{\star, \lambda_1} = a_{\star, \lambda_2}$.
\end{lemma}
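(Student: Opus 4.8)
The plan is to prove $a_{\star, \lambda_1} = a_{\star, \lambda_2}$ by showing that the numerator $\alpha c_{\star, \lambda}$ and the denominator $\gamma\, \E[\int_{\R^d} e^{\sqrt{2d}\Psi(x)}\one_{\{\Psi(x) \geq -\lambda\}}dx]$ scale the same way as $\lambda$ varies over $\Lambda$. The key input is the explicit relation between $c_{\star, \lambda_2}$ and $c_{\star, \lambda_1}$ already established in Lemma~\ref{lm:repCStarLambda}, namely $c_{\star, \lambda_2} = c_{\star, \lambda_1}\,\P(\M(\tilde\Upsilon_{\!\lambda_1}) \leq \lambda_2)$ for $\lambda_2 \leq \lambda_1$ in $\Lambda$. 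So it suffices to prove the matching identity for the denominator: for $\lambda_2 \leq \lambda_1$ in $\Lambda$,
\begin{equation*}
\E\Biggl[\int_{\R^d} e^{\sqrt{\smash[b]{2d}}\Psi(x)}\one_{\{\Psi(x)\geq - \lambda_2\}}dx\Biggr]
= \E\Biggl[\int_{\R^d} e^{\sqrt{\smash[b]{2d}}\Psi(x)}\one_{\{\Psi(x)\geq - \lambda_1\}}dx\Biggr] \P\bigl(\M(\tilde{\Upsilon}_{\! \lambda_1}) \leq \lambda_2\bigr) \;.
\end{equation*}
Dividing the two relations then gives $a_{\star, \lambda_2} = a_{\star, \lambda_1}$ directly.

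The strategy for the denominator identity is to use Proposition~\ref{pr:inversionPsi} in both directions. First, recall that $\Psi = \Psi_{\lambda}$ has a law independent of $\lambda \in \Lambda$ by Proposition~\ref{pr:PsiIndepLambda}, so the quantities $\E[\int e^{\sqrt{2d}\Psi}\one_{\{\Psi \geq -\lambda_i\}}]$ make sense unambiguously. The idea is to take $\bfF = 1$ in \eqref{e:tilt3} for the threshold $\lambda_1$: the proportionality constant there is (up to the normalisation $\E[1]=1$) precisely $\bigl(\E[\int_{\R^d} e^{\sqrt{2d}\Psi(x)}\one_{\{\Psi(x) \geq -\lambda_1\}}dx]\bigr)^{-1}$. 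Then I would apply \eqref{e:tilt3} again, now for $\lambda_1$ but with $\bfF$ chosen to be (an approximation of) the indicator $\one_{\{\M(\cdot) \leq \lambda_2\}}$ composed with the shift structure; concretely, using $\bfF(\phi) = \one_{\{\sup_y \phi(y) \leq \lambda_1 - \lambda_2 \text{ or similar}\}}$ adapted so that $\bfF(\tau_x \Psi_{\lambda_1})$ picks out the event that, after recentering at $x$, the maximum is within $\lambda_2$ of zero. The point is that Lemma~\ref{lm:Countable} tells us $\tilde\Upsilon_{\!\lambda_2}$ equals $\tilde\Upsilon_{\!\lambda_1}$ conditioned on $\{\M(\tilde\Upsilon_{\!\lambda_1}) \leq \lambda_2\}$; combining this with \eqref{e:tilt3} for $\lambda_2$ versus \eqref{e:tilt3} for $\lambda_1$ restricted to that event yields the ratio of the two denominators as exactly $\P(\M(\tilde\Upsilon_{\!\lambda_1}) \leq \lambda_2)$.

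More carefully, here is the chain I would write down. By \eqref{e:tilt3} with threshold $\lambda_2$ and $\bfF = 1$,
\begin{equation*}
1 = C_{\lambda_2}\, \E\Biggl[\int_{\R^d} e^{\sqrt{\smash[b]{2d}}\Psi(x)}\one_{\{\Psi(x) \geq -\lambda_2\}}dx\Biggr]\;,
\end{equation*}
where $C_{\lambda_2}$ is the proportionality constant, and the same with $\lambda_1$. On the other hand, applying \eqref{e:tilt3} with threshold $\lambda_1$ but testing against $\bfF(\phi) = \one_{\{\M(\phi) \leq \lambda_2 - \lambda_1 + (\text{shift correction})\}}$ — i.e.\ the function that, evaluated at $\tau_x \tilde\Upsilon_{\!\lambda_1}$ under the integral, encodes the event that the global maximum of $\tilde\Upsilon_{\!\lambda_1}$ exceeds $\tilde\Upsilon_{\!\lambda_1}(x)$ by at most $\lambda_2$ — one obtains $\P(\M(\tilde\Upsilon_{\!\lambda_1}) \leq \lambda_2)$ on the left (after using the resampling property \eqref{e:wantedUpsilon}, since $\tilde\Upsilon_{\!\lambda_1}$ under the tilted integral against its own maximiser reproduces its own law) and $C_{\lambda_1}^{-1}\E[\int e^{\sqrt{2d}\Psi}\one_{\{\Psi \geq -\lambda_2\}}]$ on the right. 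Equating and using $\lambda_2 \in \Lambda$ to ensure no issues at the boundary of the constraint set (so that the relevant indicator functions are continuity points, invoking Lemma~\ref{lm:Countable} and Remark~\ref{rm:redTechSmallk}) gives the denominator identity, hence $a_{\star, \lambda_1} = a_{\star, \lambda_2}$ for $\lambda_2 \leq \lambda_1$, and the general case follows since $\Lambda$ is totally ordered.

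The main obstacle I anticipate is bookkeeping around the shift operator $\tau_x$ and the precise form of the test function $\bfF$: one must choose $\bfF$ so that $\int_{\R^d}\bfF(\tau_x \Psi_{\lambda_1})e^{\sqrt{2d}\Psi_{\lambda_1}(x)}\one_{\{\Psi_{\lambda_1}(x) \geq -\lambda_1\}}dx$, after inverting via \eqref{e:recoverPhi} back to $\tilde\Upsilon_{\!\lambda_1}$, becomes exactly $\one_{\{\M(\tilde\Upsilon_{\!\lambda_1}) \leq \lambda_2\}}$ times the $\lambda_2$-constrained normalising integral — this requires tracking how the event $\{\Psi_{\lambda_1}(x) \geq -\lambda_1\}$ interacts with the tighter event coming from $\lambda_2$, and checking that the arguments land in $\CC^b_{\loc}(\CC(\R^d))$ or at least at continuity points of the law of $\tilde\Upsilon_{\!\lambda_1}$, for which $\lambda_1, \lambda_2 \in \Lambda$ is exactly the hypothesis needed. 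A cleaner alternative, which I would pursue if the direct computation gets unwieldy, is to bypass the denominator entirely: combine Lemma~\ref{lm:repCStarLambda} with the statement (provable the same way via \eqref{e:tilt3} and Lemma~\ref{lm:Countable}) that $\E[\int e^{\sqrt{2d}\Psi}\one_{\{\Psi \geq -\lambda\}}]$ is, up to the $\lambda$-independent constant $C^{-1} = \E[\abs{\D^{\lambda}(\tilde\Upsilon_{\!\lambda})}^{-1}]$-type normaliser from Definition~\ref{def:recoverPhi}, proportional to $c_{\star, \lambda}$ itself — in which case the ratio in $a_{\star, \lambda}$ is manifestly constant. Either route reduces the lemma to the two already-established scaling relations plus a routine (if fiddly) application of the resampling and inversion identities.
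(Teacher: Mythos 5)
Your proposal follows essentially the same route as the paper: combine Lemma~\ref{lm:repCStarLambda} for the numerator with a matching identity for the denominator, obtained from Proposition~\ref{pr:inversionPsi} by taking $\bfF = \one_{\{\M(\cdot) \leq \lambda_2\}}$, and then cancel. The one ingredient you circle around but never quite pin down is that $\M(\Psi) = 0$ by construction (since $\Psi_{\lambda}$ is defined as $\tau_{x_{\star}}\tilde\Upsilon_{\!\lambda}$ recentred at its maximiser), so $\M(\tau_x\Psi) = \M(\Psi) - \Psi(x) = -\Psi(x)$ and hence $\bfF(\tau_x\Psi) = \one_{\{\Psi(x) \geq -\lambda_2\}}$ with no ``shift correction'' whatsoever — together with $\lambda_2 \leq \lambda_1$ forcing $\one_{\{\Psi(x)\geq -\lambda_2\}}\one_{\{\Psi(x)\geq -\lambda_1\}} = \one_{\{\Psi(x)\geq -\lambda_2\}}$, this is precisely what collapses the bookkeeping you flag as the main obstacle and yields
\begin{equation*}
\P\bigl(\M(\tilde\Upsilon_{\!\lambda_1}) \leq \lambda_2\bigr) = \frac{\E\bigl[\int_{\R^d} e^{\sqrt{\smash[b]{2d}}\Psi(x)}\one_{\{\Psi(x)\geq -\lambda_2\}}dx\bigr]}{\E\bigl[\int_{\R^d} e^{\sqrt{\smash[b]{2d}}\Psi(x)}\one_{\{\Psi(x)\geq -\lambda_1\}}dx\bigr]}
\end{equation*}
in one line. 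With that observation in hand the argument is exactly the paper's, and the justification for testing against the discontinuous indicator $\one_{\{\M(\cdot)\leq\lambda_2\}}$ is, as you say, handled by $\lambda_1,\lambda_2\in\Lambda$ via Lemma~\ref{lm:Countable}.
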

\begin{proof}
Let $\lambda_1$, $\lambda_2 \in \Lambda$ such that $\lambda_2 \leq \lambda_1$. Then, thanks to Lemma~\ref{lm:repCStarLambda}, we have that 
\begin{equation}
\label{eq:ExpAStarLambda2}
a_{\star, \lambda_2} = \frac{\alpha \,c_{\star, \lambda_1} \P\bigl(\M(\tilde{\Upsilon}_{\! \lambda_1}) \leq \lambda_2\bigr)}{\gamma \, \E\bigl[\int_{\R^d} e^{\sqrt{\smash[b]{2d}}\Psi(x)}\one_{\{\Psi(x)\geq - \lambda_2\}}dx\bigr]} 
\end{equation}

Now, using Proposition~\ref{pr:inversionPsi}, along with the fact that, by construction, $\M(\Psi) = 0$, we have that
\begin{equation*}
\P\bigl(\M(\tilde{\Upsilon}_{\! \lambda_1}) \leq \lambda_2\bigr)= \frac{\E\bigl[\int_{\R^d} e^{\sqrt{\smash[b]{2d}}\Psi(x)}\one_{\{\Psi(x)\geq - \lambda_2\}} dx\bigr]}{\E\bigl[\int_{\R^d} e^{\sqrt{\smash[b]{2d}}\Psi(x)}\one_{\{\Psi(x)\geq - \lambda_1\}} dx\bigr]}\;.
\end{equation*}
Hence, substituting the right-hand side of the above display into the expression \eqref{eq:ExpAStarLambda2} for $a_{\star, \lambda_2}$, the conclusion follows readily.
\end{proof}

%%%%%%%%%%%%%%%%%%%%%%%%%%%%%%%%%%%%%%%%%%%%%%
%%%%%%%%%%%%%%%%%%%%%%%%%%%%%%%%%%%%%%%%%%%%%%
\section{The joint Laplace functional}
\label{sec:proofMainTh1}
The main goal of this section is to prove Proposition~\ref{pr:laplaceJoint} which is the main input for the proof of the stable convergence result Theorem~\ref{th:stableConv}. The main ingredient in the proof of this theorem is Proposition~\ref{pr:joint} below, which will be proved separately in Section~\ref{sec:Joint}. The key idea for the proof is to split the field $\rmX$ into two \emph{independent scales}, namely $\rmX_s$ and $\rmX_{s,t}$, for $0 \leq s < t$. The former field has strong local interactions, while the latter one has a strong independence structure. In particular, as we will see, the exponential of the field $\rmX_{s,t}$ macroscopically behaves like an independent stable random measure that will integrate the exponential of the field $\rmX_s$, thus generating the desired limiting measure.

This section is organised as follows. In Section~\ref{sub:setupJoint}, we introduce several definitions and state Proposition~\ref{pr:joint}, which plays a fundamental role in the proof of Proposition~\ref{pr:laplaceJoint}. In Section~\ref{sub:JointHigh}, assuming Proposition~\ref{pr:joint}, we establish the convergence of the ``small scales'' while conditioning on the ``large scales''. In Section~\ref{sub:JointLow}, we prove the ``full'' conditional convergence. Finally, in Section~\ref{sub:JointStable}, we show how Proposition~\ref{pr:laplaceJoint} follows directly from the results obtained thus far, and we also prove Theorem~\ref{th:stableConv}.

%%%%%%%%%%%%%%%%%%%%%%%%%%%%%%%%%%%%%%%%%%%%%%
\subsection{Setup and preliminary results}
\label{sub:setupJoint}
For the reminder of this section, we fix $\gamma > \sqrt{\smash[b]{2d}}$. For $n \in \N$, we consider a collection of fields $(\rmW_{i, t})_{i \in [n], t \geq 0}$ satisfying  \ref{hp:W1} \dash \ref{hp:W4}. We recall that we need to prove that for all $(\varphi, (f_i)_{i \in [n]}) \in \CC^{\infty}_c(\R^d) \times (\CC_c^{+}(\R^d))^n$, the following limit holds
\begin{equation*}
\lim_{t \to \infty} \E\Biggl[\exp\bigl(i \langle \rmX, \varphi \rangle \bigr) \prod_{i = 1}^{n} \exp\bigl(-\mu_{\gamma, t, i}(f_i)\bigr)\Biggr] = \E\Bigl[\exp\bigl(i \langle \rmX, \varphi \rangle \bigr) \exp\bigl(- \tilde a_{\star} \mu_{\gammac}\bigl(\rmT_{\gamma}[f_1, \ldots, f_n]\bigr)\bigr)\Bigr] \;,
\end{equation*}	
where we refer to the statement of Proposition~\ref{pr:laplaceJoint} for the definitions of all the quantities involved. 

To simplify the presentation, we assume that the functions $(f_i)_{i \in [n]}$ are all of the form $f_i = \theta_i \one_{[0, 1]^d}$ for some non-negative constants $(\theta_i)_{i \in [n]} \subseteq \R^{+}_{0}$. The proof developed below works in the general case where $(f_i)_{i \in [n]} \in (\CC_c^{+}(\R^d))^n$, with minor and straightforward adjustments. In particular, in the general setting, the unit box $[0, 1]^d$ can be replaced by any compact set that contains the supports of all the functions $(f_i)_{i \in [n]}$.

With a slight abuse of notation, for all $t \geq 0$, it is convenient to introduce the function $\rmF_{\gamma, t} : \R^d \to \R$ and the quantity $\rmT_{\gamma} \geq 0$ given by
\begin{equation}
\label{eq:RPsiGamma}
\rmF_{\gamma, t}(x) \eqdef \sum_{i = 1}^{n} \theta_i e^{\gamma \rmW_{i, t}(x)} \;, \qquad \rmT_{\gamma} \eqdef \E\Biggl[\Biggl(\sum_{i = 1}^{n} \theta_i \int_{\R^d} \exp\bigl(\gamma\bigl(\rmW_i(y) + \Psi(y)\bigr)\bigr) dy\Biggr)^{\f{\sqrt{\smash[b]{2d}}}{\gamma}}\Biggr] \;,
\end{equation}
where $\Psi$ denotes the field introduced in Definition~\ref{def:recoverPhi}, which, as stated in Proposition~\ref{pr:PsiIndepLambda}, has a law independent of $\lambda \in \Lambda$, where $\Lambda$ is the set introduced in Lemma~\ref{lm:Countable}. For this reason, we omit the subscript $\lambda$ when referring to the field $\Psi$.

In this way, assuming that the function $(f_i)_{i \in [n]}$ are as specified above, we need to prove that the following limit holds
\begin{equation}
\label{eq:ReductionMainth2}
\lim_{t \to \infty} \E\Bigl[\exp\bigl(i \langle \rmX, \varphi \rangle \bigr) \exp\bigl(- \mu_{\gamma, t}(\rmF_{\gamma, t})\bigr)\Bigr] = \E\Bigl[\exp\bigl(i \langle \rmX, \varphi \rangle \bigr) \exp\Bigl(- \tilde a_{\star} \rmT_{\gamma} \, \mu_{\gammac}\bigl([0, 1]^d\bigr)\Bigr)\Bigr] \;.
\end{equation}

In order to prove \eqref{eq:ReductionMainth2}, we introduce a decomposition of the unit box $[0, 1]^d$ as follows. For $\rmR \geq 1$, we consider $s \geq 0$ such that $(e^s+1) (\rmR+1)^{-1} \in \N$, and we write 
\begin{equation}
\label{eq:decoBox}
[0, 1]^d = \bigcup_{i \in [\frkI]} A_{i} \cup B_{\rmR, s}\;,
\end{equation}
where $(A_i)_{i \in [\frkI]}$ and $B_{\rmR, s}$ are as described in the caption of Figure~\ref{fig:deco}.
\begin{figure}[ht] 
\begin{center}
\begin{tikzpicture}[scale=0.95] 
\draw[fill=gray!20] (0,0) rectangle (5.85,5.85);
\foreach \x in {0,...,5}{
    \foreach \j in {0,...,5}
        \draw[fill=white!60] (\x,\j) rectangle (\x+0.85,\j+0.85);
}
\draw[<->,>=latex'] (-0.3, 0) -- (-0.3,5.85) node[midway,left] {\small $1$};
\draw[<->,>=latex'] (0,-0.3) -- (5.85,-0.3) node[midway,below] {\small $1$};

\draw[dotted] (5.85, 0) -- (11.85,1);
\draw[dotted] (3.85, 0) -- (7.85,1);
\draw[dotted] (3.85, 2) -- (7.85,5);
\draw[dotted] (5.85, 2) -- (11.85,5);

\draw[fill=gray!20] (7.85, 1) rectangle (11.85,5);
\foreach \x in {8.25,10.25}{
    \foreach \j in {1, 3}
        \draw[fill=white!60] (\x,\j) rectangle (\x+1.6,\j+1.6);
}

\draw[<->,>=latex'] (12.05, 1) -- (12.05,2.6) node[midway,right] {\small $\rmR e^{-s}$};
\draw[<->,>=latex'] (10.25,0.8) -- (11.85,0.8) node[midway,below] {\small $\rmR e^{-s}$};
\draw[<->,>=latex'] (12.05, 4.6) -- (12.05,5) node[midway,right] {\small $e^{-s}$};
\draw[<->,>=latex'] (7.85,0.8) -- (8.25,0.8) node[midway,below] {\small $e^{-s}$};
\end{tikzpicture}
\caption[short form]{\small{A diagram illustrating the decomposition of the cube $[0, 1]^d$ used in the proof of Lemma~\ref{lem:magic}, in the case $d = 2$. For $\rmR \geq 1$, we suppose that along each edge of the $d$ edges of $[0, 1]^d$, there are exactly $(e^s+1) (\rmR+1)^{-1}$ smaller closed boxes, which are drawn in white and that we enumerate by $(A_{i})_{i \in [\frkI]}$, for some $\frkI \in \N$. For each $i \in [\frkI]$, the box $A_{i}$ has side length equal to $R e^{-s}$. The shaded region is the buffer zone $B_{\rmR, s} = [0, 1]^d \setminus \cup_{i \in [\frkI]} A_{i}$. By construction, the buffer zone separates the smaller cubes by a distance of at least $e^{-s}$.}}
\label{fig:deco}
\end{center}
\end{figure}
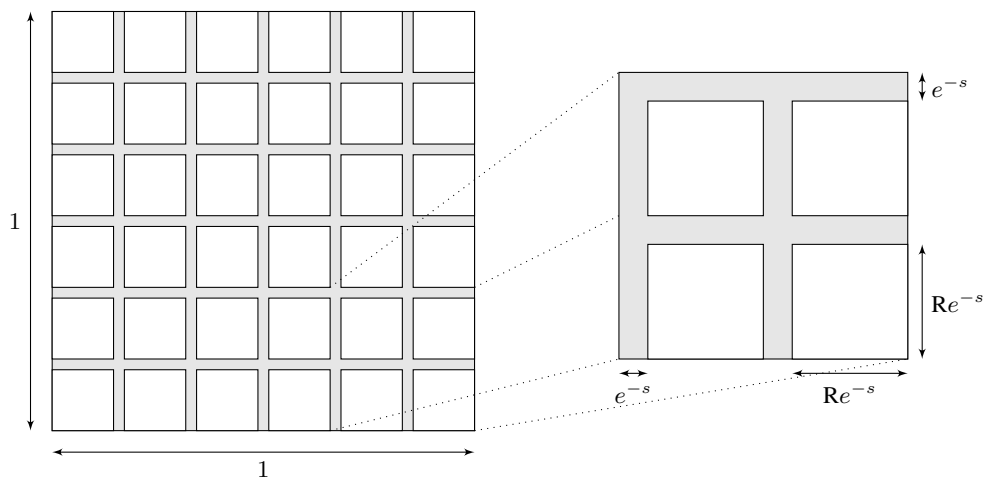

The reason why the decomposition \eqref{eq:decoBox} turns out to be useful is due to the short-range correlation properties of the fields $\rmX_t$ and $\rmW_t$. Indeed, since we are assuming that the seed covariance function $\frkK$ satisfies condition \ref{hp_K2}, as we have observed in Section~\ref{sub:propStarScale}, for $0 \leq s < t$, it holds that $\rmX_{s,t}(x)$ and $\rmX_{s,t}(y)$ are independent for $|x-y| > e^{-s}$. Similarly, thanks to \ref{hp:W3}, for $0 \leq s < t$, we have that $(\rmW_{i, t}(x))_{i \in [n]}$ and $(\rmW_{i, t}(y))_{i \in [n]}$ are independent for $|x-y| > e^{-s}$.

As mentioned earlier, the idea is to split the field $\rmX$ into two scales. Roughly speaking, we first focus on the small scales conditioned on the large scales. More precisely, we consider the sequence of measures $(\mu_{\gamma, s, t})_{0 \leq s < t}$ on $\R^d$, defined as follows,
\begin{equation}
\label{e:mu_st}
\mu_{\gamma, s, t}(dx)  \eqdef  (t-s)^{\f{3\gamma}{2\sqrt{\smash[b]{2d}}}}e^{(t - s) (\gamma/\sqrt{\smash[b]{2}} - \sqrt{\smash[b]{d}})^2} e^{\gamma \rmX_{s, t}(x) - \frac{\gamma^2}{2}(t-s)} dx \;,
\end{equation}
The role of the large scales is played by a deterministic\footnote{At this level, we are assuming that we are conditioning on the large scales. That is why, we are thinking of the large scales as being described by a deterministic function.} function $\chi : \R^d \to \R$. Given a function $\chi$, we define the measure $\rho_{\chi}$ by letting 
\begin{equation}
\label{eq:measChi}
\rho_{\chi}(dx) \eqdef \chi(x) e^{-\sqrt{\smash[b]{2d}} \chi(x)} dx \;.
\end{equation}

We now have all the necessary ingredients to state the following key proposition,
which is a generalisation of \cite[Proposition~3.2]{Glassy}. The key difference is that we allow $\rmF_{\gamma, t}$ to take the general form given in \eqref{eq:RPsiGamma}, rather than restricting it to be a constant. More importantly, we explicitly identify the multiplicative constant $C(\gamma)$ that appears in the statement of \cite[Proposition~3.2]{Glassy}.
The proof of this proposition is postponed to Section~\ref{sec:Joint}.

\begin{proposition}
\label{pr:joint}
Fix $\eps > 0$, let $\rmF_{\gamma}$ and $\rmT_\gamma$ be as in \eqref{eq:RPsiGamma}, and consider the constant $\tilde a_{\star} >0$ introduced in Proposition~\ref{pr:laplaceJoint}. Then, for all $s > 0$ large enough and for all $t \geq s$ large enough, the following holds. For any function $\chi:[0, \rmR]^d \to \R$ such that 
\begin{equation}
\label{eq:condChi}
\min_{x \in [0, \rmR]^d} \chi(x) \geq\f{1}{8 \sqrt{\smash[b]{2d}}} \log s\;, \quad  \max_{x \in [0, \rmR]^d} \chi(x) \leq \log t\;, \quad  \sup_{\substack{x, y \in [0, \rmR]^d, \\ |x-y| \leq s^{-1}}} \frac{\abs{\chi(x) - \chi(y)}}{\abs{x-y}^{1/3}} \leq 1 \;,
\end{equation}
one has that,
\begin{equation*}
\Biggl\lvert \E \Biggl[\exp\Biggl(- \int_{[0, \rmR]^d}  \rmF_{\gamma, t}(x) e^{-\gamma\chi(x)} \mu_{\gamma, t}(dx)\Biggr)\Biggr]  - 1 - \tilde a_{\star} \rmT_{\gamma} \rho_{\chi}\bigl([0, \rmR]^d\bigr) \Biggr\rvert \leq \eps \rho_{\chi}\bigl([0, \rmR]^d\bigr)\;.
\end{equation*}
\end{proposition}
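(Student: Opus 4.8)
\textbf{Proof plan for Proposition~\ref{pr:joint}.}
The plan is to expand the Laplace transform to first order and show that the only surviving contribution comes from a single ``high point'' of the field $\rmX_{s,t}$, whose shape near that point is governed by the cluster field $\Psi$. I would begin by partitioning $[0,\rmR]^d$ using the decomposition \eqref{eq:decoBox}: writing $[0,\rmR]^d$ (or $[0,1]^d$ rescaled) as a disjoint union of small boxes $(A_i)_{i\in\frkI}$ of side $\rmR e^{-s}$ together with a buffer zone, so that $\rmX_{s,t}$ restricted to distinct $A_i$'s is \emph{independent} by \ref{hp_K2}, and similarly for the $\rmW_{i,t}$'s by \ref{hp:W3}. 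Since the $A_i$'s are $e^{-s}$-separated the buffer zone contributes negligibly — this needs a separate estimate showing $\E[\int_{B_{\rmR,s}}\rmF_{\gamma,t}e^{-\gamma\chi}\mu_{\gamma,t}(dx)]$ is small relative to $\rho_\chi([0,\rmR]^d)$, which follows from the expectation of $\mu_{\gamma,t}$ together with the lower bound on $\chi$ and a union bound over the (boundedly many in $s$) boxes. On each small box $A_i$, write $1-\E\exp(-\int_{A_i}\cdots) $ and use independence across boxes to reduce the whole Laplace transform to $\prod_i(1 - q_i)$ where $q_i = 1 - \E\exp(-\int_{A_i}\rmF_{\gamma,t}e^{-\gamma\chi}\mu_{\gamma,t}(dx))$, hence up to errors to $\sum_i q_i$, reducing the problem to estimating a single box.

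Next I would analyse one box $A_i$ of side $\rmR e^{-s}$. Rescale so that $A_i$ becomes a box of side $\rmR$ in ``$e^s$-coordinates''. On $A_i$ the deterministic tilt $\chi$ is essentially constant (by the third condition in \eqref{eq:condChi}, the oscillation of $\chi$ over $A_i$ is $O(\rmR^{1/3})$, absorbed into the error), so $e^{-\gamma\chi(x)}\approx e^{-\gamma\chi(x_i)}$ for a reference point $x_i$. Then $\int_{A_i}\rmF_{\gamma,t}(x)e^{-\gamma\chi(x)}\mu_{\gamma,t}(dx)$ is, up to the constant $e^{-\gamma\chi(x_i)}$, of the form $\int_{A_i}\sum_j\theta_j e^{\gamma\rmW_{j,t}(x)}\mu_{\gamma,t}(dx)$ restricted to a mesoscopic box. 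The key point, which is exactly \cite[Theorem~2.2]{Glassy} / Theorem~\ref{th:GlassyEx} at the level of a single mesoscopic box combined with the local-structure results, is that this integral concentrates on the event that $\rmX_{s,t}$ has a ``mesoscopic maximum'' inside $A_i$, and on that event, after recentering at the maximum by $\frkm_{\cdot}$ and rescaling, the shape of $\rmX_{s,t}$ is asymptotically the field $\Upsilon$, so the limiting shape around the maximum is $\Psi$. The magnitude of the integral is then $e^{\gamma Z}\int_{\R^d}\exp(\gamma(\rmW_j(y)+\Psi(y)))dy$ where $Z$ is (roughly) $\frkm_{t-s}$ minus the tilt, with $\Psi$ and $\rmW$ independent by \ref{hp:W1}. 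One then computes $\E[1-\exp(-(\text{this}))]$ using that the height $Z$ of the mesoscopic maximum has the Gumbel-type tail $\P(\text{max}-\frkm\geq z)\sim c\,e^{-\sqrt{2d}z}$, so that $\E[\exp(-\lambda e^{\gamma Z}W)]$ contributes, via the change of variables $u = e^{\gamma Z}$ and the density $u^{-(1+\sqrt{2d}/\gamma)}du$, a factor $\Gamma(1-\sqrt{2d}/\gamma)$ times $\E[(\lambda W)^{\sqrt{2d}/\gamma}]$ — this is precisely where $\beta(d,\gamma)$ in \eqref{eq:defBetaDG} and the moment $\rmT_\gamma$ of \eqref{eq:RPsiGamma} come from. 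The constant $a_\star$ of \eqref{eq:defAStar} arises from the constants $\alpha c_{\star,\lambda}$ in Theorem~\ref{th:clusterProb} and the normalisation $\E[\int e^{\sqrt{2d}\Psi}\one_{\{\Psi\geq-\lambda\}}]$ appearing in Proposition~\ref{pr:inversionPsi}; the fact that the final answer is $\lambda$-independent is guaranteed by Lemma~\ref{lm:AStarIndepLambda}.

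Summing over the $\asymp \rho_\chi([0,\rmR]^d)/(\rmR e^{-s})^d \cdot (\rmR e^{-s})^d$ boxes — more precisely, since each box contributes $q_i \approx \tilde a_\star \rmT_\gamma \cdot(\text{mass of }\rho_\chi\text{ on }A_i)$ — gives $\sum_i q_i \approx \tilde a_\star\rmT_\gamma\rho_\chi([0,\rmR]^d)$, which after accounting for the $\prod(1-q_i)$ vs $1-\sum q_i$ discrepancy (controlled since $\max_i q_i\to 0$ and $\sum_i q_i$ stays bounded because $\rho_\chi([0,\rmR]^d)\lesssim \log t\cdot|[0,\rmR]^d|$) yields the claimed estimate. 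The main obstacle, and the heart of the matter, is the single-box asymptotics: rigorously establishing that the integral $\int_{A_i}\rmF_{\gamma,t}e^{-\gamma\chi}\mu_{\gamma,t}$ on a mesoscopic box is asymptotically distributed as $e^{\gamma Z}\cdot\bfZ$ with $Z$ Gumbel-tailed, $\bfZ=\int\exp(\gamma(\rmW+\Psi))$, and $Z\perp\bfZ$. This requires the full strength of the local-structure results of Section~\ref{sec:cluster} (Theorems~\ref{th:cluster},~\ref{th:clusterProb}, the resampling/inversion Propositions~\ref{pr:resempPropUps},~\ref{pr:inversionPsi}), together with the tail estimates on near-maximal level sets (Lemma~\ref{lm:tailsDgUpsilon} and its variants) to justify the interchange of limits and the uniform integrability needed to pass from the conditioned weak convergence to convergence of the relevant expectations; controlling the contribution of configurations with no mesoscopic maximum, or with an ``interior'' maximum that is too low, requires the second-moment / truncation arguments deferred to Section~\ref{sec:Joint}.
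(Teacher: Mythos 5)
Your proposal conflates Proposition~\ref{pr:joint} with Lemma~\ref{lem:magic}, and as a result the argument is circular. The box decomposition \eqref{eq:decoBox} into small boxes $(A_i)$ separated by a buffer zone is precisely the mechanism of Lemma~\ref{lem:magic}, one level \emph{above} Proposition~\ref{pr:joint}: there, the measure is $\mu_{\gamma,s,t}$, driven by $\rmX_{s,t}$, and independence across $A_i$'s is exploited \emph{because Proposition~\ref{pr:joint} has already been established for a single box}. In Proposition~\ref{pr:joint} itself, the measure is $\mu_{\gamma,t}$ driven by the full field $\rmX_t$, the domain $[0,\rmR]^d$ is already at unit scale, and $s$ appears only as a floor on $\chi$. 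If you decompose $[0,\rmR]^d$ into boxes of side $\rmR e^{-s}$ and rescale, each sub-box estimate you need — "$q_i \approx \tilde a_\star \rmT_\gamma \rho_\chi(A_i)$" — is again exactly an instance of Proposition~\ref{pr:joint} for the same field, so the argument closes in on itself without proving the single-box asymptotics. Relatedly, when you say the shape around the maximum of "$\rmX_{s,t}$" is governed by $\Psi$, you have the wrong field in mind: the cluster field governs the shape of $\rmX_t$ near its near-maximal points, and the identification of $\rmX_{s,t}$-peaks is the subject of Lemma~\ref{lem:magic}, not of this proposition.

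Two further concrete problems. First, your proposed buffer-zone bound via "the expectation of $\mu_{\gamma,t}$" is not available: in the supercritical regime $\gamma > \sqrt{\smash[b]{2d}}$, $\E[\mu_{\gamma,t}([0,1]^d)]$ diverges as $t \to \infty$, which is why the paper instead imports the nontrivial estimate \cite[Proposition~4.1]{Glassy} (see Lemma~\ref{lm:IdentA}). Second, and more fundamentally, your final paragraph acknowledges that "rigorously establishing that the integral on a mesoscopic box is asymptotically distributed as $e^{\gamma Z}\cdot\bfZ$" is "the heart of the matter" and defers it to the arguments of Section~\ref{sec:Joint} — but this \emph{is} Proposition~\ref{pr:joint}, so the defer is a gap, not a reduction. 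The paper's route is structurally different: it makes no use of spatial independence here. Instead it writes $\E[1 - \G_{\rmR}]$ as an integral over a random location $m$ weighted by $\one_{\{m \in \D^{\lambda}_{\rmR}\}}/|\D^{\lambda}_{\rmR}|$ (display \eqref{eq:III0}), performs a high-value cutoff, a path constraint (Lemma~\ref{lm:path}), and a localisation around $m$ (Lemma~\ref{lm:cluster}); then conditions on the radial path $s \mapsto \rmY_s(m)$ and applies Madaule's renewal theorem \cite[Theorem~5.6]{Madaule_Max} (Lemma~\ref{lm:techMainRed1}). The constant is subsequently identified through a second, independent sequence of reductions (Lemma~\ref{lm:techMainRed2}) culminating in the application of Propositions~\ref{pr:AsyConv} and \ref{pr:AsyProb}. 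Your identification of the ingredients — the $e^{-\sqrt{\smash[b]{2d}}z}$ tail yielding $\beta(d,\gamma)$ after the change of variables, the shape field $\Psi$ via Proposition~\ref{pr:inversionPsi}, the constants $\alpha c_{\star,\lambda}$ and $a_\star$, the $\lambda$-independence from Lemma~\ref{lm:AStarIndepLambda} — is correct, but these only come into play at the very end; the decomposition and renewal machinery needed to reach that point is missing.
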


Note that our definition of $\rho_{\chi}$ includes no analogue of the term 
$-\gamma^{-1}\log\theta$ appearing in \cite[Equation~3.11]{Glassy}.
This is because this term can simply be absorbed into the error term by making $s$ 
sufficiently  large.

%%%%%%%%%%%%%%%%%%%%%%%%%%%%%%%%%%%%%%%%%%%%%%
\subsection{Convergence of the small scales}
\label{sub:JointHigh}
We are now ready to state and the prove the following result in which, roughly speaking, we compute the Laplace functional of the measure $\mu_{\gamma, s, t}$ defined in \eqref{e:mu_st}. Before proceeding, we introduce the following definition.

\begin{definition}
\label{def:ChiAdm}
We say that a function $\chi:[0, 1]^d \to \R$ is \emph{scaled-admissible} if for all $\eps > 0$, there exists $\rmR_0 \geq 1$ such that $\smash{e^{ds}\rho_{\chi}(\bar{B_{\rmR, s}}) \leq \eps}$ for all $s \geq 0$ sufficiently large and for all $\rmR \geq \rmR_0$ such that $\smash{(e^s+1) (\rmR+1)^{-1}} \in \N$.
\end{definition}

We have the following key result.
\begin{lemma}
\label{lem:magic}
Let $\eps > 0$. Then, for $s \geq 0$ sufficiently large, for $t > s$ sufficiently large, and for any scaled-admissible function $\chi: [0, 1]^d \to \R$ in the sense of Definition~\ref{def:ChiAdm}, such that 
\begin{equation}
\label{eq:condChiScaled}
	\min_{x \in [0, 1]^d} \chi(x) \geq\f{1}{8 \sqrt{\smash[b]{2d}}} \log s \;, \hspace{2mm} \max_{x \in [0, 1]^d} \chi(x) \leq \log t \;, \hspace{2mm} \sup_{\substack{x, y \in [0, 1]^d, \\ |x-y| \leq e^{-s}s^{-1}}} \frac{|\chi(x) - \chi(y)|}{|x-y|^{1/3}} \leq e^{s/3} \;,
\end{equation}
one has that, 
\begin{align*}
\E\Biggl[ \exp\Biggl(- e^{ds} \int_{[0,1]^d} \rmF_{\gamma, t}(x)  e^{-\gamma \chi(x)} \mu_{\gamma, s, t}(dx)\Biggr)\Biggr] & \geq \exp \Bigl(-(\tilde a_{\star} + \eps) e^{ds} \rmT_{\gamma} \rho_{\chi}\bigl([0, 1]^d\bigr) \Bigr) -\eps \;, \\ 
\E\Biggl[ \exp\Biggl(- e^{ds} \int_{[0,1]^d} \rmF_{\gamma, t}(x)  e^{-\gamma \chi(x)} \mu_{\gamma, s, t}(dx)\Biggr)\Biggr] & \leq \exp \Bigl(-(\tilde a_{\star} - \eps) e^{ds} \rmT_{\gamma} \rho_{\chi} \bigl([0, 1]^d\bigr)\Bigr) + \eps \;,
\end{align*}
\end{lemma}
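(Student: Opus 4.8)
The plan is to decompose the integral over $[0,1]^d$ according to the splitting \eqref{eq:decoBox}, namely $[0,1]^d = \bigcup_{i \in [\frkI]} A_i + B_{\rmR, s}$, for a large parameter $\rmR$ to be tuned at the end, exploiting the short-range independence of the field $\rmX_{s,t}$ (and of the fields $\rmW_{i,t}$) across boxes separated by a distance at least $e^{-s}$. First I would bound the contribution of the buffer zone $B_{\rmR, s}$: using $e^{ds}\int_{B_{\rmR,s}} \rmF_{\gamma,t}(x) e^{-\gamma\chi(x)} \mu_{\gamma, s, t}(dx) \geq 0$ and Markov's inequality together with the moment estimate $\E[\mu_{\gamma,s,t}(dx)]\asymp e^{-ds}\,dx$-type bounds (cf.\ the first moment of the renormalised supercritical measure), the scaled-admissibility of $\chi$ guarantees that $e^{ds}\rho_{\chi}(\bar B_{\rmR,s})\le\eps$ once $\rmR$ is large, so the buffer contributes negligibly both from above (it only helps the lower bound on the Laplace transform) and from below (it can be dropped at the cost of an $\eps$, by a union bound on the event that its contribution exceeds $\delta$). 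Then on each small box $A_i$, which after rescaling by $e^s$ is a box of side length $\rmR$, I would apply Proposition~\ref{pr:joint} with the rescaled field and the rescaled version $\chi_i$ of $\chi$ restricted to $A_i$; the hypotheses \eqref{eq:condChi} follow from \eqref{eq:condChiScaled} after the change of variables $x \mapsto e^s x$ (the $1/3$-Hölder condition scales with exactly the factor $e^{s/3}$ that appears in \eqref{eq:condChiScaled}, and the min/max conditions are unchanged since they are scale-free).

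Next, by the scaling relation \eqref{e:scaling_rel}, $\rmX_{s,t}(\cdot) \eqlaw \rmX_{t-s}(e^s\cdot)$, so $\mu_{\gamma,s,t}$ restricted to $A_i$ and rescaled has the law of $\mu_{\gamma, t-s}$ restricted to a box of side $\rmR$; similarly $\rmW_{i,t}(\cdot)\eqlaw\rmW_i(e^t\cdot) = \rmW_i(e^{t-s}(e^s\cdot))$, so the rescaled objects match exactly the setup of Proposition~\ref{pr:joint} with $t-s$ in place of $t$. The independence hypotheses \ref{hp:W3} and the support property \ref{hp_K2} imply that the rescaled measures on the distinct boxes $A_i$ are mutually independent (they depend on $\rmX_{s,t}$ and $\rmW_{\cdot,t}$ through disjoint regions separated by distance $e^{-s}$). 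Therefore the Laplace transform of the sum over $i \in [\frkI]$ factorises as a product, and Proposition~\ref{pr:joint} gives, for each factor, $\E[\exp(-\int_{A_i}\cdots)] = 1 - \tilde a_\star \rmT_\gamma\, (e^{ds}\rho_{\chi}(A_i)) + O(\eps\, e^{ds}\rho_\chi(A_i))$ — here the $e^{ds}$ comes from the change of variables in $\rho_\chi$ under rescaling. Taking the product over $i$ and using $\prod_i(1-c_i) = \exp(\sum_i \log(1-c_i))$ with $\sum_i c_i = \tilde a_\star\rmT_\gamma\, e^{ds}\sum_i \rho_\chi(A_i) \le \tilde a_\star \rmT_\gamma\, e^{ds}\rho_\chi([0,1]^d)$, together with the fact that each $c_i = O(\rmR^d)$ is small when $s$ is large (so $\log(1-c_i) = -c_i + O(c_i^2)$ and $\sum_i c_i^2 \le (\max_i c_i)\sum_i c_i \to 0$), one obtains both the lower bound $\exp(-(\tilde a_\star+\eps)e^{ds}\rmT_\gamma\rho_\chi([0,1]^d))$ and the upper bound $\exp(-(\tilde a_\star-\eps)e^{ds}\rmT_\gamma\rho_\chi([0,1]^d))$, up to additive errors of size $\eps$ absorbed from the buffer-zone estimate.

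The main obstacle I anticipate is the careful bookkeeping of the three error contributions and making sure the quantifiers line up: (i) $\rmR$ must be chosen large enough (depending on $\eps$) to kill the buffer zone via scaled-admissibility, yet (ii) once $\rmR$ is fixed, $s$ must be taken large enough that $\max_i c_i = O(\rmR^d)$-times-the-error-in-Proposition~\ref{pr:joint} is genuinely small — so the order of quantifiers is "given $\eps$, choose $\rmR$, then choose $s_0(\eps,\rmR)$, then $t_0(\eps,\rmR,s)$", and one has to check Proposition~\ref{pr:joint} is invoked with $\eps' = \eps/\rmR^d$ or similar so that the accumulated relative error over $\frkI \asymp \rmR^{-d}e^{ds}$ boxes stays $O(\eps)$. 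A secondary technical point is verifying that the condition $(e^s+1)(\rmR+1)^{-1}\in\N$ needed for the clean decomposition can always be arranged by slightly adjusting $s$ (or $\rmR$) without affecting the estimates — this is the kind of harmless discretisation issue that the phrase "for $s$ sufficiently large" is meant to absorb. One also needs the elementary inequality controlling $\sum_i \rho_\chi(A_i) \le \rho_\chi([0,1]^d)$ and the lower bound $\rho_\chi(\bigcup_i A_i) \ge \rho_\chi([0,1]^d) - \rho_\chi(\bar B_{\rmR,s})$ for the two-sided conclusion, which is immediate from additivity of $\rho_\chi$.
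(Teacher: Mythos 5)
Your overall blueprint — decompose $[0,1]^d$ into the small boxes $A_i$ and the buffer zone $B_{\rmR,s}$, factorise the Laplace functional over the boxes using the short-range independence of $\rmX_{s,t}$ and $\rmW_{i,t}$, apply Proposition~\ref{pr:joint} to each rescaled box, take the product, and recombine via the elementary inequalities $ab\ge a+b-1$ and $\prod(1-c_i)\le\exp(-\sum c_i)$ — matches the paper's proof. The treatment of the product over the $A_i$ is essentially right, and your worry about needing $\eps'=\eps/\rmR^d$ in Proposition~\ref{pr:joint} is unfounded: the error there is already proportional to $\rho_\chi(A_i)$, so the accumulated relative error stays $O(\eps)$ after summing $\sum_i\rho_\chi(A_i)\le\rho_\chi([0,1]^d)$, with no need for an $\rmR$-dependent choice of $\eps'$.

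The genuine gap is the buffer-zone estimate. You propose to bound
\begin{equation*}
\P\Biggl(e^{ds}\int_{B_{\rmR,s}}\rmF_{\gamma,t}(x)e^{-\gamma\chi(x)}\mu_{\gamma,s,t}(dx)\geq\delta\Biggr)
\end{equation*}
by Markov's inequality using a first-moment estimate ``$\E[\mu_{\gamma,s,t}(dx)]\asymp e^{-ds}dx$''. But that estimate is false in the supercritical regime: from \eqref{e:mu_st} one computes
\begin{equation*}
\E\bigl[\mu_{\gamma,s,t}(dx)\bigr] = (t-s)^{\f{3\gamma}{2\sqrt{\smash[b]{2d}}}}e^{(t-s)(\gamma/\sqrt{\smash[b]{2}}-\sqrt{\smash[b]{d}})^2}\,dx\;,
\end{equation*}
which diverges as $t\to\infty$ because $(\gamma/\sqrt{\smash[b]{2}}-\sqrt{\smash[b]{d}})^2>0$ for $\gamma>\sqrt{\smash[b]{2d}}$. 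The entire point of the supercritical normalisation is that it tracks the exponentially unlikely extreme events, and the first moment is dominated by those events and blows up; so Markov gives a vacuous bound. The paper's Step~1 avoids first moments entirely: it covers $e^s\bar B_{\rmR,s}$ by a family of unit cubes $(x_j+[0,1]^d)_{j\in[\frkJ]}$ with bounded overlap (at most $(d+2)$-fold), uses $1-\prod_j a_j\leq\sum_j(1-a_j)$ to reduce to a sum over the cubes, and then applies Proposition~\ref{pr:joint} on each cube, yielding
\begin{equation*}
1-\E\Biggl[\exp\Biggl(-e^{ds}\int_{B_{\rmR,s}}\cdots\Biggr)\Biggr]\leq(d+2)(\tilde a_{\star}+\eps)\rmT_{\gamma}e^{ds}\rho_{\chi}\bigl(\bar B_{\rmR,s}\bigr)\;,
\end{equation*}
which is $\leq\eps'$ by scaled admissibility. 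In other words, the same Proposition~\ref{pr:joint} that you use for the boxes $A_i$ must also be the tool for the buffer zone — no simpler moment bound is available.
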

where $\tilde a_{\star} >0$ is the constant introduced in Proposition~\ref{pr:laplaceJoint}.
\begin{proof}
For $\rmR \geq 1$, $s \geq 0$ such that $(e^s+1)(\rmR+1)^{-1} \in \N$, we consider the decomposition of the cube $[0, 1]^d$ given in \eqref{eq:decoBox}. We also consider $t > s$ and a function $\chi:[0, 1]\to\R^d$ satisfying conditions \eqref{eq:condChi}. In order to lighten the notation, we define
\begin{equation}
\label{e:magic_light}
\E_{\eqref{e:magic_light}} \eqdef \E\Biggl[ \exp\Biggl(- e^{ds} \int_{[0,1]^d} \rmF_{\gamma, t}(x)  e^{-\gamma \chi(x)} \mu_{\gamma, s, t}(dx)\Biggr)\Biggr] \;.
\end{equation}
We write the integral over $[0, 1]^d$ appearing inside $\E_{\eqref{e:magic_light}}$ as the following sum
\begin{align*}
\int_{B_{\rmR, s}} \rmF_{\gamma, t}(x) e^{-\gamma \chi(x)} &  \mu_{\gamma, s, t}(dx) + \int_{\cup_{i \in [\frkI]} A_{i}} \rmF_{\gamma, t}(x) e^{-\gamma \chi(x)}  \mu_{\gamma, s, t}(dx)	 \;,
\end{align*}
and we divide the proof into three main steps. In the first step, we show that the integral over the buffer-zone can be made arbitrarily small by choosing $s > 0$ large enough and $t \geq s$ large enough. In the second step, we provide, upper and lower bound for the integral over the region given by the union of the small squares. 
Finally, in the third step, we show how to combine everything to obtain the desired result. We remark that the proof strategy follows similar lines to the proof of \cite[Lemma~3.1]{Glassy}.

\textbf{Step~1:} In this first step, we show that for any $\eps>0$, there exist $\rmR \geq 1$ large enough such that for all $s > 0$ large enough satisfying $(e^s + 1)(\rmR+1)^{-1} \in \N$, and $t \geq s$ large enough, it holds that 
\begin{equation}
\label{e:magic_step1_0}
1 - \E \Biggl[\exp \Biggl(- e^{ds}\int_{B_{\rmR, s}} \rmF_{\gamma, t}(x) e^{-\gamma \chi(x)} \mu_{\gamma, s, t}(dx) \Biggr)\Biggr] \leq \eps \;.
\end{equation}
By making the change of variables $x \mapsto e^{s} x$, using the scaling relation \eqref{e:scaling_rel}, and thanks to \ref{hp:W1} and \ref{hp:W2}, we obtain that the expectation on the left-hand side of \eqref{e:magic_step1_0} is equal to
\begin{equation*}
\label{e:magic_step1_1}
\E \Biggl[\exp \Biggl(-\int_{e^s B_{\rmR, s}}  \rmF_{\gamma, t-s}(x) e^{\gamma \chi(e^{-s}x)} \mu_{\gamma, t-s}(dx)\Biggr)\Biggr] \;.
\end{equation*}
Next, we note that there exists $\frkJ \in \N$ and a sequence of points $(x_{j})_{j \in [\frkJ]} \subset \R^d$ such that 
\begin{equation}
\label{e:magic_step1_inter_points}
e^s \bar{B_{\rmR, s}} = \bigcup_{j \in [\frkJ]} \; \bigl(x_{j} + [0, 1]^d\bigr)\;, \qquad  \bigcap_{i=1}^{d+2} \; \big(x_{j_{i}} + [0, 1]^d\big)	= \emptyset, \;\; \forall \, j_1 \neq  \ldots \neq j_{d+2} \in [\frkJ] \;.
\end{equation}
Therefore, using the inequality $\smash{1-\prod_{j \in [\frkJ]} a_{j} \leq \sum_{j \in [\frkJ]}(1-a_{j})}$, which is valid for $\smash{(a_{j})_{j \in [\frkJ]} \subset [0, 1]}$, and the translation invariance of $\rmX_{t-s}$, as well as \ref{hp:W2}, we obtain that
\begin{align}
\label{e:magic_step1_2}
1 - \E\Biggl[ \exp \Biggl(- & \int_{e^s \rmB_{\rmR, s}} \rmF_{\gamma, t-s}(x) e^{\gamma \chi(e^{-s}x)} \mu_{\gamma, t-s}(dx)\Biggr) \Biggr] \nonumber \\
& \quad \leq 1 - \E\Biggl[ \exp \Biggl(- \sum_{j \in [\frkJ]} \int_{x_j + [0, 1]^d} \rmF_{\gamma, t-s}(x) e^{\gamma \chi(e^{-s}x)} \mu_{\gamma, t-s}(dx) \Biggr) \Biggr] \nonumber\\
& \quad  = \E \Biggl[1- \prod_{j \in [\frkJ]}\exp \Biggl(-\int_{x_j+ [0, 1]^d} \rmF_{\gamma, t-s}(x) e^{\gamma \chi(e^{-s}x)} \mu_{\gamma, t-s}(dx)\Biggr)\Biggr] \nonumber\\
& \quad \leq \sum_{j \in [\frkJ]} \Biggl(1 - \E\Biggl[ \exp \Biggl(-\int_{[0, 1]^d} \rmF_{\gamma, t-s}(x) e^{\gamma \chi(x_j + e^{-s}x)} \mu_{\gamma, t-s}(dx)\Biggr)\Biggr] \Biggr)  \;.
\end{align}
Since by assumption the function $\chi$ satisfies all the conditions in \eqref{eq:condChiScaled}, one can readily check that the function $\chi(x_{j} + e^{-s} \, \cdot)$ satisfies all the conditions in \eqref{eq:condChi}, for all $j \in [\frkJ]$. Therefore, by Proposition~\ref{pr:joint}, for any $\eps>0$, for all $s > 0$ large enough satisfying $(e^s + 1)(\rmR+1)^{-1} \in \N$, and for all $t \geq s$ large enough, we have that the expectation in \eqref{e:magic_step1_2} satisfies the following inequality, for all $j \in [\frkJ]$,
\begin{equation*}
1 - \E \Biggl[\exp \Biggl(- \int_{[0, 1]^d} \rmF_{\gamma, t-s}(x) e^{\gamma \chi(x_j + e^{-s}x)} \mu_{\gamma, t-s}(dx) \Biggr) \Biggr] \leq (\tilde a_{\star} + \eps) \rmT_{\gamma} e^{ds} \rho_{\chi(e^{-s} \cdot)} \bigl(x_j + [0, 1]^d\bigr) \;.
\end{equation*}
Hence, plugging the right-hand side of the previous expression into \eqref{e:magic_step1_2}, making the change of variables $x \mapsto x_j +  e^{-s} x$, and using the assumptions \eqref{e:magic_step1_inter_points} on the sequence of points $(x_{j})_{j \in \frkJ}$, we obtain that
\begin{equation*}
1 - \E\Biggl[ \exp \Biggl(- \int_{e^s B_{\rmR, s}} \rmF_{\gamma, t-s}(x)  e^{\gamma \chi(e^{-s}x)} \mu_{\gamma, t-s}(dx)\Biggr) \Biggr]  \leq (d+2) (\tilde a_{\star} + \eps) \rmT_{\gamma}  e^{ds} \rho_{\chi}\bigl(\bar{B_{\rmR, s}}\bigr) \;,
\end{equation*}
Since, we are assuming that $\chi$ is scaled-admissible in the sense of Definition~\ref{def:ChiAdm}, the quantity $e^{ds} \rho_{\chi}(\bar{B_{\rmR, s}})$ can be made arbitrarily small by taking $s \geq 0$ and $\rmR \geq 1$ large enough.

\textbf{Step~2:} In this second step, we show that, for any $\eps >0$, there exist $\rmR \geq 1$ large enough such that for all $s > 0$ large enough satisfying $(e^s + 1)(\rmR+1)^{-1} \in \N$, and $t \geq s$ large enough, it holds that 
\begin{align*}
& \E \Biggl[\exp \Biggl(- e^{ds}\int_{\cup_{i \in [\frkI]} A_{i}} \rmF_{\gamma, t}(x) e^{-\gamma \chi(x)} \mu_{\gamma, s, t}(dx)\Biggr)\Biggr] \geq \exp \Bigl(- (\tilde a_{\star} + \eps) e^{ds} \rmT_{\gamma} \rho_{\chi}\bigl([0, 1]^d\bigr) \Bigr)\\
& \E \Biggl[\exp \Biggl(- e^{ds}\int_{\cup_{i \in [\frkI]} A_{i}} \rmF_{\gamma, t}(x) e^{-\gamma \chi(x)} \mu_{\gamma, s, t}(dx)\Biggr)\Biggr] \leq \exp \Bigl(- (\tilde a_{\star} - \eps) e^{ds} \rmT_{\gamma} \rho_{\chi}\bigl([0, 1]^d\bigr)\Bigr) + \eps \;.
\end{align*}
Since the lower bound can be obtained in a similar manner to the upper bound, we just focus on the latter. We recall that if $x$, $y \in [0,1]^d$ belong to two different squares in the decomposition described in Figure~\ref{fig:deco}, then $|x-y| \geq e^{-s}$ and so $\rmX_{s,t}(x)$ and $\rmX_{s,t}(y)$ are independent. Moreover, if $x$, $y$ are as above, then thanks to \ref{hp:W3}, we also have that $(\rmW_{i, t}(x))_{i \in [n]}$ and $(\rmW_{i, t}(y))_{i \in [n]}$ are independent.
Hence, using this fact, making the change of variables $x \mapsto e^{s}x$, and preceding in the same manner in the first step, we obtain that
\begin{align*}
\E \Biggl[\exp \Biggl(- e^{ds} \int_{\cup_{i \in [\frkI]} A_{i}} \rmF_{\gamma, t}(x) & e^{-\gamma \chi(x)} \mu_{\gamma, s, t}(dx)\Biggr) \Biggr] \\
& = \prod_{i \in [\frkI]} \E \Biggl[ \exp \Biggl(-\int_{e^s A_{i}}  \rmF_{\gamma, t-s}(x) e^{-\gamma \chi(e^{-s} x)}  \mu_{\gamma, t - s}(dx) \Biggr)\Biggr] \;.
\end{align*}

Now, since for each $i \in [\frkI]$, the function $\chi(e^{-s}\cdot)$ on $e^s A_{i}$ satisfies all the conditions stated in \eqref{eq:condChi}, by Proposition~\ref{pr:joint}, we get that for all $s > 0$ large enough satisfying $(e^s + 1)(\rmR+1)^{-1} \in \N$, and $t \geq s$ large enough, it holds that 
\begin{equation*}
\E \Biggl[\exp \Biggl(- \int_{e^s A_{i}} \rmF_{\gamma, t-s}(x) e^{-\gamma \chi(e^{-s} x)} \mu_{\gamma, t - s}(dx) \Biggr)\Biggr] \leq 1 - (\tilde a_{\star} - \eps) \rmT_{\gamma} \rho_{\chi(e^{-s} \cdot)}\bigl(e^s A_{i}\bigr) \;.
\end{equation*}
For all $i \in [\frkI]$, using the assumptions on the function $\chi$, we obtain that for sufficiently large $s \geq 0$, the following holds
\begin{equation*}
(\tilde a_{\star} - \eps) \rmT_{\gamma} \rho_{\chi(e^{-s} \cdot)}\bigl(e^s A_{i}\bigr) \leq \rmR^d (\tilde a_{\star} - \eps) \rmT_{\gamma} \Biggl(\sup_{z \in [\log s/(8 \sqrt{\smash[b]{2d}}), \infty)} z e^{-\sqrt{\smash[b]{2d}}z}\Biggr) < 1 \;.
\end{equation*}
Therefore, using the inequality $\prod_{i \in [\frkI]} (1-a_{i}) \leq \exp(-\sum_{i \in [\frkI]} a_{i})$, which is valid for $(a_{i})_{i \in [\frkI]} \subset [0, 1]$, and making the change of variables $x \mapsto e^s x$, we get that
\begin{align*}
\E \Biggl[\exp \Biggl(- e^{ds} \int_{\cup_{i \in [\frkI]} A_{i}} & \rmF_{\gamma, t}(x) e^{-\gamma \chi(x)} \mu_{\gamma, s, t}(dx)\Biggr) \Biggr] \leq \exp \Bigl(- (\tilde a_{\star} - \eps) e^{d s} \rmT_{\gamma} \rho_{\chi}\bigl(\cup_{i \in [\frkI]} A_i\bigr) \Bigr) \\
& = \exp \Bigl(- (\tilde a_{\star} - \eps) e^{d s} \rmT_{\gamma} \bigl(\rho_{\chi}\bigl([0,1]^d\bigr) - \rho_{\chi}\bigl(B_{\rmR, s}\bigr)\bigr) \Bigr) \\
& \leq \exp\Bigl(- (\tilde a_{\star} - \eps) e^{d s} \rmT_{\gamma} \rho_{\chi}\bigl([0,1]^d\bigr) \Bigr) + \eps\;,
\end{align*}
where, once again, the last inequality is due to the fact that, since by assumption $\chi$ is scaled-admissible in the sense of Definition~\ref{def:ChiAdm}, the quantity $e^{ds} \rho_{\chi}(\bar{B_{\rmR, s}})$ can be made arbitrarily small by taking $s \geq 0$ and $\rmR \geq 1$ large enough. 

\textbf{Step~3:} In this final step, we show how to combine the previous two steps to obtain the desired result. Thanks to decomposition \eqref{eq:decoBox} and to the elementary inequality $a b \geq a + b -1$ valid for $a$, $b \in [0, 1]$, we have that 
\begin{align*}
\E_{\eqref{e:magic_light}} & \geq \E\Biggl[ \exp\Biggl(- e^{ds} \int_{\cup_{i \in [\frkI]} A_i} \rmF_{\gamma, t}(x) e^{-\gamma \chi(x)} \mu_{\gamma, s, t}(dx)\Biggr)\Biggr] \\
& \hspace{40mm} + \E\Biggl[ \exp\Biggl(- e^{ds} \int_{B_{\rmR, s}} \rmF_{\gamma, t}(x) e^{-\gamma \chi(x)} \mu_{\gamma, s, t}(dx)\Biggr)\Biggr]  - 1 \;.
\end{align*}
On the other hand, we also have the trivial inequality
\begin{align*}
\E_{\eqref{e:magic_light}} \leq \E\Biggl[ \exp\Biggl(- e^{ds} \int_{\cup_{i \in [\frkI]} A_i} \rmF_{\gamma, t}(x) e^{-\gamma \chi(x)} \mu_{\gamma, s, t}(dx)\Biggr)\Biggr] \;.
\end{align*}
Hence, to conclude it suffices to combine Steps~1~and~2. 
\end{proof}

%%%%%%%%%%%%%%%%%%%%%%%%%%%%%%%%%%%%%%%%%%%%%%%%%%%%%%%%%%%%%%%%%%%%%%%%%%%%%%%%%%%%%%%%%%%%%%%%%%%%%%%%%%%%%%%%%
\subsection{Convergence of the large scales}
\label{sub:JointLow}
The main goal of this subsection is to compute the conditional Laplace functionals of the random measures $(\mu_{\gamma, t})_{t \geq 0}$ conditioned on the $\sigma$-field $\CF_s$ defined in \eqref{eq:defSigmaFieldT}. 
Heuristically speaking, we want to get the expectations appearing in the statement of Lemma~\ref{lem:magic} to be of order one, and so we 
want to absorb some normalisation factor $r(s)$ into $\smash{\mu_{\gamma, s, t}}$. Proceeding formally for the moment, given a function $\chi:[0, 1]^d \to\R$ satisfying the conditions in the statement of Lemma~\ref{lem:magic}, we consider the function $\tilde \chi : [0, 1]^d \to \R$ such that, for all $x \in [0, 1]^d$,
\begin{equation*}
e^{-\gamma \chi(x)} =  e^{-ds} r(s) e^{-\gamma \tilde \chi(x)} \;,
\end{equation*}
so that, by Lemma~\ref{lem:magic} with $n = 1$, $\rmW_{1, \cdot} = 0$, and $\theta_1 = 1$, the following approximate identity holds
\begin{equation*}
\E \Biggl[\exp \Biggl(-\int_{[0, 1]^d} r(s) e^{-\gamma \tilde \chi(x)} \mu_{\gamma, s, t}(dx)\Biggr)\Biggr] \approx \E\Bigl[\exp\Bigl(- c e^{ds} \rho_{\tilde \chi - \log(r(s))/\gamma + ds/\gamma}\bigl([0,1]^d\bigr) \Bigr)\Bigr] \;.
\end{equation*}
for some constant $c > 0$.
Now recalling the definition \eqref{eq:measChi}, it is easily seen that to get something of order one on the right-hand side of the above expression, we need to choose $r(s)$ such that 
\begin{equation*}
e^{ds-\f{\sqrt{\smash[b]{2d}}}{\gamma} d s} r(s)^{\f{\sqrt{\smash[b]{2d}}}\gamma} \bigl|\log (e^{-ds} r(s))\bigr| \approx 1
\end{equation*}
which is achieved by setting
\begin{equation*}
r(s) = e^{ds - \gamma s \sqrt{\smash[b]{d/2}}} s^{-\f\gamma{\sqrt{\smash[b]{2d}}}}\;.
\end{equation*}
In particular, this suggests to define the measure $\smash{\tilde\mu_{\gamma, s,t} \eqdef r(s) \mu_{\gamma, s,t}}$ which is given by
\begin{equation}
\label{eq:muTilde_st}
\tilde\mu_{\gamma,s,t}(dx) =	s^{-\f\gamma{\sqrt{\smash[b]{2d}}}} e^{\gamma s \f{\sqrt{\smash[b]{2d}}}{2}} (t-s)^{\f{3\gamma}{2\sqrt{\smash[b]{2d}}}}e^{t(\gamma/\sqrt{\smash[b]{2}} - \sqrt{\smash[b]{d}})^2} e^{\gamma \rmX_{s, t}(x) -  \frac{\gamma^2}{2} t} dx \;.
\end{equation} 
Given $\rmR \geq 1$, $s \geq 0$, a function $\chi:[0, \rmR]^d \to \R$, we define the measure
\begin{equation}
\label{eq:measTildeChi}
\rho_{\chi, s}(dx) \eqdef e^{-\sqrt{\smash[b]{2d}} \chi(x)} \Biggl(\sqrt{\smash[b]{d/2}}+\f{\chi(x)}{s} + \f{\log s}{\sqrt{\smash[b]{2d}} s}\Biggr) dx \;.
\end{equation}

\begin{definition}
\label{def:ChiAdmS}
We say that a function $\chi:[0, 1]^d \to \R$ is \emph{admissible} if for all $\eps > 0$, there exists $\rmR_0 \geq 1$ such that $\smash{\rho_{\chi, s}(\bar{B_{\rmR, s}}) \leq \eps}$ for all $s \geq 0$ sufficiently large and for all $\rmR \geq \rmR_0$ such that $\smash{(e^s+1) (\rmR+1)^{-1}} \in \N$.
\end{definition}

It is immediate to check that Lemma~\ref{lem:magic} for the measure $\tilde\mu_{\gamma,s,t}$ can be stated as follows.
\begin{corollary}
\label{cor:magic}
Let $\eps > 0$. Then, for $s \geq 0$ sufficiently large, for $t > s$ sufficiently large, and for any admissible function $\chi: [0, 1]^d \to \R$ in the sense of Definition~\ref{def:ChiAdmS}, such that 
\begin{equation}
\label{eq:condChiScaleds}
\begin{gathered}
\min_{x \in [0, 1]^d} \chi(x) \geq\f{\log s}{8 \sqrt{\smash[b]{2d}}} - \f{\sqrt{\smash[b]{2d}}s}{2} \;, \qquad \max_{x \in [0, 1]^d} \chi(x) \leq \log t - \f{\sqrt{\smash[b]{2d}}s }{ 2} - \f{\log s}{\sqrt{\smash[b]{2d}}}  \;,\\
\sup_{\substack{x, y \in [0, 1]^d, \\ |x-y| \leq e^{-s}s^{-1}}} \frac{|\chi(x) - \chi(y)|}{|x-y|^{1/3}} \leq e^{s/3} \;,
\end{gathered}
\end{equation}
one has that,
\begin{align}
\E \Biggl[\exp \Biggl(- \int_{[0, 1]^d} \rmF_{\gamma, t}(x) e^{-\gamma \chi(x)} \tilde \mu_{\gamma, s,t}(dx)\Biggr)\Biggr] & \geq  \exp \Bigl(-(\tilde a_{\star} + \eps) \rmT_{\gamma} \rho_{\chi, s}\bigl([0, 1]^d\bigr) \Bigr) - \eps \;, \label{e:magic_hat0}  \\
\E \Biggl[\exp \Biggl(- \int_{[0, 1]^d} \rmF_{\gamma, t}(x) e^{-\gamma \chi(x)} \tilde \mu_{\gamma, s,t}(dx)\Biggr)\Biggr] & \geq  \exp \Bigl(-(\tilde a_{\star} - \eps) \rmT_{\gamma} \rho_{\chi, s}\bigl([0, 1]^d\bigr) \Bigr) + \eps \;. \label{e:magic_hat1}
\end{align} 
where $\tilde a_{\star} >0$ is the constant introduced in Proposition~\ref{pr:laplaceJoint}.
\end{corollary}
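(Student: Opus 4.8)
The plan is to obtain Corollary~\ref{cor:magic} directly from Lemma~\ref{lem:magic} by an affine reparametrisation of the function $\chi$, using the fact that $\tilde\mu_{\gamma, s, t}$ is merely a deterministic multiple of $\mu_{\gamma, s, t}$. Throughout I would write $c_s \eqdef \sqrt{\smash[b]{d/2}}\, s + \frac{\log s}{\sqrt{\smash[b]{2d}}}$ and repeatedly use the elementary identities $\sqrt{\smash[b]{2d}}/2 = \sqrt{\smash[b]{d/2}}$ and $\sqrt{\smash[b]{2d}}\cdot\sqrt{\smash[b]{d/2}} = d$.

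First I would compare \eqref{e:mu_st} with \eqref{eq:muTilde_st}: collecting the exponents in $t$ (the factor $e^{t(\gamma/\sqrt{\smash[b]{2}} - \sqrt{\smash[b]{d}})^2 - \gamma^2 t/2}$ against $e^{(t-s)(\gamma/\sqrt{\smash[b]{2}} - \sqrt{\smash[b]{d}})^2 - \gamma^2(t-s)/2}$) together with the prefactor $s^{-\gamma/\sqrt{\smash[b]{2d}}} e^{\gamma s \sqrt{\smash[b]{2d}}/2}$, one checks after a short simplification that $\tilde\mu_{\gamma, s, t} = e^{ds} e^{-\gamma c_s}\, \mu_{\gamma, s, t}$ (this is exactly $r(s)\,\mu_{\gamma, s, t}$ with $r(s)$ as in the heuristic preceding \eqref{eq:muTilde_st}). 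Consequently, for any $\chi$ and any $t$,
\[
\int_{[0,1]^d} \rmF_{\gamma, t}(x)\, e^{-\gamma\chi(x)}\, \tilde\mu_{\gamma, s, t}(dx) = e^{ds}\int_{[0,1]^d} \rmF_{\gamma, t}(x)\, e^{-\gamma(\chi(x) + c_s)}\, \mu_{\gamma, s, t}(dx)\;,
\]
so that the left-hand sides of \eqref{e:magic_hat0}--\eqref{e:magic_hat1} coincide with the left-hand side of the bounds in Lemma~\ref{lem:magic} applied to the function $\chi' \eqdef \chi + c_s$ (with $n=1$, $\rmW_{1,\cdot}=0$, $\theta_1=1$ replaced by the general data as in \eqref{eq:RPsiGamma}).

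Next I would check that the hypotheses match. Since $c_s$ is constant, the Hölder seminorm of $\chi'$ equals that of $\chi$, so the third condition in \eqref{eq:condChiScaleds} transfers verbatim to the third condition in \eqref{eq:condChiScaled}. For the pointwise bounds, adding $c_s$ precisely cancels the offsets $-\sqrt{\smash[b]{2d}}\, s/2$ and $-\log s/\sqrt{\smash[b]{2d}}$ built into \eqref{eq:condChiScaleds}, yielding $\min_{[0,1]^d}\chi' \geq \tfrac{\log s}{8\sqrt{\smash[b]{2d}}}$ (in fact with room to spare, for $s$ large) and $\max_{[0,1]^d}\chi' \leq \log t$. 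Then, using $e^{-\sqrt{\smash[b]{2d}}\, c_s} = e^{-ds}/s$, a one-line computation from \eqref{eq:measChi} and \eqref{eq:measTildeChi} gives the pointwise identity of measures $e^{ds}\rho_{\chi'}(dx) = \bigl(\tfrac{\chi(x)}{s} + \sqrt{\smash[b]{d/2}} + \tfrac{\log s}{\sqrt{\smash[b]{2d}}\, s}\bigr)e^{-\sqrt{\smash[b]{2d}}\chi(x)}\, dx = \rho_{\chi, s}(dx)$; in particular $e^{ds}\rho_{\chi'}(\bar{B_{\rmR, s}}) = \rho_{\chi, s}(\bar{B_{\rmR, s}})$ and $e^{ds}\rho_{\chi'}([0,1]^d) = \rho_{\chi, s}([0,1]^d)$, so that $\chi'$ is scaled-admissible in the sense of Definition~\ref{def:ChiAdm} if and only if $\chi$ is admissible in the sense of Definition~\ref{def:ChiAdmS}.

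Finally I would simply invoke Lemma~\ref{lem:magic} for $\chi'$ and substitute $e^{ds}\rho_{\chi'}([0,1]^d) = \rho_{\chi, s}([0,1]^d)$ into its two bounds, which produces \eqref{e:magic_hat0} and \eqref{e:magic_hat1} verbatim. I do not expect any genuine obstacle here: the whole argument is a change of variables, and the only point requiring attention is the arithmetic ensuring that the constants appearing in \eqref{eq:condChiScaleds}, \eqref{eq:measTildeChi} and in the normalisation $r(s)$ cancel correctly against $c_s$ and $\log r(s)$.
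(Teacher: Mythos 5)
Your proposal is correct and takes exactly the route the paper intends: the paper introduces Corollary~\ref{cor:magic} with the remark that it is ``immediate to check'' from Lemma~\ref{lem:magic}, and your reparametrisation $\chi \mapsto \chi + c_s$ (together with the identities $\tilde\mu_{\gamma,s,t}=e^{ds}e^{-\gamma c_s}\mu_{\gamma,s,t}$, $e^{ds}\rho_{\chi+c_s}=\rho_{\chi,s}$, and the verification that \eqref{eq:condChiScaleds} maps to \eqref{eq:condChiScaled}) is precisely the check the authors leave implicit. The only slight blemish, present already in the paper's phrasing of Definitions~\ref{def:ChiAdm}--\ref{def:ChiAdmS}, is that $\chi' = \chi + c_s$ depends on the running parameter $s$, so ``scaled-admissibility of $\chi'$'' should be read as the condition at that specific $s$ (which is how it is used in the proof of Lemma~\ref{lem:magic} and in Lemma~\ref{lem:Laplace}), rather than uniformly over all large $s'$.
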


\begin{remark}
The previous lemma entails almost directly that the measure $\smash{\tilde\mu_{\gamma, s,t}}$ converges to an integrated atomic random measure with parameter $\gamma$ and spatial intensity given by the Lebesgue measure $dx$, i.e, up to multiplicative constants and recalling the notation introduce in Definition~\ref{def:PPP}, to $\CP_{\gamma}[dx]$. To see this, it suffices to take the function $\chi$ to be of constant order. Then, taking first the limit when $t \to \infty$ and then the limit when $s \to \infty$, the ``log corrections'' appearing in \eqref{eq:measTildeChi} vanish, and one can read off the Laplace functional associated to measure described above. Since we don't need this fact in the sequel of the proof, we refrain from giving further details.
\end{remark}

In the next lemma we compute the conditional Laplace functionals of the measures $(\mu_{\gamma, t})_{t \geq 0}$.
%For $s \geq 0$, we recall once again that $\CF_{s} \eqdef \sigma(\rmX_{r} \; : \; r \in [0, s))$, and we recall that $\mu_{\gammac}$ denotes the critical GMC associated to $\rmX$ obtain through the derivative renormalisation. 
\begin{lemma} 
\label{lem:Laplace}
There exists a diverging sequence $(s_n)_{n \geq 0}$ of non-negative real numbers such that, almost surely,
\begin{equation}
\label{eq:conJointLap}
\lim_{n \to \infty} \lim_{t \to \infty}\E\Biggl[\exp\Biggl(- \int_{[0, 1]^d} \rmF_{\gamma, t}(x) \mu_{\gamma, t}(dx) \Biggr) \, \Bigg| \, \CF_{s_n}\Biggr]  =  \exp\Bigl(- \tilde a_{\star} \rmT_{\gamma} \mu_{\gammac}\bigl([0, 1]^d\bigr) \Bigr) \;,
\end{equation}
where $\tilde a_{\star} >0$ is the constant introduced in Proposition~\ref{pr:laplaceJoint}, and $\mu_{\gammac}$ is the critical GMC. 
\end{lemma}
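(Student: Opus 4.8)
The plan is to obtain \eqref{eq:conJointLap} by combining the multiplicative-cascade structure $\rmX_{t}=\rmX_{s}+\rmX_{s,t}$ with Corollary~\ref{cor:magic}, in the spirit of \cite[Section~3]{Glassy}. First I would condition on $\CF_s$ and use independence of $\rmX_{s,t}$ (hence of $\tilde\mu_{\gamma,s,t}$ and of the fields $\rmW_{i,t}$, by \ref{hp:W1}) from $\CF_s$ together with the decomposition
\begin{equation*}
\mu_{\gamma,t}(dx) = e^{\gamma\rmX_s(x)-\frac{\gamma^2}{2}s}\,\Bigl(\tfrac{t}{t-s}\Bigr)^{\!\f{3\gamma}{2\sqrt{\smash[b]{2d}}}} r(s)^{-1}\,\tilde\mu_{\gamma,s,t}(dx)\,s^{\f{\gamma}{\sqrt{\smash[b]{2d}}}}\cdots
\end{equation*}
so that, writing $\chi_s(x) \eqdef \rmX_s(x) - \frac{\gamma}{?}\cdots$ (more precisely choosing $\chi$ so that $\rmF_{\gamma,t}(x)\mu_{\gamma,t}(dx)$ becomes $\rmF_{\gamma,t}(x)e^{-\gamma\chi(x)}\tilde\mu_{\gamma,s,t}(dx)$ after absorbing the $(t/(t-s))$ factor, which $\to 1$ as $t\to\infty$), the conditional Laplace functional on the left of \eqref{eq:conJointLap} is, for fixed $s=s_n$, up to a vanishing error in $t$, of the form $\E[\exp(-\int_{[0,1]^d}\rmF_{\gamma,t}(x)e^{-\gamma\chi(x)}\tilde\mu_{\gamma,s,t}(dx))\mid\CF_s]$ with $\chi$ a $\CF_s$-measurable random function. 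Here the field $\chi$ is essentially $-\gamma^{-1}\rmX_s + (\text{deterministic shift in }s)$, so that $e^{-\sqrt{\smash[b]{2d}}\chi(x)} \asymp e^{\frac{\sqrt{2d}}{\gamma}\rmX_s(x)}$, and $\rho_{\chi,s}([0,1]^d)$ becomes, after unwinding \eqref{eq:measTildeChi}--\eqref{eq:measChi}, precisely the Seneta--Heyde-type approximation $\sqrt{s}\int_{[0,1]^d}e^{\sqrt{2d}\rmX_s(x)-d s}\,dx$ plus corrections, i.e. (up to the constant $\sqrt{2/\pi}=\alpha$) the critical GMC approximant $\mu_{\gammac,s}^{\mathrm{SH}}([0,1]^d)$.

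Next I would verify, almost surely along a suitable deterministic sequence $s_n\to\infty$, that the random function $\chi$ satisfies the hypotheses of Corollary~\ref{cor:magic}: the lower and upper bounds on $\min\chi$ and $\max\chi$ follow from the a.s.\ modulus-of-continuity and the a.s.\ upper/lower bounds on the smooth field $\rmX_s$ (its supremum over $[0,1]^d$ grows like $\sqrt{2d}\,s$, and its infimum likewise, with polylog fluctuations), while the Hölder-type bound with exponent $1/3$ at scale $e^{-s}s^{-1}$ is a standard Kolmogorov/Fernique estimate for $\rmX_s$, whose gradient has variance of order $e^{2s}$. The fact that these bounds hold \emph{simultaneously} for all large $n$ almost surely is why one passes to a sparse enough deterministic subsequence $(s_n)$ (e.g.\ $s_n=n$, or sparser if needed) and invokes Borel--Cantelli; this is exactly the device used to produce the ``$\lim_n$'' rather than ``$\lim_{s}$'' in the statement. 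One also needs that $\chi$ is \emph{admissible} in the sense of Definition~\ref{def:ChiAdmS}, i.e.\ that $\rho_{\chi,s}(\bar B_{\rmR,s})\to 0$; this is where the Seneta--Heyde normalisation of the critical GMC enters: $\rho_{\chi,s}$ restricted to the buffer zone is (up to constants) $\sqrt{s}\,\mu_{\gammac,s}^{\mathrm{SH}}$-mass of a set of Lebesgue measure $O(\rmR^{-1})$, which is small uniformly in $s$ with high probability and can be handled by a tightness/uniform-integrability argument for the critical GMC, again along the subsequence.

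Then I would apply Corollary~\ref{cor:magic} with this random $\chi$ (valid since $\chi$ is $\CF_s$-measurable and the corollary's inequalities are deterministic in $\chi$), obtaining for each $\eps>0$, all large $n$, and $t$ large depending on $n$,
\begin{equation*}
\bigl|\,\E[\exp(-\mu_{\gamma,t}(\rmF_{\gamma,t})\one_{[0,1]^d})\mid\CF_{s_n}] - \exp(-\tilde a_\star\rmT_\gamma\,\rho_{\chi,s_n}([0,1]^d))\,\bigr| \le \eps\bigl(1+\rho_{\chi,s_n}([0,1]^d)\bigr)\,,
\end{equation*}
and finally let $n\to\infty$: by the Seneta--Heyde convergence \eqref{e:def_crtical_GMC_SH} (together with the identification of the ``log corrections'' in \eqref{eq:measTildeChi}--\eqref{eq:measTildeChi} as lower-order), one has $\rho_{\chi,s_n}([0,1]^d)\to \mu_{\gammac}([0,1]^d)$ a.s.\ along the subsequence, where the factor $\alpha=\sqrt{2/\pi}$ converting the SH-normalisation constant has already been folded into $\tilde a_\star$ via \eqref{eq:defAStar}. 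This yields \eqref{eq:conJointLap}. The main obstacle I anticipate is the careful bookkeeping to show that the random $\chi$ meets \emph{all} requirements of Corollary~\ref{cor:magic} \emph{and} is admissible, simultaneously and almost surely along $(s_n)$ — in particular the interplay between the deterministic shifts $\sqrt{2d}\,s/2$, $\log s/\sqrt{2d}$ in \eqref{eq:condChiScaleds} and the a.s.\ growth of $\sup\rmX_s$ and $\inf\rmX_s$ must be matched to leading order, using the known first-order asymptotics of the maximum and minimum of the martingale approximation together with \cite[Madaule, Acosta]{Madaule_Max, Acosta}; and the admissibility/buffer-zone estimate requires a uniform (in $s$) control on the SH-approximants of the critical GMC, which is a genuine input from critical GMC theory rather than a routine computation.
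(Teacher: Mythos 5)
Your overall strategy coincides with the paper's: condition on $\CF_s$, choose $\chi$ so that $e^{-\gamma\chi}\,\tilde\mu_{\gamma,s,t}=\mu_{\gamma,t}$, invoke Corollary~\ref{cor:magic} on the ($\CF_s$-measurable) random $\chi$, check that its hypotheses and admissibility hold a.s.\ along a diverging subsequence $(s_n)$, and let $n\to\infty$. The paper's bookkeeping of the event $\rmE_{s,\rmR}=\bigcap_i\rmE^i_s\cap\rmE^4_{s,\rmR}$, with $\P(\rmE_{s,\rmR})\to 1$ from \cite[Lemmas~3.1--3.3]{Madaule_Max} and subsequence extraction, is in the same spirit as your Borel--Cantelli plan.

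However, there is a genuine gap in your identification of $\chi$ and of $\rho_{\chi,s}$. The equation $e^{-\gamma\chi}\,\tilde\mu_{\gamma,s,t}=\mu_{\gamma,t}$ forces
\begin{equation*}
\chi(\cdot)=-\rmX_s(\cdot)-\tfrac{\log s}{\sqrt{2d}}+\tfrac{\sqrt{2d}\,s}{2}\;,
\end{equation*}
so the coefficient of $\rmX_s$ is $-1$, not $-\gamma^{-1}$ as you wrote; with your coefficient one would get $e^{-\sqrt{2d}\chi}\asymp e^{(\sqrt{2d}/\gamma)\rmX_s}$, a \emph{sub}critical exponential, and the whole argument would produce the wrong measure. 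Plugging the correct $\chi$ into \eqref{eq:measTildeChi}, the log-corrections cancel exactly against $\frkd_s$ and one finds
\begin{equation*}
\rho_{\chi,s}(dx)=\bigl(\sqrt{2d}\,s-\rmX_s(x)\bigr)\,e^{\sqrt{2d}\rmX_s(x)-ds}\,dx=\mu_{\gammac,s}(dx)\;,
\end{equation*}
i.e.\ exactly the \emph{derivative}-normalised critical approximant \eqref{e:def_crtical_GMC_Der}, \emph{not} the Seneta--Heyde one $\sqrt{s}\,e^{\sqrt{2d}\rmX_s-ds}dx$. This matters for two reasons. First, the limit you need is almost sure (conditionally on $\CF_{s_n}$), and \cite[Theorem~4]{Critical_der} delivers a.s.\ vague convergence of the derivative approximant, whereas the SH normalisation \eqref{e:def_crtical_GMC_SH} is a priori only convergence in probability; your route would require an additional argument to upgrade it. Second, your claim that the extra constant $\sqrt{2/\pi}$ ``has already been folded into $\tilde a_\star$'' is not correct: the $\alpha=\sqrt{2/\pi}$ appearing in $a_\star$ in \eqref{eq:defAStar} is the constant from Theorem~\ref{th:clusterProb} that is used inside the proof of Proposition~\ref{pr:joint}, and it is not available to also absorb an SH-to-derivative conversion factor; using the SH approximant would therefore leave a spurious $\sqrt{2/\pi}$ in your final limit. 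Once you correct the identification $\rho_{\chi,s}=\mu_{\gammac,s}$, both of these issues disappear and the rest of your argument goes through essentially as in the paper.
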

\begin{proof}
In order to prove the result, we rely on Corollary~\ref{cor:magic}. In particular, we want to take the function $\chi:[0, 1]^d \to \R$ in that corollary in such a way that $\smash{\exp(-\gamma \chi(x)) \tilde \mu_{\gamma, s, t}(dx) = \mu_{\gamma, t}(dx)}$, which forces us to take
\begin{equation}
\label{e:defg}
\chi(\cdot) = -\rmX_s(\cdot) - \f{\log s}{\sqrt{\smash[b]{2d}}} + \f{\sqrt{\smash[b]{2d}}s}{2} \;.
\end{equation}
Now, a simple computation yields that, for $\chi$ as in \eqref{e:defg}, 
\begin{equation*}
\rho_{\chi, s}(dx) = \mu_{\gammac, s}(dx) \;,
\end{equation*}
where here we recall \eqref{e:def_crtical_GMC_Der}.
For $\eps > 0$, $s \geq 0$, $\rmR \geq 1$ such that $\smash{(e^s+1) (\rmR+1)^{-1}} \in \N$, and $\chi$ as in \eqref{e:defg}, we consider the following events
\begin{align*}
& \rmE^1_s \eqdef \Biggl\{\min_{x \in [0, 1]^d} \chi(x) \geq\f{\log s}{8 \sqrt{\smash[b]{2d}}} - \f{\sqrt{\smash[b]{2d}}s}{2}\Biggr\}\;, \qquad  &&\rmE^2_s \eqdef \Biggl\{\max_{x \in [0, 1]^d} \chi(x) \leq\f{19 \sqrt{\smash[b]{2d}}s}{2} - \f{\log s}{\sqrt{\smash[b]{2d}}}\Biggr\} \;, \\
& \rmE^3_s \eqdef \Biggl\{\sup_{\substack{x, y \in [0, 1]^d, \\ |x-y| \leq e^{-s}s^{-1}}} \frac{|\chi(x) - \chi(y)|}{|x-y|^{1/3}} \leq e^{s/3}\Biggr\}\;, \qquad && \rmE^4_{s, \rmR} \eqdef \bigl\{\rho_{\chi, s}(\bar{B_{\rmR, s}}) \leq \eps \bigr\} \;.
\end{align*}
We also define the event $\smash{\rmE_{s, \rmR} \eqdef \cap_{i=1}^3 \rmE^{i}_s \cap \rmE^4_{s, \rmR}}$ and we note that
\begin{align*}
\E\Biggl[\exp\Biggl(- \int_{[0, 1]^d} \rmF_{\gamma, t}(x) \mu_{\gamma, t}(dx) \Biggr) \Bigg| \CF_s\Biggr] & \geq \E\Biggl[\exp\Biggl(- \int_{[0, 1]^d} \rmF_{\gamma, t}(x) \mu_{\gamma, t}(dx) \Biggr) \one_{\rmE_{s, \rmR}} \Bigg| \CF_s\Biggr]  \;,  \\
\E\Biggl[\exp\Biggl(- \int_{[0, 1]^d} \rmF_{\gamma, t}(x) \mu_{\gamma, t}(dx) \Biggr) \Bigg| \CF_s\Biggr] & \leq \E\Biggl[\exp\Biggl(- \int_{[0, 1]^d} \rmF_{\gamma, t}(x) \mu_{\gamma, t}(dx) \Biggr) \one_{\rmE_{s, \rmR}} \Bigg| \CF_s\Biggr] + \one_{\rmE^c_{s, \rmR}} \;,
\end{align*} 
where the second inequality is simply due to the fact that the map $x \mapsto e^{-x}$ is bounded by one, for all $x \geq 0$. Moreover, we observe that, by definition, on the event $\rmE_{s, \rmR}$, the function $\chi$ defined in \eqref{e:defg} satisfies the conditions required in the statement of Corollary~\ref{cor:magic}. Therefore, for $\eps > 0$ as fixed above and for $s \geq 0$ sufficiently large, it holds almost surely that
\begin{align*}
\liminf_{t \to \infty} \E\Biggl[\exp\Biggl(- \int_{[0, 1]^d} \rmF_{\gamma, t}(x) \mu_{\gamma, t}(dx) \Biggr) \Bigg| \CF_s\Biggr] & \geq \Bigl(\exp \Bigl(-(\tilde a_{\star} + \eps) \rmT_{\gamma} \mu_{\gammac, s}\bigl([0, 1]^d\bigr) \Bigr) - \eps \Bigr)\one_{\rmE_{s, \rmR}} \;, \\
\limsup_{t \to \infty}  \E\Biggl[\exp\Biggl(- \int_{[0, 1]^d} \rmF_{\gamma, t}(x) \mu_{\gamma, t}(dx) \Biggr) \Bigg| \CF_s\Biggr] & \leq \Biggl(\exp \Bigl(-(\tilde a_{\star} - \eps) \rmT_{\gamma} \mu_{\gammac, s}\bigl([0, 1]^d\bigr) \Bigr) + \eps \Biggr) \one_{\rmE_{s, \rmR}} + \one_{\rmE^c_{s, \rmR}} \;.
\end{align*}
Thanks to \cite[Theorem~4]{Critical_der}, the sequence of random measure $\smash{(\mu_{\gammac, s})_{s \geq 0}}$ converges as $s \to \infty$ almost surely to $\smash{\mu_{\gammac}}$ in the topology of vague convergence. Furthermore, we can extract positive diverging sequences $(s_n)_{n \in \N}$ and $(\rmR_n)_{n \in \N}$ such that $\smash{\one_{\rmE_{s_n, \rmR_n}}}$ converges almost surely to $1$ as $n \to \infty$. This follows since, thanks to \cite[Lemmas~3.1,~3.2,~3.3]{Madaule_Max} (see also \cite[Lemma~A.2]{Glassy}), we have that $\P(\rmE_{s, \rmR})$ converges to $1$ as $s$, $\rmR \to \infty$. In particular, this shows that the following convergences hold almost surely
\begin{align*}
\lim_{n \to \infty} \liminf_{t \to \infty}  \E\Biggl[\exp\Biggl(- \int_{[0, 1]^d} \rmF_{\gamma, t}(x) \mu_{\gamma, t}(dx) \Biggr) \Bigg| \CF_{s_n}\Biggr] & \geq \exp \Bigl(-(\tilde a_{\star} + \eps) \rmT_{\gamma} \mu_{\gammac}\bigl([0, 1]^d\bigr) \Bigr) - \eps  \;, \\
\lim_{n \to \infty} \limsup_{t \to \infty} \E\Biggl[\exp\Biggl(-\int_{[0, 1]^d} \rmF_{\gamma, t}(x) \mu_{\gamma, t}(dx) \Biggr) \Bigg| \CF_{s_n}\Biggr] & \leq \exp \Bigl(-(\tilde a_{\star} - \eps) \rmT_{\gamma}\mu_{\gammac}\bigl([0, 1]^d\bigr) \Bigr) + \eps \;.
\end{align*} 
Therefore, the conclusion follows by arbitrariness of $\eps > 0$. 
\end{proof}

%%%%%%%%%%%%%%%%%%%%%%%%%%%%%%%%%%%%%%%%%%%%%%
\subsection{Stable convergence}
\label{sub:JointStable}
The goal of this section is to prove Proposition~\ref{pr:laplaceJoint}, from which Theorem~\ref{th:stableConv} follows immediately.
\begin{proof}[Proof of Proposition~\ref{pr:laplaceJoint}]
For $\smash{n \in \N}$ and $\smash{\gamma > \sqrt{\smash[b]{2d}}}$, we consider the collection of measures $\smash{(\mu_{\gamma, i})_{i \in [n]}}$ introduced in the statement of Theorem~\ref{th:stableConv}. We recall that we need to check that, for all $(\varphi, (f_i)_{i \in [n]}) \in \CC^{\infty}_c(\R^d) \times (\CC_c^{+}(\R^d))^n$, the following holds 
\begin{equation*}
\lim_{t \to \infty} \E\Biggl[\exp\bigl(i \langle \rmX, \varphi \rangle \bigr) \prod_{i =  1}^{n} \exp\bigl(-\mu_{\gamma, t, i}(f_i)\bigr)\Biggr] = \E\Bigl[\exp\bigl(i \langle \rmX, \varphi \rangle \bigr) \exp\bigl(- \tilde a_{\star} \mu_{\gammac}(\rmT_{\gamma})\bigr)\Bigr] \;,
\end{equation*}	
where here, to simplify the notation, we have omitted the dependence of $\rmT_{\gamma}$ on $(f_i)_{i \in [n]}$, and where we recall that $\tilde a_{\star} = \beta(d, \gamma) a_{\star}$ with $a_{\star}$ as defined in \eqref{eq:defAStar}. 

We consider the sequence $(s_n)_{n \in \N}$ introduced in the statement of Lemma~\ref{lem:Laplace}. For $n \in \N$, we consider $u$, $t \geq 0$ such that $u < s_n < t$, and we note that
\begin{align*}
& \Biggl\lvert \E\Biggl[\exp\bigl(i \langle \rmX, \varphi\rangle\bigr)\Biggl(\prod_{i=1}^n \exp\bigl(- \mu_{\gamma, t, i}(f_i)\bigr) - \exp\bigl(- \tilde a_{\star} \mu_{\gammac}(\rmT_{\gamma})\bigr)\Biggr)\Biggr] \Biggr\rvert \\
& \hspace{10mm} \leq \Biggl\lvert \E\Biggl[\exp\bigl(i \langle \rmX_u, \varphi\rangle\bigr) \Biggl(\prod_{i=1}^n \exp\bigl(- \mu_{\gamma, t, i}(f_i)\bigr) - \exp\bigl(- \tilde a_{\star} \mu_{\gammac}(\rmT_{\gamma})\bigr)\Biggr)\Biggr] \Biggr\rvert  \\
& \hspace{70mm}  + 2 \E\Bigl[\Bigl\lvert\exp\bigl(i \langle \rmX, \varphi\rangle\bigr) - \exp\bigl(i \langle \rmX_u, \varphi\rangle\bigr)\Bigr\rvert\Bigr] \;,
\end{align*}
where we simply used the triangle inequality and the fact that the function $x \mapsto e^{-x}$ is bounded by $1$ for $x \geq 0$. For the term appearing in the second line of the above display, we note that it is equal to
\begin{equation*}
\Biggl\lvert \E\Biggl[\exp\bigl(i \langle \rmX_u, \varphi\rangle\bigr) \E\Biggl[\prod_{i=1}^n \exp\bigl(- \mu_{\gamma, t, i}(f_i)\bigr) - \exp\bigl(- \tilde a_{\star} \mu_{\gammac}(\rmT_{\gamma})\bigr)\, \Bigg| \, \CF_{s_n}\Biggr]\Biggr] \Biggr\rvert  \;,
\end{equation*}
where we used the fact that $\rmX_{u}$ is $\smash{\CF_{s_n}}$-measurable since by assumption $u < s_n$. On the one hand, Lemma~\ref{lem:Laplace} implies that the following convergence holds almost surely
\begin{equation}
\label{e:RHSmain1}
\lim_{n \to \infty} \lim_{t \to \infty} \E\Biggl[\prod_{i=1}^n \exp\bigl(- \mu_{\gamma, t, i}(f_i)\bigr)\, \Bigg| \, \CF_{s_n}\Biggr] = \exp\bigl(- \tilde a_{\star} \mu_{\gammac}(\rmT_{\gamma})\bigr) \;.
\end{equation} 
On the other hand, the martingale convergence theorem implies that the following convergence holds almost surely 
\begin{equation}
\label{e:RHSmain2}
\lim_{n \to \infty} \E\Bigl[\exp\bigl(- \tilde a_{\star} \mu_{\gammac}(\rmT_{\gamma})\bigr)\, \Big| \, \CF_{s_n}\Bigr] 
= \E\Biggl[\exp\bigl(- \tilde a_{\star} \mu_{\gammac}(\rmT_{\gamma})\bigr) \, \Big| \, \sigma(\rmX) \Bigr] 
= \exp\bigl(- \tilde a_{\star} \mu_{\gammac}(\rmT_{\gamma})\bigr) \;.
\end{equation}
Therefore, since the right-hand side of \eqref{e:RHSmain1} coincides with the right-hand side of \eqref{e:RHSmain2}, by the dominated convergence theorem, we obtain that uniformly over $u \geq 0$, it holds that
\begin{equation*}
\lim_{n \to \infty} \lim_{t \to \infty} \Biggl\lvert \E\Biggl[\exp\bigl(i \langle \rmX_u, \varphi\rangle\bigr) \E\Biggl[\prod_{i=1}^n \exp\bigl(- \mu_{\gamma, t, i}(f_i)\bigr) - \exp\bigl(- \tilde a_{\star} \mu_{\gammac}(\rmT_{\gamma})\bigr)\, \Bigg| \, \CF_{s_n}\Biggr]\Biggr] \Biggr\rvert  = 0 \;.
\end{equation*}
Thanks to the dominated convergence theorem and the fact that $\rmX_{u}$ converges to $\rmX$ as $u \to \infty$ almost surely in $\CH^{-\kappa}(\R^d)$, one has
\begin{equation*}
	\lim_{u \to \infty} \E\Bigl[\Bigl\lvert\exp\bigl(i \langle \rmX, \varphi\rangle\bigr) - \exp\bigl(i \langle \rmX_u, \varphi\rangle\bigr)\Bigr\rvert\Bigr] = 0 \;,
\end{equation*}
and the conclusion follows. 
\end{proof} 

We now show how the proof of Theorem~\ref{th:stableConv} follows directly.
\begin{proof}[Proof of Theorem~\ref{th:stableConv}]
Thanks to Lemma~\ref{lm:stableRV}, it suffices to show that the following joint convergence in distribution holds in $\CH_{\loc}^{-\kappa}(\R^d) \times (\CM^{+}(\R^d))^n$ for some $\kappa > 0$,
\begin{equation*}
\bigl(\rmX, (\mu_{\gamma, t, i})_{i \in [n]}\bigr) \Rightarrow \bigl(\rmX, (\mu_{\gamma, i})_{i \in [n]}\bigr)	\;.
\end{equation*}
By Lemma~\ref{lm:jointConv}, it suffices to verify that for all $(\varphi, (f_i)_{i \in [n]}) \in \CC^{\infty}_c(\R^d) \times (\CC_c^{+}(\R^d))^n$, it holds that
\begin{equation*}
\lim_{t \to \infty} \E\Biggl[\exp\bigl(i \langle \rmX, \varphi\rangle\bigr) \prod_{i=1}^n \exp\bigl(-\mu_{\gamma, t, i}(f_i)\bigr)\Biggr] = \E\Biggl[\exp\bigl(i \langle \rmX, \varphi\rangle\bigr) \prod_{i=1}^n \exp\bigl(-\mu_{\gamma, i}(f_i)\bigr)\Biggr] \;.
\end{equation*}
Furthermore, by Proposition~\ref{pr:laplaceJoint}, it suffices to establish that 
\begin{equ}[e:wantedThmC]
\E\Biggl[\exp\bigl(i \langle \rmX, \varphi\rangle\bigr) \prod_{i=1}^n \exp\bigl(-\mu_{\gamma, i}(f_i)\bigr)\Biggr] = \E\Bigl[\exp\bigl(i \langle \rmX, \varphi \rangle \bigr) \exp\bigl(- \tilde a_{\star} \mu_{\gammac}(\rmT_{\gamma})\bigr)\Bigr] \;,
\end{equ}
where, as before, to simplify the notation, we have omitted the dependence of $\rmT_{\gamma}$ on $(f_i)_{i \in [n]}$. 
Recalling the definition of the collection of measures $(\mu_{\gamma, i})_{i \in [n]}$ given in the statement of Theorem~\ref{th:stableConv}, one can use formula \eqref{eq:LaplaceCompInt} to check that
\begin{equation*}
\E\Biggl[\prod_{i=1}^n \exp\bigl(-\mu_{\gamma, i}(f_i)\bigr) \, \Bigg| \, \sigma(\rmX) \Biggr] = \exp\bigl(- \tilde a_{\star} \mu_{\gammac}(\rmT_{\gamma})\bigr) \;,
\end{equation*}
thus yielding \eqref{e:wantedThmC}, which completes the proof.
\end{proof}

%%%%%%%%%%%%%%%%%%%%%%%%%%%%%%%%%%%%%%%%%%%%%%
%%%%%%%%%%%%%%%%%%%%%%%%%%%%%%%%%%%%%%%%%%%%%%
\section{Proof of Proposition~\ref{pr:joint}}
\label{sec:Joint}
The main goal of this section is to prove Proposition~\ref{pr:joint}, and it is structured as follows. In Section~\ref{subsec:setupTechJoint}, we introduce some notation and state Lemmas~\ref{lm:techMainRed1} and~\ref{lm:techMainRed2}, which are the two main technical lemmas used in the proof of Proposition~\ref{pr:joint}. We then show how the proof of Proposition~\ref{pr:joint} follows from the two aforementioned lemmas and Propositions~\ref{pr:AsyConv}~and~\ref{pr:AsyProb}. The remaining part of the section is then devoted to the proof of Lemmas~\ref{lm:techMainRed1} and~\ref{lm:techMainRed2}. In particular, in Section~\ref{subsec:redSteps}, we prove Lemma~\ref{lm:techMainRed1}, while in Section~\ref{subsec:redStepConstant}, we prove Lemma~\ref{lm:techMainRed2}.

%%%%%%%%%%%%%%%%%%%%%%%%%%%%%%%%%%%%%%%%%%%%%%
\subsection{Setup and main technical lemmas}
\label{subsec:setupTechJoint}
Throughout this section, we fix $\gamma > \sqrt{\smash[b]{2d}}$, $n \in \N$, and a collection of non-negative constants $(\theta_i)_{i \in [n]}$. For any $x \in \R$, $k>0$, and $\lambda > 0$, we define the (random) function $\bfF^{\lambda}_{k, x}:\CC(\R^d) \to \R$ by
\begin{equation}
\label{eq:defscrF}
\bfF^{\lambda}_{k, x}(\Phi) \eqdef \f{1- \exp\bigl(-e^{-\gamma x}\int_{\B_k} \rmF_{\gamma}(y) e^{\gamma \Phi(y)} dy\bigr)}{\abs{\D^{\lambda}_{k, 0}(\Phi)}} \;,
\end{equation}
where $\rmF_{\gamma} : \R^d \to \R$ is the function given by
\begin{equation}
\label{eq:RPsiGammaNoT}
\rmF_{\gamma}(y) \eqdef \sum_{i = 1}^{n} \theta_i e^{\gamma \rmW_{i}(y)} \;.
\end{equation}

We introduce here the main processes and fields that will be used for the reminder of this section:
\begin{itemize}
\item Let $B'$ be a standard Brownian motion and $R$ a three-dimensional Bessel process starting at zero. For any $z \geq 0$, define $\rmU_z$ to be a random variable uniformly distributed on the interval $[-z, 0]$, independent of all other processes. Also, let $\tau_{z} \eqdef \inf\{s \geq 0 \, : \, B'_s = \rmU_{z}\}$. We then define the process
\begin{equation}
\label{eq:processGammaz}
\Gamma_{\! s}^z \eqdef \begin{cases}
		B'_s\;, \quad &\text{ if } s \leq \tau_{z} \;,\\
		R_{s-\tau_z} + \rmU_z \;, \quad &\text{ if } s > \tau_{z} \;.
\end{cases}
\end{equation}
\item For $b> 0$ and $z \geq 0$, we let $\smash{\frkg^z_{b}}$ be the field on $\R^d$ given by
\begin{equation}
\label{eq:fieldGzb}
\frkg^z_{b}(\cdot) \eqdef - \int_0^{\infty} \bigl(1- \frkK(e^{-(s+b)} \cdot)\bigr) d\Gamma_{\! s}^{z} + \rmZ'_{\infty}(e^{-b}\cdot) - \sqrt{\smash[b]{2d}}\int_0^{\infty} \bigl(1 - \frkK(e^{-(s+b)} \cdot)\bigr) ds  \;,
\end{equation}
where $\rmZ'_{\infty}$ has the same law as the field $\rmZ_{\infty}$ defined in Definition~\ref{def:fieldsZ}, and it is independent of the process $\Gamma^{z}$.
\item For $b > 0$, we recall that $\Upsilon_{\! b}$ denotes the field on $\R^d$ introduced in \eqref{eq:defUpsilonBBeg} and given by 
\begin{equation*}
\Upsilon_{\! b}(\cdot) \eqdef - \int_0^b \bigl(1 - \frkK(e^{-s} \cdot)\bigr) dB_{s} + \rmZ_b(\cdot) - \sqrt{\smash[b]{2d}} \frka_b(\cdot) \;,
\end{equation*}
where $\rmZ_b$ is introduced in Definition~\ref{def:fieldsZ}, $B$ is an independent Brownian motion, and $\frka_b$ is the function defined in \eqref{eq:frkgb}.
\end{itemize}

\begin{remark}
We emphasise that the processes and fields introduced above are all assumed to be mutually independent. Additionally, given $x$, $y \in \R$ and $b > 0$, we write $\P_{x, y, b}$ for the probability measure under which $(B_s)_{s \in [0, b]}$ is a Brownian bridge from $x$ to $y$ in time $b$, while the other processes/fields are left unchanged. Moroever, given a function $g:\R^d \to \R$, we set $\Upsilon_{\! b, g} = \Upsilon_{\! b} + g$. 

In what follows, for $b > 0$ and $z \geq 0$, we need to consider the field given by the sum of $\Upsilon_{\! b}$ and $\smash{\frkg^z_{b}}$. In order to lighten the notation, instead of writing the field $\smash{\frkg^z_{b}}$ as a subscript of $\Upsilon_{\! b}$, we let
\begin{equation}
\label{eq:defUpsilonBZ}
\Upsilon_{\! b}^z \eqdef \Upsilon_{\! b} + \frkg^z_{b} \;.
\end{equation}
We also observe that a standard Gaussian tail bound implies that, uniformly over all $z \geq 0$, the field $\smash{\frkg^z_{b}}$ satisfies \ref{as:GG1}\dash \ref{as:GG3}.
\end{remark}

For $\lambda > 0$ and $A$, $L$, $b \geq 0$, we introduce the function $\frkF^{\lambda}_{A, L, b} : \R \times \CC(\R^d) \to \R$ given by
\begin{equation}
\label{eq:asympFALb0}
\begin{alignedat}{1}
\frkF^{\lambda}_{A, L, b}(z, g) 
& \eqdef  \frac{1}{\sqrt{2 \pi b}} \int_{0}^{A+L} e^{\sqrt{\smash[b]{2d}} (x - L) -\f{(x - z)^2}{2b}} \E_{x, z, b} \Bigl[\bfF^{\lambda}_{b, x - L}(\Upsilon_{\!b, g}) \\
& \hspace{20mm}\cdot \one_{\{\inf_{s \in [0, b]} B_{s} \geq 0\}} \one_{\{\M_{0, b + 1, b}(\Upsilon_{\! b, g}) \leq x-(A+L)\}}  \one_{\{\M_{0, b}(\Upsilon_{\! b, g}) \leq \lambda\}}  \Bigr] dx \;.
\end{alignedat}
\end{equation}
Moreover, we define the constant $\bfC^{\lambda}_{A, L, b} > 0$ by letting
\begin{equation}
\label{eq:defCALb}
\bfC^{\lambda}_{A, L, b} \eqdef \alpha \int_{0}^{\infty} z \E\bigl[\frkF^{\lambda}_{A, L, b}\bigl(z, \frkg^z_{b}\bigr)\bigr] dz \;,
\end{equation}
where we recall that $\alpha = \sqrt{\smash[b]{2/\pi}}$.

We are now ready to state the following key lemma whose proof is given in Section~\ref{subsec:redSteps}.
\begin{lemma}
\label{lm:techMainRed1}
For any $\lambda > 0$, $\rmR \geq 1$, and $\eps > 0$, there exist $0 \leq A < L$ sufficiently large such that there exists $b_0 > 0$ and $s_0 > 0$ sufficiently large, such that for any $s \geq s_0$ satisfying $(e^s + 1)/(\rmR+1) \in \N$ and any $b \geq b_0$, there exists a sufficiently large $T \geq 0$ such that for all $t \geq T$ and any function $\chi:[0,\rmR]^d \to \R$ satisfying the conditions in \eqref{eq:condChi}, it holds that
\begin{equation*}
\Biggl\lvert \E \Biggl[1-\exp\biggl(-\int_{[0, \rmR]^d} \rmF_{\gamma, t}(x) e^{-\gamma \chi(x)} \mu_{\gamma, t}(dx)\biggr)\Biggr]  - \bfC^{\lambda}_{A, L, b} \, \rho_{\chi}\bigl([0, \rmR]^d\bigr) \Biggr\rvert \leq \eps \, \rho_{\chi}\bigl([0, \rmR]^d\bigr) \;,
\end{equation*}
where we recall that the measure $\rho_{\chi}$ is defined in \eqref{eq:measChi}.
\end{lemma}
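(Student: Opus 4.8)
\textbf{Proof strategy for Lemma~\ref{lm:techMainRed1}.}
The plan is to peel off the contribution of $\mu_{\gamma,t}$ atom by atom, using the short-range independence of $\rmX_{s,t}$ and of the $\rmW_{i,t}$ to reduce the Laplace functional over $[0,\rmR]^d$ to a sum of essentially i.i.d.\ contributions localized near the extremal points of $\rmX_t$ inside $[0,\rmR]^d$. First I would decompose the unit-scale region $[0,\rmR]^d$ into the small boxes $(A_i)_{i\in[\frkI]}$ and the buffer zone $B_{\rmR,s}$ as in Figure~\ref{fig:deco}, and use the inequality $1-\prod_i a_i \le \sum_i (1-a_i)$ together with the reverse $\prod_i(1-a_i)\ge 1-\sum_i(1-a_i)$ to sandwich $\E[1-\exp(-\int \rmF_{\gamma,t}e^{-\gamma\chi}\mu_{\gamma,t})]$ between sums of single-box quantities, up to errors controlled by the buffer zone (which, by \ref{hp_K2} and \ref{hp:W3}, decouples the boxes). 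The buffer contribution is handled exactly as in Step~1 of Lemma~\ref{lem:magic}: it is $O(\rho_\chi(\bar B_{\rmR,s}))$ and hence negligible after choosing $\rmR$ large.

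Next, inside a single box I would condition on the ``coarse field'' and on the recentered position of the near-maximum: by the decomposition of $\rmX_t$ across annuli (as in Section~\ref{subsec:decAnnuli}) and Lemma~\ref{lm:decoPointsStar}, the field $\rmX_t$ viewed from a point $x_0$ where it is close to the maximum $\frkm_t$ looks, after the change of variables $x\mapsto x_0+e^{-t}x$ and subtracting the value at $x_0$, like the field $\Upsilon_{\!b}$ plus the independent remainder field $\frkg^z_b$ (this is precisely the role of \eqref{eq:fieldGzb} and \eqref{eq:defUpsilonBZ}), with $z$ playing the role of the gap between the driving Brownian motion at the mesoscopic scale and its eventual running maximum. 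The integral $\int_{\B_k}\rmF_\gamma(y)e^{\gamma\Upsilon_{\!b}^z(y)}dy$ appearing in $\bfF^\lambda_{k,x}$ is exactly what survives, and dividing by $|\D^\lambda_{k,0}|$ accounts for the fact that many nearby points are all near-maxima, so one must not overcount. The Brownian-bridge factor $\one_{\{\inf_{s}B_s\ge 0\}}$, the constraint $\M_{0,b+1,b}(\Upsilon_{\!b,g})\le x-(A+L)$ separating scale $b$ from the outer region, and the first-moment-type weight $e^{\sqrt{2d}(x-L)-\frac{(x-z)^2}{2b}}$ come from the usual Gaussian change of measure / ballot-type estimate for the log-correlated maximum (cf.\ \cite{Madaule_Max, Glassy}); the parameters $A$ and $L$ are truncation levels that must be sent to infinity after $b$.

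The execution then splits into the two technical lemmas: Lemma~\ref{lm:techMainRed1} itself reduces the full Laplace functional to the expression $\bfC^\lambda_{A,L,b}\,\rho_\chi([0,\rmR]^d)$ with a controllable error, while Lemma~\ref{lm:techMainRed2} (in Section~\ref{subsec:redStepConstant}) will identify $\lim_{b\to\infty}\lim_{L\to\infty}\lim_{A\to\infty}\bfC^\lambda_{A,L,b}=\tilde a_\star\rmT_\gamma$ using Propositions~\ref{pr:AsyConv} and~\ref{pr:AsyProb} to pass from the conditioned field $\Upsilon_{\!b,g}$ to the limiting shape field $\tilde\Upsilon_\lambda$, and then Proposition~\ref{pr:inversionPsi} to rewrite $\tilde\Upsilon_\lambda$ in terms of $\Psi$, producing the $(\sqrt{2d}/\gamma)$-moment structure of $\rmT_\gamma$; the three-dimensional Bessel/uniform structure of $\Gamma^z$ and the $z$-integral against $z\,dz$ in \eqref{eq:defCALb} are what generate the Poissonian intensity $z^{-(1+\gammac/\gamma)}dz$ and the constant $\beta(d,\gamma)$. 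For the present lemma I would: (i) show via Lemma~\ref{lm:tailsDgUpsilon} and the tail control in Lemma~\ref{lm:repulsionShape} that truncating $|\D^\lambda|^{-1}$ and restricting the spatial integral to $\B_k$ introduce errors that vanish as $k,L\to\infty$ uniformly in the relevant regime (this is the content alluded to in the proofs of Lemmas~\ref{lm:truncOK} and~\ref{lm:reduceSizeOK}); (ii) use the uniform convergence estimates for $\rmX_t$ near its maximum, together with Assumptions~\ref{hp:W1}\dash\ref{hp:W4} to handle the reweighting by $e^{\gamma\rmW_{i,t}}$ (here \ref{hp:W4} gives the integrability needed for the $(\sqrt{2d}/\gamma)$-moment to be finite); and (iii) assemble the single-box estimate, multiply by the number of boxes, and recognize the resulting Riemann-type sum as $\rho_\chi([0,\rmR]^d)$ using the Hölder and size conditions \eqref{eq:condChi} on $\chi$.

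The main obstacle I expect is step (i)\dash(ii) done \emph{uniformly} in all the interlocking parameters $s,t,b,A,L,\rmR$ and in the admissible functions $\chi$: one needs the approximation ``field near a near-maximum $\approx \Upsilon_{\!b}^z$'' to hold with an error that is $o(\rho_\chi([0,\rmR]^d))$ after summing over boxes, which forces careful bookkeeping of the entropic-repulsion estimates (Lemmas~\ref{lm:upperBoundKk}, \ref{lm:mainEntrRW}) against the Hölder modulus of $\chi$ at scale $e^{-s}s^{-1}$, and simultaneously requires that the truncation at level $A+L$ in \eqref{eq:asympFALb0} and the $\B_k$-localization in \eqref{eq:defscrF} be taken in the correct order relative to $b\to\infty$. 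The auxiliary fields $\frkg^z_b$ must be shown to satisfy \ref{as:GG1}\dash\ref{as:GG3} uniformly in $z$ so that Propositions~\ref{pr:AsyConv} and~\ref{pr:AsyProb} apply with $\frkg = \frkg^z_b$; this is where the Gaussian tail bounds on the Bessel/Brownian driver and on $\rmZ'_\infty$ enter.
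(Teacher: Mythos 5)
Your proposal gets the overall shape of the argument (localisation near the maximum, appearance of the shape field $\Upsilon^{z}_{\!b}$ with remainder $\frkg^z_b$, the need to truncate $|\D^\lambda|^{-1}$ and the spatial integral, and the eventual appeal to Propositions~\ref{pr:AsyConv}--\ref{pr:AsyProb}), but the opening step is wrong and the central technical ingredient is missing.

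\textbf{The box decomposition is one level up, and cannot be iterated here.} The decomposition of Figure~\ref{fig:deco} into boxes $A_i$ of side $\rmR e^{-s}$ plus a buffer $B_{\rmR,s}$, together with the inequality $1-\prod_i a_i \le \sum_i(1-a_i)$, is exactly how Lemma~\ref{lem:magic} reduces the Laplace functional over $[0,1]^d$ of $\mu_{\gamma,s,t}$ to a product of per-box quantities; independence across boxes comes from the fact that $\rmX_{s,t}$ and the $\rmW_{i,t}$ decorrelate at distance $e^{-s}$, and each box, once rescaled by $e^s$, becomes the $[0,\rmR]^d$ appearing in Proposition~\ref{pr:joint}. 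But inside Proposition~\ref{pr:joint}/Lemma~\ref{lm:techMainRed1}, the measure is $\mu_{\gamma,t}$ built from the \emph{full} field $\rmX_t$, which has correlations over scales of order $1$ (the seed covariance is supported in $B(0,1)$). Any further subdivision of $[0,\rmR]^d$ therefore yields sub-boxes on which the relevant field is strongly correlated, and the product/buffer sandwich provides no control. In other words, the box decomposition has already been ``used up'' by the time you are proving Lemma~\ref{lm:techMainRed1}.

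\textbf{The missing idea is the near-maximal-set averaging plus Madaule's renewal theorem.} The paper's proof does not peel off atoms; it starts from the exact identity \eqref{eq:III0},
\begin{equation*}
\E\bigl[1-\G_\rmR\bigr] = \E\Biggl[\int_{[0,\rmR]^d}\frac{\one_{\{m\in\D^\lambda_\rmR\}}}{|\D^\lambda_\rmR|}\bigl(1-\G_\rmR\bigr)\,dm\Biggr]\;,
\end{equation*}
which introduces the $|\D^\lambda|^{-1}$ normalisation you correctly recognise in \eqref{eq:defscrF} as an overcount correction, but at the top of the argument rather than after atom-peeling. From there a sequence of elimination lemmas (high-value constraint, path constraint on $s\mapsto\rmY_s(m)$, and localisation to $\B_{b-t}(m)$; Lemmas~\ref{lm:IdentA}, \ref{lm:path}, \ref{lm:redExp}, \ref{lm:cluster}) reduces the expectation to the localised quantity $\E_{\eqref{eq:pathbI}}$. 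The Cameron--Martin theorem applied at time $t_b=t-b$ then rewrites this as $\int_{S_{\rmR,t}} e^{-\sqrt{2d}\chi(m)}t^{3/2}\,\E_{-\chi(m)}[\cdots]\,dm$, with the inner expectation of the form $\smash{\bar\frkF}^\lambda_{A,L,b}(B_{t_b}-\frkd_t-L,\cdot)$ evaluated along a Brownian path conditioned to stay below zero. The conclusion then follows from \cite[Theorem~5.6]{Madaule_Max}, which is the actual engine of the proof: it requires precisely the ``$b$-regularity'' properties of $\smash{\bar\frkF}^\lambda_{A,L,b}$ verified in Lemma~\ref{lm:bRegular} (Appendix~\ref{ap:proofbReg}), and it directly produces the factor $\rho_\chi([0,\rmR]^d)$ without any Riemann-sum argument. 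A final time-reversal of the Brownian bridge is then needed to identify the output constant with $\bfC^\lambda_{A,L,b}$ as defined in \eqref{eq:defCALb}. Without invoking Madaule's renewal result (or reproving something equivalent) your outline cannot close the argument. Note also that the appeals to Propositions~\ref{pr:AsyProb}, \ref{pr:inversionPsi} and the identification of $\rmT_\gamma$ belong to Lemma~\ref{lm:techMainRed2} and the subsequent $b\to\infty$ limit, not to Lemma~\ref{lm:techMainRed1} itself, so you are front-loading part of the wrong lemma.
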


\begin{remark}
We emphasise that in the statement of Lemma~\ref{lm:techMainRed1}, the conditions \eqref{eq:condChi} on the function $\chi:[0, \rmR]^d \to \R$ depends on $s$. 
\end{remark}

Thanks to Lemma~\ref{lm:techMainRed1}, to prove Proposition~\ref{pr:joint} it suffices to derive an explicit expression for the constant $\bfC^{\lambda}_{A, L, b}$ defined in \eqref{eq:defCALb} as the cutoff parameters are taken to infinity, and then show that this expression coincides with $\rmT_{\gamma}$ defined in \eqref{eq:RPsiGamma}, up to a multiplicative constant. To this end, for $A$, $L \geq 0$ and $0 \leq k < b$, recalling that $\alpha = \sqrt{\smash[b]{2/\pi}}$, we define the constant $\smash{\bfC^{\lambda, \new}_{A, L, k, b}}$ by letting
\begin{equation}
\label{eq:defCLambdaNew}
\begin{alignedat}{1}
& \bfC^{\lambda, \new}_{A, L, k, b} = \alpha  \int_{L/2}^{A+L} e^{\sqrt{\smash[b]{2d}} (x - L)} \int_{b^{1/4}}^{b^{3/4}} \frac{e^{-\f{u^2}{2b}}}{\sqrt{2 \pi b}} (u+x) \\
& \hspace{45mm} \cdot \E_{0, u, b} \Bigl[\bfF^{\lambda}_{k, x-L, L}\bigl(\Upsilon^{u+x}_{\! b}\bigr) \one_{\{\M_{0,b}(\Upsilon_{\! b}^{ u+x}) \leq \lambda \}}\Bigr] du dx \;,
\end{alignedat}
\end{equation} 
where for any $x \in \R$, $k>0$, $L > 0$ and $\lambda > 0$, we define the (random) function $\bfF^{\lambda}_{k, x, L}:\CC(\R^d) \to \R$ by
\begin{equation}
\label{eq:defTruncationLargeL}
\bfF^{\lambda}_{k, x, L}(\Phi) \eqdef \f{1- \exp\bigl(-e^{-\gamma x}\int_{\B_k} \rmF_{\gamma}(y) e^{\gamma \Phi(y)} dy\bigr)}{\abs{\D^{\lambda}_{k, 0}(\Phi)} \vee L^{-1}} \;,
\end{equation}

where we emphasise that the only difference from $\bfF^{\lambda}_{k, x}$, as defined in \eqref{eq:defscrF}, is the presence of the maximum in the denominator. Note that, with a slight abuse of our previous notation, we have absorbed the expectation with respect to the field $\frkg^{u+x}_{b}$ into the expectation $\E_{0, u, b}$.

We can now state the second key lemma of this section whose proof is given in Section~\ref{subsec:redStepConstant}.
\begin{lemma}
\label{lm:techMainRed2}
For any $\lambda > 0$, $\eps > 0$, and $A \geq 0$, there exists $L \geq 0$ sufficiently large, such that there exists $k_0 \geq 0$ for which, for all $k \geq k_0$, there exists $b_0 \geq 0$ sufficiently large such that for all $b \geq b_0$, it holds that 
\begin{equation*}
\abs{\bfC^{\lambda}_{A, L, b} - \bfC^{\lambda, \new}_{A, L, k, b}} \leq \eps \;. 
\end{equation*}
\end{lemma}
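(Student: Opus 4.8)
The plan is to compare the two constants $\bfC^{\lambda}_{A, L, b}$ and $\bfC^{\lambda, \new}_{A, L, k, b}$ by tracking the successive modifications that turn the former into the latter, estimating the error at each step. Recall from \eqref{eq:defCALb} that $\bfC^{\lambda}_{A, L, b} = \alpha\int_0^\infty z\,\E[\frkF^{\lambda}_{A, L, b}(z, \frkg^z_b)]\,dz$, where $\frkF^{\lambda}_{A, L, b}$ is defined in \eqref{eq:asympFALb0} as an integral over $x \in [0, A+L]$ involving a Brownian bridge from $x$ to $z$ in time $b$. The first step is to unfold the definition and perform the change of variables $u = x - z$ (equivalently, writing the bridge endpoint $z$ as $u + x$ in the notation of \eqref{eq:defCLambdaNew}), so that the $z$-integral in \eqref{eq:defCALb} and the $x$-integral in \eqref{eq:asympFALb0} combine into a double integral over $(u, x)$ matching the structure of \eqref{eq:defCLambdaNew}. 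Under this substitution, the Gaussian density $e^{-u^2/2b}/\sqrt{2\pi b}$ appears, the prefactor $z = u + x$ appears, and the bridge $\P_{x, z, b}$ becomes $\P_{0, u, b}$ after recentering (using that $\Upsilon_{\!b, g}$ only depends on the increments of $B$, not its starting value — here one uses that $\frkK(0) = 1$ so the drift term $\int_0^b(1 - \frkK(e^{-s}x))dB_s$ is unaffected by a constant shift of $B$; more precisely one recenters $B \mapsto B - x$ and absorbs $x$ into the various maxima and thresholds, which is exactly the relabelling encoded in the "$L$"-shifted truncation $\bfF^{\lambda}_{k, x-L, L}$ versus $\bfF^{\lambda}_{b, x-L}$).

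The second step is to replace the unbounded functional $\bfF^{\lambda}_{b, x-L}$ by the truncated functional $\bfF^{\lambda}_{k, x-L, L}$, which differs in two ways: the ball $\B_b$ is shrunk to $\B_k$, and the denominator $\abs{\D^{\lambda}_{b, 0}(\cdot)}$ acquires a floor $\vee\, L^{-1}$. For the denominator truncation, the error is controlled by Lemma~\ref{lm:tailsDgUpsilon} (together with Remark~\ref{rm:TailEstBM} for the Brownian-motion version, and Lemmas~\ref{lm:tailsDgUpsilonSmallU}, \ref{lm:tailsDgUpsilonLargeU} to handle the $u$-regimes outside $[b^{1/4}, b^{3/4}]$ and the contribution where $\abs{\D^{\lambda}}$ is very small): the claim is that
\begin{equation*}
\lim_{L \to \infty} \limsup_{b \to \infty} \sqrt{b}\,\Bigl|\E_{0, u, b}\bigl[(\abs{\D^{\lambda}_{0, j}}^{-1} - \abs{\D^{\lambda}_{0, j}}^{-1}\wedge L)\one_{\{\M_{0, b} \leq \lambda\}}\bigr]\Bigr| = 0
\end{equation*}
uniformly in the relevant range of $u$, which follows by splitting on $\{\abs{\D^{\lambda}}^{-1} \leq L\}$ and using the $(1+\sigma)$-moment bound \eqref{eq:boundExpuOrdinary} with $\sigma \in (0, \delta)$. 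For shrinking the ball from $\B_b$ to $\B_k$, one uses that on the event $\{\M_{0, b}(\Upsilon_{\!b,g}) \leq \lambda\}$, together with the decay estimate \eqref{eq:boundGrowthBB}/\eqref{eq:boundGrowthBM} of Lemma~\ref{lm:repulsionShape}, the contribution of the field outside $\B_k$ to the integral $\int_{\B_b}\rmF_\gamma e^{\gamma\Phi}$ is negligible with high probability once $k$ is large, since $\Upsilon_{\!b,g}$ there is $\lesssim -(\log j)^2$ and $\rmF_\gamma$ has the integrability from \ref{hp:W4}; this is the mechanism already used in the proof of Lemma~\ref{lm:NorContPsiOK} and in Lemma~\ref{lm:upgradeCon}. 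The numerator is uniformly bounded by $1$ (it is of the form $1 - e^{-\text{(nonneg)}}$), which makes all these replacements quantitative.

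The third step is to restrict the domains of integration: the $x$-integral from $[0, A+L]$ down to $[L/2, A+L]$, and the $u$-integral from $\R^+$ (or $\R$) down to $[b^{1/4}, b^{3/4}]$. The restriction of the $x$-integral to $x \geq L/2$ costs, after the $e^{\sqrt{2d}(x-L)}$ weight, a factor controlled by $e^{-\sqrt{2d}L/2}$ (the numerator functional is bounded, the remaining factors are a Gaussian density and the polynomially-growing prefactor), so it is absorbed by taking $L$ large; this also removes the auxiliary threshold $\{\M_{0, b+1, b}(\Upsilon_{\!b,g}) \leq x - (A+L)\}$ and the $\{\inf_{s}B_s \geq 0\}$ event, since on $\{\M_{0,b} \leq \lambda\}$ with $x \geq L/2$ and the control variable $\rmK_b$ small these are implied up to entropic-repulsion errors handled by Lemmas~\ref{lm:upperBoundKk} and \ref{lm:approxBrownianBridge} (cf.\ the inclusions \eqref{eq:Inc1Fin}--\eqref{eq:Inc2Fin}) — indeed \eqref{eq:defCLambdaNew} no longer carries these indicator factors, so one must check that dropping them introduces only an $o(1)$ error, which is where Lemma~\ref{lm:useCon1} (decay of $\inf B_s$) enters. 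The restriction of $u$ to $[b^{1/4}, b^{3/4}]$ uses: for $u$ small, Lemma~\ref{lm:tailsDgUpsilonSmallU} shows the $\{\M_{0,b}\leq\lambda\}$ event already forces a factor $b^{-1/2}$ that, against the Gaussian density with $u$ bounded, gives a vanishing contribution after dividing by the overall normalisation; for $u$ large, the Gaussian weight $e^{-u^2/2b}$ together with Lemma~\ref{lm:tailsDgUpsilonLargeU} kills the tail.

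I expect the main obstacle to be bookkeeping the uniformity of all these estimates simultaneously in the three (or more) parameters $L$, $k$, $b$ in the correct order of quantifiers — the statement asks for: given $\lambda, \eps, A$, first choose $L$ large, then $k_0$, then (for each $k \geq k_0$) $b_0$ — so each replacement must produce an error of the form $\eps_1(L) + \eps_2(L, k) + \eps_3(L, k)/\sqrt{b}\cdot(\text{normalisation})$ with $\lim_L \eps_1 = 0$, $\lim_k \limsup_b \eps_2 = 0$, etc., and in particular the ball-shrinking error must be taken \emph{after} $L$ is frozen (so it may depend on $L$) and the $b \to \infty$ limits must be uniform over the $u$-range. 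A secondary technical point is that the prefactor $z = u+x$ in \eqref{eq:defCLambdaNew} grows, so wherever one throws away a piece of the domain one must pair the smallness (exponential in $L$, or the $b^{-1/2}$ from the near-maximum probability) against at most polynomial growth in $u$ and $x$; this is always possible but needs to be spelled out. Given the repetitive nature, I would present steps two and three as a chain of lemmas each of the shape "$|\bfC_{\text{before}} - \bfC_{\text{after}}| \leq \eps$" and cite the relevant tail estimate, deferring the routine Gaussian integrals.
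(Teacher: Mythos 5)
Your proposal follows essentially the same route as the paper, which proves the lemma through a chain of four reduction sub-lemmas (Lemmas~\ref{lm:ExtremeIntBB}, \ref{lm:truncOK}, \ref{lm:removIndOK}, \ref{lm:reduceSizeOK}): the change of variables $z = u+x$ so the bridge runs from $0$ to $u$, restricting $u$ to $[b^{1/4},b^{3/4}]$ via Lemmas~\ref{lm:tailsDgUpsilonSmallU} and \ref{lm:tailsDgUpsilonLargeU}, truncating the denominator and restricting $x \ge L/2$ via Lemma~\ref{lm:tailsDgUpsilon}, dropping the two indicator events via Lemma~\ref{lm:useCon1} and a direct Gaussian-tail estimate for $\M_{0,b+1,b}$, and finally shrinking the ball $\B_b$ to $\B_k$. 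Your grouping of these steps and the tools you cite for each are on target, and your attention to the quantifier order (fix $L$, then $k_0$, then $b_0$) matches what the paper does. One small ordering remark: the paper performs the $u$-restriction \emph{first} precisely so that all subsequent reductions can invoke Lemma~\ref{lm:tailsDgUpsilon} with the clean $u/b$ rate; your order is fine but forces you to carry the small-$u$ and large-$u$ lemmas through every step.

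The one place where your sketch, taken at face value, does not work is the ball-shrinking step. You argue that the contribution from $\B_b\setminus\B_k$ is negligible ``since $\Upsilon_{\!b,g}$ there is $\lesssim -(\log j)^2$ and $\rmF_\gamma$ has the integrability from \ref{hp:W4}''. But bounding the integrand by the supremum gives $\sum_{j\ge k}\abs{\A_j}\,e^{-\gamma(\log j)^2}\sim \sum_{j\ge k}e^{dj}e^{-\gamma(\log j)^2}=\infty$: the Lebesgue volume $e^{dj}$ of $\A_j$ overwhelms the polylogarithmic repulsion. The paper's Lemma~\ref{lm:reduceSizeOK} instead decomposes $\Upsilon_b$ on $\A_j$ as $-c(\log j)^2 + \bigl(\rmZ_j - \sqrt{\smash[b]{2d}}\,j\bigr)$, rescales $y\mapsto e^j y$, and recognises the surviving piece as a regularised, $\frkd_j$-normalised supercritical GMC measure $\bar\mu_{\gamma,j}$ on a unit-scale annulus; the summability then comes from the $j^{-\sqrt{\smash[b]{2d}}}\log j$ decay supplied by \cite[Proposition~4.2]{Glassy} (re-proved under \ref{hp:W1}, \ref{hp:W4} because $\rmF_\gamma$ is random), not from the $(\log j)^2$ bound on the supremum. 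The $(\log j)^2$ decay only serves to provide the extra $e^{-\gamma c(\log j)^2}$ factor that beats the $e^{\gamma\frkd_j}$ prefactor; the volume is cancelled by the GMC normalisation. Without that GMC ingredient, the step as you describe it would fail.
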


The proof of Proposition~\ref{pr:joint} follows by combining Lemmas~\ref{lm:techMainRed1} and~\ref{lm:techMainRed2}.
\begin{proof}[Proof of Proposition~\ref{pr:joint}]
Let $\rmR \geq 1$ and $\lambda \in \Lambda$, where we recall that the set $\Lambda$ is introduced in Lemma~\ref{lm:Countable}.
For any $\eps > 0$, thanks to Lemmas~\ref{lm:techMainRed1}~and~\ref{lm:techMainRed2}, by taking the involved parameters large enough as specified in the statements of the aforementioned lemmas, we have that 
 \begin{equation*}
\Biggl\lvert \E \Biggl[1-\exp\biggl(-\int_{[0, \rmR]^d} \rmF_{\gamma, t}(x) \mu^{\chi}_{\gamma, t}(dx)\biggr)\Biggr]  - \bfC^{\lambda, \new}_{A, L, k, b} \rho_{\chi}\bigl([0, \rmR]^d\bigr) \Biggr\rvert \leq \eps \rho_{\chi}\bigl([0, \rmR]^d\bigr) \;.
\end{equation*}
Hence it suffices to show that when the cutoff parameters are taken to infinity, the limit of the constant $\smash{\bfC^{\lambda, \new}_{A, L, k, b}}$ coincides with $\rmT_{\gamma}$ defined in \eqref{eq:RPsiGamma},  up to a multiplicative constant.
To establish that this is indeed the case, we rely on Propositions~\ref{pr:AsyConv} and~\ref{pr:AsyProb}.

We observe that for any $A$, $L$, $k \geq 0$, $x \in [L/2, A+L]$, and $k \in \N$, conditional on $(\rmW_{i})_{i \in [n]}$, the function $\smash{\bfF^{\lambda}_{k, x-L, L}: \CC(\R^d) \to \R}$ is continuous, bounded, depends on the values of the input field in a bounded set, and it is such that its set of discontinuities is assigned measure zero by the law of the field $\smash{\tilde \Upsilon_{\! \lambda}}$ on $\CC(\R^d)$. This last fact follows since we fixed $\lambda \in \Lambda$ and from Remark~\ref{rm:redTechSmallk}.
Furthermore, as can be easily verified using the Gaussian tail bound, the field $\smash{\frkg^z_{b}}$, introduced in \eqref{eq:fieldGzb}, satisfies \ref{as:GG1}\dash \ref{as:GG3} uniformly over all $z \geq 0$.
Therefore, we are in a position to apply Propositions~\ref{pr:AsyConv} and~\ref{pr:AsyProb}, from which we deduce that for any $\eps > 0$, $k \geq 0$, $b \geq 0$ sufficiently large, and $u \in [b^{1/4}, b^{3/4}]$, it holds that
\begin{equation}
\label{eq:AbsValStartProp}
\Biggl\lvert \E_{0, u, b} \Bigl[\bfF^{\lambda}_{k, x-L, L}\bigl(\Upsilon_{\! b}^{ u+x}\bigr) \one_{\{\M_{0,b}(\Upsilon_{\! b}^{ u+x}) \leq \lambda\}} \Bigr] - 2 c_{\star, \lambda} \frac{u}{b} \E\bigl[\bfF^{\lambda}_{k, x-L, L}\bigl(\tilde\Upsilon_{\! \lambda}\bigr)\bigr] \Biggr\rvert \leq \eps \frac{u}{b} \;.
\end{equation}
Furthermore, thanks to Proposition~\ref{pr:inversionPsi}, it holds that 
\begin{equation}
\E\bigl[\bfF^{\lambda}_{k, x-L, L}(\tilde{\Upsilon}_{\!\lambda})\bigr] = \frac{\E\bigl[\int_{\R^d} \bfF^{\lambda}_{k, x-L, L}(\tau_z \Psi)e^{\sqrt{\smash[b]{2d}}\Psi(z)}\one_{\{\Psi(z)\geq -\lambda\}} dz\bigr]}{\E\bigl[\int_{\R^d} e^{\sqrt{\smash[b]{2d}}\Psi(z)}\one_{\{\Psi(z)\geq -\lambda\}}dz\bigr]}\;,
\end{equation}
where, once again, we recall that we omit the subscript $\lambda$ when writing the field $\Psi$, since, thanks to Proposition~\ref{pr:PsiIndepLambda}, its law does not depend on $\lambda \in \Lambda$.
Therefore, recalling the definition \eqref{eq:defAStar} of $a_{\star}$ and performing a change of variables in the integral over $x$ in the definition \eqref{eq:defCLambdaNew} of $\smash{\bfC^{\lambda, \new}_{A, L, k, b}}$, we obtain that 
\begin{equation}
\label{eq:asympFALbFinalFinal}
\lim_{b \to \infty} \bfC^{\lambda, \new}_{A, L, k, b} = a_{\star}  \gamma \int_{-L/2}^{A} e^{\sqrt{\smash[b]{2d}} x} \E\Biggl[\int_{\R^d} \bfF^{\lambda}_{k, x, L}(\tau_z \Psi)e^{\sqrt{\smash[b]{2d}}\Psi(z)}\one_{\{\Psi(z)\geq -\lambda\}} dz\Biggr]  dx \;.
\end{equation}
Recalling the definition of the function $\bfF^{\lambda}_{k, x}$ in \eqref{eq:defscrF} and that of $\bfF^{\lambda}_{k, x, L}$ in \eqref{eq:defTruncationLargeL}, and using the dominated convergence theorem along with the monotone convergence theorem, we observe that taking first the limit as $k \to \infty$, then as $L \to \infty$ and finally as $A \to \infty$ of the expression on the right-hand side of \eqref{eq:asympFALbFinalFinal}, we 
conclude that $\lim_{A, L, k, b \to \infty} \bfC^{\lambda, \new}_{A, L, k, b}$ equals 
\begin{equation*}
a_{\star} \gamma \E\Biggl[\int_{\R^d} \int_{-\infty}^{\infty} e^{\sqrt{\smash[b]{2d}} x} \Biggl(1- \exp\Biggl(-e^{-\gamma x} \int_{\R^d} \rmF_{\gamma}(y) e^{\gamma  \tau_z \Psi(y)} dy\Biggr)\Biggr) \f{e^{\sqrt{\smash[b]{2d}}\Psi(z)} \one_{\{\Psi(z)\geq - \lambda\}}}{\abs{\D^{\lambda}(\tau_z \Psi)}} dx dz \Biggr] \;.
\end{equation*}
Now, recalling the definition \eqref{eq:defBetaDG} of the constant $\beta(d, \gamma)$, we note that, for all $c > 0$ and any $\gamma > \sqrt{\smash[b]{2d}}$, it holds that 
\begin{equation*}
\int_{-\infty}^\infty e^{\sqrt{\smash[b]{2d}} x} (1-e^{- c e^{-\gamma x}}) dx = \frac{c^{\frac{\sqrt{\smash[b]{2d}}}{\gamma}}\beta(d, \gamma)}{\gamma} \;.
\end{equation*}
Therefore, by collecting the previous considerations, using the above identity, and leveraging the independence between the fields $(\rmW_i)_{i \in [n]}$ and $\Psi$, as well as the stationarity of the fields $(\rmW_i)_{i \in [n]}$ implied by \ref{hp:W2}, we obtain that
\begin{align*}
\lim_{A, L, k, b \to \infty} \bfC^{\lambda, \new}_{A, L, k, b} & = \tilde a_{\star} \,\E\Biggl[\int_{\R^d} \Biggl(\int_{\R^d} \rmF_{\gamma}(y) e^{\gamma \tau_z \Psi(y)} dy\Biggr)^{\frac{\sqrt{\smash[b]{2d}}}{\gamma}} \f{e^{\sqrt{\smash[b]{2d}}\Psi(z)} \one_{\{\Psi(z)\geq -\lambda\}}}{\abs{\D^{\lambda}(\tau_z \Psi)}}  dz \Biggr] \\
& = \tilde a_{\star}  \, \E\Biggl[\Biggl(\int_{\R^d} \rmF_{\gamma}(y) e^{\gamma \Psi(y)} dy\Biggr)^{\frac{\sqrt{\smash[b]{2d}}}{\gamma}}\Biggr] \\
& = \tilde a_{\star} \, \rmT_{\gamma} \;,
\end{align*} 
where we recall that $\rmT_{\gamma}$ is defined in \eqref{eq:RPsiGamma}, and $\tilde a_{\star} = \beta(d, \gamma) a_{\star}$. Hence, the desired result follows. 
\end{proof}

%%%%%%%%%%%%%%%%%%%%%%%%%%%%%%%%%%%%%%%%%%%%%
\subsection{Proof of Lemma~\ref{lm:techMainRed1}}
\label{subsec:redSteps}
In this subsection, we prove Lemma~\ref{lm:techMainRed1} by reducing the joint Laplace transform of $\mu_{\gamma, t}$ to a more manageable quantity through a series of reduction steps, following the strategy developed in \cite{Glassy} with some non-trivial modifications. To streamline this process, we introduce some shorthand notations. We fix for the reminder of this section $\rmR \geq 1$. For a function $\chi: [0, \rmR]^d \to \R$, we introduce the sequence of measures $(\mu_{\gamma, t}^{\chi})_{t \geq 0}$ on $\R^d$ defined as follows,
\begin{equation*}
	\mu_{\gamma, t}^{\chi}(dx) \eqdef e^{-\gamma \chi(x)} \mu_{\gamma, t}(dx) \;,
\end{equation*}
where we recall that $\mu_{\gamma, t}$ is the regularised and normalised supercritical GMC measure as defined in \eqref{e:norm_super}. For $t > 0$ and a function $\chi:\R^d \to \R$, we introduce the fields $\rmY_t$ and $\rmY^{\chi}_t$ on $\R^d$, as well as the constant $\frkd_t$ by setting 
\begin{equation}
\label{eq:notMainTech1frkd}
\rmY_t(x) \eqdef \rmX_t(x) - \sqrt{\smash[b]{2d}}t\;, \qquad \rmY^{\chi}_t(x) \eqdef \rmY_t(x) - \chi(x)\;, \qquad \frkd_t \eqdef - \frac{3}{2 \sqrt{\smash[b]{2d}}} \log t \;,
\end{equation}
so that the measure $\mu_{\gamma, t}^{\chi}$ introduced above can be written as
\begin{equation*}
	\mu_{\gamma, t}^{\chi}(dx) = e^{\gamma(\rmY^{\chi}_t(x) - \frkd_t) + dt} dx \;.
\end{equation*}
For $\lambda > 0$, recalling the definition \eqref{eq:RPsiGamma} of the random function $\rmF_{\gamma, t}$ and the notation introduced in \eqref{eq:maximal}, only for this section, we let
\begin{equation}
\label{eq:DefGMDR}
\G_{\rmR} \eqdef \exp\Biggl(-\int_{[0, \rmR]^d} \rmF_{\gamma, t}(x) \mu_{\gamma, t}^{\chi}(dx) \Biggr) \;, \qquad \M_{\rmR} \eqdef \M_{\rmR}(\rmY^{\chi}_t)\;, \qquad \D^{\lambda}_{\rmR} = \D^{\lambda}_{\rmR}(\rmY^{\chi}_t) \;,
\end{equation}
and for a subset $\rmD \subseteq [0, \rmR]^d$, we use $\G_{\rmD}$ to denote the same quantity as above, but with $[0, \rmR]^d$ replaced by $\rmD$. With this notation in hand, we note that we can write
\begin{equation}
\label{eq:III0}
\E \bigl[1-\G_{\rmR}\bigr] = \E\Biggl[\int_{[0, \rmR]^d} \frac{\one_{\{m \in \D^{\lambda}_{\rmR}\}}}{\abs{\D^{\lambda}_{\rmR}}} \bigl(1-\G_{\rmR}\bigr) dm\Biggr] \;,
\end{equation}
where here we used the fact that $\abs{\D^{\lambda}_{\rmR}}$ is almost surely positive.

\subsubsection{High value constraint}
We start with the following lemma, which essentially states that only the points where the field attains sufficiently high values contribute to the integral on the right-hand side of \eqref{eq:III0}.
\begin{lemma}
\label{lm:IdentA}
For any $\lambda > 0$ and $\eps > 0$, there exists a constant $A \geq 0$ sufficiently large such that for any $s \geq 0$ sufficiently large satisfying $(e^s + 1)(\rmR+1)^{-1} \in \N$, there exists $T \geq 0$ sufficiently large such that for any $t \geq T$ and $\chi:[0, \rmR]^d \to \R$ satisfying the conditions in \eqref{eq:condChi}, it holds that  
\begin{equation}
\label{eq:III0A}
\E\Biggl[\int_{[0, \rmR]^d} \frac{\one_{\{m \in \D^{\lambda}_{\rmR}\}} \one_{\{\rmY^{\chi}_t(m) - \frkd_t \leq -A\}}}{\abs{\D^{\lambda}_{\rmR}}} \bigl(1-\G_{\rmR}\bigr) dm\Biggr] \leq \eps \rho_{\chi}\bigl([0, \rmR]^d\bigr)  \;. 
\end{equation}
\end{lemma}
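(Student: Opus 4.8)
The idea is to bound the left-hand side of \eqref{eq:III0A} by a quantity that is controlled uniformly in $t$ and then shows decay in $A$. The key observation is the elementary inequality $1-\G_{\rmR} \leq \G_{\rmR}^{c} \wedge 1$ combined with the fact that, for a point $m \in \D^{\lambda}_{\rmR}$ with $\rmY^{\chi}_t(m) - \frkd_t \leq -A$, the measure $\mu_{\gamma, t}^{\chi}$ restricted to a neighbourhood of $m$ is ``small'' on the relevant event, so the contribution of such $m$ is negligible. More precisely, I would first use $1 - \G_{\rmR} \leq \int_{[0, \rmR]^d} \rmF_{\gamma, t}(x) \mu_{\gamma, t}^{\chi}(dx)$ together with the bound $\abs{\D^{\lambda}_{\rmR}}^{-1} \leq \abs{\D^{\lambda}_{\rmR} \cap B(m, \delta)}^{-1}$ for $m$ a near-maximiser, reducing matters to estimating
\begin{equation*}
\E\Biggl[\int_{[0, \rmR]^d} \one_{\{m \in \D^{\lambda}_{\rmR}\}} \one_{\{\rmY^{\chi}_t(m) - \frkd_t \leq -A\}} \frac{\int_{[0, \rmR]^d} \rmF_{\gamma, t}(x) \mu_{\gamma, t}^{\chi}(dx)}{\abs{\D^{\lambda}_{\rmR} \cap B(m, \delta)}} \, dm\Biggr]\;.
\end{equation*}

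Next I would follow the localisation strategy of \cite{Glassy}: decompose $[0,\rmR]^d$ along the grid of mesh $e^{-s}$ as in Figure~\ref{fig:deco}, use the independence of $\rmX_{s,t}$ (and of the $\rmW_{i,t}$, by \ref{hp:W3}) across distant boxes, and reduce to a single box. Conditioning on $\CF_s$, the field $\rmY^{\chi}_t$ near a point $m$ where it is close to its maximum and this maximum is at most $\frkm_t + z$ behaves, after the recentering and rescaling described in Section~\ref{sub:main_local}, like the field $\Upsilon_{\!b}$ (with $b \approx t-s$) plus the ``large-scale'' contribution encoded by $\chi$ and the process $\Gamma^z$; this is exactly where Propositions~\ref{pr:AsyConv} and~\ref{pr:AsyProb} enter. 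On the event $\{\rmY^{\chi}_t(m) - \frkd_t \leq -A\}$ the exponential weight $e^{\gamma(\rmY^{\chi}_t(m) - \frkd_t)}$ carries a factor $e^{-\gamma A}$, while the volume $\abs{\D^{\lambda}_{\rmR}\cap B(m,\delta)}^{-1}$ is controlled in expectation by Lemma~\ref{lm:tailsDgUpsilon} (and its variant Lemma~\ref{lm:tailsDgUpsilonSmallU}), uniformly. Integrating the $z$-variable against the Poisson-type intensity $z^{-(1+\gammac/\gamma)}\,dz$ that arises from the maximum of the field, together with the asymptotics of $\P(\M_{0,b}(\Upsilon_{\!b})\le\lambda)\sim \alpha c_{\star,\lambda}/\sqrt{b}$, produces an overall bound of the form $C e^{-\gamma A}\rho_{\chi}([0,\rmR]^d)$, where $C$ is independent of $t$, $s$, and $\chi$ (the $\chi$-dependence being absorbed into $\rho_\chi$ via the conditions \eqref{eq:condChi}). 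Choosing $A$ large enough that $Ce^{-\gamma A}\le\eps$ finishes the proof.

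\textbf{Main obstacle.} The delicate point is making the passage from the unconditional expectation to the conditioned, rescaled picture uniform in all the parameters and, simultaneously, extracting the factor $e^{-\gamma A}$ \emph{after} conditioning on the height. Concretely, one must argue that restricting to $\{\rmY^{\chi}_t(m)-\frkd_t\le -A\}$ is, in the rescaled limit, the same as restricting the two-sided maximum height variable $z$ (or the endpoint of the driving Brownian bridge) to a region that forces an extra $e^{-\gamma A}$ factor while keeping the negative-moment bound on $\abs{\D^{\lambda}}$ under control — this is precisely the content of the tail estimates in Section~\ref{sub:tailsD}, but one has to verify that the entropic-repulsion corrections (the polylogarithmic curves $\rmR_k$, the control variable $\rmK_b$) do not spoil the uniformity as $A \to \infty$. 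A secondary technical nuisance is that $m$ ranges over a continuum, so one needs the a priori bound that $\abs{\D^\lambda_{\rmR}}$ is a.s.\ positive and, more quantitatively, the $(1+\sigma)$-moment bound \eqref{eq:boundExpuOrdinary} to justify interchanging integration in $m$ with the expectation. I expect the bulk of the work to be bookkeeping: carefully tracking which of the parameters $A$, $s$, $t$, $b$, $\rmR$ each error term depends on, and in which order the limits/choices are taken, exactly as the nested quantifier structure in the statement of Lemma~\ref{lm:techMainRed1} demands.
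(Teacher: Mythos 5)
You have missed the short argument the paper uses, and your substitute route has a genuine gap.

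The paper's proof exploits two immediate observations that you do not make. First, on the event $\{m\in\D^{\lambda}_{\rmR}\}\cap\{\rmY^{\chi}_t(m)-\frkd_t\le -A\}$, the fact that $m$ is a $\lambda$-near-maximiser forces $\rmY^{\chi}_t(x)-\frkd_t\le -A+\lambda$ \emph{for every} $x\in[0,\rmR]^d$, so on that event one has the identity
\begin{equation*}
1-\G_{\rmR}=1-\exp\Biggl(-\int_{[0,\rmR]^d}\rmF_{\gamma,t}(x)\,\one_{\{\rmY^{\chi}_t(x)-\frkd_t\le -A+\lambda\}}\,\mu^{\chi}_{\gamma,t}(dx)\Biggr)\;,
\end{equation*}
and the right-hand side is a valid upper bound unconditionally. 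Second, one then drops the indicator in $m$ and uses the self-normalisation $\int_{[0,\rmR]^d}\one_{\{m\in\D^{\lambda}_{\rmR}\}}\abs{\D^{\lambda}_{\rmR}}^{-1}\,dm=1$, which makes the $m$-integral disappear entirely. What remains is exactly $\E[1-\exp(-\int\rmF_{\gamma,t}\one_{\{\le -A+\lambda\}}\mu^{\chi}_{\gamma,t})]$, and the bound $\le\eps\,\rho_\chi([0,\rmR]^d)$ is (a mild extension of) \cite[Proposition~4.1]{Glassy}, using \ref{hp:W1} and \ref{hp:W4} to handle the randomness and $t$-dependence of $\rmF_{\gamma,t}$. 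None of the cluster machinery (Propositions~\ref{pr:AsyConv}, \ref{pr:AsyProb}, the control variable $\rmK_b$, the negative-moment bounds of Section~\ref{sub:tailsD}) is needed.

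The gap in your own plan is the very first step: replacing $1-\G_{\rmR}$ by the linearisation $\int\rmF_{\gamma,t}\,\mu^{\chi}_{\gamma,t}(dx)$. This throws away the Laplace-transform (saturation) structure that makes the estimate work. The measure $\mu^{\chi}_{\gamma,t}$ in the supercritical regime has $\E[\mu^{\chi}_{\gamma,t}([0,\rmR]^d)]$ diverging like $e^{t(\gamma/\sqrt{2}-\sqrt{d})^2}$ as $t\to\infty$, so the expectation of your linearised quantity is not controlled by $\rho_\chi([0,\rmR]^d)$ uniformly in $t$: you would need the constraint event in $m$ and the weight $\abs{\D^{\lambda}_{\rmR}\cap B(m,\delta)}^{-1}$ to conspire and kill this divergence, and nothing in your outline actually produces that cancellation. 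The whole strategy of \cite[Proposition~4.1]{Glassy} (and of Lemma~\ref{lem:magic}) is to keep the $1-\exp$ form, split $[0,\rmR]^d$ into boxes, apply sub-additivity $1-e^{-(u+v)}\le(1-e^{-u})+(1-e^{-v})$, and exploit the independence across boxes — none of which survives your linearisation.
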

\begin{proof}
We start by observing that on the event $\smash{\{m \in \D^{\lambda}_{\rmR}, \; \rmY^{\chi}_t(m) - \frkd_t \leq -A\}}$, it holds that 
\begin{equation*}
\rmY^{\chi}_t(x) - \frkd_t \leq -A + \lambda \;, \qquad \forall \, x \in [0, \rmR]^d \;.
\end{equation*} 
Therefore, using this fact, we obtain that
\begin{align*}
& \E\Biggl[\int_{[0, \rmR]^d} \frac{\one_{\{m \in \D^{\lambda}_{\rmR}\}} \one_{\{\rmY^{\chi}_t(m) - \frkd_t \leq -A\}}}{\abs{\D^{\lambda}_{\rmR}}} \bigl(1-\G_{\rmR}\bigr) dm\Biggr] \\
& \qquad \leq \E\Biggl[\int_{[0, \rmR]^d} \frac{\one_{\{m \in \D^{\lambda}_{\rmR}\}}}{\abs{\D^{\lambda}_{\rmR}}} \Biggl(1-\exp\Biggl(-\int_{[0, \rmR]^d} \rmF_{\gamma, t}(x) \one_{\{\rmY^{\chi}_t(x) - \frkd_t \leq -A + \lambda\}} \mu_{\gamma, t}^{\chi}(dx) \Biggr)\Biggr) dm\Biggr] \\
& \qquad = \E\Biggl[1-\exp\Biggl(-\int_{[0, \rmR]^d} \rmF_{\gamma, t}(x) \one_{\{\rmY^{\chi}_t(x) - \frkd_t \leq -A + \lambda\}} \mu_{\gamma, t}^{\chi}(dx)\Biggr)\Biggr] \;.
\end{align*}
If $\rmF_{\gamma, t}(\cdot)$ were a deterministic, $t$-independent function, then the conclusion would follow by a direct application of \cite[Proposition~4.1]{Glassy}. However, by following the proof of \cite[Proposition~4.1]{Glassy}, the same conclusion holds also in our more general setting by using \ref{hp:W1} and \ref{hp:W4}.
\end{proof}

For $A \geq 0$, we introduce the quantity
\begin{equation}
\label{eq:III0AA}
\E_{\eqref{eq:III0AA}} \eqdef \E\Biggl[\int_{[0, \rmR]^d} \frac{\one_{\{m \in \D^{\lambda}_{\rmR}\}} \one_{\{\rmY^{\chi}_t(m) - \frkd_t \geq -A\}}}{\abs{\D^{\lambda}_{\rmR}}} \bigl(1-\G_{\rmR}\bigr) dm\Biggr] \;.
\end{equation}
The upshot of Lemma~\ref{lm:IdentA} is that for any $\eps > 0$, we can find $A \geq 0$ sufficiently large such that for any $s \geq 0$ sufficiently large satisfying $(e^s + 1)(\rmR+1)^{-1} \in \N$, there exists $T \geq 0$ sufficiently large such that for any $t \geq T$ and $\chi:[0, \rmR]^d \to \R$ satisfying the conditions in \eqref{eq:condChi},
\begin{equation*}
\left\lvert\E \Biggl[1 - \exp\biggl(-\int_{[0, \rmR]^d} \rmF_{\gamma, t}(x) \mu^{\chi}_{\gamma, t}(dx)\biggr)\Biggr]  - \E_{\eqref{eq:III0AA}}\right\rvert \leq \eps \rho_{\chi}\bigl([0, \rmR]^d\bigr) \;.
\end{equation*}
Therefore, in what follows we we can just focus on the expectation $\E_{\eqref{eq:III0AA}}$ for a fixed $A \geq 0$. 

\subsubsection{Path constraint}
We now want to exclude the points $m \in [0, \rmR]^d$ such that $\rmY^{\chi}_t(m) - \frkd_t \geq -A$ with an unlikely path $[0, t] \ni s \mapsto \rmY_{s}(m)$. To this end, for $A$, $L$, $z \geq 0$, we consider the set of functions
\begin{equation*}
\S_{t}^{z, A, L} \eqdef \Bigl\{\phi : \R^+_0 \to \R \, : \, \sup_{s \in [0, t]} \phi(s) \leq z, \; \sup_{s \in [t/2, t]} \phi(s) \leq z + \frkd_t + L, \; \phi(t) \geq z + \frkd_t - A \Bigr\}\;.
\end{equation*}
\begin{figure}[ht]
\centering
\pgfmathsetseed{50}
\def\dh{0.025}
\def\dv{0.017}
\def\dsh{0.17}
\begin{tikzpicture}
\fill[color = gray!20] (0,0.6) -- (0, 1.2) -- (10, 1.2) -- (10,-1.2) -- (5,-1.2) -- (5, 0.6);
\draw[->,>=latex] (0,0) -- (10.8,0);
\draw[->,>=latex] (0,1.2) -- (0,-4.2);
\draw (0,0) node[left]{$0$};
\draw (5,0) node[below left]{$\f{t}{2}$};
\draw (10,0.07) -- (10,-0.07); \draw (10,0) node[below right]{$t$};
\draw (0,0)						
	\foreach \x in {1,...,200}		
	{   -- ++(\dh,\dsh*rand - \dv)}
	\foreach \x in {1,...,200}		
	{   -- ++(\dh,\dsh*rand + 0.6*\dv)}; 

\draw (0,0.6) -- (5,0.6) -- (5,-1.2) -- (10,-1.2);
\draw (0,0.6) node[left]{$z$};
\draw[dashed] (5,-1.2) -- (0,-1.2) node[left]{$z + \frkd_t + L$};
\draw[dashed] (10,-3) -- (0,-3) node[left]{$z + \frkd_t - A$};
\draw[very thick] (10,-4) -- (10,-3);
\end{tikzpicture}
\caption[short form]{\small{A typical path in $\S_{t}^{z, A, L}$.}}
\vspace{4mm}
\label{figure A_n}
\end{figure}
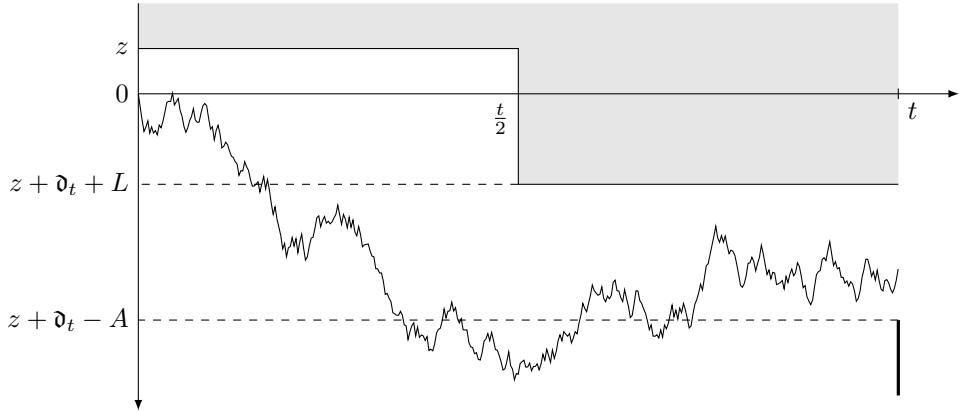

\begin{lemma}[{\cite[Lemma~5.1]{Glassy}}]
\label{lm:path}
For any $\eps > 0$ and $A \geq 0$, there exists $L \geq 0$ sufficiently large such that for any $s \geq 0$ sufficiently large satisfying $(e^s + 1)(\rmR+1)^{-1} \in \N$, there exists $T \geq 0$ sufficiently large such that for any $t \geq T$ and $\chi:[0, \rmR]^d \to \R$ satisfying the conditions in \eqref{eq:condChi}, it holds that   
\begin{equation*}
	\P\bigl(\exists m \in [0, \rmR]^d \; \text { such that } \; \rmY^{\chi}_t(m) - \frkd_t \geq -A \; \text{ and } \; \rmY_{\cdot}(m) \not \in \S_{t}^{\chi(m), A, L} \bigr) \leq \eps \rho_{\chi}\bigl([0, \rmR]^d\bigr)\;.
\end{equation*}
Furthermore, it also holds that 
\begin{equation*}
	\P\bigl(\exists m \in [0, \rmR]^d \setminus S_{\rmR, t} \; \text { such that } \; \rmY^{\chi}_t(m) - \frkd_t  \geq - A \bigr) \leq \eps \rho_{\chi}\bigl([0, \rmR]^d\bigr)\;,
\end{equation*}
where $\smash{S_{\rmR, t} \eqdef \bigl[e^{-t/2}, R-e^{-t/2}\bigr]^d}$. 
\end{lemma}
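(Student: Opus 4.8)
\textbf{Proof plan for Lemma~\ref{lm:path}.}

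The statement is a verbatim analogue of \cite[Lemma~5.1]{Glassy}, the only difference being that here we track the spatially dependent large-scale field $\chi$ and work on the box $[0,\rmR]^d$ rather than the unit box, and that the weights $\rmF_{\gamma,t}$ play no role in this particular lemma (it is purely about the field $\rmY^\chi_t$). The plan is therefore to import the proof of \cite[Lemma~5.1]{Glassy} and check that the extra features do not affect it. Concretely, I would first recall that $\rmY_\cdot(m)$ is, for fixed $m$, a Brownian motion with drift $-\sqrt{\smash[b]{2d}}\,\cdot$, so that $(\rmY_s(m))_{s\ge 0}$ has the same law for every $m$, and that $\rmY^\chi_t(m) = \rmY_t(m) - \chi(m)$ merely shifts the endpoint by the deterministic quantity $\chi(m)$. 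Since the conditions \eqref{eq:condChi} bound $\chi$ between $\tfrac{1}{8\sqrt{\smash[b]{2d}}}\log s$ and $\log t$, and $\rmR$ is fixed, the extra $\log$-type corrections are absorbable into the error exactly as in \cite{Glassy}; this is the content of the footnote comparing our $\rho_\chi$ to \cite[Eq.~3.11]{Glassy}.

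The core of the argument is a first-moment (union-bound over a mesh) estimate. I would discretise $[0,\rmR]^d$ into $O(e^{dt})$ boxes of side $e^{-t}$ (or, more efficiently, follow the multiscale second-moment scheme of \cite{Glassy}), and for each mesh point $m$ estimate
\begin{equation*}
\P\bigl(\rmY^\chi_t(m) - \frkd_t \ge -A,\; \rmY_\cdot(m)\notin \S_t^{\chi(m),A,L}\bigr)\;.
\end{equation*}
Conditioning on the value of $\rmY_t(m)$, this reduces to a Brownian-bridge computation: the probability that the bridge from $0$ to a value $\ge \chi(m)+\frkd_t - A$ exceeds the level $\chi(m)$ at some time in $[0,t]$, or exceeds $\chi(m)+\frkd_t+L$ on $[t/2,t]$. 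The first event is controlled by the ballot-type estimates from Appendix~\ref{ap:BBEstimates} (the same entropic-repulsion bounds invoked throughout Section~\ref{sec:cluster}, e.g.\ in Lemma~\ref{lm:upperBoundKk}), which give a factor that is summable against the $e^{dt}$ mesh points precisely because of the $\frkd_t = -\tfrac{3}{2\sqrt{\smash[b]{2d}}}\log t$ renormalisation; the second event, the late-time overshoot by $L$, contributes an extra factor that can be made $\le \eps$ by taking $L$ large, uniformly in $t$. Summing over the mesh and using continuity of the field to pass from mesh points to all of $[0,\rmR]^d$ (controlling oscillations on scale $e^{-t}$ by Borell--TIS and Fernique, Lemmas~\ref{lm_Borell} and \ref{lm_Fernique}, as in Lemma~\ref{lm:controlZ}) yields the bound $\le \eps\,\rho_\chi([0,\rmR]^d)$, since $\rho_\chi([0,\rmR]^d) \asymp \rmR^d \min_x \chi(x) e^{-\sqrt{\smash[b]{2d}}\chi(x)}$ is precisely the normalisation that matches the first-moment count. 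The second assertion, excluding $m$ in the boundary layer $[0,\rmR]^d\setminus S_{\rmR,t}$, follows from the same first-moment estimate restricted to the $O(e^{(d-1)t}\cdot e^{-t/2}\cdot e^{t})$—in fact $O(e^{(d-\frac12)t})$—mesh points near the boundary, which is exponentially smaller than the bulk count and hence negligible; alternatively one reads it off directly from \cite[Lemma~5.1]{Glassy}.

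The main obstacle, and the only place where genuine (rather than bookkeeping) work is needed, is to make all the estimates \emph{uniform} in the function $\chi$ over the admissible class \eqref{eq:condChi} and in $t\ge T$. The subtlety is that $\chi$ is allowed to grow like $\log t$, so when conditioning on $\rmY_t(m)$ the relevant Brownian bridge has an endpoint drifting with $t$; one must check that the ballot estimates from Appendix~\ref{ap:BBEstimates} apply with the required uniformity over endpoints in the window dictated by \eqref{eq:condChi} (this is exactly the regime $u\in[b^{1/4},b^{3/4}]$-type control that recurs in Section~\ref{sec:cluster}). I expect this to go through verbatim as in \cite{Glassy}, since the Hölder condition on $\chi$ in \eqref{eq:condChi} is precisely what is needed to control the oscillation of $\rmY^\chi_t$ on the mesh, and the lower bound $\chi\ge \tfrac{1}{8\sqrt{\smash[b]{2d}}}\log s$ guarantees that the buffer-zone/sub-box decomposition does not create spurious contributions. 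I would present the proof by stating that it follows the argument of \cite[Lemma~5.1]{Glassy} line by line, indicating the two modifications above, and carrying out in detail only the Brownian-bridge overshoot estimate if the referee deems it necessary.
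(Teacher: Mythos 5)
The paper gives no proof of this lemma; it is stated with the bracketed citation [Glassy, Lemma~5.1] in place of a proof environment, so the authors' "argument" is simply a reference, and the implicit claim is that the argument in \cite{Glassy} carries over unchanged to the spatially-varying $\chi$ and the box $[0,\rmR]^d$. Your plan is a sensible reconstruction of what such a proof must look like, and its shape — condition on $\rmY_t(m)$, union-bound over an $e^{-t}$-mesh, ballot/entropic-repulsion estimates from Appendix~\ref{ap:BBEstimates} for the atypical path events, control of mesh-scale oscillations via Borell--TIS/Fernique — matches how comparable estimates are handled elsewhere in the paper (for instance the lattice $\Lambda_{b_t}$ in the proof of Lemma~\ref{lm:cluster}).

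Two details are worth tightening when you flesh this out. This is a pure first-moment upper bound, so the parenthetical mention of a ``multiscale second-moment scheme'' is misplaced: second moments in \cite{Glassy} are a lower-bound device and play no role here. More substantively, the order of quantifiers in the statement ($\forall \eps\ \exists L\ \forall s\ \exists T\ \forall t\geq T$) forces a distinction between the two parts of the path constraint that your sketch blurs slightly. The event $\{\sup_{s\leq t}\rmY_s(m)>\chi(m)\}$ together with the endpoint constraint is controlled by the reflection principle and gains an extra factor that vanishes as $t\to\infty$ with no tunable parameter; the event $\{\sup_{s\in[t/2,t]}\rmY_s(m)>\chi(m)+\frkd_t+L\}$ is the one whose probability must be beaten down by choosing $L$ large, uniformly over $t\geq T$. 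Finally, note that both sides of the constraint \eqref{eq:condChi} on $\chi$ are genuinely used: not only the lower bound $\chi\geq\frac{1}{8\sqrt{\smash[b]{2d}}}\log s$ (to keep $\rho_\chi$ from degenerating), but also the upper bound $\chi\leq\log t$, which keeps the conditioned endpoint $\chi(m)+\frkd_t-A$ inside the window where the bridge estimates of Appendix~\ref{ap:BBEstimates} apply uniformly (the same $u\in[b^{1/4},b^{3/4}]$-type regime that recurs throughout Section~\ref{sec:cluster}).
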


Now, for $\lambda > 0$, we define the following quantities,  
\begin{align}
& \E_{\eqref{eq:pathI}} \eqdef \E\Biggl[\int_{S_{\rmR, t}} \frac{\one_{\{m \in \D^{\lambda}_{\rmR}\}} \one_{\{\rmY_{\cdot}(m) \in \S_{t}^{\chi(m), A, L}\}}}{\abs{\D^{\lambda}_{\rmR}}} \bigl(1-\G_{\rmR}\bigr) dm\Biggr] \label{eq:pathI} \;, \\
& \E_{\eqref{eq:pathII}} \eqdef \E\Biggl[\int_{S_{\rmR, t}} \frac{\one_{\{m \in \D^{\lambda}_{\rmR}\}} \one_{\{\rmY^{\chi}_t(m) - \frkd_t \geq -A, \, \rmY_{\cdot}(m) \not \in \S_{t}^{\chi(m), A, L}\}}}{\abs{\D^{\lambda}_{\rmR}}} \bigl(1-\G_{\rmR}\bigr) dm\Biggr] \label{eq:pathII} \;, \\
& \E_{\eqref{eq:pathIII}} \eqdef \E\Biggl[\int_{[0, \rmR]^d \setminus S_{\rmR, t}} \frac{\one_{\{m \in \D^{\lambda}_{\rmR}\}} \one_{\{\rmY^{\chi}_t(m) - \frkd_t \geq -A\}}}{\abs{\D^{\lambda}_{\rmR}}} \bigl(1-\G_{\rmR}\bigr) dm\Biggr] \label{eq:pathIII} \;,
\end{align}
so that, using the fact that $\smash{\{\rmY_{\cdot}(m) \in \S_{t}^{\chi(m), A, L}\} \subseteq \{\rmY^{\chi}_t(m) - \frkd_t \geq -A\}}$, we can write
\begin{equation*}
\E_{\eqref{eq:III0AA}}  = \E_{\eqref{eq:pathI}} + \E_{\eqref{eq:pathII}} +\E_{\eqref{eq:pathIII}} \;.
\end{equation*}
Using Lemma~\ref{lm:path}, one can easily verify that  
%(see \cite[p.~1399]{Madaule_Max} for further details) 
that for any $\eps > 0$ and $A \geq 0$, there exists $L \geq 0$ sufficiently large such that for any $s \geq 0$ sufficiently large satisfying $(e^s + 1)(\rmR+1)^{-1} \in \N$, there exists $T \geq 0$ sufficiently large such that for any $t \geq T$ and $\chi:[0, \rmR]^d \to \R$ satisfying the conditions in \eqref{eq:condChi},
\begin{equation*}
	\E_{\eqref{eq:pathII}} +\E_{\eqref{eq:pathIII}} \leq \eps \rho_{\chi}\bigl([0, \rmR]^d\bigr) \;.
\end{equation*}
Therefore, in what follows we can just focus on the expectation $\E_{\eqref{eq:pathI}}$ for fixed $A$, $L \geq 0$. 

\subsubsection{Localisation near the maximum}
\label{sec:localisation}

For $\lambda > 0$, $0 \leq b < t/2$, and for any $m \in [0, \rmR]^d$, recalling the notation introduced in \eqref{eq:maximalExpBalls} and \eqref{eq:maximalExpAnnulus}, we define the localised versions of the quantities in \eqref{eq:DefGMDR} by setting
\begin{equation}
\label{eq:DefGDMbtm}
\begin{gathered}	
\G_{m, b_t} \eqdef \exp\Biggl(-\int_{\B_{b_t}(m)} \rmF_{\gamma, t}(x) \mu_{\gamma, t}^{\chi}(dx) \Biggr) \;, \qquad \M_{m, b_t} \eqdef \M_{m, b_t}(\rmY^{\chi}_t) \;, \\
\D^{\lambda}_{m, b_t}\eqdef \D^{\lambda}_{m, b_t}(\rmY^{\chi}_t) \;, 
\end{gathered} 
\end{equation} 
where $b_t \eqdef b-t$. We also introduce the following quantity  
\begin{equation*}
\M_{m, b_t + 1, b_t} \eqdef \M_{m, b_t + 1, b_t}(\rmY^{\chi}_t) \;.
\end{equation*}
Furthermore, for $A \geq 0$, we define the following event
\begin{equation*}
\L_{m, b_t}^{A} \eqdef \bigl\{\exists y \in [0, \rmR]^d \setminus \B_{b_t}(m) \; \text{ such that } \; \rmY^{\chi}_t(y) - \frkd_t \geq - A - \lambda \bigr\} \;.
\end{equation*} 
Roughly speaking, on the complement of the event $\smash{\L_{m, b_t}^{A}}$, everything happens inside $\B_{b_t}(m)$. More precisely, on the event $\smash{\{\rmY^{\chi}_t(m) - \frkd_t \geq -A\}}$ and on the complement of $\smash{\L_{m, b_t}^{A}}$, the maximum of $\rmY^{\chi}_t$ over $[0, \rmR]^d$ must be attained inside $\B_{b_t}(m)$. Furthermore, the values of the field $\smash{\rmY_t^{\chi}}$ at points in $[0, \rmR]^d \setminus \B_{b_t}(m)$, are more than $\lambda$ away from the supremum of $\rmY_t^{\chi}$. Consequently, we have that 
\begin{align*}
& \one_{\{(\L_{m, b_t}^{A})^c\}} \frac{\one_{\{m \in \D^{\lambda}_{\rmR}\}} \one_{\{\rmY^{\chi}_t(m) - \frkd_t \geq -A\}}}{\abs{\D^{\lambda}_{\rmR}}} \\
& \hspace{40mm} = \one_{\{(\L_{m, b_t}^{A})^c\}}  \frac{\one_{\{m \in \D^{\lambda}_{m, b_t}\}} \one_{\{\M_{m, b_t + 1, b_t} -\frkd_t < -A \}} \one_{\{\rmY^{\chi}_t(m) - \frkd_t \geq -A\}}}{\abs{\D^{\lambda}_{m, b_t}}} \;.
\end{align*}
\begin{remark}
\label{rem:RedInd}
The reason why, on the right-hand side of the previous display, we included the seemingly redundant indicator function of the event $\{\M_{m, b_t + 1, b_t} -\frkd_t < -A\}$ is due to a technicality in the proof of Lemma~\ref{lm:cluster} below. This will be better explained during the course of the proof of that lemma. 
\end{remark}

Now, by introducing the following quantities,
\begin{align}
& \E_{\eqref{eq:pathbI}} \eqdef \E\Biggl[\int_{S_{\rmR, t}} \frac{\one_{\{m \in \D^{\lambda}_{m, b_t}\}} \one_{\{\M_{m, b_t + 1, b_t} -\frkd_t < -A \}} \one_{\{\rmY_{\cdot}(m) \in \S_{t}^{\chi(m),A, L}\}}}{\abs{\D^{\lambda}_{m, b_t}}} \bigl(1-\G_{m, b_t}\bigr) dm\Biggr] \;, \label{eq:pathbI} \\
& \E_{\eqref{eq:pathbII}} \eqdef \E\Biggl[\int_{S_{\rmR, t}} \one_{\{{\L_{m, b_t}^{A}}\}} \frac{\one_{\{m \in \D^{\lambda}_{\rmR}\}}\one_{\{\rmY_{\cdot}(m) \in \S_{t}^{\chi(m), A, L}\}}}{\abs{\D^{\lambda}_{\rmR}}} \bigl(1-\G_{\rmR}\bigr) dm\Biggr] \;, \label{eq:pathbII}\\
& \E_{\eqref{eq:pathbIII}} \eqdef \E\Biggl[\int_{S_{\rmR, t}} \one_{\{{\L_{m, b_t}^{A}}\}} \frac{\one_{\{m \in \D^{\lambda}_{m, b_t}\}} \one_{\{\M_{m, b_t + 1, b_t} -\frkd_t < -A \}} \one_{\{\rmY_{\cdot}(m) \in \S_{t}^{\chi(m), A, L}\}}}{\abs{\D^{\lambda}_{m, b_t}}} \nonumber \\
& \hspace{97mm} \cdot\bigl(1-\G_{m, b_t}\bigr) dm\Biggr] \;, \label{eq:pathbIII} \\
& \E_{\eqref{eq:pathbIV}} \eqdef \E\Biggl[\int_{S_{\rmR, t}} \one_{\{(\L_{m, b_t}^{A})^c\}} \frac{\one_{\{m \in \D^{\lambda}_{\rmR}\}} \one_{\{\rmY_{\cdot}(m) \in \S_{t}^{\chi(m),A, L}\}}}{\abs{\D^{\lambda}_{\rmR}}} \bigl(1-\G_{\rmR}\bigr) dm\Biggr] \;, \label{eq:pathbIV} \\
& \E_{\eqref{eq:pathbV}} \eqdef \E\Biggl[\int_{S_{\rmR, t}} \one_{\{(\L_{m, b_t}^{A})^c\}}\frac{\one_{\{m \in \D^{\lambda}_{\rmR}\}} \one_{\{\rmY_{\cdot}(m) \in \S_{t}^{\chi(m),A, L}\}}}{\abs{\D^{\lambda}_{\rmR}}}  \bigl(1-\G_{m, b_t}\bigr) dm\Biggr] \;, \label{eq:pathbV} 
\end{align}
one can easily check that,
\begin{equation*}
	\E_{\eqref{eq:pathI}} = \E_{\eqref{eq:pathbI}} + \E_{\eqref{eq:pathbII}} - \E_{\eqref{eq:pathbIII}} + \E_{\eqref{eq:pathbIV}} - \E_{\eqref{eq:pathbV}} \;.
\end{equation*}
We will show that the fist term is dominant by separately bounding the sum of the second and third term (since both
terms are positive, this dominates their difference), 
as well as the difference between the two last terms.

\begin{lemma}
\label{lm:redExp}
For any $\lambda > 0$ and $\eps > 0$, there exists a constant $A \geq 0$ sufficiently large such that for any $s \geq 0$ sufficiently large satisfying $(e^s + 1)(\rmR+1)^{-1} \in \N$, there exists $T \geq 0$ sufficiently large such that for any $t \geq T$, $b \geq 0$, $L \geq 0$, and $\chi:[0, \rmR]^d \to \R$ satisfying the conditions in \eqref{eq:condChi}, it holds that
\begin{equation*}
\abs{\E_{\eqref{eq:pathbIV}} - \E_{\eqref{eq:pathbV}}} \leq \eps \rho_{\chi}\bigl([0, \rmR]^d\bigr) \;.
\end{equation*}
\end{lemma}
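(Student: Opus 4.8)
The plan is to bound $\E_{\eqref{eq:pathbIV}} - \E_{\eqref{eq:pathbV}}$ by comparing the two integrands, which differ only in that $\E_{\eqref{eq:pathbIV}}$ carries the global quantity $\G_{\rmR}$ (involving an integral of $\rmF_{\gamma,t}$ over all of $[0,\rmR]^d$) whereas $\E_{\eqref{eq:pathbV}}$ carries the localised quantity $\G_{m,b_t}$ (the integral over $\B_{b_t}(m)$ only). First I would note that, since $\mu_{\gamma,t}^{\chi}$ is a positive measure, we have the pointwise inequality $\G_{\rmR} \le \G_{m,b_t}$, so the difference of integrands is non-negative and equals
\begin{equation*}
\one_{\{(\L_{m, b_t}^{A})^c\}}\frac{\one_{\{m \in \D^{\lambda}_{\rmR}\}} \one_{\{\rmY_{\cdot}(m) \in \S_{t}^{\chi(m),A, L}\}}}{\abs{\D^{\lambda}_{\rmR}}}\bigl(\G_{m,b_t}-\G_{\rmR}\bigr)\;.
\end{equation*}
Using $0 \le \G_{m,b_t}-\G_{\rmR} = \G_{m,b_t}\bigl(1 - \exp(-\int_{[0,\rmR]^d \setminus \B_{b_t}(m)} \rmF_{\gamma,t}(x)\,\mu_{\gamma,t}^{\chi}(dx))\bigr) \le 1 \wedge \int_{[0,\rmR]^d\setminus\B_{b_t}(m)} \rmF_{\gamma,t}(x)\,\mu_{\gamma,t}^{\chi}(dx)$, together with the observation that on $(\L_{m,b_t}^{A})^c$ every point $y \in [0,\rmR]^d \setminus \B_{b_t}(m)$ satisfies $\rmY_t^{\chi}(y) - \frkd_t < -A-\lambda$ (so in particular $m \in \D^{\lambda}_{\rmR}$ forces $\abs{\D^{\lambda}_{\rmR}} = \abs{\D^{\lambda}_{m,b_t}}$ and the denominator can be replaced accordingly), I would reduce to estimating
\begin{equation*}
\E\Biggl[\int_{S_{\rmR,t}} \frac{\one_{\{m \in \D^{\lambda}_{m,b_t}\}}\one_{\{\rmY_{\cdot}(m)\in\S_t^{\chi(m),A,L}\}}}{\abs{\D^{\lambda}_{m,b_t}}}\Biggl(1\wedge\int_{[0,\rmR]^d\setminus\B_{b_t}(m)}\rmF_{\gamma,t}(x)\one_{\{\rmY_t^{\chi}(x)-\frkd_t<-A-\lambda\}}\mu_{\gamma,t}^{\chi}(dx)\Biggr)dm\Biggr]\;.
\end{equation*}

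Next I would condition on $\CF$-type information local to the ball $\B_{b_t}(m)$: the factor $\one_{\{m\in\D^{\lambda}_{m,b_t}\}}\one_{\{\rmY_{\cdot}(m)\in\S_t^{\chi(m),A,L}\}}/\abs{\D^{\lambda}_{m,b_t}}$ depends only on $\rmX_t$ restricted to $\B_{b_t}(m)$ (and on $\rmW_{i,t}$ there), while the inner integral over $[0,\rmR]^d \setminus \B_{b_t}(m)$ is controlled in expectation by a first-moment computation. Using \ref{hp:W1} and \ref{hp:W4} to bound $\E[\rmF_{\gamma,t}]$ uniformly, together with the standard first-moment estimate for $\mu_{\gamma,t}^{\chi}$ restricted to the event $\{\rmY_t^{\chi}(x) - \frkd_t < -A-\lambda\}$ (which, after the $x\mapsto e^{-t}x$ rescaling and use of \eqref{e:scaling_rel}, gives a contribution of order $e^{-\sqrt{\smash[b]{2d}}A}\rho_\chi([0,\rmR]^d)$ up to a harmless factor from integrating the remaining region), I expect to obtain a bound of the form $C e^{-c A}\rho_\chi([0,\rmR]^d)$ uniformly in $b$, $L$, and in $t$ large. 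Here the key input is that the region $[0,\rmR]^d \setminus \B_{b_t}(m)$ is genuinely away from $m$ only for the part outside a ball of radius $e^{b_t} = e^{b-t}$ around $m$; since $b$ is fixed and $t \to \infty$, this ball is microscopic, so essentially the whole cube contributes, but the constraint $\rmY_t^{\chi}(x) - \frkd_t < -A-\lambda$ supplies the smallness through the exponential factor $e^{-\sqrt{\smash[b]{2d}}A}$. Choosing $A$ large then makes this $\le \eps\rho_\chi([0,\rmR]^d)$, which is exactly the claimed bound (note $A$ is chosen \emph{before} $s$, $T$, $b$, $L$, consistent with the statement).

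The main obstacle I anticipate is making rigorous the claim that the contribution of $\int_{[0,\rmR]^d\setminus\B_{b_t}(m)}\rmF_{\gamma,t}(x)\one_{\{\rmY_t^{\chi}(x)-\frkd_t<-A-\lambda\}}\mu_{\gamma,t}^{\chi}(dx)$ has expectation decaying like $e^{-cA}\rho_\chi([0,\rmR]^d)$ uniformly in $t$: a naive first-moment bound on $\mu_{\gamma,t}^{\chi}$ without the height truncation diverges (this is precisely the supercritical phenomenon), so one must genuinely exploit the truncation $\{\rmY_t^{\chi}(x) - \frkd_t < -A-\lambda\}$ via a ballot-type / Brownian-bridge tilting estimate, much as in the proof of Lemma~\ref{lm:IdentA} (which itself leans on \cite[Proposition~4.1]{Glassy}). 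I would therefore structure this step by mimicking the argument of Lemma~\ref{lm:IdentA}: bound the indicator-weighted first moment by conditioning on the path $(\rmY_s(x))_{s\le t}$ and using the entropic-repulsion estimates of Appendix~\ref{ap:BBEstimates}, extracting the factor $e^{-\sqrt{\smash[b]{2d}}(A+\lambda)}$ from the endpoint constraint. The dependence on $\rmF_{\gamma,t}$ rather than a constant is handled exactly as in Lemmas~\ref{lm:IdentA} and~\ref{lm:path} by invoking \ref{hp:W1} (independence from $\CF_t$) and \ref{hp:W4} (uniform exponential moment), so it introduces only a multiplicative constant and no new difficulty. Everything else — the pointwise inequalities, the identification $\abs{\D^{\lambda}_{\rmR}}=\abs{\D^{\lambda}_{m,b_t}}$ on $(\L_{m,b_t}^A)^c$, Fubini — is routine.
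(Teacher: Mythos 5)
Your proposal is directionally correct and correctly identifies that the bound ultimately reduces to an estimate of the form $\E\bigl[1-\exp\bigl(-\int_{[0,\rmR]^d}\rmF_{\gamma,t}\one_{\{\rmY^\chi_t-\frkd_t<-A-\lambda\}}\mu^\chi_{\gamma,t}\bigr)\bigr]$, which is controlled exactly as in Lemma~\ref{lm:IdentA} via \cite[Proposition~4.1]{Glassy}. However, your route is noticeably more circuitous than the paper's. The paper's proof is short: apply the elementary inequality $1-e^{-(u_1+u_2)} \leq (1-e^{-u_1})+(1-e^{-u_2})$ to get $1-\G_{\rmR} \leq (1-\G_{[0,\rmR]^d\setminus\B_{b_t}(m)})+(1-\G_{m,b_t})$, so $\E_{\eqref{eq:pathbIV}}-\E_{\eqref{eq:pathbV}}$ (which is non-negative) is bounded by the expectation of the extra term. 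Then on $(\L_{m,b_t}^A)^c$ the factor $1-\G_{[0,\rmR]^d\setminus\B_{b_t}(m)}$ is bounded above by the $m$-independent quantity $1-\exp\bigl(-\int_{[0,\rmR]^d}\rmF_{\gamma,t}\one_{\{\rmY^\chi_t-\frkd_t\leq-A-\lambda\}}\mu^\chi_{\gamma,t}\bigr)$, and the remaining $m$-integral $\int_{S_{\rmR,t}}\one_{\{m\in\D^\lambda_\rmR\}}/\abs{\D^\lambda_\rmR}\,dm$ is trivially $\leq 1$. There is no need for your passage to $1\wedge\int$, the identification $\abs{\D^\lambda_\rmR}=\abs{\D^\lambda_{m,b_t}}$, or the conditioning on local $\CF$-information around $\B_{b_t}(m)$ — keeping the exponential structure is precisely what lets you invoke Lemma~\ref{lm:IdentA} directly instead of re-deriving its content.

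Two further remarks. First, your parenthetical claim that the truncated first-moment estimate ``gives a contribution of order $e^{-\sqrt{\smash[b]{2d}}A}\rho_\chi([0,\rmR]^d)$'' is false: even with the truncation $\{\rmY^\chi_t(x)-\frkd_t<-A-\lambda\}$, the first moment $\E\bigl[\mu^\chi_{\gamma,t}(\{\rmY^\chi_t-\frkd_t<-A-\lambda\})\bigr]$ diverges (a direct Gaussian computation gives order $t\cdot e^{-\sqrt{\smash[b]{2d}}\chi}\cdot e^{-(\gamma-\sqrt{\smash[b]{2d}})(A+\lambda)}$). You do recognise this a sentence later — ``a naive first-moment bound\dots diverges'' — but the initial claim is misleading and, if taken at face value, would send the reader down a dead end. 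It is genuinely essential to use the $1-\exp(-\cdot)$ (or $1\wedge\int$) structure, together with the path constraint, as \cite[Proposition~4.1]{Glassy} does. Second, by passing to $1\wedge\int$ and then proposing to ``mimic'' the ballot-type argument of Lemma~\ref{lm:IdentA}, you are effectively re-proving the input rather than citing it; the paper's cleaner route is to keep the form $1-\exp(-\cdot)$ so that Lemma~\ref{lm:IdentA}'s proof applies verbatim (indeed, the relevant height cutoff $-A-\lambda$ is strictly smaller than the $-A+\lambda$ appearing there, so the bound is only better).
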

\begin{proof}
Using the elementary inequality $1-e^{-(u_1+u_2)} \leq (1 - e^{-u_1}) + (1 - e^{-u_2})$ which is valid for $u_1$, $u_2 \geq 0$, we note that for all $m \in S_{\rmR, t}$, it holds that
\begin{equation*}
1-\G_{\rmR} \leq \bigl(1- \G_{[0, \rmR]^d \setminus \B_{b_t}(m)}\bigr) + \bigl(1-\G_{m, b_t}\bigr)\;.
\end{equation*}
Hence the claim is proved if we can show that there exists a constant $A \geq 0$ sufficiently large such that for any $s \geq 0$ sufficiently large satisfying $(e^s + 1)(\rmR+1)^{-1} \in \N$, there exists $T \geq 0$ sufficiently large such that for any $t \geq T$, $b \geq 0$, $L \geq 0$, and $\chi:[0, \rmR]^d \to \R$ satisfying the conditions in \eqref{eq:condChi}, it holds that
\begin{equation*}
\E\Biggl[\int_{S_{\rmR, t}} \one_{\{(\L^{A}_{m, b_t})^c\}}\frac{\one_{\{m \in \D^{\lambda}_{\rmR}\}} \one_{\{\rmY_{\cdot}(m) \in \S_{t}^{\chi(m),A, L}\}}}{\abs{\D^{\lambda}_{\rmR}}} \bigl(1-\G_{[0, \rmR]^d\setminus \B_{b_t}(m)}\bigr) dm\Biggr] \leq \eps \rho_{\chi}\bigl([0, \rmR]^d\bigr) \;.
\end{equation*}
By bringing the indicator function of the complement of the event $\L_{m, b_t}^A$ inside the exponential, the above expectation can be bounded from above by
\begin{equation*}
\E\Biggl[1-\exp\Biggl(-\int_{[0, \rmR]^d} \rmF_{\gamma, t}(x) \one_{\{\rmY^{\chi}_t(x) - \frkd_t \leq -A - \lambda\}} \mu_{\gamma, t}^{\chi}(dx)\Biggr)\Biggr] \;,
\end{equation*}
and so the conclusion follows by choosing $A \geq 0$ as in Lemma~\ref{lm:IdentA}. 
\end{proof}

\begin{lemma}
\label{lm:cluster}
For any $\lambda > 0$, $\eps > 0$, and $A$, $L \geq 0$, there exist $b_0$, $s_0 \geq 0$ large enough such that for any $s \geq s_0$ satisfying $(e^s + 1)(\rmR+1)^{-1} \in \N$ and $b \geq b_0$, there exists $T \geq 0$ large enough such that for any $t \geq T$, and $\chi:[0, \rmR]^d \to \R$ satisfying the conditions in \eqref{eq:condChi}, it holds that 
\begin{equation*}
\abs{\E_{\eqref{eq:pathbII}} + \E_{\eqref{eq:pathbIII}}} \leq \eps \rho_{\chi}\bigl([0, \rmR]^d\bigr) \;.
\end{equation*}
\end{lemma}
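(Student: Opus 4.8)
\textbf{Proof plan for Lemma~\ref{lm:cluster}.}
The plan is to bound the two (non-negative) expectations $\E_{\eqref{eq:pathbII}}$ and $\E_{\eqref{eq:pathbIII}}$ separately; since both are positive it suffices to dominate their sum, and both are controlled by the probability of the ``localisation failure'' event $\L^A_{m,b_t}$. First I would observe that, thanks to the indicator $\one_{\{\rmY_{\cdot}(m)\in \S^{\chi(m),A,L}_t\}}$ forcing $\rmY^\chi_t(m)-\frkd_t\geq -A$, on the event $\L^A_{m,b_t}$ there exist two points $m$ and $y$ with $|m-y|>e^{b_t}=e^{b-t}$ at which the field $\rmY^\chi_t-\frkd_t$ exceeds $-A-\lambda$, i.e.\ the field has two distinct high points separated by a macroscopic (in the rescaled picture) distance. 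I would then use the crude bound $(1-\G_\rmR)\le 1$ and $(1-\G_{m,b_t})\le 1$ together with $\abs{\D^\lambda_\rmR}^{-1}\le \abs{\D^\lambda_{m,b_t}}^{-1}$ (when the global maximum is attained inside $\B_{b_t}(m)$) and the tail estimates of Lemma~\ref{lm:tailsDgUpsilon} (cf.\ Remark~\ref{rm:TailEstBM}) to bound the $\abs{\D^\lambda}^{-1}$ factor in $L^{1+\sigma}$ for some $\sigma>0$; a Hölder split then reduces everything to showing that the probability of seeing two $\lambda$-high points at distance $>e^{b-t}$, integrated against $dm$ over $S_{\rmR,t}$, is at most $\eps\,\rho_\chi([0,\rmR]^d)$ once $b$ and $s$ are large.

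The main analytic input for that last probability is a two-point upper bound for the supercritical-threshold field, which is exactly the kind of estimate proved in \cite{Glassy} (e.g.\ the ``two high points are rare'' estimates underlying \cite[Proposition~4.1]{Glassy} and the localisation steps there). Concretely, I would first reduce, exactly as in Lemma~\ref{lm:IdentA} and Lemma~\ref{lm:path} and using \ref{hp:W1} and \ref{hp:W4}, to the unconditioned field $\rmX_t$ and then convert the event $\L^A_{m,b_t}$ into the requirement that, after recentering at $m$, the field has a high point in the annulus $\B_{b_t}(m)^c\cap[0,\rmR]^d$. For a single fixed $m$, the decomposition of $\rmX$ into the independent scales $\rmX_{s}$ and $\rmX_{s,t}$ (with $b\le s$), combined with the short-range independence from \ref{hp_K2} and the ballot-type/entropic-repulsion estimates already used repeatedly in Section~\ref{sec:cluster} (in particular Lemma~\ref{lm:mainEntrRW} and the Brownian-bridge bounds of Appendix~\ref{ap:BBEstimates}), shows that conditionally on $\rmY^\chi_t(m)-\frkd_t\geq -A$ the probability of an extra $\lambda$-high point at radial distance at least $b-t$ from $m$ is $O(e^{-c b})$ uniformly in $t$ and in $\chi$ satisfying \eqref{eq:condChi}. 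Integrating this against the density that governs the first high point — which, after the change of variables $x\mapsto e^{s}x$ and the scaling relation \eqref{e:scaling_rel}, produces precisely a factor comparable to $\rho_\chi([0,\rmR]^d)$, just as in the proof of Lemma~\ref{lem:magic} — yields $\E_{\eqref{eq:pathbII}}+\E_{\eqref{eq:pathbIII}}\lesssim e^{-cb}\rho_\chi([0,\rmR]^d)$, which is $\le\eps\,\rho_\chi([0,\rmR]^d)$ once $b\ge b_0(\eps)$ and then $s\ge s_0$, $t\ge T$.

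Regarding the redundant indicator $\one_{\{\M_{m,b_t+1,b_t}-\frkd_t<-A\}}$ flagged in Remark~\ref{rem:RedInd}: it only helps here, since it further restricts the event, so it can simply be dropped when proving the upper bound. I would carry out the steps in the order: (1) reduce $\E_{\eqref{eq:pathbII}},\E_{\eqref{eq:pathbIII}}\ge 0$ and bound their sum; (2) Hölder-split off the $\abs{\D^\lambda}^{-(1+\sigma)}$ factor using Lemma~\ref{lm:tailsDgUpsilon}/Remark~\ref{rm:TailEstBM} to get a uniformly bounded contribution; (3) bound the remaining probability of a second $\lambda$-high point at macroscopic distance via the independent-scales decomposition and the entropic-repulsion lemmas, getting a gain $e^{-cb}$; (4) integrate over $m\in S_{\rmR,t}$, recognising the resulting normalisation as $\rho_\chi([0,\rmR]^d)$ via \eqref{e:scaling_rel} and the argument of Step~1 in the proof of Lemma~\ref{lem:magic}; (5) choose the parameters in the stated order. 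The main obstacle I anticipate is making Step~(3) genuinely uniform over all admissible $\chi$ and over $t$: the high-point density depends on $\chi$ through the conditions \eqref{eq:condChi}, and one must be careful that the $e^{-cb}$ gain survives the worst-case slope and range of $\chi$; this is handled by noting that \eqref{eq:condChi} forces $\chi$ to be, on the scale $e^{-s}$, almost constant with controlled Hölder norm, so the two-point estimate is essentially the one for constant $\chi$ up to harmless factors absorbed into the constant $c$ at the cost of enlarging $s$.
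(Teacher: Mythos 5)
The decisive gap is your treatment of the indicator $\one_{\{\M_{m,b_t+1,b_t}-\frkd_t<-A\}}$, which you dismiss as ``redundant'' and propose to drop. The paper flags the opposite: Remark~\ref{rem:RedInd} and the footnote in the actual proof of Lemma~\ref{lm:cluster} state that this indicator is \emph{essential}, not redundant, precisely for $\E_{\eqref{eq:pathbIII}}$. The paper's proof covers $S_{\rmR,t}$ by the lattice $\Lambda_{b_t}=\frac{1}{4\sqrt d}e^{b_t}\Z^d\cap[0,\rmR]^d$, and inside each ball $\B_{b_t-\log 4}(x)$ uses the fact that $\one_{\{\M_{m,b_t+1,b_t}-\frkd_t<-A\}}\,\one_{\{\rmY^\chi_t(m)-\frkd_t\geq-A\}}\leq \one_{\{\M_{m,b_t}=\M_{x,b_t+\log(5/4)}\}}$, i.e.\ the maximality indicator forces the local maximum $\M_{m,b_t}$ to coincide with a fixed, $m$-independent quantity. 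With the $m$-independent replacement $\bar\D^\lambda_{x,b_t}$, the normalised integral $\int_{\B_{b_t-\log4}(x)}\one_{\{m\in\bar\D^\lambda_{x,b_t}\}}/\abs{\bar\D^\lambda_{x,b_t}}\,dm$ equals $1$ \emph{deterministically}, and the $\abs{\D^\lambda}^{-1}$ factor disappears altogether. Without that indicator, $\D^\lambda_{m,b_t}$ depends on $m$ through the moving ball $\B_{b_t}(m)$ and one can exhibit realisations of $\rmY_t$ for which the unnormalised integral blows up; this is exactly what the paper's footnote warns of.

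Your proposed workaround — a H\"older split using the $L^{1+\sigma}$ bound of Lemma~\ref{lm:tailsDgUpsilon} — does not obviously repair this. That lemma lives in the already-translated renewal picture (field pinned at the origin, Brownian-bridge law $\P_{0,u,b}$); applying it for each $m$ would require a separate Cameron--Martin reduction for each $m$, and one is again left to integrate a quantity involving $\abs{\D^\lambda_{m,b_t}}^{-(1+\sigma)}$ whose normalising set moves with $m$. You would essentially have to re-derive the renewal bound inside this lemma and still confront the $m$-dependence that the maximality indicator was introduced to neutralise. The paper's lattice-covering route, which reduces the whole thing to a sum of probabilities $\P(\L^A_{x,b_t-\log(4/3)},\dots)$ over lattice points and then invokes \cite[Lemma~5.1]{Madaule_Max}, avoids any moment estimate on $\abs{\D^\lambda}^{-1}$ here.

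For $\E_{\eqref{eq:pathbII}}$, by contrast, your outline is basically sound: there $\D^\lambda_\rmR$ is the global near-maximal set, $\int_{[0,\rmR]^d}\one_{\{m\in\D^\lambda_\rmR\}}/\abs{\D^\lambda_\rmR}\,dm=1$ with no pathology, and one is left to bound the probability that a second $\lambda$-high point exists at distance $\gtrsim e^{b_t}$ from the maximiser, which is the kind of two-point/entropic-repulsion estimate you correctly invoke. The paper handles $\E_{\eqref{eq:pathbII}}$ by a ``similar, in fact simpler'' version of the lattice argument. So the gap is confined to $\E_{\eqref{eq:pathbIII}}$, but it is genuine: the maximality indicator cannot simply be dropped, and the lattice covering together with the deterministic $\leq 1$ bound is the key idea you are missing.
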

\begin{proof}
The proof of this result is similar to the proof of \cite[Lemmas~5.1,~5.2]{Madaule_Max}. We only consider the quantity $\E_{\eqref{eq:pathbIII}}$, since $\E_{\eqref{eq:pathbII}}$ can be bounded by following a similar, and in fact simpler, strategy. We introduce the lattice $\Lambda_{b_t}$ by letting 
\begin{equation*}
\Lambda_{b_t} \eqdef \frac{1}{4 \sqrt{d}}e^{b_t} \Z^d \cap [0, \rmR]^d \;.
\end{equation*}
Using the fact that $S_{\rmR, t} \subseteq \cup_{x \in \Lambda_{t, b}} \B_{b_t-\log4}(x)$, we note that the quantity inside the expectation $\E_{\eqref{eq:pathbIII}}$ can be bounded from above by   
\begin{align}
& \int_{S_{\rmR, t}} \one_{\{\L^{A}_{m, b_t}\}} \frac{\one_{\{m \in \D^{\lambda}_{m, b_t}\}} \one_{\{\M_{m, b_t + 1, b_t} - \frkd_t < -A\}} \one_{\{\rmY_{\cdot}(m) \in \S_{t}^{\chi(m), A, L}\}}}{\abs{\D^{\lambda}_{m, b_t}}} dm \nonumber \\
& \leq \sum_{x \in \Lambda_{b_t}} \int_{\B_{b_t-\log4}(x)} \one_{\{\L^{A}_{m, b_t}\}} \frac{\one_{\{m \in \D^{\lambda}_{m, b_t}\}} \one_{\{\M_{m, b_t + 1, b_t} - \frkd_t < -A\}} \one_{\{\rmY_{\cdot}(m) \in \S_{t}^{\chi(m), A, L}\}}}{\abs{\D^{\lambda}_{m, b_t}}} dm \nonumber \\
& \leq \sum_{x \in \Lambda_{b_t}} \one_{\{\L^{A}_{x, b_t-\log(4/3)}\}} \one_{\{\exists m \in \B_{b_t-\log4}(x) \text{ such that } \rmY_{\cdot}(m) \in \S_{t}^{\chi(m), A, L}\}} \;. \label{eq:bridge05152}
\end{align}
In order to get the last inequality, one can note that for $m \in \B_{b_t-\log4}(x)$, it holds that 
\begin{equation*}
\one_{\{\M_{m, b_t + 1, b_t} - \frkd_t < -A\}} \one_{\{\rmY_t^{\chi}(m) - \frkd_t \geq -A\}} \leq \one_{\{\M_{m, b_t} = \M_{x, b_t+\log(5/4)}\}} \;,
\end{equation*}
where we refer to Figure~\ref{fig:decoComplex} for a diagram illustrating the sets involved. 
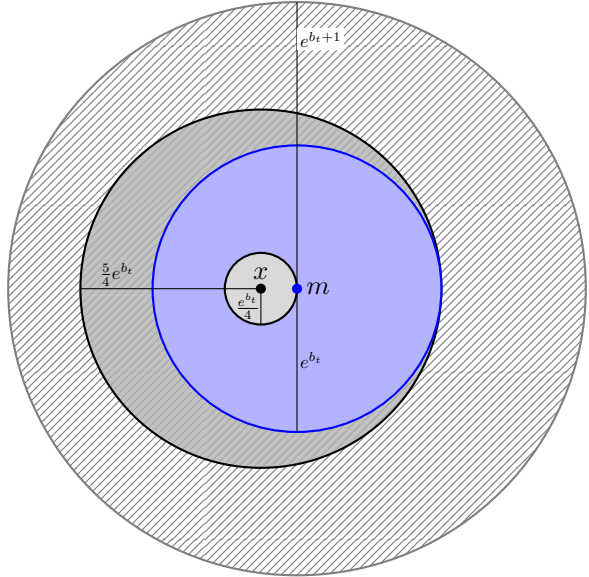
\begin{figure}[ht] 
\begin{center}
\begin{tikzpicture}[scale=0.95]
	\fill[pattern=north east lines, pattern color=gray] (0.5,0) circle (4);
	\draw[gray, thick] (0.5,0) circle (4);
	
	\fill[black!30, opacity = 0.8] (0,0) circle (2.5);
	\draw[black, thick] (0,0) circle (2.5);

	\fill[blue!30, opacity = 1] (0.5,0) circle (2);
	\draw[blue, thick] (0.5,0) circle (2);
	
	\fill[gray!30, opacity = 1] (0,0) circle (0.5);
	\draw[black, thick] (0,0) circle (0.5);
	\draw[black] (0,0) -- (-2.5,0) node[midway, above, inner sep=1pt, xshift=-20pt] {\scalebox{0.7}{$\f54 e^{b_t}$}};
	\draw[black] (0,0) -- (0,-0.5) node[midway, left, inner sep=0pt] {\scalebox{0.7}{$\f{e^{b_t}}{4}$}};
	\fill[black] (0,0) circle (2pt);
	\node[above] at (0,0) {$x$};
	
	\draw[black] (0.5,0) -- (0.5,-2) node[midway, right, inner sep=1pt] {{\color{black}\scalebox{0.7}{$e^{b_t}$}}};
	\draw[black] (0.5,0) -- (0.5,4) node[midway, right, fill, white, inner sep=1pt, yshift=40pt] {{\color{black}\scalebox{0.7}{$e^{b_t + 1}$}}};
		\fill[blue] (0.5, 0) circle (2pt);
	\node[right] at (0.5,0) {$m$};
\end{tikzpicture}
\caption[short form]{\small{The point $m$ is chosen inside the small grey ball $\B_{b_t - \log 4}(x)$, and the blue ball represents  $\B_{b_t}(m)$. Since we are on the event $\{\rmY_t^{\chi}(m) - \frkd_t \geq -A\}$, on the event $\{\M_{m, b_t + 1, b_t} -\frkd_t < -A\}$, the supremum of the field $\rmY_t^{\chi}$ inside the large striped ball must coincide with the supremum of the field $\rmY_t^{\chi}$ inside the shaded black ball $\B_{b_t + \log(5/4)}(x)$.}}
\label{fig:decoComplex}
\end{center}
\end{figure}
Thus, by letting
 \begin{equation*}
\bar{\D}^{\lambda}_{x, b_t} \eqdef \bigl\{y \in \B_{b_t-\log4}(x) \, : \, \rmY^{\chi}_t(y) \geq \M_{x, b_t+\log(5/4)} - \lambda\bigr\} \;,
\end{equation*}
we have that\footnote{As we mentioned in Remark~\ref{rem:RedInd}, the reason we included the indicator function of the event $\smash{\{\M_{m, b_t + 1, b_t} - \frkd_t < -A \}}$ is due to a technical reason. Indeed,  without this indicator function, we would not be able to bound the integral by one. It is not difficult to construct a ``pathological example'' for a possible realisation of the field $\rmY_t$ for which the integral becomes arbitrarily large.}
\begin{align*}
& \int_{\B_{b_t-\log4}(x)}  \frac{\one_{\{\M_{m, b_t + 1, b_t} - \frkd_t < -A \}} \one_{\{\rmY_t^{\chi}(m) - \frkd_t \geq -A\}} \one_{\{m \in \D^{\lambda}_{m, b_t}\}}}{\abs{\D^{\lambda}_{m, b_t}}} dm \\
& \hspace{80mm}\leq \int_{\B_{b_t-\log4}(x)} \frac{\one_{\{m \in  \bar{\D}^{\lambda}_{x, b_t}\}}}{\abs{\bar{\D}^{\lambda}_{x, b_t}}} dm = 1 \;.
\end{align*}
From this point, it suffices to follow the proof of \cite[Lemma~5.1]{Madaule_Max} to conclude.
\end{proof}

%%%%%%%%%%%%%%%%%%%%%%%%%%%%%%%%%%%%%%%%%%%%%
\subsubsection{A renewal result}
Combining Lemmas~\ref{lm:IdentA},~\ref{lm:path},~\ref{lm:redExp},~and~\ref{lm:cluster}, we have shown in the previous subsection that for any $\lambda > 0$ and $\eps > 0$, there exist $0 \leq A < L$ sufficiently large such that there exists $b_0 > 0$ and $s_0 > 0$ sufficiently large, such that for any $s \geq s_0$ satisfying $(e^s + 1)/(\rmR+1) \in \N$ and any $b \geq b_0$, there exists a sufficiently large $T \geq 0$ such that for all $t \geq T$ and any function $\chi:[0,\rmR]^d \to \R$ satisfying the conditions in \eqref{eq:condChi}, it holds that
\begin{equation*}
\left\lvert\E \Biggl[1-\exp\biggl(-\int_{[0, \rmR]^d} \rmF_{\gamma, t}(x) \mu^{\chi}_{\gamma, t}(dx)\biggr)\Biggr]  - \E_{\eqref{eq:pathbI}}\right\rvert \leq \eps \rho_{\chi}\bigl([0, \rmR]^d\bigr) \;,
\end{equation*}
Therefore, in what follows, we fix $\lambda > 0$ and $A$, $L$, $b \geq 0$, and we study the quantity $\E_{\eqref{eq:pathbI}}$. Given a function $g:\R^d \to \R$, we introduce the fields $\bar \Upsilon_{\!b}$ and $\bar \Upsilon_{\!b, g}$ on $\R^d$ by letting
\begin{equation}
\label{eq:defRealUps}
\bar \Upsilon_{\!b}(x) \eqdef \rmY_b(e^{-b} x)-\rmY_b(0)\;, \qquad \bar \Upsilon_{\!b, g}(x) \eqdef \bar \Upsilon_{\!b}(x) + g(x) \;.
\end{equation}
Thanks to translation invariance, the scaling relation \eqref{e:scaling_rel}, the Markov property at time $\smash{t_b \eqdef t-b}$ of the process $\smash{(\rmY_t(\cdot))_{t \geq 0}}$, the assumptions \ref{hp:W1} and \ref{hp:W2}, and, for $m \in S_{\rmR, t}$, by applying the Cameron--Martin theorem (see Lemma~\ref{lm_Girsanov}) with density $\smash{\exp(\sqrt{\smash[b]{2d}}\rmY_{t_b}(m) + d (t-b))}$ to the process $\smash{(\rmY_s(m))_{s \in [0, t_b]}}$, we can rewrite $\E{\eqref{eq:pathbI}}$ as follows
\begin{align*}
& \int_{S_{\rmR, t}} e^{-\sqrt{\smash[b]{2d}} \chi(m)} t^{3/2} \E_{-\chi(m)}\Bigl[\one_{\{\sup_{s \in [0, t_b]} B_s \leq 0, \; \sup_{s\in [t/2, t_b]} B_s \leq \frkd_t + L\}} \smash{\bar{\frkF}}^{\lambda}_{A, L, b}\bigl(B_{t_b} - \frkd_t - L, \frkg^{\chi, m}_{t, b}\bigr)\Bigr] dm \;,
\end{align*}
where, under $\P_{- \chi(m)}$ the Brownian motion $(B_s)_{s \geq 0}$ is started from $-\chi(m)$, and recalling the definition \eqref{eq:defscrF} of the function $\bfF^{\lambda}_{k, x}$, the map $\smash{\smash{\bar{\frkF}}^{\lambda}_{A, L, b} : \R \times \CC(\R^d) \to \R}$ is given by
\begin{equation}
\label{eq:defbarFALb}
\begin{alignedat}{1}
&\smash{\bar{\frkF}}^{\lambda}_{A, L, b}(z, g) \eqdef e^{-\sqrt{\smash[b]{2d}} (z + L) + db} \E_{z} \Bigl[\one_{\{\rmY_b(0) \geq - A - L \}} \one_{\{\sup_{s \in [0, b]} \rmY_s(0) \leq 0\}} \\
& \hspace{30mm} \cdot \one_{\{\M_{0, b + 1, b}(\bar \Upsilon_{\! b, g}) + \rmY_b(0) < -A-L\}} \one_{\{\M_{0, b}(\bar \Upsilon_{\! b, g})  \leq \lambda\}} \bfF^{\lambda}_{b, -\rmY_b(0) - L}\bigl(\bar \Upsilon_{\! b, g}\bigr)\Bigr] \;.
\end{alignedat}
\end{equation}
where, under $\P_z$ the field $(\rmY_s(x))_{s \geq 0, x \in \R^d}$ has the same law of $(\rmY_s(x) + z)_{s \geq 0, x \in \R^d}$ under $\P$, and $(\frkg^{\chi, m}_{t, b}(x))_{x \in \R^d}$ is an independent continuous random field given by
\begin{equation*}
\frkg^{\chi, m}_{t, b}(x) \eqdef \int_0^{t_b} (1-\frkK(e^{s-t}x)) dB_{s} + \rmZ_{t_b}(e^{-b} x) - \sqrt{\smash[b]{2d}}\int_0^{t_b} (1 - \frkK(e^{s-t}x)) ds - \bigl(\chi(m+e^{-t}x) - \chi(m)\bigr) \;,
\end{equation*}
where we recall once again that $\rmZ_{t_b}$ is the field introduced in Definition~\ref{def:fieldsZ}. 

\begin{lemma}
\label{lm:bRegular}
For any $\lambda > 0$ and $A$, $L$, $b \geq 0$, consider the function $\smash{\smash{\bar{\frkF}}^{\lambda}_{A, L, b}: \R \times \CC(\R^d) \to \R}$ defined in \eqref{eq:defbarFALb}. Then, there exist two functions $\frkh: \R \to \R^{+}$ and $\smash{\frkF_{*}: \CC(\R^d) \to \R^{+}}$, possibly depending on the parameters $\lambda$, $A$, $L$, $b$, such that:
\begin{enumerate}[start=1,label={{{(\arabic*})}}]
\item \label{eq:bReg1} It holds that 
\begin{equation*}
\sup_{z \in \R} \frkh(z) < \infty \qquad \text{ and } \qquad \frkh(z) = O(e^z) \quad \text{ as } z \to -\infty \;.
\end{equation*}
\item \label{eq:bReg2} For any $z \in \R$ and $g \in C(\R^d)$ it holds that
\begin{equation*}
\smash{\bar{\frkF}}^{\lambda}_{A, L, b}(z, g) \leq \frkh(z) \frkF_{*}(g) \;.
\end{equation*}
\item \label{eq:bReg3} There exists $c > 0$ such that for any $\delta \in (0, 1)$ and $g \in \CC(\R^d)$ satisfying  
\begin{equation*}
\sup_{x, y \in \R^d,\, \abs{x - y} \leq e^b\delta}\abs{g(x) - g(y)} \leq 1/4 \;,
\end{equation*} 
it holds that 
\begin{equation*}
\frkF_{*}(g) \leq c \delta^{-10} \;.
\end{equation*}
\item \label{eq:bReg4} There exists $c > 0$ such that for any $g_1$, $g_2 \in \CC(\R^d)$ satisfying  
\begin{equation*}
\|g_1 - g_2\|_{\infty} \eqdef \sup_{x \in \R^d}\abs{g_1(x) - g_2(x)} \leq 1/8 \;,
\end{equation*} 
it holds that 
\begin{equation*}
\bigl|\smash{\bar{\frkF}}^{\lambda}_{A, L, b}(z, g_1) - \smash{\bar{\frkF}}^{\lambda}_{A, L, b}(z, g_2)\bigr| \leq c \|g_1 - g_2\|_{\infty}^{1/8} \frkh(z) \frkF_{*}(g_1) \;.
\end{equation*}
\end{enumerate}
\end{lemma}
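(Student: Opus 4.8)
\textbf{Proof strategy for Lemma~\ref{lm:bRegular}.}
The plan is to construct $\frkh$ and $\frkF_*$ by ``splitting off'' the dependence on the starting height $z$ from the dependence on the field $g$, and then to estimate each of the two factors separately. Recalling the definition \eqref{eq:defbarFALb}, the prefactor $e^{-\sqrt{\smash[b]{2d}}(z+L) + db}$ is already of the form $C(L,b)\, e^{-\sqrt{\smash[b]{2d}} z}$, and the expectation $\E_z[\cdots]$ is taken under a measure where $(\rmY_s(x))_{s,x}$ is shifted by $z$; this shift enters only through the indicator functions. First I would use the constraint $\one_{\{\rmY_b(0) \geq -A-L\}}$ together with $\one_{\{\sup_{s\in[0,b]}\rmY_s(0)\le 0\}}$ to see that the $z$-shifted random walk is forced to live in a band of width $O(A+L)$ at its endpoint while staying negative before; this is a standard ballot-type constraint whose probability is $O(e^{\sqrt{\smash[b]{2d}} z})$ for $z\to-\infty$ (because the shifted walk starts at $z<0$ and must return to near $0$, paying the exponential cost via the Cameron--Martin tilt already incorporated) and is bounded by a constant for $z$ in a compact set, and trivially small for $z \to +\infty$ since then $\rmY_b(0)=B_b+z$ cannot stay below $0$. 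Factoring out the worst-case behaviour over $z$ of this indicator expectation (integrated against the Gaussian law of the increments of $\rmY$ on $[0,b]$, which is a fixed finite-dimensional Gaussian measure since $b$ is fixed) defines $\frkh(z)$, and gives \ref{eq:bReg1}. What remains inside is then a functional of the field $\bar\Upsilon_{\!b,g}$ alone (after conditioning on the value $\rmY_b(0)$, whose law has a bounded density), which we call $\frkF_*(g)$: concretely $\frkF_*(g)$ is a supremum/average over the admissible value of $\rmY_b(0)\in[-A-L,0]$ of $\E[\bfF^\lambda_{b,\cdot}(\bar\Upsilon_{\!b,g})\,\one_{\{\M_{0,b}(\bar\Upsilon_{\!b,g})\le\lambda\}}\one_{\{\M_{0,b+1,b}(\bar\Upsilon_{\!b,g})+\rmY_b(0)<-A-L\}}]$ over the remaining (Gaussian) randomness of $\bar\Upsilon_{\!b}$. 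By construction \ref{eq:bReg2} holds.

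For \ref{eq:bReg3}, the only potentially unbounded ingredient in $\frkF_*(g)$ is the reciprocal $\abs{\D^\lambda_{b,0}(\bar\Upsilon_{\!b,g})}^{-1}$ appearing in $\bfF^\lambda_{b,x}$ via \eqref{eq:defscrF} (the numerator $1-\exp(-\cdots)$ is bounded by $1$, and the indicator functions are bounded). Here I would use the regularity hypothesis on $g$ precisely as in the proof of Lemma~\ref{lm:tailsDgUpsilon} and Remark~\ref{rem:implicationSmallD}: a modulus-of-continuity control $\abs{g(x)-g(y)}\le 1/4$ on the scale $e^b\delta$ — combined with the fact that $\bar\Upsilon_{\!b}$ itself, being a smooth Gaussian field on the fixed scale $e^b$, has a modulus of continuity that is (deterministically, on an event of controllable probability, or more simply after also throwing in a high-probability bound on its second derivatives) at worst polynomial — forces that whenever the maximum of $\bar\Upsilon_{\!b,g}$ is attained somewhere in $\B_b$ there is a ball of radius of order $\delta$ around that maximum on which $\bar\Upsilon_{\!b,g}$ stays within $\lambda$ of its max. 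Hence $\abs{\D^\lambda_{b,0}(\bar\Upsilon_{\!b,g})}\gtrsim \delta^d$, so the reciprocal is $\lesssim \delta^{-d}\le \delta^{-10}$ for $d\le 10$; in general dimension one simply replaces the exponent $10$ by $\max(10,d)$, but since the statement fixes $10$ I would note that the smoothness of $\bar\Upsilon_{\!b}$ lets one in fact get $\delta^{-d}$ and absorb any dimensional constant, or—more carefully—argue that because $\bar\Upsilon_{\!b}$ is $C^\infty$ one can take the radius of the level ball proportional to $\delta$ for the $g$-part and to a fixed quantity for the $\bar\Upsilon_{\!b}$-part, so the binding constraint is genuinely $\delta^{-d}$. (I will keep the exponent as stated; the key point is polynomial dependence on $\delta^{-1}$.)

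For \ref{eq:bReg4}, the claim is Hölder continuity of $\smash{\bar{\frkF}}^\lambda_{A,L,b}(z,\cdot)$ in $g$ with respect to the sup norm, quantitatively dominated by $\frkh(z)\frkF_*(g_1)$. I would write the difference $\smash{\bar{\frkF}}^\lambda_{A,L,b}(z,g_1)-\smash{\bar{\frkF}}^\lambda_{A,L,b}(z,g_2)$ as an expectation of a difference of products of (numerator)$\times$(indicators)$/$(volume), and bound it term by term. The numerator $1-\exp(-e^{-\gamma x}\int_{\B_b}\rmF_\gamma(y)e^{\gamma\Phi(y)}dy)$ is globally Lipschitz in $\Phi$ uniformly on the relevant events (on which $\Phi$ is bounded above, so the integrand is bounded), giving a contribution $\lesssim \|g_1-g_2\|_\infty$. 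The indicator functions $\one_{\{\M_{0,b}(\bar\Upsilon_{\!b,g})\le\lambda\}}$ etc.\ are not continuous, but their jump set is a level set of a supremum of a Gaussian-plus-$g$ field over a compact region; since $\lambda\in\Lambda$ (via Lemma~\ref{lm:Countable} and Remark~\ref{rm:redTechSmallk}) and the relevant field has a density for its supremum that is bounded on compacts (cf.\ the use of \cite[Theorem~3.1]{SupDens} in the proof of Lemma~\ref{lm:entroDelta}), one gets that perturbing $g$ by $\|g_1-g_2\|_\infty=:\eps_0$ changes the indicator only on an event of probability $\lesssim \eps_0$; the exponent $1/8$ is built in with room to spare to absorb a crude $\eps_0$ vs.\ $\eps_0^{1/8}$ comparison and to handle the $\abs{\D^\lambda}^{-1}$ weight via Hölder's inequality using the bounded moment $\E[\abs{\D^\lambda}^{-(1+\sigma)}]<\infty$ supplied by Lemma~\ref{lm:tailsDgUpsilon}. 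Collecting these, the difference is $\lesssim \|g_1-g_2\|_\infty^{1/8}$ times the $z$-prefactor $\frkh(z)$ times a bound of the type $\frkF_*(g_1)$ (using $\|g_1-g_2\|_\infty\le 1/8$ to compare $\frkF_*(g_2)$ with $\frkF_*(g_1)$ up to a constant).

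The main obstacle is \ref{eq:bReg4}: controlling the discontinuity of the hard indicator constraints $\{\M_{0,b}(\bar\Upsilon_{\!b,g})\le\lambda\}$ and $\{\M_{0,b+1,b}(\bar\Upsilon_{\!b,g})+\rmY_b(0)<-A-L\}$ under sup-norm perturbations of $g$, simultaneously with the singular weight $\abs{\D^\lambda_{b,0}(\bar\Upsilon_{\!b,g})}^{-1}$, which is itself discontinuous and only has finite moments rather than being bounded. The clean way through is to first truncate $\abs{\D^\lambda_{b,0}(\cdot)}^{-1}$ at a level $N$, prove the Hölder estimate for the truncated functional with an $N$-dependent constant, and then remove the truncation using the uniform-in-$g$ moment bound of Lemma~\ref{lm:tailsDgUpsilon} together with \ref{as:GG2}–\ref{as:GG3} (which the fields $\frkg^z_b$ of interest satisfy uniformly in $z$), optimising over $N$ to land on the exponent $1/8$; I would flag that the specific exponents $10$ and $1/8$ are not sharp and are chosen only to have concrete numbers, the real content being ``polynomial in $\delta^{-1}$'' and ``Hölder in $\|g_1-g_2\|_\infty$''.
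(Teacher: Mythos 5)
Your high‑level architecture matches the paper's: isolate the $z$–dependence in a scalar prefactor $\frkh(z)$, isolate the $g$–dependence in a functional $\frkF_*(g)$ that carries a moment of $|\D^\lambda_{b,0}|^{-1}$, verify \ref{eq:bReg3} by a modulus‑of‑continuity argument for the Gaussian part of $\bar\Upsilon_{\!b,g}$, and verify \ref{eq:bReg4} by controlling the jump sets of the indicators through density bounds for suprema and then absorbing the singular volume factor via H\"older. Where you diverge, and where there is a real gap, is in the actual definition of $\frkF_*$ and why it closes both \ref{eq:bReg2} and \ref{eq:bReg4} simultaneously.

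The paper takes
\begin{equation*}
\frkh(z) = e^{-\sqrt{\smash[b]{2d}}(z+L)}\,\P_{z+A+L}\bigl(\rmY_b(0)\ge 0\bigr)^{1/2}\;,\qquad
\frkF_*(g) = \E\Biggl[\frac{\one_{\{\M_{0,b}(\bar\Upsilon_{\!b,g})\le 2\}}}{e^{-8db}\,\abs{\D_g(1/2)}^{8}\wedge 1}\Biggr]^{1/4}\;,
\end{equation*}
where $\D_g(\eta)\eqdef\{y\in\B_b:\bar\Upsilon_{\!b,g}(y)\ge\M_{0,b}(\bar\Upsilon_{\!b,g})-\eta\}$. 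The exact exponents $8$ and $1/4$, the half‑level $\D_g(1/2)$, and the square root in $\frkh$ are co‑designed with the proof of \ref{eq:bReg4}: \ref{eq:bReg2} then becomes a single Cauchy--Schwarz / H\"older step, and in \ref{eq:bReg4} one decomposes the difference into four terms (difference of the two $\M$–indicators, difference of the two $\M_{0,b+1,b}$–indicators, difference of reciprocals $|\D_{g_1}(1)|^{-1}-|\D_{g_2}(1)|^{-1}$, and difference of the exponential), each of which is bounded by $\frkh(z)\frkF_*(g_1)$ times $\|g_1-g_2\|_\infty^{1/8}$ using (i) $\min\{|\D_{g_1}(1)|,|\D_{g_2}(1)|\}\ge|\D_{g_1}(1/2)|$ when $\|g_1-g_2\|_\infty\le 1/8$, (ii) density bounds for the supremum à la \cite[Theorem~3.1]{SupDens}, and (iii) for the reciprocal term, the pointwise bound $|\,|\D_{g_1}(1)|^{-1}-|\D_{g_2}(1)|^{-1}|\le|\D_{g_1}(1/2)|^{-2}\int\one_{\{\M-\bar\Upsilon_{\!b,g_1}(y)\in[1-\delta,1+\delta]\}}\,dy$, which forces you to carry $|\D_{g_1}(1/2)|^{-2}$ and another probability factor, hence the need for a high enough power inside $\frkF_*$. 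Your proposal defines $\frkF_*(g)$ as a supremum over admissible values of $\rmY_b(0)$ of a conditional expectation; that does not give a clean H\"older‑type factorisation for \ref{eq:bReg2}, and more seriously, it leaves you no room in \ref{eq:bReg4} to absorb the extra $|\D_{g_1}|^{-2}$ and the indicator‑difference probabilities. Your fallback (``truncate the weight at level $N$, prove the estimate with an $N$‑dependent constant, remove the truncation using the moment bound of Lemma~\ref{lm:tailsDgUpsilon}'') faces two obstacles: Lemma~\ref{lm:tailsDgUpsilon} gives $(1+\sigma)$‑moments with a small $\sigma$, so you cannot freely iterate H\"older as the paper does with its built‑in $8$th power; and the target bound must produce the multiplicative factor $\frkF_*(g_1)$, which a naive truncate–remove argument does not automatically generate. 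So the missing idea is the specific power structure in $\frkF_*$ — chosen so that a single chain of Cauchy--Schwarz inequalities both proves \ref{eq:bReg2} and absorbs all the nuisance factors in \ref{eq:bReg4} — and without it the argument does not close.

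For \ref{eq:bReg3} your reasoning is the same as the paper's (modulus‑of‑continuity of $\rmY_b$ on scale $e^b\delta$ $\Rightarrow$ $|\D_g(1/2)|\gtrsim (e^b\delta)^d$); the paper organises it as a dyadic sum with Gaussian tails from \cite[Equation~(3.10)]{Madaule_Max}, arriving at $\frkF_*(g)^4\lesssim\delta^{-10d}$, and your observation that the exponent $10$ is not sharp and only polynomial dependence is needed is correct.
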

\begin{proof}
	The proof can be found in Appendix~\ref{ap:proofbReg}.
\end{proof}

We are now in position to conclude the proof of Lemma~\ref{lm:techMainRed1}, which as we see below follows by \cite[Theorem~5.6]{Madaule_Max}.

\begin{proof}[Proof of Lemma~\ref{lm:techMainRed1}]
Thanks to Lemma~\ref{lm:bRegular}, we have that for any $\lambda > 0$ and $A$, $L$, $b \geq 0$, the function $\smash{\smash{\bar{\frkF}}^{\lambda}_{A, L, b}: \R \times \CC(\R^d) \to \R}$ defined in \eqref{eq:defbarFALb} is ``$b$-regular'' in the sense of \cite{Madaule_Max}. Hence, thanks to \cite[Theorem~5.6]{Madaule_Max}, we have that for any $\eps >0$, for any $s \geq 0$ sufficiently large satisfying $(e^s + 1)(\rmR+1)^{-1} \in \N$, there exists $T \geq 0$ sufficiently large such that for any $t \geq T$ and $\chi:[0, \rmR]^d \to \R$ satisfying the conditions in \eqref{eq:condChi}, it holds that
\begin{equation*}
\lvert\E_{\eqref{eq:pathbI}}  - \bar{\bfC}^{\lambda}_{A, L, b} \, \rho_{\chi}\bigl([0, \rmR]^d\bigr) \rvert \leq \eps \rho_{\chi}\bigl([0, \rmR]^d\bigr) \;,
\end{equation*}
with the constant $\bar{\bfC}^{\lambda}_{A, L, b}$ defined as
\begin{equation*}
\bar{\bfC}^{\lambda}_{A, L, b} \eqdef \alpha \int_{0}^{\infty} z \E\bigl[\smash{\bar{\frkF}}^{\lambda}_{A, L, b}\bigl(-z, \frkg^z_{b}\bigr)\bigr] dz \;,
\end{equation*}
where we recall once again that $\alpha = \sqrt{\smash[b]{2/\pi}}$ and the field $\frkg^z_{b}$ is defined in \eqref{eq:fieldGzb}. 
Hence, in order to conclude, it suffices to show that for all $z \geq 0$ and $g \in \CC(\R^d)$ it holds that  $\smash{\smash{\bar{\frkF}}^{\lambda}_{A, L, b}(-z, g) = \frkF^{\lambda}_{A, L, b}(z, g)}$. 
To this end, we begin by observing that, thanks to Lemma~\ref{lm:decoPointsStar}, and recalling Definition~\ref{def:fieldsZ} as well as \eqref{eq:frkgb}, for all $y \in \R^d$, it holds that
\begin{align*}
\bar \Upsilon_{\! b}(y) 
& = - \int_0^b \bigl(1 - \frkK(e^{s-b} y)\bigr) d \rmY_{s}(0) + \bar\rmZ^0_b(e^{-b} y) - \sqrt{\smash[b]{2d}}  \int_0^b \bigl(1 - \frkK(e^{s-b} y)\bigr) ds \\
& \eqlaw - \int_0^b \bigl(1 - \frkK(e^{-s} y)\bigr) d \rmY_{b-s}(0) + \rmZ_b(y) - \sqrt{\smash[b]{2d}}  \frka_b(y) \;.
\end{align*}
Hence, by applying the Cameron--Martin theorem (see Lemma~\ref{lm_Girsanov}) with density $\smash{\exp(\sqrt{\smash[b]{2d}}\rmY_b(0) + db)}$ to the process $\smash{(\rmY_s(0))_{s \in [0,b]}}$, using the equality in law $\smash{(B_s)_{s \in [0,b]} \eqlaw (-B_s)_{s \in [0, b]}}$, using the fact that $\P_z(B_b \in dx) = (2 \pi b)^{-1/2} e^{-(x-z)^2/(2b)} dx$, and finally the fact that a time reversed Brownian bridge is still a Brownian bridge but with starting and final point swapped, we obtain that 
\begin{equation*}
\begin{alignedat}{1}
\smash{\bar{\frkF}}^{\lambda}_{A, L, b}(-z, g) 
&  = \frac{1}{\sqrt{2 \pi b}} \int_{0}^{A+L} e^{\sqrt{\smash[b]{2d}}(x-L)} e^{-\frac{(x-z)^2}{2b}}  \E_{x, z, b} \Bigl[\bfF^{\lambda}_{b, x - L}\bigl(\Upsilon_{\! b, g}\bigr) \\
& \hspace{20mm} \cdot \one_{\{\inf_{s \in [0, b]} B_s \geq 0\}} \one_{\{\M_{0, b + 1, b}(\Upsilon_{\! b, g})  < x- (A+L)\}} \one_{\{\M_{0, b}(\Upsilon_{\! b, g})  \leq \lambda\}} \Bigr] dx \;.
\end{alignedat}
\end{equation*}
Therefore, the conclusion follows since the right-hand side of the previous display coincides with $\smash{\frkF^{\lambda}_{A, L, b}(z, g)}$ as desired. 
\end{proof}

%%%%%%%%%%%%%%%%%%%%%%%%%%%%%%%%%%%%%%%%%%%%%
\subsection{Proof of Lemma~\ref{lm:techMainRed2}}
\label{subsec:redStepConstant}
The main goal of this section is to prove Lemma~\ref{lm:techMainRed2}. As in the previous case, the proof follows a sequence of reduction steps, allowing us to transition from $\smash{\bfC^{\lambda}_{A, L, b}}$ to $\smash{\bfC^{\lambda, \new}_{A, L, k, b}}$. Each of these reduction steps forms a lemma within this section. Recalling the definition \eqref{eq:defCALb} of the constant $\smash{\bfC^{\lambda}_{A, L, b}}$, it is convenient to introduce the function $\smash{\bfG^{\lambda}_{A, L, b}: \CC(\R^d) \to \R}$ given by
\begin{equation}
\label{eq:defF'ALb}
\bfG^{\lambda}_{A, L, b}(g) \eqdef \alpha \int_{0}^{\infty} z \frkF^{\lambda}_{A, L, b}\bigl(z, g\bigr) dz \;.
\end{equation}
By plugging the expression \eqref{eq:asympFALb0} for the function $\smash{\frkF^{\lambda}_{A, L, b}}$ into the right-hand side of \eqref{eq:defF'ALb}, and doing a change of variables, we get that the expectation of $\smash{\bfG^{\lambda}_{A, L, b}(\frkg^z_{b})}$ is equal to
\begin{equation} 
\label{eq:expGALb}
\begin{alignedat}{1}
\bfG^{\lambda}_{\eqref{eq:expGALb}} & \eqdef \alpha \int_{0}^{A+L} e^{\sqrt{\smash[b]{2d}} (x - L)} \int_{-x}^{\infty} \frac{e^{-\f{u^2}{2b}}}{\sqrt{2 \pi b}} (u+x) \E_{0, u, b} \Bigl[\bfF^{\lambda}_{b, x - L}(\Upsilon_{\! b}^{ u+x})   \\
& \hspace{10mm}\cdot \one_{\{\M_{0, b + 1, b}(\Upsilon_{\! b}^{ u+x}) \leq x-(A+L)\}} \one_{\{\inf_{s \in [0, b]} B_{s} \geq - x\}} \one_{\{\M_{0, b}(\Upsilon_{\! b}^{ u+x}) \leq \lambda\}}\Bigr] du dx \;, 
\end{alignedat}
\end{equation}
where we recall that $\smash{\Upsilon_{\! b}^{z} = \Upsilon_{\! b} + \frkg^z_{b}}$, where $\smash{\frkg^z_{b}}$ is the field introduced in \eqref{eq:fieldGzb}. 
As before, with a slight abuse of our previous notation, we have absorbed the expectation with respect to the field $\frkg^{u+x}_{b}$ into the expectation $\E_{0, u, b}$.
In what follows, we will implicitly use the fact \dash previously noted in the proof of Proposition~\ref{pr:joint} \dash that the field $\smash{\frkg^z_{b}}$ satisfies \ref{as:GG1} \dash \ref{as:GG3} uniformly over all $z \geq 0$. In particular, all the results derived in Section~\ref{sec:cluster} remain valid for the field $\Upsilon_{\! b}^{z}$ uniformly over all $z \geq 0$.

The goal is now to simplify the quantity $\smash{\bfG^{\lambda}_{\eqref{eq:expGALb}}}$ into a more manageable form through a sequence of reduction steps.
We begin with Lemma~\ref{lm:ExtremeIntBB}, where we show that the integral over $u$ is concentrated around $\sqrt{b}$. 
Next, in Lemma~\ref{lm:truncOK}, we establish that $\smash{\bfF^{\lambda}_{b, x-L}}$ can be replaced by its truncation at a large values. 
Afterward, in Lemma~\ref{lm:removIndOK}, we show that the two first two indicator functions on the second line in \eqref{eq:expGALb} can be removed. 
Finally, in Lemma~\ref{lm:reduceSizeOK}, we show that $\smash{\bfF^{\lambda}_{b, x-L}}$ can be replaced by $\smash{\bfF^{\lambda}_{k, x-L}}$ for some $k \ll b$.

\subsubsection{Reduction steps}
\label{sub:prooftechLemmaId}
We can now begin reducing $\smash{\bfG^{\lambda}_{\eqref{eq:expGALb}}}$ to a more manageable quantity. To this end, we start by defining
\begin{equation}
\label{eq:expGALb1}
\begin{alignedat}{1}
\bfG^{\lambda}_{\eqref{eq:expGALb1}} & \eqdef \alpha \int_{0}^{A+L} e^{\sqrt{\smash[b]{2d}} (x - L)} \int_{b^{1/4}}^{b^{3/4}} \frac{e^{-\f{u^2}{2b}}}{\sqrt{2 \pi b}} (u+x) \E_{0, u, b} \Bigl[\bfF^{\lambda}_{b, x-L}\bigl(\Upsilon_{\! b}^{ u+x}\bigr)  \\
& \hspace{10mm}\cdot \one_{\{\M_{0, b + 1, b}(\Upsilon_{\! b}^{ u+x}) \leq x-(A+L)\}} \one_{\{\inf_{s \in [0, b]} B_{s} \geq -x\}} \one_{\{\M_{0, b}(\Upsilon_{\! b}^{ u+x}) \leq \lambda\}}\Bigr] du dx \;,
\end{alignedat}
\end{equation}
where we emphasise that the only difference between $\bfG^{\lambda}_{\eqref{eq:expGALb1}}$ and $\bfG^{\lambda}_{\eqref{eq:expGALb}}$ is the domain of integration in the $u$ variable. 
\begin{lemma}
\label{lm:ExtremeIntBB}
For any $\lambda > 0$, $\eps > 0$, and $A$, $L \geq 0$, there exists $b \geq 0$ sufficiently large such that 
\begin{equation*}
\abs{\bfG^{\lambda}_{\eqref{eq:expGALb1}} - \bfG^{\lambda}_{\eqref{eq:expGALb}}} \leq \eps \;.
\end{equation*}
\end{lemma}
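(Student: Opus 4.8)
The plan is to show that restricting the $u$-integral from $(-x,\infty)$ to $[b^{1/4},b^{3/4}]$ changes the quantity $\bfG^{\lambda}_{\eqref{eq:expGALb}}$ by at most $\eps$ for $b$ large. Since the integrand is non-negative, it suffices to bound the contribution of the complementary regions $u \in (-x, b^{1/4})$ and $u \in (b^{3/4},\infty)$, uniformly in $x \in [0,A+L]$ (a compact interval), and then integrate the resulting bound against the finite measure $\alpha e^{\sqrt{\smash[b]{2d}}(x-L)}dx$ on $[0,A+L]$.

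\textbf{The two tail regions.} First I would handle the region $u \in (-x, b^{1/4})$. Here, after dropping the first two indicator functions (which only decreases a non-negative integrand — though since we want an upper bound on the \emph{omitted} mass, dropping them is exactly what we want), the inner expectation is at most $\E_{0, u, b}[\bfF^{\lambda}_{b, x-L}(\Upsilon_{\! b}^{u+x}) \one_{\{\M_{0, b}(\Upsilon_{\! b}^{u+x}) \leq \lambda\}}]$. Now $\bfF^{\lambda}_{b, x-L}(\Phi) = (1-\exp(-e^{-\gamma(x-L)}\int_{\B_b}\rmF_\gamma e^{\gamma\Phi}))/\abs{\D^{\lambda}_{b,0}(\Phi)}$ is bounded above by $\abs{\D^{\lambda}_{0,b}(\Phi)}^{-1}$ times a factor controlled by the exponential moments of $\rmW_i$ via \ref{hp:W4}; so by Lemma~\ref{lm:tailsDgUpsilonSmallU} (which applies uniformly over the fields $\frkg^z_b$ satisfying \ref{as:GG1}--\ref{as:GG3}, with $u < b^{1/4}$ and noting $u+x$ plays the role of the perturbation parameter only through $\frkg$) the inner expectation is $\lesssim b^{-1/2}$. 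Combined with the factor $(u+x)/\sqrt{2\pi b} \cdot e^{-u^2/(2b)} \lesssim (u+x) b^{-1/2}$ and integrating over $u \in (-x, b^{1/4})$ gives a contribution $\lesssim b^{-1/2}\cdot (b^{1/4})^2 \cdot b^{-1/2} = b^{-1/2}$, which vanishes as $b \to \infty$. For the region $u > b^{3/4}$, the Gaussian weight $e^{-u^2/(2b)}$ provides the decay: here I would bound $\bfF^{\lambda}_{b,x-L}$ crudely using $\abs{\D^\lambda_{0,b}(\Upsilon_b^{u+x})}^{-1}$ and invoke Lemma~\ref{lm:tailsDgUpsilonLargeU} to control $\E_{0,u,b}[\abs{\D^\lambda_{0,b}(\Upsilon^{u+x}_b)}^{-1}]$ by a polynomial in $u/b$ (namely $b^{d^2}$ for $u \in [b^{3/4}, b^{2d}]$ and $(u/b)^{2d}$ for $u > b^{2d}$); multiplying by $(u+x) e^{-u^2/(2b)}/\sqrt{b}$ and integrating over $u > b^{3/4}$, the Gaussian factor beats every polynomial, so this contribution is super-polynomially small in $b$ and in particular $\to 0$.

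\textbf{Assembling.} Adding the two tail bounds gives $\abs{\bfG^{\lambda}_{\eqref{eq:expGALb}} - \bfG^{\lambda}_{\eqref{eq:expGALb1}}} \lesssim_{A,L} b^{-1/2} + (\text{super-poly small})$, uniformly in $x \in [0, A+L]$ after integrating that compact variable out; hence the difference is $\le \eps$ once $b$ is large enough, which is exactly the claim.

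\textbf{Main obstacle.} The one delicate point is making sure the tail estimates of Section~\ref{sub:tailsD} (Lemmas~\ref{lm:tailsDgUpsilonSmallU} and~\ref{lm:tailsDgUpsilonLargeU}) really do apply to the field $\Upsilon_b^{u+x} = \Upsilon_b + \frkg^{u+x}_b$ \emph{uniformly in the parameter} $u+x \ge 0$ — i.e.\ that the constants in those lemmas can be chosen independently of which admissible perturbation $\frkg^z_b$ one plugs in. This was flagged in the proof of Proposition~\ref{pr:joint}: the Gaussian tail bound shows $\frkg^z_b$ satisfies \ref{as:GG1}--\ref{as:GG3} with constants uniform in $z$, and the proofs of those lemmas only use $\frkg_b$ through \ref{as:GG1}--\ref{as:GG3}, so the uniformity is genuine; but it needs to be invoked carefully. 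A secondary bookkeeping nuisance is controlling $\bfF^{\lambda}_{b,x-L}$ by $\abs{\D^{\lambda}_{0,b}}^{-1}$ up to a constant depending on $\sup_{y}\sum_i \E[e^{\gamma\rmW_i(y)}]$, which is finite by \ref{hp:W4} since $\gamma > \sqrt{\smash[b]{2d}}$ — the bound $1-e^{-a} \le a$ reduces $\bfF^\lambda_{b,x-L}(\Phi) \le e^{-\gamma(x-L)}\int_{\B_b}\rmF_\gamma e^{\gamma\Phi}/\abs{\D^\lambda_{0,b}(\Phi)}$, and then one must absorb the $\int \rmF_\gamma e^{\gamma \Phi}$ factor together with the $e^{\sqrt{2d}(x-L)}$ weight; a slightly cleaner route, avoiding this, is to bound $1 - e^{-(\cdots)} \le 1$ directly, giving $\bfF^\lambda_{b,x-L}(\Phi) \le \abs{\D^\lambda_{0,b}(\Phi)}^{-1}$ outright, which is all that the tail lemmas need.
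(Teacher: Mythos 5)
Your proof is correct and follows the same route as the paper: split off the two $u$-regions $(-x, b^{1/4})$ and $(b^{3/4}, \infty)$, drop the first two indicators, bound $\bfF^{\lambda}_{b,x-L}(\Phi)$ by $\abs{\D^{\lambda}_{0,b}(\Phi)}^{-1}$, then invoke Lemmas~\ref{lm:tailsDgUpsilonSmallU} and~\ref{lm:tailsDgUpsilonLargeU} respectively. The paper uses exactly the same two tail lemmas, and your observation that $1-e^{-a}\le 1$ makes the $\rmF_\gamma$ factor and \ref{hp:W4} unnecessary here is also what the paper does implicitly.
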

\begin{proof}
We begin by observing that the quantity $\smash{\abs{\bfG^{\lambda}_{\eqref{eq:expGALb1}} - \bfG^{\lambda}_{\eqref{eq:expGALb}}}}$ is bounded above by a multiple of
\begin{align}
& \int_{0}^{A+L} e^{\sqrt{\smash[b]{2d}} (x - L)} \int_{-x}^{b^{1/4}}\frac{e^{-\f{u^2}{2b}}}{\sqrt{2 \pi b}} (u+x) \E_{0, u, b} \Bigl[\bfF^{\lambda}_{b, x-L}\bigl(\Upsilon_{\! b}^{ u+x}\bigr) \one_{\{\M_{0, b}(\Upsilon_{\! b}^{ u+x}) \leq \lambda\}}\Bigr] du dx \label{eq:techIdTring1} \\	
& \hspace{1mm} + \int_{0}^{A+L} e^{\sqrt{\smash[b]{2d}} (x - L)} \int_{b^{3/4}}^{\infty}\frac{e^{-\f{u^2}{2b}}}{\sqrt{2 \pi b}} (u+x) \E_{0, u, b} \Bigl[\bfF^{\lambda}_{b, x-L}\bigl(\Upsilon_{\! b}^{ u+x}\bigr) \one_{\{\M_{0, b}(\Upsilon_{\! b}^{ u+x}) \leq \lambda\}}\Bigr] du dx \label{eq:techIdTring2}\;.
\end{align}
Recalling the definition \eqref{eq:defscrF}, we have that
\begin{equation*}
\bfF^{\lambda}_{b, x-L}\bigl(\Upsilon_{\! b}^{ u+x}\bigr) \leq \abs{\D^{\lambda}_{b, 0}(\Upsilon_{\! b}^{ u+x})}^{-1} \;.
\end{equation*}
By using the above estimate, we proceed to bound \eqref{eq:techIdTring1} and \eqref{eq:techIdTring2} separately.
Starting from \eqref{eq:techIdTring1}, using Lemma~\ref{lm:tailsDgUpsilonSmallU}, we obtain that 
\begin{equation*}
\eqref{eq:techIdTring1} \lesssim b^{-\f12} \int_0^{A+L} e^{\sqrt{\smash[b]{2d}} (x - L)} (b^{1/4} + x)  dx \;,
\end{equation*}
and the quantity on the right-hand side can be made arbitrarily small by taking $b \geq 0$ sufficiently large.
Regarding \eqref{eq:techIdTring2}, thanks to Lemma~\ref{lm:tailsDgUpsilonLargeU}, we obtain that  
\begin{equation*}
\eqref{eq:techIdTring2} \lesssim \int_0^{A+L}e^{\sqrt{\smash[b]{2d}} (x - L)} \int_{b^{3/4}}^{\infty} \frac{e^{-\f{u^2}{2b}}}{\sqrt{2 \pi b}} (u+x) (b^{d^2} + (u/b)^{2d}) du dx \;,
\end{equation*}
and, as before, the quantity on the right-hand side can be made arbitrarily small by taking $b \geq 0$ sufficiently large. Hence, the claim follows.
\end{proof}

Thanks to the previous lemma, from now on, we can focus on $\smash{\bfG^{\lambda}_{\eqref{eq:expGALb1}}}$ instead of $\smash{\bfG^{\lambda}_{\eqref{eq:expGALb}}}$. We now address the fact that the quantity $\smash{\bfF^{\lambda}_{b, x-L}(\Upsilon_{\! b}^{ u+x})}$ is not a priori bounded.
The approach to overcoming this issue is quite straightforward. Specifically, recalling \eqref{eq:defTruncationLargeL}, we show that replacing $\smash{\bfF^{\lambda}_{b, x-L}(\Upsilon_{\! b}^{ u+x})}$ in $\smash{\bfG^{\lambda}_{\eqref{eq:expGALb1}}}$ with $\smash{\bfF^{\lambda}_{b, x-L, L}(\Upsilon_{\! b}^{ u+x})}$ introduces only a negligible error, provided that $L$ is sufficiently large. 
More precisely, we consider
\begin{equation}
\label{eq:expGALb3}
\begin{alignedat}{1}
\bfG^{\lambda}_{\eqref{eq:expGALb3}} & \eqdef \alpha \int_{L/2}^{A+L} e^{\sqrt{\smash[b]{2d}} (x - L)} \int_{b^{1/4}}^{b^{3/4}} \frac{e^{-\f{u^2}{2b}}}{\sqrt{2 \pi b}} (u+x) \E_{0, u, b} \Bigl[\bfF^{\lambda}_{b, x-L, L}(\Upsilon_{\! b}^{ u+x}) \\
& \hspace{10mm}\cdot \one_{\{\M_{0, b + 1, b}(\Upsilon_{\! b}^{ u+x}) \leq x-(A+L)\}} \one_{\{\inf_{s \in [0, b]} B_{s} \geq - x\}} \one_{\{\M_{0, b}(\Upsilon_{\! b}^{ u+x}) \leq \lambda\}}\Bigr] du dx \;,
\end{alignedat}
\end{equation} 
and we claim the following result.
\begin{lemma}
\label{lm:truncOK}
For any $\lambda > 0$, $\eps > 0$ and $A \geq 0$, there exist $L \geq 0$ sufficiently large, and $b \geq 0$ sufficiently large such that 
\begin{equation*}
\abs{\bfG^{\lambda}_{\eqref{eq:expGALb3}}  - \bfG^{\lambda}_{\eqref{eq:expGALb1}}} \leq \eps \;.
\end{equation*}
\end{lemma}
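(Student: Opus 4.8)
\textbf{Proof proposal for Lemma~\ref{lm:truncOK}.}
The plan is to estimate the difference $\abs{\bfG^{\lambda}_{\eqref{eq:expGALb3}} - \bfG^{\lambda}_{\eqref{eq:expGALb1}}}$ by splitting it into two pieces: the contribution of the region $x \in [0, L/2]$ that is present in $\bfG^{\lambda}_{\eqref{eq:expGALb1}}$ but absent from $\bfG^{\lambda}_{\eqref{eq:expGALb3}}$, and the contribution coming from replacing the untruncated function $\bfF^{\lambda}_{b, x-L}$ by the truncated function $\bfF^{\lambda}_{b, x-L, L}$ on the region $x \in [L/2, A+L]$. For the first piece, I would use the trivial bound $\bfF^{\lambda}_{b, x-L}(\Upsilon_{\! b}^{u+x}) \leq \abs{\D^{\lambda}_{b, 0}(\Upsilon_{\! b}^{u+x})}^{-1}$ together with Lemma~\ref{lm:tailsDgUpsilon} (and Remark~\ref{rm:TailEstBM}, noting that the field $\frkg^z_b$ satisfies \ref{as:GG1}\dash\ref{as:GG3} uniformly in $z$), which gives $\E_{0, u, b}[\abs{\D^{\lambda}_{0, b}(\Upsilon_{\! b}^{u+x})}^{-1}\one_{\{\M_{0, b}(\Upsilon_{\! b}^{u+x}) \leq \lambda\}}] \lesssim u/b$. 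Plugging this into the integral over $x \in [0, L/2]$, $u \in [b^{1/4}, b^{3/4}]$, the Gaussian factor $e^{-u^2/(2b)}/\sqrt{2\pi b}$ integrates against $(u+x)$ to something of order one, and the remaining $x$-integral is $\int_0^{L/2} e^{\sqrt{2d}(x - L)}(1 + x)\, dx \lesssim e^{-\sqrt{2d}L/2}$, which is small for $L$ large.

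For the second piece, the key observation is that $\bfF^{\lambda}_{b, x-L}(\Phi)$ and $\bfF^{\lambda}_{b, x-L, L}(\Phi)$ differ only when $\abs{\D^{\lambda}_{b, 0}(\Phi)}^{-1} > L$, and on that event their difference is bounded by $\abs{\D^{\lambda}_{b, 0}(\Phi)}^{-1}\one_{\{\abs{\D^{\lambda}_{b, 0}(\Phi)}^{-1} > L\}}$ (using $0 \le 1 - e^{-t} \le 1$). Thus the second piece is controlled by
\begin{equation*}
\int_{L/2}^{A+L} e^{\sqrt{2d}(x-L)} \int_{b^{1/4}}^{b^{3/4}} \frac{e^{-u^2/(2b)}}{\sqrt{2\pi b}}(u+x)\, \E_{0, u, b}\bigl[\abs{\D^{\lambda}_{0, b}(\Upsilon_{\! b}^{u+x})}^{-1}\one_{\{\abs{\D^{\lambda}_{0, b}(\Upsilon_{\! b}^{u+x})}^{-1} > L\}}\one_{\{\M_{0, b}(\Upsilon_{\! b}^{u+x}) \leq \lambda\}}\bigr]\, du\, dx\,.
\end{equation*}
Here I would invoke the stronger moment bound \eqref{eq:boundExpuOrdinary} from Lemma~\ref{lm:tailsDgUpsilon}: for some $\sigma \in (0, \delta)$ one has $\E_{0, u, b}[\abs{\D^{\lambda}_{0, b}(\Upsilon_{\! b}^{u+x})}^{-(1+\sigma)}\one_{\{\M_{0, b}(\Upsilon_{\! b}^{u+x}) \leq \lambda\}}] \lesssim u/b$, uniformly in $b$ large and $u \in [b^{1/4}, b^{3/4}]$. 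By Markov/Chebyshev applied to the truncation, $\E[\abs{\D}^{-1}\one_{\{\abs{\D}^{-1} > L\}}] \le L^{-\sigma}\E[\abs{\D}^{-(1+\sigma)}] \lesssim L^{-\sigma}\,u/b$. Substituting, the $u$-integral again contributes order one and the $x$-integral $\int_{L/2}^{A+L} e^{\sqrt{2d}(x-L)}(1+x)\,dx$ is bounded uniformly in $L$ (it is dominated by $\int_{-\infty}^{A} e^{\sqrt{2d}y}(1+\abs{y}+L)\,dy$, which is $O(L)$), so the whole piece is $O(L^{1-\sigma}) \cdot L^{-\sigma}$... more carefully, the bound is $\lesssim L^{-\sigma}\int_{L/2}^{A+L}e^{\sqrt{2d}(x-L)}(1+x)dx$; writing $x = L + y$ with $y \in [-L/2, A]$ gives $e^{\sqrt{2d}y}(1 + L + y)$, whose integral over $y \in (-\infty, A]$ is finite but grows like $L$. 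So the second piece is $\lesssim L^{1-\sigma}$ — which does not vanish. The fix: choose the truncation exponent more cleverly, or rather note that $1 - e^{-t} \le t$ so that $\abs{\bfF^{\lambda}_{b,x-L} - \bfF^{\lambda}_{b,x-L,L}}$ also carries a factor $e^{-\gamma(x-L)}\int_{\B_b}\rmF_\gamma(y)e^{\gamma\Upsilon_b^{u+x}(y)}dy$; combined with the $e^{\sqrt{2d}(x-L)}$ prefactor and using $\gamma > \sqrt{2d}$, the effective weight in $x$ becomes integrable with a gain, killing the spurious growth. I would carry this out so that after integrating in $x$ one is left with a genuine $O(L^{-\sigma})$, hence small for $L$ large.

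The main obstacle I anticipate is precisely this bookkeeping of the $x$-integral: the naive bound $1 - e^{-t} \le 1$ loses the exponential decay in $x$ needed to offset the $e^{\sqrt{2d}x}$ prefactor, so one must retain enough of the $1 - e^{-t} \le t$ estimate (or split according to whether $x$ is moderate or large, using the cruder bound only on a bounded range of $x$) to get an honest small quantity. Everything else is a routine application of the near-maximal level-set tail estimates of Lemma~\ref{lm:tailsDgUpsilon} together with the fact, already established in the proof of Proposition~\ref{pr:joint}, that $\frkg^z_b$ satisfies \ref{as:GG1}\dash\ref{as:GG3} uniformly in $z \ge 0$, so that those estimates apply to $\Upsilon_{\! b}^{z}$ with constants independent of $z$. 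I would first dispatch the $[0, L/2]$ region (Step~1), then handle the truncation difference on $[L/2, A+L]$ via the $(1+\sigma)$-moment bound (Step~2), and finally combine to conclude that for $L$ and then $b$ large enough the total is at most $\eps$.
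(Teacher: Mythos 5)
Your overall strategy matches the paper's: Step~1 reduces the $x$-integration range from $[0,A+L]$ to $[L/2,A+L]$ using the first-moment bound from Lemma~\ref{lm:tailsDgUpsilon}, and Step~2 controls the truncation error on the remaining range via a tail estimate on $\abs{\D^{\lambda}_{0,b}(\Upsilon_{\! b}^{u+x})}^{-1}$. The paper implements Step~2 by bounding the difference by $(D^{-1}-L)\one_{\{D^{-1}\ge L\}}$ and applying the layer-cake formula together with \eqref{eq:boundProbuOrdinary}, obtaining $\lesssim (u/b)\,L^{-\delta}$; your Markov step using the $(1+\sigma)$-moment \eqref{eq:boundExpuOrdinary} gives the same type of bound, so the two are interchangeable.

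However, the worry you raise at the end — that the $x$-integral carries a residual factor $(1+x)$ and so grows like $L$, forcing you to invoke $1-e^{-t}\le t$ — is a computational slip, not a real obstacle, and the paper's proof makes no such detour. The factor $(u+x)$ sits inside the $u$-integral, not the $x$-integral. After plugging in the expectation bound $\lesssim (u/b)\,L^{-\delta}$, the $u$-integral becomes
\begin{equation*}
\int_{b^{1/4}}^{b^{3/4}}\frac{e^{-u^2/(2b)}}{\sqrt{2\pi b}}\,(u+x)\,\frac{u}{b}\,du
=\int_{b^{1/4}}^{b^{3/4}}\frac{e^{-u^2/(2b)}}{\sqrt{2\pi b}}\,\frac{u^2}{b}\,du
+x\int_{b^{1/4}}^{b^{3/4}}\frac{e^{-u^2/(2b)}}{\sqrt{2\pi b}}\,\frac{u}{b}\,du\;,
\end{equation*}
which, by the substitution $u=\sqrt b\,v$, equals $(1+o_b(1))+x\cdot O(b^{-1/2})$. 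Since the lemma allows you to take $b$ large \emph{after} fixing $L$ (hence after fixing the range $x\le A+L$), the second summand is $O((A+L)/\sqrt b)$, negligible for $b$ large enough depending on $L$. No factor $(1+x)$ survives into the $x$-integral: the remaining $x$-integral is simply $\int_{L/2}^{A+L}e^{\sqrt{2d}(x-L)}dx\le e^{\sqrt{2d}A}/\sqrt{2d}=O(1)$ uniformly in $L$. The whole second piece is therefore $O(L^{-\sigma})$, and your proposed fix via the numerator bound $1-e^{-t}\le t$ and the slack $\gamma>\sqrt{2d}$ is unnecessary. With that correction, the argument closes exactly as in the paper.
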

\begin{proof}
We start by addressing the fact that the integral over $x$ from $0$ to $A+L$ in $\smash{\bfG^{\lambda}_{\eqref{eq:expGALb1}}}$ can be reduced, up to a negligible error, to an integral from $L/2$ to $A+L$. To this end, for $x \in [0, A+L]$ and $u \in [b^{1/4}, b^{3/4}]$, we recall that thanks to Lemma~\ref{lm:tailsDgUpsilon}, it holds that  
\begin{equation}
\label{eq:boundL/2}
\E_{0, u, b}\Bigl[\bfF^{\lambda}_{b, x-L}(\Upsilon_{\! b}^{ u+x}) \one_{\{\M_{0, b}(\Upsilon_{\! b}^{ u+x}) \leq \lambda\}} \Bigr] \lesssim \frac{u}{b}  \;.
\end{equation}
Using the above bound, one can easily see that the difference between $\smash{\bfG^{\lambda}_{\eqref{eq:expGALb1}}}$ and the same quantity with the integral over $x$ from $0$ to $A+L$ replaced by the integral over $x$ from $L/2$ to $A+L$ can be made arbitrarily small by taking $L \geq 0$ sufficiently large uniformly over all $b \geq 0$.

Now, for $x \in [L/2, A+L]$ and $u \in [b^{1/4}, b^{3/4}]$, we need to bound from above the following difference,  
\begin{equation}
\label{eq:boundTrunc}
\E_{0, u, b}\Bigl[\abs{\bfF^{\lambda}_{b, x - L}(\Upsilon_{\! b}^{ u+x}) - \bfF^{\lambda}_{b, x-L, L}(\Upsilon_{\! b}^{ u+x})} \one_{\{\M_{0, b}(\Upsilon_{\! b}^{ u+x}) \leq \lambda\}} \Bigr] \;.
\end{equation}
To this end, recalling the definition of the function $\bfF^{\lambda}_{k, x}$ in \eqref{eq:defscrF} and that of $\bfF^{\lambda}_{k, x, L}$ in \eqref{eq:defTruncationLargeL}, we note that 
\begin{equation*}
\abs{\bfF^{\lambda}_{b, x-L}(\Upsilon_{\! b}^{ u+x}) - \bfF^{\lambda}_{b, x-L, L}(\Upsilon_{\! b}^{ u+x})}
\leq \bigl(\abs{\D^{\lambda}_{0,b}(\Upsilon_{\! b}^{ u+x})}^{-1} - L\bigr)\one_{\{\abs{\D^{\lambda}_{0,b}(\Upsilon_{\! b}^{ u+x})}^{-1} \geq L\}} \;.
\end{equation*}
Therefore, plugging this into \eqref{eq:boundTrunc} and using Lemma~\ref{lm:tailsDgUpsilon}, there exists $\delta = \delta(d) \in (0, 1)$ such that 
\begin{align*}
\E_{0, u, b}\Bigl[\abs{\bfF^{\lambda}_{b, x-L} & (\Upsilon_{\! b}^{ u+x}) - \bfF^{\lambda}_{b, x-L, L}(\Upsilon_{\! b}^{ u+x})} \one_{\{\M_{0, b}(\Upsilon_{\! b}^{ u+x}) \leq \lambda \}} \Bigr]	\\
& \leq \int_{L}^{\infty} \P_{0, u, b}\bigl(\abs{\D^{\lambda}_{0,b}(\Upsilon_{\! b}^{ u+x})}^{-1} \geq \eta,\; \M_{0, b}(\Upsilon_{\! b}^{ u+x}) \leq \lambda \bigr) d\eta \\
& \lesssim \frac{u}{b} \int_{L}^{\infty} \eta^{-(1+\delta)} d\eta \lesssim \frac{u}{b} L^{-\delta} \;.
\end{align*}
Therefore, the conclusion follows readily from the above bound.
\end{proof}

We now need to show how we can remove the first two indicator functions appearing on the second line of $\smash{\bfG^{\lambda}_{\eqref{eq:expGALb3}}}$. To this end, we consider
\begin{equation}
\label{eq:expGALb4}
\begin{alignedat}{1}
\bfG^{\lambda}_{\eqref{eq:expGALb4}} & \eqdef \alpha \int_{L/2}^{A+L} e^{\sqrt{\smash[b]{2d}} (x - L)} \int_{b^{1/4}}^{b^{3/4}} \frac{e^{-\f{u^2}{2b}}}{\sqrt{2 \pi b}}  (u+x) \\
& \hspace{40mm} \cdot \E_{0, u, b} \Bigl[\bfF^{\lambda}_{b, x-L, L}(\Upsilon_{\! b}^{ u+x}) \one_{\{\M_{0, b}(\Upsilon_{\! b}^{ u+x}) \leq \lambda\}}\Bigr]  du dx \;,
\end{alignedat}
\end{equation} 
and we claim the following lemma.
\begin{lemma}
\label{lm:removIndOK}
For any $\lambda > 0$, $\eps > 0$ and $A \geq 0$, there exist $L \geq 0 $ sufficiently large, and $b \geq 0$ sufficiently large such that 
\begin{equation*}
\abs{\bfG^{\lambda}_{\eqref{eq:expGALb4}} -\bfG^{\lambda}_{\eqref{eq:expGALb3}}} \leq \eps \;.
\end{equation*}
\end{lemma}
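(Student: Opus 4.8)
\textbf{Proof proposal for Lemma~\ref{lm:removIndOK}.}
The plan is to estimate the difference $\smash{\bfG^{\lambda}_{\eqref{eq:expGALb4}} - \bfG^{\lambda}_{\eqref{eq:expGALb3}}}$ directly, which equals (a multiple of) the same integral over $x \in [L/2, A+L]$ and $u \in [b^{1/4}, b^{3/4}]$, but with the integrand weighted by
\begin{equation*}
\E_{0, u, b}\Bigl[\bfF^{\lambda}_{b, x-L, L}(\Upsilon_{\! b}^{ u+x}) \one_{\{\M_{0, b}(\Upsilon_{\! b}^{ u+x}) \leq \lambda\}} \bigl(1 - \one_{\{\M_{0, b + 1, b}(\Upsilon_{\! b}^{ u+x}) \leq x-(A+L)\}} \one_{\{\inf_{s \in [0, b]} B_{s} \geq - x\}}\bigr)\Bigr]\;.
\end{equation*}
Since $\bfF^{\lambda}_{b, x-L, L}$ is bounded by $L$ (because of the $\vee L^{-1}$ in the denominator of \eqref{eq:defTruncationLargeL}) and by $\abs{\D^{\lambda}_{0,b}(\Upsilon_{\! b}^{ u+x})}^{-1}$, it is enough to show that the probability of the ``bad'' events, intersected with $\{\M_{0, b}(\Upsilon_{\! b}^{ u+x}) \leq \lambda\}$, is much smaller than $u/b$; more precisely, one wants a bound of the form $\eps_0(L) \, u/b$ or $\eps_0(b)\, u/b$ together with a uniform integrability estimate coming from Lemma~\ref{lm:tailsDgUpsilon} to control the product with the (possibly large) factor $\abs{\D^{\lambda}_{0,b}(\Upsilon_{\! b}^{ u+x})}^{-1}$. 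First I would split the complement into the two pieces $\{\inf_{s \in [0, b]} B_s < - x\}$ and $\{\inf_{s\in [0,b]} B_s \ge -x\} \cap \{\M_{0, b+1, b}(\Upsilon_{\! b}^{u+x}) > x - (A+L)\}$.

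For the first piece, since $x \geq L/2$, the event $\{\inf_{s\in[0,b]} B_s < -x\}$ forces the Brownian bridge (from $0$ to $u$) to dip below $-L/2$; recalling that on $\{\M_{0,b}(\Upsilon_{\!b}^{u+x}) \le \lambda\}$ Lemma~\ref{lm:approxBrownianBridge} (and the inclusion \eqref{eq:Inc1Fin}, applied to the field $\Upsilon_{\!b}^{u+x}$, which satisfies \ref{as:GG1}\dash\ref{as:GG3} uniformly in $u+x \ge 0$) pins $\sup_{\A_j}\Upsilon_{\!b}^{u+x}(\cdot) \approx -B_j$, I would invoke Lemma~\ref{lm:useCon1}: this gives $\P_{0,u,b}(\M_{0,b}(\Upsilon_{\!b}^{u+x}) \le \lambda, \inf_s B_s < -L/2) \lesssim (u/b) e^{-c\sqrt{L/2}}$, which after multiplication by $\bfF^{\lambda}_{b,x-L,L}$ (using either the bound $L$ or Lemma~\ref{lm:tailsDgUpsilon} together with Cauchy--Schwarz as in the proof of Lemma~\ref{lm:upperBoundKk}) is summable against $\int_{L/2}^{A+L} e^{\sqrt{2d}(x-L)}(u+x)\,dx$ and the Gaussian weight in $u$, and tends to $0$ as $L \to \infty$, uniformly in $b$.

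For the second piece, on $\{\inf_{s\in[0,b]} B_s \ge -x\}$ the field is controlled, and the event $\{\M_{0,b+1,b}(\Upsilon_{\!b}^{u+x}) > x - (A+L)\}$ says that the supremum over the outermost annulus $\A_b$ is atypically large; by Lemma~\ref{lm:approxBrownianBridge} this is essentially $\{-B_b > x - (A+L) - \rmR_{\rmK_b}(b)\}$, i.e.\ $\{B_b < (A+L) - x + \rmR_{\rmK_b}(b)\}$, which, combined with the endpoint constraint $B_b = u \ge b^{1/4}$ under $\P_{0,u,b}$, is impossible once $b$ is large enough (since $(A+L) - x + \rmR_{\rmK_b}(b) \le A + L + \rmR_{\rmK_b}(b)$ grows only polylogarithmically in $\rmK_b$, while on $\{\rmK_b < b^{1/8}\}$, say, this is $o(b^{1/4})$; the complementary event $\{\rmK_b \ge b^{1/8}\}$ is handled by Lemma~\ref{lm:upperBoundKk} which gives a bound $\lesssim (u/b)e^{-c(\log b)^2/64}$). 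Combining these two pieces, for any $\eps > 0$ one first picks $L$ large to kill the first piece uniformly in $b$, and then picks $b$ large to kill the second piece. The main obstacle I expect is the bookkeeping in the second piece: one must carefully track how the cutoff parameter $A+L$ interacts with the control variable $\rmK_b$ and the polylogarithmic error $\rmR_{\rmK_b}(b)$, and verify that on the bulk event $\{\rmK_b$ small$\}$ the required inequality $x - (A+L) - \rmR_{\rmK_b}(b) < u$ genuinely fails for all $u \ge b^{1/4}$ once $b$ is large; this is where one has to be slightly careful that the factor $\abs{\D^{\lambda}_{0,b}(\Upsilon_{\!b}^{u+x})}^{-1}$ multiplying the indicator does not spoil the estimate, which is exactly what the integrability bound \eqref{eq:boundExpuOrdinary} of Lemma~\ref{lm:tailsDgUpsilon} (via Cauchy--Schwarz, as in Lemma~\ref{lm:upperBoundKk}) is designed to handle.
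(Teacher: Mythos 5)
Your proposal follows essentially the same two-step decomposition as the paper: split the complement of the two indicators into the event $\{\inf_{s\in[0,b]} B_s < -x\}$ and the event $\{\M_{0,b+1,b}(\Upsilon_{\! b}^{u+x}) > x - (A+L)\}$, control the first via Lemma~\ref{lm:useCon1} and the constraint $x \ge L/2$ by taking $L$ large, and control the second using the endpoint constraint $B_b = u \ge b^{1/4}$ by taking $b$ large. Step~1 coincides with the paper's proof. Note, though, that the Cauchy--Schwarz/Lemma~\ref{lm:tailsDgUpsilon} fallback you mention for handling $\abs{\D^{\lambda}_{0,b}(\Upsilon_{\!b}^{u+x})}^{-1}$ is superfluous here: the entire purpose of the preceding reduction, Lemma~\ref{lm:truncOK}, was to replace $\bfF^{\lambda}_{b,x-L}$ by the truncation $\bfF^{\lambda}_{b,x-L,L}$, which is bounded by $L$, and the paper simply uses that uniform bound.

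In Step~2 you take a genuinely different route, and it has a gap. You appeal to Lemma~\ref{lm:approxBrownianBridge} and the control variable $\rmK_b$, but that lemma is stated only for $j \in [b-1]$, while the supremum here is over the outermost annulus $\A_b = \B_{b+1}\setminus\B_b$, which lies outside both the range covered by the decomposition \eqref{eq:decoRestrictedShape} and the error bound $\rmR_{\rmK_b}$. The paper sidesteps this entirely by working directly on $\A_b$: since $\frkK$ is supported in $B(0,1)$, for $\abs{y} > e^b$ one has the exact identity $\Upsilon_{\!b}^{u+x}(y) = -B_b + \rmZ_b(y) - \sqrt{\smash[b]{2d}}\,b + \frkg_{b,u+x}(y)$, so the bad event becomes $\{\M_{0,1,0}(\bar\rmZ_b + \frkg_{b,u+x}(e^b\cdot)) - \sqrt{\smash[b]{2d}}\,b > B_b + x - (A+L)\}$; restricting to $\{\M_{0,1,0}(\frkg_{b,u+x}(e^b\cdot)) < b^{1/4}/2\}$ and using $B_b = u > b^{1/4}$, the tightness of $\M_{0,1,0}(\bar\rmZ_b) - \frkm_b$ (the argument of Lemma~\ref{lm:controlX'}) gives a bound $\lesssim e^{-c(b^{1/4} - (A+L))}$ uniformly in $x$ and $u$, with no need to invoke $\rmK_b$ at all. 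Your conclusion is the same, and your route could be salvaged by extending Lemma~\ref{lm:approxBrownianBridge} to $j = b$, but as written the application is outside the lemma's scope; also note the minor imprecision that for $j=b$ one has $\Theta_{\rmK_b}(b) = [\log(1+b)]^2$ regardless of $\rmK_b$, so $\rmR_{\rmK_b}(b) \asymp (\log b)^2$ with no dependence on the event $\{\rmK_b < b^{1/8}\}$ (though this does not affect the $o(b^{1/4})$ conclusion).
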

\begin{proof}
We divide the proof in two steps. First, we prove that the indicator function involving the Brownian motion can be removed. Then, we show that the indicator function concerning the quantity $\M_{0, b + 1, b}(\Upsilon_{\! b}^{u+x})$ can also be disregarded.

\textbf{Step 1:} We start by showing how we can remove the indicator function regarding the Brownian motion. 
To this end, we fix $x \in [L/2, A+L]$ and $u \in [b^{1/4}, b^{3/4}]$, and we proceed to bound from above the following quantity,
\begin{equation}
\label{eq:toBoudnRemInf}
\E_{0, u, b}\Bigl[\bfF^{\lambda}_{b, x-L, L}(\Upsilon_{\! b}^{ u+x}) \one_{\{\inf_{s \in [0, b]} B_{s} < -x\}} \one_{\{\M_{0, b}(\Upsilon_{\! b}^{ u+x})\leq \lambda\}}\Bigr]\;,  
\end{equation} 
which arises when considering the difference $\smash{\bfG^{\lambda}_{\eqref{eq:expGALb4}} -\bfG^{\lambda}_{\eqref{eq:expGALb3}}}$. Since the quantity $\bfF^{\lambda}_{b, x-L, L}(\Upsilon_{\! b}^{ u+x})$ is by definition bounded above by $L$, we can proceed to estimate the following probability 
\begin{equation*}
\P_{0, u, b}\Biggl(\inf_{s \in [0, b]} B_{s} < - x, \; \M_{0, b}(\Upsilon_{\! b}^{ u+x}) \leq \lambda\Biggr)\;,
\end{equation*}
for $x$ and $u$ as specified above. By monotonicity, since $x > L/2$, the probability in the previous display is bounded above by 
\begin{equation*}
\P_{0, u, b}\Biggl(\inf_{s \in [0, b]} B_{s} < - L/2, \; \M_{0, b}(\Upsilon_{\! b}^{ u+x}) \leq \lambda\Biggr) \;.
\end{equation*}
This probability can be bounded by using Lemma~\ref{lm:useCon1}, from which we deduce that is less than a constant times $e^{-c \sqrt{L}}u/b$, for some $c > 0$. 

\textbf{Step 2:} We now show how we can remove the indicator function regarding the quantity $\M_{0, b + 1, b}(\Upsilon_{\! b}^{ u+x})$. Fix $x \in [L/2, A+L]$ and $u \in [b^{1/4}, b^{3/4}]$. Using again the fact that the quantity $\smash{\bfF^{\lambda}_{b, x-L, L}(\Upsilon_{\! b}^{ u+x})}$ is by definition bounded above by $L$, we need to estimate the following probability
\begin{align*}
\P_{0, u, b}\bigl(\M_{0, b + 1, b}(\Upsilon_{\! b}^{ u+x}) > x-(A+L)\bigr) \;.
\end{align*}
Thanks to \ref{hp_K2}, the seed covariance function $\frkK$ is supported in $B(0, 1)$ and so, recalling the definition \eqref{eq:Phi} of the field $\Phi_b$, we have that, for all $y \in \R^d$ with $\abs{y} > e^b$, 
\begin{equation*}
	\Upsilon_{\! b}^{ u+x}(y) = -B_b + \rmZ_{b}(y) -\sqrt{\smash[b]{2d}} b + \frkg_{b, u+x}(y) \;, 
\end{equation*}
where we recall that $\rmZ_{b}$ is the centred Gaussian field on $\R^d$ introduced in Definition~\ref{def:fieldsZ}. In particular, leveraging again on the fact that $\frkK$ is supported in $B(0, 1)$, we have that for all $x$, $y \in \R^d$ with $\abs{x}$, $\abs{y} > e^b$,  
\begin{equation*}
\E\bigl[\rmZ_{b}(x) \rmZ_{b}(y)\bigr] = \int_0^b \frkK(e^{-s} (x-y)) ds \;.
\end{equation*}
In particular, this implies that the scaled field $\bar \rmZ_b(\cdot) \eqdef \rmZ_{b}(e^b \cdot)$ restricted to the annulus $\B_1 \setminus \B_0$ has the same covariance of the martingale approximation at level $b$ of a $\star$-scale invariant field with seed covariance function $\frkK$. Therefore, by rescaling space, the event $\{\M_{0, b + 1, b}(\Upsilon_{\!b, g}) > x-(A+L)\}$ is equivalent to the event that 
\begin{equation*}
\bigl\{\M_{0, 1, 0}(\bar \rmZ_{b} + \frkg_{b, u+x}(e^b \cdot)) -\sqrt{\smash[b]{2d}} b  > B_b +  x - (A+L) \bigr\} \;.
\end{equation*}	
Therefore, on the events $\{\M_{0, 1, 0}(\frkg_{b, u+x}(e^b \cdot)) < b^{1/4}/2\}$ and $\{B_b > b^{1/4}\}$, using the same argument as in the proof of Lemma~\ref{lm:controlX'}, we observe that, there exists a constant $c >0$ such that, for any $x \in [0, A+L]$ and $u \in [b^{1/4}, b^{3/4}]$,
\begin{align*}
\P\bigl(\M_{0, 1, 0}(\bar \rmZ_{b}) -\sqrt{\smash[b]{2d}} b  > b^{1/4}/2 + x - (A+L)\bigr)  \lesssim e^{-c (b^{1/4} - (A+L))} \;.
\end{align*}
On the other hand, to treat the event $\{\M_{0, 1, 0}(\frkg_{b, u+x}(e^b \cdot)) \geq b^{1/4}/2\}$, we can use the fact that the supremum over $\B_1 \setminus \B_0$ of $\frkg_{b, u+x}(e^b \cdot)$ has uniform Gaussian tails, in order to obtain a similar (stronger) bound.  

Finally, the conclusion follows readily by combining the bounds provided in Steps~1~and~2.
\end{proof}

Finally, we need to prove that the function $\smash{\bfF^{\lambda}_{b, x-L, L}}$ in $\smash{\bfG^{\lambda}_{\eqref{eq:expGALb4}}}$ can be replaced by $\bfF^{\lambda}_{k, x-L, L}$ for some $0 \leq k < b$. To this end, for $0 \leq k < b$, we define 
\begin{equation}
\label{eq:expGALb5}
\begin{alignedat}{1}
\bfG^{\lambda}_{\eqref{eq:expGALb5}} & \eqdef \alpha \int_{L/2}^{A+L} e^{\sqrt{\smash[b]{2d}} (x - L)} \int_{b^{1/4}}^{b^{3/4}} \frac{e^{-\f{u^2}{2b}}}{\sqrt{2 \pi b}} (u+x) \\
& \hspace{40mm} \cdot \E_{0, u, b} \Bigl[\bfF^{\lambda}_{k, x-L, L}(\Upsilon_{\! b}^{ u+x}) \one_{\{\M_{0, b}(\Upsilon_{\! b}^{ u+x}) \leq \lambda\}} \Bigr] du dx \;,
\end{alignedat}
\end{equation} 
and we claim the following lemma.
\begin{lemma}
\label{lm:reduceSizeOK}
For any $\lambda > 0$, $\eps > 0$ and $A$, $L \geq 0$, there exist $k \geq 0$ sufficiently large, and $b \geq k$ sufficiently large such that 
\begin{equation*}
\abs{\bfG^{\lambda}_{\eqref{eq:expGALb5}} -\bfG^{\lambda}_{\eqref{eq:expGALb4}}} \leq \eps \;.
\end{equation*}
\end{lemma}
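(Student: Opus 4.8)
The plan is to bound the difference $\bfG^{\lambda}_{\eqref{eq:expGALb5}} - \bfG^{\lambda}_{\eqref{eq:expGALb4}}$ by controlling, for fixed $x \in [L/2, A+L]$ and $u \in [b^{1/4}, b^{3/4}]$, the quantity
\begin{equation*}
\E_{0, u, b}\Bigl[\bigl\lvert\bfF^{\lambda}_{b, x-L, L}(\Upsilon_{\! b}^{ u+x}) - \bfF^{\lambda}_{k, x-L, L}(\Upsilon_{\! b}^{ u+x})\bigr\rvert \one_{\{\M_{0, b}(\Upsilon_{\! b}^{ u+x}) \leq \lambda\}}\Bigr] \;.
\end{equation*}
Since $\bfF^{\lambda}_{b, x-L, L}$ and $\bfF^{\lambda}_{k, x-L, L}$ differ only in that the denominator uses $\abs{\D^{\lambda}_{b, 0}(\Phi)} \vee L^{-1}$ versus $\abs{\D^{\lambda}_{k, 0}(\Phi)} \vee L^{-1}$ and the numerator integrates $\rmF_\gamma$ over $\B_b$ versus $\B_k$, I would first observe that on the ``good event'' where the supremum of $\Upsilon_{\! b}^{u+x}$ is attained inside $\B_k$ and where the field decays on the annuli $\A_j$ for $j \geq k$ (i.e.\ $\sup_{x\in\A_j}\Upsilon_{\! b}^{u+x}(x) \leq -(\log j)^2$, which is exactly the content of Lemma~\ref{lm:repulsionShape}), the two functions coincide. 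Indeed, on such an event $\D^{\lambda}_{b, 0}(\Phi) = \D^{\lambda}_{k, 0}(\Phi)$ because the near-maximal level set is contained in $\B_k$, and the tail of the integral $\int_{\B_b \setminus \B_k} \rmF_\gamma(y) e^{\gamma\Phi(y)} dy$ is negligible thanks to the decay of $\Phi$ on the annuli together with the moment bound \ref{hp:W4} on the fields $\rmW_i$ (which controls $\rmF_\gamma$).

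The key steps, in order, are as follows. First, split the expectation according to whether the ``good event'' $\rmE_{\mathrm{good}}$ described above holds. On $\rmE_{\mathrm{good}}$, the integrand vanishes, so there is nothing to do. Second, on $\rmE_{\mathrm{good}}^c$, use that both $\bfF^{\lambda}_{b, x-L, L}$ and $\bfF^{\lambda}_{k, x-L, L}$ are bounded above by $L$ (by construction, since the denominators are bounded below by $L^{-1}$), so the contribution is at most $L \cdot \P_{0, u, b}(\rmE_{\mathrm{good}}^c, \M_{0, b}(\Upsilon_{\! b}^{u+x}) \leq \lambda)$. Third, estimate this probability: the failure of the decay-on-annuli part is controlled by Lemma~\ref{lm:repulsionShape} (the bound \eqref{eq:boundGrowthBB}), which gives $\lesssim (u/b) k^{-1/16}$; the failure of ``maximum inside $\B_k$'' is handled by the same entropic-repulsion argument already used repeatedly in Section~\ref{sec:cluster} (cf.\ the proof of Lemma~\ref{lm:cluster} or \cite[Lemma~4.18]{BiskupLouidor} combined with Lemma~\ref{lm:upperBoundKk}), again yielding $\lesssim (u/b) k^{-1/16}$ uniformly over $x \in [L/2, A+L]$ and $u \in [b^{1/4}, b^{3/4}]$, provided $b$ is large relative to $k$. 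Finally, plug this bound back in: the difference $\abs{\bfG^{\lambda}_{\eqref{eq:expGALb5}} - \bfG^{\lambda}_{\eqref{eq:expGALb4}}}$ is then bounded by a constant times
\begin{equation*}
L k^{-\f{1}{16}} \int_{L/2}^{A+L} e^{\sqrt{\smash[b]{2d}}(x-L)} \int_{b^{1/4}}^{b^{3/4}} \frac{e^{-\f{u^2}{2b}}}{\sqrt{2\pi b}} (u+x) \cdot \frac{u}{b} \, du \, dx \;,
\end{equation*}
which, since $\int_{b^{1/4}}^{b^{3/4}} b^{-1/2} e^{-u^2/(2b)} (u+x)(u/b)\, du$ is bounded uniformly in $b$ (the Gaussian weight makes the $u$-integral of order one), is at most a constant (depending on $A$, $L$) times $k^{-1/16}$. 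Taking $k$ large first, then $b \geq k$ large, makes this $\leq \eps$.

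The main obstacle I anticipate is making precise the claim that the tail integral $\int_{\B_b \setminus \B_k} \rmF_\gamma(y) e^{\gamma\Phi(y)} dy$ is negligible on the good event, because $\rmF_\gamma$ is itself random (a sum of $\theta_i e^{\gamma \rmW_i}$) and one needs to integrate out the independent fields $\rmW_i$; here one uses Fubini together with \ref{hp:W4}, which guarantees $\sup_{y} \sum_i \E[e^{\gamma\rmW_i(y)}] < \infty$, so that $\E[\int_{\B_b\setminus\B_k} \rmF_\gamma(y) e^{\gamma\Phi(y)}\,dy \mid \Phi] \lesssim \int_{\B_b\setminus\B_k} e^{\gamma\Phi(y)}\,dy$, and the latter is summable thanks to $\sup_{y\in\A_j}\Phi(y) \leq -(\log j)^2$ and $\gamma(\log j)^2$ dominating the volume growth $e^{dj}$ of the annulus — wait, that is not automatic, so one actually needs the sharper annular bound coming from the Brownian-motion approximation Lemma~\ref{lm:approxBrownianBridge}, namely $\sup_{y \in \A_j}\Upsilon_{\! b}^{u+x}(y) \approx -B_j$ with $B$ staying above a polylog curve, which forces $e^{\gamma\sup_{\A_j}\Phi} \cdot \abs{\A_j}$ to be summable in $j$ because $\gamma > \sqrt{2d}$ strictly beats the critical rate; this is exactly the same mechanism that makes $\E[\int_{\R^d} e^{\gamma\Psi(y)}\,dy] < \infty$ (the finiteness of $\bfZ_\gamma$ noted after Theorem~\ref{th:stableConv}), so I would reuse that computation. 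A secondary, more bookkeeping obstacle is the uniformity of all estimates over the shift parameter $z = u+x$ — but this is guaranteed because, as already noted in the proof of Proposition~\ref{pr:joint}, the field $\frkg^z_b$ satisfies \ref{as:GG1}–\ref{as:GG3} uniformly in $z \geq 0$, so all of Section~\ref{sec:cluster} applies uniformly.
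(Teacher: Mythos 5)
Your overall structure (split into a good event where the field decays outside $\B_k$ and a bad event controlled by Lemma~\ref{lm:repulsionShape}) matches the paper's, and your treatment of the bad event is essentially correct. However, there is a genuine gap in the good-event step.

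You write: \emph{``On $\rmE_{\mathrm{good}}$, the integrand vanishes, so there is nothing to do.''} This is false. On the good event the \emph{denominators} $\abs{\D^{\lambda}_{b,0}}\vee L^{-1}$ and $\abs{\D^{\lambda}_{k,0}}\vee L^{-1}$ coincide (because $\Upsilon^{u+x}_{\! b}(0)=0$ and the field stays negative outside $\B_k$), but the \emph{numerators} do not: $\bfF^{\lambda}_{b,x-L,L}$ integrates over $\B_b$ while $\bfF^{\lambda}_{k,x-L,L}$ integrates over $\B_k$, so their difference is precisely
\begin{equation*}
\bigl|\bfF^{\lambda}_{b,x-L,L}(\Upsilon^{u+x}_{\! b}) - \bfF^{\lambda}_{k,x-L,L}(\Upsilon^{u+x}_{\! b})\bigr| \leq L\Bigl(1-\exp\Bigl(-e^{-\gamma(x-L)}\int_{\B_b\setminus\B_k}\rmF_{\gamma}(y)\,e^{\gamma\Upsilon^{u+x}_{\! b}(y)}\,dy\Bigr)\Bigr),
\end{equation*}
which is nonzero. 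Showing that the expectation of this quantity is $O((u/b)k^{-c})$ uniformly is in fact the substantive part of the proof. You do anticipate this in your ``main obstacle'' paragraph and sketch the right ingredients (annular decomposition, the fact that $\gamma>\sqrt{\smash[b]{2d}}$ beats the volume growth), but the sketch is hand-wavy and, as you yourself note mid-sentence, a naive bound using $\sup_{\A_j}\Phi\leq-(\log j)^2$ does not close on its own. The paper closes the loop by: restricting to $\{\rmK_b\leq k\}$; using Lemma~\ref{lm:approxBrownianBridge} to bound $\sup_{\A_j}\Upsilon^{u+x}_{\! b}\lesssim -c(\log j)^2 - \sqrt{\smash[b]{2d}}j + \rmZ_j$ on $\rmE^c_{k,b}$; recognising each annular contribution as $e^{-\gamma c_3(\log j)^2}$ times a normalised supercritical GMC mass $\bar\mu_{\gamma,j}(\B_1\setminus\B_0)$; exploiting the conditional independence of $\rmZ_j$ from $(B_s)_{s\leq b}$ together with Cauchy--Schwarz to extract a factor $u/b$; and finally invoking \cite[Proposition~4.2]{Glassy} (suitably generalised to random $\rmF_\gamma$ via \ref{hp:W1} and \ref{hp:W4}) to get a summable bound $j^{-\sqrt{\smash[b]{2d}}}\log j$. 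None of this machinery is in your write-up, and pointing to the finiteness of $\E[\bfZ_\gamma^{\sqrt{\smash[b]{2d}}/\gamma}]$ does not substitute for it, since that concerns the limiting field $\Psi$ rather than the pre-limit $\Upsilon^{u+x}_{\! b}$ under $\P_{0,u,b}$.
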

\begin{proof}
For $0 \leq k < b$, we define the following event 
\begin{equation*}
	\rmE_{k, b} \eqdef \bigcup_{j = k}^{b-1} \Biggl\{\sup_{y \in \A_j} \Upsilon_{\! b}^{ u+x}(y)\geq - a \log(j)^2\Biggr\} \;,
\end{equation*}
for some constant $a > 0$.  
By arguing as in the proof of Lemma~\ref{lm:repulsionShape}, and using the fact that the function $\smash{\bfF^{\lambda}_{b, x-L, L}}$ is bounded above by $L$, we note that for any $x \in [L/2, A+L]$, $b \geq 0$ sufficiently large, and $u \in [b^{1/4}, b^{3/4}]$, it holds that 
\begin{equation*}
\E_{0, u, b}\Bigl[\bfF^{\lambda}_{b, x-L, L}(\Upsilon_{\! b}^{ u+x}) \one_{\{\rmE_{k, b}\}} \one_{\{\M_{0, b}(\Upsilon_{\! b}^{ u+x}) \leq \lambda\}} \Bigr] \lesssim L k^{-\f{1}{16}} \f{u}{b} \;.
\end{equation*}
Obviously, the same bound also holds with $\smash{\bfF^{\lambda}_{b, x-L, L}(\Upsilon_{\! b}^{ u+x})}$ replaced by $\smash{\bfF^{\lambda}_{k, x-L, L}(\Upsilon_{\! b}^{ u+x})}$. Therefore, we can focus our attention on the complement of the event $\rmE_{k, b}$. Specifically, we need to estimate the following expectation 
\begin{equation}
\label{eq:redcetoKBoundMes}
\E_{0, u, b}\Bigl[\abs{\bfF^{\lambda}_{b, x-L, L}(\Upsilon_{\! b}^{ u+x}) - \bfF^{\lambda}_{k, x-L, L}(\Upsilon_{\! b}^{ u+x})} \one_{\{\rmE^c_{k, b}\}} \one_{\{\M_{0, b}(\Upsilon_{\! b}^{ u+x}) \leq \lambda\}} \Bigr] \;. 
\end{equation}
By recalling the Definition~\ref{def:ControlVar} of the control variable $\rmK_b$, we can further restrict our attention to the event $\{\rmK_b \leq k\}$, as the bound on the complement of this event can be obtained by using Lemma~\ref{lm:upperBoundKk}. 
We recall that $\Upsilon_{\! b}^{ u+x}(0) = 0$, and so on the complement of the event $\rmE_{k, b}$, it holds that $\smash{\abs{\D^{\lambda}_{b, 0}(\Upsilon_{\! b}^{ u+x})} = \abs{\D^{\lambda}_{k, 0}(\Upsilon_{\! b}^{ u+x})}}$. Therefore, we have that 
\begin{align*}
\abs{\bfF^{\lambda}_{b, x-L, L}(\Upsilon_{\! b}^{ u+x})&  - \bfF^{\lambda}_{k, x-L, L}(\Upsilon_{\! b}^{ u+x})} \\
& \hspace{25mm} \leq L\Biggl(1 - \exp\Biggl(- e^{- \gamma (x - L)} \int_{\B_b \setminus \B_k} \rmF_{\gamma}(y) e^{\gamma \Upsilon_{\! b}^{ u+x}(y)} dy\Biggr)\Biggr) \;.
\end{align*}
Thanks to Lemma~\ref{lm:approxBrownianBridge}, and by choosing $a > 0$ sufficiently large, we observe that there exists a constant $c_1 = c_1(a) > 0$ such that 
\begin{equation*}
\rmE^c_{k, b} \cap \bigl\{\rmK_b \leq k\bigr\} \subseteq \bigcap_{j = k}^{b-1} \bigl\{B_j \geq c_1 (\log j)^2 \bigr\} \cap \bigl\{\rmK_b \leq k\bigr\} \;. 
\end{equation*}
Therefore, recalling once again the Definition~\ref{def:ControlVar} of the control variable $\rmK_b$, on the event $\rmE_{k, b}^c \cap \{\rmK_b \leq k\}$, by using the decomposition \eqref{eq:decoRestrictedShape} of the field $\Upsilon_{\! b}$ and by choosing the constant $a > 0$ sufficiently large, we obtain that there exist constants $c_2$, $c_3 > 0$ such that
\begin{align*}
\int_{\B_b \setminus \B_k} \rmF_{\gamma}(y) e^{\gamma\Upsilon_{\! b}^{ u+x}(y)} dy 
& \leq \sum_{j = k}^{b-1} \int_{\A_j} \rmF_{\gamma}(y)  e^{\gamma (- c_2 (\log j)^2 + \rmZ_{j}(y) - \sqrt{\smash[b]{2d}}j)} dy \\
& \lesssim \sum_{j = k}^{b-1} e^{-\gamma c_3 (\log j)^2} \int_{\B_1 \setminus \B_0} \rmF_{\gamma}(e^j y) \underbrace{e^{\gamma(\rmZ_{j}(e^{j} y) - \sqrt{\smash[b]{2d}} j - \frkd_j) + dj} dy}_{\bar{\mu}_{\gamma, j}(dy)} \;,
\end{align*}
where we recall that $\frkd_j$ is defined as in \eqref{eq:notMainTech1frkd}, and we use the notation introduced in \eqref{eq:RPsiGamma}. Furthermore, since thanks to \ref{hp_K2}, the function $\frkK$ is supported in $B(0, 1)$, we observe that on $\B_1 \setminus \B_0$, the measure $\bar{\mu}_{\gamma, j}$ has the same law as the measure introduced in \eqref{e:norm_super} (with $t$ replaced by $j$), i.e., it is a regularised and normalised supercritical GMC measure.
Hence, applying the previous estimate, we deduce that the quantity in \eqref{eq:redcetoKBoundMes} is bounded above by
\begin{equation*}
\label{eq:Bound1minusExp}
L \E\Biggl[\Biggl(1 - \exp\Biggl(- e^{- \gamma (x - L)} \sum_{j = k}^{\infty} e^{-\gamma c_3 (\log j)^2} \int_{\B_1 \setminus \B_0} \rmF_{\gamma}(e^j y) \bar{\mu}_j(dy)\Biggr)\Biggr) \one_{\{\M_{0, b}(\Upsilon_{\! b}^{ u+x}) \leq \lambda,\, \rmK_b\leq k\}}\Biggr].
\end{equation*}
We observe that the quantity inside the parenthesis in the previous display is independent of $(B_s)_{s \leq b}$. Therefore, using \eqref{eq:Inc1Fin} and arguing as in the proof of Lemma~\ref{lm:upperBoundKk}, for any $k \geq 0$ sufficiently large and for $b \geq 0$ sufficiently large, the quantity in the previous display is bounded from above by a multiple of 
\begin{align*}
& \frac{u}{b} L \E\Biggl[1 - \exp\Biggl(- \sum_{j = k}^{\infty} e^{-\gamma (x - L + c_3(\log j)^2)} \int_{\B_1 \setminus \B_0} \rmF_{\gamma}(e^j y)\bar{\mu}_{\gamma, j}(dy)\Biggr)\Biggr]^{\f12} \\
& \qquad \leq  \frac{u}{b} L \Biggl(\sum_{j = k}^{\infty}  \E\Biggl[1 - \exp\Biggl(- e^{-\gamma \log j} \int_{\B_1 \setminus \B_0} \rmF_{\gamma}(e^j y)\bar{\mu}_{\gamma, j}(dy)\Biggr)\Biggr]\Biggr)^{\f12} \\
& \qquad \lesssim \frac{u}{b} L \Biggl(\sum_{j = k}^{\infty} j^{-\sqrt{\smash[b]{2d}}} \log j\Biggr)^{\f12}\;,
\end{align*} 
where here we used the elementary inequality $\smash{1 -  e^{-\sum_{j = k}^{\infty}u_j} \leq \sum_{j = k}^{\infty} (1 - e^{-u_j})}$ valid for $(u_j)_{j \geq k} \subset \R^{+}_0$, and the last inequality follows from \cite[Proposition~4.2]{Glassy} applied with $\chi = \log j$. 
Strictly speaking, we cannot directly apply \cite[Proposition~4.2]{Glassy}, as the function $\rmF_{\gamma}$ is random. However, by following the proof of that proposition, we obtain the same conclusion in our more general setting by leveraging assumptions \ref{hp:W1} and \ref{hp:W4}.
Thus, the conclusion follows, as the series in the last line of the above display converges to zero as $k \to \infty$.
\end{proof}

We are now ready to prove Lemma~\ref{lm:techMainRed2}.
\begin{proof}[Proof of Lemma~\ref{lm:techMainRed2}]
The claim follows by combining Lemmas~\ref{lm:ExtremeIntBB},~\ref{lm:truncOK},~\ref{lm:removIndOK},~and~\ref{lm:reduceSizeOK}. 
\end{proof}

%%%%%%%%%%%%%%%%%%%%%%%%%%%%%%%%%%%%%%%%%%%%%%
%%%%%%%%%%%%%%%%%%%%%%%%%%%%%%%%%%%%%%%%%%%%%%
\appendix

%%%%%%%%%%%%%%%%%%%%%%%%%%%%%%%%%%%%%%%%%%%%%%
%%%%%%%%%%%%%%%%%%%%%%%%%%%%%%%%%%%%%%%%%%%%%%
\section{Brownian bridge above a curve} 
\label{ap:BBEstimates}
In this appendix, we collect some estimates for the probability of a Brownian bridge staying above a positive/negative curve, and some ``entropic repulsion'' estimates. In particular, we are interested in results analogous to those stated in \cite[Section~4.2]{BiskupLouidor}. We emphasise that in \cite[Section~4.2]{BiskupLouidor}, there are actually estimates for Brownian bridges but are only limited to the case where the underlying curve is symmetric about the midpoint of the lifespan of the Brownian bridge. Hence, their techniques for transferring statements about Brownian motion to statements about the Brownian bridge cannot be applied to our setting. Generally speaking, the proofs of our results are relatively simple adaptations of the proofs of the results for the Brownian motion in \cite[Section~4.2]{BiskupLouidor}.

%%%%%%%%%%%%%%%%%%%%%%%%%%%%%%%%%%%%%%%%%%%%%%
\subsection{Some preliminary results}
We collet in this subsection some simple results that are used several times throughout the remaining part of this appendix. For a continuous function $\zeta: \R^{+}_0 \to \R$, we define the stopping time 
\begin{equation*}
	\tau_{\zeta} \eqdef \inf\bigl\{s \geq 0 \, : \, B_s = \zeta(s)\bigr\} \;.
\end{equation*}
We recall that for $x$, $u \in \R$ and $b > 0$, the law of $(B_s)_{s \in [0, b]}$ under $\P_{x, u, b}$ is that of a Brownian bridge from $x$ to $u$ in time $b$. We start with some basic facts about Brownian bridges.
\begin{lemma}
\label{lm:basicEstBB}
For $b > 0$ and $x$, $u > 0$, it holds that
\begin{equation}
\label{eq:boundsBBstaysPosExact}
\P_{x, u, b}\bigl(\tau_{0} > b\bigr) = 1 - e^{-\frac{2xu}{b}} \;,
\end{equation}
and in particular, we have the following upper and lower bounds,
\begin{equation}
\label{eq:boundsBBstaysPos}
\f{2xu}{b}\Biggl(1-\f{x u}{b}\Biggr) \leq \P_{x, u, b}\bigl(\tau_{0} > b\bigr) \leq \f{2xu}{b} \;. 
\end{equation}
Furthermore, it holds that
\begin{equation}
\label{eq:boundsBBstaysPosDens}
\P_{x, u, b}\bigl(\tau_0 \in ds, \; \tau_0  \leq b \bigr) = \frac{b x e^{-\frac{((b-s)x + su)^2}{2 b s (b-s)}}}{s^{3/2} \sqrt{2 \pi b (b-s)}} \one_{\{s \in [0,b]\}}ds \;.
\end{equation}
\end{lemma}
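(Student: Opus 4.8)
The statement to prove is Lemma~\ref{lm:basicEstBB}, concerning basic facts about Brownian bridges: the exact probability that a bridge from $x>0$ to $u>0$ in time $b$ stays positive, the resulting two-sided bounds, and the density of the first hitting time of zero.

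The plan is to reduce everything to the classical reflection principle for Brownian motion. First I would establish \eqref{eq:boundsBBstaysPosExact}. Let $(B_s)_{s\ge 0}$ be a standard Brownian motion started at $x>0$, and recall that the Brownian bridge law $\P_{x,u,b}$ is obtained by conditioning $B_b = u$. The event $\{\tau_0 > b\}$ is the event that the path stays positive on $[0,b]$. By the reflection principle applied at the first hitting time of $0$, the joint density of $(B_b)$ restricted to paths that have touched $0$ is obtained by reflecting the endpoint: for a free Brownian motion, $\P_x(\tau_0 \le b,\ B_b \in dy) = \P_x(B_b \in d(-y-\ldots))$ — more precisely, writing $p_s(a,c) = (2\pi s)^{-1/2} e^{-(a-c)^2/(2s)}$ for the heat kernel, the density of $B_b$ on the set $\{\tau_0 \le b\}$ equals $p_b(x,-u)$ for $u>0$. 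Hence the density of $B_b$ on $\{\tau_0 > b\}$ equals $p_b(x,u) - p_b(x,-u)$. Dividing by the unconditioned endpoint density $p_b(x,u)$ gives
\begin{equation*}
\P_{x,u,b}(\tau_0 > b) = 1 - \frac{p_b(x,-u)}{p_b(x,u)} = 1 - \exp\Bigl(-\frac{(x+u)^2 - (x-u)^2}{2b}\Bigr) = 1 - e^{-\frac{2xu}{b}}\;,
\end{equation*}
which is \eqref{eq:boundsBBstaysPosExact}.

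Next I would derive \eqref{eq:boundsBBstaysPos} from \eqref{eq:boundsBBstaysPosExact} by elementary calculus. The upper bound $1 - e^{-t} \le t$ for $t \ge 0$ applied with $t = 2xu/b$ gives $\P_{x,u,b}(\tau_0>b) \le 2xu/b$. For the lower bound, use $1 - e^{-t} \ge t - t^2/2 = t(1 - t/2)$ for $t \ge 0$; applied with $t = 2xu/b$ this yields $\P_{x,u,b}(\tau_0>b) \ge \frac{2xu}{b}\bigl(1 - \frac{xu}{b}\bigr)$, as claimed. (Note this bound is only informative when $xu < b$, but it holds as stated regardless.)

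Finally, for the hitting-time density \eqref{eq:boundsBBstaysPosDens} I would combine the strong Markov property at $\tau_0$ with the known density of the first hitting time of $0$ for a Brownian motion started at $x>0$, namely $\P_x(\tau_0 \in ds) = \frac{x}{\sqrt{2\pi s^3}} e^{-x^2/(2s)}\,ds$. Under the free Brownian motion, decompose on $\{\tau_0 = s\}$: the endpoint density at $u$ becomes, by the Markov property, $\P_x(\tau_0 \in ds)\cdot p_{b-s}(0,u)$. Dividing by the unconditioned endpoint density $p_b(x,u)$ and simplifying the exponentials,
\begin{equation*}
\P_{x,u,b}(\tau_0 \in ds,\ \tau_0 \le b) = \frac{x}{\sqrt{2\pi s^3}}\, e^{-\frac{x^2}{2s}} \cdot \frac{(2\pi(b-s))^{-1/2} e^{-\frac{u^2}{2(b-s)}}}{(2\pi b)^{-1/2} e^{-\frac{(x-u)^2}{2b}}}\, ds\;,
\end{equation*}
for $s \in [0,b]$, and collecting the Gaussian exponents into the single quadratic form $\frac{((b-s)x + su)^2}{2bs(b-s)}$ (a routine algebraic identity, since $\frac{x^2}{s} + \frac{u^2}{b-s} - \frac{(x-u)^2}{b}$ equals that expression) gives precisely the right-hand side of \eqref{eq:boundsBBstaysPosDens}. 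There is no real obstacle here; the only care needed is bookkeeping of the Gaussian normalisation constants and the algebraic completion of squares, and one should note that integrating \eqref{eq:boundsBBstaysPosDens} over $s \in [0,b]$ recovers $e^{-2xu/b}$, consistent with \eqref{eq:boundsBBstaysPosExact}, which provides a useful sanity check.
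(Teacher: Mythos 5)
Your proofs of \eqref{eq:boundsBBstaysPosExact} and \eqref{eq:boundsBBstaysPos} are exactly the paper's: reflection principle, then the elementary inequalities $t - t^2/2 \le 1 - e^{-t} \le t$.

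For the hitting-time density \eqref{eq:boundsBBstaysPosDens}, your route differs from the paper's, though both are standard and correct. The paper conditions at the \emph{deterministic} time $s$: it writes $\P_{x,u,b}(\tau_0 > s) = \int_0^\infty \P_{x,y,s}(\tau_0 > s)\,\P_{x,u,b}(B_s \in dy)$, notes that $B_s$ under the bridge law is normal with mean $x + s(u-x)/b$ and variance $s(b-s)/b$, plugs in the already-established identity \eqref{eq:boundsBBstaysPosExact} for the inner bridge probability, and obtains the density by differentiating the resulting tail in $s$. You instead decompose at the \emph{random} time $\tau_0$ via the strong Markov property, invoking the classical first-passage density $\P_x(\tau_0 \in ds) = x(2\pi s^3)^{-1/2}e^{-x^2/(2s)}\,ds$ for unconditioned Brownian motion and dividing by the endpoint density $p_b(x,u)$ to pass to the bridge. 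Your approach produces the density directly with no differentiation, at the cost of importing the first-passage law as an external input; the paper's approach is more self-contained (it only reuses \eqref{eq:boundsBBstaysPosExact}) but requires an extra Gaussian integral and a derivative. Both land on the same quadratic form $((b-s)x + su)^2/(2bs(b-s))$ after completing the square, and your sanity check that the density integrates to $e^{-2xu/b}$ is a nice touch. One minor remark: you should note that \eqref{eq:boundsBBstaysPos} as stated is vacuous when $xu \ge b$ (the lower bound becomes nonpositive), which you do observe; the paper leaves this implicit.
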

\begin{proof}
The equality \eqref{eq:boundsBBstaysPosExact} is standard and follows by the reflection principle. The bounds in \eqref{eq:boundsBBstaysPos} follows by the elementary inequalities $x-x^2/2 \leq 1 -e^{-x} \leq x$ which are valid for $x \geq 0$. To get \eqref{eq:boundsBBstaysPosDens}, it suffices to note that, for $s \in [0, b]$,
\begin{equation*}
\P_{x, u, b}\bigl(\tau_0 > s, \; \tau_0 \leq b \bigr) = \int_{0}^{\infty} \P_{x, y, s}\bigl(\tau_0 > s\bigr) \P_{x, u, b}\bigl(B_s \in dy\bigr)\;.
\end{equation*}
Since the density $\P_{x, u, b}\bigl(B_s \in dy\bigr)$ is the same as that of a normal random variable with mean $x + s(u-x)/b$ and variance $s(b-s)/b$ and by using the exact identity $\eqref{eq:boundsBBstaysPosExact}$, the result follows by differentiation. 
\end{proof}

\begin{lemma}
\label{lm:infSupBoundBB}
For $b >0 $, consider $M_b \eqdef \inf_{r \in [0, b]} B_r$ and $T_b \eqdef \sup\{r \in [0, b] \, : \, B_r = M_b\}$. Then, for $u > 0$, it holds that 
\begin{equation}
\label{eq:dendInf}
\P_{0, u, b}(T_b \in ds, \: M_b \in dz) = \sqrt{\f{2}{\pi}} \frac{\sqrt{b} (-z) (u-z) e^{-\f{(b z- u s)^2}{2b s(b-s)}}}{(s(b-s))^{3/2}} \one_{\{s \in [0, b]\}} \one_{\{z < 0\}} ds dz \;.
\end{equation}
\end{lemma}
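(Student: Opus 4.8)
\textbf{Proof proposal for Lemma~\ref{lm:infSupBoundBB}.} The plan is to derive the joint density of $(T_b, M_b)$ for the Brownian bridge from $0$ to $u > 0$ in time $b$ by decomposing the path at the time $T_b$ of its running infimum, using the classical path decomposition at the minimum (Williams' decomposition) together with the elementary Brownian bridge identities already recorded in Lemma~\ref{lm:basicEstBB}. First I would observe that, on the event $\{M_b = z\}$ with $z < 0$, the bridge path run up to time $T_b$, after the shift $s \mapsto B_{T_b - s} - z$ and time-reversal, is a piece of Brownian motion started from $-z > 0$ and killed upon hitting $0$; similarly the portion of the path after $T_b$, shifted by $-z$, is a Brownian motion started from $0$, conditioned to stay positive on $(0, b - T_b)$, and ending at $u - z > 0$. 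Both constraints are exactly of the type handled by \eqref{eq:boundsBBstaysPosExact} and \eqref{eq:boundsBBstaysPosDens}.

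The cleanest route is to compute, for $s \in (0, b)$ and $z < 0$, the quantity
\[
\P_{0, u, b}\bigl(T_b \in ds,\; M_b \in dz\bigr)
\]
by writing the bridge density as the product of two independent bridge densities glued at time $s$ with value $z$, times a Jacobian factor coming from the Gaussian transition densities. Concretely, conditioning on $\{B_s = z\}$ splits $\P_{0,u,b}$ into an independent bridge from $0$ to $z$ on $[0,s]$ and an independent bridge from $z$ to $u$ on $[s, b]$, with the conditioning density
\[
\P_{0,u,b}(B_s \in dz) = \sqrt{\tfrac{b}{2\pi s (b-s)}}\, \exp\!\Bigl(-\tfrac{(bz - us)^2}{2 b s (b-s)}\Bigr)\, dz\;.
\]
On $[0,s]$ I then need the density that the bridge from $0$ to $z$ attains its \emph{minimum} exactly at the right endpoint $s$, i.e.\ that it stays above $z$ on $(0,s)$ and the running infimum over $[0,s]$ equals $z$; this is the ``endpoint is the minimum'' density, which by time-reversal equals the first-passage density of \eqref{eq:boundsBBstaysPosDens} evaluated appropriately (a bridge from $0$ down to $z$ hitting its minimum at time $s$ has density $\tfrac{-z}{s}\sqrt{\tfrac{b}{2\pi(b-s)}} \cdots$ after the reversal; the factor $-z/s$ is the analogue of the $bx/s^{3/2}$ factor in \eqref{eq:boundsBBstaysPosDens} with the normalising Gaussian stripped out). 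On $[s,b]$ I need the probability that the bridge from $z$ to $u$ stays \emph{strictly above} $z$, which by \eqref{eq:boundsBBstaysPosExact} applied after shifting by $-z$ is $1 - e^{-2(-z)(u-z)/(b-s)}$ — but since we are asking for the minimum to be attained at the single point $s$, what actually enters is the derivative of this in the ``just touching'' regime, producing the factor $\tfrac{u-z}{b-s}$ (the local-time density at level $z$ of the post-minimum bridge, which by the same first-passage computation is $\tfrac{(u-z)}{b-s}$ up to the Gaussian normalisation).

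Multiplying the three contributions — the conditioning Gaussian density in $z$, the pre-minimum factor $\propto \tfrac{-z}{s}$, and the post-minimum factor $\propto \tfrac{u-z}{b-s}$ — and carefully bookkeeping the Gaussian normalising constants $\sqrt{\cdot}$ so that the exponential reassembles into the single term $\exp\!\bigl(-\tfrac{(bz - us)^2}{2 b s (b-s)}\bigr)$, I expect to land exactly on
\[
\P_{0,u,b}(T_b \in ds,\; M_b \in dz) = \sqrt{\tfrac{2}{\pi}}\,\frac{\sqrt{b}\,(-z)(u-z)\,e^{-\frac{(bz - us)^2}{2 b s (b-s)}}}{(s(b-s))^{3/2}}\, ds\, dz
\]
on $\{s \in (0,b),\, z < 0\}$. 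The main obstacle I anticipate is \emph{not} conceptual but bookkeeping: getting all the powers of $s$, $b-s$, $b$ and all the $\sqrt{2\pi}$ factors to combine into the clean stated constant $\sqrt{2/\pi}\,\sqrt{b}\,(s(b-s))^{-3/2}$, and justifying rigorously that the ``minimum attained at the endpoint'' and ``local time at the minimum level'' densities are exactly the $x/s^{3/2}$-type first-passage densities of Lemma~\ref{lm:basicEstBB} after the reversal; a way to sidestep delicate local-time arguments is to instead verify the formula by integrating the claimed right-hand side over $z < 0$ and $s \in (0,b)$ and checking it gives $1$ (which it must, since $(T_b, M_b)$ is a.s.\ well-defined), and independently checking the marginal of $M_b$ against the known law $\P_{0,u,b}(M_b < z) = e^{-2z(z-u)/b}$ (valid for $z<0\wedge u$) obtained from \eqref{eq:boundsBBstaysPosExact}; matching both marginals together with the correct exponential kernel pins down the joint density uniquely.
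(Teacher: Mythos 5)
Your decomposition of the path at the minimum time is a genuinely different route from the paper's. The paper does not condition on $B_s = z$; instead it computes the joint \emph{CDF} $\P_{0,u,b}(T_b \leq s,\, M_b \leq z)$ directly by slicing the level set $(-\infty, z]$ into $\eps$-bands $[z-(k+1)\eps,\, z-k\eps]$, applying the strong Markov property at the first passage time $\tau_{z-k\eps}$, and then requiring that the post-passage bridge stay above $z-(k+1)\eps$. Both ingredients are already recorded in Lemma~\ref{lm:basicEstBB}: the first-passage density \eqref{eq:boundsBBstaysPosDens} and the exact stay-positive probability \eqref{eq:boundsBBstaysPosExact}. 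As $\eps \to 0$ the Riemann sum becomes an integral, and one differentiates the resulting double integral in $s$ and $z$. This route has the virtue of never having to make sense of a ``density that the bridge attains its minimum exactly at its endpoint,'' which is precisely the delicate degenerate conditioning that you correctly identify as the sticking point of your path-decomposition approach. Your multiplicative ansatz ($p_{B_s}(z)$ times a pre-factor $\propto -z/s$ times a post-factor $\propto (u-z)/(b-s)$) is heuristically consistent with the answer — the residual after stripping off the Gaussian is indeed $2(-z)(u-z)/(s(b-s))$ — but turning those two local factors into a theorem requires exactly the local-time / limiting machinery you are hoping to avoid.

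The proposed ``sidestep,'' however, does not work. Checking that the candidate density integrates to $1$ and that the $M_b$-marginal matches the known law $e^{-2z(z-u)/b}$ is a necessary consistency check, but it cannot ``pin down the joint density uniquely.'' Infinitely many joint densities on $(0,b) \times (-\infty, 0)$ have total mass $1$ and the same $M_b$-marginal (e.g.\ any mixture of the true law with another law sharing that marginal), and ``having the correct exponential kernel'' is part of the conclusion, not an a priori constraint you can impose. So if you want to avoid the local-time argument, you cannot replace it with marginal-matching; you must produce a genuine computation of the joint law. The paper's $\eps$-slicing of the CDF, combined with \eqref{eq:boundsBBstaysPosExact}--\eqref{eq:boundsBBstaysPosDens} and one differentiation, is such a computation, and is probably the path of least resistance given the toolkit already assembled in Lemma~\ref{lm:basicEstBB}.
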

\begin{proof}
The proof is inspired by the proof of \cite[Lemma~A.1]{BiskupLouidor}.
Using the path continuity and the strong Markov property of the Brownian bridge, along with the exact identity \eqref{eq:boundsBBstaysPosExact}, we note that for any $s \in [0, b]$ and $z \leq 0$, it holds that 
\begin{align*}
\P_{0, u, b}\bigl(T_b \leq s, \; M_b \leq z\bigr) 
& = \lim_{\eps \to 0} \sum_{k \in \N_0} \int_0^s \P_{z-k\eps, u, b-r}\bigl(\tau_{z-(k+1)\eps} > b-r\bigr) \P_{0, u, b}\bigl(\tau_{z-k\eps} \in dr\bigr) \\
& = \lim_{\eps \to 0} \sum_{k \in \N_0} \int_0^s \P_{\eps, u -(z-(k+1)\eps), b-r}\bigl(\tau_{0} > b-r\bigr) \P_{0, u, b}\bigl(\tau_{z-k\eps} \in dr\bigr) \\
& = 2 \int_{-\infty}^{z} \int_{0}^{s} \frac{u-w}{b-r} \P_{0, u, b}\bigl(\tau_{w} \in dr\bigr) dw\;,
\end{align*}
and so the claim follows by using \eqref{eq:boundsBBstaysPosDens} and by differentiation.
\end{proof}

\begin{lemma}
Let $b \geq 1$ and $x \geq 1$. For all $u \in [0, b^{3/4}]$ and $s \in [1, b]$, it holds that
\begin{equation}
\label{eq:boundUse}
\P_{x, u, b}\bigl(\tau_{0} > s \bigr) \leq \frac{2 x^2}{s} + \frac{8 x}{s^{1/4}} \;.
\end{equation}	
\end{lemma}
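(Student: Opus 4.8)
The statement is an upper bound for the probability that a Brownian bridge from $x$ to $u$ in time $b$ stays above zero at least up to time $s$, valid uniformly over $u \in [0, b^{3/4}]$ and $s \in [1, b]$. The natural route is to condition on the position $B_s$ of the bridge at the intermediate time $s$ and use the exact one-point formula \eqref{eq:boundsBBstaysPosExact} together with the explicit Gaussian density of $B_s$ under $\P_{x, u, b}$. Concretely, since the law of $B_s$ under $\P_{x, u, b}$ is Gaussian with mean $m \eqdef x + s(u-x)/b$ and variance $\sigma^2 \eqdef s(b-s)/b \le s$, the Markov property at time $s$ gives
\begin{equation*}
\P_{x, u, b}\bigl(\tau_0 > s\bigr) = \E_{x, u, b}\Bigl[\one_{\{\tau_0 > s\}}\Bigr] \le \E_{x, u, b}\Bigl[\P_{x, B_s, s}\bigl(\tau_0 > s\bigr)\one_{\{B_s > 0\}}\Bigr] = \E_{x, u, b}\Bigl[\bigl(1 - e^{-2 x B_s/s}\bigr)\one_{\{B_s > 0\}}\Bigr]\;,
\end{equation*}
and then bound $1 - e^{-2xB_s/s} \le 2 x B_s/s$ on $\{B_s > 0\}$, reducing the problem to estimating $\frac{2x}{s}\E_{x, u, b}[B_s \one_{\{B_s > 0\}}]$.

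\textbf{Key steps.} First I would write $B_s \eqlaw m + \sigma N$ with $N$ a standard normal, so that $\E_{x, u, b}[B_s \one_{\{B_s > 0\}}] \le \E[|m| + \sigma |N|] = |m| + \sigma \sqrt{2/\pi} \le |m| + \sigma$. Next, control $|m| = |x + s(u-x)/b|$: since $s \le b$ one has $m = x(1 - s/b) + su/b$, which is non-negative (as $x, u \ge 0$, $s \le b$) and bounded by $x + su/b \le x + u$; and $\sigma \le \sqrt{s} \le \sqrt{b}$. Plugging in, one gets
\begin{equation*}
\P_{x, u, b}\bigl(\tau_0 > s\bigr) \le \frac{2x}{s}\bigl(x + \tfrac{su}{b} + \sqrt{s}\bigr) = \frac{2x^2}{s} + \frac{2xu}{b} + \frac{2x}{\sqrt{s}}\;.
\end{equation*}
The first term already matches the first term in \eqref{eq:boundUse}. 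For the remaining two, I would use the hypotheses $u \le b^{3/4}$ and $b \ge s$: the middle term satisfies $2xu/b \le 2x b^{3/4}/b = 2x b^{-1/4} \le 2x s^{-1/4}$ since $s \le b$; and the last term satisfies $2x/\sqrt{s} \le 2x/s^{1/4}$ because $s \ge 1$. Adding these gives $2x^2/s + 4x/s^{1/4}$, which is exactly the claimed bound. A small technical point to check carefully is the inequality used in the conditioning step: one has $\{\tau_0 > s\} \subseteq \{B_r > 0 \ \forall r \in [0, s]\} \subseteq \{B_s > 0\}$ (up to a null set where $B_s = 0$), and on this event $\P_{x, u, b}(\tau_0 > s \mid B_s) = \P_{x, B_s, s}(\tau_0 > s)$ by the Markov property at time $s$ — but since we are only bounding from above it suffices to drop the further constraint on $[s, b]$ and use $\E_{x,u,b}[\one_{\{\tau_0 > s\}}] \le \E_{x,u,b}[\P_{x,B_s,s}(\tau_0 > s)\one_{\{B_s > 0\}}]$, which is the displayed inequality.

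\textbf{Main obstacle.} There is no serious obstacle here; the only mild care needed is in bookkeeping the elementary inequalities so that the final constants come out as stated (in particular noticing that $2 + 2 = 4$ works for the $s^{-1/4}$ terms only because $u \le b^{3/4}$ and $s \le b$ are used together, and that $s \ge 1$ is what converts $s^{-1/2}$ into $s^{-1/4}$). One should also double-check the edge behaviour: when $s = b$ the variance $\sigma^2 = 0$ and $B_s = u$ deterministically, in which case the bound reads $\P_{x,u,b}(\tau_0 > b) \le 2xu/b \le 2x^2/b + 4x/b^{1/4}$, consistent with \eqref{eq:boundsBBstaysPos}. Thus the proof is a short computation, and I would present it essentially as the three displays above followed by the two elementary estimates on the last two terms.
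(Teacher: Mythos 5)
Your proof is correct, and it takes a genuinely different — and in fact cleaner — route than the paper's. Both arguments start the same way: condition on $B_s$ and use the exact formula $\P_{x,y,s}(\tau_0 > s) = 1-e^{-2xy/s}$ from \eqref{eq:boundsBBstaysPosExact}, with $B_s$ Gaussian under $\P_{x,u,b}$ of mean $m = x + s(u-x)/b$ and variance $\sigma^2 = s(b-s)/b$. The paper then introduces a cutoff $r = m + \sigma\log(e+s)$, applies $1-e^{-z} \le z$ only on $[0,r]$ (obtaining $2xr/s$), bounds the integrand by $1$ on $[r,\infty)$, and estimates that second piece as the Gaussian tail $\P(\CN(0,1) > \log(e+s)) \le 2/s^{1/4}$; collecting terms, using $u \le b^{3/4}$, $s \le b$, $\log(e+s) \le 2 s^{1/4}$, and finally $x \ge 1$ to absorb the tail contribution, yields the constant $8$. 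You instead apply $1-e^{-2xy/s} \le 2xy/s$ over the entire range of integration, reducing immediately to $\f{2x}{s}\E_{x,u,b}[B_s^+]$, which is bounded directly by $\f{2x}{s}(|m| + \sigma\sqrt{2/\pi}) \le \f{2x}{s}(x + su/b + \sqrt{s})$; the elementary arithmetic ($u/b \le s^{-1/4}$ from $u\le b^{3/4}$, $s\le b$; and $s^{-1/2} \le s^{-1/4}$ from $s\ge 1$) then gives $2x^2/s + 4x/s^{1/4}$. This dispenses with the cutoff and the Gaussian tail bound, does not need to invoke $x\ge 1$ except trivially, and gives a slightly sharper constant ($4$ rather than $8$), so it dominates the paper's version. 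Your treatment of the Markov-property step (conditioning on $B_s$, noting that the conditional law on $[0,s]$ is again a bridge, and that $\le$ suffices since you only need an upper bound) is also careful and correct.
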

\begin{proof}
Fix $b \geq 1$, $x \geq 1$, and $u \in [0, b^{3/4}]$. As we have observed in the proof of Lemma~\ref{lm:basicEstBB}, for any $s \in [1, b]$, by conditioning on the value of the Brownian bridge at times $s$, we can write
\begin{equation*}
\P_{x, u, b}\bigl(\tau_{0} > s \bigr) = \int_{0}^{\infty} \P_{x, y, s}\bigl(\tau_{0} > s \bigr) \P_{x, u, b}\bigl(B_s \in d y \bigr) \;,
\end{equation*}
where, as one can easily check, the density $\P_{x, u, b}\bigl(B_s \in dy\bigr)$ is that of a normal random variable with mean $x + s(u-x)/b$ and variance $s(b-s)/b$, i.e., 
\begin{equation*}
\P_{x, u, b}\bigl(B_s \in d y \bigr) = \sqrt{\f{b}{2 \pi s (b - s)}} \exp\Biggl(-\f{b}{2 s (b - s)}\Biggl(y - \Biggl(x + s\f{u-x}{b}\Biggr)\Biggr)^2\Biggr) dy \;.
\end{equation*}
Therefore, for $s \in [1, b]$, by letting
\begin{equation*}
r = r(s, u, b, x) \eqdef x+ \frac{s(u-x)}{b} + \sqrt{\frac{s(b-s)}{b}}\log(e+s)\;,
\end{equation*}
we have that 
\begin{equation}
\label{eq:intSplitPosBB}
\P_{x, u, b}\bigl(\tau_{0} > s \bigr) 
\leq \int_0^{r} \Bigl(1-e^{-\frac{2xy}{s}}\Bigr)\P_{x, u, b}\bigl(B_s \in d y \bigr) + \int_{r}^{\infty}\P_{x, u, b}\bigl(B_s \in d y \bigr) \;,
\end{equation}
where we simply bounded the integrand by one on the second half of the integration interval. By using the elementary inequality $1-e^{-x} < x$ valid for $x \geq 0$, we have the following bound for the first integral on the right-hand side of \eqref{eq:intSplitPosBB},
\begin{equation*}
\int_0^{r} \Bigl(1-e^{-\frac{2xy}{s}}\Bigr)\P_{x, u, b}\bigl(B_s \in d y \bigr)  \leq \frac{2 x r}{s} \leq \frac{2 x^2}{s} + \frac{6 x}{s^{1/4}}  \;.
\end{equation*}
where here we used that $u <b^{3/4}$ to bound $u/b$ by $1/s^{1/4}$, and $s>1$ to bound $\log(e+s)$ by $2 s^{1/4}$.
Regarding the second integral on the right-hand side of \eqref{eq:intSplitPosBB}, we note that it is equivalent to
\begin{equation*}
\P\Biggl(\CN\Biggl(x+ \frac{s(u-x)}{b}, \frac{s(b-s)}{b}\Biggr) > r \Biggr) = \P\bigl(\CN(0,1) > \log(e+s)\bigr) \leq \frac{2}{s^{1/4}} \;.
\end{equation*}
Therefore, thanks to the fact that $x \geq 1$, the claim follows by collecting the previous estimates. 
\end{proof}

%%%%%%%%%%%%%%%%%%%%%%%%%%%%%%%%%%%%%%%%%%%%%%
\subsection{Brownian bridge above a positive curve}
\begin{proposition}
\label{pr:BoundBBAbovePos}
For $\iota \in (0, 1/8)$, let $\zeta: \R^{+}_0 \to \R^{+}_0$ be a continuous and non-decreasing function such that $\zeta(0)> 1$ and $\zeta(s) = o(s^{\iota})$ as $s \to \infty$, and let $x > \zeta(0)$. Then, for all $b \geq 1$ and $u \in [b^{\iota}, b^{3/4}]$, it holds that 
\begin{equation*}
\P_{x, u, b}\bigl(\tau_{\zeta} > b\bigr) \geq (1-\delta) \frac{2xu}{b} \;,
\end{equation*}
where 
\begin{equation*}
	\delta = 2 \Biggl(\frac{xu}{2 b} + \frac{\zeta(b)}{u} + \frac{\rho(x)}{x}\Biggr)	 \;,
\end{equation*}
where the function $\rho:\R^{+}_0 \to \R^{+}_0$ is defined as follows
\begin{equation}
\label{eq:defRho}
	\rho(x) = \zeta(x^4) + 2 x^2 \int_{x^4}^{\infty} \frac{\zeta(s)}{s^2} ds + 2 x \int_{x^4}^{\infty} \frac{\zeta(s)}{s^{5/4}} ds \;.
\end{equation}
\end{proposition}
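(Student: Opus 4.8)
The plan is to compare the event that the Brownian bridge stays above the curve $\zeta$ with the event that it stays above $0$, for which Lemma~\ref{lm:basicEstBB} provides the sharp two–term estimate \eqref{eq:boundsBBstaysPos}. Since $\zeta \geq \zeta(0) > 1 > 0$ on $\R^+_0$, we have $\{\tau_\zeta > b\} \subseteq \{\tau_0 > b\}$, and on $[0,\tau_\zeta)$ the path lies above $\zeta$, hence above $0$; consequently one may write
\begin{equation*}
\P_{x, u, b}\bigl(\tau_{\zeta} > b\bigr) = \P_{x, u, b}\bigl(\tau_{0} > b\bigr) - \P_{x, u, b}\bigl(\tau_{0} > b, \ \tau_{\zeta} \leq b\bigr)\;.
\end{equation*}
The first term is bounded below by $\tfrac{2xu}{b}\bigl(1-\tfrac{xu}{b}\bigr)$ via \eqref{eq:boundsBBstaysPos}, which accounts for the $\tfrac{xu}{2b}$ contribution to $\delta$. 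It then suffices to show the ``bad'' probability is at most $\tfrac{4\zeta(b)x}{b} + \tfrac{4\rho(x)u}{b}$, since $\tfrac{2xu}{b}(1-\delta)$ with $\delta$ as in the statement expands to exactly $\tfrac{2xu}{b} - \tfrac{2x^2u^2}{b^2} - \tfrac{4\zeta(b)x}{b} - \tfrac{4\rho(x)u}{b}$.

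To bound the bad probability I would decompose the event according to the location of the first hitting time $\tau_\zeta$ into an \emph{early} regime $\tau_\zeta \leq x^4 \wedge \tfrac b2$, a \emph{bulk} regime $x^4 \wedge \tfrac b2 < \tau_\zeta \leq \tfrac b2$, and a \emph{late} regime $\tfrac b2 < \tau_\zeta \leq b$. In each, apply the strong Markov property at $\tau_\zeta$: conditionally, the remaining path is a Brownian bridge from $\zeta(\tau_\zeta)$ to $u$ over time $b-\tau_\zeta$, which must stay positive, with probability $1 - e^{-2\zeta(\tau_\zeta)u/(b-\tau_\zeta)} \leq \tfrac{2\zeta(\tau_\zeta)u}{b-\tau_\zeta}$ by \eqref{eq:boundsBBstaysPos}. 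In the early regime, monotonicity of $\zeta$ gives $\zeta(\tau_\zeta)\leq \zeta(x^4)$ while $b-\tau_\zeta \geq b/2$, so this factor is $\leq \tfrac{4\zeta(x^4)u}{b}$; bounding $\P_{x,u,b}(\tau_\zeta \leq x^4) \leq 1$ and $\zeta(x^4)\leq \rho(x)$ yields a contribution $\lesssim \tfrac{\rho(x)u}{b}$. In the bulk regime one still has $b-\tau_\zeta\geq b/2$, and the inclusion $\{\tau_\zeta > s\}\subseteq\{\tau_0 > s\}$ together with \eqref{eq:boundUse} gives $\P_{x,u,b}(\tau_\zeta\in[s,2s]) \leq \tfrac{2x^2}{s} + \tfrac{8x}{s^{1/4}}$; multiplying by the Markov factor $\tfrac{4\zeta(2s)u}{b}$ and summing over dyadic scales $s = x^4, 2x^4, \dots$ up to $b$, after the change of variables $v=2s$, produces precisely the two integral terms $x^2\int_{x^4}^\infty \tfrac{\zeta(v)}{v^2}\,dv$ and $x\int_{x^4}^\infty \tfrac{\zeta(v)}{v^{5/4}}\,dv$ entering $\rho(x)$ in \eqref{eq:defRho}, hence again $\lesssim \tfrac{\rho(x)u}{b}$.

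The \emph{late} regime is the main obstacle: when $\tau_\zeta$ approaches $b$ the post-$\tau_\zeta$ bridge is short and stays positive with probability close to one, so the Markov bound above is useless. The way out is to exploit that the bridge is pinned to $u \geq b^\iota$ at time $b$ while $\zeta(b) = o(b^\iota)$, so that coming down to touch $\zeta$ at a time $\tau_\zeta \in (\tfrac b2,b]$ and returning to $u$ forces an atypically large downward excursion near the right endpoint; quantitatively, on $\{\tau_0 > b,\ \tau_\zeta > \tfrac b2\}$ one has $B_{b/2} > \zeta(0) > 1$ and, conditioning on $B_{b/2}$, one is left with a bridge ending at $u$ whose probability of dipping down to the level $\leq\zeta(b)$ near its endpoint is $\lesssim \zeta(b)/u$ — the factor $u^{-1}$ coming from the identity $\int_0^\infty r^{-3/2}e^{-u^2/(2r)}\,dr = \sqrt{2\pi}/u$. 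Carrying this out rigorously requires the explicit Brownian bridge densities of the previous subsection, namely the first-passage density \eqref{eq:boundsBBstaysPosDens} and the joint law of the minimum and its location in Lemma~\ref{lm:infSupBoundBB} (equation \eqref{eq:dendInf}), applied after the translation that turns ``min at level $z$'' into ``first hit of $0$''; integrating over $z\in(0,\zeta(b)]$ and over the argmin in $(\tfrac b2,b]$ yields a contribution $\lesssim \tfrac{\zeta(b)x}{b}$, which is $\tfrac{\zeta(b)}{u}\cdot\tfrac{xu}{b}$ and hence the term $\tfrac{\zeta(b)}{u}$ in $\delta$. Collecting the three regimes gives $\P_{x,u,b}(\tau_0 > b,\tau_\zeta\leq b) \lesssim \tfrac{\zeta(b)x}{b} + \tfrac{\rho(x)u}{b}$ with explicit constants (which one then tracks to land on the constants $2$, $2$, $2$ appearing in $\delta$), completing the proof; everything outside the late regime is a routine, if slightly tedious, application of the strong Markov property and the elementary bridge estimates already established.
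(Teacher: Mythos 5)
Your opening move is the one the paper makes: write $\P_{x,u,b}(\tau_\zeta > b) = \P_{x,u,b}(\tau_0 > b) - \P_{x,u,b}(\tau_\zeta \le b < \tau_0)$, bound the first term below by $\tfrac{2xu}{b}(1-\tfrac{xu}{b})$ via \eqref{eq:boundsBBstaysPos}, and then show the subtracted probability is $\le \tfrac{4u\rho(x)}{b} + \tfrac{4x\zeta(b)}{b}$. Your split of the bad event along the size of $\tau_\zeta$ is also the right idea, but your execution of the bulk regime diverges from the paper in a way that loses the exact constant. The paper handles $\{\tau_\zeta \le b/2\}$ in one stroke: apply the strong Markov property at $\tau_\zeta$ to pick up the factor $\tfrac{4\zeta(\tau_\zeta)u}{b}$, then integrate $\int_0^{b/2}\zeta(s)\,\P(\tau_\zeta\in ds)$ by parts, bound $\P(\tau_\zeta\ge s)$ trivially by $1$ for $s\le x^4$ and by $\P(\tau_0\ge s)$ together with \eqref{eq:boundUse} for $s\ge x^4$, and integrate by parts once more to land exactly on $\rho(x)$. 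Your dyadic decomposition together with $\P(\tau_\zeta\in[s,2s])\le\P(\tau_0\ge s)$ only reproduces $\rho(x)$ up to an absolute multiplicative constant strictly larger than $1$ (Riemann-sum versus integral over dyadic blocks costs a factor), which matters here because $\delta$ is an explicit formula.

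The more substantive gap is the late regime. The paper disposes of $\{\tau_\zeta\in(b/2,b],\ \tau_0>b\}$ in a single line: time-reverse the bridge so that it runs from $u$ to $x$; the reversed path must first hit the constant level $\zeta(b)$ at some time $\sigma<b/2$, and by the strong Markov property at $\sigma$ the remainder is a positive bridge from $\zeta(b)$ to $x$ of duration $b-\sigma\ge b/2$, which has probability $\le \P_{x,\zeta(b),b/2}(\tau_0>b/2)\le\tfrac{4x\zeta(b)}{b}$ by \eqref{eq:boundsBBstaysPos}. You correctly identify that the Markov bound at $\tau_\zeta$ degenerates here, and you are right that the pinning to $u\ge b^\iota$ while $\zeta(b)=o(b^\iota)$ is what saves the day, but your proposed route through conditioning on $B_{b/2}$ and then invoking Lemma~\ref{lm:infSupBoundBB} is both far heavier and not correct as sketched: the probability that the bridge from $B_{b/2}$ to $u$ on $[b/2,b]$ dips below $\zeta(b)$ is $e^{-4(B_{b/2}-\zeta(b))(u-\zeta(b))/b}$, which is of order one (not $\lesssim\zeta(b)/u$) for typical $B_{b/2}\sim\sqrt b$ and $u\lesssim\sqrt b$. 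The $\tfrac{x\zeta(b)}{b}$ bound only appears after multiplying by the stay-positive factor $1-e^{-4xB_{b/2}/b}$ for the first half and integrating over the Gaussian law of $B_{b/2}$, and tracking constants through that computation to land exactly on $4$ is considerably more delicate than the paper's one-liner. I would replace this whole paragraph by the paper's time-reversal observation.
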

\begin{proof}
We start by noticing that, thanks to \eqref{eq:boundsBBstaysPos}, it holds that 
\begin{align}
\P_{x, u, b}\bigl(\tau_{\zeta} > b\bigr) 
& = \P_{x, u, b}\bigl(\tau_{0} > b\bigr) - \P_{x, u, b}\bigl(\tau_{\zeta} \leq b < \tau_{0}\bigr) \nonumber \\
& \geq \f{2xu}{b}\Biggl(1-\f{x u}{b}\Biggr)  - \P_{x, u, b}\bigl(\tau_{\zeta} \leq b < \tau_{0}\bigr) \;, \label{eq:firstBoundPos}
\end{align} 
and so we can just focus on finding a suitable upper bound for the probability in \eqref{eq:firstBoundPos}. Using the strong Markov property of the Brownian bridge at the stopping time $\tau_{\zeta}$ and again \eqref{eq:boundsBBstaysPos}, we have that 
\begin{align*}
\P_{x, u, b} & \bigl(\tau_{\zeta} \leq b < \tau_{0}\bigr) \\
& \leq \int_0^{b/2} \P_{x, u, b}\bigl(\tau_{\zeta} \in ds \bigr) \P_{\zeta(s), u, b-s}\bigl(\tau_{0} > b-s\bigr) + \int_{b/2}^{b} \P_{x, u, b}\bigl(\tau_{\zeta} \in ds \bigr) \P_{x, \zeta(s), s}\bigl(\tau_{0} > s\bigr) \\
& \leq \int_0^{b/2} \P_{x, u, b}\bigl(\tau_{\zeta} \in ds \bigr) \P_{\zeta(s), u, b-s}\bigl(\tau_{0} > b-s\bigr) + \int_{b/2}^{b} \P_{x, u, b}\bigl(\tau_{\zeta} \in ds \bigr) \frac{2 x \zeta(s)}{s} \\
& \leq \frac{4 u}{b} \int_0^{b/2} \zeta(s) \P_{x, u, b}\bigl(\tau_{\zeta} \in ds \bigr) + \frac{4x\zeta(b)}{b} \;.
\end{align*}
We now focus on the integral in the last line of the above display. By integrating by parts, we note that 
\begin{align*}
\int_0^{b/2} \zeta(s) \P_{x, u, b}\bigl(\tau_{\zeta} \in ds \bigr)
& \leq \zeta(0) + \int_0^{b/2} \zeta'(s) \P_{x, u, b}\bigl(\tau_{\zeta} \geq s \bigr) ds  \\
& \leq \zeta(x^4) + \int_{x^4}^{b/2} \zeta'(s) \P_{x, u, b}\bigl(\tau_{0} \geq s \bigr) ds  \,.
\end{align*}
Now, using the bound \eqref{eq:boundUse} and the fact that $\zeta(s) = o(s^{1/4})$ as $s \to \infty$ (since $\iota \in (0, 1/8)$), we obtain that
\begin{align*}
\int_{x^4}^{b/2} \zeta'(s) \P_{x, u, b}\bigl(\tau_{0} \geq s \bigr) ds  & 
\leq x \int_{x^4}^{\infty} \zeta'(s) \Biggl(\frac{2x}{s} + \frac{8}{s^{1/4}}\Biggr) ds \\
& \leq 2 x\int_{x^4}^{\infty} \zeta(s)\Biggl(\frac{x}{s^2}+\frac{1}{s^{5/4}}\Biggr) ds \;.
\end{align*}
Therefore, the conclusion follows by combining all the previous estimates.
\end{proof}

%%%%%%%%%%%%%%%%%%%%%%%%%%%%%%%%%%%%%%%%%%%%%%
\subsection{Brownian bridge above a negative curve}
\begin{proposition}
\label{pr:BoundBBAboveNeg}
For $\iota \in (0, 1/8)$, let $\zeta: \R^{+}_0 \to \R^{+}_0$ be a continuous and non-decreasing function such that $\zeta(0)> 1$ and $\zeta(s) = o(s^{\iota})$ as $s \to \infty$, and let $x > \zeta(0)$.  Then, for all $b \geq 1$ sufficiently large and $u \in [b^{\iota}, b^{3/4}]$, it holds that   
\begin{equation*}
\P_{x, u, b}\bigl(\tau_{-\zeta} > b\bigr) \leq (1+ \tilde \delta) \frac{2xu}{b} \;,
\end{equation*}
where 
\begin{equation}
\label{eq:ErrorTermNeg}
\tilde \delta = 4 \Biggl(\frac{x}{u} + \frac{4 \zeta(b)^2}{x u} + \frac{4 \tilde\rho(x)}{x}\Biggr)  \;.
\end{equation}
where, recalling the definition \eqref{eq:defRho} of the function $\rho$, the function $\tilde \rho:\R^{+}_0 \to \R^{+}_0$ is defined as follows
\begin{equation}
\label{eq:defRhoTilde}
\tilde \rho(x) = \rho(x) + 2 \frac{\zeta(x^2)^2}{x} + \int_{x^2}^{\infty} \frac{\zeta(s)^2}{s^{3/2}} ds \;.
\end{equation}
\end{proposition}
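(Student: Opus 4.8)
The plan is to mirror the proof of Proposition~\ref{pr:BoundBBAbovePos}, but now obtaining an \emph{upper} bound on the probability that a Brownian bridge from $x$ to $u$ in time $b$ stays above the negative curve $-\zeta$. First I would write
\begin{equation*}
\P_{x, u, b}\bigl(\tau_{-\zeta} > b\bigr) = \P_{x, u, b}\bigl(\tau_{0} > b\bigr) + \P_{x, u, b}\bigl(\tau_{0} \leq b < \tau_{-\zeta}\bigr)\;,
\end{equation*}
so that, thanks to the upper bound in \eqref{eq:boundsBBstaysPos}, the first term contributes at most $2xu/b$ and it remains to control the ``overshoot'' probability $\P_{x, u, b}(\tau_{0} \leq b < \tau_{-\zeta})$, i.e.\ the event that the bridge dips below $0$ but never reaches $-\zeta$ before time $b$. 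The main point is to show this is a lower-order correction, of size $\tilde\delta \cdot (2xu/b)$.

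The key step is the strong Markov property at the stopping time $\tau_0$. Conditioning on $\{\tau_0 \in ds\}$, after time $s$ the bridge starts from $0$ and must avoid $-\zeta$ on $[s,b]$. I would split the integral over $s \in [0,b]$ at $b/2$. For $s \in [0,b/2]$, I use that $\P_{0, u, b-s}(\tau_{-\zeta(\cdot)} > b - s)$ is bounded, via a shift of the curve and the reflection-principle identity \eqref{eq:boundsBBstaysPosExact}, by something like $2\zeta(s)(u+\zeta(s))/(b-s) \lesssim u\zeta(s)/b + \zeta(s)^2/b$; integrating against $\P_{x,u,b}(\tau_0 \in ds)$ and bounding $\P_{x,u,b}(\tau_0 \geq s)$ by \eqref{eq:boundUse} (after an integration by parts in $\zeta$, exactly as in the proof of Proposition~\ref{pr:BoundBBAbovePos}) produces the terms $\tilde\rho(x)/x$ and the extra pieces $\zeta(x^2)^2/x + \int_{x^2}^\infty \zeta(s)^2 s^{-3/2}\,ds$ appearing in \eqref{eq:defRhoTilde}. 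For $s \in [b/2,b]$, I bound the probability that the bridge first hits $0$ only after time $b/2$ by the probability that a bridge from $x$ to $\zeta(b)$ (or thereabouts) in time $b/2$ stays above $-\zeta(b)$, i.e.\ stays above a level of order $\zeta(b)$; by \eqref{eq:boundsBBstaysPos} this is $\lesssim (x+\zeta(b))\zeta(b)/b$, which after dividing by $2xu/b$ gives the term $\zeta(b)^2/(xu) + x/u$ in \eqref{eq:ErrorTermNeg}. Collecting these estimates and being slightly generous with the numerical constants yields the claimed $\tilde\delta$.

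The main obstacle I expect is bookkeeping the curve shifts cleanly: unlike in the positive-curve case, after hitting $0$ the bridge has to avoid a curve that is \emph{below} its current position, and the natural comparison is with a bridge avoiding a \emph{constant} barrier at height $\zeta(s)$ (monotonicity of $\zeta$ lets one dominate $\zeta$ on $[s,b]$ by $\zeta(b)$ where needed, or keep it inside the integral where the $x^4$-truncation matters). One has to be careful that the barrier-avoidance probability for the post-$\tau_0$ bridge is genuinely $O(\zeta/b)$ and not $O(1)$; this requires noting that the endpoint $u \geq b^{\iota}$ is much larger than $\zeta(b) = o(b^\iota)$, so the bridge is strongly pushed away from the barrier, exactly the regime where \eqref{eq:boundsBBstaysPos} and \eqref{eq:boundUse} are effective. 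The requirement ``$b$ sufficiently large'' in the statement is precisely what makes $\zeta(b)/u$ and the other error terms small, and it also ensures $x^2, x^4 < b/2$ so that the truncation points in \eqref{eq:defRhoTilde} and \eqref{eq:defRho} lie inside the integration range. Once these comparisons are set up, the remaining computation is a routine integration by parts together with the tail bound \eqref{eq:boundUse}, following line by line the proof of Proposition~\ref{pr:BoundBBAbovePos}.
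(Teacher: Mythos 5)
Your high-level skeleton — decomposing at $\tau_0$, using the strong Markov property, splitting the integral at $b/2$, and invoking integration by parts together with \eqref{eq:boundUse} — matches the paper's proof exactly. The genuine gap is the justification of the key estimate for the post-$\tau_0$ piece. You claim that $\P_{0, u, b-s}\bigl(\tau_{-\zeta(s+\cdot)} > b-s\bigr)$ can be bounded, \emph{via a shift of the curve and the reflection-principle identity} \eqref{eq:boundsBBstaysPosExact}, by $2\zeta(s)(u+\zeta(s))/(b-s)$. This does not work. Since $\zeta$ is non-decreasing, the barrier $-\zeta(s+\cdot)$ lies \emph{below} the constant barrier $-\zeta(s)$, so the event of staying above $-\zeta(s+\cdot)$ \emph{contains} the event of staying above $-\zeta(s)$; comparing to the constant barrier at level $-\zeta(s)$ therefore gives a \emph{lower} bound on the probability, not the upper bound you need. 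The only constant barrier that gives an honest upper bound is at $-\zeta(b)$, which via \eqref{eq:boundsBBstaysPosExact} yields something of order $u\zeta(b)/b$, and after integrating over $s$ and dividing by $2xu/b$ this produces a relative error $\sim \zeta(b)/x$. That is too large: in the applications (e.g.\ the proof of Lemma~\ref{lm:upperBoundKk}) one has $x \sim k(\log k)^2$ fixed and $\zeta(b) \sim (\log b)^2 \to \infty$, so $\zeta(b)/x$ diverges as $b \to \infty$, whereas the error in \eqref{eq:ErrorTermNeg} must be $O(1)$.

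The paper instead proves a dedicated intermediate result, Lemma~\ref{lm:badProbMax}, showing
\begin{equation*}
\P_{0, u, b}\bigl(\tau_{-\zeta} > b\bigr) \leq 16 \frac{u}{b}\Biggl(\zeta(0) + \int_0^b \frac{\zeta(z)\zeta'(z)}{\sqrt{z}}\,dz + \frac{\zeta(b)^2}{u}\Biggr)\;,
\end{equation*}
which is strictly sharper than any constant-barrier comparison: the leading term is $\zeta(0)$ rather than $\zeta(b)$, the growth of $\zeta$ enters only through the mild integral $\int \zeta\zeta'/\sqrt{z}$, and the remaining $\zeta(b)^2/u$ is controllable because $u \geq b^{\iota}$ dominates $\zeta(b)^2 = o(b^{2\iota})$ (and after integrating against $\P_{x,u,b}(\tau_0 \in ds)$ it contributes the $\zeta(b)^2/(xu)$ piece of $\tilde\delta$). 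The proof of Lemma~\ref{lm:badProbMax} is not a reflection argument on a shifted curve: it computes the joint density of the running minimum $M_b$ of the bridge and its location $T_b$ (Lemma~\ref{lm:infSupBoundBB}) and integrates that density against the curve $\zeta$. Without this input — or something equivalent — the crucial bound in your proposal is unjustified, and the rest of your (otherwise essentially correct) bookkeeping does not close.
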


The proof of Proposition~\ref{pr:BoundBBAboveNeg} is based on the following lemma.
\begin{lemma}
\label{lm:badProbMax}
For $\iota \in (0, 1/8)$, let $\zeta: \R^{+}_0 \to \R^{+}_0$ be a continuously differentiable and non-decreasing function such that $\zeta(0)> 1$ and $\zeta(s) = o(s^{\iota})$ as $s \to \infty$. Then, for all $b \geq 1$ sufficiently large and $u \in [b^{\iota}, b^{3/4}]$, it holds that  
\begin{equation*}
\P_{0, u, b}\bigl(\tau_{-\zeta} > b\bigr) \leq 16 \frac{u}{b}\Biggl(\zeta(0) + \int_{0}^{b} \frac{\zeta(z) \zeta'(z)}{\sqrt{z}} dz + \frac{\zeta(b)^2}{u}\Biggr)  \;.
\end{equation*}
\end{lemma}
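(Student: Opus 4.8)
The plan is to prove Lemma~\ref{lm:badProbMax} by decomposing the event $\{\tau_{-\zeta} > b\}$ according to the location and value of the global minimum of the Brownian bridge, using the explicit joint density from Lemma~\ref{lm:infSupBoundBB}. Write $M_b = \inf_{r \in [0,b]} B_r$ and $T_b$ for the (a.s.\ unique) time at which the minimum is attained. On the event $\{\tau_{-\zeta} > b\}$ one has $M_b > -\zeta(T_b) \geq -\zeta(b)$ since $\zeta$ is non-decreasing, so in particular $M_b \in (-\zeta(b), 0)$ and moreover $B_{T_b} = M_b > -\zeta(T_b)$. Hence
\begin{equation*}
\P_{0, u, b}\bigl(\tau_{-\zeta} > b\bigr) \leq \P_{0, u, b}\bigl(M_b > -\zeta(T_b)\bigr) = \int_0^b \int_{-\zeta(s)}^0 \P_{0, u, b}(T_b \in ds, \, M_b \in dz) \;.
\end{equation*}
Plugging in \eqref{eq:dendInf} and bounding $(-z)(u-z) \leq \zeta(s) (u + \zeta(s)) \leq 2u\zeta(s)$ (the last step using $\zeta(s) \leq \zeta(b) = o(b^\iota) \leq u$ for $b$ large, since $u \geq b^\iota$), together with $e^{-(bz - us)^2/(2bs(b-s))} \leq 1$, gives a bound of the form a constant times $u\sqrt{b} \int_0^b \zeta(s)^2 (s(b-s))^{-3/2} ds$. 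This is not quite integrable near $s = b$, so one needs to be a little more careful there: for $s$ close to $b$ one keeps the Gaussian factor and uses that on $\{M_b > -\zeta(b)\}$ the endpoint constraint forces $us$ and $bz$ to be comparable, which produces the extra decay. Alternatively, and more robustly, one splits the time integral at $b/2$ and treats the two halves separately by symmetry of the bridge under $s \mapsto b - s$, $z$ fixed, $0 \leftrightarrow u$ swapped.

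The key steps, in order, would be: (i) reduce to the event $\{M_b > -\zeta(T_b)\}$ as above; (ii) insert the density \eqref{eq:dendInf}; (iii) split the integral over $s \in [0,b]$ at $s = b/2$; (iv) on $[0, b/2]$, bound $(-z)(u-z) \leq 2u\zeta(s)$, drop the Gaussian, and estimate $\sqrt{b}(s(b-s))^{-3/2} \lesssim s^{-3/2} b^{-1}$, then integrate by parts in $s$ (writing $\int_0^{b/2} \zeta(s)^2 s^{-3/2} ds$, integrate $s^{-3/2}$ and differentiate $\zeta^2$) to obtain the boundary term $\zeta(0)$ plus $\int_0^b \zeta(z)\zeta'(z) z^{-1/2} dz$, times $u/b$; (v) on $[b/2, b]$, use the substitution $s' = b - s$ to map this onto a bridge from $u$ to $0$ of length $b$, where now the relevant quantity is $\int_0^{b/2} \zeta(b)^2 (s')^{-3/2} ds' \cdot \sqrt{b}/b$ but the role of the factor $(-z)(u-z)$ and the Gaussian must be reexamined — here the cleanest route is to note that near $s = b$, $\zeta(s) = \zeta(b)$ is essentially constant, so one is bounding $\P_{0,u,b}(\tau_{-\zeta(b)} > b, \, T_b \in [b/2,b])$, and by the reflection/time-reversal argument this is at most $\P_{u, 0, b}(\tau_{0} > b/2) \cdot$ (something), giving the $\zeta(b)^2/(xu)$-type term — wait, but $x = 0$ here, so one instead directly estimates $\P_{0,u,b}(M_b > -\zeta(b)) \lesssim (u/b)\zeta(b)^2/u$ by the same density integral over all of $[0,b]$ with $\zeta$ replaced by the constant $\zeta(b)$, which is a clean computation: $\sqrt{b} \zeta(b) u \int_0^b (s(b-s))^{-3/2} e^{-(bz-us)^2/(2bs(b-s))}\,ds$ integrated in $z$ over $(-\zeta(b),0)$.

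The main obstacle I anticipate is step (v): the naive bound on the time integral is divergent at the endpoint $s = b$, and recovering the correct power of $\zeta(b)$ (namely $\zeta(b)^2/u$ rather than something worse) requires genuinely using the endpoint constraint $B_b = u > 0$ — that is, the Gaussian factor $e^{-(bz - us)^2/(2bs(b-s))}$ must not be discarded near $s = b$, because there $bz$ is order $b\zeta(b)$ while $us$ is order $ub$, and their difference being small is a nontrivial constraint. Concretely, for $s \in [b/2, b]$ one has $s(b-s) \leq b(b-s)$ and $(bz - us)^2 \geq (us - b z)^2 \gtrsim (ub)^2$ when $b-s$ is not too small relative to $b \zeta(b)/u$; a dyadic decomposition of $[b/2,b]$ in $b - s$, or simply completing the square and recognising a Gaussian tail, should close this. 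A cleaner alternative that sidesteps the issue entirely: bound $\P_{0,u,b}(\tau_{-\zeta} > b)$ by first conditioning on $B_{b/2}$, using on $[0, b/2]$ the estimate just derived for a bridge of length $b/2$ (with endpoint $B_{b/2}$, which is order $u$ with overwhelming probability) and on $[b/2, b]$ the trivial bound $\P_{B_{b/2}, u, b/2}(\tau_{-\zeta(b)} > b/2) \leq 2 (B_{b/2} + \zeta(b))(u + \zeta(b))/b \lesssim u \cdot u / b$ via \eqref{eq:boundsBBstaysPos} applied to the shifted bridge $B + \zeta(b)$ — this gives exactly a $\zeta(b)^2/(ub) \cdot$ correction after the $z$-integration step, matching \eqref{eq:ErrorTermNeg}. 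The remaining terms $\zeta(0)$ and $\int_0^b \zeta\zeta' z^{-1/2}$ then come purely from the $[0, b/2]$ analysis as in step (iv), and assembling the constants yields the factor $16$.
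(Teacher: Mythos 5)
Your overall strategy --- reduce to $\P_{0,u,b}(M_b > -\zeta(T_b))$, insert the joint density from Lemma~\ref{lm:infSupBoundBB}, and split the time integral at $b/2$ --- has the right shape, and you have correctly flagged that near $s = b$ the Gaussian factor must be retained. But step~(iv) as written fails. After bounding $(-z)(u-z) \leq 2u\zeta(s)$, integrating $z$ over $(-\zeta(s),0)$, and \emph{dropping the Gaussian}, the remaining integral is $\int_0^{b/2}\zeta(s)^2 s^{-3/2}\,ds$, which diverges at $s=0$ because $\zeta(0)>1$. The integration by parts you propose (integrate $s^{-3/2}$, differentiate $\zeta^2$) does not produce a finite boundary term $\zeta(0)$: the evaluation at $s=0$ is $\lim_{s\to 0^+} 2\zeta(s)^2/\sqrt{s}=+\infty$. (The finite boundary term $\zeta(x^4)$ in the analogous calculation that defines $\rho$ in \eqref{eq:defRho} arises precisely because that integral starts at $x^4>0$ rather than at $0$.) If you keep the Gaussian $e^{-z^2/(2s)}$ the small-$s$ region is in fact controllable --- the effective $z$-range shrinks to $O(\sqrt s)$, and after splitting $[0,b/2]$ further around $s\sim\zeta(0)^2$ the $\zeta(0)$ term does emerge with the correct size $u\zeta(0)/b$ --- but that is not what you wrote and needs genuine additional work.

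The paper sidesteps the difficulty with one extra move made \emph{before} the density is inserted: it decomposes
\begin{equation*}
\P_{0,u,b}(\tau_{-\zeta}>b) \leq \P_{0,u,b}(\tau_{-\zeta(0)}>b) + \P_{0,u,b}\bigl(-\zeta(T_b)<M_b<-\zeta(0)\bigr)\;,
\end{equation*}
bounds the first term by $4(u/b)\zeta(0)$ via \eqref{eq:boundsBBstaysPos} (this is where the $\zeta(0)$ in the statement comes from), and only then inserts the density into the second event, on which $|M_b|>\zeta(0)>1$, so the singularity of the density at $(s,z)\to(0,0)$ is never encountered. The paper also swaps the order of integration, taking $z$ over $[\zeta(0),\zeta(b)]$ as the outer variable; the inner $s$-integral then runs from $\zeta^{-1}(z)>0$ to $b$, so after bounding the Gaussian by $1$ one gets $\int_{\zeta^{-1}(z)}^{\infty}s^{-3/2}\,ds = 2/\sqrt{\zeta^{-1}(z)}$, and the substitution $z\mapsto\zeta(z)$ yields exactly $\int_0^b\zeta\zeta'/\sqrt{z}\,dz$. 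These two ideas --- subtracting the constant-level barrier first, and integrating over $z$ before $s$ --- are what your step~(iv) is missing, and without them the claimed $O(u/b)$ bound does not close.
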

\begin{proof}
Recalling the notation introduced in the statement of Lemma~\ref{lm:infSupBoundBB}, we begin by noting that 
\begin{equation}
\label{eq:badFirstSplit}
\P_{0, u, b}\bigl(\tau_{-\zeta} > b\bigr) \leq 	\P_{0, u, b}\bigl(\tau_{-\zeta(0)} > b\bigr) + \P_{0, u, b}\bigl(-\zeta(T_b) < M_b < -\zeta(0)\bigr) \;.
\end{equation}
The first probability on the right-hand side can be bounded by using \eqref{eq:boundsBBstaysPos} as follows
\begin{equation*}
\P_{0, u, b}\bigl(\tau_{-\zeta(0)} > b\bigr) \leq 2\frac{\zeta(0)(\zeta(0) + u)}{b} \leq 4 \frac{u}{b} \zeta(0)\;,
\end{equation*}
where here we assumed that $b \geq  1$ is taken large enough so that $\zeta(0)/u < 1$ uniformly over all $u \in [b^{\iota}, b^{3/4}]$.
Concerning the second probability on the right-hand side of \eqref{eq:badFirstSplit}, using \eqref{eq:dendInf} of Lemma~\ref{lm:infSupBoundBB}, we have that
\begin{align}
\P_{0, u, b}\bigl(-\zeta(T_b) < M_b < -\zeta(0)\bigr) 
& = \sqrt{\f{2}{\pi}} \int_0^b \int_{-\zeta(s)}^{-\zeta(0)} \frac{\sqrt{b} (-z) (u-z) e^{-\f{(-bz + u s)^2}{2bs(b-s)}}}{(s(b-s))^{3/2}} dz ds \nonumber\\
& = \sqrt{\f{2}{\pi}} \int_0^b \int_{\zeta(0)}^{\zeta(s)} \frac{\sqrt{b} z (u+z) e^{-\f{(bz + u s)^2}{2bs(b-s)}}}{(s(b-s))^{3/2}} dz ds \nonumber\\
& = \sqrt{\f{2}{\pi}} \int_{\zeta(0)}^{\zeta(b)} \int_{\zeta^{-1}(z)}^b \frac{\sqrt{b} z (u+z) e^{-\f{(bz + u s)^2}{2bs(b-s)}}}{(s(b-s))^{3/2}} ds dz \;, \label{eq:innerInt}
\end{align}
where here, we assumed that $\zeta$ is invertible. This is not restrictive since the general case can be obtained by a standard approximation argument. 
We now bound the inner integral in \eqref{eq:innerInt} by splitting the interval of integration around $\zeta^{-1}(z) \vee b/2$. We note that, 
\begin{align}
\int_{\zeta^{-1}(z)}^{\zeta^{-1}(z) \vee b/2} \frac{\sqrt{b} z (u+z) e^{-\f{(bz + u s)^2}{2bs(b-s)}}}{(s(b-s))^{3/2}} ds 
& \leq  \frac{2^{3/2} (u+z)z}{b} \int_{\zeta^{-1}(z)}^{\infty} \frac{e^{-\f{z^2}{2s}}}{s^{3/2}} ds \nonumber \\ 
& \leq \frac{8 (u+z)}{b} \frac{z}{\sqrt{\zeta^{-1}(z)}} \nonumber \\
& \leq \frac{16 u}{b} \frac{z}{\sqrt{\zeta^{-1}(z)}} \;, \label{eq:case1Zetab}
\end{align}
where in order to get the last inequality we used that fact that for $b \geq 1$ sufficiently large, it holds that $z/u < 1$, uniformly over $u \in [b^{\iota}, b^{3/4}]$ and $z \in [\zeta(0), \zeta(b)]$. Similarly, again by taking $b \geq 1$ sufficiently large, we have that  
\begin{align}
\int_{\zeta^{-1}(z) \vee b/2}^{b} \frac{\sqrt{b} z (u+z) e^{-\f{(bz + u s)^2}{2bs(b-s)}}}{(s(b-s))^{3/2}} ds 
&\leq \int_{b/2}^{b} \frac{\sqrt{b} z (u+z) e^{-\f{(bz + u s)^2}{2bs(b-s)}}}{(s(b-s))^{3/2}} ds  \nonumber \\
& \leq \frac{2^{3/2} (u+z) z}{b} \int_{0}^{\infty} \frac{e^{-\f{(z+u)^2}{16 s}}}{s^{3/2}} ds \nonumber \\
& \leq 2^{7/2} \sqrt{\pi} \frac{z}{b} \;.\label{eq:case2Zetab}
\end{align} 

Therefore, plugging the estimates \eqref{eq:case1Zetab} and \eqref{eq:case2Zetab} into \eqref{eq:innerInt}, for $b \geq 1$ sufficiently large, it holds that
\begin{align*}
\P_{0, u, b}\bigl(-\zeta(T_b) < M_b < -\zeta(0)\bigr) 
& \leq \sqrt{\frac{2}{\pi}} \frac{u}{b} \int_{\zeta(0)}^{\zeta(b)} \Biggl(\frac{16 z}{\sqrt{\zeta^{-1}(z)}} + 2^{7/2} \sqrt{\pi} \frac{z}{u}\Biggr)dz  \\
& \leq 16 \frac{u}{b}\Biggl(\int_{0}^{b} \frac{\zeta(z) \zeta'(z)}{\sqrt{z}} dz + \frac{\zeta(b)^2}{u}\Biggr)\;.
\end{align*}
where to go from the first to the second line, we also performed the change of variables $z \mapsto \zeta^{-1}(z)$.
\end{proof}

\begin{proof}[Proof of Proposition~\ref{pr:BoundBBAboveNeg}]
We begin by noting that, thanks to \eqref{eq:boundsBBstaysPos}, it holds that  
\begin{equation*}
	\P_{x, u, b}\bigl(\tau_{-\zeta} > b\bigr) \leq \P_{x,u,b}\bigl(\tau_0 > b\bigr) + \P_{x, u, b}\bigl(\tau_0 \leq b < \tau_{-\zeta}\bigr) \leq \frac{2 x u}{b} + \P_{x, u, b}\bigl(\tau_0 \leq b < \tau_{-\zeta}\bigr) \;.
\end{equation*}
Therefore, our task is now to find a suitable upper bound for the probability on the right-hand side of the above display. To this end, using again \eqref{eq:boundsBBstaysPos}, we note that 
\begin{align*}
\P_{x, u, b}\bigl(\tau_0 \leq b < \tau_{-\zeta}\bigr) 
& \leq \P_{x, u, b}\bigl(\tau_0 \leq b/2, \; \tau_{-\zeta} > b\bigr) + \P_{x, 0, b/2}\bigl(\tau_{-\zeta(0)}>b/2\bigr) \\
& \leq \P_{x, u, b}\bigl(\tau_0 \leq b/2, \; \tau_{-\zeta} > b\bigr) + \frac{4 (x+\zeta(0))\zeta(0)}{b} \\
& \leq \P_{x, u, b}\bigl(\tau_0 \leq b/2, \; \tau_{-\zeta} > b\bigr) + \frac{8 x^2}{b} \;,
\end{align*}
where to obtain the bound in the first line, we observed that the probability that the first time the Brownian bridge from $x$ to $u$ in time $b$ hits $0$ is after time $b/2$ is bounded above by the probability that the Brownian bridge from $x$ to $0$ in time $b/2$ stays above height $-\zeta(0)$. 
Therefore, from now on, we can just focus on the probability in the last line of the above display. Using the strong Markov property of the Brownian bridge, we note that 
\begin{align}
\label{eq:decomStrongMarkovNeg}
\P_{x, u, b}\bigl(\tau_0 \leq b/2, \; \tau_{-\zeta} > b\bigr) = \int_{0}^{b/2} \P_{0, u, b-s}(\tau_{-\zeta(s+\cdot)} > b-s) \P_{x, u, b}\bigl(\tau_0 \in ds) \;.
\end{align}
Now, thanks to Lemma~\ref{lm:badProbMax}, we have that, for all $s \in [0, b/2]$, it holds that 
\begin{equation*}
\P_{0, u, b-s}\bigl(\tau_{-\zeta(s+\cdot)} > b-s\bigr) \leq 32 \frac{u}{b} \Biggl(\zeta(s) + \int_{0}^{b} \frac{\zeta(z+s) \zeta'(z+s)}{\sqrt{z}} dz + \frac{\zeta(b)^2}{u}\Biggr)\;,
\end{equation*}
and so plugging this estimate into the right-hand side of \eqref{eq:decomStrongMarkovNeg}, we note that  
\begin{align}
\P_{x, u, b}\bigl(& \tau_0 \leq b/2, \; \tau_{-\zeta} > b\bigr) \leq 32 \frac{u}{b}\Biggl(\frac{\zeta(b)^2}{u}  \nonumber \\
& +\int_{0}^{b/2} \zeta(s) \P_{x, u, b}\bigl(\tau_0 \in ds) + \int_{0}^{b/2}\int_{0}^{b} \frac{\zeta(z+s) \zeta'(z+s)}{\sqrt{z}} dz \P_{x, u, b}\bigl(\tau_0 \in ds)\Biggr) \;.\label{eq:decomStrongMarkovNegExtra}
\end{align} 
Regarding the first integral in \eqref{eq:decomStrongMarkovNegExtra}, we note that by proceeding in the same exact way as in the proof of Proposition~\ref{pr:BoundBBAbovePos}, we get that
\begin{equation*}
\int_{0}^{b/2} \zeta(s) \P_{x, u, b}\bigl(\tau_0 \in ds)  \leq \rho(x) \;,
\end{equation*} 
where we recall that the function $\rho$ is defined in \eqref{eq:defRho}. Regarding the second integral in \eqref{eq:decomStrongMarkovNegExtra}, by using \eqref{eq:boundsBBstaysPosDens}, we note that 
\begin{align*}
\int_{0}^{b} \int_{0}^{b/2} \frac{\zeta(z+s) \zeta'(z+s)}{\sqrt{z}} \frac{b xe^{-\frac{(b x+s (u-x))^2}{2 b s (b-s)}}}{\sqrt{2 \pi} s \sqrt{b s (b-s)}}ds dz 
& \leq \frac{1}{\sqrt{\pi}} \int_{0}^{\infty} \int_{0}^{\infty} \frac{\zeta(z+s) \zeta'(z+s)}{\sqrt{z}} \frac{xe^{-\frac{x^2}{2s}}}{s^{3/2}}ds dz \\
& \leq 2 \frac{\zeta(x^2)^2}{x} + \int_{x^2}^{\infty} \frac{\zeta(s)^2}{s^{3/2}} ds \;.
\end{align*}
where to obtain the last inequality, it is sufficient to follow the calculations in the proof of \cite[Lemma~A.8]{BiskupLouidor}. Hence, the claim follows by gathering toghther all the previous estimates.
\end{proof}

Let $a$, $k > 1$, and consider the function $\zeta_{a, k}:\R^{+}_0 \to \R^{+}_0$ given by
\begin{equation}
\label{eq:zetaAK}
\zeta_{a, k}(s) \eqdef a\bigl(1+\bigr[\log(1+k+s)\bigr]^2\bigl) \;. 
\end{equation}
\begin{lemma}
\label{lm:techRhoAK}
Let $a$, $k > 1$ and consider the functions $\rho_{a, k}$, $\tilde \rho_{a, k}:[0, \infty] \to \R^{+}_0$ defined in \eqref{eq:defRho} and \eqref{eq:defRhoTilde} with $\zeta = \zeta_{a, k}$ as in \eqref{eq:zetaAK}. Then, there exist constants $c_1$, $c_2$, $\tilde c_1$, $\tilde c_2$, $\tilde c_3 >0$, depending on $a$ but not on $k$, such that  
\begin{equation*}
\rho_{a, k}(x) \leq c_1 \zeta_{a, k}(0)  + c_2 \bigl[\log(e+x^4)\bigr]^2 \;, \qquad \forall \, x \geq 1\;,
\end{equation*}
and 
\begin{equation*}
\tilde \rho_{a, k}(x) \leq \tilde c_1 \zeta_{a, k}(0) + \tilde c_2 \frac{\zeta_{a, k}(0)^2}{x}  + \tilde c_3 \bigl[\log(e+x^4)\bigr]^4\;, \qquad \forall \,  x \geq 1\;.
\end{equation*}
\end{lemma}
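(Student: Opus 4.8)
\textbf{Proof plan for Lemma~\ref{lm:techRhoAK}.}
The plan is to estimate the defining integrals of $\rho_{a,k}$ and $\tilde\rho_{a,k}$ directly, exploiting the fact that $\zeta_{a,k}$ and $\zeta_{a,k}'$ are dominated by powers of $\log$ that are essentially insensitive to $k$ once we extract the value at the origin. The key elementary observation is that, since $s \ge x^4$ in all the integrals appearing in \eqref{eq:defRho} and \eqref{eq:defRhoTilde}, and since for $x \ge 1$ one has $k + s \le (1+k)(1+s)$, the logarithm splits as $\log(1+k+s) \le \log(1+k) + \log(1+s)$; squaring and using $(A+B)^2 \le 2A^2 + 2B^2$ lets us write $\zeta_{a,k}(s) \lesssim_a \zeta_{a,k}(0) + [\log(1+s)]^2$ uniformly in $k$. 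This is the mechanism that converts all the $k$-dependence into the harmless additive term $\zeta_{a,k}(0)$, leaving $k$-free tail integrals of $[\log(1+s)]^2/s^{c}$ with $c \in \{5/4, 3/2, 2\}$, each of which converges and is bounded by a constant times $[\log(e+x^4)]^2$ (for the $\rho$-terms) or $[\log(e+x^4)]^4$ (for the extra $\zeta^2$-terms in $\tilde\rho$).

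First I would treat $\rho_{a,k}$. Recalling
\begin{equation*}
\rho_{a,k}(x) = \zeta_{a,k}(x^4) + 2x^2 \int_{x^4}^{\infty} \frac{\zeta_{a,k}(s)}{s^2}\, ds + 2x \int_{x^4}^{\infty} \frac{\zeta_{a,k}(s)}{s^{5/4}}\, ds\;,
\end{equation*}
the first term is bounded by $\zeta_{a,k}(0) + a[\log(1+x^4)]^2 \le \zeta_{a,k}(0) + a[\log(e+x^4)]^2$. For the two integrals, after the splitting $\zeta_{a,k}(s) \lesssim_a \zeta_{a,k}(0) + [\log(1+s)]^2$, I would bound $\int_{x^4}^\infty s^{-2}\,ds = x^{-4}$ and $\int_{x^4}^\infty s^{-5/4}\,ds = 4x^{-1}$ for the $\zeta_{a,k}(0)$-piece (giving contributions $\lesssim_a x^{-2}\zeta_{a,k}(0)$ and $\lesssim_a \zeta_{a,k}(0)$, both absorbed into $c_1 \zeta_{a,k}(0)$ using $x \ge 1$), and for the $[\log(1+s)]^2$-piece I would use the standard estimate $\int_{y}^\infty s^{-c}[\log(1+s)]^2\,ds \lesssim_c y^{1-c}[\log(e+y)]^2$ for $c > 1$, applied with $y = x^4$. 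Multiplying by the prefactors $2x^2$ and $2x$ respectively and using $x \ge 1$ yields the claimed bound $\rho_{a,k}(x) \le c_1 \zeta_{a,k}(0) + c_2[\log(e+x^4)]^2$ with $c_1, c_2$ depending only on $a$.

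Next I would treat $\tilde\rho_{a,k}$. By \eqref{eq:defRhoTilde},
\begin{equation*}
\tilde\rho_{a,k}(x) = \rho_{a,k}(x) + 2\frac{\zeta_{a,k}(x^2)^2}{x} + \int_{x^2}^{\infty} \frac{\zeta_{a,k}(s)^2}{s^{3/2}}\, ds\;.
\end{equation*}
The first summand is handled by the bound just obtained. For $\zeta_{a,k}(x^2)^2$ I would use $\zeta_{a,k}(x^2)^2 \lesssim_a \zeta_{a,k}(0)^2 + [\log(1+x^2)]^4 \le \zeta_{a,k}(0)^2 + [\log(e+x^4)]^4$, so $2\zeta_{a,k}(x^2)^2/x \lesssim_a \zeta_{a,k}(0)^2/x + [\log(e+x^4)]^4$ (here $x \ge 1$ keeps the last term bounded). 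For the tail integral, expanding $\zeta_{a,k}(s)^2 \lesssim_a \zeta_{a,k}(0)^2 + [\log(1+s)]^4$ and integrating: the constant piece gives $\zeta_{a,k}(0)^2 \int_{x^2}^\infty s^{-3/2}\,ds = 2\zeta_{a,k}(0)^2 x^{-1}$, and the logarithmic piece gives $\int_{x^2}^\infty s^{-3/2}[\log(1+s)]^4\,ds \lesssim x^{-1}[\log(e+x^2)]^4 \le x^{-1}[\log(e+x^4)]^4 \le [\log(e+x^4)]^4$. Collecting all contributions and relabelling constants gives $\tilde\rho_{a,k}(x) \le \tilde c_1 \zeta_{a,k}(0) + \tilde c_2 \zeta_{a,k}(0)^2/x + \tilde c_3 [\log(e+x^4)]^4$, with $\tilde c_1, \tilde c_2, \tilde c_3$ depending on $a$ only.

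I do not anticipate a genuine obstacle here; this is a bookkeeping lemma. The one point that needs a little care is making sure the $\zeta_{a,k}(0)$-dependence is isolated cleanly and that no hidden $k$-dependence survives in the tail integrals — this is exactly why the multiplicative split $\log(1+k+s) \le \log(1+k) + \log(1+s)$, rather than a cruder bound, is the right tool. I would also record once, for reuse, the elementary tail estimate $\int_y^\infty s^{-c}[\log(1+s)]^m\,ds \le C_{c,m}\,y^{1-c}[\log(e+y)]^m$ valid for $c>1$, $m \in \N$, $y \ge 1$, since it is invoked several times above.
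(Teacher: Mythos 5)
Your proposal is correct and follows essentially the same route as the paper: identify the uniform-in-$k$ bound $\zeta_{a,k}(s) \lesssim_a \zeta_{a,k}(0) + [\log(e+s)]^2$ as the key mechanism, then feed it into the defining integrals of $\rho_{a,k}$ and $\tilde\rho_{a,k}$ via elementary tail estimates. The only cosmetic difference is in how you derive that key bound — you use the multiplicative split $1+k+s \leq (1+k)(1+s)$ and $(A+B)^2 \leq 2A^2+2B^2$, whereas the paper writes $\zeta_{a,k}(s)-\zeta_{a,k}(0)$ as an integral of $\zeta_{a,0}'$ over $[k,k+s]$ and translates it to $[0,s]$ by monotonicity of $\log(e+r)/(e+r)$; both give the same estimate up to constants, and the paper then declares the remaining bookkeeping (which you carry out in full) to be "some elementary calculations."
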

\begin{proof}
For $a$, $k>1$, we have that
\begin{equation*}
\zeta'_{a, k}(s) = \frac{2a\log(1+ k+s)}{1+k+s} \;, \qquad \forall s \geq 0 \;.
\end{equation*}
and so using the elementary inequality $\log(1+s)/(1+s) \leq e \log(e+s)/(e+s)$ and the fact that the expression on the right decreases for $s \geq 0$, we get that 
\begin{align*}
\zeta_{a, k}(s) = \int_0	^{k+s} \zeta'_{a, 0}(r) dr + a  & = 2a\int_0^{k} \frac{\log(1+r)}{1+r} dr + 2a\int_{k}^{k+s} \frac{\log(1+r)}{1+r} dr + a\\
& \leq \zeta_{a, k}(0) + 2 a e \int_0^{s} \frac{\log(e+r)}{e+r} dr  \\
& \leq \zeta_{a, k}(0) + e a \bigl[\log(e+s)\bigr]^2 \;.
\end{align*}
By using the above bound on $\zeta_{a, k}$ and recalling the exact expressions of the functions $\rho_{a, k}$ and $\tilde \rho_{a,k}$, the claims follow by some elementary calculations.
\end{proof}

%%%%%%%%%%%%%%%%%%%%%%%%%%%%%%%%%%%%%%%%%%%%%%
\subsection{Entropic repulsion}
\begin{lemma}
\label{lm:entrBasic}
For $a$, $k > 1$, let $\zeta_{a, k}:\R^{+}_0 \to \R^{+}_0$ be the function defined in \eqref{eq:zetaAK}. There exists a constant $c = c(a) > 0$ such that for all $\iota \in (0, 1/8)$, $b \geq 1$ sufficiently large, $u \in [b^{\iota}, b^{3/4}]$, all $s \in (0, b/2]$, and all $x \geq \zeta_{a, k}(s)$, 
\begin{equation*}
\P_{0, u, b}\Biggl(\inf_{r \in [0, b]}\bigl(B_r + \zeta_{a, k}(r)\bigr) > 0, \; B_s \leq x\Biggr) \leq c \frac{u  x^2}{b \sqrt{s}} \;. 
\end{equation*}
\end{lemma}
 
Intuitively, Lemma~\ref{lm:entrBasic} states that, when conditioned to stay above a negative curve, the Brownian motion is repelled upward.

\begin{proof}[Proof of Lemma~\ref{lm:entrBasic}]
For $a$, $k > 1$ and $s \in (0, b/2]$, we let
\begin{equation*}
A_s \eqdef \Biggl\{\inf_{r \in [s, b]}\bigl(B_r + \zeta_{a, k}(r)\bigr) > 0\Biggr\} \;,
\end{equation*}
so that the event in the lemma statement can be written as $A_0 \cap \{B_s \leq x\}$. Using the Markov property of the Brownian bridge, we note that 
\begin{equation}
\label{eq:boundEntr101}
\P_{0, u, b}\bigl(A_0,\; B_s \leq x\bigr) \leq \E_{0, u, b}\Bigl[\one_{\{-\zeta_{a, k}(s) < B_s\leq x\}} \P_{0, u, b}\bigl(A_s \big| \sigma(B_s)\bigr)\Bigr] \;.
\end{equation}
On the event $\{B_s \in (-\zeta_{a, k}(s), x]\}$, the above conditional probability is maximised when $\{B_s = x\}$. Now, since $s \in (0, b/2]$ and $x \geq \zeta_{a, k}(s)$, Proposition~\ref{pr:BoundBBAboveNeg} and Lemma~\ref{lm:techRhoAK} ensure that there exists a constant $c_1 = c_1(a) >0$ such that for all $b \geq 1$ sufficiently large and $u \in [b^{\iota}, b^{3/4}]$, it holds that
\begin{equation}
\label{eq:boundEntr11}
\P_{0, u, b}\bigl(A_s \big|B_s = x\bigr) = \P_{x, u, b-s}\Biggl(\inf_{r \in [0, b-s]}\bigl(B_r + \zeta_{a, k+s}(r)\bigr) > 0\Biggr) \leq c_1 \frac{x u}{b} \;.
\end{equation}
To conclude, it remains to estimate the probability of the event in the indicator function on the right-hand side of \eqref{eq:boundEntr101}. Since $x \geq \zeta_{a, k}(s)$, we observe that there exists a constant $c_2 > 0$ such that 
\begin{equation*}
\P_{0, u, b} \bigl(-\zeta_{a, k}(s) < B_s\leq x\bigr) \leq \P_{0, u, b} \bigl(\abs{B_s} \leq x\bigr) \leq c_2 \frac{x}{\sqrt{s}}\;.
\end{equation*}
Finally, putting everything together, we showed that there exist a constant $c_3 = c_3(a) > 0$ such that 
\begin{equation*}
\P_{0, u, b}\bigl(A_0,\; B_s \leq x\bigr) \leq c_3 \frac{u x^2}{b \sqrt{s}} \;,
\end{equation*}
and so the claim follows.
\end{proof}

\begin{proposition}
\label{pr:BoundBBEntropic}
For $a$, $k > 1$, let $\zeta_{a, k}:\R^{+}_0 \to \R^{+}_0$ be the function defined in \eqref{eq:zetaAK}. There exist constants $c_1 = c_1(a)$, $c_2 = c_2(a) > 0$ such that for all $\iota \in (0, 1/8)$, $b \geq 1$ sufficiently large, $u \in [b^{\iota}, b^{3/4}]$, and all $s \in (c_2, b/2]$, 
\begin{equation*}
	\P_{0, u, b}\Biggl(\inf_{r \in [0, b]}\bigl(B_r + \zeta_{a, k}(r)\bigr) > 0, \;\inf_{r \in [s, b]}\bigl(B_r - \zeta_{a, k}(r)\bigr) < 0\Biggr) \leq c_1 \frac{u}{b} \frac{(k+s)^{7/16}}{\sqrt s}  \;.
\end{equation*}
\end{proposition}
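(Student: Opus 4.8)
The plan is to combine the ``lower curve'' and ``upper curve'' constraints via the strong Markov property of the Brownian bridge at the \emph{first} time $\sigma$ the path crosses $\zeta_{a,k}$ (from below), and then use the entropic-repulsion estimate of Lemma~\ref{lm:entrBasic} together with the one-sided bound of Proposition~\ref{pr:BoundBBAboveNeg} (and its companion Lemma~\ref{lm:techRhoAK}) to control the two pieces. First I would introduce the stopping time $\sigma \eqdef \inf\{r \geq s : B_r = \zeta_{a,k}(r)\}$ and note that, on the event in the statement, $\sigma \le b$; moreover since we also require $B_r + \zeta_{a,k}(r) > 0$ for all $r \in [0,b]$ (in particular on $[0,s]$), the path at time $s$ satisfies $-\zeta_{a,k}(s) < B_s$. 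The idea is to decompose according to the value of $\sigma$ and of $B_\sigma = \zeta_{a,k}(\sigma)$: before $\sigma$ the path stays in the strip $(-\zeta_{a,k}(r), \zeta_{a,k}(r))$, which by Lemma~\ref{lm:entrBasic} applied with $x = \zeta_{a,k}(\sigma)$ costs at most a multiple of $u\,\zeta_{a,k}(\sigma)^2/(b\sqrt{s})$ for the part up to time $s$, while after $\sigma$ we only keep the lower-curve constraint, whose conditional probability given $B_\sigma$ is bounded by Proposition~\ref{pr:BoundBBAboveNeg} (shifting the curve by $\sigma$ and using that $\zeta_{a,k}(\sigma + \cdot)$ is again of the admissible form) by a multiple of $\zeta_{a,k}(\sigma)\,u/b$. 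Multiplying and integrating the density of $\sigma$ should produce the claimed bound.

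Concretely, the key steps in order are: (1) reduce to the event $\{\sigma \le b\} \cap \{-\zeta_{a,k}(s) < B_s\} \cap \{\inf_{r \in [0,b]}(B_r + \zeta_{a,k}(r)) > 0\}$; (2) split into the case $\sigma \le b/2$ and $\sigma > b/2$, handling the latter crudely (the bridge must travel up to a polylogarithmic height near the midpoint while staying positive, which by the Gaussian density of $B_{b/2}$ and a reflection bound costs at most a multiple of $u\,\zeta_{a,k}(b)^2/b^{3/2}$, absorbed into the error); (3) for $\sigma \le b/2$, condition on $\mathcal{F}_\sigma$ and use the strong Markov property to write the probability as an integral over $r \in [s, b/2]$ of $\P_{\zeta_{a,k}(r),u,b-r}(\inf_{\rho \in [0,b-r]}(B_\rho + \zeta_{a,k+r}(\rho)) > 0) \le c\,\zeta_{a,k}(r)\,u/b$ against the law of $\sigma$ restricted to staying below the upper curve up to time $r$; (4) bound the latter restricted law: the constraint up to time $s$ contributes (via Lemma~\ref{lm:entrBasic} with $x = \zeta_{a,k}(r) \ge \zeta_{a,k}(s)$) a factor $c\,u\,\zeta_{a,k}(r)^2/(b\sqrt s)$ times the conditional density of the first upcrossing time on $[s,r]$ given $B_s$, and the remaining hitting-density integral over $r$ is handled as in the proof of Proposition~\ref{pr:BoundBBAbovePos}; (5) combine, using $\zeta_{a,k}(r) \lesssim_a [\log(e + k + r)]^2$, and estimate $\int_s^{b/2} [\log(e+k+r)]^{\text{const}}\,r^{-3/2}\,dr \lesssim s^{-1/2}$, while the extra polylogarithmic factors are absorbed into $(k+s)^{7/16}$ by choosing $c_2 = c_2(a)$ large enough so that $[\log(e+k+s)]^N \le (k+s)^{1/16}$ for $s \ge c_2$.

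Two presentational remarks help keep the bookkeeping manageable. The power $(k+s)^{7/16}$ is not sharp, so all polylogarithmic factors in $\zeta_{a,k}$ may be bounded by $(k+s)^{1/16}$ once $s$ exceeds a constant depending only on $a$; thus effectively one needs ``$7/16 = 6/16 + 1/16$,'' where the $6/16$ would come from a naive product of six such logarithmic factors and $\sqrt{s}$ in the denominator accounts for the remaining $1/\sqrt s$. Secondly, all the $\P_{x,u,b}$-estimates invoked (Propositions~\ref{pr:BoundBBAboveNeg}, Lemma~\ref{lm:badProbMax}, Lemma~\ref{lm:entrBasic}, Lemma~\ref{lm:techRhoAK}) are uniform over $u \in [b^\iota, b^{3/4}]$ and over the relevant shifts of the curve, so uniformity is inherited at each step.

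The main obstacle will be step (4): making precise the joint control of ``the path stays above $-\zeta_{a,k}$ on $[0,s]$, lies in the strip at time $s$, and first exits the strip \emph{upward} at some time $r \in [s,b/2]$.'' Lemma~\ref{lm:entrBasic} as stated only gives the probability of staying above the lower curve with $B_s \le x$; one needs to upgrade it to include the first-upcrossing density of the upper curve, which requires re-running the Markov-property argument with an extra stopping time and checking that the density of the first upcrossing integrates correctly against the polynomial factors — essentially an amalgam of the proofs of Propositions~\ref{pr:BoundBBAbovePos} and~\ref{pr:BoundBBAboveNeg}. I expect this to be the only place where a genuinely new computation (as opposed to a citation of the earlier lemmas) is needed; everything else is assembly.
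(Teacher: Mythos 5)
Your plan is genuinely different from the paper's, and the difference is precisely at the point you flagged as the ``main obstacle''. The paper's proof does not decompose at a crossing time at all, and therefore never needs a control on the first-crossing density. Instead it works with the ratio
\begin{equation*}
\frac{\P_{0,u,b}(A^{+}_0 \setminus A^{-}_s)}{\P_{0,u,b}(A^{+}_0)}
= 1 - \frac{\P_{0,u,b}(A^{+}_0 \cap A^{-}_s)}{\P_{0,u,b}(A^{+}_0)}
\leq 1 - \frac{\P_{0,u,b}(A^{+}_0 \cap A^{-}_s \cap \{B_s\geq x\})}{\P_{0,u,b}(A^{+}_0)}
\end{equation*}
for a free threshold $x > \zeta_{a,k}(s)$, applies the Markov property at the \emph{fixed} time $s$ (not at a stopping time), and observes via Propositions~\ref{pr:BoundBBAbovePos} and~\ref{pr:BoundBBAboveNeg} that, once $B_s = z \geq x$ with $x$ large, the conditional probabilities of staying above $+\zeta_{a,k}$ and above $-\zeta_{a,k}$ on $[s,b]$ are both $\approx 2zu/(b-s)$ up to multiplicative errors $\delta,\tilde\delta$ of order $[\log(e+x^4)]^4/x$. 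It then uses Lemma~\ref{lm:entrBasic} to bound $\P_{0,u,b}(A_0^+\cap\{B_s\leq x\})/\P_{0,u,b}(A_0^+)$, and the choice $x = c_7(k+s)^{7/32}$ balances the $\delta,\tilde\delta$ terms against the term $\sim ux^2/(b\sqrt s)$. The upshot is that the only estimate used beyond the one-sided propositions is the already-proved Lemma~\ref{lm:entrBasic}, with nothing new to establish.

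Your proposal does, by contrast, have a genuine gap. You acknowledge it yourself: neither Lemma~\ref{lm:entrBasic} nor any of the other ingredients in Appendix~\ref{ap:BBEstimates} controls the joint quantity ``probability of staying above $-\zeta$ on $[0,\sigma]$ times the first-crossing density of $+\zeta$ at time $\sigma$'', which is exactly what your strong-Markov decomposition at $\sigma$ requires. Furthermore, as stated, your step (4) multiplies the estimate $\lesssim u\zeta_{a,k}(r)^2/(b\sqrt s)$ from Lemma~\ref{lm:entrBasic} (already a complete probability bound for the behaviour up to time $s$) by the post-$\sigma$ bound $\lesssim \zeta_{a,k}(r)u/b$ from Proposition~\ref{pr:BoundBBAboveNeg}, which double-counts and produces a spurious factor of $(u/b)^2$; to make the decomposition correct you would need the \emph{restricted density} of $\sigma$ rather than the Lemma~\ref{lm:entrBasic} bound. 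One more minor slip: the event in the statement does \emph{not} force $\sigma\leq b$ with your definition $\sigma = \inf\{r\geq s: B_r = \zeta_{a,k}(r)\}$; if $B_s < \zeta_{a,k}(s)$ (which is allowed) the event $(A_s^-)^c$ holds automatically and $\sigma$ may exceed $b$. So you would have to split off $\{B_s < \zeta_{a,k}(s)\}$ (handled directly by Lemma~\ref{lm:entrBasic}) before invoking the stopping-time decomposition, and in the remaining case $\sigma$ is actually the first \emph{downward} crossing of $\zeta_{a,k}$, not an upcrossing. In short, your route is plausible in spirit but would require proving a new crossing-density lemma from scratch, whereas the paper's ratio argument is engineered specifically to avoid that.
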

\begin{proof}
For $a$, $k> 1$ and $s \geq 0$, we define the events 
\begin{equation*}
A^{\pm}_s \eqdef \Biggl\{\inf_{r \in [s, b]}\bigl(B_r \pm \zeta_{a, k}(r)\bigr) > 0\Biggr\} \;,	
\end{equation*}
so that the probability in the statement can be written as $\P_{0, u, b}(A^{+}_0 \setminus A^{-}_s)$. For $x \geq \zeta_{a, k}(s)$, we have that 
\begin{equation}
\label{eq:startEntropic}
\frac{\P_{0, u, b}(A^{+}_0 \setminus A^{-}_s)}{\P_{0, u, b}(A_0^{+})}  = 1-\frac{\P_{0, u, b}(A_0^{+} \cap A_{s}^{-})}{\P_{0, u, b}(A_0^{+})} \leq 1-\frac{\P_{0, u, b}(A_0^{+} \cap A_{s}^{-} \cap \{B_s \geq x\})}{\P_{0, u, b}(A_0^{+})} \;.
\end{equation}
Our goal now is to find a suitable lower bound for the probability in the numerator on the right hand-side of \eqref{eq:startEntropic}. To this end, we define the event 
\begin{equation*}
A_{0, s}^{+} \eqdef \Biggl\{\inf_{r \in [0, s]} \bigl(B_r + \zeta_{a, k}(r)\bigr) >0\Biggr\}\;,
\end{equation*}
and we note that 
\begin{equation*}
\P_{0, u, b}\bigl(A_{0}^{+} \cap A_{s}^{-} \cap \{B_s \geq x\}\bigr) = \E_{0, u, b}\Bigl[\one_{\{A_{0, s}^{+} \cap \{B_s \geq x\}\}} \P_{0, u, b}\bigl(A_{s}^{-} \, \big| \, \sigma((B_r)_{r \leq s})\bigr)\Bigr] \;.
\end{equation*}
For any $s \in (0, b/2]$, on the event $\{B_s \geq x\}$, thanks to Proposition~\ref{pr:BoundBBAbovePos} and Lemma~\ref{lm:techRhoAK}, we have that there exists a constant $c_1 = c_1(a) >0$ such that
\begin{equation*}
\P_{0, u, b}\bigl({A_{s}^{-}} \, \big | \, \sigma((B_r)_{r \leq s})\bigr) \geq \frac{2 B_s u}{b-s}\Biggl(1-c_1 \Biggl(\frac{\zeta_{a, k+s}(0)}{x} + \frac{[\log(e+x^4)]^2}{x} + o_b(1)\Biggr)\Biggr)\;.
\end{equation*}
Similarly, for any $s \in (0, b/2]$, on the event $\{B_s \geq x\}$, thanks to Proposition~\ref{pr:BoundBBAboveNeg} and Lemma~\ref{lm:techRhoAK}, we have that there exists a constant $c_2 = c_2(a) >0$ such that 
\begin{equation*}
\P_{0, u, b}\bigl({A_{s}^{+}} \, \big | \, \sigma((B_r)_{r \leq s})\bigr) \leq \frac{2 B_s u}{b-s}\Biggl(1+c_2 \Biggl(\frac{\zeta_{a, k+s}(0)}{x} + \frac{\zeta_{a, k+s}(0)^2}{x^2}+ \frac{[\log(e+x^4)]^4}{x} + o_b(1)\Biggr)\Biggr)\;.
\end{equation*}
Therefore, denoting by $\delta$ and $\tilde \delta$ the two error terms in the previous two displays, we have that 
\begin{equation*}
\P_{0, u, b}\bigl(A_{0}^{+} \cap A_{s}^{-} \cap \{B_s \geq x\}\bigr) \geq \frac{1-\delta}{1+\tilde\delta} \P_{0, u, b}\bigl(A_{0}^{+} \cap \{B_s \geq x\}\bigr) \;,
\end{equation*} 
and so 
\begin{align}
\frac{\P_{0, u, b}\bigl(A_{0}^{+} \cap A_{s}^{-} \cap \{B_s \geq x\}\bigr)}{\P_{0, u, b}(A_0^{+})} 
& \geq \frac{1-\delta}{1+\tilde \delta} \frac{\P_{0, u, b}\bigl(A_{0}^{+} \cap \{B_s \geq x\}\bigr)}{{\P_{0, u, b}(A_0^{+})}} \nonumber\\
& = \frac{1-\delta}{1+\tilde \delta} \Biggl(1 - \frac{\P_{0, u, b}\bigl(A_{0}^{+} \cap \{B_s \leq x\}\bigr)}{{\P_{0, u, b}(A_0^{+})}}\Biggr) \;.\label{eq:entrMainInter} 
\end{align} 
Now, by using \eqref{eq:boundsBBstaysPos}, we note that, by possibly taking $b > 0$ large enough depending on $k$ and $a$, it holds that  
\begin{align*}
\P_{0, u, b}(A^{+}_0) \geq \P_{0, u, b}(\tau_{-\zeta_{a, k}(0)} > b) 
& \geq \frac{2 \zeta_{a, k}(0)(u+\zeta_{a, k}(0))}{b}\Biggl(1-\f{\zeta_{a, k}(0)(u+\zeta_{a, k}(0))}{b}\Biggr)  \\
& \geq \frac{\zeta_{a, k}(0)(u+\zeta_{a, k}(0))}{b} \;,
\end{align*}
By using this estimate along with Lemma~\ref{lm:entrBasic}, we obtain that there exists a constant $c_3 = c_3(a)$ such that, for all $x \geq \zeta_{a, k}(s)$, 
\begin{equation*}
\frac{\P_{0, u, b}(A_0^{+} \cap \{B_s \leq x\})}{\P_{0, u, b}(A_0^{+})} \leq c_3 \frac{u x^2}{\zeta_{a, k}(0)(u + \zeta_{a, k}(0)) \sqrt{s}} \;.
\end{equation*}  
Hence, plugging this estimate back into \eqref{eq:entrMainInter} and recalling \eqref{eq:startEntropic}, we get that 
\begin{equation*}
\frac{\P_{0, u, b}(A^{+}_0 \setminus A^{-}_s)}{\P_{0, u, b}(A_0^{+})} \leq 1 - \frac{1-\delta}{1+\tilde \delta} \Biggl(1 - c_3 \frac{u x^2}{\zeta_{a, k}(0)(u + \zeta_{a, k}(0)) \sqrt{s}} \Biggr) \leq \tilde \delta + \delta + c_3 \frac{u x^2}{\zeta_{a, k}(0)(u + \zeta_{a, k}(0)) \sqrt{s}} \;.
\end{equation*}
Thanks to Proposition~\ref{pr:BoundBBAboveNeg}, Lemma~\ref{lm:techRhoAK}, and by using the fact that $\zeta_{a, k}(s) \leq \zeta_{a, 0}(s) + \zeta_{a, k}(0)$, we have that there exist a constant $c_4 = c_4(a)$ such that
\begin{equation*}
\P_{0, u, b}(A_0^{+}) \leq c_4 \frac{\zeta_{a, k}(0)(u + \zeta_{a, k}(0))}{b} \;,
\end{equation*}
and so, putting everything together, we obtain that there exist constants $c_5=c_5(a)$, $c_6 = c_6(a) >0$ such that 
\begin{equation*}
\P_{0, u, b}(A^{+}_0 \setminus A^{-}_s) \leq c_5 \frac{u x^2}{b \sqrt{s}} + c_6 \frac{\zeta_{a, k}(0)(u + \zeta_{a, k}(0))}{b} (\delta + \tilde \delta)
\end{equation*}
If we choose $x = c_7 (k+s)^{7/32}$, for some constant constant $c_7 = c_7(a) > 0$ for which $x > \zeta_{a, k}(s)$, one can check that the first term on the right-hand side of the above expression dominates the others as soon as $s$ is larger than some constant depending on $a$, and so we have that 
\begin{equation*}
\P_{0, u, b}(A^{+}_0 \setminus A^{-}_s) \leq c_8 \frac{u}{b} \frac{(k+s)^{7/16}}{\sqrt s}\;,
\end{equation*}
for some constant $c_8 = c_8(a)$. 
\end{proof}

%%%%%%%%%%%%%%%%%%%%%%%%%%%%%%%%%%%%%%%%%%%%%%
\subsection{Random walk estimates}
We collect in this subsection some results that allows us to transfer the statements for Brownian bridge we obtained in the previous subsection to the case of the random walk.  

\begin{lemma}
\label{lm:transferBBRW}
Let $\zeta: \R^{+}_0 \to \R^{+}_0$	be a non-decreasing concave function. For each $x \in \R$, all $b \geq k \geq 1$ it holds that 
\begin{align*}
\P_{x, u, b}\Biggl(\bigcap_{j = k}^{b} \bigl\{B_j & > - \zeta(j) \bigr\}   \, \Bigg| \, B_{k} = z\Biggr) \\
& \leq \P_{z, u, b-k}\Biggl(\inf_{s \in [0, b-k]} \bigl(B_s + 2\zeta(k+s)\bigr) > 0\Biggr) \prod_{j=k}^{b}\Bigl(1-e^{-2\zeta(j)^2}\Bigr)^{-2}\;.
\end{align*}
Similarly, for all $x \in \R$ and all $b \geq k \geq 1$, it holds that 
\begin{equation*}
\P_{x, u, b}\Biggl(\bigcap_{j = k}^b \bigl\{B_j > - \zeta(j) \bigr\} \, \Bigg| \, B_{k} = z\Biggr) \geq \P_{z, u, b-k}\Biggl(\inf_{s \in [0, b-k]} \bigl(B_s + \zeta(k+s)\bigr) > 0\Biggr) \;.
\end{equation*}
\end{lemma}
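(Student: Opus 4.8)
The plan is to reduce the statement, via the Markov property of the Brownian bridge, to a one‑sided comparison between the event that a bridge stays above a curve at all continuous times and the event that it stays above at the integer times only. Conditionally on $\{B_k=z\}$, the process $(B_{k+s})_{s\in[0,b-k]}$ is a Brownian bridge from $z$ to $u$ in time $b-k$, independent of $(B_r)_{r\le k}$; and the event $\bigcap_{j=k}^b\{B_j>-\zeta(j)\}$ depends only on the integer‑time values of $B$ on $[k,b]$. So, writing $B$ for this shifted bridge, it suffices to work under $\P_{z,u,b-k}$ with the events $\mathcal{B}\eqdef\bigcap_{j=0}^{b-k}\{B_j>-\zeta(k+j)\}$ and $\mathcal{C}\eqdef\{\inf_{s\in[0,b-k]}(B_s+2\zeta(k+s))>0\}$. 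The lower bound of the lemma is then immediate, since $\{\inf_{s\in[0,b-k]}(B_s+\zeta(k+s))>0\}\subseteq\mathcal{B}$: positivity of the continuous infimum forces positivity at integer times.

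For the upper bound I would run a Bayes‑type argument by conditioning on the vector $(B_j)_{j=0}^{b-k}$ of integer‑time values. Under this conditioning the segments $(B_s)_{s\in[j,j+1]}$, $j=0,\dots,b-k-1$, are independent Brownian bridges between the prescribed endpoints. On $\mathcal{B}$ each such segment has left endpoint $>-\zeta(k+j)$ and right endpoint $>-\zeta(k+j+1)$, and since the constraint defining $\mathcal{C}$ at an integer time $j$ is only $B_j>-2\zeta(k+j)$, which is implied by $\mathcal{B}$ (as $\zeta\ge 0$), we have $\mathcal{C}\supseteq\mathcal{B}\cap\{\text{every segment stays above }-2\zeta(k+\cdot)\}$. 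Hence
\begin{equation*}
\P_{z,u,b-k}(\mathcal{C})\ \ge\ \E_{z,u,b-k}\Bigl[\one_{\mathcal{B}}\,\P_{z,u,b-k}\bigl(\text{all segments stay above }-2\zeta(k+\cdot)\,\big|\,(B_j)_j\bigr)\Bigr],
\end{equation*}
so it remains to bound the conditional probability for a single segment from below by a product of two factors.

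Fix the segment on $[j,j+1]$. Since $\zeta$ is concave, on that interval $2\zeta(k+\cdot)$ dominates the chord $L$ through $(j,2\zeta(k+j))$ and $(j+1,2\zeta(k+j+1))$, so it suffices that $B_s+L(s)>0$ throughout. Adding the affine function $L$ to a Brownian bridge again gives a Brownian bridge, now from $\alpha\eqdef B_j+2\zeta(k+j)$ to $\beta\eqdef B_{j+1}+2\zeta(k+j+1)$; on $\mathcal{B}$ one has $\alpha>\zeta(k+j)>0$ and $\beta>\zeta(k+j+1)>0$, so by the reflection identity \eqref{eq:boundsBBstaysPosExact} the conditional probability that it stays positive on $[j,j+1]$ equals $1-e^{-2\alpha\beta}\ge 1-e^{-2\zeta(k+j)\zeta(k+j+1)}$. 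Using the elementary inequality $1-e^{-2ab}\ge(1-e^{-2a^2})(1-e^{-2b^2})$ for $a,b\ge 0$ (assume $a\le b$, so $ab\ge a^2$, and use $1-e^{-2b^2}\le 1$), this is at least $(1-e^{-2\zeta(k+j)^2})(1-e^{-2\zeta(k+j+1)^2})$. Multiplying over $j$ and reindexing $m=k+j$,
\begin{align*}
\prod_{m=k}^{b-1}\!\bigl(1-e^{-2\zeta(m)^2}\bigr)\bigl(1-e^{-2\zeta(m+1)^2}\bigr) &= \bigl(1-e^{-2\zeta(k)^2}\bigr)\bigl(1-e^{-2\zeta(b)^2}\bigr)\!\!\prod_{m=k+1}^{b-1}\!\bigl(1-e^{-2\zeta(m)^2}\bigr)^2\\
&\ge \prod_{m=k}^{b}\bigl(1-e^{-2\zeta(m)^2}\bigr)^2,
\end{align*}
the last step using that every factor lies in $[0,1)$. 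Plugging this into the previous display gives $\P_{z,u,b-k}(\mathcal{C})\ge\P_{z,u,b-k}(\mathcal{B})\prod_{j=k}^b(1-e^{-2\zeta(j)^2})^2$, i.e. $\P_{z,u,b-k}(\mathcal{B})\le\P_{z,u,b-k}(\mathcal{C})\prod_{j=k}^b(1-e^{-2\zeta(j)^2})^{-2}$, which is exactly the asserted upper bound after undoing the Markov reduction.

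The only delicate point is the Bayes step: one must be precise that conditioning on the integer‑time values makes the inter‑integer segments genuinely independent bridges, and that on $\mathcal{B}$ the segment endpoints become strictly positive after the affine shift, so that \eqref{eq:boundsBBstaysPosExact} applies. Everything else — the chord estimate from concavity, the scalar inequality for $1-e^{-x}$, and the product bookkeeping — is routine; note that $\zeta$ being non‑decreasing is not even needed beyond concavity here, and $b=\infty$ does not occur since $B$ is a bridge.
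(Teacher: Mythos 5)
Your proof is correct and, since the paper simply cites \cite[Lemma~4.15]{BiskupLouidor} as the model, your argument matches the intended one: reduce via the Markov property to a bridge on $[0,b-k]$, note the lower bound is an inclusion, and for the upper bound condition on the integer-time values so the inter-integer segments become independent bridges, bound each segment's probability of staying above the chord of $2\zeta$ (which dominates $2\zeta$ by concavity, hence $-2\zeta$ is below $-L$) via the reflection identity \eqref{eq:boundsBBstaysPosExact}, and then use $1-e^{-2ab}\ge(1-e^{-2a^2})(1-e^{-2b^2})$ together with the product reindexing. The bookkeeping of the exponents and the use of strict positivity of the shifted endpoints on $\mathcal{B}$ are handled correctly.
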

\begin{proof}
The proof follows exactly the same lines as that of \cite[Lemma 4.15]{BiskupLouidor}.
\end{proof}

\begin{lemma}
\label{lm:mainEntrRW}
For $a$, $k > 1$, let $\zeta_{a, k}:\R^{+}_0 \to \R^{+}_0$ be the function defined in \eqref{eq:zetaAK}. There exist constants $c_1 = c_1(a)$, $c_2 = c_2(a) > 0$ such that for all $\iota \in (0, 1/8)$, $b \geq 1$ sufficiently large, $u \in [b^{\iota}, b^{3/4}]$, and all $k \in (c_2, b/2]$, 
\begin{equation*}
\P_{0, u, b}\Biggl(\bigcap_{j = 1}^{b-1} \bigl\{B_j  > - \zeta_{a,k}(j)\bigr\}, \; \bigcup_{j = k}^{b-1} \bigl\{B_j < \zeta_{a,k}(j)\bigr\}\Biggr) \leq c_1 \frac{u}{b} k^{-1/16} \;.
\end{equation*}
\end{lemma}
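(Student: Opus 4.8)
The plan is to transfer the corresponding Brownian-bridge estimate, Proposition~\ref{pr:BoundBBEntropic}, to the discrete-time setting (the "random walk" obtained by sampling the bridge at integer times) using Lemma~\ref{lm:transferBBRW}. The statement to prove is
\begin{equation*}
\P_{0, u, b}\Biggl(\bigcap_{j = 1}^{b-1} \bigl\{B_j  > - \zeta_{a,k}(j)\bigr\}, \; \bigcup_{j = k}^{b-1} \bigl\{B_j < \zeta_{a,k}(j)\bigr\}\Biggr) \leq c_1 \frac{u}{b} k^{-1/16} \;,
\end{equation*}
so first I would condition on the value $B_k = z$ of the bridge at the deterministic time $k$, and split the probability according to whether $z$ is "large" (say $z \geq x_0$ for a threshold $x_0$ of order $\zeta_{a,k}(k) \approx a(\log k)^2$, or more conveniently of order $k^{7/32}$ as in the proof of Proposition~\ref{pr:BoundBBEntropic}) or "small". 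On the event $\{B_k = z\}$ the conditional law of $(B_j)_{j=1}^{k}$ and of $(B_j)_{j=k}^{b-1}$ factorise, so the probability splits into a product of a "left" factor controlling $\bigcap_{j=1}^{k-1}\{B_j > -\zeta_{a,k}(j)\}$ and a "right" factor controlling both the lower barrier $\bigcap_{j=k}^{b-1}\{B_j > -\zeta_{a,k}(j)\}$ and the "bad" event $\bigcup_{j=k}^{b-1}\{B_j < \zeta_{a,k}(j)\}$.

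For the right factor I would use the first (upper) bound in Lemma~\ref{lm:transferBBRW} with $\zeta = \zeta_{a,k}$ (which is non-decreasing and concave for $a$ large enough, after possibly enlarging the constant in its definition) to pass from the discrete barrier events to the continuous-time events $\{\inf_{s\in[0,b-k]}(B_s + 2\zeta_{a,k}(k+s)) > 0\}$ and $\{\inf_{s\in[k,b-1]}(B_s - 2\zeta_{a,k}(s)) < 0\}$, up to the harmless product $\prod_{j\geq k}(1-e^{-2\zeta_{a,k}(j)^2})^{-2}$, which converges since $\zeta_{a,k}(j)^2 \gtrsim (\log j)^4$. Then Proposition~\ref{pr:BoundBBEntropic} bounds the resulting continuous-time probability by $c\,(u/b)(k+s)^{7/16}/\sqrt{s}$ evaluated at $s=k$, i.e.\ by a constant times $(u/b)\,k^{7/16-1/2} = (u/b)\,k^{-1/16}$, for all $z$ in the relevant range; one has to be a little careful that the starting point $z$ of the shifted bridge enters, but the probability is monotone in $z$ and we only need an upper bound, so restricting to $z$ of order at most $b^{3/4}$ (which is forced on the event that the left barrier holds, by Lemma~\ref{lm:basicEstBB}, since $\P_{0,u,b}(B_k \in dz)$ has Gaussian tails around $uk/b$) suffices. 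The left factor — the probability under $\P_{0,z,k}$ that a bridge of length $k$ stays above $-\zeta_{a,k}$ — is bounded by $\P_{0,z,k}(\tau_{-\zeta_{a,k}(0)} > k) \lesssim z/k \cdot \zeta_{a,k}(0)$ via \eqref{eq:boundsBBstaysPos}, and when integrated against the Gaussian marginal of $B_k$ this produces a factor of order $u/b$ times an absolute constant, so combining the two factors gives the claimed bound.

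For the "small $z$" regime, $z < x_0$, one cannot afford to use Proposition~\ref{pr:BoundBBEntropic} (the bound there degenerates near $z = \zeta_{a,k}(s)$), so instead I would simply bound the giant intersection event by discarding the bad union and use the basic barrier estimate: the probability that $B_\cdot$ stays above $-\zeta_{a,k}$ on $[1,b-1]$ and has $B_k < x_0$ is, after conditioning at time $k$ and using Lemma~\ref{lm:transferBBRW} plus the continuous-time entropic estimate Lemma~\ref{lm:entrBasic}, of order $(u/b)\,x_0^2/\sqrt{k} \lesssim (u/b)\,k^{7/16}/\sqrt{k} = (u/b)\,k^{-1/16}$ with the choice $x_0 \asymp k^{7/32}$; alternatively one observes that on $\{B_k < x_0\}$ together with the two-sided constraint the event is itself "costly enough" and is already absorbed into the continuous-time estimate. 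Either way the two regimes together give the bound.

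The main obstacle, I expect, is bookkeeping rather than anything conceptual: matching the exponents $7/16$ and $1/16$ correctly through the rescaling $s \mapsto k$, tracking the dependence of all implicit constants on $a$ (but not on $k$), verifying that $\zeta_{a,k}$ can be taken non-decreasing and concave so that Lemma~\ref{lm:transferBBRW} applies, and checking that the discrete-to-continuous correction product $\prod_{j\geq k}(1-e^{-2\zeta_{a,k}(j)^2})^{-2}$ is bounded uniformly in $k$. One subtle point is that Lemma~\ref{lm:transferBBRW} in the form stated transfers only the lower-barrier event; to also handle the upper "bad" union $\bigcup_{j=k}^{b-1}\{B_j < \zeta_{a,k}(j)\}$ one should note that $\{B_j < \zeta_{a,k}(j)\}$ for some integer $j$ implies $\inf_{s}(B_s - \zeta_{a,k}(s)) < 0$ in continuous time trivially (the continuous infimum is at most the discrete one), so no transfer lemma is needed in that direction — the discrete bad event is contained in the continuous bad event, which makes the reduction to Proposition~\ref{pr:BoundBBEntropic} clean. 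Finally, since this is stated for the bridge with the specific endpoint range $u \in [b^{1/4}, b^{3/4}]$ while Proposition~\ref{pr:BoundBBEntropic} is for $u \in [b^\iota, b^{3/4}]$ with $\iota \in (0,1/8)$, one just takes $\iota = 1/8^-$ or notes $1/4 > 1/8$ so the hypothesis is met.
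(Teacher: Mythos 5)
Your proposal contains the right ingredients, but the way you assemble them creates a genuine gap that the paper's proof avoids. The paper does \emph{not} condition on $B_k = z$ at all: it applies Lemma~\ref{lm:transferBBRW} directly to the full bridge from $0$ to $u$ to pass from the discrete events to the continuous ones (the lower-barrier intersection becomes $\{\inf_{r\in[0,b]}(B_r+\zeta_{2a,k}(r))>0\}$ after the harmless doubling $\zeta_{a,k}\to\zeta_{2a,k}=2\zeta_{a,k}$, while — as you correctly note near the end — the bad union is trivially contained in $\{\inf_{r\in[k,b]}(B_r-\zeta_{2a,k}(r))<0\}$), absorbs the correction product into the constant, and then invokes Proposition~\ref{pr:BoundBBEntropic} \emph{once}, with $s=k$, which yields $(u/b)(2k)^{7/16}/\sqrt{k}\lesssim (u/b)k^{-1/16}$. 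No conditioning at $k$, no large/small $z$ split.

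Your conditioning-at-$B_k=z$ strategy runs into three concrete problems. First, after conditioning, your ``right factor'' is a bridge from $z$ to $u$ of length $b-k$, not a bridge from $0$, so Proposition~\ref{pr:BoundBBEntropic} does not apply directly to it; in fact that proposition already performs its own conditioning at time $s$ internally, so you would be re-proving it rather than applying it. Second, the monotonicity-in-$z$ shortcut you invoke (``the probability is monotone in $z$ and we only need an upper bound'') is simply false for the joint event $\{\text{lower barrier holds}\}\cap\{\text{bad event occurs}\}$: increasing $z$ raises the probability of the first event but lowers that of the second, so there is no clean monotone comparison. Third, your ``left factor'' $\P_{0,z,k}(\tau_{-\zeta_{a,k}(0)}>k)\lesssim z\,\zeta_{a,k}(0)/k$ carries a $\zeta_{a,k}(0)\asymp a(\log k)^2$ dependence that must be cancelled against the right factor, and the bookkeeping needed to show this integrates to $O(u/b)$ uniformly in $k$ is not worked out and is not routine given points one and two. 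The insight buried in your ``one subtle point'' paragraph (discrete bad event $\subseteq$ continuous bad event, so only the lower barrier needs the transfer lemma) is exactly the right idea; if you use it together with Lemma~\ref{lm:transferBBRW} applied to the \emph{whole} bridge, you can drop the conditioning-at-$k$ scaffolding and land on the paper's two-line argument.
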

\begin{proof}
Thanks to Lemma~\ref{lm:transferBBRW}, we have that 
\begin{align*}
\P_{0, u, b}\Biggl(\bigcap_{j = 1}^{b-1} \bigl\{B_j  & > - \zeta_{a,k}(j) \bigr\}, \; \bigcup_{j = k}^{b-1} \bigl\{B_j < \zeta_{a,k}(j)\bigr\}\Biggr)   \prod_{j=1}^{b}\Bigl(1-e^{-2\zeta_{a, k}(j)^2}\Bigr)^{2} \\
 & \leq \P_{0, u, b}\Biggl(\inf_{r \in [0, b]}\bigl(B_r + \zeta_{2 a, k}(r)\bigr) > 0, \;\inf_{r \in [k, b]}\bigl(B_r - \zeta_{2 a, k}(r)\bigr) < 0\Biggr) 	\;.
\end{align*}
By observing that the product on the first line is bounded away from zero uniformly in $k$ and $b$, the conclusion follows from Proposition~\ref{pr:BoundBBEntropic} with $s=k$. 
\end{proof}

%%%%%%%%%%%%%%%%%%%%%%%%%%%%%%%%%%%%%%%%%%%%%%
%%%%%%%%%%%%%%%%%%%%%%%%%%%%%%%%%%%%%%%%%%%%%%
\section{Proof of Lemma~\ref{lm:bRegular}}
\label{ap:proofbReg}
In this appendix, we verify that, for any $\lambda > 0$ and $A$, $L$, $b \geq 0$, the function $\smash{\smash{\bar{\frkF}}^{\lambda}_{A, L, b}: \R \times \CC(\R^d) \to \R}$ defined in \eqref{eq:defbarFALb} satisfies conditions \ref{eq:bReg1}\dash \ref{eq:bReg4} of Lemma~\ref{lm:bRegular}. To simplify the notation, we prove the case $\lambda = 1$ and note that the general case, where $\lambda > 0$, follows with straightforward modifications.

To streamline the presentation, only in this appendix, for any $g \in \CC(\R^d)$ and $\eta \geq 0$, we let
\begin{align*}
& \D_{g}(\eta) \eqdef \bigl\{y \in \B_b \; : \; \bar \Upsilon_{\! b, g} (y) \geq \M_{0, b}(\bar \Upsilon_{\! b, g}) - \eta\bigr\} \;.
\end{align*}
We note that if $0 \leq \eta_1 \leq \eta_2$, then $\D_g(\eta_1) \subseteq \D_g(\eta_2)$. 

\begin{proof}[Proof of Lemma~\ref{lm:bRegular}]
Fix $\lambda = 1$. We begin by defining the functions $\frkh: \R \to \R^{+}$ and $\frkF_{*}: \CC(\R^d) \to \R^{+}$ as follows
\begin{equation*}
\frkh(z) \eqdef e^{-\sqrt{\smash[b]{2d}} (z + L)}  \P_{z+A+L}(\rmY_b(0) \geq 0)^{1/2} \;, \qquad \frkF_{*}(g) \eqdef \E\Biggl[\frac{\one_{\{\M_{0, b}(\bar \Upsilon_{\! b, g}) \leq  2 \}}}{e^{-8db}\abs{\D_{g}(1/2)}^8 \wedge 1}\Biggr]^{1/4} \;,
\end{equation*}
We observe that \ref{eq:bReg1} follows immediately from the definition of $\frkh$, while \ref{eq:bReg2} is an immediate consequence of Cauchy--Schwarz's inequality. Hence, it remains to verify that \ref{eq:bReg3} and \ref{eq:bReg4} are satisfied.

Regarding condition \ref{eq:bReg3}, we fix $\delta \in (0,1)$ and $g \in C(\R^d)$ such that 
\begin{equation}
\label{eq:bRegAssumption3}
\sup_{x, y \in \R^d,\, \abs{x-y} \leq \delta}|g(x) - g(y)| \leq 1/4 \;.
\end{equation}
We define the random radius $r \geq 0$ as the largest radius for which all of the following conditions hold:
\begin{enumerate}
	\item There exists $x \in \B_b$ for which $B(x, r) \subseteq \B_b$ and $\bar \Upsilon_{\! b, g} (x) = \bar \M_{0, b}(\Upsilon_{\! b, g})$.
	\item For all $y \in B(x, r)$, it holds that $\bar \Upsilon_{\! b, g} (y) \geq \bar \M_{0, b}(\Upsilon_{\! b, g}) - 1/2$.
\end{enumerate}
For $\delta > 0$ and letting $S$ be the volume of the unit ball, we observe that
\begin{align*}
\frkF_{*}(g)^4 \leq S^{-8} \lceil 1/\delta \rceil^{8d} + \sum_{k = \lceil 1/\delta \rceil}^{\infty} S^{-8} (k+1)^{8d}\E\bigl[\one_{\{S/(k+1)^d \leq e^{-db}\abs{\D_g(1/2)} < S/k^d\}}\bigr] \;.
\end{align*}
For $k \in \N$ such that $\smash{k \geq \lceil 1/\delta \rceil}$, we note that $e^{-db}\abs{\D_g(1/2)} \leq S/k^d$ implies that $r \leq e^b/k < e^b \delta$. On this event, we know that there exists $z \in \B_b$ with $\abs{x - z} \leq e^b/k$ such that $\bar \Upsilon_{\! b, g}(z) < \bar \M_{0, b}(\Upsilon_{\! b, g}) - 1/2$. In turn, this implies that  
\begin{equation*}
\abs{\bar\Upsilon_{\!b, g}(x) - \bar\Upsilon_{\!b, g}(z)} \geq \frac12 \implies \abs{\rmY_b(e^{-b} x) - \rmY_b(e^{-b} z)} \geq \f12 - \abs{g(x) - g(z)}\;. 
\end{equation*}
In particular, recalling that we are assuming \eqref{eq:bRegAssumption3}, for all $k \in \N$ such that $\smash{k \geq \lceil 1/\delta \rceil}$, it holds that
\begin{equation*}
\P\Biggl(e^{-db} \abs{\D_g(1/2)} \leq \f{S}{k^d}\Biggr) \leq \P\Biggl(r\leq \f{e^{b}}{k}\Biggr) \leq \P\Biggl(\sup_{x, y \in \B_b, \, |x-y|\leq e^b/k}\bigl|\rmY_b(e^{-b}x) - \rmY_b(e^{-b}y)\bigr| \geq \f14\Biggr)\;.
\end{equation*}
By \cite[Equation~(3.10)]{Madaule_Max}, we know that there exist constants $c_1$, $c_2 > 0$ such that the probability on the right-hand side in the previous display is bounded from above by $c_1 e^{-c_2 e^{-b}k}$. Hence, we have that
\begin{align*}
\frkF_{*}(g)^4 & \leq S^{-8} \lceil 1/\delta \rceil^{8d} + \sum_{k = \lceil 1/\delta \rceil}^{\infty} S^{-8} (k+1)^{8d} \P\Biggl(\sup_{x, y \in \B_b, \, |x-y|\leq e^b/k}\bigl|\rmY_b(e^{-b}x) - \rmY_b(e^{-b}y)\bigr| \geq \f14\Biggr) \\
& \leq S^{-8} \lceil 1/\delta \rceil^{8d} + c_1 \sum_{k = \lceil 1/\delta \rceil}^{\infty} S^{-8} (k+1)^{8d} e^{-c_2 e^{-b}k} \lesssim \delta^{-10 d}\;,
\end{align*}
where the implicit constant depends on $b$.

Finally, regarding condition \ref{eq:bReg4}, we fix $g_1$, $g_2 \in \CC(\R^d)$ such that $\sup_{x \in \B_{b+2}}\abs{g_1(x) - g_2(x)} \leq 1/8 $. Then, thanks to the triangle inequality and Cauchy--Schwarz's inequality, we obtain that 
\begin{align}
& \abs{\smash{\bar{\frkF}}^{1}_{A, L, b}(z, g_1) - \smash{\bar{\frkF}}^{1}_{A, L, b}(z, g_2)} \nonumber \\ 
& \qquad \leq e^{-\sqrt{\smash[b]{2d}} (z + L) + db} \E_z\Biggl[\one_{\{\rmY_b(0) \geq -A-L\}} \frac{\abs{\one_{\{\M_{0,b}(\bar \Upsilon_{\!b, g_1}) \leq 1\}}  - \one_{\{\M_{0,b}(\bar \Upsilon_{\!b, g_2}) \leq 1\}}}}{\abs{\D_{g_1}(1)}}\Biggr] \label{eq:bRegGiant1} \\
& \qquad + e^{-\sqrt{\smash[b]{2d}} (z + L) + db} \E_z\Biggl[\one_{\{\rmY_b(0) \geq -A-L\}} \one_{\{\M_{0,b}(\bar \Upsilon_{\!b, g_1}) \leq 1\}} \frac{\abs{\one_{\{\rmE_{g_1}\}} - \one_{\{\rmE_{g_2}\}}}}{\abs{\D_{g_1}(1)}} \Biggr] \label{eq:bRegGiant2} \\
& \qquad + e^{-\sqrt{\smash[b]{2d}} (z + L) + db} \E_z\Biggl[\one_{\{\rmY_b(0) \geq -A-L\}} \one_{\{\M_{0,b}(\bar \Upsilon_{\!b, g_2})\leq 1\}} \Biggl|\frac{1}{\abs{\D_{g_1}(1)}} - \frac{1}{\abs{\D_{g_2}(1)}} \Biggr|\Biggr] \label{eq:bRegGiant3} \\
& \qquad + \frkh(z) \frkF_{*}(g_1) \E_z\bigl[\Delta(g_1, g_2)^4\bigr]^{1/4} \label{eq:bRegGiant4} \;,
\end{align}
where, for $g \in \CC(\R^d)$, we set
\begin{equation*}
\rmE_{g} \eqdef \bigl\{\M_{0, b + 1, b}(\bar \Upsilon_{\! b, g}) + \rmY_b(0) < -A-L\} \;,
\end{equation*}
and also, for $g_1$, $g_2 \in \CC(\R^d)$, we let 
\begin{equation*}
	\Delta(g_1, g_2) \eqdef \exp\Biggl(-\int_{\B_b} \rmF_{\gamma}(y) e^{\gamma(\bar \Upsilon_{\! b, g_1}(y) + \rmY_b(0)+L)} dy\Biggr) - \exp\Biggl(-\int_{\B_b} \rmF_{\gamma}(y) e^{\gamma(\bar \Upsilon_{\! b, g_2}(y) + \rmY_b(0)+L)} dy\Biggr) \;.
\end{equation*}
We proceed to bound separately \eqref{eq:bRegGiant1}, \eqref{eq:bRegGiant2}, \eqref{eq:bRegGiant3}, and \eqref{eq:bRegGiant4}. We will use repeatedly the fact that, letting $\delta = \|g_1 - g_2\|_{\infty}$, it holds that $|\M_{0,b}(\bar \Upsilon_{\!b, g_1}) - \M_{0,b}(\bar \Upsilon_{\!b, g_2})| \leq \delta$ and $| \M_{0, b + 2, b, g_1}(\bar \Upsilon_{\!b, g_1}) - \bar \M_{0, b + 2, b, g_2}(\bar \Upsilon_{\!b, g_1})| \leq \delta$. 

Regarding \eqref{eq:bRegGiant1}, we have the following bound
\begin{equation*}
\abs{\one_{\{\M_{0,b}(\bar \Upsilon_{\!b, g_1}) \leq 1\}} - \one_{\{\M_{0,b}(\bar \Upsilon_{\!b, g_2}) \leq 1\}}} \leq \one_{\{\M_{0,b}(\bar \Upsilon_{\!b, g_1})\in [1-\delta, 1+\delta]\}}\;.
\end{equation*}
Therefore, using \cite[Theorem~3.1]{SupDens} as in the proof of Lemma~\ref{lm:entroDelta}, thanks to the Cauchy--Schwarz's inequality, one obtains that  
\begin{equation*}
\eqref{eq:bRegGiant1} \lesssim \|g_1-g_2\|_{\infty}^{1/8} \frkh(z) \frkF_{*}(g_1)\;.
\end{equation*}
Regarding \eqref{eq:bRegGiant2}, we note that 
\begin{equation*}
\abs{\one_{\{\rmE_{g_1}\}} - \one_{\{\rmE_{g_2}\}}} \leq \one_{\{\M_{0, b + 1, b}(\bar \Upsilon_{\! b, g_1})  + \rmY_b(0) + A+L \in [-\delta,\delta]\}} \;.
\end{equation*}
Therefore, thanks again to \cite[Theorem~3.1]{SupDens} and using the Cauchy--Schwarz's inequality, we have that
\begin{equation*}
\eqref{eq:bRegGiant2} \lesssim \|g_1-g_2\|_{\infty}^{1/8} \frkh(z) \frkF_{*}(g_1)\;.
\end{equation*}
Regarding \eqref{eq:bRegGiant3}, we note that 
\begin{align*}
& \one_{\{\M_{0,b}(\bar \Upsilon_{\! b, g_2}) \leq 1\}} \Biggl|\frac{1}{\abs{\D_{g_1}(1)}} - \frac{1}{\abs{\D_{g_2}(1)}} \Biggr| \\
& \hspace{40mm} \leq \int_{\B_b} \frac{\one_{\{\M_{0,b}(\bar \Upsilon_{\!b, g_1}) \leq 1-\delta\}}}{\abs{\D_{g_1}(1/2)}^2} \one_{\{\M_{0,b}(\bar \Upsilon_{\!b, g_1}) - \bar \Upsilon_{\! b, g_1}(y) \in [1-\delta, 1+\delta]\}} dy\;,
\end{align*}
where we also used the fact that $\min\{\abs{\D_{g_1}(1)}, \abs{\D_{g_2}(1)}\} \geq \abs{\D_{g_1}(1/2)}$. Therefore, proceeding similarly to the above, we obtain that
\begin{equation*}
\eqref{eq:bRegGiant3} \lesssim \|g_1-g_2\|_{\infty}^{1/8} \frkh(z) \frkF_{*}(g_1) \;. 
\end{equation*}

To conclude, it remains to handle \eqref{eq:bRegGiant4}. For this term, it suffices to show that $\E_z[\Delta(g_1, g_2)^4]^{1/4} \lesssim \|g_1 - g_2\|_{\infty}^{1/8}$, which follows from some elementary computations.
\end{proof}

%%%%%%%%%%%%%%%%%%%%%%%%%%%%%%%%%%%%%%%%%%%%%%
%%%%%%%%%%%%%%%%%%%%%%%%%%%%%%%%%%%%%%%%%%%%%%
\section{Gaussian toolbox}
\label{sec:GaussianTool}
We collect here some well known results on Gaussian fields. In all the subsequent lemmas, we assume $\rmD \subseteq \R^d$ to be a bounded domain. We begin with the well known 
Cameron--Martin theorem \cite{CameronMartin}.
\begin{lemma}
\label{lm_Girsanov}
Consider an almost surely continuous centred Gaussian field $(\rmX(x))_{x \in \rmD}$ and a real-valued random variable $\rmZ$ belonging to the $L^2$ closure of the vector space spanned by $\{\rmX(x)\}_{x \in \rmD}$. Let $\bfF: \CC(\rmD) \to \R$ be a bounded measurable functional. Then the following equality holds  
\begin{equation*}
\E\Bigl[e^{\rmZ-\frac{\E[\rmZ^2]}{2}} \bfF\left(\rmX(\cdot)\right)\Bigr] = \E\bigl[\bfF\bigl(\rmX(\cdot)+\E[\rmX(\cdot)\rmZ]\bigr)\bigr] \,.
\end{equation*}
\end{lemma}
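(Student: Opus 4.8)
The final statement to prove is the Cameron--Martin theorem (Lemma~\ref{lm_Girsanov}). The plan is to reduce it first to the case where $\rmZ$ lies in the (finite-dimensional) span of finitely many values $\rmX(x_1),\dots,\rmX(x_k)$, and then to a genuinely finite-dimensional Gaussian computation together with a conditioning argument for the remaining coordinates. Since everything is driven by the covariance structure, the key object is the linear functional $h(\cdot) \eqdef \E[\rmX(\cdot)\rmZ]$, which is precisely the Cameron--Martin shift.

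First I would handle the reduction to finite dimensions. By hypothesis $\rmZ$ belongs to the $L^2$-closure of $\mathrm{span}\{\rmX(x) : x \in \rmD\}$, so there is a sequence $\rmZ_n$ in that span (each a finite linear combination $\rmZ_n = \sum_{i} a_i^{(n)} \rmX(x_i^{(n)})$) with $\rmZ_n \to \rmZ$ in $L^2$. One checks that then $h_n(\cdot) \eqdef \E[\rmX(\cdot)\rmZ_n] \to h(\cdot) \eqdef \E[\rmX(\cdot)\rmZ]$ pointwise (indeed uniformly on $\rmD$, using continuity of the covariance and boundedness of $\rmD$), that $\E[\rmZ_n^2]\to \E[\rmZ^2]$, and that $e^{\rmZ_n - \E[\rmZ_n^2]/2} \to e^{\rmZ - \E[\rmZ^2]/2}$ in $L^1$ (uniform integrability follows from the standard bound $\E[e^{2(\rmZ_n - \E[\rmZ_n^2]/2)}] = e^{\E[\rmZ_n^2]} $, which is bounded since $\E[\rmZ_n^2]$ converges). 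Since $\bfF$ is bounded and measurable, the left-hand side of the claimed identity passes to the limit. For the right-hand side, one needs $\bfF(\rmX(\cdot) + h_n) \to \bfF(\rmX(\cdot) + h)$ in a suitable sense; here it is cleanest to invoke that the laws of $\rmX + h_n$ converge to the law of $\rmX + h$ in total variation (equivalently, the Radon--Nikodym derivatives $e^{\rmZ_n - \E[\rmZ_n^2]/2}$ converge in $L^1$), so that $\E[\bfF(\rmX + h_n)]\to \E[\bfF(\rmX+h)]$. This reduces the statement to $\rmZ$ in the span of finitely many $\rmX(x_i)$.

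In the finite-dimensional case, write $\rmZ = \sum_{i=1}^k a_i \rmX(x_i) = \langle a, V\rangle$ where $V = (\rmX(x_1),\dots,\rmX(x_k))$ is a centred Gaussian vector with covariance matrix $\Sigma$. The claim is then equivalent to the statement that, for any bounded measurable $\Phi$ on the path space, $\E[e^{\langle a,V\rangle - \frac12\langle a,\Sigma a\rangle}\,\Phi(\rmX)] = \E[\Phi(\rmX + \Sigma_{\rmX, V} a)]$ where $\Sigma_{\rmX,V}(x) = \E[\rmX(x) V] \in \R^k$, so that $\Sigma_{\rmX,V} a = \E[\rmX(\cdot)\rmZ] = h(\cdot)$. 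To prove this, condition on $V$: the conditional law of the field $\rmX$ given $V = v$ is that of a Gaussian field with mean $\Sigma_{\rmX,V}\Sigma^{-1} v$ and a covariance that does not depend on $v$. Hence $\E[e^{\langle a,V\rangle - \frac12\langle a,\Sigma a\rangle}\Phi(\rmX)]$ can be written by first integrating the regular conditional law in $\rmX$ against $\Phi$, producing a function $\psi(v)$, and then integrating $e^{\langle a,v\rangle - \frac12\langle a,\Sigma a\rangle}\psi(v)$ against the Gaussian density of $V$; the classical one-line computation with the Gaussian density shows this equals $\E[\psi(V + \Sigma a)]$. Undoing the conditioning and using that the conditional covariance of $\rmX$ is translation-unaffected by the shift of the conditioning point by $\Sigma a$ (which shifts the conditional mean by exactly $\Sigma_{\rmX,V}\Sigma^{-1}\Sigma a = \Sigma_{\rmX,V} a = h$), one obtains $\E[\Phi(\rmX + h)]$, as desired.

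The main obstacle is the limiting argument in the first step, specifically justifying that the right-hand side passes to the limit: one must ensure that $\bfF(\rmX + h_n) \to \bfF(\rmX + h)$ in expectation despite $\bfF$ being merely bounded measurable (not continuous). The clean way around this is not to argue pathwise but to use the reformulation of both sides as $\E[\bfF(\rmX)\,e^{\rmZ_n - \E[\rmZ_n^2]/2}]$ — i.e. to first establish the finite-dimensional identity, which gives $\E[\bfF(\rmX + h_n)] = \E[\bfF(\rmX) e^{\rmZ_n - \E[\rmZ_n^2]/2}]$ for every $n$, and then let $n\to\infty$ on the right, where the $L^1$ convergence of the exponential martingale-type factors (together with boundedness of $\bfF$) immediately gives convergence, and simultaneously identifies the limit of the left-hand side. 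This both sidesteps the measurability issue and makes the limit argument essentially automatic. The remaining finite-dimensional computation is entirely routine Gaussian algebra (completing the square / the standard exponential-tilt identity for Gaussian vectors), so no further difficulties are expected there.
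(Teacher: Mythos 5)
The paper offers no proof of this lemma; it cites the classical Cameron--Martin theorem. Your proposal follows the standard textbook route (finite-dimensional approximation plus a Gaussian conditioning computation), and the finite-dimensional computation is correct. However, the limiting step contains a circularity that is not fully resolved.

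Once you have $\E[\bfF(\rmX + h_n)] = \E[\bfF(\rmX)\, e^{\rmZ_n - \E[\rmZ_n^2]/2}]$ for each $n$ and the $L^1$ convergence $e^{\rmZ_n - \E[\rmZ_n^2]/2} \to e^{\rmZ - \E[\rmZ^2]/2}$, you conclude that $\lim_n \E[\bfF(\rmX + h_n)] = \E[\bfF(\rmX)\, e^{\rmZ - \E[\rmZ^2]/2}]$. This does not yet identify the limit as $\E[\bfF(\rmX + h)]$, which is what the lemma asserts. Your alternative justification via ``total variation convergence of the laws of $\rmX + h_n$ to the law of $\rmX + h$'' is circular: that the law of $\rmX + h$ has Radon--Nikodym derivative $e^{\rmZ - \E[\rmZ^2]/2}$ with respect to the law of $\rmX$ is precisely the statement being proved, so it cannot be used to establish the convergence.

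The missing ingredient is a second route to the limit that does not presuppose the theorem. Concretely: the $L^1$ convergence of the densities shows that the laws of $\rmX + h_n$ converge in total variation to the probability measure $\nu(\bfF) \eqdef \E[\bfF(\rmX)\,e^{\rmZ - \E[\rmZ^2]/2}]$ (this is a probability measure because $\E[e^{\rmZ - \E[\rmZ^2]/2}] = 1$). Separately, the locally uniform convergence $h_n \to h$ together with the a.s. continuity of $\rmX$ gives $\rmX + h_n \to \rmX + h$ almost surely in $\CC(\rmD)$, hence weak convergence of the laws to the law of $\rmX + h$. Since total variation convergence implies weak convergence and weak limits are unique, $\nu$ coincides with the law of $\rmX + h$, and the identity holds for all bounded measurable $\bfF$. (Equivalently: prove the identity first for bounded continuous $\bfF$ by dominated convergence, then extend to bounded measurable $\bfF$ by noting that two probability measures on a Polish space agreeing on bounded continuous functions are equal.) With this added, your proof is complete; without it, the argument stops one step short of the claimed conclusion.
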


We have the following standard concentration inequality for Gaussian fields which is known as Borell-TIS inequality. We refer to \cite[Theorem~2.1.1]{Adler} for a proof.
\begin{lemma}
\label{lm_Borell}
Consider an almost surely continuous centred Gaussian field $(\rmX(x))_{x \in \rmD}$. Then it holds that
\begin{equation*}
\P\Biggl(\Biggl|\sup_{x \in \rmD} \rmX(x) - \E\Biggl[\sup_{x \in \rmD} \rmX(x)\Biggr]\Biggr| > t \Biggr) \leq 2 e^{-\frac{t^2}{2 \sigma_\rmD^2}} \,,
\end{equation*}
for all $t \geq 0$, where $\sigma_\rmD^2 := \sup_{x \in \rmD} \E[\rmX(x)^2]$.
\end{lemma}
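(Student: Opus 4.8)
The plan is to deduce this from the classical Gaussian concentration inequality for Lipschitz functions of a finite standard Gaussian vector, via a separability argument. Since $\rmX$ has almost surely continuous sample paths and $\rmD$ is bounded (so that $\bar\rmD$ is compact), $\sup_{x \in \rmD}\rmX(x)$ is almost surely finite, and there is a countable dense subset $\{x_k\}_{k \in \N} \subseteq \rmD$ along which $\sup_{x \in \rmD}\rmX(x) = \sup_{k \in \N}\rmX(x_k)$ almost surely. Setting $\rmS_n \eqdef \max_{k \le n}\rmX(x_k)$, we have $\rmS_n \uparrow \sup_{x \in \rmD}\rmX(x)$ almost surely, so it suffices to prove, for each fixed $n$, the bound
\begin{equation*}
\P\bigl(\abs{\rmS_n - \E[\rmS_n]} > t\bigr) \le 2 e^{-t^2/(2\sigma_n^2)}\,, \qquad \sigma_n^2 \eqdef \max_{k \le n}\E[\rmX(x_k)^2] \le \sigma_\rmD^2\,,
\end{equation*}
and then pass to the limit $n \to \infty$.

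For the finite-dimensional bound I would write the centred Gaussian vector $(\rmX(x_1),\ldots,\rmX(x_n))$ as $A\mathbf{g}$ for some $n \times m$ matrix $A$ and a standard Gaussian vector $\mathbf{g} \sim \CN(0, I_m)$ (e.g.\ with $m = n$ and $A$ a square root of the covariance matrix), so that the $k$-th row $a_k$ of $A$ satisfies $\abs{a_k}^2 = \E[\rmX(x_k)^2]$. Then $\rmS_n = F(\mathbf{g})$ with $F(\mathbf{z}) \eqdef \max_{k \le n}\langle a_k,\mathbf{z}\rangle$, and $\abs{F(\mathbf{z}) - F(\mathbf{z}')} \le (\max_{k}\abs{a_k})\,\abs{\mathbf{z}-\mathbf{z}'} = \sigma_n\abs{\mathbf{z}-\mathbf{z}'}$, i.e.\ $F$ is $\sigma_n$-Lipschitz. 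The Gaussian concentration inequality for Lipschitz functions then gives precisely $\P(\abs{F(\mathbf{g}) - \E[F(\mathbf{g})]} > t) \le 2 e^{-t^2/(2\sigma_n^2)}$. This concentration statement is the real content: the standard route is the Gaussian isoperimetric inequality of Borell and Sudakov--Tsirelson (provable, for instance, by the Poincar\'e limit from spherical isoperimetry, or via Ehrhard's inequality), applied to the sub-level sets $\{F \le m\}$ around a median $m$ of $F(\mathbf{g})$; this yields sub-Gaussian control of both the upper and the lower deviation from $m$ with the sharp variance proxy $\sigma_n^2$, and one then passes from the median to the mean using $\abs{m - \E[F(\mathbf{g})]} \le C\sigma_n$ uniformly. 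A softer Ornstein--Uhlenbeck semigroup interpolation argument gives the same statement with a slightly worse constant, which would already suffice for every application of this lemma in the paper.

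Finally, to pass to the limit I would note that the displayed bound for $\rmS_n$ is uniform in $n$ (since $\sigma_n \le \sigma_\rmD$), whence $(\rmS_n)_n$ has uniformly sub-Gaussian tails around the finite medians of $\sup_{x \in \rmD}\rmX(x)$; in particular $\sup_{x \in \rmD}\rmX(x)$ is integrable and, by monotone convergence, $\E[\rmS_n] \to \E[\sup_{x \in \rmD}\rmX(x)]$. Letting $n \to \infty$ in the inequality for $\rmS_n$ and using $\sigma_n \le \sigma_\rmD$ yields the claim. The main obstacle is the Gaussian concentration inequality with the correct constant $2\sigma_\rmD^2$: the separability reduction and the limit passage are routine measure-theoretic bookkeeping, whereas obtaining the sharp sub-Gaussian rate genuinely requires an isoperimetric (or equivalent) input rather than a soft $L^2$ estimate.
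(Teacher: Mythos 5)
The paper does not prove this lemma; it simply cites \cite[Theorem~2.1.1]{Adler}, so there is no in-paper argument to compare against. Your proposal is the standard proof of Borell\dash TIS (separability reduction to a finite maximum, representation as $\max_k\langle a_k,\mathbf{g}\rangle$ which is $\sigma_n$-Lipschitz, Gaussian concentration for Lipschitz functions via the Gaussian isoperimetric inequality, then monotone passage to the limit using uniform integrability of $\rmS_n$), and it is correct; this is essentially the route taken in the cited reference.
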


We now state Fernique's majorizing criterion, and we refer to {\cite[Theorem~1.3.3]{Adler}} for a proof.
\begin{lemma}
\label{lm_Fernique}
Consider an almost surely continuous centred Gaussian field $(\rmX(x))_{x \in \rmD}$. Consider the pseudometric on $\rmD$ defined as follows
\begin{equation*}
\dd_X(x, y) := \sqrt{\E[\abs{\rmX(x) - \rmX(y)}^2]}, \qquad \forall \, x, y \in \rmD \;.
\end{equation*}
Then there exists a universal constant $C$ such that for any probability measure $\sigma$ on $\rmD$, 
\begin{equation*}
\E\Biggl[\sup_{x \in \rmD} \rmX(x)\Biggr] < C \sup_{x \in \rmD}\int_{0}^{\infty} \sqrt{\smash[b]{-\log \sigma(B_\rmX(x, r))}} dr \,,
\end{equation*}
where $B_\rmX(x, r) \eqdef \{y \in \rmD \, : \, \dd_\rmX(x, y) < r\}$. 
\end{lemma}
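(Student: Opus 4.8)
The approach is the classical chaining argument of Fernique in its majorizing-measure form; I describe the shape of the proof and refer to \cite[Theorem~1.3.3]{Adler} for the one genuinely delicate ingredient, the recursive construction of the approximants.

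\emph{Reduction to a finite index set.} Since $\rmX$ has almost surely continuous paths and $\rmD$ is bounded, $(\rmD,\dd_\rmX)$ is separable; choosing an increasing sequence of finite sets $\rmD_k\subseteq\rmD$ with $\dd_\rmX$-dense union, monotone convergence gives $\E[\sup_{\rmD_k}\rmX]\uparrow\E[\sup_{\rmD}\rmX]$, while the balls $B_\rmX(x,r)$ and their $\sigma$-masses are computed in the ambient space, so the right-hand side for $\rmD_k$ is at most that for $\rmD$. Hence it suffices to prove the bound when $\rmD$ is finite, with a constant independent of $\#\rmD$; one may also assume every ball has positive $\sigma$-mass, since otherwise the right-hand side is infinite and there is nothing to prove.

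\emph{Dyadic scales, the integral–sum comparison, and the weighted maximal inequality.} Set $\Delta\eqdef\sup_{x,y\in\rmD}\dd_\rmX(x,y)$, $\eps_n\eqdef 2^{-n}\Delta$, and $\phi_n(x)\eqdef\sqrt{-\log\sigma(B_\rmX(x,\eps_n))}\geq 0$. Monotonicity of $r\mapsto\sigma(B_\rmX(x,r))$ makes the discretisation of the integral lossless up to a universal factor, uniformly in $x\in\rmD$:
\begin{equation*}
\int_0^\infty\sqrt{-\log\sigma(B_\rmX(x,r))}\,\dd r\;\asymp\;\sum_{n\geq 0}\eps_n\,\phi_{n+1}(x)\;.
\end{equation*}
Moreover, since two $\tfrac{\Delta}{4}$-balls at $\dd_\rmX$-distance $\geq\tfrac{\Delta}{2}$ are disjoint, there is some $x$ with $\sigma(B_\rmX(x,\tfrac{\Delta}{4}))\leq\tfrac12$, so $\int_0^{\Delta/4}\sqrt{-\log\sigma(B_\rmX(x,r))}\,\dd r\geq\tfrac{\Delta}{4}\sqrt{\log 2}$; thus $\Delta\lesssim\sup_x\int_0^\infty\sqrt{-\log\sigma(B_\rmX(x,r))}\,\dd r$. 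The probabilistic engine is the weighted Gaussian maximal inequality: if $(Y_i)_{i\in I}$ are centred Gaussian with $\E[Y_i^2]\leq\delta^2$ and $(w_i)_{i\in I}$ are positive with $\sum_i w_i\leq 1$, then a union bound on $\P(Y_i>u\delta)\leq e^{-u^2/2}$ gives, off an event of probability at most $e^{-s^2/2}$, the bound $Y_i\leq\delta(\sqrt{2\log(1/w_i)}+s)$ for all $i$ simultaneously.

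\emph{Chaining and conclusion.} Fix $t_0\in\rmD$; since $\E\rmX(t_0)=0$ we have $\E[\sup_x\rmX(x)]=\E[\sup_x(\rmX(x)-\rmX(t_0))]$. Following \cite{Adler} one builds recursively a tree of approximants: for each $x\in\rmD$ a chain $t_0=\pi_0(x),\pi_1(x),\pi_2(x),\dots$, stabilising at $x$ after finitely many steps, such that $\pi_{n-1}(x)$ is a function of $\pi_n(x)$, with $\dd_\rmX(\pi_{n-1}(x),\pi_n(x))\lesssim\eps_{n-1}$, and with weights $(w_{n,s})$ attached to the active values $s$ of $\pi_n(\cdot)$ satisfying $\sum_s w_{n,s}\leq\sigma(\rmD)=1$ for every $n$ while $\log(1/w_{n,\pi_n(x)})\lesssim\phi_{n+1}(x)^2$; the $w_{n,s}$ are $\sigma$-masses of pairwise disjoint balls selected by a greedy $\sigma$-maximising rule along the tree, which is exactly where $\sigma$ being a probability measure enters. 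Writing $\rmX(x)-\rmX(t_0)=\sum_{n\geq1}(\rmX(\pi_n(x))-\rmX(\pi_{n-1}(x)))$, a single union bound over all pairs at all scales --- applying the inequality of the previous paragraph at each scale with safety margins $\beta_n\asymp\sqrt n$ so that $\sum_n e^{-\beta_n^2/2}<\tfrac12$ and $\sum_n\beta_n\eps_{n-1}\lesssim\Delta$ --- yields, with probability at least $\tfrac12$,
\begin{equation*}
\sup_x\bigl(\rmX(x)-\rmX(t_0)\bigr)\;\lesssim\;\Delta+\sup_x\sum_{n\geq1}\eps_{n-1}\,\phi_{n+1}(x)\;,
\end{equation*}
which, by the integral–sum comparison and the bound $\Delta\lesssim\sup_x\int_0^\infty\sqrt{-\log\sigma(B_\rmX(x,r))}\,\dd r$, is $\lesssim\sup_x\int_0^\infty\sqrt{-\log\sigma(B_\rmX(x,r))}\,\dd r$. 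Finally, Gaussian concentration (the functional $\sup_x(\rmX(x)-\rmX(t_0))$ has sub-Gaussian tails with variance proxy $\leq\Delta^2$, by Borell--TIS) converts this high-probability bound into the same bound in expectation, giving the claim with a universal constant $C$. The step requiring the most care --- and the reason I would lean on \cite{Adler} rather than reproduce it --- is the recursive construction of the approximant tree: arranging simultaneously the tree structure, the metric control $\dd_\rmX(\pi_{n-1}(x),\pi_n(x))\lesssim\eps_{n-1}$, the summability $\sum_s w_{n,s}\leq 1$ at each scale, and the pointwise control $\log(1/w_{n,\pi_n(x)})\lesssim\phi_{n+1}(x)^2$. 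Everything downstream --- the single union bound, the geometric sums, and the passage from median to mean --- is routine.
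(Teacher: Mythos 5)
Your proposal is a correct outline of the classical majorizing-measure chaining argument for Fernique's criterion; the paper itself gives no proof and simply cites \cite[Theorem~1.3.3]{Adler}, which is exactly the reference you lean on for the tree construction. The structure you describe — reduction to finite index sets, the integral–to–dyadic-sum comparison, the weighted union bound at each scale, the greedy disjoint-ball selection giving $\sum_s w_{n,s}\leq 1$, and the Borell--TIS passage from a median bound to the mean — is the standard route and is sound.
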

\endappendix

%%%%%%%%%%%%%%%%%%%%%%%%%%%%%%%%%%%%%%%%%%%%%%
%%%%%%%%%%%%%%%%%%%%%%%%%%%%%%%%%%%%%%%%%%%%%%
\small
\bibliographystyle{Martin}
\bibliography{./refs}{}

@article {Glassy,
    AUTHOR = {Madaule, Thomas and Rhodes, R\'{e}mi and Vargas, Vincent},
     TITLE = {Glassy phase and freezing of log-correlated {G}aussian
              potentials},
   JOURNAL = {Ann. Appl. Probab.},
  FJOURNAL = {The Annals of Applied Probability},
    VOLUME = {26},
      YEAR = {2016},
    NUMBER = {2},
     PAGES = {643--690},
      ISSN = {1050-5164},
   MRCLASS = {60G57 (60G15 82D30)},
  MRNUMBER = {3476621},
MRREVIEWER = {Flora Koukiou},
       DOI = {10.1214/14-AAP1071},
       URL = {https://doi.org/10.1214/14-AAP1071},
}

@article {Lacoin,
    AUTHOR = {Lacoin, Hubert and Rhodes, R\'{e}mi and Vargas, Vincent},
     TITLE = {Complex {G}aussian multiplicative chaos},
   JOURNAL = {Comm. Math. Phys.},
  FJOURNAL = {Communications in Mathematical Physics},
    VOLUME = {337},
      YEAR = {2015},
    NUMBER = {2},
     PAGES = {569--632},
      ISSN = {0010-3616},
   MRCLASS = {60G60},
  MRNUMBER = {3339158},
       DOI = {10.1007/s00220-015-2362-4},
       URL = {https://doi.org/10.1007/s00220-015-2362-4},
}

@article {Kahane,
	AUTHOR = {Kahane, Jean-Pierre},
	TITLE = {Sur le chaos multiplicatif},
	JOURNAL = {Ann. Sci. Math. Qu\'{e}bec},
	FJOURNAL = {Annales des Sciences Math\'{e}matiques du Qu\'{e}bec},
	VOLUME = {9},
	YEAR = {1985},
	NUMBER = {2},
	PAGES = {105--150},
	ISSN = {0707-9109},
	URL={https://www.labmath.uqam.ca/~annales/volumes/09-2/PDF/105-150.pdf},
	MRCLASS = {60G57 (60G42)},
	MRNUMBER = {829798},
	MRREVIEWER = {S. D. Chatterji},
}

@article {RV_Review,
	AUTHOR = {Rhodes, R\'{e}mi and Vargas, Vincent},
	TITLE = {Gaussian multiplicative chaos and applications: a review},
	JOURNAL = {Probab. Surv.},
	FJOURNAL = {Probability Surveys},
	VOLUME = {11},
	YEAR = {2014},
	PAGES = {315--392},
	MRCLASS = {60G57 (28A80 60G15 60G60)},
	MRNUMBER = {3274356},
	MRREVIEWER = {Dora Sele\v{s}i},
	DOI = {10.1214/13-PS218},
	URL = {https://doi.org/10.1214/13-PS218},
}

@article {Shamov,
	AUTHOR = {Shamov, Alexander},
	TITLE = {On {G}aussian multiplicative chaos},
	JOURNAL = {J. Funct. Anal.},
	FJOURNAL = {Journal of Functional Analysis},
	VOLUME = {270},
	YEAR = {2016},
	NUMBER = {9},
	PAGES = {3224--3261},
	ISSN = {0022-1236},
	MRCLASS = {60G15 (60B10 60G57)},
	MRNUMBER = {3475456},
	MRREVIEWER = {Anna Talarczyk},
	DOI = {10.1016/j.jfa.2016.03.001},
	URL = {https://doi.org/10.1016/j.jfa.2016.03.001},
}

@article {Berestycki_Elementary,
	AUTHOR = {Berestycki, Nathana\"{e}l},
	TITLE = {An elementary approach to {G}aussian multiplicative chaos},
	JOURNAL = {Electron. Commun. Probab.},
	FJOURNAL = {Electronic Communications in Probability},
	VOLUME = {22},
	YEAR = {2017},
	PAGES = {Paper No. 27, 12},
	MRCLASS = {60G57 (60G15 60J65)},
	MRNUMBER = {3652040},
	DOI = {10.1214/17-ECP58},
	URL = {https://doi.org/10.1214/17-ECP58},
}

@article {Madaule_Max,
	AUTHOR = {Madaule, Thomas},
	TITLE = {Maximum of a log-correlated {G}aussian field},
	JOURNAL = {Ann. Inst. Henri Poincar\'{e} Probab. Stat.},
	FJOURNAL = {Annales de l'Institut Henri Poincar\'{e} Probabilit\'{e}s et
	Statistiques},
	VOLUME = {51},
	YEAR = {2015},
	NUMBER = {4},
	PAGES = {1369--1431},
	ISSN = {0246-0203},
	MRCLASS = {60G15 (60F05)},
	MRNUMBER = {3414451},
	MRREVIEWER = {Amites Dasgupta},
	DOI = {10.1214/14-AIHP633},
	URL = {https://doi.org/10.1214/14-AIHP633},
}

@article {Stationary,
    AUTHOR = {Ruzmaikina, Anastasia and Aizenman, Michael},
     TITLE = {Characterization of invariant measures at the leading edge for
              competing particle systems},
   JOURNAL = {Ann. Probab.},
  FJOURNAL = {The Annals of Probability},
    VOLUME = {33},
      YEAR = {2005},
    NUMBER = {1},
     PAGES = {82--113},
      ISSN = {0091-1798,2168-894X},
   MRCLASS = {60G70 (60G55 82C22 82C44)},
  MRNUMBER = {2118860},
MRREVIEWER = {Dimitri\ Petritis},
       DOI = {10.1214/009117904000000865},
       URL = {https://doi.org/10.1214/009117904000000865},
}

@article {Critical_der,
	AUTHOR = {Duplantier, Bertrand and Rhodes, R\'{e}mi and Sheffield, Scott and
	Vargas, Vincent},
	TITLE = {Critical {G}aussian multiplicative chaos: convergence of the
	derivative martingale},
	JOURNAL = {Ann. Probab.},
	FJOURNAL = {The Annals of Probability},
	VOLUME = {42},
	YEAR = {2014},
	NUMBER = {5},
	PAGES = {1769--1808},
	ISSN = {0091-1798},
	MRCLASS = {60G57 (60G15 60G60)},
	MRNUMBER = {3262492},
	DOI = {10.1214/13-AOP890},
	URL = {https://doi.org/10.1214/13-AOP890},
}

@article {Criticial_KPZ,
	AUTHOR = {Duplantier, Bertrand and Rhodes, R\'{e}mi and Sheffield, Scott and
	Vargas, Vincent},
	TITLE = {Renormalization of critical {G}aussian multiplicative chaos
	and {KPZ} relation},
	JOURNAL = {Comm. Math. Phys.},
	FJOURNAL = {Communications in Mathematical Physics},
	VOLUME = {330},
	YEAR = {2014},
	NUMBER = {1},
	PAGES = {283--330},
	ISSN = {0010-3616},
	MRCLASS = {60G15 (60G57 60G60 81T40)},
	MRNUMBER = {3215583},
	MRREVIEWER = {Narn-Rueih Shieh},
	DOI = {10.1007/s00220-014-2000-6},
	URL = {https://doi.org/10.1007/s00220-014-2000-6},
}

@article {Powell_Critical,
	AUTHOR = {Powell, Ellen},
	TITLE = {Critical {G}aussian multiplicative chaos: a review},
	JOURNAL = {Markov Process. Related Fields},
	FJOURNAL = {Markov Processes and Related Fields},
	VOLUME = {27},
	YEAR = {2021},
	NUMBER = {4},
	PAGES = {557--506},
	eprint={2006.13767},
	ISSN = {1024-2953},
	MRCLASS = {60-02 (60G15 60G57)},
	MRNUMBER = {4396197},
}

@article {Junnila_deco,
    AUTHOR = {Junnila, Janne and Saksman, Eero and Webb, Christian},
     TITLE = {Decompositions of log-correlated fields with applications},
   JOURNAL = {Ann. Appl. Probab.},
  FJOURNAL = {The Annals of Applied Probability},
    VOLUME = {29},
      YEAR = {2019},
    NUMBER = {6},
     PAGES = {3786--3820},
      ISSN = {1050-5164},
   MRCLASS = {60G15 (60G57)},
  MRNUMBER = {4047992},
       DOI = {10.1214/19-AAP1492},
       URL = {https://doi.org/10.1214/19-AAP1492},
}

@book {Kallenberg,
    AUTHOR = {Kallenberg, Olav},
     TITLE = {Random measures, theory and applications},
    SERIES = {Probability Theory and Stochastic Modelling},
    VOLUME = {77},
 PUBLISHER = {Springer, Cham},
      YEAR = {2017},
     PAGES = {xiii+694},
      ISBN = {978-3-319-41596-3; 978-3-319-41598-7},
   MRCLASS = {60G55 (60G57 60J68)},
  MRNUMBER = {3642325},
MRREVIEWER = {Frank Aurzada},
       DOI = {10.1007/978-3-319-41598-7},
       URL = {https://doi.org/10.1007/978-3-319-41598-7},
}

@book {JacodLimit,
    AUTHOR = {Jacod, Jean and Shiryaev, Albert N.},
     TITLE = {Limit theorems for stochastic processes},
    SERIES = {Grundlehren der mathematischen Wissenschaften [Fundamental
              Principles of Mathematical Sciences]},
    VOLUME = {288},
   EDITION = {Second},
 PUBLISHER = {Springer-Verlag, Berlin},
      YEAR = {2003},
     PAGES = {xx+661},
      ISBN = {3-540-43932-3},
   MRCLASS = {60-02 (60F17 60G48 60H05)},
  MRNUMBER = {1943877},
MRREVIEWER = {Dominique\ L\'{e}pingle},
       DOI = {10.1007/978-3-662-05265-5},
       URL = {https://doi.org/10.1007/978-3-662-05265-5},
}

@book {StableBook,
    AUTHOR = {H\"{a}usler, Erich and Luschgy, Harald},
     TITLE = {Stable convergence and stable limit theorems},
    SERIES = {Probability Theory and Stochastic Modelling},
    VOLUME = {74},
 PUBLISHER = {Springer, Cham},
      YEAR = {2015},
     PAGES = {x+228},
      ISBN = {978-3-319-18328-2; 978-3-319-18329-9},
   MRCLASS = {60-02 (60F05 60F17)},
  MRNUMBER = {3362567},
MRREVIEWER = {Alexander\ Iksanov},
       DOI = {10.1007/978-3-319-18329-9},
       URL = {https://doi.org/10.1007/978-3-319-18329-9},
}

@article {BerMulti,
    AUTHOR = {Bertacco, Federico},
     TITLE = {Multifractal analysis of {G}aussian multiplicative chaos and
              applications},
   JOURNAL = {Electron. J. Probab.},
  FJOURNAL = {Electronic Journal of Probability},
    VOLUME = {28},
      YEAR = {2023},
     PAGES = {Paper No. 2, 36},
      ISSN = {1083-6489},
   MRCLASS = {60G57 (28A78 28A80 60G15 60G60)},
  MRNUMBER = {4529087},
       DOI = {10.1214/22-ejp893},
       URL = {https://doi.org/10.1214/22-ejp893},
}

@article {BiskupLouidor,
    AUTHOR = {Biskup, Marek and Louidor, Oren},
     TITLE = {Full extremal process, cluster law and freezing for the
              two-dimensional discrete {G}aussian free field},
   JOURNAL = {Adv. Math.},
  FJOURNAL = {Advances in Mathematics},
    VOLUME = {330},
      YEAR = {2018},
     PAGES = {589--687},
      ISSN = {0001-8708,1090-2082},
   MRCLASS = {60G15 (60G55 60G57 60G70 62G30 82B41)},
  MRNUMBER = {3787554},
       DOI = {10.1016/j.aim.2018.02.018},
       URL = {https://doi.org/10.1016/j.aim.2018.02.018},
}

@article {Acosta,
    AUTHOR = {Acosta, Javier},
     TITLE = {Tightness of the recentered maximum of log-correlated
              {G}aussian fields},
   JOURNAL = {Electron. J. Probab.},
  FJOURNAL = {Electronic Journal of Probability},
    VOLUME = {19},
      YEAR = {2014},
     PAGES = {no. 90, 25},
      ISSN = {1083-6489},
   MRCLASS = {60G15 (60G60)},
  MRNUMBER = {3272323},
MRREVIEWER = {Jiang\ Lun\ Wu},
       DOI = {10.1214/EJP.v19-3170},
       URL = {https://doi.org/10.1214/EJP.v19-3170},
}

@article {SupDens,
    AUTHOR = {Pitt, Loren D. and Lanh Tat Tran},
     TITLE = {Local sample path properties of {G}aussian fields},
   JOURNAL = {Ann. Probab.},
  FJOURNAL = {The Annals of Probability},
    VOLUME = {7},
      YEAR = {1979},
    NUMBER = {3},
     PAGES = {477--493},
      ISSN = {0091-1798,2168-894X},
   MRCLASS = {60G15 (60G60)},
  MRNUMBER = {528325},
MRREVIEWER = {Robert\ J.\ Adler},
       DOI = {10.1214/aop/1176995048},
}

@article {DS11,
    AUTHOR = {Duplantier, Bertrand and Sheffield, Scott},
     TITLE = {Liouville quantum gravity and {KPZ}},
   JOURNAL = {Invent. Math.},
  FJOURNAL = {Inventiones Mathematicae},
    VOLUME = {185},
      YEAR = {2011},
    NUMBER = {2},
     PAGES = {333--393},
      ISSN = {0020-9910},
   MRCLASS = {81T40 (60K35)},
  MRNUMBER = {2819163},
MRREVIEWER = {Lee-Peng Teo},
       DOI = {10.1007/s00222-010-0308-1},
       URL = {https://doi.org/10.1007/s00222-010-0308-1},
}

@article {BHUniq,
    AUTHOR = {Bertacco, Federico and Hairer, Martin},
     TITLE = {Uniqueness of supercritical {G}aussian multiplicative chaos},
   JOURNAL = {Electron. J. Probab.},
  FJOURNAL = {Electronic Journal of Probability},
    VOLUME = {30},
      YEAR = {2025},
     PAGES = {Paper No. 188, 22},
      ISSN = {1083-6489},
   MRCLASS = {60G60 (60G15 60G70)},
  MRNUMBER = {4997642},
       DOI = {10.1214/25-ejp1449},
       URL = {https://doi.org/10.1214/25-ejp1449},
}

@book {Adler,
	AUTHOR = {Adler, Robert J. and Taylor, Jonathan E.},
	TITLE = {Random fields and geometry},
	SERIES = {Springer Monographs in Mathematics},
	PUBLISHER = {Springer, New York},
	YEAR = {2007},
	PAGES = {xviii+448},
	ISBN = {978-0-387-48112-8},
	MRCLASS = {60G60 (58J65)},
	MRNUMBER = {2319516},
	MRREVIEWER = {Jos\'{e} Rafael Le\'{o}n},
	DOI = {10.1007/978-0-387-48116-6},
}

@article {CameronMartin,
    AUTHOR = {Cameron, R. H. and Martin, W. T.},
     TITLE = {Transformations of {W}iener integrals under translations},
   JOURNAL = {Ann. of Math. (2)},
  FJOURNAL = {Annals of Mathematics. Second Series},
    VOLUME = {45},
      YEAR = {1944},
     PAGES = {386--396},
      ISSN = {0003-486X},
   MRCLASS = {60.0X},
  MRNUMBER = {10346},
MRREVIEWER = {J.\ L.\ Doob},
       DOI = {10.2307/1969276},
       URL = {https://doi.org/10.2307/1969276},
}

@book {BogachevI,
    AUTHOR = {Bogachev, V. I.},
     TITLE = {Measure theory. {V}ol. {I}, {II}},
 PUBLISHER = {Springer-Verlag, Berlin},
      YEAR = {2007},
     PAGES = {Vol. I: xviii+500 pp., Vol. II: xiv+575},
      ISBN = {978-3-540-34513-8; 3-540-34513-2},
   MRCLASS = {28-02 (28Axx 28Cxx 46G12 60G42 60G44)},
  MRNUMBER = {2267655},
MRREVIEWER = {Ren\'e\ L.\ Schilling},
       DOI = {10.1007/978-3-540-34514-5},
       URL = {https://doi.org/10.1007/978-3-540-34514-5},
}
%%%%%%%%%%%%%%%%%%%%%%%%%%%%%%%%%%%%%%%%%%%%%%
%%%%%%%%%%%%%%%%%%%%%%%%%%%%%%%%%%%%%%%%%%%%%%

\end{document}